\title{Thomas-Yau conjecture and holomorphic curves}
\author{Yang Li}
\date{\today}
\newtheorem{thm}{Theorem}[section]
\newtheorem{lem}[thm]{Lemma}
\theoremstyle{definition}
\newtheorem{eg}[thm]{Example}
\newtheorem{conj}[thm]{Conjecture}
\newtheorem{cor}[thm]{Corollary}
\newtheorem{claim}[thm]{Claim}
\newtheorem{rmk}{Remark}[section]
\newtheorem{prop}[thm]{Proposition}
\newtheorem{Def}[thm]{Definition}
\newtheorem{Question}{Question}
\newtheorem*{Acknowledgement}{Acknowledgement}
\newcommand{\ie}{\emph{i.e.} }
\newcommand{\cf}{\emph{cf.} }
\newcommand{\R}{\mathbb{R}}
\newcommand{\C}{\mathbb{C}}
\newcommand{\Z}{\mathbb{Z}}
\newcommand{\N}{\mathbb{N}}
\newcommand{\Q}{\mathbb{Q}}
\newcommand{\norm}[1]{\left\lVert#1\right\rVert}
\newcommand{\Lap}{\Delta}
\newcommand{\tarc}{\mbox{\large$\frown$}}
\newcommand{\arc}[1]{\stackrel{\tarc}{#1}}
\DeclareMathOperator{\Hom}{Hom}
\DeclareMathOperator{\End}{End}
\DeclareMathOperator{\Tr}{Tr}
\begin{document}
	\maketitle

\begin{abstract}
The main theme of this paper is the Thomas-Yau conjecture, primarily in the setting of exact, (quantitatively) almost calibrated, unobstructed Lagrangian branes inside Calabi-Yau Stein manifolds.  In our interpretation, the conjecture is that Thomas-Yau semistability is equivalent to the existence of special Lagrangian representatives. We clarify how holomorphic curves enter this conjectural picture, through the construction of bordism currents between Lagrangians, and in the definition of the Solomon functional. Under some extra hypothesis, we shall prove Floer theoretic obstructions to the existence of special Lagrangians, using the technique of integration over moduli spaces. In the converse direction, we set up a variational framework with the goal of finding special Lagrangians under the Thomas-Yau semistability asumption, and we shall make sufficient progress to pinpoint the outstanding technical difficulties, both in Floer theory and in geometric measure theory.

\end{abstract}

\tableofcontents

\section{Foreword}

The visionary proposal of Thomas and Yau \cite{Thomas}\cite{ThomasYau} is the philosophy that \emph{existence and uniqueness questions of certain special Lagrangian branes inside an almost Calabi-Yau manifold $(X,\omega,\Omega)$ should be governed by stability conditions in the derived Fukaya category} (\cf section \ref{ThomasYaupicture}). This proposal lies at the intersection of two major mathematical disciplines:
\begin{itemize}
\item From the viewpoint of \textbf{geometric measure theory}, the Thomas-Yau proposal promises a systematic method to prove existence theorems for special Lagrangians, which are certain absolute volume minimizers within prescribed homology classes. Currently, the known construction methods are based on perturbative techniques,  symmetry reductions, some special ansatzs, and integrable system techniques specific to low dimensions \cite{Joycecalibrated}. In contrast, Thomas and Yau suggests the difficult PDE questions may be reducible to largely topological and algebraic questions in Floer theory. 

\item 
From the viewpoint of \textbf{mirror symmetry}, the Thomas-Yau proposal is a window beyond homological mirror symmetry. Currently, most of the mathematical works on mirror symmetry are concerned with the duality between pure symplectic topology and pure complex geometry, and as such have a largely topological and algebraic flavour. As soon as one simultaneously consider the full K\"ahler geometric data on each side of the mirror, then questions of analytic nature become inevitable, and the Thomas-Yau proposal is a central component of this larger picture (\cf section \ref{Mirrorsymmetry}).

\end{itemize}

Two decades have lapsed since their proposal, but we feel only a small part of the mystery has been unveiled. Part of the problem is that the relevance of holomorphic curves to the Thomas-Yau proposal is insufficiently understood, despite their central role in defining the Fukaya category. One of the main goals of this paper is to propose the following picture, in the more specialized setting of exact Lagrangian branes inside (almost) Calabi-Yau Stein manifolds:

\begin{itemize}
\item  
Using $(n+1)$-dimensional universal families of holomorphic curves associated to certain $(n-1)$-dimensional moduli spaces, one can construct bordism currents between two $n$-dimensional Lagrangians in the same derived category class. More generally, one can also sometimes associate bordism currents between several Lagrangians, the notable example coming from distinguished triangles. This is a special case of the open-closed map well known to symplectic geometers, and some lower brow expositions are given in section \ref{LotayPacinirevisited}. 


\item  
Solomon \cite{Solomon} defined a functional on the universal cover of the space of Lagrangians within the same Hamiltonian deformation class, whose critical points are precisely special Lagrangians. In the exact setting, we will give a more homological formula for the Solomon functional, and propose that the bordism current produced from holomorphic curves allows one to extend the functional to Lagrangian objects in the same derived category class, which is well defined without the universal cover problem (\cf section \ref{Solomonrevisited}). A different perspective involving integration over moduli spaces is presented in section \ref{ModuliintegralSolomonsection}, and we suggest further how the Solomon functional may generalize to compact Calabi-Yau manifolds.

\item
We further specialize to almost calibrated Lagrangians. We propose that
given a distinguished triangle $L_1\to L\to L_2\to L_1[1]$,  then there are Floer theoretic necessary conditions for $L$ to be a special Lagrangian:
\[
\arg \int_{L_1} \Omega \leq \arg \int_L \Omega \leq \arg \int_{L_2}\Omega.
\]
Here $L_1, L_2$ are not required to be special Lagrangians, and the cohomological integral is the only quantitative way they enter into obstruction criterions. In other words, obstructions are of numerical nature.  We will prove this assuming further that the bordism current from holomorphic curves satisfies  automatic transversality and a \emph{positivity condition} (\cf section \ref{Automatictransversalitypositivity}, \ref{Positivitycondition}, \ref{Floertheoreticobstructions}). The main technique is integration over the $(n-1)$-dimensional moduli spaces of holomorphic curves. We also give generalizations to several Lagrangians, to make possible contact with Harder-Narasimhan decomposition.
Compared with the proposal of Joyce \cite{Joyceconj}, the Floer theoretic obstructions capture some features of the Bridgeland stability condition, but instead of tackling the entire derived Fukaya category $D^b Fuk(X)$, we restrict to almost calibrated Lagrangian objects, which morally correspond to the heart of a $t$-structure.

\begin{rmk}
While Joyce's prediction of a Bridgeland stability condition is a very important heuristic motivation for this paper, in our proposal we will carefully avoid assuming that the Bridgeland stability condition exists on $D^b Fuk(X)$, or that $D^b Fuk(X)$ is idempotent closed, or that the almost calibrated Lagrangians form an abelian subcategory of $D^bFuk(X)$, except when we make comparisons with Joyce's program. 
\end{rmk}

\item 
When there are no destabilizing distinguished triangles, we say the derived Fukaya category class of $L$ is \emph{Thomas-Yau semistable} (\cf Definition \ref{ThomasYausemistability}). This is similar to the original viewpoint of Thomas and Yau \cite{Thomas}\cite{ThomasYau}. Let $X$ be a Calabi-Yau Stein manifold, such that $(\omega, \Omega)$ satisfies the complex Monge-Amp\`ere equation, and the regularity scale of the Calabi-Yau manifold tends to infinity asymptotically. Let $\mathcal{L}$ be the geometric measure theoretic closure of the class of exact, quantitatively almost calibrated, unobstructed Lagrangians in the $D^bFuk(X)$ class. (Beware that we include immersed and singular Lagrangians as in Joyce \cite{Joyceconj}, so the definition of $\mathcal{L}$ is partially conjectural.) According to our interpretation of the Thomas-Yau conjecture, if $L_0\in \mathcal{L}$ is Thomas-Yau semistable, then  there exists a special Lagrangian current in $\mathcal{L}$  (\cf section \ref{ThomasYauconjecturesubsection} for the full version). Morever, it is desirable that the special Lagrangian current carries unobstructed brane structure in some formal sense, to represent the $D^bFuk(X)$ class or some weaker equivalence class.

\item
We will make a start on the variational approach to tackle the Thomas-Yau conjecture in the exact and quantitative almost calibrated setting. A summary of the variational strategy is in section \ref{Variationalstrategy}. We prove a number of uniform estimates independent of the Lagrangians, notably the potential clustering property, which implies a uniform bound on the energy of holomorphic curves, and that the Solomon functional deviates from an elementary functional by a uniformly bounded amount (\cf section \ref{Compactnessregularity}, \ref{Quantitativealmostcalibrated}). We are not able to complete the proof of the Thomas-Yau conjecture, but we try to identify the main technical difficulties to be overcome. To make contact with the a priori compactness theorems in geometric measure theory, it is natural to set up the variational program in terms of Lagrangian currents and varifolds. One then encounters both geometric measure theoretic difficulties, to do with the existence of enough Lagrangian competitors, and the difficulty to make sense of Floer theory for Lagrangian currents. We expect that many aspects of Floer theory will not be robust under passage to such weak limits of Lagrangians, but as a heuristic principle, we hope that the bordism currents relevant to our proposal are robust (\cf section \ref{Floertheoryweakregularity}).

\item
Assume there is a suitable compactness theory for Lagrangian objects, then we expect the existence of special Lagrangians to be essentially equivalent to the properness of the Solomon functional. The following heuristic principle would then explain why Thomas-Yau conjecture could be true: for a sequence of Lagrangians whose Solomon functional diverges to infinity, the underlying Lagrangian objects should break up into a bounded number of connected components $L_i$, such that the Lagrangian potential on each component has uniformly bounded oscillation, and the asymptotic behaviour of the Solomon functional is controlled by Floer theoretic data related to the cohomological integrals $\int_{L_i}\Omega$. We will give evidence for this picture (\cf section \ref{AsymptoticSolomon}), and explain a similar picture for Hermitian Yang-Mills connections (\cf section \ref{HYM}).

\item
Although we restrict to the exact setting in the above, we will leave the room open for more ambitious speculations in the setting of compact almost Calabi-Yau manifolds. The almost calibrated condition, on the other hand, is essential in our proposal, and cannot be dropped within this framework even with substantial efforts. 

\end{itemize}

Aside from the main goal of making the proposal and presenting evidence, this paper also contains a large amount of expository content, which aims to present both a (biased) overview and a critique of the current literature on the Thomas-Yau conjecture, and to explain how our picture fits into this body of works. The most relevant works are the original papers of Thomas and Yau \cite{Thomas}\cite{ThomasYau}, and the major update by Joyce \cite{Joyceconj}.
We will devote substantial attention to their main considerations (\cf section \ref{ThomasYaupicture}, \ref{ThomasYauevidence}, \ref{Lawlor}, \ref{Mirrorsymmetry}, \ref{HYM}, \ref{Extensionwallcrossing},  \ref{LMCF}), and the significance of the Thomas-Yau-Joyce proposal to mirror symmetry (\cf section \ref{Mirrorsymmetry}), and we will constantly draw comparisons between their pictures and ours. We also touch on a number of other relevant works:
\begin{itemize}
\item (\cf section \ref{Lawlor}) Joyce-Imagi-Santos \cite{JoyceImagi} gave a characterisation of Lawlor necks, in which holomorphic curves made an interesting, albeit somewhat technical appearance.

\item (\cf section \ref{HYM}, \ref{dHYM}) The Hermitian Yang-Mills equation \cite{DonaldsonHYM}\cite{DonaldsonHYM1}\cite{UhlenbeckYau} was a major inspiration to Thomas and Yau. The deformed Hermitian Yang-Mills equation \cite{Collins}\cite{CollinsJacobXie}\cite{CollinsJacobYau}\cite{Chen} has been advocated as a mirror version of special Lagrangians.

\item (\cf section \ref{Solomon}) Solomon \cite{Solomon}\cite{Solomon2}\cite{Solomon3} introduced a functional and a formal Riemannian metric on the infinite dimensional space of almost calibrated Lagrangians in a given Hamiltonian isotopy class. This space is too small for our variational purpose, but his functional is fundamental to our proposal.

\item (\cf section \ref{LotayPacini}) 
Lotay and Pacini \cite{Lotay}\cite{LotayPacini2} introduced a formal GIT picture for totally real submanifolds. Their notion of `geodesic' was our main inspiration for the bordism currents between Lagrangians.

\item (\cf section \ref{LMCF}, \ref{Quantitativealmostcalibrated}) Neves \cite{Neves}\cite{Neves2}\cite{Nevessurvey} pointed out the inevitability of finite time singularity in the Lagrangian mean curvature flow, which is fundamental to Joyce's proposal \cite{Joyceconj}. Some of his technical arguments are adapted to support our variational program in section \ref{Quantitativealmostcalibrated}.

\item (\cf section \ref{Continuitymethod}) Joyce's earlier work on wall crossing and special Lagrangian counting problems \cite{Joycecounting} is in some sense the elliptic cousin to his Lagrangian mean curvature flow proposal \cite{Joyceconj}.

\end{itemize}

\begin{rmk}
(Prerequisites)
While we have endeavoured to survey most of the previous works directly aimed at the Thomas-Yau conjecture, there is a very extensive literature on geometric measure theory and symplectic geometry in the background. We do not assume expertise on these matters, but some previous exposures such as F. Morgan's introductory book \cite{Morgan}, and the excellent surveys of Auroux \cite{Aurouxsurvey} and Smith \cite{Smithsurvey} would be useful. The most important background facts for our main purpose are also recalled in section \ref{Compactnessregularity} and the Appendix on the Fukaya category. While the brief summary therein is not completely sufficient for all arguments in this paper, we hope the casual reader could get the main gists, if not some sporadic remarks. The punctilious reader may wish to refer to the Floer degree and  sign convention summarized in the Appendix, which is different from e.g. Seidel's book \cite{Seidelbook}. The various allusions to K\"ahler geometry are mainly for motivational purposes, which can be skipped by readers less interested in these topics.
\end{rmk}

\begin{rmk}(Rigor)
This paper contains a somewhat unconventional mixture of proposals, expositions and proofs. We have attempted to indicate all speculative elements as `heuristic' or `conjecture'. Other arguments are either complete proofs, or sketches intended as expositions. 
\end{rmk}

\begin{Acknowledgement}
The author is currently an MIT CLE Moore Instructor and a Clay Research Fellow. This paper owes much intellectual debt to the proposal of Thomas-Yau and Joyce. The author thanks MIT for providing a stimulating research environment, and especially P. Seidel, S. Rezchikov and T. Collins for useful discussions. 
\end{Acknowledgement}




\section{Thomas-Yau conjecture backgrounds}

\subsection{The Thomas-Yau-Joyce picture}\label{ThomasYaupicture}

An \textbf{almost Calabi-Yau} manifold $(X,\omega, J, \Omega)$ is a complete $n$-dimensional K\"ahler manifold with a nowhere vanishing holomorphic volume form. It is called Calabi-Yau if the complex Monge-Amp\`ere equation
$
\omega^n=  \text{const} \Omega\wedge \overline{\Omega} 
$
holds, whence $X$ is Ricci flat.
A real $n$-dimensional compact submanifold of an almost Calabi-Yau manifold $\iota: L\to X$  is called \textbf{special Lagrangian} of constant phase angle $\hat{\theta}\in \R$, \footnote{Different lifts of the phase angle from $\R/\pi\Z$ to $\R$ shift the grading of the brane structure, so they are different as objects of the Fukaya category.} if
\begin{equation}\label{specialLagrangian}
\omega|_L=0, \quad \text{Im} (e^{-i\hat{\theta}}\Omega|_L)=0.
\end{equation}
Special Lagrangian submanifolds have unobstructed deformation theory.\footnote{However, obstructions may arise if brane structures are taken into account.}
Furthermore, if $X$ is Calabi-Yau, then a special Lagrangian is a minimal submanifold, and in fact minimizes the area within its homology class, thanks to the \textbf{calibration inequality} for submanifolds, saturated precisely by special Lagrangians:
\begin{equation}\label{calibration}
|\int_L \text{Re}( e^{-i\hat{\theta}}\Omega)|\leq |\int_L \Omega | \leq \int_L dvol(L)=\text{Vol}(L).
\end{equation}
We will always impose some compactness or convexity at infinity on $X$ to ensure the Fukaya category of $(X,\omega)$ makes sense.

\subsubsection*{Thomas-Yau's proposal}

In \cite{Thomas}\cite{ThomasYau} Thomas and Yau introduced the remarkable philosophy that \emph{existence and uniqueness questions of special Lagrangians inside an (almost) Calabi-Yau manifold $(X,\omega, \Omega)$ should be related to stability conditions in the Fukaya category}. Turning this intuition into precise mathematical predictions, is however not easy, for at least the following geometric reasons, in addition to the analytic difficulties related to minimal surfaces and mean curvature flows:
\begin{itemize}
\item  The Fukaya category as it currently stands is likely inadequate for the purpose: the special Lagrangian representatives of a given derived Fukaya category class in $D^b Fuk(X)$, should it exists, is by no means guaranteed to be smooth and embedded. Thus one would like to enlarge the objects of the Fukaya category to include immersed and possibly singular Lagrangians. From the viewpoint of symplectic topology, the lack of Lagrangian objects is a basic difficulty, which is usually hidden in the non-geometric step of taking the twisted complexes in the construction of $D^bFuk(X)$, and the  idempotent completions in the construction of $D^\pi Fuk(X)$ (\cf the Appendix \ref{immersedFukaya}).


\item The knowledge of the stability conditions is severely deficient. Thomas-Yau's paper predates the ingredient of Bridgeland stability\footnote{
What was available at the time, was the famous $\mu$-stability related to Hermitian Yang-Mills connections, of which Thomas and Yau were of course leading experts.} \footnote{There are current debates whether Bridgeland stability is the ultimately correct framework for formalising the physical intuition of stability conditions controlling BPS particle decay \cite{Douglas}. We regard Bridgeland stability as a working definition, to be modified in case future evidence arises.},  but the basic problem remains open: how to construct a stability condition on the derived Fukaya category $D^b Fuk(X)$ from the information of a holomorphic volume form $\Omega$ on $X$? In contrast, in analogous problems such as the existence of Hermitian-Yang-Mills connections, the stability condition is known a priori before the more serious endeavour to solve the PDE. One of the goals of this paper is to explain some modest progress on this issue, namely that there are  nontrivial Floer theoretic obstructions to the existence of special Lagrangians.

\end{itemize}

Thomas and Yau primarily restricted attention to the case of \textbf{almost calibrated} Lagrangians, meaning the Lagrangian angle function $\theta$ satisfies \footnote{The important thing is that the upper and lower bounds on $\theta$ differ by $\pi$. Shifting the interval by a constant is inconsequential.}
\[
-\frac{\pi}{2} <\theta< \frac{\pi}{2}.
\]
Notice this restriction removes any $\pi\Z$ ambiguity of the Lagrangian angle, so the Lagrangian is graded. 
Since we are focusing on compact Lagrangians, it makes sense to restrict to a more quantitative version, for some fixed small $\epsilon>0$:
\begin{equation}\label{quantitativealmostcalibrated}
-\frac{\pi}{2}+\epsilon \leq \theta \leq \frac{\pi}{2}-\epsilon.
\end{equation}
One immediate consequence is an a priori \textbf{volume bound}. 
\begin{lem}\label{Volumebound}
If $L$ satisfies (\ref{quantitativealmostcalibrated}), then $\text{Vol}(L)\leq \frac{1}{\sin \epsilon} \int_L\text{Re }\Omega.$
\end{lem}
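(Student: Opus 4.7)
The plan is to unwind the definition of the Lagrangian angle and then integrate a pointwise inequality. First, I would recall that on a graded Lagrangian $L\subset X$ the holomorphic volume form pulls back as $\Omega|_L = e^{i\theta}\, dvol(L)$ (using the calibration conventions of (\ref{calibration}) in the Calabi-Yau setting, or the corresponding conformal version in the almost Calabi-Yau setting). Taking real parts gives the pointwise identity $\mathrm{Re}\,\Omega|_L = \cos\theta \cdot dvol(L)$.

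Next, I would plug in the quantitative hypothesis (\ref{quantitativealmostcalibrated}): since $\theta$ lies in $[-\tfrac{\pi}{2}+\epsilon,\tfrac{\pi}{2}-\epsilon]$, the cosine is bounded below by $\cos(\tfrac{\pi}{2}-\epsilon)=\sin\epsilon$. Integrating the pointwise inequality $\mathrm{Re}\,\Omega|_L \geq \sin\epsilon\, dvol(L)$ over $L$ yields
\[
\int_L \mathrm{Re}\,\Omega \;\geq\; \sin\epsilon\cdot \mathrm{Vol}(L),
\]
which rearranges to the claimed bound. Since $\sin\epsilon>0$ for the fixed small $\epsilon$, the factor $1/\sin\epsilon$ makes sense.

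There is essentially no serious obstacle here; the statement is just the pointwise calibration inequality (\ref{calibration}) quantified by the lower bound on $\cos\theta$. The only mild subtlety to flag is the convention for the Lagrangian angle in the almost Calabi-Yau case versus the Calabi-Yau case, but since the argument uses only the relation between $\mathrm{Re}\,\Omega|_L$ and the induced volume through the factor $\cos\theta$, the conclusion is unaffected.
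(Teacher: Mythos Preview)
Your proof is correct and follows essentially the same approach as the paper: both use the pointwise identity $\Omega|_L=e^{i\theta}\,dvol_L$ to get $\mathrm{Re}\,\Omega|_L=\cos\theta\,dvol_L$, then bound $\cos\theta\geq\sin\epsilon$ and integrate. The paper phrases the final step as $\mathrm{Vol}(L)=\int_L\tfrac{1}{\cos\theta}\,\mathrm{Re}\,\Omega\leq\tfrac{1}{\sin\epsilon}\int_L\mathrm{Re}\,\Omega$, which is just your inequality rearranged.
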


\begin{proof}
From
$
e^{i\theta}dvol_L= \Omega|_L= \text{Re}\Omega|_L+ \sqrt{-1} \text{Im}\Omega|_L, 
$
we see $\text{Re }\Omega$ is an \textbf{orientation} form on $L$, and we arrive at
\begin{equation*}
\text{Vol}(L)= \int_L \frac{1}{\cos \theta} \text{Re }\Omega \leq \frac{1}{\sin \epsilon} \int_L\text{Re }\Omega
\end{equation*}	
as required. 
\end{proof}
\begin{rmk}
Another immediate consequence of the almost calibrated condition, is that the complex number $Z(L)=\int_L\Omega$ is nonzero, with $\phi(L)= \frac{1}{\pi}\arg Z(L)\in (-\frac{1}{2}, \frac{1}{2})$. 	
\end{rmk}

Keeping our narrative closer to the historical development, Thomas and Yau 
were inspired by $\mu$-stability for Hermitian Yang-Mills connections. They
assumed that the principal mechanism which can forbid the Hamiltonian isotopy class of $L$ from admitting a special Lagrangian representative, is related to a distinguished triangle in the Fukaya category, the primary geometric source being that $L$ is Hamiltonian isotopic to a graded Lagrangian connected sum $L=L_1\#L_2$,\footnote{It is very important for Thomas and Yau that the Lagrangian connected sum is asymmetric in $L_1$ and $L_2$. Our notation for the Lagrangian connected sum agrees with Thomas and Yau, but is opposite to Joyce \cite{JoyceSLsurvey} and a large number of symplectic geometry texts. Our convention is compatible with the distinguished triangle $L_1\to L_1\# L_2\to L_2\to L_1[1]$. } such that $\phi(L_1)\geq \phi(L_2)$. Based on this intuition, Thomas made an attempt to define a notion of stability (\cf \cite[Definition 5.1]{Thomas}, and Definition \ref{ThomasYausemistability} below).\footnote{We will not repeat their definition verbatim here because the author thinks its focus on the Hamiltonian isotopy class, instead of the Fukaya category class, is largely a limitation of its time. It is the spirit rather than the letter of their definition which matters.} Thomas then made the important prediction:

\begin{conj}
\cite[Conj 5.2]{Thomas} A graded Lagrangian has a special Lagrangian representative in its Hamiltonian class if and only if it is stable, and this special Lagrangian representative is unique.
\end{conj}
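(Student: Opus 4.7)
The plan is to split the conjecture into three parts: necessity of stability for existence of a special Lagrangian representative, sufficiency of stability for existence, and uniqueness within the Hamiltonian class. Since this statement remains open, my proof proposal should really be read as a research strategy, with several ingredients to be justified independently.

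For the necessity direction (existence $\Rightarrow$ stability), I would argue by contradiction. Suppose $L$ is a special Lagrangian of phase angle $\hat{\theta} = \pi\phi(L)$, and that $L$ is Hamiltonian isotopic to a graded connected sum $L_1\#L_2$ sitting in a distinguished triangle $L_1\to L\to L_2\to L_1[1]$ with $\phi(L_1)\geq \phi(L_2)$. The key move is to use the open-closed map to produce an $(n+1)$-dimensional bordism current $W$ between $L_1,L,L_2$ arising from a universal family of holomorphic triangles over a compact $(n-1)$-dimensional moduli space. Integrating $\mathrm{Im}(e^{-i\hat{\theta}}\Omega)$ over $W$ and applying Stokes' theorem gives a boundary contribution which vanishes identically on $L$ by the special Lagrangian equation (\ref{specialLagrangian}), while on $L_1$ and $L_2$ it has a definite sign determined by the sign of $\phi(L_j)-\phi(L)$. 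Under the positivity condition on the bordism current alluded to in the introduction, this forces the Thomas-Yau inequality $\phi(L_1)\leq \phi(L)\leq \phi(L_2)$, contradicting the destabilizing assumption. The nontrivial inputs here are the existence of the bordism current, automatic transversality of the relevant moduli space, and the positivity condition controlling the sign of the integrand on $W$.

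For the sufficiency direction (stability $\Rightarrow$ existence), I would set up a variational problem for the Solomon functional on the geometric measure theoretic closure $\mathcal{L}$ of the derived Fukaya category class of $L_0$. Picking a minimizing sequence $L_k\in\mathcal{L}$, the quantitative almost calibrated hypothesis together with Lemma \ref{Volumebound} gives a uniform volume bound, so GMT compactness theorems for integral Lagrangian currents extract a weak limit $L_\infty$. One must then show that the Solomon functional, extended homologically over $\mathcal{L}$ via the bordism current produced by holomorphic curves, is \emph{proper} under Thomas-Yau semistability. The heuristic for properness is the dichotomy indicated in the introduction: if the Solomon functional were to diverge along $L_k$, the underlying Lagrangian objects would break into finitely many connected components with bounded oscillation of the Lagrangian potential, and the asymptotic growth of the functional would be controlled by the cohomological integrals $\int_{L_i}\Omega$, producing a destabilizing decomposition and contradicting semistability. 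Finally, the criticality of $L_\infty$ for the extended Solomon functional is equivalent to the special Lagrangian equation, giving the desired representative.

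For uniqueness, I would appeal to the formal geodesic convexity of the Solomon functional along Lotay-Pacini geodesics, combined with the unobstructedness of the special Lagrangian deformation theory, to conclude that two special Lagrangian representatives within the same Hamiltonian class must coincide. The main obstacle is unambiguously the sufficiency direction: one must simultaneously make geometric measure theoretic and Floer theoretic sense of the weak limit $L_\infty$. Concretely, the hardest steps will be (a) defining a brane/Floer theoretic structure on singular immersed Lagrangian currents robust enough to keep $L_\infty\in\mathcal{L}$, (b) guaranteeing that there are enough Lagrangian competitors in $\mathcal{L}$ so the infimum is meaningful, and (c) showing that the bordism currents, and hence the extended Solomon functional, behave continuously under the weak convergence. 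These are precisely the difficulties the paper announces as outstanding, and any complete proof of the conjecture will have to confront them head-on.
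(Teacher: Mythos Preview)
The paper does not prove this statement: it is quoted verbatim as Thomas's original conjecture \cite[Conj 5.2]{Thomas}, presented as historical background, and left open. There is therefore no ``paper's own proof'' to compare against. Your proposal is honest about this, framing itself as a research strategy rather than a proof, and in fact your strategy is essentially the program that the rest of the paper develops: the necessity direction via bordism currents and moduli integration is the content of Chapter~\ref{Moduliholocurves} (Theorems~\ref{Floertheoreticobstruction1}, \ref{Floertheoreticobstruction2}), and the sufficiency direction via properness of the Solomon functional under semistability is the content of Chapter~\ref{Variational} (Conjecture~\ref{ThomasYauexistence}). The outstanding difficulties you list in (a)--(c) match those the paper identifies in sections~\ref{Variationalstrategy} and~\ref{Floertheoryweakregularity}.

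One point of genuine divergence: for uniqueness you invoke geodesic convexity of the Solomon functional along Lotay--Pacini geodesics. The paper does not take this route. Thomas--Yau uniqueness is actually a \emph{proven} theorem (section~\ref{ThomasYauevidence}), and the argument is purely Floer-theoretic: one perturbs the two putative special Lagrangians to be transverse, shows via degree considerations that there are no degree $0$ or $n$ intersection points, hence $HF^0(L,L')=0$, forcing $L$ to be a zero object---a contradiction. The paper later revisits this (section~5.7) with a bordism-current argument based on the open mapping theorem for the holomorphic function $F$. Neither argument uses convexity of $\mathcal{S}$; indeed the paper is rather critical of relying on Solomon's formal Riemannian picture (section~\ref{Solomon}, ``Limitations''), precisely because the space $\mathcal{O}$ is incomplete and the geodesics need not stay inside it. Your convexity approach would need to confront that incompleteness directly.
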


Thomas and Yau \cite{ThomasYau} analyzed the problem again from the Lagrangian mean curvature flow (LMCF) perspective, and made a somewhat more cautious prediction, which roughly amounts to the following. When
a destabilising decomposition into Lagrangian sums is forbidden, 
either by a smallness assumption on the oscillation of the phase function $\theta$, or because the volume of $L$ is smaller than $\text{Vol}(L_1)+\text{Vol}(L_2)$ for any putative decomposition, (notice both conditions are preserved under the flow), then they conjectured that the mean curvature flow starting from $L$ will converge into a special Lagrangian \cite[section 7]{ThomasYau}.

\subsubsection*{Bridgeland stability, Joyce's proposal}

To trace the subsequent development, and move beyond the almost calibrated setting, we need to recall
an important piece of homological algebra known as \textbf{Bridgeland stability conditions} on a triangulated category \cite{Bridgeland}.

\begin{Def}
A Bridgeland stability condition $(Z,\mathcal{P})$ on a triangulated category $\mathcal{D}$
consists of a group homomorphism  (`\textbf{central charge}') $Z$ from the (numerical) Grothendieck group $K(\mathcal{D})$  to $\C$,  and
full additive subcategories  $\mathcal{P}(\phi) \subset \mathcal{D}$ for each $\phi \in \R$, whose objects are called `\textbf{semistable objects} of phase angle $\pi \phi$',\footnote{In the convention of Bridgeland, the phase is $\phi$. We have instead opted to call $\pi \phi$ the phase, which is more naturally identified with the phase angle of special Lagrangians.} satisfying the following
axioms:
\begin{itemize}
\item (Phase) If $L \in \mathcal{P}(\phi)$ then $Z(L) = m(L) \exp(i\pi \phi)$ for some $m(L)>0$,
\item (Shift) For all $\phi \in \R$, $\mathcal{P}(\phi + 1) =\mathcal{P}(\phi)[1]$,
\item 
(Monotonicity)
 If $\phi_1>\phi_2$ and $L_j \in \mathcal{P}(\phi_j)$, $j=1,2$ then $Hom_{\mathcal{D}}(L_1,L_2) = 0$,
 \item 
 For each nonzero object $L \in \mathcal{D}$  there are a finite sequence of real numbers
$ \phi_1 > \phi_2 >\ldots > \phi_N$
 and a \textbf{Harder-Narasimhan decomposition}
 \begin{equation}\label{HarderNarasimhan}
 0=\mathcal{E}_0 \to \mathcal{E}_1\to \ldots \to \mathcal{E}_N=L,
 \end{equation}
 with distinguished triangles \[
 \mathcal{E}_{i-1}\to \mathcal{E}_{i}\to L_{i}\to \mathcal{E}_{i-1}[1]
 \]
such that $L_j\in \mathcal{P}(\phi_j)$.
\item (Calibration) For any fixed norm on the finite dimensional vector space $K(\mathcal{D})\otimes_\Z \R$, we have a uniform constant $C$, such that any semistable object $L$ satisfies
$
\norm{L} \leq C|Z(L)|.
$
\end{itemize}
\end{Def}


\begin{rmk}
For the heuristic but na\"ive geometric meaning,  one can imagine  that $\mathcal{D}$ is the derived Fukaya category, $K(\mathcal{D})$ is the sublattice of $H_n(X)$ generated by the Lagrangians, the central charge is $Z(L)=\int_L \Omega$, the subcategory $\mathcal{P}(\phi)$ is generated by the special Lagrangians of constant phase angle $\theta=\pi \phi$, the Harder-Narasimhan decomposition means a multiple Lagrangian connected sum with decreasing phase angles
\[
L\simeq L_1\#L_2\#\ldots \# L_N, 
\]
and the calibration property comes from the fact that the total mass of a special Lagrangian is equal to $|Z(L)|$, and the mass bounds any norm of $[L]\in H_n(X)$ using Poincar\'e duality, assuming $H_n(X)$ is finite dimensional.
\end{rmk}

\begin{rmk}
We write the subcategory generated by all the $\mathcal{P}(\phi)$ within the interval $\phi\in (\phi_0, \phi_1]$ as $\mathcal{P}(\phi_0<\phi\leq \phi_1)$. The important property is that $\mathcal{P}(\phi_0<\phi\leq \phi_0+1)$ is an abelian category, known as the heart of a bounded $t$-structure. The advantage is that while in general triangulated categories only distinguished triangles make sense, for abelian categories we can talk about exact sequences, subobjects and quotients.
\end{rmk}

With the hindsight of Bridgeland stability condition, and more than one decade of progress on the mean curvature flow as well as symplectic geometry, Joyce \cite{Joyceconj} produced a major update of the Thomas-Yau picture. We shall discuss more about the LMCF considerations in section \ref{LMCF}, 
but it suffices here to give away its main punchline (\cf \cite[Conj 3.34]{Joyceconj}):

\begin{itemize}
\item
Let $(X, \omega, \Omega)$ be a Calabi-Yau manifold \footnote{The complex Monge-Amp\`ere equation is convenient but not indispensable, see section \ref{LMCF}.}, either compact or Stein.
There should be an enlarged version of the derived Fukaya category $D^b Fuk(X)$, including classes of immersed or mildly singular Lagrangians, and a Bridgeland stability condition $(Z, \mathcal{P})$ on $D^b Fuk(X)$, whose central charge is 
\begin{equation}
Z(L)= \int_L \Omega.
\end{equation}

\item
Given a Lagrangian brane $L$ (with grading, orientation, relative spin structure, local system and bounding cochain data) such that the Floer cohomology is unobstructed, and suppose $L$ is generic in its Hamiltonian isotopy class. Then the LMCF $(L_t)_{t>0}$ (with brane structure) starting from $L$ exists for all time \emph{with surgeries} at a finite series of singular times. The nature of these surgeries have conjectural descriptions. The Lagrangian can change its Hamiltonian class at these surgeries, but maintains its derived category class. At $t\to \infty$, the Lagrangians $L_t$ converges in the geometric measure theory sense to a union of graded special Lagrangian currents $L_1, \ldots L_N$ of phase angle $\hat{\theta}_1>\hat{\theta}_2>\ldots > \hat{\theta}_N$ with multiplicities counted:
\begin{equation}\label{infinitetimelimit}
\lim_{t\to \infty} L_t= L_1+\ldots L_N.
\end{equation}

\item
If there is only one constant phase angle $\hat{\theta}$ appearing in the infinite time limit, then $L$ defines a semistable object of phase $\hat{\theta}$ in $ D^b Fuk(X)$. Otherwise $L$ is not semistable with respect to any phase angle, and the infinite time limit supposedly give rise to the Harder-Narasimhan decomposition. 
\end{itemize}


Joyce's program is much more ambitious in the sense that it attempts to capture the entire triangulated category $D^b Fuk(X)$. The original Thomas-Yau proposal, which is concerned only with almost calibrated Lagrangians, fits into the Joyce picture as an \emph{abelian subcategory}  $\mathcal{P}(-\pi/2<\theta\leq \pi/2)\subset D^b Fuk(X)$,\footnote{Here we ignore the `small' difference between $<$ and $\leq$. This would be justified if $\Omega$ is suitably generic so that there is no special Lagrangian of phase $\pi/2$, namely that the countably many numbers $\text{Arg} \int_L\Omega$ for $[L]\in H_n(X,\Z)$ miss the number $\pi/2$.
	} the heart of a certain $t$-structure. The main appeal of Joyce's mean curvature flow perspective, is that if the infinite time limiting currents $L_1,\ldots L_N$ can be given suitable brane structures to be admitted as objects of $D^bFuk(X)$, then the program gives a conjectural dynamical mechanism to explain the Harder-Narasimhan decomposition, which is the most nontrivial aspect of the Bridgeland stability. Its principal drawback is that Joyce offers no a priori information on the Bridgeland stability beyond the central charge, other than letting off the mean curvature flow to find its own destiny. This limits its practical applicability, for instance to existence questions of special Lagrangians in prescribed classes in $D^b Fuk(X)$.  See section \ref{TowardsBridgeland} for more discussions.

\subsubsection*{Why Thomas-Yau has predictive power}

At this moment an objection may arise: since the Thomas-Yau-Joyce picture is beset by some vagueness and plenty of technical difficulties, why is it useful as a guiding principle at all? Besides the supportive evidence that we shall soon discuss, the main answer is that the Thomas-Yau philosophy transforms a PDE problem (the existence of special Lagrangians) into a categorical framework, which if better understood is in principle checkable by algebraic means. \emph{A Bridgeland stability is the interplay between a category and a numerical property}. In the analogous problem of Hermitian Yang-Mills connections, the existence criterion is formulated by $\mu$-stability, which compresses the information of the K\"ahler form into only certain intersection numbers/cohomological integrals (\cf section \ref{HYM}). Likewise, the stability condition responsible for the existence of special Lagrangians, even though it is not adequately specified, is in principle a compression of the analytic data of a holomorphic volume form, and quite plausibly enters only through cohomological integrals of $\Omega$, as will be discussed more fully in Chapter \ref{Moduliholocurves}. As an indication of the possible predicative power of the Thomas-Yau picture, here is a sample question as food for thought:

\begin{Question}
Fix the holomorphic volume form $\Omega$.
Let $\omega_1, \omega_2$ be two generic K\"ahler forms, differing only by the differential of a compactly supported 1-form. Can we define a count of  special Lagrangian rational homology spheres, such that the numbers agree for $\omega_1$ and $\omega_2$? 
\end{Question}

\begin{rmk}
As we shall see, the main evidence of Thomas-Yau picture (\cf section \ref{ThomasYauevidence}) does not really use the complex Monge-Amp\`ere equation.\footnote{The almost Calabi-Yau setting is desirable not only for the sake of generality, but may be essential to achieve suitable genericity. As an additional motivation on the side of physics, the SCFT condition translates into a K\"ahler condition on the target space metric, which satisfies the Ricci-flatness only approximately \cite[section 14.2.4]{Claymirror}.} On the mirror side, as a consequence of the $\mu$-stability characterisation, the existence of Hermitian Yang Mills connections on a compact K\"ahler manifold does not depend on the choice of the K\"ahler form within a fixed K\"ahler class. 
\end{rmk}

\subsection{Principal evidence of Thomas-Yau}\label{ThomasYauevidence}

Thomas and Yau offered a number of arguments in support of their picture. We now describe the evidence from \emph{first principles},  and will later return to the evidence from \emph{analogies}.

The following elementary observation is a first indication about how special Lagrangian objects resemble stable objects of a Bridgeland stability condition:

\begin{prop}\cite[section 5.2]{ThomasYau}
Let $L$, $L'$ be unobstructed Lagrangian branes whose supports are compact special Lagrangian manifolds, with Lagrangian angles $\theta_L> \theta_{L'}$. Then $HF^k(L,L')=0$ for $k\leq 0$. 
\end{prop}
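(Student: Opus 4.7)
The plan is to show, at the chain level, that every generator of $CF^*(L, L')$ has Floer degree $\geq 1$, which immediately gives $HF^k(L,L')=0$ for $k\leq 0$. The main input is the standard grading formula expressing the degree of a transverse intersection point in terms of the graded Lagrangian angles and the K\"ahler angles of the Lagrangian pair.

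First I would perturb $L$ by a $C^1$-small Hamiltonian isotopy to a brane $\tilde L$ transverse to $L'$. Unobstructedness of both branes ensures $HF^*(L,L')\cong HF^*(\tilde L, L')$ is computed from the intersection complex in the usual way. After such a perturbation the Lagrangian angle $\theta_{\tilde L}$ is pointwise $C^0$-close to the constant $\hat\theta_L$, so at every $p\in\tilde L\cap L'$ one retains $\theta_{\tilde L}(p) > \hat\theta_{L'}=\theta_{L'}(p)$, using that $\hat\theta_L-\hat\theta_{L'}$ is a fixed strictly positive number.

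At each transverse intersection $p$, I would choose a unitary frame of $T_pX$ that simultaneously diagonalizes the Lagrangian pair, so that $T_p\tilde L=\R^n\subset\C^n$ and $T_pL' = \bigoplus_{i=1}^n e^{i\alpha_i}\R$ with K\"ahler angles $\alpha_i\in (0,\pi)$ (the strict inequalities following from transversality). The grading formula of the Appendix then reads
\[
\deg(p) \;=\; \frac{1}{\pi}\Bigl(\theta_{\tilde L}(p) - \theta_{L'}(p) + \sum_{i=1}^n \alpha_i\Bigr),
\]
which is a well-defined integer once the graded lifts of $\theta_{\tilde L}$ and $\theta_{L'}$ (dictated by the brane structure) are fixed. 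Under our hypotheses each of the $n+1$ summands inside the parenthesis is strictly positive, so $\deg(p)\in\Z_{>0}$, i.e.\ $\deg(p)\geq 1$. Consequently $CF^k(\tilde L, L')=0$ for all $k\leq 0$, and therefore $HF^k(L,L')=0$ in those degrees.

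The main bookkeeping hurdle is matching the graded lifts of $\theta_{\tilde L},\theta_{L'}$ with the K\"ahler angle representatives $\alpha_i\in (0,\pi)$: shifting a lift of $\theta$ changes the right hand side by $2\pi\Z$, while shifting an $\alpha_i$ by $\pi$ shifts it by $\pi$, so the Appendix convention must be applied consistently to ensure $\deg(p)\in\Z$ with the stated sign. Once this bookkeeping is settled the argument reduces to the single positivity observation above; a subsidiary, standard technicality is genericity of the Hamiltonian perturbation to achieve transversality.
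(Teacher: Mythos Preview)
Your proof is correct and follows essentially the same approach as the paper: perturb to transverse intersection, put the tangent planes in standard form, and observe from the degree formula that $\theta_L-\theta_{L'}>0$ together with $\alpha_i\in(0,\pi)$ forces every generator to have strictly positive degree. Your additional remarks on why the small perturbation preserves the strict angle inequality and on the grading bookkeeping are reasonable elaborations of points the paper leaves implicit.
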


\begin{proof}
After $C^\infty$-small Hamiltonian perturbation we may assume any intersection point $p$ between $L$ and $L'$ is transverse. We can write the tangent planes of $L, L'$ inside $T_p X\simeq \C^n$ in the standard form
\[
T_p L= \R^n \subset \C^n, \quad T_p L'= (e^{i\phi_1}, \ldots e^{i\phi_n})\R^n\subset \C^n, \quad 0<\phi_i<\pi.
\]
The Floer degree of the intersection point $p$ is
\[
\mu_{L,L'}(p)=\frac{1}{\pi} (\theta_L- \theta_{L'}+ \sum_1^n \phi_i)>0,
\]
whence $HF^k(L,L')=0$ for $k\leq 0$.
\end{proof}

\begin{rmk}
As is clear from the proof, the special Lagrangian condition can be relaxed to $\inf_{L} \theta_L> \sup_{L'} \theta_{L'}$.
\end{rmk}

\begin{rmk}
Our Floer degree convention follows Joyce \cite{Joyceconj}, but is opposite to that of Seidel \cite{Seidelbook} and many other symplectic geometry texts. In our convention,
shifting the Lagrangian phase by $k\pi$ for $k\in \Z$ is equivalent to considering the shifted object $L[k]$. Thus a slight extension of the above is that if $\theta_L>\theta_{L'}+l$, then \[
HF^k(L,L')= HF^{k-l}(L, L'[l])=0, \quad k\leq l.
\]
\end{rmk}

The most compelling evidence due to Thomas and Yau is

\begin{thm}(Thomas-Yau uniqueness)
\cite{ThomasYau} Let $L, L'$ be unobstructed Lagrangian branes supported on embedded special Lagrangians of the same phase, which define isomorphic objects in $D^b Fuk(X)$, then their supports coincide.
\end{thm}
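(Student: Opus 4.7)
The plan is to argue by contradiction using Hamiltonian invariance of Floer cohomology. Suppose the supports of $L$ and $L'$ differ. The isomorphism in $D^b Fuk(X)$ furnishes classes $\alpha \in HF^0(L, L')$ and $\beta \in HF^0(L', L)$ whose composition $\beta \circ \alpha$ equals $\mathrm{id}_L$, so in particular $HF^0(L, L') \neq 0$.

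Because $L$ and $L'$ are distinct embedded closed submanifolds, a $C^\infty$-small generic Hamiltonian perturbation produces $\tilde L$ transverse to $L'$. The grading lifts uniquely along the isotopy, yielding a Lagrangian angle function $\tilde\theta$ for $\tilde L$ that is uniformly close to the constant $\theta_L = \theta_{L'}$, and Hamiltonian invariance gives $HF^*(\tilde L, L') \cong HF^*(L, L')$ which is still nonzero in degree zero.

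At each transverse intersection $p \in \tilde L \cap L'$, the Floer degree formula from the preceding proposition reads
\[
\mu_{\tilde L, L'}(p) \;=\; \tfrac{1}{\pi}\bigl(\tilde\theta(p) - \theta_{L'} + \textstyle\sum_{j=1}^n \phi_j(p)\bigr),
\]
with each K\"ahler angle $\phi_j(p)$ in the open interval $(0, \pi)$. For a sufficiently small perturbation the term $\tilde\theta(p) - \theta_{L'}$ is negligible, while $\sum_j \phi_j(p)$ remains strictly between $0$ and $n\pi$; since the Maslov index is an integer, it must lie in $\{1, \dots, n-1\}$. Hence no transverse intersection contributes a degree-zero generator, so $CF^0(\tilde L, L') = 0$ and therefore $HF^0(\tilde L, L') = 0$, the desired contradiction.

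The hard step is the perturbation, specifically when $L$ and $L'$ share a positive-dimensional locus of tangential contact. A generic Hamiltonian perturbation will resolve such a locus into transverse intersections, but at those new intersections the $\phi_j(p)$ can be of the same order as the perturbation itself, so both terms in the degree formula become small simultaneously and it is not immediate that $\mu_{\tilde L, L'}(p)$ stays positive. To bypass this, I would first invoke Aronszajn-type unique continuation for the special Lagrangian elliptic system: any open tangential agreement forces the connected components containing it to coincide, reducing the analysis to the case where $L \cap L'$ is lower dimensional and free of tangential contact, so that a generic perturbation produces only genuine transverse intersections with K\"ahler angles uniformly bounded away from $0$ and the degree argument applies verbatim.
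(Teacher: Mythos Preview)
Your overall strategy coincides with the paper's sketch: assume the supports differ, perturb to achieve transversality while preserving the $D^b Fuk(X)$ class, use the degree formula to force $CF^0(\tilde L, L') = 0$, and contradict the isomorphism. You also correctly isolate the hard step.

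The gap is in your last paragraph. Unique continuation for the special Lagrangian system rules out \emph{open} agreement of $L$ and $L'$, hence gives a lower-dimensional intersection locus, but it does \emph{not} yield ``free of tangential contact'': one can still have $T_pL = T_pL'$ at points (or along positive-codimension strata) of $L \cap L'$ while $L \neq L'$ nearby, since unique continuation only excludes infinite-order tangency. Near such a point a generic small perturbation resolves the tangency into transverse intersections whose K\"ahler angles $\phi_j$ are of the same order as the perturbation, so your size estimate alone does not force $\mu \in \{1, \ldots, n-1\}$. The paper handles precisely this step by deferring to two sources: Thomas--Yau \cite[Thm~4.3]{ThomasYau} make a judicious Morse-theoretic choice of Hamiltonian, writing $L'$ locally as the graph of $df$ over $L$ and arranging the perturbation so that no critical point of index $0$ or $n$ is created; Joyce--Imagi--Oliveira dos Santos \cite[\S4.3]{JoyceImagi} instead exploit the real analyticity of special Lagrangians to control the degrees after perturbation. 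Your unique-continuation idea is adjacent to the second route but stops one step short; to close the argument you need one of these additional structural inputs from the special Lagrangian equation, not merely the nowhere-dense conclusion.
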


\begin{rmk}
The most general Thomas-Yau uniqueness, which includes immersed Lagrangians, seems to be due to Imagi \cite{Imagi}. The original Thomas-Yau theorem is phrased in terms of uniqueness in the Hamiltonian isotopy class, even though it can be cast in more general categorical terms. The categorical perspective is preferred, because it is closer to the spirit of homological mirror symmetry, and because one derived Fukaya category class may contain several Hamiltonian isotopy classes. If immersed Lagrangians are allowed, then isomorphism in $D^bFuk(X)$ would also identify certain embedded Lagrangian objects with immersed objects of different topologies. When this happens, an interesting corollary of Thomas-Yau uniqueness is that at most one of these Hamiltonian classes contains special Lagrangian branes.
\end{rmk}

\begin{proof}
(Sketch)
Assume the suppports do not coincide. 
 After small Hamiltonian perturbations, we can ensure the perturbed Lagrangians $\tilde{L}, \tilde{L}'$ have transverse intersections, and still define the same isomorphic objects in $D^b Fuk(X)$. Under the special Lagrangian assumption, through judicious choice of the Hamiltonian via Morse theory as in Thomas-Yau \cite[Thm 4.3]{ThomasYau}, or by using genericity arguments based on real analyticity as in Joyce-Imagi-Santos \cite[section 4.3]{JoyceImagi}, one can ensure there is no intersection point $\tilde{L}\cap \tilde{L}'$ of degree $0,n$, so in particular $HF^0(L,L')\simeq HF^0(\tilde{L}, \tilde{L}')= 0$. However, this implies the cohomological unit of $HF^*(L,L)$ is zero, so the Floer cohomology ring of $L$ is zero, namely $L$ is a zero object in $D^bFuk(X)$, contradiction.
\end{proof}

The Thomas-Yau argument reveals the relevance of the Fukaya category to special Lagrangian geometry. The role of holomorphic curves, which are a central ingredient in the Fukaya category, is however rather opaque in this argument; their only appearance is to make the Floer cohomology defined.

\subsection{Variant: uniqueness of the Lawlor neck}\label{Lawlor}

 A variant of the Thomas-Yau argument\footnote{Abouzaid and Imagi have mentioned in their talks some other interesting applications on the topology of special Lagrangians inside the cotangent bundle of a special Lagrangian with some fundamental group conditions, using another variant of the Thomas-Yau argument. We look forward to the appearance of their paper. } appears in the subsequent work of Joyce-Imagi-Santos \cite{JoyceImagi} on the uniqueness of Lawlor necks, where holomorphic curves and the algebraic structures of the Fukaya category appear in a more prominent, albeit somewhat technical way.


\subsubsection*{Lawlor necks}

We first recall some basics about Lawlor necks \cite{Lawlor}\cite{JoyceLeeTsui}, which are non-compact embedded exact special Lagrangians $L_{\phi,A}$ inside the standard Euclidean $\C^n$, asymptotic at infinity to the union of two planes
\[
\Pi_0= \R^n, \quad \Pi_\phi=(e^{i\phi_1}, \ldots e^{i\phi_n})\R^n, \quad 0<\phi_i<\pi, \quad \sum \phi_i=\pi. 
\]
Symplectic topologically, they can be viewed as a realisation of the Lagrangian handle $S^{n-1}\times \R$ that appears in the Lagrangian connected sum construction. This motivates the ansatz 
\begin{equation}
L_{\phi,A}= \{   (z_1(y)x_1, . . . , z_n(y)x_n) : y \in \R, x_k \in \R, x^2_1
+ \ldots+ x_n^2 =1      \}.
\end{equation}
The special Lagrangian condition translates into an ODE system on the functions $z_1(y),\ldots z_n(y)$, which can be solved exactly as follows.

Let $n > 2$ and $a_1, . . . , a_n > 0$, and define polynomials $p, P$ by
\[
p(x) = (1 + a_1x^2) \ldots (1 + a_nx^2) -1 , \quad  P(x) =
\frac{p(x)}{x^2}.
\]
Define real numbers $\phi_1, . . . , \phi_n$ and $A$ by
\[
\phi_k = a_k
\int_{-\infty}^{\infty}
\frac{dx}{
(1 + a_kx^2)\sqrt{P(x)}}
, \quad
 A =
\int_{-\infty}^{\infty}
\frac{dx}{
	2\sqrt{P(x)}}
\]
Clearly $\phi_k,A > 0$, and elementary integration shows $\sum \phi_i=\pi$. This yields a 1-1 correspondence between $n$-tuples $(a_1,\ldots, a_n)$ with $a_k > 0$,
and $(n+1)$-tuples $(\phi_1, \ldots , \phi_n,A)$ with $\phi_k\in (0,\pi)$, $\sum \phi_k=\pi$ and $A>0$. Setting
\[
z_k(y)= e^{i\psi_k(y)}
\sqrt{a_k^{-1}+y^2 },  \quad \text{where } \psi_k(y) = a_k
\int_{-\infty}^y \frac{dx}{
	(1 + a_kx^2)\sqrt{P(x)}},
\]
yields the solution $(z_1(y),\ldots, z_n(y))$, hence the \textbf{Lawlor necks} $L_{\phi,A}$.

For fixed asymptotic planes $\Pi_0, \Pi_\phi$, the Lawlor necks arise in a 1-parameter family, related to each other by the rescaling $\vec{z}\to \lambda\vec{z}$ in $\C^n$, and $A$ behaves like 2-dimensional area $A\to \lambda^2A$ under this scaling. One also observes that asymptotically near infinity, the Lawlor necks are graphs over $\Pi_0$ (resp. $\Pi_\phi$) of the differential $df$, where
\[
|f|=O( |\vec{x}|^{2-n }), \quad |\partial^k f|=O( |\vec{x}|^{2-n-k}  ).
\]
We say the Lawlor neck has asymptotic decay rate $2-n$. The upshot is that it approaches $\Pi_0\cup \Pi_\phi$ sufficiently fast.



\subsubsection*{Joyce-Imagi-Santos uniqueness theorem}

\begin{thm}
\cite{JoyceImagi} Let $n\geq 3$. \footnote{The theorem is also true in complex dimension 2, proved earlier by Lotay and Neves \cite{LotayNeves} by other means not involving Floer theory.} Assume $L$ is a smooth embedded exact special Lagrangian of phase zero, asymptotic at rate $<0$ to the union of the two planes $\Pi_0\cup \Pi_\phi$ with $\sum \phi_i=\pi$, then $L$ is  $L_{\phi,A}$ for some $A>0$.

\end{thm}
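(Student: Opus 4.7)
The plan is to adapt the Thomas--Yau Fukaya-categorical uniqueness argument to this non-compact, exact, asymptotically planar setting, with the Lawlor neck $L_{\phi,A}$ playing the role of the reference special Lagrangian. Assume for contradiction that $L$ is not of the form $L_{\phi,A}$.

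First, I would use the one-parameter family of Lawlor necks to fix the modulus. Since $L$ is exact and asymptotic at rate $< 0$ to $\Pi_0 \cup \Pi_\phi$, the primitive of $\lambda|_L$ extends to $\Pi_0 \cup \Pi_\phi$ with definite constants on each sheet; the difference of these constants is a scalar invariant that scales like $A$ under the scaling $\vec{z}\mapsto\lambda\vec{z}$. So pick $A>0$ making this invariant of $L_{\phi,A}$ agree with that of $L$. This matching is the exact-category analogue of ``being in the same Fukaya object class.''

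Second, set up Floer cohomology $HF^*(L, L_{\phi,A})$ in this non-compact exact setting. After a compactly supported Hamiltonian perturbation $\tilde{L}$ of $L$, the intersections $\tilde{L}\cap L_{\phi,A}$ become transverse and finite (the asymptotic decay rate $<0$ confines them to a compact region). Exactness and the matching of primitives give an a priori energy bound on any holomorphic strip, and the asymptotically planar geometry excludes bubbling or escape to infinity, so the complex is well defined and independent of the perturbation.

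Third, produce a non-vanishing class. Because $L$ and $L_{\phi,A}$ are cohomologous exact Lagrangians with the same asymptotic behaviour (after the calibration in the first step), one expects a continuation map, built from holomorphic strips interpolating between them, to identify them in the relevant Fukaya-type category; concretely, the identity/unit class $\mathbf{1}\in HF^0(L_{\phi,A}, L_{\phi,A})$ should continue to a nonzero class in $HF^0(L, L_{\phi,A})$.

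Fourth, apply the Thomas--Yau vanishing. Since both Lagrangians are special of phase zero, at every transverse intersection $p$ the Floer degree is $\mu(p) = \tfrac{1}{\pi}\sum_i \phi_i(p) \in (0,n)$, with $\phi_i(p)\in(0,\pi)$ the K\"ahler angles. Thus no intersection point can have degree $\leq 0$ \emph{generically}, but to rule out degree exactly $0$ or $n$ after the necessary perturbation one invokes the Morse-theoretic trick of Thomas--Yau \cite[Thm 4.3]{ThomasYau} combined with the real-analyticity argument of \cite[section 4.3]{JoyceImagi}, yielding $HF^0(L, L_{\phi,A})=0$ and contradicting step three.

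The main obstacle is step two together with step three: rigorously setting up a Floer theory of asymptotically planar exact special Lagrangians in $\C^n$, for which holomorphic curves must be controlled near the common asymptotic pair of planes. This is where holomorphic curves enter nontrivially (beyond merely defining the differential): one needs an SFT/neck-stretching or Lagrangian-cobordism formalism to account for curves running off along $\Pi_0$ or $\Pi_\phi$, and to ensure that the continuation class producing the unit in $HF^0$ is genuinely nonzero. The restriction $n\geq 3$ enters through the decay exponent $2-n$, which guarantees fast enough decay at infinity for this analytic machinery to close up.
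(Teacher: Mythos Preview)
Your outline has a genuine gap at step four, and the paper explicitly flags it. The Thomas--Yau perturbation trick (Morse-theoretic or real-analytic) can remove degree $0$ and $n$ intersections \emph{inside} $\C^n$, but it cannot remove them near infinity. In the paper's setup this is seen by partially compactifying $\C^n$ to the plumbing $M$ of two copies of $T^*S^n$: after perturbation there remain exactly one degree-$0$ point $p$ and one degree-$n$ point $q$, each living on a cotangent fibre at infinity. The paper remarks that this failure is \emph{necessary}, since the Lawlor necks with fixed asymptotics form a genuine one-parameter family---if your argument worked as stated, it would prove uniqueness without ever using the choice $A=A(L)$, which is impossible. So $HF^0(L,L_{\phi,A})$ does \emph{not} vanish, and you cannot get a contradiction at this stage.

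The actual proof in \cite{JoyceImagi} proceeds differently after the perturbation. The nontrivial Floer product $HF^0(\bar L'',\bar L)\otimes HF^0(\bar L,\bar L'')\to HF^0(\bar L,\bar L)$ forces the existence of a $J$-holomorphic strip $\Sigma$ with corners at $p$ and $q$. The area of $\Sigma$ is computed cohomologically via the primitives and equals $\pm(A(L)-A)=0$, contradicting positivity of holomorphic area. This is where holomorphic curves enter essentially, beyond merely defining the differential. Your step three is also underdeveloped: the isomorphism $\bar L\simeq\bar L_{\phi,A}$ in $D^bFuk(M)$ is not a soft continuation statement but uses the Abouzaid--Smith classification of exact Lagrangians in plumbings, together with intersection numbers with the cotangent fibres at infinity; this is also where the hypothesis $n\geq 3$ is actually consumed, not via the decay rate $2-n$.
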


Here is a sketch of their arguments:
\begin{itemize}
\item  Using the asymptotic  assumption on the exact Lagrangian $L$, one can assign an analytic invariant $A(L)$ to $L$ as follows. Let $f_L:L\to \R$ be a primitive of the Liouville form $\lambda$, namely $df_L=\lambda|_L$, then $f_L$ converges to constants $c_0, c_\phi$ at the two asymptotic ends along $\Pi_0, \Pi_\phi$ respectively. Then one defines $A(L)=c_\phi-c_0$. If $L$ coincides with the Lawlor neck $L_{\phi,A}$, then $A(L)=A$.

\item  Partially compactify $\C^n$ into a Liouville manifold identified as the plumbing $M$ of two cotangent bundles $T^*S^n$ with $T^*S^n$. Here the two copies of $S^n$ arise topologically as one-point compactifications of $\Pi_0$ and $\Pi_\phi$ by adding the points at infinity $\infty_0$ and $\infty_\phi$, and topologically $M$ is the union of $\C^n$ and the two cotangent fibres over $\infty_0$ and $\infty_\phi$ respectively. Under suitably fast decay condition at infinity, the unknown special Lagrangian $L$ can be compactified into an exact graded embedded Lagrangian $\bar{L}$ inside $M$. One would like to compare this to the Lagrangian $\bar{L}_{\phi,A}$ obtained by the compactification of the standard Lawlor necks $L_{\phi,A}$ inside $M$.

\item  
By analyzing the intersection pattern with the two cotangent fibres at infinity, and using the classification results of Abouzaid and Smith \cite{AbouzaidSmith}, one shows that inside $D^bFuk(M)$, the Lagrangian object $\bar{L}$ is isomorphic to one of the two Lagrangian connected sums of the two $S^n$ with suitable gradings, and in fact the assumption on Floer degrees $\sum \phi_i=\pi$ singles out $\bar{L}\simeq \bar{L}_{\phi, A}\in D^bFuk(M)$, the opposite surgery  corresponding to $\sum \phi_i=(n-1)\pi$.
This step needs $n\geq 3$. 
 For contradiction, we assume $\bar{L}$ does not coincide with $\bar{L}_{\phi,A}$ for any choice of parameter $A>0$.

\item
By a modification of the Thomas-Yau argument, one shows that after a small Hamiltation perturbation $\bar{L}''$ of $\bar{L}_{\phi,A}$, we can ensure $\bar{L}''$ is transverse to $\bar{L}$, there is no degree $0, n$ intersection points in $\bar{L}''\cap \bar{L}$ inside $\C^n$, and there is precisely one intersection point $p$ and $q$ in $\bar{L}''\cap \bar{L}$ on each of the two cotangent fibres at infinity respectively. Morever, the $D^b Fuk(M)$ class and the analytic invariants of $\bar{L}''$ agree with that of $\bar{L}_{\phi,A}$.

\begin{rmk}
The subtlety at infinity prevents one from removing degree $0,n$ intersections outside the $\C^n$ region, so one does not reach an immediate contradiction as in the Thomas-Yau argument. This technical failure is necessary, because the Lawlor necks with fixed asymptotic planes are not unique, but do arise in a 1-parameter family. It is in overcoming this technical problem that holomorphic curves appear in \cite{JoyceImagi}.	
\end{rmk}

\item Now suppose the Lawlor neck is chosen with the parameter $A=A(L)$, which presumes $A(L)>0$. 
\begin{lem}\cite[Thm 2.15]{JoyceImagi}\label{JoyceImagicountinglemma}
Assume $J$ is a generic almost complex structure on $M$ compatible with the Liouville structure.
There exists a $J$-holomorphic strip $\Sigma$ with boundary on $\bar{L}$ and $\bar{L}''$ and two corners at $p$ and $q$ respectively.
\end{lem}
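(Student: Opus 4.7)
My plan is to produce the strip as the limit of a non-trivial count of $J$-holomorphic triangles whose existence is forced by the $D^bFuk(M)$-isomorphism $\bar{L}\simeq \bar{L}_{\phi,A}\simeq \bar{L}''$ established in the previous step.

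First, I would pin down the Floer cohomology. Since $\bar{L}_{\phi,A}$ compactifies the Lawlor neck into an $S^n$ and is (by Abouzaid--Smith) a spherical object of the Fukaya category of $M$, the isomorphism yields
\[
HF^*(\bar{L},\bar{L}'')\;\cong\; HF^*(\bar{L}_{\phi,A},\bar{L}_{\phi,A})\;\cong\; H^*(S^n;k),
\]
one-dimensional in degrees $0$ and $n$ and zero otherwise. Because $p$ and $q$ are the unique intersections of those two degrees, $[p]\in HF^0$ and $[q]\in HF^n$ must represent the two non-zero generators, and at the chain level $\mu^1(p)=0$ and $q\notin\mathrm{im}\,\mu^1$.

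Next I would extract a non-trivial triangle count and degenerate it to a strip. A strip with corners $p,q$ has relative Maslov index $n$, so it does not appear in $\mu^1$ but in an $(n-1)$-dimensional moduli; direct counting is not available, so I argue indirectly. Since $[p]$ is an isomorphism in $D^bFuk(M)$, the induced map $\mu^2(p,\cdot):HF^*(\bar{L},\bar{L})\to HF^*(\bar{L},\bar{L}'')$ is an isomorphism; applied to the Poincar\'e dual of the fundamental class $\mathrm{top}_{\bar{L}}\in HF^n(\bar{L},\bar{L})$ it yields a non-zero multiple of $[q]$. At the chain level this guarantees a non-zero signed count of $J$-holomorphic triangles with boundary arcs on $\bar{L},\bar{L},\bar{L}''$ and corners at a representative of $\mathrm{top}_{\bar{L}}$, at $p$, and at $q$. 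Choosing the representative of $\mathrm{top}_{\bar{L}}$ to be concentrated near the compactifying point $\infty_0\in\bar{L}$ where $p$ sits, and letting it drift to $\infty_0$, the two corners on $\bar{L}$ collide; Gromov compactness, applied under exactness, produces a broken holomorphic building whose principal component is an honest $J$-holomorphic strip with boundary on $\bar{L}\cup\bar{L}''$ and corners at $p$ and $q$.

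The main obstacle is this last degeneration. Exactness and the asymptotic decay of $L$ give uniform energy bounds and rule out sphere- and disc-bubbles, while the Floer-degree uniqueness of $p$ and $q$ at the two cotangent fibres at infinity severely constrains the combinatorics of the limiting broken building. The delicate point is to ensure that the limit is a non-degenerate strip rather than a ghost configuration or a constant bubble: here the non-trivial multiplicity of the triangle count established above enforces non-triviality of the principal component, and with a generic choice of $J$ this principal component is transverse, hence an honest $(p,q)$-strip of the desired type.
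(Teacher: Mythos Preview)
Your overall strategy---force a nontrivial count via the $D^bFuk(M)$-isomorphism $\bar L\simeq\bar L''$, then pin the corners down to $p,q$ using that these are the \emph{only} degree $0$ and $n$ intersection points---is exactly the paper's idea. The paper, however, organises it more simply. It uses the product
\[
HF^0(\bar L'',\bar L)\otimes HF^0(\bar L,\bar L'')\longrightarrow HF^0(\bar L,\bar L),
\]
which under the isomorphism is just $1_{\bar L}\cup 1_{\bar L}=1_{\bar L}$, hence nontrivial. The chain-level $m_2$ contributing to this has the standard geometric interpretation (the ``Floer product involving the identity''): it counts holomorphic \emph{strips} with one end in $CF^0(\bar L,\bar L'')$, the other in $CF^0(\bar L'',\bar L)\simeq CF^n(\bar L,\bar L'')^\vee$, passing through a generic point of $\bar L$. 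Since the only degree $0,n$ intersections are $p$ and $q$, the two ends are forced.

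Your degeneration step is where there is a genuine gap. You propose to take the Morse representative of $\mathrm{top}_{\bar L}$, drift it to $\infty_0$, and argue that ``the two corners on $\bar L$ collide'' so that Gromov compactness yields a broken building with a $(p,q)$-strip as principal component. This is not the correct geometric picture: the $CF^n(\bar L,\bar L)$ input is a Hamiltonian chord (or Morse critical point), not an honest corner that can be pushed into $p$; and in any case the passage from triangle to strip comes from sending the Hamiltonian perturbation to zero, not from moving the point constraint to a special location. What survives in that limit is a strip through a \emph{generic} point---precisely the paper's interpretation---so no collision or building analysis is needed. Your dual choice of product (multiplying $\mathrm{top}_{\bar L}$ by $p$ to hit $q$) would work equally well once you replace the collision argument by this standard identification, but as written the final paragraph does not establish the strip.
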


\begin{proof}
Consider the Floer cup product with mod 2 coefficients
\[
HF^0( \bar{L}'', \bar{L} )\otimes HF^0(\bar{L}, \bar{L}'')\to HF^0(\bar{L}, \bar{L}  ),
\]
which can be identified as the cup product
\[
H^0( \bar{L}, \bar{L} )\otimes H^0(\bar{L}, \bar{L})\to H^0(\bar{L}, \bar{L}  ),\quad 1_{\bar{L}}\cup 1_{\bar{L}}= 1_{\bar{L}},
\]
and thus must be nontrivial. However, at chain level this Floer product comes from the $A_\infty$ operation
\[
m_2:  CF^0(\bar{L}'', \bar{L}) \otimes  CF^0( \bar{L}, \bar{L}'' )\to CF^0(\bar{L}, \bar{L}  ),
\]
which must be nontrivial. The counting interpretation implies there are intersection points $p'\in CF^0(\bar{L}, \bar{L}'')$ and $q'\in CF^0(\bar{L}'', \bar{L})\simeq CF^n( \bar{L}, \bar{L}'' )^\vee$ and some holomorphic strip in between. Since degree $0,n$ intersection points cannot occur inside $\C^n$, they can only occur at infinity, so we must have $\{p,q \}=\{ p',q' \}$. 
\end{proof}

Now the area of the $J$-holomorphic curve can be computed cohomologically. Using the choice of parameter $A$,
\[
\int_{\Sigma}\omega=\int_{\partial \Sigma}\lambda= \int_{p'\to q'} df_{\bar{L}}+ \int_{q'\to p'} df_{\bar{L}''}= \pm( A(L)-A(L'')  ) =\pm(A-A)=0.
\]
This contradicts the positivity of area of the holomorphic curve, which proves $L$ must coincide with $L_{\phi,A}$.

\item Finally one needs to a priori justify $A(L)>0$. This relies on a slightly more complicated holomorphic polygon counting argument, and the main upshot is that one can produce a nontrivial holomorphic triangle from a distinguished triangle in $D^b Fuk(M)$, with the three edges on $\bar{L}$, $\Pi_0\cup\{\infty_0\}$ and $\Pi_\phi\cup \{ \infty_\phi \}$. Then one shows $A(L)$ has the interpretation as its area, so must be positive.

\end{itemize}

\subsubsection*{Ideal triangles}

In  \cite{JoyceImagi} the perturbations involved in the partial compactification and the genericity of the almost complex structure makes the holomorphic curves rather difficult to visualize.\footnote{A typical feature of Floer theory, is that completely realistic examples about holomorphic curves are also non-explicit.} We now present a heuristic way to see holomorphic triangles with the edges on $\bar{L}_{\phi,A}$, $\Pi_0\cup\{\infty_0\}$ and $\Pi_\phi\cup \{ \infty_\phi \}$, by restricting attention to $\C^n$ with the standard complex structure, and we imagine the two vertices $\infty_0, \infty_\phi$ as the intersection points at infinity.

We choose any $\vec{x}=(x_1,\ldots x_n)\in S^{n-1}$. Assume first that $x_k\neq 0$.  Then coordinatewise, we have a real curve in $\C$ swept out by $z_k(y)x_k$ as $y$ varies from $-\infty$ to $+\infty$, and two straight rays emanating from the origin defined by $\text{sign}(x_k) \R_+$ and $\text{sign}(x_k)e^{i\phi_k} \R_+$. Inside $\C$, these three real curves enclose a noncompact holomorphic triangle, with one vertex at the origin, and two idealized intersection points at the infinity of $\text{sign}(x_k)\R_+$ and $\text{sign}(x_k) e^{i\phi_k} \R_+$. 
In the product space $\C^n$, this gives rise to a holomorphic triangle $\Sigma_{\vec{x}}$ with boundary on $L_{\phi,A}, \Pi_0, \Pi_\phi$, and corners at $0, \infty_0, \infty_\phi$.
Now in case some $x_k= 0$, there is still a holomorphic triangle in the product space that makes sense; the $k$-th projection of this triangle is simply the origin. What happens when $x_k\to 0$, is simply that the $k$-th projection $\pi_k(\Sigma_{\vec{x}})$ becomes very thinly concentrated near the two rays $\text{sign}(x_k) \R_+$ and $\text{sign}(x_k)e^{i\phi_k} \R_+$, and its area shrinks to zero. Morever for any given $\epsilon$, the subset $\Sigma_{\vec{x}}\cap \pi_k^{-1}(|z|>\epsilon) $ disappears into the infinity of $\C^n$ as $x_k\to 0$. Thus when we restrict to any compact subset of $\C^n$, the holomorphic discs behave continuously as $x_k\to 0$.



\begin{eg}
In the most symmetric case $\phi_1=\phi_2=\ldots \phi_n=\frac{\pi}{n}$, the Lawlor neck is invariant under $SO(n,\R)$, and these holomorphic triangles are up to $SO(n,\R)$ rotation, simply the triangle inside the first coordinate line $\C\subset \C^n$ enclosed by the three Lagrangians. 
\end{eg}

Using any of the holomorphic triangles $\Sigma_{\vec{x}}$ parametrised by $S^{n-1}$, we can calculate its area cohomologically by 
\[
\int_{\Sigma}\omega=\int_{\partial \Sigma}\lambda= \int_{\partial \Sigma\cap L_{\phi,A}}  df_{L_{\phi,A}} =A,
\]
which is the intuitive explanation of why $A$ must be positive, an important ingredient of \cite{JoyceImagi}.

We want to draw attention also to a different aspect not explicit in \cite{JoyceImagi}: that these holomorphic
triangles naturally arise in an $(n-1)$-dimensional \emph{moduli}, rather than as isolated triangles. Consequently, the universal family of such holomorphic triangles is naturally  $(n+1)$-dimensional. A generic point on $\Pi_0\cup \Pi_\phi\cup L_{\phi,A}$ is swept out precisely once by some $\partial\Sigma_{\vec{x} }$. When the orientations are taken into account, then the total space of this universal family gives rise to an $(n+1)$-dimensional integration current, which
 provides a \emph{bordism current} between the integration cycles of $L_{\phi,A}$ and $\Pi_0\cup \Pi_\phi$. 
 Producing bordism currents via universal families of holomorphic curves will be essential to our proposals concerning the Thomas-Yau conjecture.

\subsection{Philosophy of open string mirror symmetry}\label{Mirrorsymmetry}

The Thomas-Yau conjecture is largely motivated by mirror symmetry, based on the analogy between special Lagrangian geometry on the $A$-side (`symplectic'), and stability conditions in the derived category of coherent sheaves on the $B$-side (`holomorphic'). It is worth emphasizing that mirror symmetry is quantum by nature, and only sometimes admits classical geometric interpretations. The mathematical predictions of open string mirror symmetry chiefly fall into the following three classes: 

\begin{itemize}
\item (\textbf{Homological mirror symmetry}, \ie the \textbf{categorical} approach) Given a (compact) Calabi-Yau manifold $X$ viewed as a symplectic manifold, one expects to find a (compact) Calabi-Yau manifold $X^\vee$ viewed as a complex manifold (sometimes over the Novikov field), such that the (derived, idempotent closed) Fukaya category $D^\pi Fuk(X)$ is equivalent as a triangulated category to the derived category of coherent sheaves $Coh(X^\vee)$. Frequently, there is a preferred $A_\infty$ enhancement on both sides, such that $D^\pi Fuk(X)\simeq D^bCoh(X^\vee)$ holds as an $A_\infty$-equivalence. The concrete implication is that Lagrangian branes $L$ correspond to objects $E$ of $D^b Coh(X^\vee)$, such that the Floer groups are identified with the $Ext$ groups, which explains the name `homological':
\[
HF^*(L,L')\simeq Ext^*(E,E').
\]
Homological mirror symmetry has many ramifications beyond Calabi-Yau manifolds, involving for instance Fano manifolds and Landau-Ginzburg models. As a general feature, homological mirror symmetry only uses pure symplectic topology (vis. a vis.  complex geometry) on either side of the mirror. It is the best understood aspect of mirror symmetry, with many special cases proven.

\item (\textbf{SYZ mirror symmetry}, \ie the \textbf{moduli space} approach) Taken in a somewhat generalised and very optimistic sense \footnote{The SYZ paper \cite{SYZ} focuses primarily on special Lagrangian torus fibrations, but hints at more general special Lagrangian branes. Some more discussions can be found in Fukaya's work \cite[section 8.5]{Fukaya}.}, the Strominger-Yau-Zaslow philosophy says that after taking into account the quantum instanton corrections, then the moduli space of (semi)stable Lagrangian branes within fixed homology classes acquires a modified complex structure, and can be identified with certain moduli spaces of (semi)stable objects from the mirror side. The stable Lagrangian branes are believed to be related to special Lagrangians via the Thomas-Yau conjecture. Most of the literature concentrates on the restricted case of (special) Lagrangian torus fibrations arising from the large complex structure limit, in which case the moduli space of Lagrangian branes supported on the fibres inside $X$ should recover the moduli space of points on $X^\vee$, namely the mirror manifold $X^\vee$ itself \footnote{This SYZ mirror construction has seen the intense research effort on the symplectic side by Auroux \cite{Auroux}, Abouzaid, Fukaya, among many others. The very recent progress \cite{Yuan} achieves a non-archimedean mirror from Fukaya categorical considerations, but the nature of singular fibres is still not adequately understood. The skeptics may reasonably doubt if special Lagrangians exist at all in the region  with large curvature inside the Calabi-Yau manifold. Even if they do exist, there is insufficient evidence that the fibration structure persists. This is maybe the weakest point of the SYZ conjecture.}.
 Beyond the torus fibration case, the moduli space of Lagrangian branes defined by the Mauer-Cartan equation typically has singularities, which is compatible with the fact that the moduli space of semistable objects in $D^b Coh(X^\vee)$ typically is very singular.

\item (\textbf{DT theory}, \ie the \textbf{counting} approach) In the special case of Calabi-Yau 3-folds, the moduli spaces/stacks of semistable objects have virtual dimension zero, so one can hope to extract enumerative invariants, which can be thought of as integrals over the moduli space, the most basic version being the Euler number. Donaldson-Thomas theory is a highly elaborate framework for extracting such numbers using virtual techniques, on the side of $D^b Coh(X^\vee)$. It is suggested (e.g in \cite[section 8.5]{Fukaya}) that one can assign DT invariants to the moduli space of semistable objects in the Fukaya category\footnote{This would presuppose suitable properness and algebraicity on the moduli space of special Lagrangian objects, which is a highly nontrivial claim related to the Thomas-Yau conjecture. For instance, in the SYZ special Lagrangian torus fibration case, compactifying the moduli space requires good understanding in the singular region, which is far beyond current knowledge.}, and these should be equivalent to the DT invariants on the mirror $X^\vee$. The role of Thomas-Yau conjecture is that in principle it transforms a problem about counting special Lagrangian objects, into a categorical framework involving Calabi-Yau $A_\infty$-categories and stability conditions, which then supposedly \footnote{modulo heroic efforts} feeds into the grand machinery of Kontsevich and Soibelman \cite{KontsevichSoibelman}\cite{KS2}\cite{KSsurvey1}\cite{KSsurvey2}. This DT perspective may be regarded as the ultimate goal of Thomas-Yau conjecture \footnote{As an analogy, the practical calculation of Donaldson's ASD instanton invariants depends largely on the solution of Hermitian-Yang-Mills equation on algebraic surfaces.}, and is close to the physical applications involving BPS states counting, although on the $A$-side it is the most removed from mathematical attempts.

\end{itemize}

It is worth emphasizing that taking stability questions into account requires one to go beyond homological mirror symmetry. While the Fukaya category (vis. a vis. $D^bCoh(X^\vee)$) depends only on the symplectic (holomorphic) data, the stability condition remembers information about the holomorphic volume form (polarisation class), which now sees the $B$-side ($A$-side). As such, neither the above strong version of the SYZ mirror symmetry, nor the DT mirror symmetry are consequences of homological mirror symmetry alone, as indicated already in the SYZ paper \cite{SYZ}.\footnote{In the words of SYZ, they required the full equivalence of the two type II string theories, including the full BPS spectrum.} A very general framework that encompasses all the three aspects of mirror symmetry is in Kontsevich and Soibelman \cite{KontsevichSoibelman}, but how to fit the $A$-side into this picture is wildly conjectural.

\subsubsection*{Caveats for differential geometers}

However, in the differential geometric literature on the Thomas-Yau conjecture, one often attempts to go beyond all these aspects of mirror symmetry, and tries to directly compare the space of Lagrangian submanifolds inside $X$, with the space of holomorphic vector bundles equipped with Hermitian metrics over $X^\vee$. This comparison must be treated with caution, because there is no general bijective correspondence between these two infinite dimensional spaces, and because mirror symmetry is properly a quantum phenomenon which is not fully captured by classical considerations. In fact, mirror symmetry only relates $Fuk(X)$ and $Coh(X^\vee)$ after taking the derived category, and there is no general reason for the heart of a preferred $t$-structure on the derived Fukaya category (e.g. the $t$-structure defined by some almost calibrated condition) to agree with $Coh(X^\vee)$. Having vaccinated the reader with these precautions, we shall regard such comparisons as useful formal analogies, and we consider those aspects with quantum interpretations as more reliable.

\subsection{Hermitian-Yang-Mills}\label{HYM}

Hermitian Yang-Mills (HYM) connections have long been established as the epitome of how \textbf{stability conditions} control the existence questions of geometric PDEs, but we shall attempt to say a few new words besides the customary hommage.
We consider Hermitian metrics $H$ on a holomorphic vector bundle $E\to X^\vee$ over a compact K\"ahler manifold $X^\vee$, \footnote{If we are interested in noncompact $A$-model target spaces, the mirror is in fact not a compact K\"ahler manifold. We hope the reader will excuse us on this issue, since mirror symmetry is only used as motivations in this paper.}  inducing the Chern connection $\nabla_H$ and the curvature $F_H\in \Omega^{1,1}(End(E))$. The HYM equation can be written as
\[
\omega_{X^\vee}^{n-1}\wedge \frac{\sqrt{-1}}{2\pi} F_H= \frac{ \mu(E)}{ \int_{X^\vee} \omega_{X^\vee}^n} Id_E\otimes \omega_{X^\vee}^n, \quad \mu(E)= \frac{ \int_{X^\vee} c_1(E)\wedge \omega_{X^\vee}^{n-1} }{  rk(E) }= \frac{ deg(E)}{rk(E) }.
\]
This implies the Yang-Mills inequation $\nabla_H^* F_H=0$, and in fact HYM connections are absolute minimizers of the Yang-Mills energy among all unitary connections on the Hermitian vector  bundle $E$. The $\mu$-stability (resp. semistability) means that for all proper coherent subsheaf $E'\subset E$, the slope $\mu(E')<\mu(E)$ (resp. $\mu(E')\leq \mu(E)$). The bundle $E$ is called $\mu$-polystable if it is a direct sum of $\mu$-stable bundles of the same slope.

The famous Donaldson-Uhlenbeck-Yau theorem says

\begin{thm}
On a compact K\"ahler manifold, the holomorphic bundle $E$ admits a HYM metric if and only if $E$ is $\mu$-polystable.
\end{thm}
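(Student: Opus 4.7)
The plan is to prove the two implications separately, as they have very different analytic character.

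The necessity direction (HYM $\Rightarrow$ polystability) is essentially pointwise. Given a HYM metric $H$, and any proper coherent subsheaf $E' \subset E$ with torsion-free quotient, the orthogonal projection $\pi_{E'}$ extends from a Zariski open subset to a weakly holomorphic $L^2_1$ endomorphism by the Uhlenbeck-Yau regularity theorem. The Chern-Weil identity
\begin{equation*}
\deg(E') \; = \; \int_X \Tr(\pi_{E'} \cdot \sqrt{-1}\Lambda F_H) \, \frac{\omega^n}{n!} \; - \; \int_X |\bar\partial \pi_{E'}|^2 \, \frac{\omega^n}{n!},
\end{equation*}
combined with the HYM equation, gives $\mu(E') \leq \mu(E)$, and equality forces $\bar\partial \pi_{E'} = 0$, so that $E = E' \oplus (E')^\perp$ splits holomorphically with each summand inheriting a HYM metric. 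Iterating on the ranks yields the polystable decomposition.

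For the sufficiency direction (polystability $\Rightarrow$ HYM), I would follow the Uhlenbeck-Yau continuity method. Fix a background Hermitian metric $K$ and consider the family
\begin{equation*}
L_\epsilon(H) \, := \, \sqrt{-1}\Lambda F_H \, - \, \mu \cdot Id_E \, + \, \epsilon \log(K^{-1} H) \; = \; 0, \qquad \epsilon \in (0, 1].
\end{equation*}
At $\epsilon = 1$ coercivity produces a unique solution by a direct variational argument. Openness follows from the implicit function theorem, since the linearization is a perturbation of the Laplacian on $\End(E)$. Closedness requires a uniform $C^0$ bound on $\log(K^{-1} H_\epsilon)$ along the path; once established, elliptic bootstrapping yields $C^\infty$ control and one passes to the limit $\epsilon \to 0$.

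The heart of the argument, and the principal obstacle, is the closedness step. If the $C^0$ bound fails, normalise $s_\epsilon := \log(K^{-1}H_\epsilon)/\norm{\log(K^{-1}H_\epsilon)}_{C^0}$ and extract a weak $L^2_1$ limit $s_\infty$, which is a nonzero self-adjoint endomorphism with spectrum in $[-1,1]$. The Uhlenbeck-Yau regularity theorem then guarantees that for any spectral cut value avoiding the spectrum of $s_\infty$, the associated spectral projector defines a weakly holomorphic subsheaf $E' \subset E$, and a slope estimate derived from the equation satisfied by $s_\infty$ forces $\mu(E') \geq \mu(E)$, contradicting strict stability. In the polystable rather than strictly stable case, the same construction isolates one of the stable direct summands, and one finishes by solving HYM on each factor separately. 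The hard part is precisely this passage from a weak $L^2_1$ limit of endomorphisms to a genuine coherent subsheaf: there is no purely algebraic shortcut, and the Uhlenbeck-Yau regularity theorem remains the central analytic input with no easy substitute.
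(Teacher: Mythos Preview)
Your proposal is correct, but both directions take a different route from the paper's sketch.

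For necessity, the paper gives the more elementary argument via the pointwise inequality $\sqrt{-1}F_{E',H} \leq \sqrt{-1}F_{E,H}|_{E'}$ for holomorphic subbundles, which after wedging with $\omega_{X^\vee}^{n-1}$ and integrating yields $\mu(E')\leq \mu(E)$; this only establishes semistability, and the paper does not spell out the splitting in the equality case. Your Chern--Weil identity with the $L^2_1$ projection $\pi_{E'}$ is the sharper formulation: it handles arbitrary coherent subsheaves (not just subbundles) in one stroke via Uhlenbeck--Yau regularity, and the second fundamental form term $\int |\bar\partial \pi_{E'}|^2$ makes the holomorphic splitting in the equality case immediate, which is what actually gives polystability rather than mere semistability.

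For sufficiency, the paper sketches Simpson's heat flow approach: long-time existence via parabolic maximum principles on $|\Lambda F|$, monotonicity of the Donaldson functional, and then extraction of a destabilising $L^2$-subbundle from the small-eigenvalue eigenspaces of $H(t)$ if $\norm{H(t)}_{L^\infty}$ blows up. You instead follow the original Uhlenbeck--Yau continuity path $L_\epsilon(H)=0$. The paper explicitly mentions this in a remark as an alternative to the flow, noting that the stability condition enters in essentially the same way. Both approaches funnel into the same hard step you correctly identify: passing from a weak limit of self-adjoint endomorphisms to a coherent destabilising subsheaf via the Uhlenbeck--Yau regularity theorem. The heat flow buys parabolic smoothing and a cleaner a priori bound on $\Lambda F$; the continuity method avoids long-time existence questions but requires the perturbation term $\epsilon\log(K^{-1}H)$ to guarantee solvability at $\epsilon=1$. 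Neither is strictly easier.
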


\begin{rmk}
Certain parallels between HYM and special Lagrangians are already known to Thomas and Yau. The Chern connection $\nabla_H$ is analogous to the Lagrangian submanifold $L$ (with $U(1)$ local systems), the HYM equation as a first order equation on $\nabla_H$ is analogous to the special Lagrangian condition on $L$, the second order Yang-Mills equation is analogous to the minimal surface equation on $L$, and the Yang-Mills energy minimization is analogous to the volume minimization. If the Lagrangian $L$ is represented as the graph of an exact 1-form, then the potential function defining $L$ could be viewed as analogous to the Hermitian metric $H$. Finding a mirror analogue resembling the $\mu$-stability was one of Thomas and Yau's principal motivations.
\end{rmk}

\subsubsection*{$\mu$-stability and its wider context}

We now recall why the HYM equation implies $\mu$-semistability. Let $E'\subset E$ be a holomorphic subbundle (or more generally, a proper coherent subsheaf). A basic feature of holomorphic geometry is that pointwise curvature decreases in subbundles: the Chern curvature $F_{E',H}$ for the restricted Hermitian metric $H|_{E'}$ satisfies
\[
\sqrt{-1}F_{E',H} \leq \sqrt{-1}F_{E,H}|_{E'}.
\]
Wedging both sides with $\omega_{X^\vee}^{n-1}$, taking the trace, and integrating over ${X^\vee}$, we get
\[
\int_{X^\vee} c_1(E')\wedge \omega_{X^\vee}^{n-1} \leq \mu(E) rk(E'), 
\]
namely $\mu(E')\leq \mu(E)$, which is $\mu$-semistability. Although this argument is very transparent, we wish to summarize its key features:
\begin{itemize}
\item  Even though connections and curvatures make sense in a more general setting, we need the \textbf{integrability} of K\"ahler geometry to obtain pointwise \textbf{positivity}.\footnote{It would be interesting if the physicists can explain this positivity from supersymmetry, which is closely related to the K\"ahler condition.}

\item  To derive $\mu$-stability, one integrates over $X^\vee$, which can be interpreted as the \textbf{moduli space} of constant maps into $X^\vee$. \footnote{Path integrals on the topological B-model typically localizes to the moduli space of constant maps. This suggests a worldsheet interpretation, which will be more apparent on the mirror side.}

\item The input from complex geometry can be interpreted as a short exact sequence 
\[
0\to E'\to E\to E/E'\to 0,
\]
which has a categorical meaning in $D^b Coh(X^\vee)$ as a \textbf{distinguished triangle}.

\item 
The role of the K\"ahler form enters via \textbf{cohomological integrals}.

\item There is no need for the complex Monge-Amp\`ere equation. 

\end{itemize}
Most of these features are not specific to HYM connections, but similar arguments give rise to obstructions for a large class of PDEs involving holomorphic bundles, such as the deformed Hermitian Yang-Mills equation.

\begin{rmk}\label{Bridgelandflexible}
The $\mu$-semistability condition can be recast in terms of the central charge $Z(E)= -deg(E)+ \sqrt{-1}rk(E)$, as saying $\arg Z(E')\leq \arg Z(E)$ for all nonzero proper subsheaves $E'\subset E$.
It is worth emphasizing that except for the case of Riemann surfaces, $\mu$-stability does \emph{not} give rise to a  Bridgeland stability condition on $D^b Coh(X^\vee)$, since skyscrapper sheaves will generally have zero rank and zero degree, hence zero central charge. A similar but more subtle failure of Bridgeland stability happens in the context of the deformed Hermitian-Yang-Mills connections (\cf \cite[section 4]{Collins}). Such a failure does not spell doom for the PDE applications, nor for DT theoretic applications.\footnote{R. Thomas defined DT invariants for $\mu$-stability long before the insight of Bridgeland.}
Even though Bridgeland stability seems to be a plausible framework for special Lagrangians in the light of Joyce's proposal, it is probably advisable to maintain a more flexible attitude to stability conditions. 
\end{rmk}

\subsubsection*{Donaldson functional}

The reverse direction, that $\mu$-stability implies the existence of HYM connections, is the hard part of the subject, and a key ingredient is the \textbf{Donaldson functional}. We make the not very essential simplification that $c_1(E)=0$
. Donaldson \cite{DonaldsonHYM1}\cite{DonaldsonHYM} defined a functional $\mathcal{M}$ on the infinite dimensional space $\mathcal{H}$ of Hermitian metrics on the fixed bundle $E$
, by prescribing its first variation at any point $H\in \mathcal{H}$:
\begin{equation}\label{Donaldsonfunctional}
\delta \mathcal{M}=\int_{X^\vee} \Tr (\sqrt{-1}F_H \delta H H^{-1} )\wedge \omega_{X^\vee}^{n-1}.
\end{equation}
Obviously from the definition, the critical points are precisely the HYM metrics. Less obviously, this functional is well defined up to an additive constant fixed by the choice of a reference Hermitian metric $H_0$.  Different choices are related by
\begin{equation}\label{Donaldsonfunctionalchangeofreference}
\mathcal{M}_{H_0}(H)= \mathcal{M}_{H_0'}(H)+ \mathcal{M}_{H_0}(H_0').
\end{equation}

The space $\mathcal{H}$ can be formally assigned a Riemannian structure with non-positive curvature:
\begin{equation}\label{DonaldsonRiemstr}
|\delta H|^2= \int_{X^\vee} \Tr (\delta H H^{-1})^2 \wedge \omega_{X^\vee}^n.
\end{equation}
 The geodesics in $\mathcal{H}$ are given by $e^{tA}H$ where $A$ is self adjoint with respect to $H$, and the Donaldson functional is convex along geodesics.

\subsubsection*{Proof idea of Donaldson-Uhlenbeck-Yau theorem}

In very sketchy terms, one standard proof of the Donaldson-Uhlenbeck-Yau theorem (closest to Simpson's approach \cite{Simpson}) proceeds via the heat flow. It has two principal steps:

\begin{itemize}
\item Consider the HYM heat flow
\[
\partial_t H H^{-1} = -\sqrt{-1} \Lambda F_H .
\]
Using certain parabolic maximum principles, one proves long time existence by showing that all derivatives of $H$ remain bounded for any given finite time. Furthermore,  $\sup_{X^\vee} |\Lambda F|$ and $\norm{F}_{L^2}$ are non-increasing in time, so remain uniformly bounded for all time. This step does not use stability.

\item
The Donaldson functional is non-increasing in time almost by definition, so has a uniform upper bound for all time. Together with the pointwise bound on $\Lambda F$, which is like a Laplacian bound, one eventually shows that if $H(t)$ fails to be $L^\infty$ bounded for all time, then there exists an $L^2$-subbundle of $E$ with destabilizing properties, which is then interpreted algebraically as a subsheaf. (Roughly, the destabilizing sheaf comes from the eigensubspaces of $E$ corresponding to the small eigenvalues of $H(t)$ with respect to a fixed reference metric; compare the variational viewpoint below.) The stability condition rules out this case; one then shows that $H(t)$ actually converges smoothly at infinite time to a solution of the HYM equation.  
\end{itemize}

\begin{rmk}
An alternative approach by Donaldson \cite{DonaldsonHYM} in the projective manifold case, is also based on the flow method, but uses a dimensional induction in which the stability condition appears indirectly through alternative algebro-geometric characterisations. The approach of Uhlenbeck and Yau \cite{UhlenbeckYau} uses the continuity method instead, where the stability condition appears in a way similar to the above.

\end{rmk}

We now discuss the variational perspective to the HYM equation, even though no proofs have been constructed along such lines. One would try to compactify $\mathcal{H}$ in some weaker topology (which is not known),\footnote{What is known is how to compactify the space of K\"ahler potentials via psh functions, see Boucksom \cite{Boucksom} for its fantastic application to K\"ahler-Einstein metrics. In that context, the boundary at infinity is related to non-archimedean geometry.}  extend the Donaldson functional to this compactification, attempt to find a minimizer of the functional, and then prove its regularity. Since the Donaldson functional is convex, it is natural to expect the existence of minimizer is equivalent to the properness of $\mathcal{M}$, or roughly equivalently $\mathcal{M}$ should grow at the infinity of $\mathcal{H}$.

Suppose now that $E$ fits into an extension sequence
\[
0\to E_1\to E\to E_2\to 0.
\]
Take arbitrary Hermitian metrics $H_E$ and $H_{E_2}$ on $E$ and $E_2$ respectively,\footnote{The fact that the choice of Hermitian metrics will not ultimately matter is an expected feature, analogous to  the relation between K\"ahler potentials and non-archimedean potential theory.} and regard $H_{E_2}$ as a semi-Hermitian metric on $E$. We can then produce a 1-parameter family of Hermitian metrics on $E$, via $H(s)=e^{-s} H_{E}+ H_{E_2}$. For $s\gg 1$, the metric $H(s)$ can be understood as equal to $e^{-s} H_{E}$ when restricted to $E_1$, and almost equal to $H_{E_2}$ on the orthogonal complement of $E_1$. In terms of bundles with connections, in the limit $s\to \infty$ we get $E_1\oplus E_2$ with the Chern connection for $H_E|_{E_1}\oplus H_{E_2}$. As such, it is an easy exercise to show that to leading order
\[
\mathcal{M}(H(s)) \sim  -2\pi s \int_{X^\vee} c_1(E_1)\wedge \omega_{X^\vee}^{n-1}.
\]
Recall we have assumed $c_1(E)=0$ to simplify the definition of the Donaldson functional. Thus the subbundle $E_1$ destabilizes $E$, precisely when the degree of $E_1$ is positive, so $\mathcal{M}(H(s))$ goes to $-\infty$ along this 1-parameter family.

If one wants to turn the variational approach into an actual proof, one needs to further show that all possible ways to approach the infinity of (the metric completion of) $\mathcal{H}$ can be approximated by such 1-parameter families of algebraic origin. For our motivational purpose, it suffices to emphasize the following conceptual points:
\begin{itemize}
\item  The interesting limiting behaviour of the Donaldson functional occurs at an \emph{infinite distance} boundary of (the metric completion of) $\mathcal{H}$. This sits well with the non-positive Riemannian curvature of $\mathcal{H}$. 

\item
The \emph{stability condition controls the asymptotic behaviour} of the Donaldson functional near the boundary of $\mathcal{H}$. 
 
\end{itemize}

\begin{rmk}
The nonlinear analysis concerning special Lagrangians is more difficult than HYM. For instance, the finite time singularities of the LMCF are inevitable. The HYM equation can be viewed as a toy model which shares some, but by no means all, of the high level features with the special Lagrangian equation.
\end{rmk}

\subsubsection*{Infinite dimensional GIT picture, and possible lack of mirror analgoue}

The HYM equation famously fits into a formal geometric invariant theory (GIT) framework. For this, we slightly change viewpoint, and consider the bundle $E$  equipped with a fixed Hermitian structure, while the $(0,1)$-connection $\bar{\partial}_E$  encoding the holomorphic structure is allowed to vary. Each $\bar{\partial}_E$ uniquely determines the Chern connection $\nabla_E$. The group of complex gauge transformations $GL(E,\C)$ acts on the space of integrable $(0,1)$-connections, via $ \bar{\partial}_E\mapsto g\circ \bar{\partial}_E\circ g^{-1}$. This action is analogous to a complex reductive group action on a finite dimensional K\"ahler manifold. The subgroup $U(E)$ of unitary gauge transformations is analogous to the maximal compact subgroup. The space $GL(E,\C)/U(E)$ can be formally identified with the space of Hermitian metrics on $E$. The HYM equation arises naturally from considerations of the moment map, and the Donaldson-Uhlenbeck-Yau theorem can be formally motivated from this picture \cite{DonaldsonHYM}.

This kind of infinite dimensional GIT framework has successfully suggested the answer in many problems within K\"ahler geometry, so it is only natural that many people have attempted to find an analogue suitable for special Lagrangian geometry. One such attempt is as follows. Thomas \cite[section 3]{Thomas} considered the space 
\[
\mathcal{Z}=\{ (L,A): L\subset X \text{ is Lagrangian}, A \text{ is a flat $U(1)$-connection on } L \}
\]
(not up to gauge equivalence!). The tangent space is $Z^1(L)\oplus Z^1(L)$, where $Z^1(L)$ is the space of closed 1-forms on $L$. This suggests an almost complex structure on $\mathcal{Z}$ \[
J= \begin{pmatrix}
0 & 1 \\
-1 & 0
\end{pmatrix}.
\]
With some hesitation,\footnote{Thomas was aware of the possible objections, and did not use the GIT analogy as the principal basis of his proposal.}
Thomas attempted to complexify the Hamiltonian group action into a complex infinite dimensional group action. Unfortunately, there seems to be no natural way to do this in general, and $J$ is not quite an integrable complex structure. On the other hand, the moduli space of special Lagrangians with $U(1)$-local systems does have a natural complex structure induced from $J$, which is however na\"ive in the sense that the complex structure of the moduli space of Lagrangian branes is subject to further quantum corrections due to holomorphic curves, known also as `worldsheet instantons' \cite{Auroux}.

There seems to be no agreed interpretation, but in the author's view, this suggests the infinite dimensional GIT framework is itself inadequate for the purpose of special Lagrangian geometry.\footnote{Thomas's suggestion is not the only possible way to achieve a GIT analogy. However, the other proposals  \cite{DonaldsonSL}\cite{Lotay} do not exhibit the mirror analogy in the same intuitively plausible way.} As we explained in section \ref{Mirrorsymmetry}, the main predictions of mirror symmetry are quantum in nature, and one should be cautious about taking an overly classical perspective. From our perspective, one underlying reason why the infinite dimensional GIT framework is successful in K\"ahler geometry, is that on the B-side one does not see the worldsheet instanton effect directly. On the A-side we have no such luxury.

As a word of console, although GIT is very good at \emph{suggesting} the correct stability conditions for a PDE problem in K\"ahler geometry, it is almost never involved in the actual \emph{proofs} of existence and uniqueness results for PDEs.

\subsection{Deformed Hermitian Yang-Mills}\label{dHYM}

Much recent research concentrates on a more nonlinear cousin of HYM, known as the \textbf{deformed Hermitian Yang-Mills} equation (dHYM), expertly surveyed in \cite{Collins}. Although the techniques involved in dHYM are more akin to other areas of K\"ahler geometry, such as K\"ahler-Einstein metrics and the J-equation \footnote{Indeed, the recent breakthrough of Gao Chen \cite{Chen} on dHYM is largely based on the methods he developed for the J-equation. The work of Collins et al \cite{CollinsJacobYau} has strong analogy with the homogeneous complex Monge-Amp\`ere equation important for CSCK metrics.}, its motivation is in part to find an improved mirror analogue of special Lagrangians, with the distant goal of constructing the Bridgeland stability on the B-side of the mirror.

The equation can be motivated differential geometrically from \textbf{semiflat mirror symmetry} \cite{CollinsJacobXie}. To start, one imposes \emph{toric symmetry}, so that over a local $n$-dimensional base $D$, the $A$ and $B$ sides are respectively the $T^n$-bundles $T^*D/\Lambda$ and $TD/\Lambda^\vee$ equipped with the canonical symplectic/complex structure, where $\Lambda$ and $\Lambda^\vee$ are dual lattices, so that the torus fibres are naturally dual. A \emph{section} $L$ of $T^*D/\Lambda$ over $D$ fibrewise determines a flat $U(1)$-connection on the corresponding fibre of $TD/\Lambda$, and the Lagrangian condition on $L$ is equivalent to these fibrewise connections fitting into a connection $A$ on a line bundle $E$ over $TD/\Lambda$, with the integrability condition $F_A^{0,2}=0$. Now a Hessian metric on $D$ induces a K\"ahler structure on both sides. The condition for $L$ to be a special Lagrangian of phase $\hat{\theta}$, translates into the mirror condition on the holomorphic line bundle:
\[
\text{Im} ( e^{-i\hat{\theta}}(\omega_{X^\vee}+ F_A)^n )=0.
\]
Here the curvature $F_A$ of the $U(1)$-connection is an imaginary valued $(1,1)$-form. Up to rescaling $\omega_{X^\vee}$,  the global version is the dHYM equation for a Hermitian metric on a holomorphic line bundle $E$ over $X^\vee$, thought of as a PDE on the potential function $\phi$:
\begin{equation}
\text{Im} ( e^{-i\hat{\theta}}(\omega_{X^\vee}+ \sqrt{-1}\alpha_\phi)^n )=0, \quad [\alpha_\phi=\alpha+ \sqrt{-1}\partial \bar{\partial} \phi]\in -c_1(E).
\end{equation}
For an alternative view, we can pointwise simultaneously diagonalize $\omega_{X^\vee}$ and $\alpha_\phi$, to extract the eigenvalues $\lambda_i$,
\[
\omega_{X^\vee}= \frac{\sqrt{-1}}{2} \sum dz_i\wedge d\bar{z}_i, \quad \alpha_\phi=\frac{\sqrt{-1}}{2} \sum \lambda_i dz_i\wedge d\bar{z}_i.
\]
The dHYM equation is then
\begin{equation}
\Theta_\omega(\alpha)=\sum_i \arctan \lambda_i= \hat{\theta} \mod \pi\Z.
\end{equation}
Thus the equation actually separates into several discrete branches depending on the choice of $\hat{\theta}$, and changing $\hat{\theta}$ by $k\pi$ can drastically alter the behaviour of the equations. Replacing $\hat{\theta}$ by $-\hat{\theta}$ (so $E$ is replaced by its dual) does not change the problem, so without loss of generality $0\leq \hat{\theta}< \frac{n\pi}{2}$. There are two significant limiting cases:
\begin{itemize}
\item When $\lambda_i\gg 1$ for any $i$, so $\hat{\theta}\approx \frac{n\pi}{2}-\sum \frac{1}{\lambda_i}$, the equation becomes approximately the J-equation
\[
\alpha_\phi^{n-1} \wedge \omega_{X^\vee}= \frac{ \int \alpha^{n-1}\wedge \omega_{X^\vee} }{ \int \alpha^n  } \alpha_\phi^n.
\]
Most mathematical works on dHYM assume some lower bound such as $\hat{\theta}>\frac{(n-2)\pi}{2}$, which may be morally interpreted as being near this large phase limit.

\item 
When $|\lambda_i|\ll 1$ for any $i$, so $\hat{\theta}\approx \sum \lambda_i$, the equation is approximately the line bundle case of the HYM equation
\[
\alpha_\phi \wedge \omega_{X^\vee}^{n-1}= \frac{ \int \alpha\wedge \omega_{X^\vee}^{n-1} }{ \int \omega_{X^\vee}^n  } \omega_{X^\vee}^n.
\]
This is also known as the large volume limit, in the sense that the K\"ahler metric length scale is much larger than the curvature scale of the  bundle.
\end{itemize}

\subsubsection*{Towards a Bridgeland stability condition}

The existence of dHYM solutions has algebraic obstructions. Let $V$ be a $p$-dimensional subvariety of $X^\vee$, with $1\leq p<n$.
Under the large phase assumption $\hat{\theta}> \frac{(n-2)\pi}{2}$, a pointwise consideration of eigenvalues shows \cite[Prop. 3.4]{Collins}
\[
\text{Im} \big(     \int_V  e^{ -\sqrt{-1}(\hat{\theta}- (n-p)\frac{\pi}{2})  }  (\omega_{X^\vee}+ \sqrt{-1}\alpha_\phi)^n   \big)>0.
\]
More suggestively, denote
\begin{equation}
Z_{V}(E)= -\int_V e^{ -\sqrt{-1}\omega_{X^\vee} } ch(E) \in \R_{>0} e^{\sqrt{-1}\phi_V(E)}, \quad \phi_V(E)\in (  0   , \pi  ),
\end{equation}
so the algebraic obstruction can be rewritten as
\[
\text{Im} (\frac{Z_{V}(E) }{ Z_{X^\vee}(E) }) >0, \quad \ie \quad \phi_V(E)>\phi_{X^\vee}(E)= \hat{\theta}- \frac{(n-2)\pi}{2}.
\]
This bears some resemblance to a Bridgeland stability condition with central charge $Z_{X^\vee}(E)$
, even though on the technical level there are some discrepancies 
\cite[section 3.2]{Collins}. In the simplest understood examples, such as the blow-up of $\mathbb{CP}^2$ in a point, the obstruction criterion for dHYM seems to refine Bridgeland stability, in the sense that every dHYM stable object is Bridgeland stable, but not conversely. It is not entirely clear how to interpret this (\cf Remark \ref{Bridgelandflexible}).

\subsubsection*{Main achievements on dHYM}

Some of the most significant results on the dHYM equation are:

\begin{itemize}
\item There is a well developed formal GIT picture \cite[section 2]{CollinsJacobYau}\cite{Collins1}\cite{Collins2}, including a complexified group action on an infinite dimensional K\"ahler manifold, a moment map, a negatively curved symmetric space, and a Donaldson type functional which is convex along geodesics. 

\item The geodesic equation with prescribed boundary data have $C^{1,1}$-solutions and viscosity solutions under a large phase assumption \cite[Thm 2.7]{Collins}. This leads to nontrivial algebraic obstructions \cite[section 3]{Collins}.

\item Assume large phase $\hat{\theta}>\frac{(n-2)\pi}{2}$ and  the existence of a  subsolution in the suitable sense, then the dHYM equation can be solved \cite[Thm 5.2]{Collins}.

\item Assume large phase $\hat{\theta}>\frac{(n-2)\pi}{2}$, then the existence of dHYM solution can be formulated in terms of algebraic obstruction conditions \cite{Chen}.
\end{itemize}

\subsubsection*{Will dHYM lead to the mirror Bridgeland condition?}

Despite substantial progress, many essential difficulties still need to be overcome before  the dHYM equation can give rise to a Bridgeland stability condition on $D^b Coh(X^\vee)$:
\begin{itemize}
\item Can one relax the \emph{large phase assumption}? 
\item Is there a generalisation of dHYM equation to \emph{higher rank vector bundles}? (Currently, there is no well established PDE, let alone how to solve it.)
\item Can this story be extended to \emph{complexes} of vector bundles? 
\item How can one compare the answer with the A-side of the mirror?
\end{itemize}

Time will tell how far one can push in this program, but we would like to momentarily play the skeptic's advocate:
\begin{itemize}
\item The differential geometric motivation \footnote{There is an independent physics motivation for dHYM from the Dirac-Born-Infeld action.
The DBI action is however not an exact result, but depends on the assumption that certain derivative terms of the curvature can be ignored. Unlike the A-model side where the central charge $Z(L)=\int_L \Omega$ is believed to hold exactly, on the B-side the central charge formula $Z_{X^\vee}(E)=-\int_{X^\vee}e^{-\sqrt{-1}\omega_{X^\vee}}ch(E)$ is believed to be subject to worldsheet instanton corrections.
} of dHYM assumes semiflat ambient K\"ahler metrics. Such metrics only arise naturally if the manifold has toric symmetry, or as a good approximate description for degenerating Calabi-Yau metrics near the large complex structure limit/large volume limit. Near this limit, the dHYM equation may be an improvement on the HYM equation as the mirror version of special Lagrangians.
But far away from such limits, the instanton corrections cannot be ignored, and indeed a general K\"ahler metric has no toric symmetry.

\item
The reliance on the large phase assumption is a possible indication of a breakdown once we move too far from a large phase limit. In the closely related special Lagrangian graph equation, relaxing the phase condition is known to result in rather severe singularities for the viscosity solutions.

\item
Being the section of a torus fibration is a serious assumption on the topology of a Lagrangian.

\item 
The difficulty with higher rank vector bundles is a possible indication that $Coh(X^\vee)$ may be not the natural abelian subcategory of $D^bCoh(X^\vee)$ suited for dHYM. Furthermore, there is no a priori guarantee for arbitrary stability conditions to correspond to PDEs,\footnote{If a Bridgeland stability condition arises from a PDE, there is no a priori guarantee that its deformations also come from PDEs.} or to have any classical geometrical interpretation at all. In particular, the $B$-side mirror to the hypothetical special Lagrangian stability condition may well be an abstract stability condition with no particular PDE interpretation.

\item
Due to the lack of imagination, it is hard to see how PDEs can know about the degree shifts of a complex of vector bundles, and how it can detect homotopy equivalence of complexes.

\end{itemize}

If we take this skeptic view, then we do not expect an exact comparison between the  hypothetical Bridgeland stabilities on both sides of the mirror, without adding very substantial assumptions such as toric symmetry. But when HYM shares the same qualitative features as dHYM, which admit a mirror interpretation, it lends more plausibility for these features to appear also on the special Lagrangians.

\subsection{Extensions and wall crossing}\label{Extensionwallcrossing}

Part of Thomas and Yau's insights is that one should focus on the \emph{categorical} aspect of mirror symmetry when we look for an analogy between the A-side and the B-side. Their starting observation is that \emph{Lagrangian connected sums are analogous to bundle extensions} \cite[section 3,4]{Thomas}.

\subsubsection*{Extension bundle vs. Lagrangian connection sum}

An essential aspect of $Coh(X^\vee)$ is that new bundles can be constructed from extensions of known bundles $E_1, E_2$, namely
\[
0\to E_1\to E\to E_2\to 0.
\]
Such extension sequences are classified by the complex vector space $\text{Ext}^1(E_2, E_1)$. Due to the $\C^*$-scaling, the choice of $E$ is parametrised by the projective space $\mathbb{P}(  \text{Ext}^1(E_2, E_1) )$.
In general, extensions are \emph{not symmetric} in $E_1$ and $E_2$. The extensions
\[
0\to E_2\to E\to E_1\to 0
\]
are classified by $\text{Ext}^1(E_1, E_2)$, which is a quite different space. Extensions can also be viewed as distinguished triangles in $D^bCoh(X^\vee)$.

In the mirror picture, exact sequences do not make a priori sense, but one can talk about distinguished triangles, whose geometric sources are the graded Lagrangian connected sums $L_1\#L_2$, fitting into a distinguished triangle
\[
L_1\to L_1\#L_2\to L_2\to L_1[1].
\]
Such distinguished triangles are classified by $HF^1(L_2,L_1)$.\footnote{The caveat is that unlike bundles, the neck length of the Lagrangian connected sum cannot be arbitrarily large.} Again there is a scaling symmetry related to the neck size of the Lagrangian connected sum, and there is an asymmetry between $L_1\#L_2$ and $L_2\#L_1$.

This analogy is a prime example of \emph{homological mirror symmetry}. On either side, only pure complex geometry/pure symplectic geometry appears.

\subsubsection*{Wall crossing}

Wall crossing in the categorical context refers to the following phenomenon when the stability condition varies in a 1-parameter family, with central charges $Z_t$. For $t>0$, the objects $E_1, E_2, E$ in the extension sequence are all stable, so necessarily
\[
\arg Z_t (E_1) < \arg Z_t (E)< \arg Z_t (E_2), \quad Z_t(E)= Z_t(E_1)+ Z_t(E_2) .
\]
At $t=0$, the phase angles become equal, and for $t<0$, the phase angle inequality is reversed, and $E$ becomes unstable. This phase alignment occurs on a codimension one locus in the space of stability condition, and thus they are called walls. Every extension sequence potentially gives rise to a wall, and the walls can be dense in general.

A notable special case is $\mu$-stability of bundles for a 1-parameter family of K\"ahler classes $[\omega_t]$, and the slopes $\mu(E_1), \mu(E_2), \mu(E)$ become equal precisely for $t=0$. On one side of the wall, the HYM connections exist on $E_1, E_2, E$, and on the other side $E$ becomes unstable and no longer admits any HYM connection.

Thomas \cite{Thomas} interpreted a gluing construction of Joyce as the mirror analogue of the wall crossing phenomenon for bundles.\footnote{It is quite remarkable that Thomas and Yau knew before Bridgeland, that stability conditions  make sense categorically beyond $\mu$-stability, and the mirror of the hypothetical special Lagrangian stability condition does not need to be $\mu$-stability.} In the simplest case, let $n\geq 3$, fix a symplectic structure $\omega$, and vary the holomorphic volume form $\Omega_t$ in a 1-parameter family while keeping the almost Calabi-Yau condition. Let $L^1_t, L^2_t$ be smooth special Lagrangians with respect to $\Omega_t$ with phase $\theta_t^1$ and $\theta_t^2$, intersecting transversely at precisely one point $p_t$, with Floer degree $\mu_{L_t^2, L_t^1  }(p_t)=1$, such that $\theta_t^2-\theta_t^1$ increases past zero at $t=0$, and in particular $\theta_0^1=\theta_0^2$. Thus at $t=0$, the 
tangent planes can be put into the standard form inside $\C^n$:
\[
 T_p L_0^1=(e^{i\phi_1}, \ldots e^{i\phi_n}) \R^n,  \quad T_p L_0^2= \R^n, \quad  0<\phi_k<\pi,\quad \sum \phi_k=\pi,
\]
\[
\omega= \frac{\sqrt{-1}}{2} \sum dz_k\wedge d\bar{z}_k, \quad e^{-i\theta_0^1}\Omega= a^n dz_1\wedge\ldots  dz_n, \quad a>0.
\]
\footnote{The constant $a$ comes from the almost Calabi-Yau structure, and $a=1$ in the Calabi-Yau case.} 
This provides the appropriate framing data to topologically glue in a Lawlor neck $L_{\phi, A}$ (\cf section \ref{Lawlor}) to desingularize $L_t^1\cup L_t^2$ for small $t$, so that the glued Lagrangian has the topology $L_t^1 \#L_t^2$. Joyce \cite[Thm 9.10]{JoyceSLsurvey} shows that for each small $t>0$, the glued Lagrangian can be perturbed into a special Lagrangian of phase $\theta_t\approx \theta_0^1$. As $t\to 0$, these special Lagrangians converge as currents to $L_t^1\cup L_t^2$, while for $t<0$ this gluing strategy does not produce any new special Lagrangian.

 The central charges are 
\[
Z(L_t^i)= \int_{L_t^i} \Omega_t^i= R_t^i e^{\sqrt{-1}\theta_t^i}, \quad Z(L_t^1\#L_t^2)= Z(L_t^1)+ Z(L_t^2)= R_t e^{\sqrt{-1}\theta_t}.
\]
In particular $R_t^1 \sin (\theta_t^1-\theta_t)= - R_t^2\sin(\theta_t^2-\theta_t)$. The asymmetry between $t>0$ and $t<0$ in Joyce's gluing construction, comes from an approximate formula for the Lawlor neck parameter valid for small $t$ \footnote{For a heuristic short derivation see \cite[section 6]{Joycecounting}. Beware that Joyce's Lagrangian connected sum has the opposite convention.}
\[
R_t^1 \sin (-\theta_t^1+\theta_t)= a^m A>0.
\]
Thus $\theta_t^1<\theta_t$ is needed in the gluing construction. This is strongly reminiscent of a Bridgeland stability condition. What happens when $t$ crosses zero can be interpreted as wall crossing.

To summarize, the mirror analogy of wall crossing phenomenon is of categorical nature. However, it goes beyond homological mirror symmetry as soon as it involves the stability conditions, and the new problems involve analytical aspects, beyond purely topological issues. The similarity between the Joyce gluing and the bundle case is a strong motivation for Thomas and Yau. However, the Joyce analysis is only valid in a  perturbative regime, and what is missing here is a non-perturbative understanding of when the $D^b Fuk(X)$ class of the Lagrangian connected sum can admit special Lagrangians.

%



\subsection{Space of almost calibrated Lagrangians}\label{Solomon}

We now return to the A-side of the mirror, and describe the work of J. Solomon \cite{Solomon}\cite{Solomon2}, generally accepted as the canonical picture on the subject.
While dHYM is motivated by the dream of a correspondence (semiflat mirror symmetry) between the two sides of the mirror at the level of classical objects, Solomon is interested in the structural similarities between the infinite dimensional spaces involved in the mirrors, and especially in finding analogues for the HYM equation.\footnote{Solomon's work predates the substantial works on the dHYM equation.} The reader is thus invited to keep in mind the comparison with section \ref{HYM}.

Let $\mathcal{L}^+$ be the space of almost calibrated compact immersed Lagrangians in an almost Calabi-Yau manifold $(X,\omega, \Omega)$,
\[
\mathcal{L}^+= \{  \iota: L\to X | -\frac{\pi}{2}<\theta<\frac{\pi}{2}      \}.
\]
For the benefit of intuition, we shall loosely identify the Lagrangian immersion with its image.
Solomon \cite{Solomon2} considers the space $\mathcal{O}$ of Lagrangians which are exact isotopic (aka. local Hamiltonian isotopic)\footnote{Exact isotopies of immersed Lagrangians differ from global Hamiltonian isotopies, in that the Hamiltonian function on the Lagrangians may depend on the local sheets, so may not always extend smoothly to the ambient space. } within $\mathcal{L}^+$ to a fixed Lagrangian. It should be borne in mind that unlike the space $\mathcal{H}$ of Hermitian metrics on a bundle, the space $\mathcal{O}$ may have very nontrivial topology; we call its universal cover $\tilde{\mathcal{O}}$. The formal deformations of the Lagrangians are given by the Hamiltonian functions $h: L\to \R$, up to the ambiguity of an additive constant. Solomon proposes to fix the constant by the normalisation condition $\int_L h\text{Re}\Omega=0$, and assigns a formal Riemannian metric on 
\[
T_L \mathcal{O}= \{ h: L\to \R |   \int_L h\text{Re}\Omega=0        \},
\]
via the formula
\begin{equation}
\langle h, k\rangle = \int_L h k \text{Re}\Omega.
\end{equation}
This is positive definite because 
 $\text{Re}\Omega$ is a volume form on $L$ by almost calibratedness.
The main result of \cite{Solomon2} is a computation on the Riemannian curvature of $\mathcal{O}$, which is found to be \emph{non-positively curved}, similar to the HYM setting. A further paper \cite{Solomon3}  computes the geodesic equation with respect to this formal metric, and reinterprets a geodesic between $L_1,L_2\in \mathcal{O}$ in terms of a 1-parameter family of special Lagrangians (of phase $\frac{\pi}{2}$ instead!) with boundary on $L_0\cup L_1$.

Another major aspect of Solomon's work is to look for an analogue of the Donaldson functional. The definition of this Solomon functional does not really require the almost calibrated condition, even though some of the main properties do. Choose some appropriate $\hat{\theta}\in (-\frac{\pi}{2}, \frac{\pi}{2})$ so that
 $\int_L \text{Im}(e^{-i\hat{\theta}}\Omega) =0$. Now take a 1-parameter family of Lagrangians $L_t$ in $\mathcal{O}$ with associated Hamiltonian functions $h_t: L_t\to \R$. Solomon defines
\begin{equation}
\mathcal{S}= \int_0^1 dt \int_{L_t} h_t \text{Im}(e^{-i\hat{\theta}}\Omega)   .
\end{equation}
His main theorem \cite{Solomon} is 

\begin{thm}\cite{Solomon}
The functional $\mathcal{S}$ is independent of Hamiltonian deformations of the path of Lagrangians fixing the two ends. In particular, by fixing the starting Lagrangian $L_0$, we obtain a functional $\mathcal{S}$ of the endpoint Lagrangian, which is well defined on the universal cover $\tilde{\mathcal{O}}$.
\end{thm}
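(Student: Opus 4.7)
My plan is to reinterpret $\mathcal{S}$ as the line integral of a closed 1-form on $\mathcal{O}$, whence it descends to a well-defined function on its universal cover. Set $\beta = \text{Im}(e^{-i\hat\theta}\Omega)$, a closed $n$-form on $X$ (since $\Omega$ is holomorphic). Tangent vectors at $L \in \mathcal{O}$ are represented by Hamiltonians $h : L \to \R$ modulo additive constants, and the 1-form
\[
\alpha_L(h) = \int_L h\,\beta
\]
is well-defined on this quotient because $\int_L \beta = 0$: the integral $\int_L \Omega$ is Hamiltonian-isotopy invariant by closedness of $\Omega$, and the choice of $\hat\theta$ normalises its imaginary part to zero. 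By construction $\mathcal{S}(\gamma) = \int_\gamma \alpha$ for any path $\gamma$ in $\mathcal{O}$, so the theorem reduces to showing $d\alpha = 0$.

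To evaluate $d\alpha$ at $L$ on two tangent vectors with Hamiltonian representatives $H, K$, I would extend them to vector fields on $\mathcal{O}$ near $L$ via ambient extensions of $H, K$ to functions on a tubular neighbourhood of $L$ in $X$ with vanishing normal derivatives along $L$. With this choice, the induced ambient Hamiltonian vector fields $X_H|_L$ and $X_K|_L$ are purely normal (equal to $-J\nabla H$ and $-J\nabla K$, where $\nabla$ denotes the tangential gradient on $L$), so $X_H(K)|_L = X_K(H)|_L = \{H, K\}|_L = 0$, and the Lie bracket $[X_H, X_K]|_L$ vanishes as a tangent vector on $\mathcal{O}$. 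Cartan's formula, together with $d\beta = 0$ and Stokes on the closed Lagrangian, then gives
\[
X_H(\alpha(X_K))\big|_L = \int_L X_H(K)\,\beta - \int_L dK \wedge (i_{X_H}\beta)\big|_L,
\]
and symmetrically for $X_K(\alpha(X_H))$, so the only potentially nonzero contribution to $d\alpha(X_H, X_K)$ is the difference of the two wedge integrals.

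The heart of the proof is a pointwise identity. In Darboux normal coordinates at $p \in L$ with $L = \{y = 0\}$ locally and $\Omega|_p = e^{i\theta(p)}\, dz_1 \wedge \cdots \wedge dz_n$, expanding $\prod_k (dx_k + i\,dy_k)$ and using $i_{\partial_{y_j}} \prod_k (dx_k + i\,dy_k) = i(-1)^{j-1} \prod_{k \neq j}(dx_k + i\,dy_k)$ produces
\[
dH \wedge (i_{X_K}\beta)\big|_L = -\cos(\theta - \hat\theta)\,g(\nabla H, \nabla K)\,\mathrm{dvol}_L,
\]
which is manifestly symmetric in $H$ and $K$. Hence the two wedge integrals cancel, $d\alpha = 0$ follows, and $\int_\gamma \alpha$ is a homotopy invariant of paths with fixed endpoints, descending to a function $\mathcal{S}$ on $\tilde{\mathcal{O}}$. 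The main obstacle is the careful sign bookkeeping across the various conventions (Hamiltonian vector field, Poisson bracket, Lie bracket) together with the verification that the pointwise identity in adapted Darboux coordinates patches coherently along $L$; a secondary technical point is formalising the tensoriality of $d\alpha$ on the infinite-dimensional space $\mathcal{O}$, which is justified precisely by the availability of the normal-extension construction above.
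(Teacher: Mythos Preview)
Your argument is correct and is essentially Solomon's original strategy: interpret $\mathcal{S}$ as the line integral of a 1-form $\alpha$ on $\mathcal{O}$ and verify $d\alpha=0$ by a direct pointwise computation showing the symmetry of $dH\wedge(i_{X_K}\beta)|_L$ in $H,K$. This works on any almost Calabi--Yau ambient manifold with no exactness hypothesis.

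The paper, however, does not reproduce this argument; it instead gives an alternative proof restricted to the exact Stein setting (Lemma~\ref{Solomonfunctionallem1} and the paragraph following it). Using the Liouville form $\lambda$ and the Lagrangian potentials $f_{L_t}$, the paper rewrites
\[
\int_0^1\!\!dt\int_{L_t} h_t\,\mathrm{Im}(e^{-i\hat\theta}\Omega)
= \int_{L_t} f_{L_t}\,\mathrm{Im}(e^{-i\hat\theta}\Omega)\Big|_{t=0}^{t=1}
- \int_{\cup_t L_t}\lambda\wedge \mathrm{Im}(e^{-i\hat\theta}\Omega),
\]
and then observes that $d\bigl(\lambda\wedge\mathrm{Im}(e^{-i\hat\theta}\Omega)\bigr)=\omega\wedge\mathrm{Im}(e^{-i\hat\theta}\Omega)=0$, so the last integral depends only on the homology class of the swept-out $(n+1)$-chain, hence only on the endpoints. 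What the paper's route buys is an explicit closed-form expression for $\mathcal{S}$ in terms of any bordism current $\mathcal{C}$ with $\partial\mathcal{C}=L-L_0$, which is precisely the starting point for the extension~(\ref{Solomonfunctionalextension}) of the Solomon functional beyond a fixed exact isotopy class. Your argument is cleaner and valid in greater generality, but does not by itself suggest this homological reformulation; the paper's argument is less general (it needs $\omega=d\lambda$) but feeds directly into the Floer-theoretic framework that follows.
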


Like the Donaldson functional, this well definition is nontrivial. It is however obvious that the critical points in $\mathcal{O}$ are precisely special Lagrangians of phase $\hat{\theta}$.
Furthermore,

\begin{thm}
\cite{Solomon} Assume $\hat{\theta}=0$. Then the second variation of $\mathcal{S}$ at a critical point is positive semidefinite. Furthermore, along a geodesic with respect to Solomon's formal Riemannian metric, the functional $\mathcal{S}$ is convex.
\end{thm}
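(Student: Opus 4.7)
My plan has two parts, one for each assertion. For the positive semidefiniteness of the second variation, I would first recall the standard first-variation formula: for a smooth path $L_s\subset \mathcal{O}$ through $L_0$ generated by a Hamiltonian $k:L_0\to \R$, one has
\begin{equation*}
\frac{d}{ds}\Big|_{s=0}\mathcal{S}(L_s)=\int_{L_0}k\,\mathrm{Im}\,\Omega.
\end{equation*}
Since $\hat\theta=0$, critical points are exactly special Lagrangians of phase $0$, where $\mathrm{Im}\,\Omega|_{L_0}=0$, so both summands appearing in a second differentiation which contain this pulled-back form as a factor vanish. Choosing a second variation whose initial velocity is $k$ with $\dot k|_{s=0}=0$, only the derivative of $\mathrm{Im}\,\Omega|_{L_s}$ survives. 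Using the pointwise identity $\mathrm{Im}\,\Omega|_{L_s}=\sin(\theta_s)\,\rho_s\,d\mathrm{vol}_{L_s}$ together with the Hamiltonian deformation law $\dot\theta = \mathcal{D}k$ (the weighted Laplacian that reduces to the usual $\Delta$ in the Calabi-Yau case), I obtain
\begin{equation*}
\delta^2\mathcal{S}(k,k)=\int_{L_0}k\,(\mathcal{D}k)\,\rho\,d\mathrm{vol}.
\end{equation*}
Integration by parts converts this into a weighted Dirichlet energy in $\nabla k$, which is nonnegative. The Solomon normalization $\int_L k\,\mathrm{Re}\,\Omega=0$ ensures that constants in the kernel are in fact zero.

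For geodesic convexity I would invoke the explicit Solomon geodesic equation from \cite{Solomon3}, which pins down $\dot h_t$ along a geodesic $L_t$ in terms of $h_t$ and the geometry of $L_t$ (and admits the interpretation via a one-parameter family of phase-$\pi/2$ special Lagrangians with boundary on $L_0\cup L_1$). Differentiating
\begin{equation*}
\frac{d}{dt}\mathcal{S}(L_t)=\int_{L_t}h_t\,\mathrm{Im}\,\Omega
\end{equation*}
once more, I would split the result into (i) the part coming from $\dot h_t$ paired against $\mathrm{Im}\,\Omega|_{L_t}$, and (ii) the part coming from the motion of $L_t$ paired against $h_t$. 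Substituting the geodesic equation into (i) and $\dot\theta=\mathcal{D}h_t$ together with the standard volume-form variation into (ii), the hope—by analogy with the Mabuchi picture in K\"ahler geometry—is that after integration by parts the indefinite terms cancel and one is left with a manifestly nonnegative expression of weighted Dirichlet type in $\nabla h_t$, giving convexity.

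The genuinely delicate step is the second one: I expect the calculation to succeed because Solomon's metric $\langle h,k\rangle=\int_L hk\,\mathrm{Re}\,\Omega$ is tuned precisely so that the $L^2$-geometry is compatible with the derivative of the complementary form $\mathrm{Im}\,\Omega$, but this compatibility has to be extracted by a careful combination of three ingredients: the geodesic equation, the Hamiltonian deformation formula for $\theta$, and the variation formula for the induced volume form. The almost calibrated hypothesis ensures that $\mathrm{Re}\,\Omega$ remains a nondegenerate volume form along $L_t$ so that the weights never degenerate, but near $\theta=\pm\pi/2$ the bookkeeping becomes subtle, and this is where I anticipate the main technical difficulty. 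Once these formulas are cleanly assembled, the positive semidefinite structure should emerge algebraically.
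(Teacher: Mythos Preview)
The paper does not prove this theorem. It is stated with the citation \cite{Solomon} and no proof is given; the surrounding text merely remarks that the analogy with Donaldson's picture for HYM should be clear. So there is no ``paper's own proof'' to compare your proposal against.

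That said, a brief comment on your sketch. Your first part is the standard computation and is essentially correct in outline: at a phase-$0$ special Lagrangian, differentiating $\int_L k\,\mathrm{Im}\,\Omega$ and using $\dot\theta = \Delta k$ (in the Calabi--Yau case) gives $\int_L k\,\Delta k\,d\mathrm{vol} = \int_L |\nabla k|^2\,d\mathrm{vol}\ge 0$. Your second part is more of a strategy than a proof: you correctly identify the ingredients (geodesic equation, variation of $\theta$, variation of the volume form) but the actual cancellation is the entire content of Solomon's computation, and you have not exhibited it. Saying the terms ``should'' cancel by analogy with the Mabuchi functional is a reasonable heuristic but not an argument; Solomon's proof in \cite{Solomon} carries out this calculation explicitly, and the algebra is not short. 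If you want to complete this, you need to write down the geodesic equation concretely and perform the integration by parts, rather than assert that a nonnegative quantity emerges.
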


The analogy with Donaldson's picture in section \ref{HYM} should be quite clear.

\subsubsection*{Limitations}

Unlike Thomas and Yau who based their bet primarily on the Floer theoretic or categorical aspects, which are closer to the quantum world of topological field theories, Solomon's picture is predominantly classical, and its chief limitation comes from fixing the topological type of the Lagrangian:

\begin{itemize}
\item There is no appearance of the brane structure, or the role of holomorphic curves.

\item Solomon works with exact isotopic Lagrangians, but the Thomas-Yau argument suggests it is more natural to work in a derived Fukaya category class.

\item The Solomon functional is only  well defined by passing to a highly nontrivial universal cover. In the very special case where $\omega$ and $\text{Im}(e^{i\hat{\theta}}\Omega)$ are exact forms on $X$, Solomon gave a formula \cite[Thm 1.3]{Solomon} that shows his functional is well defined on $\mathcal{O}$. We view the exactness on $\text{Im}(e^{i\hat{\theta}}\Omega)$ as too strong an assumption for applications.

\item The infinite dimensional Riemannian structure is incomplete in a much more severe way compared to the B-side analogues. This means that in non-pathological examples,
 we can reach the boundary of the exact isotopy class within finite distance in the Solomon metric, such that the Solomon functional remains finite.

This is geometrically very significant. In the LMCF approach, this would strongly suggest the formation of finite time singularity, which is a major difference with the HYM case. In the variational viewpoint, this incompleteness would negate all the favourable arguments from the convexity of the functional and the non-positivity of curvature, and suggest instead that the exact isotopy class is not an adequate framework for finding special Lagrangians. We will discuss later that a more promising variational framework needs to incorporate Lagrangians from the same \emph{derived Fukaya category class}, not just the same exact isotopy class.

\end{itemize}

\subsubsection*{Exact isotopy class versus derived category class}

The example below is closely related to the most symmetric case of the Lawlor necks (\cf section \ref{Lawlor}). It is also morally related to the Lawlor neck pinching singularity in the Joyce program \cite[section 3.5]{Joyceconj}.

\begin{eg}\label{Lagsumexample}
Consider two almost calibrated Lagrangians $L^1, L^2$ with a unique intersection point $p$, and there is a Darboux chart around $p$ modelled on $B_1\subset \C^n$, such that inside the chart $L^1, L^2$ the local setup is
\[
 L^1= e^{i\pi/n}\R^n, \quad L^2= \R^n, \quad \omega=\frac{\sqrt{-1}}{2} \sum dz_k\wedge d\bar{z}_k, \quad \Omega\approx \prod dz_k.
\]
Let $(L_t)_{0<t\ll 1}$ be a 1-parameter family of Lagrangian connected sums with neck length $O(t)$, which all agree with $L_1\cup L_2$ except in a compact subset in $B_1$. Inside $B_1$, we take the ansatz
\[
L_t=\{  ( t\gamma(s)x_1,\ldots, t\gamma(s) x_n        ) |x_1^2+\ldots x_n^2=1  \},
\]
where the curve $\gamma(s): \R\to \C$ can be chosen so that $L_t$ is almost calibrated and agrees with $L_1\cup L_2$ outside $B_{1/2}$. Clearly, $L_t$ are related by scaling inside $B_1$. The Hamiltonian vector field along $L_t$, which is really a section of $(TX/TL_t)|_{L_t}$, agrees with
$
 (\gamma(s)x_1,\ldots, \gamma(s) x_n)  
$
inside $B_1$ and is zero outside.\footnote{This is consistent because near the boundary of $B_1$, the position vector is a tangent vector of $L$, so vanishes in the quotient $TX/TL_t$.} The corresponding Hamiltonian function $h_t$ is $t^2$ times a smooth function of one variable $s$; a small caveat is that $h_t$ converges to two generally different constants along $L_1$ and $L_2$. Thus it takes finite distance in the Solomon metric to reach the limit $t\to 0$, and the Solomon functional remains finite, but the topology changes from $L_t$ to $L^1\cup L^2$.

\end{eg}

\begin{rmk}
The Hamiltonian functions $h_t$ along $L_t$ can be extended to global functions on $X$, with \[
\norm{h_t}_{C^0}=O(t^2), \quad \norm{dh_t}_{C^0} =O(t), \quad \norm{\nabla^2 h_t}_{L^\infty} \leq C.
\]
But in the $t\to 0$ limit, the second derivatives fail to be continuous at the origin, and indeed $L_t$ changes topology in the limit.

\end{rmk}

In this example the essential failure is the breakdown of smoothness. In view of Joyce's program, this suggests that the remedy is to allow for (Floer theoretically unobstructed) almost calibrated Lagrangians connected to each other not just by exact isotopies, but also surgeries such as Lagrangian connected sums. In these transitions the derived category class of the Lagrangian is unchanged, and the Thomas-Yau argument suggests the $D^bFuk(X)$ class is a natural framework to look for special Lagrangian representatives. The following fundamental question is thus relevant for the compatibility between the geometric and the categorical perspectives:

\begin{Question}\label{geometricconnectedness}
When are two almost calibrated unobstructed Lagrangian branes isomorphic in $D^b Fuk(X)$ connected by exact isotopies with surgeries?
\end{Question}

\begin{rmk}
The Joyce program suggests that running the LMCF would result in a sequence of exact isotopies and surgeries, to connect the initial Lagrangian to its infinite time limit, which one hopes to be the unique representative of the Harder-Narasimhan decomposition. In the almost calibrated case, there is no `collapsing zero object' in this process for homological reasons, so the surgeries should be continuous in the geometric measure theory sense. Since any two such Lagrangians within the same $D^bFuk(X)$ class are expected to flow to the same limit, they are supposedly connected to each other through a continuous family of unobstructed Lagrangians.
\end{rmk}

\subsection{Totally real geometry}\label{LotayPacini}

There is another interesting framework due to Lotay and Pacini \cite{Lotay} \cite{LotayPacini2}, which makes the holomorphic curves appear on the forefront, and exhibits good analogy with the classical GIT picture, by enlarging the space of Lagrangians into the space of \textbf{totally real submanifolds}.

Let $(M,\omega, J)$ be a K\"ahler manifold. A real $n$-dimensional submanifold $L$ is called totally real, if at every point $T_p L$ is transverse to $JT_pL$. Lotay and Pacini introduce a formal principal bundle $\mathcal{P}\to \mathcal{T}$, where $\mathcal{P}$ is the space of totally real immersions $\iota: L\to X$ isotopic to a given immersion, and $\mathcal{T}$ is formally its quotient by the orientation preserving diffeomorphism group $Diff^+(L)$ of $L$.\footnote{Lotay and Pacini did not worry about analytical issues involving the quotient; their picture is entirely formal.} There is a horizontal distribution, which at each point $\iota\in \mathcal{P}$ assigns the transverse bundle $JT\iota(L)$, so gives a way to lift tangent vectors from $\mathcal{T}$ to $\mathcal{P}$. They then define a \textbf{geodesic} to be a curve $\iota_t$ in $\mathcal{P}$, such that the tangent vector field $\partial_t\iota_t$ is parallel with respect to the horizontal distribution. As a caveat, here the word `geodesic' does not suggest a Riemannian metric. An alternative characterisation \cite[Lem 2.2]{Lotay} of a geodesic, is a 1-parameter family of totally real submanifolds $\iota_t: L\to X$, such that there is a fixed vector field $Y\in \Gamma(L,TL)$, with
\[
\frac{d}{dt} \iota_t= J\iota_{t*}Y, \quad [\iota_{t*} Y, J\iota_{t*} Y  ]=0.
\]
Geometrically, one can imagine $(\iota_t)_{0\leq t\leq1}$ sweeps out an $(n+1)$-dimensional submanifold with boundary, foliated into complexified integral curves of $Y$, which are \textbf{holomorphic curves} inside $X$.

Lotay and Pacini also define the J-volume functional on $\mathcal{T}$. 
Pointwise on a totally real submanifold $L$, a real cotangent vector of $L$ corresponds to a $(1,0)$-form in $\Lambda^{(1,0)}X|_{L}$, so taking the $n$-th wedge power, we have a canonical isomorphism between $\Lambda^nL\otimes_\R \C$ and $K_X|_L$. Now the K\"ahler structure on $X$ induces a Hermitian metric on $K_X$, and pointwise on $L$ an element of $K_X$ with unit Hermitian norm uniquely specifies a volume form $dvol_J$ on $L$. Their \textbf{$J$-volume functional} is
\[
\text{Vol}_J(L)=\int_L dvol_J.
\]
It is easy to show $\text{Vol}_J$ provides a lower bound $\text{Vol}_J(L)\leq \text{Vol}(L)$ to the Riemannian volume of $L$, with equality precisely when $L$ is Lagrangian.

In case $X$ is Calabi-Yau, this construction is particularly transparent. The totally real condition is equivalent to the pointwise non-vanishing of $\Omega$ when restricted to $L$, and 
\[
|\int_L \Omega|\leq 
\text{Vol}_J(L)=\int_L e^{-i\theta}\Omega \leq \text{Vol}(L),
\]
where the phase factor is chosen to make $e^{-i\theta}\Omega$ an orientation form on $L$. Consequently, any totally real submanifold in a Calabi-Yau manifold with $\theta=\hat{\theta}$ constant, with no need for the Lagrangian condition, is an absolute minimizer of the $\text{Vol}_J$ functional. This enormous space of critial points is closely related to the non-ellipticity of the critical point equation. The situation is somewhat better in a negative K\"ahler-Einstein ambient space, where the critical points coincide with minimal Lagrangians  \cite[section 5.5]{Lotay}.

One of their main results is 
\begin{prop}\label{LotayPaciniconvexity}
\cite[Thm 5.10]{Lotay} In a K\"ahler-Einstein ambient manifold with non-positive Ricci curvature, the $J$-volume functional is convex along the geodesics.

\end{prop}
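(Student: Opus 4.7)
The plan is to compute the second variation of $\text{Vol}_J$ along a Lotay--Pacini geodesic and isolate an ambient-curvature term controlled by $\text{Ric}(\omega)=c\omega$ with $c\le 0$. I would work locally in a neighborhood of the image of $\iota_{t}$, and pick a local holomorphic trivialization $\Omega$ of $K_X$ with $|\Omega|_{K_X}^{2}=e^{-\phi}$; the Chern curvature of $K_X$ is then $\partial\bar\partial\phi$, and the K\"ahler--Einstein hypothesis gives $\partial\bar\partial\phi=-\text{Ric}(\omega)=-c\,\omega$, a semipositive $(1,1)$-form. In this trivialization the $J$-volume form reads $dvol_J=e^{\phi/2}\,\text{Re}(e^{-i\theta_t}\,\Omega|_{L_t})$, where $\theta_t\colon L_t\to\R/2\pi\Z$ is the generalized Lagrangian angle determined by requiring $e^{-i\theta_t}\Omega|_{L_t}$ to be a positive multiple of the Riemannian volume form on $L_t$.

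First I would derive the first variation along a general direction $v=J\iota_{t*}Y$ using Cartan's formula together with $d\Omega=0$; after grouping the intrinsic and extrinsic terms one obtains an expression of the form $\frac{d}{dt}\text{Vol}_J(L_t)=\int_{L_t}\langle J\iota_{t*}Y,\,H_J\rangle\,dvol_J$, where the generalized mean curvature $H_J$ combines the extrinsic mean curvature of $L_t$ with the horizontal gradient of $\tfrac12\phi$, and thus encodes both the geometry of $L_t$ inside $X$ and the Hermitian structure on $K_X$. This is the first variation already recorded by Lotay--Pacini. Differentiating once more along the flow produces three pieces: (i) a measure-derivative piece coming from $\mathcal{L}_{v}(dvol_J)$; (ii) a bending piece recording the variation of $H_J$ through the second fundamental form of $L_t$; and (iii) an ambient-curvature piece from differentiating the factor $e^{\phi/2}$, producing $\partial\bar\partial\phi$ evaluated on the plane spanned by $\iota_{t*}Y$ and $J\iota_{t*}Y$.

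The decisive input is the geodesic condition $[\iota_{t*}Y,J\iota_{t*}Y]=0$, which foliates the $(n+1)$-dimensional trace of $\iota_{t}(L)$ by honest holomorphic curves, and which enables one to commute Lie derivatives past interior products so that a cancellation in (i)$+$(ii) reduces those pieces to a non-negative Bochner-type integrand (morally a squared covariant derivative of $Y$ along $L$). The main obstacle is precisely this bookkeeping: showing that no sign-indefinite cross terms remain once $[Y,JY]=0$ is invoked to identify the bending contribution with the curvature term evaluated against the holomorphic tangent plane. Granted this cancellation, the remaining piece (iii) equals $\partial\bar\partial\phi(\iota_{t*}Y,J\iota_{t*}Y)\,dvol_J=-c\,|Y|_{\omega}^{2}\,dvol_J\ge 0$, using $c\le 0$ and the identity $\omega(Y,JY)=|Y|^{2}$. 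Summing the three contributions gives $\frac{d^{2}}{dt^{2}}\text{Vol}_J(L_t)\ge 0$, establishing Proposition \ref{LotayPaciniconvexity}.
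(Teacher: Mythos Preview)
The proposition is stated as a citation to Lotay--Pacini, and the paper does not reproduce their full proof. What the paper does offer, later in the section on moduli space integrals, is a short alternative argument valid only in the Calabi--Yau case: one interprets the Lotay--Pacini geodesic as a universal family of holomorphic strips $\Sigma\simeq\R_s\times[0,1]_t$, defines the holomorphic function $F$ on $\Sigma$ via $dF=\Omega(\cdot,v_1,\ldots,v_{n-1})$ for first-order deformations $v_i$, and writes
\[
\text{Vol}_J(\mathcal{C}_t)=\int_{\mathcal{M}}\int_{\R_s\times\{t\}}\Bigl|\frac{\partial F}{\partial s}\Bigr|\,ds.
\]
Since $\partial F/\partial s$ is holomorphic, $|\partial F/\partial s|$ is subharmonic on the strip, and integrating in $s$ (using decay at the ends) gives a convex function of $t$. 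No second-variation bookkeeping is needed.

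Your proposal instead follows the original Lotay--Pacini route: compute the second variation directly, separate it into a Bochner-type piece and an ambient-curvature piece, and use the K\"ahler--Einstein hypothesis $\text{Ric}=c\omega$ with $c\le 0$ to control the latter. This is the more general argument --- it covers strictly negative $c$, which the paper's $\Omega$-based approach does not obviously handle since it relies on a global holomorphic volume form. However, the key step in your outline, the cancellation of sign-indefinite cross terms in (i)+(ii) via $[Y,JY]=0$, is precisely what the paper calls ``rather heavy calculations'' and is left as a black box in your sketch; you have identified the correct structure but not carried out the part that does the work. The paper's argument trades that generality for a one-line proof via subharmonicity in the Ricci-flat case.
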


There is also a formal GIT picture \cite[section 6]{Lotay}: to some extent the space $\mathcal{P}$ can be viewed as the infinitesimal complexification of $Diff^+(L)$, with the space of orbits $\mathcal{T}$. The J-volume functional is formally a K\"ahler potential on $\mathcal{P}$, and the $Diff^+(L)$-moment map gives rise to critical points of the J-functional.

\subsubsection*{Limitations}

From the viewpoint of special Lagrangian geometry, the limitations of the Lotay-Pacini picture  are:
\begin{itemize}
\item The Lagrangians are largely relegated to the back stage. As clear from the above, in the Calabi-Yau case the space of critical points is too enormous. Their framework may be more useful in the negative K\"ahler-Einstein case, but the lack of ellipticity is a severe obstacle.

\item 
There is no attempt to link up with Floer theory.
As such, they lack satisfactory existence criterions for the holomorphic curves, despite making some limited progress in special cases. In fact, Lotay and Pacini's geodesics seem too oversimplified from the Floer theoretic viewpoint: one needs to address transversality questions in general, and holomorphic disc breaking should occur in moduli spaces of dimension $\geq 1$.

\end{itemize}

\subsection{Analogy with tunneling effect}\label{Tunneling}

\footnote{This section is meant to be purely inspirational, and its aim is to present some analogies and comparisons with no claim to physical accuracy.} Notice the Thomas-Yau argument is a little mysterious from the following classical perspective: how could two Lagrangians in possibly different Hamiltonian isotopy classes communicate with each other? This situation seems conceptually similar to the phenomenon of tunneling in quantum mechanics: there may be several minima of the classical potential function separated by potential wells, but there is a nontrivial quantum amplitude for the particles to move in between, thereby removing the ground state degeneracy. In the Thomas-Yau setup, we have two putative special Lagrangians, which are analogous to the energy minima points,  and the Thomas-Yau uniqueness statement is similar to the removal of degeneracy, resulting in a unique ground state.

The physicists tell us that branes are dynamical objects and can fluctuate. If so, it might make sense to ask about the amplitude for a brane to start with a given configuration and end with another. Now if Lagrangian branes behave like classical particles moving on an infinite dimensional space such as Solomon's space $\mathcal{O}$, one might expect Solomon's formal picture to be relevant for describing this amplitude, and the incompleteness of $\mathcal{O}$ would suggest a nontrivial amplitude to tunnel outside $\mathcal{O}$ to  another Hamiltonian isotopy class. As a conflicting viewpoint, the 
use of Floer theory in the Thomas-Yau argument suggests the Lagrangian branes communicate by the strings stretched between them, so one might expect the amplitudes to be computed in terms of worldsheet integrals. Lotay and Pacini's geodesics fit this viewpoint better, and have the major advantage of making sense outside a given Hamiltonian isotopy class.




Tunneling effects made a famous appearance in Witten's interpretation of Morse theory \cite{Witten} in terms of supersymmetric quantum mechanics, with an eye towards applications in quantum field theory. In Witten's  context, the tunneling amplitude between two critical points of a Morse function is to leading order proportional to $e^{-S}$, where $S$ is proportional to the difference of the two critical values.  Now Solomon's functional has some similarity with a Morse function whose only critical points are minima. In sections \ref{Solomonrevisited}, \ref{ModuliintegralSolomonsection} we will \emph{unify} the Solomon functional with the Lotay-Pacini geodesics, in the special case of exact Lagrangians. Could the Solomon functional have any physical interpretation in terms of the logarithm of the tunneling amplitudes? \footnote{As a grain of salt, Witten's interpretation concerns supersymmetric QFT, while the Thomas-Yau picture concerns the dynamics of Lagrangian branes, which is a target space perspective on string theory, which lacks a fundamental path integral formulation. As such our suggested physcial interpretation is not logically rigorous, but only a guess.}


\section{Moduli space of holomorphic curves}\label{Moduliholocurves}

The theme of this Chapter is the \emph{bordism currents} produced from the $(n+1)$-dimensional universal family over the $(n-1)$-dimensional moduli spaces of holomorphic curves. 
This theme unifies the search for the Floer theoretic obstructions, and the problem of extending the Solomon functional to the derived Fukaya category setting. One of our central techniques is \emph{integration over the moduli spaces}, to prove both identities and inequalities. The primary setting of this Chapter is exact graded compact Lagrangians inside Calabi-Yau Stein manifolds, with occasional comments on the compact Calabi-Yau case.

The \emph{Floer theoretic obstructions} are necessary conditions to the existence of special Lagrangians in the exact and almost calibrated setting. We will give a large number of a priori heuristic principles to place very stringent constraints on what kind of obstructions we are looking for, then prove the Floer theoretic obstructions under the extra hypotheses of automatic transversality and a positivity condition (\cf section \ref{Floertheoreticobstructions}), and then compare our picture to Joyce's proposal on Bridgeland stability (\cf section \ref{TowardsBridgeland}). The converse direction, in search of \emph{sufficient conditions} for the existence of special Lagrangians, will be laid out in Chapter \ref{Variational}.

We also present two perspectives on extending the Solomon functional to unobstructed Lagrangians within the same $D^bFuk(X)$ class. The more elementary perspective (\cf section \ref{Solomonrevisited}) works only in the exact setting, and implies the first variation formula under Hamiltonian isotopies in a rather straightforward manner. The second perspective (\cf section \ref{ModuliintegralSolomonsection}) represents the Solomon functional as an integral over the moduli space of holomorphic curves, which we hope can be generalized to the compact Calabi-Yau setting. Section 
\ref{Variantsandobservations} contains
more applications of the moduli integral technique.

\begin{rmk}
The two \emph{assumptions} of automatic transversality (which roughly means that no perturbation of almost complex structure is needed, \cf section \ref{Automatictransversalitypositivity}) and the positivity condition (\cf section \ref{Positivitycondition}) will feature prominently in the major results of this Chapter. Correspondingly, we will not dwell on the detail of the perturbation schemes aimed at overcoming the transversality issues, which are painstakingly carried out in \cite{JoyceAkaho}\cite{Woodward1}\cite{Woodward2}\cite{CharestWoodward}\cite{CharestWoodward2}. The expert readers may convert to their favourite schemes as they prefer. For more on our clockwise conventions such as gradings and signs, which differ from many symplectic texts, see the Appendix.

\end{rmk}


\subsection{Lotay-Pacini picture revisited}\label{LotayPacinirevisited}

From a Floer theoretic perspective, the main insight of Lotay and Pacini (\cf section \ref{LotayPacini}) is that given two Lagrangians $L, L'$ decorated  with suitable brane structures, one should look for a \emph{family} of holomorphic curves  with boundary on $L$ and $L'$,  which pass through any generic point of $L$ and $L'$. In their formal picture, such families are called `geodesics'. What Lotay and Pacini did not provide is a good existence criterion for their geodesics. Now, even though we will soon specialize to a much simpler setting, we wish to explain how their geodesics fit into the Thomas-Yau-Joyce picture.

In the setup of Bridgeland stability, the central charge is defined as a homomorphism from the Grothendieck group of a triangulated category to $\C$, which factorizes through a finitely generated lattice, viewed as a numerical Grothendieck group. In view of the application to special Lagrangians, the triangulated category is $D^b Fuk(X)$, and the numerical Grothendieck group should be a subgroup of the homology group modulo torsion $H_n(X, \Z)/\text{tors}$. In particular, 
\begin{itemize}
\item   If two Lagrangian branes $L,L'$ define the same object in $D^b Fuk(X)$, then this picture predicts them to lie in the same homology class in $H_n(X)$.

\item If $L_1\to L\to L_2\to L_1[1]$ is a distinguished triangle, then $[L_1]+[L_2]=[L]$ in $H_n(X)$. 
\end{itemize}

 This means Floer theory must provide a bordism current between $L, L'$ (resp. $L$ and $L_1\cup L_2$), namely an $(n+1)$-dimensional integration current $\mathcal{C}$ with $\partial \mathcal{C}= L-L'$ (resp. $\partial \mathcal{C}=L-L_1-L_2$).\footnote{The boundary of integration currents do not detect contributions from supports of small enough dimension. In this respect they are similar to pseudocycles, although when we generalize to Lagrangians with very weak regularity later, the language of currents may be more natural.} In Floer theory, cycles are constructed from moduli spaces of holomorphic curves and evaluation maps, so this current $\mathcal{C}$ should come from families of holomorphic curves, possibly with highly sophisticated perturbations and virtual techniques. This suggests the Lotay-Pacini geodesics, construed in this homological sense, should be part of the Fukaya category foundation
necessary for the Thomas-Yau-Joyce program.


\subsubsection*{High brow viewpoint}

The claim that \emph{the $K$-theory of the derived Fukaya category of a symplectic Calabi-Yau manifold (exact with suitable convexity at infinity, or compact) factorizes through homology $H_n(X)$ modulo torsion} seems well known to symplectic topology experts, although a precise reference seems rather difficult to find. We now sketch a high brow viewpoint explained to the author by P. Seidel and S. Rezchikov, and will later explain in more detail a more pedestrian approach in the exact setting. The claim is a formal consequence of the existence of maps
\[
K_0(D^bFuk(X)) \xrightarrow{ch_0} HH_0(D^bFuk(X))\]
\[
HH_0(D^bFuk(X)) 
\xrightarrow{OC} QH^n(X; \Lambda_{nov})\simeq H_n(X)\otimes \Lambda_{nov}.
\]
Here $HH_0(D^bFuk(X))$ is the Hochschild homology in degree zero. The first map sends the K-theory class of an unobstructed  Lagrangian brane $L$ (compact, graded, oriented, with spin and bounding cochain structure) to the unit $1_L\in HF^0(L,L)\to HH_0(D^bFuk(X)  )$; the well definition of this map is an essentially algebraic fact. Suppose $L, L'$ are isomorphic in $D^bFuk(X)$, then there are closed morphisms $\alpha\in CF^0(L,L')$ and $\beta\in CF^0(L',L)$ whose derived category compositions are equal to $1_L\in HF^0(L,L)$ and $1_{L'}\in HF^0(L',L')$ in cohomology. The Hochschild differential of $\beta\otimes \alpha$ exhibits $1_L-1_{L'}$ as a coboundary in the Hochschild chain complex, so $1_L=1_{L'}$ in $HH_0(D^b Fuk(X))$. Some additional calculation shows the compatibility with distinguished triangles.

The second map is a special case of the open-closed string map, and in general requires working over the Novikov field. 
 One then needs the claim that $1_L$ is sent to the homology class $[L]\in H_n(X)$, without quantum correction. The intuitive meaning of the open-closed string map is to consider holomorphic discs with boundary on $L$ with an unconstrained boundary marked point, and find the cycle in $X$ traced out by an interior marked point. The claim amounts to saying that the only contribution comes from constant maps. Unfortunately, the author is unable to locate a general reference. Granted this claim, we would get by composition a map from $K_0(D^bFuk(X))$ to $H_n(X; \Lambda_{nov})$ which sends the K-theory class of $L$ to the homology class $[L]$.

\subsubsection{The exact embedded Lagrangian case}

We specialize to the setting of Stein manifolds, and all Lagrangians are assumed to be exact, graded and compact, and in particular carry an orientation (\cf the Appendix for some basic Floer theory). The local systems have holonomy in $\R$, $\Q$ or $\Z$. We consider two transverse embedded Lagrangians $L,L'$ in the \emph{same derived Fukaya category class}. By definition, we have closed morphisms $\alpha\in CF^0(L,L')$ and $\beta\in CF^0(L', L)$; we sometimes view the Lagrangian intersections in $\beta$ as degree $n$ outputs. Morever, in terms of the product structure on cohomology
\[
\begin{cases}
HF^0(L,L')\otimes HF^0(L',L)\to HF^0(L',L'),\\
 HF^0(L',L)\otimes HF^0(L,L')\to HF^0(L,L),
\end{cases}
\]
the composition $\alpha\circ \beta=1_{L'}$ and $\beta\circ \alpha=1_{L}$. These conditions completely characterize isomorphism in $D^b Fuk(X)$. Our goal is to explain

\begin{prop}\label{bordism}
There is a bordism current $\mathcal{C}$ such that $\partial \mathcal{C}=L-L'$ in the sense of currents.
\end{prop}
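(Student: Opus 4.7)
The plan is to realise $\mathcal{C}$ as the image of the Hochschild $2$-chain $\beta\otimes\alpha$ under a chain-level open--closed map. At cohomology the hypothesis $\beta\circ\alpha=1_L$, $\alpha\circ\beta=1_{L'}$ gives $\partial_{HH}(\beta\otimes\alpha)=1_L-1_{L'}$, so at chain level $e_L-e_{L'}$ (where $e_L$, $e_{L'}$ are chain representatives of the units) differs from $\partial_{HH}(\beta\otimes\alpha)$ only by $m_1$-boundaries coming from the relations $m_2(\beta,\alpha)=e_L+m_1\gamma_L$ and $m_2(\alpha,\beta)=e_{L'}+m_1\gamma_{L'}$. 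Applying the open--closed map $\mathrm{OC}$, which is a chain map sending $e_L\mapsto L$ and $e_{L'}\mapsto L'$ as $n$-currents in the exact setting, then realises $L-L'$ as the boundary of an $(n+1)$-current.

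For the geometric construction, for each $p\in L\cap L'$ appearing in $\alpha$ and $q\in L'\cap L$ appearing in $\beta$, I would consider the moduli $\mathcal{M}(p,q)$ of pseudo-holomorphic strips $u\colon\R\times[0,1]\to X$ with $u(\cdot,0)\subset L$, $u(\cdot,1)\subset L'$, asymptotic to $p$ at $-\infty$ and $q$ at $+\infty$, equipped with one free interior marked point $z_{\mathrm{int}}$. The virtual dimension is $n+1$: the Fredholm index of the linearised $\bar\partial$ operator is $\mu(q)-\mu(p)=n$ (since $p$ has Floer degree $0$ while $q$, viewed as an output, contributes degree $n$), the free interior point adds $2$, and the strip translation quotient subtracts $1$. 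The evaluation $\mathrm{ev}\colon u\mapsto u(z_{\mathrm{int}})$ pushes the oriented fundamental class of $\mathcal{M}(p,q)$ forward to an integration current on $X$ of dimension $n+1$. Summing with the coefficients of $p$ in $\alpha$ and $q$ in $\beta$, together with the analogous correction currents built from $\gamma_L$ and $\gamma_{L'}$, defines $\mathcal{C}$.

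To compute $\partial\mathcal{C}$, compactify via Gromov--Floer. The codimension-one strata come in three flavours. First, the interior marked point can migrate to $\partial D$; this places the evaluation on $L\cup L'$ and, after summing, reproduces $\mathrm{OC}(e_L)-\mathrm{OC}(e_{L'})=L-L'$ as $n$-currents, because any nonconstant disc with boundary on an exact Lagrangian has strictly positive symplectic area, ruling out nonconstant contributions in the relevant zero-energy limit so only diagonal constant maps survive. Second, a strip may break at one of the corners: breaking off a Floer strip produces $m_1(\alpha)$- or $m_1(\beta)$-type terms, which vanish because $\alpha$ and $\beta$ are cycles; breaking off a holomorphic triangle produces $m_2(\beta,\alpha)$- and $m_2(\alpha,\beta)$-type terms, which combine with $\mathrm{OC}(\gamma_L)$, $\mathrm{OC}(\gamma_{L'})$ via the chain-map property of $\mathrm{OC}$ to cancel exactly the discrepancy between $m_2(\beta,\alpha)$ and $e_L$ (respectively $m_2(\alpha,\beta)$ and $e_{L'}$). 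Third, disc bubbling is ruled out by exactness. The net outcome is $\partial\mathcal{C}=L-L'$ as required.

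The principal obstacle is analytic rather than algebraic: one needs $\mathcal{M}(p,q)$ and its relatives to be cut out transversely of the expected dimension and to admit a Gromov--Floer compactification whose codimension-one boundary is exactly what was listed above. For generic tame almost complex structures transversality is standard for strip and triangle moduli, but assembling the full chain-level construction of $\mathrm{OC}$ and verifying the identification $\mathrm{OC}(e_L)=L$ on the nose typically requires either the automatic transversality hypothesis flagged in the Chapter's preamble, or the heavier perturbation technology developed in \cite{JoyceAkaho,Woodward1,Woodward2,CharestWoodward,CharestWoodward2}. A subsidiary point is choosing the chain-level unit in a geometrically transparent model (e.g.\ Morse or pearl complex) so that $\mathrm{OC}(e_L)$ is literally the fundamental class of $L$ as an integration current; otherwise the identity $\mathrm{OC}(e_L)=L$ holds only up to an explicit chain homotopy, which can be absorbed into $\mathcal{C}$ without loss.
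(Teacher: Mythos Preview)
Your approach via the open--closed map is essentially the ``high brow viewpoint'' the paper itself sketches immediately before this proposition (attributing it to Seidel and Rezchikov), before announcing it will ``explain in more detail a more pedestrian approach in the exact setting''. The pedestrian route is the proof actually given, so you have taken the alternative the author deliberately set aside.

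The paper builds $\mathcal{C}$ as the evaluation image of the universal family over the $(n-1)$-dimensional moduli $\overline{\mathcal{M}(\alpha,\beta)/\R}$ and analyses $\partial\mathcal{C}$ directly rather than invoking a packaged chain-map identity. The key step you do not address is why strip breakings at intermediate intersections $r$ with $2\le\deg r\le n-2$ contribute nothing: the paper observes that the resulting boundary strata push forward into the images of the universal families over $\overline{\mathcal{M}(\alpha,r)/\R}$ and $\overline{\mathcal{M}(r,\beta)/\R}$, each of dimension at most $n-1$, and an $n$-current supported in a set of Hausdorff dimension $\le n-1$ vanishes. This support-dimension trick is exactly where working with integration currents (rather than singular chains) pays off. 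Your appeal to $\partial\circ\mathrm{OC}=\mathrm{OC}\circ\partial_{HH}$ hides this step; making that identity rigorous at the level of currents essentially reproduces the paper's argument. The paper also pins down the coefficients more directly: once $\partial\mathcal{C}$ is known to be supported on $L\cup L'$ and closed, it must be a constant linear combination of the two cycles, and the constants are read off from $\beta\circ\alpha=1_L$, $\alpha\circ\beta=1_{L'}$ interpreted as signed counts of strips through a generic point, not via your ``zero-energy limit'' reasoning (which conflates the universal-family boundary $\partial\Sigma$ with the disc-splitting boundary of the $\mathrm{OC}$ moduli). Finally, in the embedded setting of this proposition $CF^{-1}(L,L)=0$, so your correction chains $\gamma_L,\gamma_{L'}$ are identically zero; they become relevant only in the immersed generalisation treated in the next subsection.
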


The bordism current will be constructed from \emph{universal families of (perturbed) holomorphic strips} with boundary on $L$ and $L'$, and with ends at $\alpha$ and $\beta$ (meaning that the ends of the strip converge to intersection points of $L,L'$ of degree $0$ and $n$ respectively, and $\alpha, \beta$ encode the weighting factors to the contribution of these intersection points). We will assume all the usual transversality assumptions in Floer theory are satisfied, so the compactified moduli space $\overline{\mathcal{M}(\alpha,\beta)/\R}$ of perturbed holomorphic strips up to domain translation  is a smooth manifold with boundary and corners, of dimension $(n-1)$. The notation really stands for a formal sum of many moduli spaces, coming from the summands of $\alpha,\beta$. The universal family is fibred over this moduli space $\overline{\mathcal{M}(\alpha,\beta)/\R}$, whose fibres are the solutions to the Cauchy-Riemann equation (with domain dependent perturbations of the almost complex structure), which we call perturbed holomorphic curves. The fibres over the boundary of the moduli space are broken holomorphic curves. The orientation on the universal family is induced from the complex orientation on $\Sigma$ and the orientation on the moduli space, up to an extra minus sign (\cf Example \ref{FLoerproduct2} for conventions). Upon evaluation to $X$ we obtain an $(n+1)$-dimensional current $\mathcal{C}$.



\begin{rmk}
If the Fukaya category is defined over $\Z$, then all the weighting factors to the various universal families are all \emph{integers}, and $\mathcal{C}$ is naturally an \emph{integral current}. If we use Fukaya categories over $\Q$ or $\R$ instead, then $\mathcal{C}$ is only guaranteed to be a finite $\Q$ (resp. $\R$) linear combination of integral currents.
\end{rmk}

Our main task is to understand the boundary of $\mathcal{C}$. There are two  sources of boundaries:
\begin{itemize}
\item The holomorphic curves themselves have boundary along $L\cup L'$. This boundary contribution is always supported on $L\cup L'$. 

\item The compactified moduli spaces have boundary due to holomorphic strip breaking.
\end{itemize}

In schematic notation, the boundary of the moduli space is described by
\begin{equation}
\partial (\overline{ \mathcal{M}(\alpha,\beta)/\R})= \bigcup_r \overline{\mathcal{M}(\alpha,r)/\R} \times \overline{\mathcal{M}(r, \beta)/\R}.
\end{equation}
Here $r$ can range from all intersection points of degree between $1$ and $n-1$. The notation $\mathcal{M}(\alpha, \beta)/\R$ stands  for a weighted sum of moduli spaces, with weighting coming from the holonomy factors of the local systems.

Next comes a crucial observation. Although $2\leq \deg r\leq n-2$ give rise to boundaries of the moduli space $\overline{\mathcal{M}(\alpha,\beta)/\R}$, their contributions to $\partial \mathcal{C}$ are contained in the universal families associated to $\overline{\mathcal{M}(\alpha,r)/\R}$ and $\overline{\mathcal{M}(r,\beta)/\R}$. These smaller moduli spaces have dimension at most $n-3$, and the corresponding universal families have dimension at most $n-1$. 
By rectifiability considerations, the $n$-dimensional current $\partial \mathcal{C}$ cannot receive contributions from at most $(n-1)$-dimensional supports, so such disc breakings do not contribute to $\partial \mathcal{C}$.

Now for $\deg r=1$, the moduli spaces 
$\overline{\mathcal{M}(\alpha,r)/\R}$ are zero dimensional, so their presence merely amounts to some counting factors.  The condition for $\alpha$ to be \emph{closed} in $CF^0(L,L')$ is equivalent to the weighted count of $\overline{\mathcal{M}(\alpha,r)/\R}$ being zero. This weighted sum appears as the coefficient of the $n$-dimensional current defined by the universal family over $\overline{\mathcal{M}(r,\beta)/\R}$. Thus we see that $r\in CF^1(L,L')$ does not contribute to $\partial \mathcal{C}$. Similarly, the condition for $\beta$ to be closed in $CF^0(L',L)$ implies that $r\in CF^1(L',L)$ (alternatively viewed as degree $n-1$ intersections from $L$ to $L'$) does not contribute to $\partial \mathcal{C}$. 
In summary, $\partial \mathcal{C}$ must be an integration cycle supported on $L\cup L'$.

Since $\partial \mathcal{C}$ is itself the boundary of a current, it must be closed. This explains why $\partial \mathcal{C}$ is a constant linear combination of the integration cycle of $L$ and $L'$, instead of some nontrivial function times these cycles. The constant coefficients can be pinned down by counting the number of holomorphic strips passing through a given generic point on $L$ (resp. $L'$), and the choice of the generic point does not matter. Such counts are precisely the geometric interpretation of the Floer product $HF^0(L,L')\otimes HF^0(L',L)\to HF^0(L',L'),$ and $
HF^0(L',L)\otimes HF^0(L,L')\to HF^0(L,L)$ (\cf Example \ref{FLoerproduct}). When the moduli space orientations are taken into account,  we obtain $\partial \mathcal{C}=L-L'$ (\cf Example \ref{FLoerproduct2} for an exposition on signs).

\begin{rmk}
(Homological uniqueness of the bordism current)
Some auxiliary perturbation data goes into the construction of $\mathcal{C}$ due to the need to ensure transversality. If we fix $L,L'$, but change the domain dependent almost complex structures, then the difference of two bordism currents $\mathcal{C}-\mathcal{C}'$ has zero boundary in the sense of currents. Recall that Stein manifolds have the homotopy type of a CW complex of dimension $\leq n$, and thus $H_k(X)=0$ for $k\geq n+1$, so $\mathcal{C}-\mathcal{C}'$ must be the boundary of an $(n+2)$-dimensional current. For an alternative viewpoint, this  $(n+2)$-dimensional current can be concretely provided by  parametrized families of pseudoholomorphic curves (\cf Remark \ref{3Lag}).

\end{rmk}

\subsubsection{Immersed case}\label{LotayPaciniimmersed}

Still working in the exact setting, we now allow $(L,b),(L',b')$ to be unobstructed immersed Lagrangians with transverse self intersections. Assume they intersect transversally, and define isomorphic objects in $D^bFuk(X)$. We now wish to explain why Proposition \ref{bordism} should continue to hold even in the immersed setting, without delving too deep into the specifics of the perturbation schemes and transversality issues. For some background on the immersed Floer theory, see the Appendix \ref{immersedFukaya}.

The isomorphism condition gives us closed morphisms $\alpha\in CF^0(L,L')$ and $\beta\in CF^0(L',L)$ whose cohomological compositions give the identities. At the chain level,
\[
m_2^{b,b'}(\beta, \alpha)= 1_L - m_1^{b}(\gamma),\quad m_2^{b',b}( \alpha, \beta)= 1_{L'} + m_1^{b'}(\gamma')
\]
where $1_L, 1_{L'}$ stand for the geometric units (represented by a sum of local maximum points of Hamiltonian functions on $L,L'$ respectively), and $\gamma, \gamma'$ are elements in $CF^{-1}(L,L)$, $CF^{-1}(L',L')$ respectively. Notice in the almost calibrated case, $\gamma, \gamma'$ would be both zero, since there are no self intersections of degree $-1$.

As before, the bordism current shall be constructed from the universal family of (perturbed) holomorphic curves with boundary on $L$ and $L'$. But instead of working only with holomorphic strips, we need holomorphic polygons with corners not only at intersection points in $\alpha, \beta$, but also at points in $b, b'$. In addition to the holomorphic strip moduli space $\mathcal{M}(\alpha, \beta)/\R$, we also need the moduli space of polygons $\mathcal{M}(b, \ldots b, \alpha, b',\ldots b', \beta )$, and $\mathcal{M}(b,\ldots, b, \gamma)$, $\mathcal{M}(b',\ldots b', \gamma')$. 
The notation here is a shorthand for a weighted sum of many moduli spaces of polygons. Since the bounding cochain elements have Floer degrees one, these moduli spaces all have  dimension $n-1$. The energy of the polygons satisfies the topological formula (\ref{topologicalenergy2}), so by the Novikov positivity requirement of bounding cochains, there is a uniform a  priori energy bound once $\alpha,\beta,\gamma,\gamma'$ are given, whence there are in fact only finitely many moduli spaces involved. Each moduli space provides a universal family of holomorphic curves, and the sum of all the contributions defines an $(n+1)$-dimensional current $\mathcal{C}$. For sign conventions, see the Appendix \ref{immersedFukaya}, and Example \ref{FLoerproduct2}.

The boundary of $\mathcal{C}$ comes from two sources: the boundary of the individual holomorphic curves which lie on $L\cup L'$, and the boundary of the compactified moduli spaces. In the exact setting, there are no sphere bubbles. As in the embedded case, for support dimension reasons, the boundaries of the compactified moduli space that can contribute to $\partial \mathcal{C}$, is caused by curve breaking into two pieces arising in $0$ and $n-2$ dimensional moduli spaces. The cancellation of these contributions is very similar to the standard argument for the Floer differential to square to zero (\cf the Appendix \ref{immersedFukaya}):
\begin{itemize}
\item For breakings at a nodal point mapping to $CF^1(L,L')$ (resp. $CF^1(L',L)$), the contributions vanish due to the closedness condition $m_1^{b,b'}(\alpha)=0$ (resp. the closedness of $\beta$).

\item For breakings at degree 2 self intersection point on $L$ (resp. $L'$), the contributions vanish due to the Mauer-Cartan equation on $b$ (resp. $b'$).

\item A new way of disc breaking/splitting, is at a degree zero self intersection $r$ on $L$ (the $L'$ case being entirely similar). The discs of $\mathcal{M}(b, \ldots b, \alpha, b',\ldots b', \beta )$ can break into a virtual dimension zero disc with input at $b,\ldots, \alpha, b',\ldots \beta,b,\ldots b$ and output at $r$, and a virtual dimension $n-1$ disc with input corners at $b,\ldots b, r$. On the other hand, the discs of $\mathcal{M}(b,\ldots, b, \gamma)$ can break into a virtual dimension zero disc with input at $b,\ldots, \gamma,b,\ldots b$ and output at $r$, and a virtual dimension $n-1$ disc with input corners at $b,\ldots b, r$. These two effects cancel out.

\end{itemize}

After the cancellation of all moduli space boundaries, the only contributions $\partial \mathcal{C}$ are supported in $L\cup L'$.
As in the embedded case, $\partial \mathcal{C}$ is locally a constant multiple of the underlying cycles of $L,L'$.
The interpretation of the geometric unit pins down $\partial \mathcal{C}=L-L'$ as in the embedded case.

\begin{eg}
If $L$ and $L'$ are disjoint Lagrangian branes which both define the zero object in $D^bFuk(X)$, then $\alpha, \beta$ are both zero, and $\mathcal{C}$ comes entirely from the $\gamma,\gamma'$ contributions. Of course, zero Lagrangian objects have zero homology class, which cannot happen in the almost calibrated case.
\end{eg}

\begin{rmk}
If there are degree $-2$ self intersections of $L$, then the choice of $\gamma$ is only unique up to $m_1^b$ of some element in $CF^{-2}(L,L)$. The corresponding choice of $\mathcal{C}$ would be ambiguous by the boundary of an $(n+2)$-dimensional integration current. As a closely related issue, our conditions on $\alpha, \beta$ are merely cohomological, so in general we can adjust $\alpha$ and $\beta$ by coboundary terms, which would affect $\mathcal{C}$ also by the boundary of an $(n+2)$ dimensional current. If we impose $L,L'$ to be almost calibrated, then there are no $CF^{-1}(L,L')$ elements to begin with, and these phenomena do not happen.

On the other hand, $\mathcal{C}$ still depends on the choice of local systems and bounding cochains, which may contribute nontrivial holonomy factors. Gauge equivalent choices affect $\mathcal{C}$ by the boundary of an $(n+2)$-dimensional current. One may naturally ask:

\begin{Question}\label{gaugerepresentative}
Up to gauge equivalence of bounding cochains and local systems, is there an optimal representative of $\mathcal{C}$?
\end{Question}

\begin{Question}\label{preferredbordism}
Given an exact isotopy with surgery between $L$ and $L'$ among unobstructed Lagrangians, is there a preferred choice of $\mathcal{C}$ (\cf Question \ref{geometricconnectedness})?
\end{Question}


\end{rmk}

\subsubsection*{Distinguished triangles}

In our convention, an immersed Lagrangian can be made up of \emph{several connected components}. A prototypical situation is when $L'$ is the union of two immersed Lagrangians $L_1$ and $L_2$, with some degree one intersections in $CF^1(L_2,L_1)$ arising as part of the bounding cochain data of $L'$. When the brane structure is taken into account, we can view $L'$ as a twisted complex built from $L_1,L_2$ (with bounding cochains $b_1,b_2$ suppressed in the notation) and a closed morphism $\gamma\in CF^1(L_2,L_1)=CF^0(L_2[-1],L_1)$. Inside $D^bFuk(X)$,
\[
L\simeq L'\simeq \left( \begin{matrix}
(L_2, b_2) & \\
\gamma & (L_{1}, b_{1}) 
\end{matrix} \right).
\]
We have a distinguished triangle
\[
L_2[-1]\xrightarrow{\gamma} L_1\to \text{Cone}(\gamma) \xrightarrow{} L_2,
\]
and $L\simeq L'\simeq \text{Cone}(\gamma)$. Rotating the triangles, we get another \textbf{distinguished triangle}
\[
L_1\to L\to L_2\xrightarrow{\gamma} L_1[1].
\] 
The bordism current between $L$ and $L'$ is an $(n+1)$-dimensional integration current, with
$
\partial \mathcal{C}=L- L_1-L_2.
$
In particular, this explains that the Grothendieck group of $D^b Fuk(X)$ should factorize through $H_n(X)$.

Here is a more geometric perspective on the bordism currents arising from distinguished triangles, which is very close to Thomas and Yau's original viewpoint, where the fundamental phenomenon is \textbf{Lagrangian breaking}. In the simplest case, we can imagine $L$ is  isomorphic in $D^b Fuk(X)$ to the Lagrangian connected sum $L_1\#L_2$ (beware our  convention for $L_1\#L_2$ is the same as Thomas-Yau \cite{Thomas} but different from many symplectic texts), so that we can construct a bordism current between $L$ and $L_1\#L_2$. Now when $L_1\#L_2$ deforms, the Lagrangian handle part can shrink, and in the limit  $L_1\#L_2$ can break into two components $L_1\cup L_2$ (\cf Example \ref{Lagsumexample}). The bordism current between $L$ and $L_1\cup L_2$ should simply be the limit of the sequence of bordism currents. This picture illustrates that even when the topology of the Lagrangians can change under non-smooth convergence, the bordism currents should persist in a continuous way.


One can proceed with the case of \emph{many Lagrangians}, namely we take the immersed Lagrangian $L'$ to be the \textbf{twisted complex} (\cf the Appendix section \ref{immersedFukaya})
\begin{equation}\label{twistedcomplex}
\left(
\begin{matrix}
(L_N, b_N) &  & \\
b_{N,N-1} & (L_{N-1}, b_{N-1}) & 
\\
\ldots
\\
b_{N,1} & b_{N-1,1} &\ldots & b_{2,1}  & (L_1,b_1)
\end{matrix} \right).
\end{equation} 
In this case, assuming $L$ is isomorphic to $L'$ in $D^b Fuk(X)$, the bordism current between $L$ and $L'$ amounts to a bordism current between $L$ and $L_1\cup L_2\ldots \cup L_N$.

This multi-Lagrangian situation is built out of many distinguished triangles: for $1\leq k\leq N$, let $\mathcal{E}_k$ be the immersed Lagrangian $L_1\cup \ldots \cup L_k$  corresponding to the twisted complex 
\begin{equation}\label{HNEk}
\left(
\begin{matrix}
(L_k, b_k) &  & \\
b_{k,k-1} & (L_{k-1}, b_{k-1}) & 
\\
\ldots
\\
b_{k,1} & b_{k-1,1} &\ldots & b_{2,1}  & (L_1,b_1)
\end{matrix} \right).
\end{equation}
Then (suppressing bounding cochains in the notation) we have a sequence in $D^b Fuk(X)$,
 \begin{equation*}
 0=\mathcal{E}_0 \to \mathcal{E}_1\to \ldots \to \mathcal{E}_N\simeq L,
 \end{equation*}
 with distinguished triangles \[
 \mathcal{E}_{i-1}\to \mathcal{E}_{i}\to L_{i}\to \mathcal{E}_{i-1}[1].
 \]
The morphism from $L_i$ to $\mathcal{E}_{i-1}$ comes from $b_{i,j}$ for $j<i$. This setup should be reminiscent of Harder-Narasimhan decompositions  (\ref{HarderNarasimhan}), although at this stage we have not yet brought in stability conditions, which shall be discussed further in section \ref{TowardsBridgeland}.

\subsection{Solomon functional revisited}\label{Solomonrevisited}

Let $(X,\omega,\Omega)$ be an almost Calabi-Yau Stein manifold, and $L$ be an  exact  Lagrangian brane, with $\int_{L} \text{Im}(e^{-i\hat{\theta}}\Omega)=0$ for some suitable $\hat{\theta}\in (-\frac{\pi}{2}, \frac{\pi}{2})$. Our goal is to suggest how the Solomon functional may be well defined without the universal cover issue, and extended beyond a given exact isotopy class. Both issues are essential for the variational approach to the Thomas-Yau conjecture, see Chapter \ref{Variational}.

\subsubsection*{Homological nature of the Solomon functional}

Write the Liouville 1-form as $\lambda$, so $d\lambda=\omega$, and the potential of the immersed Lagrangian $L$ as $f_L$, so $df_L=\lambda|_L$.  We consider the potential as part of the brane data, so adding a constant to $f_L$ is viewed as a different Lagrangian brane. We shall consider a path of such Lagrangians $L_t$, with associated Hamiltonian functions $h_t$, so there is a preferred way to parallel transport $f_L$, as recalled below.

\begin{lem}\label{Solomonfunctionallem1}
\begin{equation}\label{Solomonhomological}
\int_0^1 dt\int_{L_t} h_t \text{Im}(e^{-i\hat{\theta}}\Omega)= \int_{L_t} f_{L_t}\text{Im}(e^{-i\hat{\theta}}\Omega) |^{t=1}_{t=0} - \int_{\cup_t L_t} \lambda\wedge \text{Im}(e^{-i\hat{\theta}}\Omega).
\end{equation}	
\end{lem}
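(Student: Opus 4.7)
The plan is to interpret both sides as integrals over the $(n+1)$-dimensional chain $C := \bigcup_{t\in[0,1]} L_t$ parametrized by $\iota:[0,1]\times L\to X$, and reduce the identity to Stokes' theorem applied to the closed $n$-form $\beta := \text{Im}(e^{-i\hat\theta}\Omega)$.

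First I would globalize the slicewise potentials $\{f_{L_t}\}$ into a single function $F$ on $C$ via the preferred parallel transport $\partial_t F = h_t + \lambda(V)$, where $V = \partial_t\iota$ is the velocity of the moving Lagrangian (so $V|_{L_t}$ is the Hamiltonian vector field $X_{h_t}$). This prescription is the one dictated by how symplectic primitives transform under a Hamiltonian isotopy $\phi_t$ generated by an ambient extension $H_t$ of $h_t$: from $\mathcal{L}_{X_{H_t}}\lambda = d(\lambda(X_{H_t}) + H_t)$ one obtains $\phi_t^*\lambda - \lambda = d\sigma_t$ with $\dot\sigma_t = \phi_t^*(H_t + \lambda(X_{H_t}))$, and taking $f_{L_t}\circ\phi_t = f_{L_0} + \sigma_t|_{L_0}$ recovers the formula. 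In the immersed setting $H_t$ exists only locally near each sheet, which suffices because the computation to follow is local on $[0,1]\times L$.

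The crux of the argument is then the pointwise identity
\[
dF - \iota^*\lambda = h_t\, dt
\]
on $[0,1]\times L$. Decomposing forms into horizontal (tangent to $L_t$) and vertical ($\partial_t$) parts, the horizontal piece of $dF$ equals $df_{L_t} = \iota_t^*\lambda$, cancelling the horizontal piece of $\iota^*\lambda$; the vertical piece of $dF$ is $(h_t + \lambda(V))\,dt$ while that of $\iota^*\lambda$ is $\lambda(V)\,dt$, and their difference is exactly $h_t\,dt$. Wedging with $\iota^*\beta$ and integrating over $[0,1]\times L$ yields
\[
\int_C (dF - \lambda)\wedge \beta \;=\; \int_0^1 dt \int_{L_t} h_t\,\beta,
\]
which is the left-hand side of the lemma.

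To conclude I would invoke $d\beta = 0$ (since $d\Omega = 0$), so that Stokes' theorem on $C$ with $\partial C = L_1 - L_0$ gives $\int_C dF\wedge\beta = \int_C d(F\beta) = \int_{L_1} f_{L_1}\beta - \int_{L_0} f_{L_0}\beta$. Subtracting $\int_C \lambda\wedge\beta$ from both sides produces the claimed equality. The only delicate step is pinning down the parallel transport convention, particularly in the immersed/multi-sheeted setting where the Hamiltonian may not extend globally; once this is done via local extensions, the rest is a purely formal consequence of Stokes' theorem and the horizontal/vertical bookkeeping on the source cylinder.
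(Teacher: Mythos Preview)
Your proof is correct and is essentially the same as the paper's: both fix the parallel transport $\partial_t f_{L_t}=h_t+\lambda(V)$ and then use the closedness of $\Omega$ together with an integration by parts. The only difference is packaging---the paper differentiates $\int_{L_t} f_{L_t}\beta$ slicewise using Cartan's formula $\mathcal{L}_V\beta=d\iota_V\beta$ and integrates by parts on each $L_t$, whereas you establish the pointwise identity $dF-\iota^*\lambda=h_t\,dt$ on $[0,1]\times L$ and apply Stokes once on the full cylinder; unpacking your Stokes step slice by slice reproduces exactly the paper's computation.
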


\begin{proof}
Let $X_t$ be the Hamiltonian vector field along $L_t$ associated to $h_t$, namely $dh_t=\omega(X_t,\cdot)$. We calculate the time derivative of $f_{L_t}$: along $L_t$
\[
\mathcal{L}_X \lambda= \iota_X d\lambda+ d(\iota_X\lambda)= \iota_X\omega+d(\iota_X\lambda)= d(h_t+ \iota_X\lambda),
\]
so there is a preferred parallel transport of $f_L$ along the path $L_t$,
\[
\partial_t f_{L_t}= h_t+ \iota_X\lambda.
\]
Hence
\[
\partial_t \int_{L_t} f_{L_t}\text{Im}(e^{-i\hat{\theta}}\Omega) = \int_{L_t} (h_t+ \iota_X\lambda)\text{Im}(e^{-i\hat{\theta}}\Omega)+ \int_{L_t} f_{L_t} \mathcal{L}_X  \text{Im}(e^{-i\hat{\theta}}\Omega).
\]
Now by the Cartan formula and the closedness of $\Omega$,
\[
\mathcal{L}_X  \text{Im}(e^{-i\hat{\theta}}\Omega)= d\iota_X \text{Im}(e^{-i\hat{\theta}}\Omega),
\]
so after integration by part, \[
\int_{L_t} f_{L_t} \mathcal{L}_X  \text{Im}(e^{-i\hat{\theta}}\Omega)= -\int_{L_t} df_t \wedge \iota_X \text{Im}(e^{-i\hat{\theta}}\Omega)= - \int_{L_t} \lambda \wedge \iota_X \text{Im}(e^{-i\hat{\theta}}\Omega).
\]
Combining the above,
\[
\partial_t \int_{L_t} f_{L_t}\text{Im}(e^{-i\hat{\theta}}\Omega)= \int_{L_t} h_t\text{Im}(e^{-i\hat{\theta}}\Omega)+ \int_{L_t} \iota_X( \lambda\wedge  \text{Im}(e^{-i\hat{\theta}}\Omega)  ).
\]
Integrating in $t$ gives the result.
\end{proof}

We observe that by the K\"ahler condition $\omega\wedge \Omega=0$, so
\[
d( \lambda\wedge \text{Im}(e^{-i\hat{\theta}}\Omega  ) )=\omega\wedge  \text{Im}(e^{-i\hat{\theta}}\Omega)=0.
\]
This means the term $\int_{\cup_t L_t} \lambda\wedge \text{Im}(e^{-i\hat{\theta}}\Omega)$ is a homological quantity, in the sense that 
 we can replace $\cup_t L_t$  by any compactly supported $(n+1)$-current $\mathcal{C}$ with $\partial \mathcal{C}=L_1-L_0$, which would automatically satisfy $[\mathcal{C}-\cup_t L_t]=0\in H_{n+1}(X)$, since $H_{n+1}(X)=0$ for Stein manifolds. In particular, this explains Solomon's theorem that his functional is invariant under Hamiltonian deformations of the path of Lagrangians. The advantage of our homological interpretation is to allow more general currents $\mathcal{C}$, which in particular can come from families of holomorphic curves.

\subsubsection*{Proposed extension of the Solomon functional}

Taking the homological interpretation of (\ref{Solomonhomological}) as starting point, a natural way to extend the Solomon functional is to make use of the bordism current $\mathcal{C}$ between an unobstructed Lagrangian $L$ and a fixed unobstructed reference Lagrangian $L_0$. We have $\partial \mathcal{C}= L-L_0$ as currents, and $\mathcal{C}$ comes from the universal family of holomorphic curves. Our proposed formula is
\begin{equation}\label{Solomonfunctionalextension}
\mathcal{S}(L)= \int_L f_L \text{Im}(e^{-i\hat{\theta}} \Omega) -\int_{L_0} f_{L_0} \text{Im}(e^{-i\hat{\theta}} \Omega)- \text{Im} \int_{\mathcal{C}} \lambda\wedge e^{-i\hat{\theta}} \Omega.
\end{equation}
A few conceptual points are in order:
\begin{itemize}
\item We emphasize that this depends not only on the underlying Lagrangian, but also on the potential $f_L$.

\item There is no need to pass to any universal cover in the space of Lagrangians, as in Solomon's work (\cf section \ref{Solomon}).

\item The topology of $L$ is no longer fixed, and in particular the Hamiltonian isotopy class may change. 

\item We view (\ref{Solomonfunctionalextension}) as a unification of the very different viewpoints of Solomon and Lotay-Pacini. In section \ref{Tunneling} we suggested that this extended Solomon functional may be relevant for quantum tunneling amplitudes between the branes $L_0$ and $L$.

\item Suppose we vary the Lagrangian $L$ within a 1-parameter exact isotopy family of unobstructed Lagrangians $L_t$. 
The bordism currents $\mathcal{C}_t$ between $L_t$ and $L_0$ satisfy 
\[
\mathcal{C}_{t_2}= \mathcal{C}_{t_1} + \cup_{t_1\leq t\leq t_2} L_t  \text{  modulo exact $(n+1)$-dim currents},
\]
then the computation in Lem \ref{Solomonfunctionallem1} proves the \textbf{first variation formula for the Solomon functional}
\begin{equation}\label{Solomonfirstvariation}
\frac{d}{dt} \mathcal{S}(L_t)= \int_{L_t} h_t \text{Im}(e^{-i\hat{\theta}} \Omega)
\end{equation}
which is of course the defining feature of the Solomon functional. Consequently, the formula (\ref{Solomonfunctionalextension}) extends Solomon's definition in our exact setting, and fixes the multivaluedness problem (\ie the need to pass to universal covers) in Solomon's work.
\end{itemize}




\subsubsection*{Change of reference Lagrangian}

The definition of the Solomon functional depends on the reference Lagrangian $L_0$, and we write $\mathcal{S}_{L_0}(L)$ when we wish to emphasize this dependence. The following feature of the Solomon functional resembles the Donaldson functional in the HYM context (\cf (\ref{Donaldsonfunctionalchangeofreference})):

\begin{prop}\label{SolomonchangeofreferenceProp}
Under the change of reference Lagrangians, 
\begin{equation}\label{SolomonfunctionalchangeL0}
\mathcal{S}_{L_0} (L)= \mathcal{S}_{L_0'}(L)+ \mathcal{S}_{L_0}(L_0').
\end{equation} 
\end{prop}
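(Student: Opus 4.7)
The plan is to unpack the three Solomon functionals from definition (\ref{Solomonfunctionalextension}) and isolate the single term where something nontrivial is happening. Writing $\mathcal{C}_{A,B}$ for the bordism current produced from families of holomorphic curves satisfying $\partial \mathcal{C}_{A,B} = B - A$, we have
\begin{align*}
\mathcal{S}_{L_0'}(L) + \mathcal{S}_{L_0}(L_0') &= \int_L f_L \text{Im}(e^{-i\hat{\theta}}\Omega) - \int_{L_0} f_{L_0} \text{Im}(e^{-i\hat{\theta}}\Omega) \\
&\quad - \text{Im}\int_{\mathcal{C}_{L_0', L} + \mathcal{C}_{L_0, L_0'}} \lambda \wedge e^{-i\hat{\theta}}\Omega,
\end{align*}
after the terms $\pm \int_{L_0'} f_{L_0'} \text{Im}(e^{-i\hat{\theta}}\Omega)$ telescope. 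Comparing with $\mathcal{S}_{L_0}(L)$, the identity (\ref{SolomonfunctionalchangeL0}) reduces to the claim that
\[
\text{Im}\int_{\mathcal{C}_{L_0, L}} \lambda \wedge e^{-i\hat{\theta}}\Omega = \text{Im}\int_{\mathcal{C}_{L_0', L} + \mathcal{C}_{L_0, L_0'}} \lambda \wedge e^{-i\hat{\theta}}\Omega.
\]

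The key observation is that the $(n+1)$-current $\mathcal{E} := \mathcal{C}_{L_0,L} - \mathcal{C}_{L_0, L_0'} - \mathcal{C}_{L_0', L}$ is closed, since its boundary is $(L - L_0) - (L_0' - L_0) - (L - L_0') = 0$. Because $X$ is a Stein manifold, it has the homotopy type of a CW complex of real dimension $\leq n$, so $H_{n+1}(X) = 0$; hence $\mathcal{E} = \partial \mathcal{D}$ for some $(n+2)$-dimensional current $\mathcal{D}$ (this is exactly the same homological input already used to establish that $\mathcal{C}_{L_0, L}$ itself is well-defined up to boundaries). Next one checks that $\lambda \wedge e^{-i\hat{\theta}}\Omega$ is a closed form:
\[
d(\lambda \wedge e^{-i\hat{\theta}}\Omega) = \omega \wedge e^{-i\hat{\theta}}\Omega - \lambda \wedge e^{-i\hat{\theta}} d\Omega = 0,
\]
using $d\Omega = 0$ and the identity $\omega \wedge \Omega = 0$ (the wedge would be a $(n+1,1)$-form on a complex $n$-fold), the same Kähler compatibility exploited already above equation (\ref{Solomonhomological}). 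Applying Stokes' theorem for currents then yields
\[
\int_{\mathcal{E}} \lambda \wedge e^{-i\hat{\theta}}\Omega = \int_{\partial \mathcal{D}} \lambda \wedge e^{-i\hat{\theta}}\Omega = \int_{\mathcal{D}} d(\lambda \wedge e^{-i\hat{\theta}}\Omega) = 0,
\]
which is the desired equality.

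I do not expect any genuine obstacle here; the proposition is essentially a cocycle identity for the Solomon functional, and its content has already been built into the setup. The only points worth being careful about are: (i) verifying that one is allowed to add bordism currents coming from different universal families (the construction in section \ref{LotayPacinirevisited} is linear in this sense, any two choices differing by the boundary of an $(n+2)$-current, which is harmless for the same Stokes' argument); and (ii) the imaginary part is taken only at the end, but the complex current $\mathcal{E}$ pairs with the complex closed form $\lambda \wedge e^{-i\hat{\theta}}\Omega$ to give zero, so in particular the imaginary part vanishes. The argument is formally parallel to (\ref{Donaldsonfunctionalchangeofreference}) in the Donaldson functional setting, as anticipated in the discussion preceding the statement.
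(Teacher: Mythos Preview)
Your proof is correct and follows essentially the same approach as the paper: both reduce the identity to the fact that $\mathcal{C}_{L_0,L}$ and $\mathcal{C}_{L_0,L_0'}+\mathcal{C}_{L_0',L}$ are homologous $(n+1)$-currents (using $H_{n+1}(X)=0$ for Stein $X$), so the closed form $\lambda\wedge e^{-i\hat\theta}\Omega$ integrates to the same value over either. The paper phrases this as replacing $\mathcal{C}_3$ by $\mathcal{C}_1+\mathcal{C}_2$ and invoking the already-established homological well-definedness of the Solomon functional, whereas you spell out the Stokes argument explicitly, but the content is identical.
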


\begin{proof}
We shall use the homological nature of the Solomon functional and the fact that $H_{n+1}(X)=0$. We pick $\mathcal{C}_1,\mathcal{C}_2, \mathcal{C}_3$ such that
\[
\partial \mathcal{C}_1=L-L_0', \quad \partial \mathcal{C}_2=L_0'-L_0, \quad \partial \mathcal{C}_3= L-L_0.
\]
Then $\mathcal{C}_1+\mathcal{C}_2$ is homologous to $\mathcal{C}_3$, so we can replace $\mathcal{C}_3$ by $\mathcal{C}_1+\mathcal{C}_2$ to compute $\mathcal{S}_{L_0} (L)$, whence  (\ref{SolomonfunctionalchangeL0}) follows.
\end{proof}

\begin{rmk}\label{3Lag}
A more Floer theoretic argument that $\mathcal{C}_1+\mathcal{C}_2$ is homologous to $\mathcal{C}_3$, which does not appeal to $H_{n+1}(X)=0$ directly, can be sketched as follows. We assume $L_0,L_0',L$ are three unobstructed Lagrangians mutually isomorphic in $D^bFuk(X)$, and $HF^{-1}(L_0,L_0)=0$. Of course, the self Floer cohomologies of $L_0,L_0',L$ are all isomorphic, and $HF^{-1}=0$ is a necessary condition if the $D^bFuk(X)$ class admits any almost calibrated representative at all.
We consider $\alpha\in CF^0(L_0,L_0'),\beta\in CF^0(L_0',L),\gamma\in CF^0(L,L_0)$ representing the generators of $HF^0$, such that at the level of Floer cohomology
\[
\gamma\circ \beta\circ \alpha=1_{L_0}, \quad \alpha\circ \gamma\circ \beta=1_{L_0'},\quad  \beta\circ \alpha\circ \gamma=1_{L}.
\]
For simplicity we first assume almost calibratedness, so that $CF^{-1}=0$, and there is no ambiguity for these generators. Notice the compositions $\beta\circ\alpha, \gamma\circ\beta, \alpha\circ \gamma$ provide generators of $HF^0(L_0,L)$, $HF^0(L_0',L_0)$, $HF^0(L,L_0')$.
Consider the $n$-dimensional moduli spaces $\tilde{\mathcal{M}}$ of holomorphic discs with corners at $\alpha,\beta,\gamma$ and the self intersection points corresponding to the bounding cochains. The corresponding universal family $\tilde{\mathcal{C}}$ provides an $(n+2)$-dimensional current, whose boundary comes from disc bubbling and disc breaking. Most of the boundary contributions are eliminated by the Mauer-Cartan equation of the bounding cochains, the closedness of $\alpha,\beta,\gamma$, and support dimension reasons, and only three boundary contributions survive. These are the $(n+1)$-dimensional bordism currents between $L_0,L_0'$ (resp. $L_0',L$ and $L,L_0$) constructed from the universal family of holomorphic curves associated to the generators $\alpha, -\gamma\circ \beta$ (resp. $\beta, -\alpha\circ \gamma$ and $\gamma, \beta\circ\alpha$). We can identify these as $\mathcal{C}_2, \mathcal{C}_1, -\mathcal{C}_3$. The upshot is that  
Floer theory explicitly provides the $(n+2)$-dimensional current that exhibits the homological relation between $\mathcal{C}_1+\mathcal{C}_2$ and $\mathcal{C}_3$.

In general without assuming almost calibratedness, then $CF^{-1}$ can be nonzero.
Then we need some extra $n$-dimensional moduli spaces to account for the non-uniqueness of cohomological representatives of $HF^0$, an issue quite similar to section \ref{LotayPaciniimmersed}.
A subtle new issue is that the moduli space $\tilde{\mathcal{M}}$ receives new boundary contributions involving the $m_3^b$ products (this shorthand notation indicates the presence of bounding cochain elements, \cf (\ref{Ainftytwisted})) of $\alpha,\beta,\gamma$. The three cyclic permutations of $\alpha,\beta,\gamma$ produce three $m_3^b$ products, which are elements in $CF^{-1}(L_0,L_0), CF^{-1}(L_0',L_0')$ and $CF^{-1}(L,L)$ respectively, and the $(n-1)$-dimensional moduli of polygons with one corner at the $CF^{-1}$ intersections and the other corners at bounding cochain elements contribute to $\partial \tilde{\mathcal{C}}$. Now by the $A_\infty$ relation, and the closedness of $\alpha,\beta,\gamma$,
\[
m_1^b(m_3^b(\gamma,\beta,\alpha))+ m_2^b(\gamma, m_2^b(\beta,\alpha))- m_2^b(m_2^b(\gamma,\beta), \alpha)=0.
\]
Writing $m_2^b(\gamma, m_2^b(\beta,\alpha))=1_{L_0}+ m_1^b(\delta_1)$ and $m_2^b(m_2^b(\gamma,\beta),\alpha)= 1_{L_0}+ m_1^b(\delta_2)$, we see $m_3^b(\gamma,\beta,\alpha)+\delta_1-\delta_2$ is $m_1^b$-closed, so by the assumption that $HF^{-1}(L_0,L_0)=0$, it is in fact $-m_1^b(\epsilon_1)$ for some $\epsilon_1\in CF^{-2}(L_0,L_0)$. We can then produce an $n$-dimensional moduli space, from polygons with a corner at $\epsilon_1$, and other corners at the bounding cochain elements. Completely analogously, one can produce two other $n$-dimensional moduli spaces from $\epsilon_2\in CF^{-2}(L_0',L_0')$ and $\epsilon_3\in CF^{-2}(L,L)$. Combining the $(n+2)$-dimensional universal families over the $n$-dimensional moduli spaces, results in an explicit bordism current between $\mathcal{C}_1+\mathcal{C}_2$ and $\mathcal{C}_3$.
\end{rmk}


\subsection{Automatic transversality}\label{Automatictransversalitypositivity}

In our later applications, it is not enough to just have a bordism current between Lagrangians $L,L'$ constructed from perturbed pseudoholomorphic curves. Two additional conditions are desirable: \textbf{automatic transversality} and \textbf{positivity condition}. These are natural properties of the highly idealized picture of Lotay and Pacini (\cf section \ref{LotayPacini}), but may seem rather strong for Floer theorists.

 In this section we discuss various sufficient conditions for automatic transversality, which intuitively means that the bordism current $\mathcal{C}$ is constructed without perturbing the integrable complex structure. This requires that the (extended) linearized Cauchy-Riemann operator is surjective, namely the moduli space is regular. The next section will discuss the positivity condition. Complex integrability and the existence of holomorphic volume form $\Omega$ will be assumed throughout. All holomorphic curves are assumed to be nonconstant.


\begin{itemize}
\item (\textbf{Automatic transversality}) There exist a finite collection of $(n-1)$-dimensional \emph{smooth} moduli spaces of holomorphic curves $u:\Sigma\to X$ with respect to the \emph{integrable complex structure}, constructed from the inputs in $CF^0(L,L')$, $CF^0(L',L)\simeq CF^n(L,L')^\vee$ and the bounding cochain data as in section \ref{LotayPacinirevisited}, such that by taking the weighted sum of the $(n+1)$-dimensional universal families of holomorphic curves, we obtain a current $\mathcal{C}$ with $\partial \mathcal{C}=L-L'$. This bordism current $\mathcal{C}$ agrees with the bordism currents constructed from generically perturbed almost complex structures, up to the boundary of an $(n+2)$-dimensional current.

Morever, considering the boundary of holomophic curves $\partial \Sigma$ varying in the $(n-1)$ dimensional regular moduli spaces, we obtain $n$-dimensional universal families, sweeping out the cycle $L-L'$; we require the evaluation map from these $n$-dimensional universal family to $L\cup L'$ to be \emph{immersions}, except at the corner points of $\partial \Sigma$ mapping to the Lagrangian intersections, where the failure of immersion is `minimal' (see below for details). We say that the bordism current $\mathcal{C}$ consists purely of `automatically transverse curves'.

\item (\textbf{Automatic transversality, weak version)}  We can allow certain holomorphic curves $u:\Sigma\to X$ arising in $(n-1)$-virtual dimensional moduli spaces, which are not automatically transverse, subject to the following requirements on these extra bad curves:
\begin{enumerate}
\item 
When virtual perturbation theory is taken into account, $\partial \mathcal{C}=L-L'$ still holds.	
	
\item At any such bad curve $u:\Sigma\to X$, given any $n-1$ first order deformation vector fields, the 1-form $\Omega(\cdot, v_1,\ldots v_n)$ restricted to $\Sigma$ vanishes identically. Intuitively, this means $\Omega$ vanishes identically on the Zariski tangent space of the universal family at $u:\Sigma\to X$. As a caveat, these Zariski tangent spaces may be higher dimensional.

\item The boundary evaluation $u:\partial \Sigma\to L\cup L'$ for all such bad holomorphic curves is contained in some subset of $L\cup L'$ of Hausdorff dimension $\leq n-1$. As such, at almost every point on $L\cup L'$, only automatically transverse curves pass through it.

\item The Solomon functional can be computed by integrating only on the part of $\mathcal{C}$ consisting of automatically transverse curves.

\end{enumerate}
 
\end{itemize}

The automatic transversality assumption should be viewed as a higher dimensional generalization of the fact that on Riemann surfaces, the nontrivial holomorphic polygons are immersions up to the boundary (\cf \cite[Section 13 (b)]{Seidelbook}. The intuition for the weak version is that we sometimes need extra holomorphic curves to maintain $\partial \mathcal{C}=L-L'$, but for questions related to the Solomon functional and the boundary evaluation to the Lagrangians, these extra curves do not contribute.

\subsubsection*{Index theory preliminary}

For a pseudoholomorphic polygon $u:\Sigma\to X$ with inputs at $p_1, \ldots p_k$ and an output at $q$, arranged in clockwise order, the index is $\deg q-\sum_1^k \deg p_k$, where the degree convention is (\ref{Floerdegree}). The index amounts to a Maslov number computation, and an alternative topological description is as follows: take a section $s$ of the complex line bundle $\Lambda^n TM\to \Sigma$, which restricts on $\partial \Sigma$ to a section of the real line bundle $\Lambda^n TL$. (When several Lagrangians are involved, it is understood that $TL$ refers to the appropriate Lagrangian on the portion of $\partial \Sigma$.) We assume $s$ has isolated zeros up to the boundary and the corner (aka. strip like ends). We may also regard $s$ as a function $\Omega(s)$ on the polygon, by contraction with $\Omega$. Then
\begin{equation}\label{indexvanishingorder}
\text{Index}=2 \sum (\text{interior zeros}) + \sum (\text{boundary zeros}) + \sum (\text{excess corner zeros})+n.
\end{equation}
Here the order of zeros is computed from winding numbers, and for general sections $s$ may take positive and negative values. At a corner where $\partial \Sigma$ passes from $L_+$ to $L_-$ in the clockwise direction, we can put the tangent spaces $TL_\pm\subset TX$ into the standard form respecting the complex structure
\begin{equation}
L_+= \R^n, \quad L_-= (e^{i\phi_1},\ldots e^{i\phi_n})\R^n, \quad 0<\phi_i<\pi,\quad TX=\R^n\otimes \C,
\end{equation}
so if $\Omega(s)\sim z^\alpha$ in the complex coordinate of the upper half plane model, the excess vanishing order at the corner is $\frac{1}{\pi}(\alpha-\sum_1^n \phi_i)$. Formula (\ref{indexvanishingorder}) is equivalent to the standard index formula by a topological version of the Cauchy residue formula.

\subsubsection{Automatically transverse cases}

\subsubsection*{Holomorphic strip}

We first consider the holomorphic strip case with input $p$ and output $q$, and the \emph{integrability} of the complex structure will be important. The \emph{first order deformations} of the holomorphic strips $\Sigma$ are given by \emph{holomorphic sections} of $TX|_\Sigma$, which takes boundary value in $TL$ over $\partial \Sigma$, and decays at the corners.

\begin{lem}\label{immersionlem}
If $v_1,\ldots v_n$ are first order deformation vector fields, then either $\Omega(v_1,\ldots v_n)=0$ everywhere on $\Sigma$, or we must have $\deg q-\deg p\geq n$, and when the equality holds then $\Omega(v_1,\ldots v_n)$ only vanishes at the ends with excess vanishing order zero.
\end{lem}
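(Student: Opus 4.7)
The plan is to study the scalar function $f := \Omega(v_1,\ldots,v_n)$ on $\Sigma$. Because the complex structure is integrable, each first-order deformation $v_i$ is a genuine holomorphic section of $TX|_\Sigma$ (not merely a solution of a perturbed Cauchy--Riemann equation), so contracting with the holomorphic volume form $\Omega$ produces a scalar holomorphic function $f$ on $\Sigma$. If $f\equiv 0$ we are in the first alternative of the lemma, so assume $f\not\equiv 0$. Set $s := v_1\wedge\ldots\wedge v_n$, a section of the complex line bundle $\Lambda^n TX|_\Sigma$ which takes boundary values in the real line bundle $\Lambda^n TL$; since $\Omega$ is nowhere zero, the zero locus of $s$ coincides with that of $f$, is isolated up to boundary and corners, and the formula (\ref{indexvanishingorder}) applies, giving
\[ \deg q - \deg p \;=\; 2\sum(\text{interior zeros of } s) \;+\; \sum(\text{boundary zeros}) \;+\; \sum(\text{excess corner zeros}) \;+\; n. \]
Since the index is an additive invariant, the lemma will follow from non-negativity of each of these counts.

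Interior vanishing orders are non-negative by elementary complex analysis. Along a boundary arc of $\partial\Sigma$ mapped to a Lagrangian $L$, the $v_i$ take values in $TL$, and since $\Omega|_L = e^{i\theta_L}\, dvol_L$, the function $e^{-i\theta_L}f$ is real-valued on that arc; the Schwarz reflection principle then extends $f$ holomorphically across the arc and forces boundary zeros to have non-negative integer order.

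The crux is the excess corner vanishing. At a corner where the boundary passes (clockwise) from $L_+$ to $L_-$, pass to a local upper half-plane model with the two Lagrangian tangent spaces placed in the standard form $TL_+=\R^n$ and $TL_-=(e^{i\phi_1},\ldots,e^{i\phi_n})\R^n$ with $\phi_j\in(0,\pi)$, and with $\Omega=dz_1\wedge\ldots\wedge dz_n$. Writing $v_i(z)=(v_i^1(z),\ldots,v_i^n(z))$, each $v_i^j$ is a scalar holomorphic function on the half-plane with the Riemann--Hilbert boundary condition $v_i^j(\R_{>0})\subset\R$ and $v_i^j(\R_{<0})\subset e^{i\phi_j}\R$. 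Multiplying by $z^{-\phi_j/\pi}$ converts this into a real boundary condition, so $v_i^j(z)=z^{\phi_j/\pi} g_{ij}(z)$ with $g_{ij}$ extending holomorphically across the boundary; the required decay of $v_i$ at the strip-like end forces $g_{ij}$ to be regular (in fact to vanish to some non-negative integer order) at $z=0$. Consequently $f(z)=\det(v_i^j(z))=z^{\sum_j \phi_j/\pi}\det(g_{ij}(z))$ vanishes to order at least $\sum_j\phi_j/\pi$, so the excess $\tfrac{1}{\pi}(\alpha-\sum_j\phi_j)$ is a non-negative integer.

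Substituting these non-negativities into (\ref{indexvanishingorder}) gives $\deg q-\deg p\geq n$; equality forces every term on the right to vanish, so $f$ has no interior and no boundary zeros, and at each corner the vanishing rate is exactly $\sum_j\phi_j/\pi$, i.e.\ $\det(g_{ij}(0))\neq 0$. This is precisely the `minimal' corner behaviour stated in the lemma. The main obstacle is the Riemann--Hilbert/asymptotic analysis at the corners: one must match the decay exponents of kernel elements of the linearised $\bar\partial$-operator at strip-like ends to the excess-angle convention of (\ref{indexvanishingorder}), keeping track of the clockwise versus counterclockwise orientation of $\partial\Sigma$ at the input corner $p$ versus the output corner $q$. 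Once this bookkeeping is in place, the rest of the argument is essentially a topological winding-number count.
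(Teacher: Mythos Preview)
Your proof is correct and follows essentially the same route as the paper: both arguments use the holomorphicity of $\Omega(v_1,\ldots,v_n)$ together with the index formula (\ref{indexvanishingorder}), reduce to showing non-negativity of interior, boundary, and excess corner vanishing orders, and handle the corners by the same standard-form asymptotic $v_k=(a_{k1}z^{\phi_1/\pi},\ldots,a_{kn}z^{\phi_n/\pi})+O(z)$. Your write-up is slightly more explicit about the Schwarz reflection at boundary points and the Riemann--Hilbert reasoning at the corners, but the substance is identical.
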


\begin{proof}
We have a section of $\Lambda^n TX|_\Sigma$ given by $v_1\wedge \ldots v_n$, which takes boundary value in $\Lambda^n TL$ on $\partial \Sigma$. Since $v_1,\ldots v_n$ are all \emph{holomorphic}, so must be the function $\Omega(v_1,\ldots v_n)$. Assume this function is not identically zero. By holomorphicity, the zeros are isolated. We claim that the order of zeros must be nonnegative everywhere. This is clear for the interior and the boundary points. We analyze the ends of the strip as  the origin in the upper half plane model with holomorphic coordinate $z$, putting $TL_\pm$ in the standard form at the corner point. The deformation vector field has the leading asymptote
\[
v_k=( a_{k1} z^{\phi_1/\pi}, \ldots, a_{kn}z^{\phi_n/\pi} ) +O(z), \quad k=1,2,\ldots n,
\]
hence
\[
\Omega(v_1,\ldots v_n)=  z^{  (\sum \phi_k)/\pi} ( \det(a_{kj})  +o(1)),
\]
and the excess vanishing order is nonnegative. By the index formula (\ref{indexvanishingorder}), the index $\deg q-\deg p\geq n$, and when equality is achieved all vanishing orders must be zero. In particular $\det (a_{kj})\neq 0$ at the corners.
\end{proof}

\begin{cor}\label{automatictransversalitystrip}
(Automatic transversality, strip case)
Suppose $\deg q-\deg p=n$. 
If $v_1,\ldots v_n$ are $\R$-linearly independent at some point on $\partial \Sigma$ away from the two corners, then $v_1,\ldots v_n$ span the space of all first order deformations, the \emph{obstruction space vanishes}, and   \emph{the moduli space is smooth} at $u:\Sigma\to X$. Morever, the holomorphic strip is an \emph{immersion up to the boundary}.
\end{cor}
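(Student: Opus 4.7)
The approach is to exploit the holomorphic function $\Omega(v_1,\ldots,v_n)$ on $\Sigma$ in conjunction with the index formula from Lemma \ref{immersionlem}. The plan is first to show that the $v_i$ form a $\C$-linear frame of $u^*TX$ away from the corners, and then to use this frame both to expand all other first-order deformations and to read off the immersion property.

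First, I would observe that the $\R$-linear independence of $v_1(z_0),\ldots,v_n(z_0)$ at a boundary point $z_0$ makes them an $\R$-basis of the Lagrangian $T_{u(z_0)}L$, hence a $\C$-basis of $T_{u(z_0)}X$, so $\Omega(v_1,\ldots,v_n)(z_0)\neq 0$. Thus this holomorphic function is not identically zero. Since the assumption $\deg q-\deg p=n$ saturates the index inequality of Lemma \ref{immersionlem}, all interior, boundary, and excess corner vanishing orders must be zero, so $\Omega(v_1,\ldots,v_n)$ has no zeros on $\Sigma$ and has the prescribed leading asymptote $z^{(\sum\phi_k)/\pi}$ at each corner. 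Consequently $v_1,\ldots,v_n$ form a $\C$-frame of $u^*TX$ at every interior point and every non-corner boundary point, and an $\R$-frame of $TL$ along $\partial\Sigma$ away from corners.

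Given this frame, any first-order deformation $w$ (holomorphic, boundary-tangent, with the standard decay at the strip-like ends) can be expanded uniquely as $w=\sum c_i v_i$ with $c_i$ holomorphic functions on the interior. The boundary condition $w\in TL$ combined with the fact that $v_i$ is a real frame of $TL$ forces each $c_i$ to be real along $\partial\Sigma$. Conformally sending $\Sigma$ to the upper half plane with the two corners going to $0$ and $\infty$, Schwarz reflection extends $c_i$ to a holomorphic function on $\C^*$; matching the decay of $w$ against the known leading asymptotics of the $v_i$ at the corners keeps $c_i$ bounded at the punctures, so $c_i$ extends to an entire function on $\mathbb{CP}^1$, hence a real constant by Liouville. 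This shows that the kernel of the linearized $\bar{\partial}$-operator is exactly the real span of $v_1,\ldots,v_n$; since the Fredholm index equals $n$, the cokernel vanishes and the moduli space is smooth at $u$.

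For the immersion claim, I would observe that $\partial u/\partial z$ is itself a nontrivial first-order deformation (generated by strip translation), so by the above it equals $\sum c_i v_i$ for real constants $c_i$ not all zero. At every interior or non-corner boundary point $z$, the $\C$-linear independence of the $v_i(z)$ forces $\partial u/\partial z(z)\neq 0$, hence $du(z)\neq 0$, giving the immersion up to the boundary. The main technical obstacle is the reflection-and-Liouville step: one must carefully align the weighted Sobolev decay imposed on first-order deformations at the strip-like ends with the exact leading asymptotics of the $v_i$ extracted from the vanishing-order-zero conclusion of Lemma \ref{immersionlem}, in order to conclude that each $c_i$ is bounded across the punctures. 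This is essentially standard asymptotic analysis for holomorphic sections on strips with totally real boundary conditions, but it is where the Fredholm bookkeeping has to be done most carefully.
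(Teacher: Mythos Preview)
Your proposal is correct and arrives at the same conclusions, but the key step---showing that the coefficient functions are constant---is handled differently from the paper. You expand $w=\sum c_i v_i$, use that the $c_i$ are real on $\partial\Sigma$, apply Schwarz reflection to get holomorphic functions on $\C^*$, and then invoke the corner asymptotics plus Liouville to force each $c_i$ to be constant. The paper instead bootstraps Lemma~\ref{immersionlem}: after subtracting a constant combination of the $v_i$ so that $v$ vanishes at a chosen boundary point, it observes that $\Omega(v,v_2,\ldots,v_n)=f_1\,\Omega(v_1,\ldots,v_n)$ acquires a boundary zero, which by the equality case of the Lemma forces $\Omega(v,v_2,\ldots,v_n)\equiv 0$, hence $f_1\equiv 0$; iterating gives $v=0$.

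The paper's route is slicker in that it recycles the same vanishing-order lemma and avoids any explicit reflection or Liouville argument; in particular the delicate ``bounded across the punctures'' step you flag as the main obstacle is entirely bypassed. Your route, on the other hand, is a perfectly standard Riemann-surface argument and makes the holomorphic-function-theory content of the statement more transparent; the corner boundedness you need does follow from $\det(a_{kj})\neq 0$ in the equality case, exactly as the paper uses implicitly when it says the $f_i$ are ``holomorphic even up to corners.'' One small addendum: the paper also extracts from $\det(a_{kj})\neq 0$ that the translation vector field is not $O(z)$ at the corners, giving the ``minimal'' failure of immersion there; your argument establishes the immersion away from the corners but does not record this extra corner information.
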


\begin{proof}
Since $v_1,\ldots v_n$ are $\R$-linearly independent at a point on $\partial \Sigma$, they span $TL$ at the point, so  $\Omega(v_1,\ldots v_n)\neq 0$. By the Lemma above $v_1,\ldots v_n$ are pointwise complex linearly independent as sections of the holomorphic vector bundle $TX$ over $\Sigma$, so any holomorphic first order deformation can be written as 
\[
v= f_1 v_1+\ldots f_n v_n.
\]
The functions $f_1,\ldots f_n$ are holomorphic on $\Sigma$ up to boundary, and even up to corners due to $\det(a_{kj})\neq 0$. Now subtracting a constant linear combination of $v_1,\ldots v_n$, we can ensure $v$ vanishes at any chosen point on $\partial\Sigma$. Then $\Omega(v, v_2,\ldots v_n)$ has a zero, so must be identically zero by the above Lemma, whence $f_1=0$ identically. Similar all $f_k=0$, so $v=0$. This proves that $v_1,\ldots v_n$ span all first order deformations. Since the index is $n$, and the first order deformation space is $n$-dimensional, we must have \emph{vanishing obstruction space}.

There is a special deformation vector field from $\R$ translation. The nonvanishing result then implies that the holomorphic strip is an immersion up to boundary. At the corners, the holomorphic strip is to leading order
\[
(a_1z^{\phi_1/\pi}+O(z), \ldots  a_n z^{\phi_n/\pi}+O(z) ), \quad a_k\neq 0, \forall k, \quad |z|\ll 1.
\]
By $\det(a_{kj})\neq 0$, this translation vector field cannot be $O(z)$ at the corner, so for at least one choice of $k$, we have $a_k\neq 0$. We say \emph{the failure of immersion at the corner is `minimal'}.
\end{proof}


\subsubsection*{Holomorphic polygon}

We now move on to holomorphic polygons with $k+1$ corner points for $k\geq 2$. The extended linearized Cauchy-Riemann equation  (\cf \cite[Chapter 9]{Seidelbook})
involves a vector field $v\in C^\infty(\Sigma, u^*TX)$ decaying at the ends, and $\rho\in \Omega^{0,1}(\Sigma, T\Sigma)$ representing a tangent vector of the Stasheff associahedron (\ie the deformation of Riemann surface structure on the domain $\Sigma$), satisfying
\[
\bar{\partial} v+ \frac{1}{2} J_X \circ du\circ \rho=0.
\]
where $J_X$ is the complex structure on $X$. Here $\rho$ can be taken to be compactly supported, so $\bar{\partial} v=0$ near the corners. It immediately follows that

\begin{lem}
Given   first order deformation vector fields $v_1,\ldots v_{n-1}$, then the (1,0)-form $\Omega(\cdot, v_1,\ldots v_{n-1})$  on $\Sigma$ is \textbf{holomorphic}.
\end{lem}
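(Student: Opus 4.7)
The plan is a direct local computation exploiting the integrability of $J_X$. Work in local holomorphic coordinates $(z^1, \ldots, z^n)$ on a chart of $X$, writing $\Omega = f(z)\, dz^1 \wedge \cdots \wedge dz^n$ with $f$ holomorphic, and fix a local holomorphic coordinate $w$ on $\Sigma$. Decompose each real vector field $v_i \in C^\infty(\Sigma, u^*TX)$ as $v_i = V_i + \overline{V_i}$ with $(1,0)$-part $V_i = V_i^k \partial_{z^k}$. Because $u$ is pseudoholomorphic and $J_X$ is integrable, $u$ is genuinely holomorphic for the given $j$ on $\Sigma$, so $\partial_{\bar w} u^k = 0$ and $du(\partial_{\bar w}) = 0$. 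Since $\Omega$ is of type $(n,0)$ on $X$, only the $(1,0)$-parts contribute, and
\[
\alpha := \Omega(\,\cdot\,, v_1, \ldots, v_{n-1}) = \alpha_w \, dw, \qquad \alpha_w = f(u)\, \det M,
\]
where $M$ is the $n \times n$ matrix with first row $(\partial_w u^1, \ldots, \partial_w u^n)$ and subsequent rows $(V_i^1, \ldots, V_i^n)$ for $i = 1, \ldots, n-1$. In particular $\alpha$ is automatically of type $(1,0)$.

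Next, translate the extended linearized Cauchy-Riemann equation $\bar\partial v_i + \tfrac{1}{2} J_X \circ du \circ \rho_i = 0$ into these coordinates. Writing $\rho_i = \mu_i(w, \bar w)\, d\bar w \otimes \partial_w$ for the local expression of the Beltrami-like tangent to the Stasheff associahedron, integrability makes the connection terms drop out and the equation collapses to
\[
\partial_{\bar w} V_i^k = c\, \mu_i(w, \bar w)\, \partial_w u^k
\]
for some universal constant $c$, uniformly in $k$. The structural punchline is that the failure of each $V_i$ to be antiholomorphic is constrained to lie precisely in the $\partial_w u$-direction; this is the whole point of why the additional freedom from varying the complex structure on $\Sigma$ does not destroy the holomorphicity of $\alpha$.

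With this in hand, the holomorphicity of $\alpha_w$ follows by direct differentiation. Compute $\partial_{\bar w} \alpha_w = \partial_{\bar w}(f \circ u)\, \det M + f(u)\, \partial_{\bar w} \det M$. The first term vanishes because $f$ is holomorphic on $X$ and $\partial_{\bar w} u^k = 0$. For the second, expand $\partial_{\bar w}\det M$ as the sum of $n$ determinants obtained by differentiating one row of $M$ at a time: the top-row contribution vanishes by $\partial_{\bar w}\partial_w u^k = \partial_w \partial_{\bar w} u^k = 0$, while each $V_i$-row, after differentiation, becomes $c\mu_i (\partial_w u^1, \ldots, \partial_w u^n)$, proportional to the first row of $M$, so the corresponding determinant is zero. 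Hence $\partial_{\bar w} \alpha_w = 0$ and $\alpha$ is a holomorphic $(1,0)$-form on $\Sigma$.

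The main friction rather than a genuine obstacle will be the sign and normalisation bookkeeping in the coordinate form of the extended Cauchy-Riemann equation: one must verify unambiguously that the $\rho_i$-contribution to $\partial_{\bar w} V_i$ lies along $\partial_w u$, and not along $\partial_{\bar w} \bar u$ or a mixture. Once this is pinned down, no moduli-theoretic input is required, and the lemma is then the structural foundation for the polygon analogues of Lemma~\ref{immersionlem} and Corollary~\ref{automatictransversalitystrip}, where the zero pattern of $\alpha$ will be converted into an index identity and into the automatic transversality and immersion-at-the-boundary statements.
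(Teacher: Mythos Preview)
Your proof is correct and follows the same underlying idea as the paper, which treats the lemma as immediate from the form of the extended linearized equation together with the observation (made in the remark following the lemma) that the $\rho$-correction term $\tfrac{1}{2}J_X\circ du\circ\rho$ is tangent to the image of $\Sigma$, and hence does not affect $\Omega(\cdot,v_1,\ldots,v_{n-1})$ since $\Omega$ is of type $(n,0)$. Your coordinate computation, in which the $\bar w$-derivative of each $V_i$-row becomes proportional to the first row of $M$, is precisely an explicit unpacking of this fact.
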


\begin{rmk}
Adding vector fields on $\Sigma$ valued in $T\Sigma$ to $v_1,\ldots v_{n-1}$ does not affect $\Omega(\cdot, v_1,\ldots v_{n-1})$ as a 1-form on $\Sigma$. Thus this 1-form is insensitive to how one represents the Riemann surface structures on the abstract polygon.
\end{rmk}

We impose that the input at one of the $k$ corners maps to an intersection point in $CF^0(L,L')$, and the other inputs map to degree one self intersections of $L$ or $L'$. The output maps to $q\in CF^*(L,L')$.

\begin{prop}\label{automatictransversalitypolygon}(Automatic transversality, polygon case)
Suppose $v_1,\ldots v_{n-1}$ are linearly independent first order deformation vector fields.  Either  $\Omega(\cdot, v_1,\ldots v_{n-1})$ vanishes identically as a 1-form on $\Sigma$, or we must have $\deg q\geq n$, and when the equality holds then $\Omega(\cdot, v_1,\ldots v_{n-1})$ only vanishes at corners. In this case, all first order defomation vector fields are spanned by $v_1,\ldots v_{n-1}$, the holomorphic polygon $u:\Sigma\to X$ is an immersion up to the boundary, the obstruction of the extended linearized operator vanishes, and the moduli space is smooth at $u:\Sigma\to X$. 
\end{prop}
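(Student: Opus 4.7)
The approach parallels Lemma \ref{immersionlem} and Corollary \ref{automatictransversalitystrip} for the strip, with the scalar holomorphic function $\Omega(v_1,\ldots,v_n)$ replaced by the holomorphic $(1,0)$-form $\eta := \Omega(\cdot, v_1,\ldots, v_{n-1})$ on $\Sigma$ provided by the preceding lemma. Assuming $\eta\not\equiv 0$, its zeros are isolated. On any boundary arc mapping into a Lagrangian $L$ with Lagrangian angle function $\theta$, the vectors $v_1,\ldots, v_{n-1}$ lie in $TL$, and $du(T\partial\Sigma)\subset TL$ as well, so $\eta$ evaluated on a tangent vector to $\partial\Sigma$ is $\Omega$ applied to $n$ real vectors of $TL$, which lies in $e^{i\theta(u)}\R$. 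This totally real boundary condition with phase $e^{i\theta(u)}$ forces boundary zeros of $\eta$ to have nonnegative integer order.

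At each corner, put the tangent spaces $TL_{\pm}$ in the standard form with Lagrangian angles $(\phi_1,\ldots,\phi_n)\in(0,\pi)^n$. The deformation vector fields decay with leading asymptotic $v_i^k\sim a_{ik} z^{\phi_k/\pi}$, while the holomorphic map satisfies $u^k\sim c_k z^{\phi_k/\pi}$, so $du(\partial_z)^k\sim (\phi_k/\pi) c_k z^{\phi_k/\pi - 1}$. Writing $\Omega = dz_1\wedge\cdots\wedge dz_n$ in the standard basis, a direct computation yields
\[
\eta(\partial_z) = \det\big(du(\partial_z)\mid v_1\mid\cdots\mid v_{n-1}\big)\sim z^{(\sum_k\phi_k)/\pi - 1}
\]
times the determinant of the leading coefficients in $(c_k, a_{ik})$, which generically does not vanish. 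The excess vanishing order at the corner (above the baseline $(\sum_k\phi_k)/\pi - 1$ prescribed by the angle data) is then a nonnegative integer. A topological Cauchy residue computation for $\eta$, mirroring the derivation of (\ref{indexvanishingorder}) but adapted to $(1,0)$-forms on the disc with totally real boundary conditions and corner jumps, gives an index formula matching the Fredholm index of the relevant operator. Combined with the moduli space virtual dimension $\deg q - 1$ (coming from one input at $CF^0$, $k-1$ inputs of degree $1$, and the $(k-2)$-dimensional associahedron), this forces $\deg q \geq n$.

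In the equality case $\deg q = n$, every nonnegative contribution in the index formula vanishes, so $\eta$ is nonzero on $\Sigma$ away from corners, and at each corner the determinantal coefficient is nonzero. In particular, at every interior and non-corner boundary point, the holomorphic vectors $du(\partial_z), v_1,\ldots,v_{n-1}$ are $\C$-linearly independent in $TX$, whence $u$ is an immersion up to the boundary, with minimal failure at corners in the sense of Corollary \ref{automatictransversalitystrip}. For any additional first-order deformation $(v,\rho)$, pointwise we write $v = f_0\, du(\partial_z) + \sum_{i=1}^{n-1} f_i v_i$ for complex-valued functions $f_j$ on $\Sigma$; the extended linearized equation combined with the pointwise $\C$-linear independence forces the $f_j$ to be holomorphic, while boundary conditions and corner decay force $f_0$ to correspond to an infinitesimal reparametrization (absorbed by the associahedron deformation $\rho$) and $f_1,\ldots,f_{n-1}$ to be constants. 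Thus $v_1,\ldots,v_{n-1}$ span the full space of first-order deformations modulo reparametrization, the cokernel of the extended linearized operator vanishes by index matching, and the moduli space is smooth at $u$. The main obstacle is establishing the precise Riemann--Hilbert index formula for $\eta$ in the $(1,0)$-form setting, with the correct bookkeeping of corner contributions; once this is in place, the transversality conclusions follow the pattern of the strip argument.
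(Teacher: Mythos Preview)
Your approach is on the right track and is conceptually equivalent to the paper's, but you stop short of the key reduction that resolves precisely the ``main obstacle'' you flag. Rather than developing a new Riemann--Hilbert index formula for the holomorphic $(1,0)$-form $\eta$, the paper pairs $\eta$ with an explicit vector field to reduce to the scalar case and reuse formula (\ref{indexvanishingorder}) directly. Concretely: view the polygon domain as a strip with the $CF^0$ input $p$ and the output $q$ at the two infinite ends, and the $k-1$ degree-one corners as boundary punctures; then set $v_n := \partial_s u$, the $\R$-translation vector field. Now $\Omega(v_1,\ldots,v_n) = \eta(\partial_s)$ is a holomorphic function to which (\ref{indexvanishingorder}) applies. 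The crucial subtlety is that $v_n$ does \emph{not} decay at the degree-one corners: in the local half-plane coordinate $z$ there one has $v_n \sim z^{\phi_j/\pi - 1}$ rather than $z^{\phi_j/\pi}$, so the excess vanishing order of $\Omega(v_1,\ldots,v_n)$ is $\geq -1$ at each of the $k-1$ degree-one corners, while it is $\geq 0$ at $p$ and $q$ as in the strip case. Feeding this into (\ref{indexvanishingorder}) with ordinary index $\deg q - (k-1)$ yields $\deg q - (k-1) \geq -(k-1) + n$, hence $\deg q \geq n$, with equality forcing every bound to saturate. Your direct $1$-form formulation hides this asymmetry between corners (it surfaces as the coordinate dependence of $\partial_z$ near finite versus infinite punctures), which is why a clean count eluded you.

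Your spanning argument also diverges from the paper's and is more delicate than necessary. Writing an arbitrary deformation as $f_0\, du(\partial_z) + \sum f_i v_i$ is awkward because $du(\partial_z)$ is not real along $\partial\Sigma$ and is not a deformation vector field; disentangling the reparametrization piece $f_0$ from the associahedron variable $\rho$ is not straightforward. The paper instead subtracts a constant real linear combination of $v_1,\ldots,v_{n-1}$ to make $v$ tangent to the image of $\Sigma$ at one boundary point, invokes the just-established nonvanishing of $\Omega(v_1,\ldots,v_n)$ to conclude $v$ is tangent to the image everywhere, lifts via the immersion property to a vector field on the domain, and then uses rigidity of the punctured domain (no automorphisms, and domain deformations already parametrized by $\rho$) to conclude $v$ represents the zero deformation.
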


\begin{proof}
By viewing the domain of the polygon as a strip with extra boundary punctures, we produce a holomorphic vector field $v_n$ as the $\R$-translation vector field. However, unlike in the strip case, at the degree one self intersection corners $v_n$ does not typically have the required decay to be admitted as a deformation vector field. Indeed, by thinking about such a corner point as the origin in the upper half plane model of $\Sigma$ with local coordinate $z$, then $zv_n$ decays at the corner, but not necessarily $v_n$ itself.

Now $v_1\wedge\ldots v_n$ is a section of $\Lambda^n TX$ with boundary value on $\Lambda^n TL$, and $\Omega(v_1,\ldots v_n)$ is a holomorphic function on $\Sigma$. We assume from now on that it is not identically zero. 
The index of the ordinary Cauchy-Riemann operator is 
\[
\deg q- \sum_1^k \deg p_k=\deg q-k+1.
\]
Invoking (\ref{indexvanishingorder}) this is computable from the vanishing orders of $\Omega(v_1,\ldots v_n)$:
\[
\deg q-k+1=2 \sum (\text{interior zeros}) + \sum (\text{boundary zeros}) + \sum (\text{corner zeros})+n.
\]
The interior and boundary vanishing orders are non-negative. Since the $v_k$ are holomorphic near the corners without correction, the proof of Lemma \ref{immersionlem} shows that
the excess vanishing order at the $CF^0(L,L')$ corner and the $q$ corner are both non-negative. At the degree one self intersection corners, the excess vanishing order of $\Omega(v_1,\ldots v_{n-1}, zv_n)$ is nonnegative by the same previous arguments, so $\Omega(v_1,\ldots v_{n-1}, v_n)$ itself has excess vanishing order $\geq -1$. Hence
$
\deg q-k+1 \geq n-k+1,
$
namely $\deg q\geq n$.

When the equality is achieved, then all bounds are saturated. In particular, $\Omega(v_1,\ldots v_n)$ \emph{can only vanish at the corners}, so $u: \Sigma\to X$ is an \emph{immersion up to boundary}. At the corners, the same arguments in Corollary \ref{automatictransversalitystrip} shows the failure of immersion is minimal.

If $v$ is the deformation vector field corresponding to an arbitrary kernel element of the extended linearized operator, then after subtracting off a constant linear combination of $v_1,\ldots v_{n-1}$, we may assume $v$ is tangent to $\Sigma$ at any chosen point on $\partial\Sigma$. The same argument in Corollary \ref{automatictransversalitystrip} shows $v$ is tangent to the image of $\Sigma$. The immersion property allows us to lift $v$ to the domain $\Sigma$. There is no room to deform the complex structure of $\Sigma$, nor is there any automorphism of $\Sigma$, so in fact $v$ vanishes identically. This shows that $v_1,\ldots v_{n-1}$ \emph{span all first order deformations}. But $\deg q=n$ implies that the index of the extended linearized operator is
\[
\deg q-\sum_1^k p_k +k-2= n-1
\]
Thus the cokernel dimension is zero, namely the obstruction space vanishes. Consequently, the moduli space of such holomorphic polygons is smooth.
\end{proof}

\begin{rmk}\label{cornerOmega}
Using the Floer degree formula  (\ref{Floerdegree}), 
the asymptotic behaviour of $\Omega(v_1,\ldots v_n)$ at the corners can be extracted from the above proof: at the $CF^0(L,L')$ corner point $p$
\[
\Omega(v_1,\ldots v_n)= a_p z^{(\theta_{L'}-\theta_{L})(p)/\pi} (1+O(z)), \quad a_p\neq 0,\quad  \arg a_p= \theta_L(p) \mod \pi \Z.
\]
At the degree one self intersections $p_l\in CF^1(L_+,L_-)$ on $L$ or $L'$, 
\[
\Omega(v_1,\ldots v_n)= a_l z^{ (\theta_{L_-}-\theta_{L_+} )(p_l)/\pi} (1+O(z)), \quad a_l\neq 0,\quad  \arg a_l= \theta_{L_+}(p_l) \mod \pi \Z.
\]
At the degree $n$ output $q$,
\[
\Omega(v_1,\ldots v_n)= a_q z^{(\theta_{L}-\theta_{L'})(q)/\pi} (1+O(z)), \quad a_q\neq 0,\quad  \arg a_q= \theta_{L'}(q) \mod \pi \Z.
\]
\end{rmk}

\subsubsection*{Weighted Sobolev space with exponential growth}

We now discuss solutions to linearized Cauchy-Riemann equations in weighted Sobolev spaces $W^{1,2;\mu}$ (\cf \cite[section 2]{SeidelLefV}). These spaces agree with their unweighted counterparts along the strip like input ends, but at the strip like output end $s\gg 0$, a vector field $v\in  W^{l,2;\mu}$ means that $\exp(-\mu s) v$ lies in $W^{l,2}$. Generally we choose $\mu$ to avoid a discrete set of indicial values. The main point of these weighted Sobolev spaces is that they allow for holomorphic vector fields with prescribed exponential growth along the output end, which is conceptually similar to allowing for meromorphic functions in Riemann surface theory. If we think of the strip like end $q$ as the infinity (resp. the origin) in the upper half plane model of $\Sigma$, then the natural coodinate is $z=e^{\pi (s+it)}$ (resp. $z=e^{-\pi (s+it)}$), and the exponential growth $o(e^{\mu s})$ becomes $o(|z|^{\mu/\pi})$ (resp. $o(|z|^{-\mu/\pi})$.

For larger $\mu$ more vector fields are included in the Sobolev space, and the index increases by one each time $\mu$ crosses an indicial value (counted with multiplicity). In our problem, the indicial values are
\[
\phi_1+\pi\Z, \quad  \phi_2+\pi\Z,\ldots, \phi_n+\pi\Z,
\]
where $\phi_1,\ldots \phi_n$ are the characterizing angles at the Lagrangian intersection point $q$ at the output end. Then the index for the linearized Cauchy-Riemann operator $W^{1,2;\mu}\to L^{2,\mu}$ is 
\begin{equation}\label{indexexpgrowth}
\deg q-\sum_1^k \deg p_i+ \text{number of indicial values between $0$ and $\mu$},
\end{equation}
where $k$ is the number of input ends. In particular, for holomorphic strips with $\deg p=\deg q$ (resp. $\deg q-\deg p=1$), then the index for $\mu=\pi$ is equal to $n$ (resp. $n+1$). In contrast, the ordinary index (for the $\mu=0$ case) is zero, and the moduli space obtained by taking $\R$-quotient has virtual dimension $-1$ (resp. zero). There are in fact sufficient conditions to rule out the negative dimension moduli spaces, and constrain the zero dimensional moduli spaces:

\begin{lem}\label{stripexpgrowth}
In the holomorphic strip case, 
assume $v_1,\ldots v_n$ are in the kernel of the linearized Cauchy-Riemann operator on $W^{1,2;\mu=\pi}$, such that $\Omega(v_1,\ldots v_n)$ does not vanish identically. Then $\deg q-\deg p\geq 1$. When the equality is achieved, the holomorphic strip is an immersion up to the boundary with minimal vanishing at the corner, and the zero dimensional moduli space is regular. 

\end{lem}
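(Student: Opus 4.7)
The plan is to apply the topological vanishing-order identity (\ref{indexvanishingorder}) to the holomorphic function $F = \Omega(v_1,\ldots,v_n)$ on $\Sigma$, whose boundary values lie in $\Lambda^n TL$ or $\Lambda^n TL'$ along the corresponding edges. Assuming $F\not\equiv 0$, all interior and boundary vanishing orders are non-negative. At the input $p$, ordinary decay of the $v_i$ forces the corner excess to be $\geq 0$, exactly as in Lemma \ref{immersionlem}. At the output $q$, the weight $\mu=\pi$ crosses the $n$ indicial values $\phi_1,\ldots,\phi_n$, so each $v_i$ may additionally take an extra mode of leading behaviour $z^{\phi_j/\pi-1}$ in some direction $j$; each such mode contributes $-1$ to the corner excess, so the excess at $q$ is bounded below by $-n$. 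Combining these yields $\deg q-\deg p\geq 0$.

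To sharpen this to $\deg q-\deg p \geq 1$, I rule out equality using the $\R$--translation vector field $\partial_s u$, which is a nonzero element of the ordinary kernel $W^{1,2;0}\subset W^{1,2;\pi}$ since $u$ is nonconstant. If $\deg q=\deg p$, every bound above saturates: $F$ has no interior or boundary zeros, each $v_i$ uses one extra mode at $q$, and $v_1,\ldots,v_n$ pointwise span $T_{u(z)}X$. Cramer's rule yields
\[
\partial_s u = \sum_{j=1}^n f_j\, v_j, \qquad f_j = \frac{\Omega(v_1,\ldots,v_{j-1},\partial_s u, v_{j+1},\ldots,v_n)}{\Omega(v_1,\ldots,v_n)},
\]
with $f_j$ holomorphic on the interior of $\Sigma$ and, because numerator and denominator share the Lagrangian phase along each edge, real on $\partial\Sigma$. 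A direct leading-order computation shows $f_j$ has a finite limit at $p$ (baseline orders match there) and vanishes to order exactly $1$ at $q$, since replacing an extra-mode entry $z^{\phi_j/\pi-1}$ by the ordinary-decay entry $z^{\phi_j/\pi}$ of $\partial_s u$ improves the determinant's leading exponent by one.

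By Schwarz reflection, $f_j$ extends to a holomorphic function on the Schottky double of the strip, which is conformally $\C^*$ with the corners $p,q$ as the punctures $0,\infty$. The finite limit at $p$ is a removable singularity, and the vanishing at $q$ then gives a bounded entire function vanishing at infinity; Liouville forces $f_j\equiv 0$, hence $\partial_s u\equiv 0$, contradicting the nonconstancy of $u$. Hence $\deg q-\deg p\geq 1$. In the equality case, the same Liouville obstruction excludes excess $-n$ at $q$, so the only possibility is excess $-(n-1)$ at $q$ with all other contributions vanishing; exactly one $v_k$ avoids the extra modes, and the same Cramer--Schwarz--Liouville argument applied to it identifies $v_k$ (up to scalar) with $\partial_s u$. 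Pointwise spanning by $v_1,\ldots,v_n$ then makes $u$ an immersion up to the boundary with minimal corner vanishing as in Corollary \ref{automatictransversalitystrip}. Regularity of the zero-dimensional moduli space follows by rerunning the coefficient argument on an arbitrary $W^{1,2;\pi}$ kernel element $v = \sum f_j v_j$: the resulting $f_j$ extend to meromorphic functions on $\C^*$ with at most a simple pole at one puncture, producing an $(n+1)$--dimensional kernel that matches the Fredholm index $\deg q-\deg p+n=n+1$, forcing the cokernel to vanish.

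The main technical obstacle is the corner bookkeeping: one must track which of the $n$ extra growth modes at $q$ each $v_i$ occupies, verify that the $f_j$ have the claimed leading orders at both corners independently of the frame choice, and check that the Schottky double is smooth at the corner punctures (cone angle $\pi+\pi=2\pi$) so that Schwarz reflection applies. With this bookkeeping in hand, the Schwarz--Liouville step is standard, and the equality analysis reduces to the same template already deployed in Corollary \ref{automatictransversalitystrip} and Proposition \ref{automatictransversalitypolygon}.
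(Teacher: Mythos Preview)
Your Schwarz--Liouville argument for the strict inequality $\deg q-\deg p\geq 1$ is correct and is a genuinely different route from the paper. The paper instead makes the substitution $v_n=\partial_s u$ at the outset: since $\partial_s u$ lies in the ordinary kernel $W^{1,2;0}\subset W^{1,2;\pi}$ and your own Cramer identity shows that not all $\Omega(v_1,\ldots,\widehat{v_j},\ldots,v_n,\partial_s u)$ can vanish identically (else $\partial_s u\equiv 0$), one may WLOG take $v_n=\partial_s u$. This improves the corner excess at $q$ from $\geq -n$ to $\geq 1-n$ in one stroke, so the index formula (\ref{indexvanishingorder}) gives $\deg q-\deg p\geq 1$ directly, and the equality analysis is then literally that of Corollary \ref{automatictransversalitystrip}: no interior or boundary zeros, $\det(a_{kj})\neq 0$ at both corners, immersion up to the boundary, and the $W^{1,2;\pi}$ kernel is spanned by $v_1,\ldots,v_n,v_n/z$, of which only $v_n=\partial_s u$ decays at $q$, so the ordinary kernel is one--dimensional and the cokernel vanishes.

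Your equality--case step, however, has a gap. You assert that ``the same Liouville obstruction excludes excess $-n$ at $q$'' when $\deg q-\deg p=1$. But in that situation the index formula forces $F=\Omega(v_1,\ldots,v_n)$ to carry one extra simple zero, either on $\partial\Sigma$ or as excess $1$ at $p$. At that zero your Cramer coefficients $f_j$ can acquire a simple pole; after Schwarz reflection you obtain a meromorphic function on $\mathbb{CP}^1$ with one simple pole and one simple zero, and Liouville no longer forces it to vanish. The residue cancellation needed for $\sum f_j v_j=\partial_s u$ to remain regular is exactly the single linear relation among $v_1(z_0),\ldots,v_n(z_0)$, so nontrivial $f_j$ survive. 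The most economical repair is to feed this back into the paper's move: the non-vanishing of some $f_j$ is precisely the statement that $\Omega(v_1,\ldots,\partial_s u,\ldots,v_n)\not\equiv 0$ for some slot, so replace that $v_j$ by $\partial_s u$ and redo the count with excess $\geq 1-n$ at $q$. Once you make that substitution, your argument and the paper's coincide.
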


\begin{proof}
We modify the proof of Lemma \ref{immersionlem} and Cor. \ref{automatictransversalitystrip}. We think of the corner $q$ as the origin in the upper half plane model. Without loss of generality $v_n$ is the $\R$-translation vector field of the holomorphic strip.
 Then the leading order asymptotic is
\[
v_k= (a_{k1} z^{\phi_1/\pi-1}, \ldots, a_{kn}z^{\phi_n/\pi-1}) +O(1), \quad k=1,2,\ldots n-1,
\]
and 
\[
v_n=( a_{n1} z^{\phi_1/\pi}, \ldots, a_{nn}z^{\phi_n/\pi}) +O(z).
\]
hence
\[
\Omega(v_1,\ldots v_n)=  z^{  (\sum \phi_k)/\pi-n+1} ( \det(a_{kj})  +o(1)),
\]
The excess vanishing order is $\geq 1-n$, where negative order stands for poles. 
By the index formula (\ref{indexvanishingorder}) for the ordinary linearized Cauchy-Riemann equation, we have 
\[
\deg q-\deg p \geq 1-n+n=1,
\]
and equality forces $\Omega(v_1,\ldots v_n)$ to have no interior zero, no boundary zero, minimal zero at $p$, and $\det(a_{kj})\neq 0$ at $q$. The argument in Cor. \ref{automatictransversalitystrip} shows $v_1,\ldots v_n, \frac{v_n}{z}$ span the real vector space of first order deformations in $W^{1,2;\pi}$. In particular, the only first order deformation which decays at $q$ is the $\R$-translation vector field. Thus the cokernel to the ordinary linearized Cauchy-Riemann operator vanishes, and the moduli space is regular.
\end{proof}

A very analogous statement holds in the polygon case, and is left to the reader:

\begin{lem}\label{polygonexpgrowth}
In the holomorphic polygon case, 
assume $v_1,\ldots v_{n-1}$ are in the kernel of the extended linearized Cauchy-Riemann operator on $W^{1,2;\mu=\pi}$, such that  $\Omega(\cdot, v_1,\ldots v_{n-1})$ does not vanish identically as a 1-form on $\Sigma$. Then $\deg q-\sum_1^k\deg p_k+k-2\geq 0$. When the equality is achieved, the holomorphic polygon is an immersion up to the boundary with minimal vanishing at the corner, and the zero dimensional moduli space of holomorphic polygons is regular at $u:\Sigma\to X$. 
\end{lem}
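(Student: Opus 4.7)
The plan is to adapt the proof of Lemma~\ref{stripexpgrowth} to the polygon setting by combining it with the $\R$-translation construction from Proposition~\ref{automatictransversalitypolygon}. First I would realize the polygon domain as a strip with $k-1$ extra boundary punctures, taking one chosen input and the output $q$ as the two strip-like ends, and let $v_n$ be the resulting $\R$-translation vector field on $\Sigma$. As in the proof of Proposition~\ref{automatictransversalitypolygon}, $v_n$ is holomorphic, satisfies the Lagrangian boundary conditions, decays at the standard rate at $q$ and at the chosen input, but only satisfies $zv_n \to 0$ at each of the remaining $k-1$ degree-one self-intersection corners. The function $\Omega(v_1, \ldots, v_{n-1}, v_n)$ is then holomorphic on $\Sigma$ (insensitive to $T\Sigma$-valued modifications of the $v_i$), and under the hypothesis that $\Omega(\cdot, v_1, \ldots, v_{n-1})$ is not identically zero as a $1$-form, a generic choice of $v_n$ makes this function not identically zero.

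Next I would apply the vanishing-order identity~\eqref{indexvanishingorder} to this holomorphic function. At the $CF^0$-input and at each of the $k-1$ degree-one input corners, the excess vanishing is bounded below exactly as in Proposition~\ref{automatictransversalitypolygon}, namely $\geq 0$ and $\geq -1$ respectively, the latter bound coming from the fact that $z v_n$ decays at the standard rate. At the output $q$, the $W^{1,2;\pi}$ condition on each of $v_1, \ldots, v_{n-1}$ permits an extra pole of order one in each standard direction, and the leading asymptotic computation of Lemma~\ref{stripexpgrowth} then shows that the excess vanishing at $q$ is at worst $-(n-1)$. Summing contributions yields
\[
\deg q - \sum_{i=1}^k \deg p_i \;\geq\; 0 + 0 + (k-1)(-1) + (-(n-1)) + n \;=\; 2 - k,
\]
which rearranges to the claimed $\deg q - \sum \deg p_i + k - 2 \geq 0$. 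In the equality case every bound saturates, so $\Omega(v_1, \ldots, v_n)$ has no interior or boundary zeros and the leading coefficient matrices at all corners are non-degenerate. In particular $v_n$ is nowhere vanishing, which as in Proposition~\ref{automatictransversalitypolygon} forces $u : \Sigma \to X$ to be an immersion up to the boundary with minimal vanishing at each corner.

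For regularity of the zero-dimensional moduli space, I would show that the ordinary (unweighted) extended linearized operator has trivial kernel, which since its index equals $\deg q - \sum \deg p_i + k - 2 = 0$ gives vanishing cokernel automatically. Any ordinary kernel element $v$ lies in $W^{1,2}$, hence decays at $q$ at the standard rate with no pole. Substituting $v$ for $v_1$ in the count above improves the excess vanishing at $q$ from $-(n-1)$ to $-(n-2)$, yielding the strict inequality $\deg q - \sum \deg p_i \geq 3 - k$, which contradicts equality unless $\Omega(\cdot, v, v_2, \ldots, v_{n-1}) \equiv 0$. Iterating this replacement through each slot $v_2, \ldots, v_{n-1}$ forces $v$ to lie pointwise in the complex span of $v_n$ alone, so $v = f v_n$ for a holomorphic function $f$ on $\Sigma$; since $v \in W^{1,2}$ requires $f$ to vanish at each of the $k-1$ degree-one input corners where $v_n$ itself does not decay, and compatibility with the full extended kernel equation $(\bar\partial f) v_n + \tfrac{1}{2} J_X \circ du \circ \rho_v = 0$ further constrains $f$, one expects $f \equiv 0$. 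The main obstacle I expect is making this last step fully rigorous in the presence of the domain complex-structure deformation $\rho_v$; the cleanest alternative is to exhibit an analogue of $v_n / z$ from Lemma~\ref{stripexpgrowth} as a genuine $W^{1,2;\pi}$ kernel vector and match the weighted index $n$ against $n$ explicit independent kernel elements, but the bookkeeping at the degree-one corners where $v_n$ itself is only borderline integrable is delicate.
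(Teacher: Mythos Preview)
Your inequality derivation and the immersion conclusion in the equality case are correct and exactly follow the paper's implied route (the paper leaves this lemma to the reader as ``very analogous'' to Lemma~\ref{stripexpgrowth}, and the analogy is precisely the merger with Proposition~\ref{automatictransversalitypolygon} that you carry out). Your substitution argument reducing any unweighted extended-kernel element to the form $v = f v_n$ is also correct and matches how the paper argues in Proposition~\ref{automatictransversalitypolygon}.

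The gap you flag in the final step is real, but the resolution is simpler than either of your two proposed routes and is already present verbatim in the proof of Proposition~\ref{automatictransversalitypolygon}. Once $v = f v_n = du(f\,\partial_s)$ everywhere, the immersion property lets you lift $v$ to the domain vector field $\tilde v = f\,\partial_s$ on $\Sigma$. Your own growth estimates show $\tilde v$ vanishes at \emph{all} $k+1$ marked points: at the two strip-like ends because $\partial_s$ does, and at the remaining $k-1$ inputs because $f$ does. Pulling back the extended equation through $du$ then gives $\bar\partial\tilde v + \tfrac{1}{2}j\rho = 0$ as an equation purely on the domain. This says the pair $(\tilde v,\rho)$ is an infinitesimal deformation of the \emph{domain} alone, and since the disk with $k+1\geq 3$ boundary marked points has no automorphisms and its complex-structure deformations are exactly parametrised by the Stasheff direction $\rho$ (so that $\rho$ being $\bar\partial$-exact on vector fields vanishing at the marked points forces $\rho = 0$), one concludes $\tilde v = 0$ and $\rho = 0$. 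Your attempt to argue directly via holomorphicity of $f$ stalls precisely because $\bar\partial f \neq 0$ until $\rho$ is dispatched, and the $v_n/z$ route is unnecessary here.
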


A similar statement applies to teardrop curves:

\begin{lem}\label{teardropexpgrowth}
(Regularity of teardrops)
Let $u:\Sigma\to X$ be a teardrop curve with a unique output $q$ and no input ends.
Assume $v_1,\ldots v_{n-1}$ are in the kernel of the linearized Cauchy-Riemann operator on $W^{1,2;\mu=\pi}$, such that $\Omega(\cdot, v_1,\ldots v_{n-1})$ does not vanish identically as a 1-form on $\Sigma$. Then $\deg q\geq 2$. When the equality is achieved, the teardrop curve is an immersion up to the boundary with minimal vanishing at the corner, and the kernel of the ordinary Cauchy-Riemann operator is  spanned as a real vector space by the M\"obius vector fields on $\Sigma$ fixing the $q$ corner, and the cokernel vanishes. 
\end{lem}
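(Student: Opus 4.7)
I will adapt the proof of Lemma \ref{polygonexpgrowth} to the teardrop setting. The crucial modification is that, in the absence of input corners, there is no $\R$-translation vector field; instead, the automorphism group of the domain is the $2$-dimensional stabilizer of $q$ in the Möbius group of the disc. In the upper half plane model with $q$ at the origin, it is generated by $V = z\partial_z$ and $V' = z^2\partial_z$, whose pushforwards $V_* = du(V)$, $V'_* = du(V')$ lie in the ordinary (unweighted) kernel of the linearized Cauchy-Riemann operator. First I would adjoin $v_n := V'_*$ to the given $v_1,\ldots,v_{n-1}$ and study the holomorphic function $\Omega(v_1,\ldots,v_n)$ on $\Sigma$.

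To establish $\deg q \geq 2$, observe that $V' = zV$ gives $v_n = z V_*$, hence $\Omega(v_1,\ldots,v_n) = z \cdot \Omega(v_1,\ldots,v_{n-1}, V_*)$. The second factor is not identically zero: the hypothesis produces a nonzero holomorphic $(1,0)$-form $\eta$ on $\Sigma$ obtained from $\Omega(\cdot, v_1,\ldots,v_{n-1})$, and since $V$ is nonvanishing off $q$ while $T_p\Sigma$ is one-dimensional over $\C$, the pairing $\eta(V)$ is not identically zero. Near $q$, rows $1,\ldots,n-1$ contribute leading terms $c_{kj} z^{\phi_j/\pi - 1}$ in column $j$, while row $n$ contributes $d_j z^{\phi_j/\pi + 1}$ with $d_j = (\phi_j/\pi) a_j$; factoring yields excess vanishing order $\geq 2-n$ at $q$ for $\Omega(v_1,\ldots,v_n)$. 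Applying \eqref{indexvanishingorder} with index $= \deg q$ (no input corners) and non-negative interior/boundary zero counts gives $\deg q \geq (2-n) + n = 2$. When equality holds, all nonnegative contributions must vanish: $\Omega(v_1,\ldots,v_n)$ has no interior or boundary zero, and the leading coefficient $\det(c_{kj}\mid d_j) \neq 0$ forces every $a_j \neq 0$, so $u$ is an immersion up to the boundary with minimal corner vanishing as in Corollary \ref{automatictransversalitystrip}.

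For the kernel and cokernel, given any $w$ in the ordinary kernel, use the pointwise spanning of $v_1,\ldots,v_n$ on $\Sigma \setminus \{q\}$ to write $w = \sum_{k=1}^{n-1} f_k v_k + g v_n$ globally, with $f_k, g$ holomorphic on $\Sigma\setminus\{q\}$ and real-valued on $\partial\Sigma$ (the Lagrangian phase $e^{i\theta}$ cancels in the Cramer-rule ratio of determinants). Leading-order analysis at $q$ via Cramer's rule shows $f_k$ extends holomorphically with $f_k(q) = 0$, while $g$ admits at worst a simple pole. Schwarz reflection across $\R$ extends $f_k$ to a bounded entire function vanishing at $0$, hence $f_k \equiv 0$, and extends $g$ to a rational function on $\mathbb{CP}^1$ with pole only at $0$ and real boundary values, so $g = c/z + d$ for real $c, d$. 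Finally, $g v_n = (c/z + d) z^2 du(\partial_z) = c\, du(z\partial_z) + d\, du(z^2\partial_z) = c V_* + d V'_*$, so $w \in \mathrm{span}_\R(V_*, V'_*)$. The ordinary kernel thus equals the $2$-dimensional space of Möbius vector fields fixing $q$, and since the ordinary index equals $\deg q = 2$, the cokernel vanishes. The most delicate step is this last one: tracking orders of $f_k$ and $g$ at $q$ precisely enough that Schwarz reflection yields rational extensions, and checking that the Lagrangian phase truly cancels in the Cramer ratio on both boundary sheets meeting at $q$.
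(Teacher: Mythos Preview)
Your proof is correct and follows essentially the same approach as the paper: adjoin $v_n = du(z^2\partial_z)$, compute the excess corner vanishing order $\geq 2-n$, and apply \eqref{indexvanishingorder}; for the kernel the paper argues more tersely, invoking the method of Cor.~\ref{automatictransversalitystrip} to show $v_1,\ldots,v_n,v_n/z,v_n/z^2$ span the $W^{1,2;\pi}$ kernel and then reading off the decaying subspace $\R v_n\oplus\R(v_n/z)$, which your Cramer--Schwarz reflection argument makes explicit. One minor slip: $\det(c_{kj}\mid d_j)\neq 0$ only forces \emph{some} $a_j\neq 0$ (row $n$ is $(d_1,\ldots,d_n)$, so a single nonzero entry suffices), not all of them, but that is exactly what ``minimal vanishing at the corner'' means.
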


\begin{proof}
We modify the proof of Lemma \ref{stripexpgrowth}. We think of the corner $q$ as the origin in the upper half plane model, and take $v_n$ instead to be the M\"obius vector field $z^2\partial_z$ on $\Sigma$. This has one higher order of vanishing:
\[
v_n= ( a_{n1} z^{\phi_1/\pi+1}, \ldots a_{nn}z^{\phi_n/\pi+1}) +O(z^2).
\]
This leads to 
\[
\Omega(v_1,\ldots v_n)=  z^{  (\sum \phi_k)/\pi-n+2} ( \det(a_{kj})  +o(1)),
\]
so the excess vanishing order at $q$ is $\geq 2-n$. The index of the ordinary linearized Cauchy-Riemann operator is
\[
\deg q= 2 \sum (\text{interior zeros}) + \sum (\text{boundary zeros}) + \sum (\text{excess corner zeros})+n,
\]
whence $\deg q\geq 2$.

 When the equality is achieved, then there is no interior or boundary zero, and $\det(a_{kj})\neq 0$ at the corner, hence the immersion claim. The argument in Cor. \ref{automatictransversalitystrip} shows that $v_1,\ldots v_n, \frac{v_n}{z}, \frac{v_n}{z^2}$ span the real vector space of first order deformations in $W^{1,2;\pi}$.
In particular, the only first order deformation which decays at $q$ are spanned by $v_n$ and $z^{-1}v_n$, namely the M\"obius generators. Since the index of the ordinary Cauchy-Riemann operator is two, the cokernel must have dimension zero, namely the obstruction vanishes.
\end{proof}

The above lemma describes the optimal case for teardrop curves. Such $\deg q=2$ teardrop curves arise in isolated zero dimensional moduli spaces after taking the $Aut(D^2,q)$ quotient, and the counting contribution to $m_0$ are $\pm 1$ depending on the spin structure and the orientation issues.

\subsubsection*{Structure of linearized Cauchy-Riemann equation}

Let $\Sigma$ be a holomorphic polygon with $k\geq 0$ input ends $p_i$, and one output end at $q$. The case $k=0$ corresponds to teardrops, and $k=1$ corresponds to strips.
We consider the ordinary linearized Cauchy-Riemann operator in weighted Sobolev spaces $W^{1,2;\mu}$, to classify the structure of the first order deformation theory. As usual, the complex structure is integrable.
Since $\bar{\partial}$ is elliptic, its cokernel in $L^2$ is finite dimensional, represented by holomorphic 1-forms on $\Sigma$, which must have finite order of vanishing at $q$. For large enough $\mu$, the dual space $L^{2;-\mu}$ for $L^{2;\mu}$ imposes an exponential decay condition $O(e^{-\mu s})$ at $q$, so the cokernel evantually vanishes for $\mu \gg 1$. Then the kernel dimension in $W^{1,2;\mu}$ is equal to the index, computed by (\ref{indexexpgrowth}). For convenience, we use $\mu\in \pi \N$, which avoids the indicial values. Then 
\begin{equation}\label{indexexpgrowth1}
\dim (\ker \bar{\partial}\subset W^{1,2;\mu})= \deg q-\sum_1^k \deg p_i+\frac{n\mu}{\pi}.
\end{equation}

It is convenient to view the domain $\Sigma$ of the holomorphic polygon as the upper half plane with coordinate $z$, with corners  $p_i$ on the real line and $q$ at infinity.

\begin{lem}
	If $v\in \ker \bar{\partial}\subset W^{1,2;\mu}$, then $v=fw$ for some real coefficient polynomial function $f$ on the upper half plane, such that $w$ is nonvanishing on $\R\setminus \{ p_1,\ldots p_k \} $, and vanishes minimally at $p_i$ (meaning $w$ is indivisible by $z-p_i$). 
\end{lem}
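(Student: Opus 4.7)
The plan is to extract the scalar vanishing of $v$ along the real boundary as a real polynomial $f$, leaving a holomorphic section $w$ with the claimed minimality. Near each point of $\R \setminus \{p_1,\dots,p_k\}$, I would choose a local trivialization of $u^{*}TX$ compatible with $TL$ (so that $TL$ becomes $\R^n \subset \C^n$); overlapping such trivializations are related by transition functions in $O(n)$, which are real on $\R$. In any such trivialization, $v$ is a $\C^n$-valued holomorphic function whose boundary values on $\R$ lie in $\R^n$, and Schwarz reflection extends $v$ holomorphically across $\R$ with the symmetry $v(\bar z) = \overline{v(z)}$. Consequently, if $v$ vanishes at some $y_j \in \R \setminus \{p_i\}$ to common component order $b_j \geq 1$, one obtains a local factorization $v(z) = (z - y_j)^{b_j} \tilde v(z)$ with $\tilde v(y_j) \neq 0$, and $b_j$ is a well-defined nonnegative integer.

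At each corner $p_i$, I would apply the indicial analysis underlying the proof of Lemma \ref{immersionlem}: after putting the adjacent Lagrangians into the standard form $\R^n$ and $(e^{i\phi_{i,1}},\dots,e^{i\phi_{i,n}})\R^n$, admissible asymptotic expansions for $v$ are generated by terms $(a_1 z^{\phi_{i,1}/\pi + m_1},\dots, a_n z^{\phi_{i,n}/\pi + m_n})$ with $m_\ell \in \Z_{\geq 0}$, and the weighted Sobolev class $W^{1,2;\mu}$ selects a smallest admissible leading term. The excess order $e_i \geq 0$, defined as the largest integer by which every component of $v$ vanishes beyond this minimum admissible leading term, is itself a nonnegative integer. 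Finiteness of $\{y_j\}$ and of $\{e_i\}$ follows from the Fredholm index formula (\ref{indexexpgrowth1}), which bounds the total ``zero order'' available to a single kernel element.

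Setting
$$
f(z) \;:=\; \prod_{j=1}^{J}(z - y_j)^{b_j} \cdot \prod_{i=1}^{k}(z - p_i)^{e_i},
$$
one obtains a polynomial with real coefficients, since all its roots lie in $\R$. Defining $w := v/f$, the quotient extends to a holomorphic section on $\mathbb{H}$ by Riemann removable singularities: at every zero of $f$, the section $v$ vanishes to at least the matching order. Reality of $f$ on $\R$ preserves the Lagrangian boundary condition $w|_{\R} \in TL$; by construction $w$ is nonvanishing on $\R \setminus \{p_i\}$, and at each corner $p_i$ it is indivisible by $z - p_i$, because only the excess beyond the minimum admissible leading term was factored out.

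The main obstacle is controlling the integrality of the excess corner orders $e_i$ when several angles $\phi_{i,j}$ coincide or are rationally related. For generic angles, the indicial roots modulo $\Z$ are distinct and the analysis reduces to a diagonalizable singular ODE, so the ``minimum common shift'' is transparently an integer. In degenerate cases, one must work with generalized eigenspaces and potentially logarithmic expansions, and verify that the integer character of $e_i$ is preserved; this is a routine but careful exercise in singular Fredholm theory, which does not affect the overall factorization strategy.
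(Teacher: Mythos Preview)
Your argument is essentially correct in outline, but the paper takes a much shorter route. Rather than identifying all boundary zeros and corner excesses at once, the paper simply observes that whenever $v$ vanishes at a boundary point $a\in\R\setminus\{p_1,\ldots,p_k\}$ (or vanishes beyond minimal order at a corner $p_i$), the quotient $v/(z-a)$ (resp.\ $v/(z-p_i)$) is again an element of $\ker\bar\partial\subset W^{1,2;\mu}$: it is holomorphic by removable singularities, satisfies the $TL$ boundary condition since $(z-a)$ is real on $\R$, and has only improved decay at the output end $q$ at infinity. Iterating produces a chain of linearly independent kernel elements $v,\,v/f_1,\,v/f_2,\ldots$, so finite-dimensionality of the kernel forces termination after finitely many divisions, yielding the polynomial $f$. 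No indicial analysis is required.

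A few remarks on your version. The Schwarz reflection step is unnecessary and somewhat delicate here, since the Lagrangians are only assumed smooth rather than real analytic; the division argument goes through without it because reality of $(z-a)$ on $\R$ already preserves the $TL$ boundary condition, and continuity pins down the value at $a$. Your finiteness appeal to formula (\ref{indexexpgrowth1}) is imprecise as stated: that formula gives $\dim\ker\bar\partial$, not a direct bound on the vanishing order of a single section; the actual finiteness mechanism is precisely the paper's (successive quotients are independent in a finite-dimensional kernel). Finally, the ``main obstacle'' you flag is a phantom: in the local corner model the Lagrangian boundary conditions are diagonal, the components of $v$ decouple, and no logarithmic solutions arise even when some $\phi_{i,j}$ coincide. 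The excess $e_i$ is simply the largest integer $e$ with $v/(z-p_i)^e$ still admissible, which is tautologically a nonnegative integer.
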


\begin{proof}
	If $v$ vanishes at any boundary point $a$ on $\R\setminus \{p_1,\ldots p_k \}$, or if $v$ vanishes at $p_i$ beyond minimal order, then $\frac{v}{z-a}$ (resp. $\frac{v}{z-p_i}$) is also a first order deformation with the same $TL$ boundary condition, subject to the growth constraints at infinity. Since the kernel dimension is finite, the divisions can only happen a finite number of times, producing the polynomial $f$. 
\end{proof}

Let $p\in \partial \Sigma\simeq \partial\mathbb{H}$, and let $K$ be the maximal number depending on $p$, such that there exist $\R$-linearly independent $v_1,\ldots v_K\in \ker\bar{\partial}$, satisfying
\begin{itemize}
\item In case $p$ is not a corner point, then $v_1(p),\ldots, v_K(p)$ are $\R$-linearly independent vectors,
\item In case $p=p_i$ is a corner point, then the nonzero elements in the $\R$-span of $v_1,\ldots v_K$ are vector fields vanishing minimally at $p$.
\end{itemize}

\begin{lem}
	Any $v\in \ker \bar{\partial}\subset W^{1,2;\mu}$ is of the form $g_1 v_1+ \ldots g_K v_K$ for some real coefficient rational functions $g_1,\ldots g_K$, nonsingular at $p$.
\end{lem}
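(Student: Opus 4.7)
The plan is to prove the claim by an iterative subtract-and-divide argument that successively uses the maximality of $K$ to reduce the behaviour of $v$ at $p$, and then divides by $(z-p)$ to decrease the growth at the output $q$, terminating via the finite-dimensionality of the weighted kernels $V_{\mu'} := \ker\bar{\partial}\cap W^{1,2;\mu'}$.

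First I would set $w^{(0)} := v\in V_\mu$. At the $k$-th step, having produced $w^{(k)}\in\ker\bar{\partial}$, I invoke the maximality of $K$: any element of $\ker\bar{\partial}$, in particular $w^{(k)}$, must have its value at $p$ (in the non-corner case) or its leading asymptotic expansion at $p=p_i$ (in the corner case) lying in the $\R$-span of the corresponding data of $v_1,\ldots,v_K$, since otherwise adjoining $w^{(k)}$ to the list would contradict the maximality of $K$. Hence I can find real constants $c_1^{(k)},\ldots,c_K^{(k)}$ such that $w^{(k)}-\sum_j c_j^{(k)} v_j$ vanishes at $p$ (resp.\ vanishes beyond minimal indicial order at $p=p_i$). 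Using that $(z-p)$ is real-valued on $\partial\mathbb{H}$, I then set $w^{(k+1)} := (w^{(k)}-\sum_j c_j^{(k)} v_j)/(z-p)$; the extra vanishing engineered in the subtraction ensures $w^{(k+1)}$ is again a holomorphic section of $u^*TX$ with $TL$ boundary condition, and since $|z-p|\sim e^{\pi s}$ at the output strip-like end, this division decreases the weighting exponent by $\pi$, so $w^{(k+1)}\in V_{\mu-(k+1)\pi}$.

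For termination, by the index formula (\ref{indexexpgrowth1}) the index tends to $-\infty$ as the weighting exponent decreases, and a Liouville-type argument in weighted Sobolev spaces (holomorphic sections with arbitrarily fast prescribed decay at $q$ must vanish) forces $V_{\mu'}=0$ once $\mu'$ is sufficiently negative; hence $w^{(N)}=0$ for some finite $N$. Unwinding the recursion then yields
\[
v \;=\; \sum_{k=0}^{N-1}(z-p)^k\sum_j c_j^{(k)} v_j \;=\; \sum_j P_j(z)\, v_j,
\]
where each $P_j(z)=\sum_{k=0}^{N-1} c_j^{(k)}(z-p)^k$ is a real polynomial in $(z-p)$, and in particular a real rational function nonsingular at $p$, as claimed.

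The hard part will be the careful bookkeeping in the corner case $p=p_i$, where ``vanishing minimally'' is an indicial condition on the asymptotic exponents $\phi_k/\pi$ rather than pointwise vanishing, so the subtraction-and-division scheme must be phrased in terms of the leading asymptotic matrix $(a_{jk})$ from the proof of Proposition \ref{automatictransversalitypolygon}. The key point to verify is that after the subtraction step the residue acquires one extra unit of indicial vanishing in each component, which precisely matches the shift induced by dividing by $z$, so that $w^{(k+1)}$ remains within the class of admissible holomorphic vector fields (and moves into a strictly smaller weighted space, guaranteeing termination).
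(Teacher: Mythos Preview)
Your subtract-and-divide scheme is the same mechanism as the paper's proof, but your termination argument contains a genuine gap. You claim that $w^{(k+1)}\in V_{\mu-(k+1)\pi}$, reasoning that division by $(z-p)$ lowers the weight by $\pi$ at each step. The division does lower the weight of the \emph{numerator} by $\pi$, but the numerator $w^{(k)}-\sum_j c_j^{(k)} v_j$ is not in $V_{\mu-k\pi}$: the correction terms $c_j^{(k)} v_j$ lie only in $V_\mu$, since the $v_j$ are fixed elements of $\ker\bar\partial\subset W^{1,2;\mu}$ with no assumed extra decay at $q$. Hence after subtraction you are back in $V_\mu$, and after one division you land in $V_{\mu-\pi}$ --- and you stay there for all subsequent $k$. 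The sequence $(w^{(k)})$ does not descend through a strictly decreasing chain of weighted kernels, so your Liouville-type vanishing $\bigcap_{\mu'}V_{\mu'}=0$ (which is itself true) never gets invoked, and you cannot conclude $w^{(N)}=0$.

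The paper's fix is to use finite-dimensionality of $V_\mu$ directly: all the $w^{(k)}$ live in this single finite-dimensional space together with $v_1,\dots,v_K$, so for some minimal $N$ there is a nontrivial linear relation among $v_1,\dots,v_K,w^{(0)},\dots,w^{(N)}$. Substituting the recursion $w^{(j)}=(\text{poly in }z)\cdot w^{(N)}+(\text{poly combinations of }v_i)$ into that relation yields $f_0(z)\,w^{(N)}=\sum_i f_i(z)\,v_i$ with $f_0(p)\neq 0$ (this nonvanishing comes from minimality of $N$), and then back-substitution gives $v=\sum_i g_i v_i$ with $g_i$ rational and nonsingular at $p$. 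Note that this only yields \emph{rational} coefficients, not the polynomial ones your (flawed) termination would have produced; the weaker conclusion is all the lemma claims. The corner bookkeeping you flag as ``the hard part'' is comparatively routine --- the real issue is the growth at $q$, not the behaviour at $p$.
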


\begin{proof}
Without loss of generality $p=0$.
Let $w_0$ be any first order deformation. By the maximality of $K$, we can choose real numbers $a_i$, such that the first order deformation $w_0- \sum_1^K a_i v_i$ vanishes at zero, so $w_0- \sum_1^K a_{i1} v_i= z^{k_1}w_1$ for some first order deformation $w_1$ which is nonzero at the origin. Finite dimensionality means this process can be repeated for only a finite number of times:
	\[
	\begin{cases}
	w_0= \sum a_{i1} v_i+ z^{k_1} w_1,
	\\
	w_1=\sum a_{i2}v_i+ z^{k_2} w_2, 
	\\ \ldots
	\\
	w_{N-1}=\sum a_{iN} v_i+ z^{k_N} w_N.
	\end{cases}
	\]
	We choose the smallest $N$ such that $v_1,\ldots v_K, w_0,\ldots w_N$ are $\R$-linearly dependent as vector fields; notice $v_1,\ldots v_K$ are linearly independent, so $N\geq 0$. We then get a linear relation
	\[
	f_0(z) w_N= \sum_1^K f_i(z) v_i,
	\]
	where $f_0,\ldots f_K$ are polynomials, and $f_0(0)\neq 0$. This implies the claim.
\end{proof}

We can also apply a M\"obius transform to make the output end $q$ lie at the origin. The growth condition translates to $o(|z|^{-\mu/\pi})$ at zero. Let $K$ be maximal, such that there are $\R$-linearly independent vector fields $z^{-\mu/\pi}v_1,\ldots, z^{-\mu/\pi}v_{K}\in \ker\bar{\partial}\subset W^{1,2;\mu}$, and any nonzero element in the $\R$-span of $v_1,\ldots v_K$ vanishes mimimally at $q=0$. As a caveat, this does not assume $v_1,\ldots v_{K}$ satisfy the growth constraints at infinity to lie in $\ker\bar{\partial}\subset W^{1,2;\mu}$. Minor adaptions give

\begin{lem}
Any $v\in \ker \bar{\partial}\subset W^{1,2;\mu}$ is of the form $z^{-\mu/\pi}(g_1 v_1+ \ldots g_K v_K)$ for some real coefficient rational functions $g_1,\ldots g_K$, nonsingular at $q$.
\end{lem}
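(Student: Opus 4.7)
The plan is to mimic the preceding lemma's extraction algorithm, applied now at the corner $q=0$ and in the ambient finite-dimensional space $\ker\bar{\partial}\subset W^{1,2;\mu}$. The honest analogues of the $v_i$ from that proof are the elements $\tilde{v}_i := z^{-\mu/\pi}v_i \in \ker\bar{\partial}\subset W^{1,2;\mu}$; by the maximality clause defining $K$, the $\tilde{v}_i$ span precisely the subspace of kernel elements saturating the allowed growth $|z|^{-\mu/\pi}$ at $q$ (equivalently, those $u$ for which $z^{\mu/\pi}u$ vanishes minimally at $q$).

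Given $v \in \ker\bar{\partial}\subset W^{1,2;\mu}$, I would choose real constants $a_{i,1}$ so that the leading asymptote of $v - \sum_i a_{i,1}\tilde{v}_i$ at $q$ is strictly below the saturation bound. Holomorphicity together with the $TL$-boundary condition along $\R$ then forces $v - \sum_i a_{i,1}\tilde{v}_i = z^{k_1}w_1$ for some integer $k_1\geq 1$ and some $w_1\in\ker\bar{\partial}\subset W^{1,2;\mu}$: multiplication by the real factor $z^{k_1}$ preserves the boundary condition on $\R$, and the resulting growth of $w_1$ at $q$ again lies within the $W^{1,2;\mu}$ window. Iterating gives
\begin{equation*}
w_{j-1} = \sum_i a_{i,j}\tilde{v}_i + z^{k_j}w_j, \qquad w_0 = v.
\end{equation*}
By finite-dimensionality of $\ker\bar{\partial}\subset W^{1,2;\mu}$ (via the index formula \eqref{indexexpgrowth1}), there is a smallest $N$ for which $\tilde{v}_1,\ldots,\tilde{v}_K,w_0,\ldots,w_N$ become $\R$-linearly dependent; exactly as in the preceding lemma, writing out this dependence and telescoping the recursion yields a polynomial identity $f_0(z)w_N = \sum_i f_i(z)\tilde{v}_i$ with $f_0(0)\neq 0$. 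Setting $g_i := f_i/f_0$ and using $\tilde{v}_i = z^{-\mu/\pi}v_i$ then produces $v = z^{-\mu/\pi}\sum_i g_i v_i$ with rational $g_i$ nonsingular at $q$.

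The step requiring the most care is verifying that each $w_j$ genuinely remains in $\ker\bar{\partial}\subset W^{1,2;\mu}$, so that the iteration stays confined to a single finite-dimensional space and hence must terminate. Holomorphicity and the $TL$-boundary condition are automatic, and the behavior of $w_j$ at the input corners $p_i$ and at infinity is inherited unchanged from that of $v$ and the $\tilde{v}_i$. The only nontrivial point is at $q$ itself: subtracting the unique combination of $\tilde{v}_i$ matching the top-order asymptote of $w_{j-1}$ kills that top order, increasing the integer-power part of the vanishing at $q$ by some $k_j\geq 1$, and division by $z^{k_j}$ restores a growth exponent again inside the $W^{1,2;\mu}$ ceiling. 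Once termination is secured, $f_0(0)\neq 0$ comes directly from the leading coefficient in the linear dependence, and this is exactly what guarantees that the $g_i$ are nonsingular at $q$.
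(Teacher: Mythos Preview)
Your argument is correct and is precisely the ``minor adaption'' the paper indicates: you run the same extraction algorithm as in the preceding lemma, now based at $q=0$ with the $\tilde v_i=z^{-\mu/\pi}v_i$ playing the role of the local frame, and you correctly verify that each $w_j$ stays in $\ker\bar\partial\subset W^{1,2;\mu}$ so that finite-dimensionality forces termination and $f_0(0)\neq 0$. One minor phrasing quibble: the $\tilde v_i$ do not literally span ``the subspace of kernel elements saturating the growth'' (that set is not a subspace), but what you actually use---that the leading-coefficient map has image spanned by the $\tilde v_i$, so a suitable real combination kills the top order---is exactly right.
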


\begin{cor}
The number $K$ is independent of the boundary and corner points on $\partial \Sigma$.
\end{cor}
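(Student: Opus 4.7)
The plan is to interpret $K_p$ as the rank (dimension over the field $\mathbb{R}(z)$ of real rational functions) of a natural rational-function extension of $\ker \bar\partial$, and to deduce from the representation lemmas just proved that every admissible choice of $p$ yields this same rank.

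First I would verify that the vector fields $v_1, \ldots, v_{K_p}$ furnished by the definition of $K_p$ are not merely $\mathbb{R}$-linearly independent, but in fact $\mathbb{R}[z]$-linearly independent. At a non-corner boundary point $p$, the values $v_1(p), \ldots, v_{K_p}(p)$ span a $K_p$-dimensional subspace of $T_pL$; by continuity this independence persists at nearby boundary points, so any polynomial relation $\sum_i f_i(z) v_i(z) \equiv 0$ evaluated on such a $\partial\mathbb{H}$-neighborhood forces each $f_i$ to vanish on a nonempty open subset of $\mathbb{R}$, hence identically. At a corner the same reasoning applies after replacing $v_k(q)$ by the appropriate leading coefficient in a local model (which still varies continuously in $q$), since minimal vanishing is an open condition on $\partial\mathbb{H}$. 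At the output $q$, since $\mu \in \pi\mathbb{N}$ the factor $z^{-\mu/\pi}$ is rational, so absorbing it reduces to the previous case.

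The main step is a rectangular matrix rank argument. Fix two points $p, p' \in \partial\Sigma$ with corresponding tuples $v_1, \ldots, v_{K_p}$ and $w_1, \ldots, w_{K_{p'}}$. Applying the previous representation lemma at $p$ to each $w_j$ yields real rational functions $G_{ij}$ nonsingular at $p$ with $w_j = \sum_i G_{ij}(z) v_i$; symmetrically, the representation lemma at $p'$ yields real rational $H_{ji}$ nonsingular at $p'$ with $v_i = \sum_j H_{ji}(z) w_j$. Substituting one into the other and invoking the $\mathbb{R}[z]$-independence of the $v_k$ established above yields the matrix identities $GH = I_{K_p}$ and $HG = I_{K_{p'}}$ in $\mathrm{Mat}(\mathbb{R}(z))$. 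A $K_p \times K_{p'}$ matrix $G$ and a $K_{p'} \times K_p$ matrix $H$ cannot simultaneously satisfy both identities unless $K_p = K_{p'}$, by elementary rank considerations; this proves the corollary.

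The main technical care will be required in handling the corner and output cases on equal footing with the non-corner case, since the defining nondegeneracy conditions there involve vanishing orders rather than pointwise values. Interpreting them as continuous leading-coefficient maps on small neighborhoods in $\partial\mathbb{H}$ is the crux; once these are in place, the rank argument is purely formal linear algebra.
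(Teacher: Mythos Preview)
Your proposal is correct and is essentially the natural way to flesh out what the paper leaves implicit. The paper gives no proof at all for this corollary; it is treated as an immediate consequence of the representation lemmas, and the very next sentence packages the conclusion geometrically by declaring that the $v_1,\ldots,v_K$ form a basis of local sections of a real algebraic vector bundle $\mathcal{E}$ over $\mathbb{P}^1(\R)$, whose rank is therefore constant. Your argument---showing that each local frame $\{v_i\}$ is $\R(z)$-independent and then using the two representation lemmas to produce rational transition matrices $G,H$ with $GH=I_{K_p}$, $HG=I_{K_{p'}}$---is exactly the standard verification that such local frames patch into a vector bundle of well-defined rank. So there is no genuine difference in approach, only in the level of detail; your matrix-rank formulation is the concrete content behind the paper's one-line appeal to the bundle picture.

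One small remark on your handling of the corner and output cases: rather than invoking continuity of leading coefficients on a neighborhood, it is slightly cleaner to argue directly. If $\sum_j f_j(z)v_j=0$ with $f_j$ real polynomials not all zero, divide by the highest common power of $(z-p_i)$ so that not all $f_j(p_i)$ vanish; then $\sum_j f_j(p_i)v_j$ is a nonzero $\R$-combination, hence vanishes minimally at $p_i$ by hypothesis, while the remainder $(z-p_i)\sum_j g_j v_j$ vanishes to strictly higher order---contradicting that their sum is zero. This avoids any appeal to open neighborhoods and works uniformly at corners and at the output end after absorbing the rational factor $z^{-\mu/\pi}$.
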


We view the boundary $\partial\Sigma\simeq \mathbb{P}^1(\R)$. By the above lemmas, there is a real algebraic vector bundle $\mathcal{E}$ of rank $K$ over  $\mathbb{P}^1(\R)$ such that $v_1,\ldots v_K$ provide the basis of local sections.  By Grothendieck's classification of vector bundles,

\begin{prop}
$\mathcal{E}\simeq \oplus_1^K\mathcal{O}(n_i)$ for some $n_i\in \Z$. 
\end{prop}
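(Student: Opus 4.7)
The plan is to invoke Grothendieck's classical splitting theorem for algebraic vector bundles on $\mathbb{CP}^1$, after complexifying $\mathcal{E}$ by Schwarz reflection. The vector fields $v_1,\ldots,v_K$ are holomorphic sections of $u^*TX$ over $\Sigma$ (realized as $\mathbb{H}$) whose boundary values on $\partial\Sigma\setminus\{\text{corners}\}$ lie in the totally real subspaces $TL\subset TX|_L$. This is precisely the setting in which Schwarz reflection applies: doubling $\Sigma$ along $\partial\Sigma=\mathbb{P}^1(\R)$ produces $\mathbb{CP}^1$ with the anti-holomorphic involution $z\mapsto \bar z$, and each $v_i$ extends meromorphically across the real axis to a section of a rank $K$ algebraic vector bundle $\mathcal{E}_\mathbb{C}$ on $\mathbb{CP}^1$. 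By construction $\mathcal{E}_\mathbb{C}$ carries an anti-holomorphic involution whose real locus is $\mathcal{E}$.

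Having lifted to $\mathbb{CP}^1$, Grothendieck gives $\mathcal{E}_\mathbb{C}\simeq \bigoplus_{1}^{K}\mathcal{O}_{\mathbb{CP}^1}(n_i)$ for a uniquely determined multiset $\{n_i\}\subset \Z$. The anti-holomorphic involution preserves this splitting up to permutation of summands, but since line bundles of different degree on $\mathbb{CP}^1$ are non-isomorphic, summands can only be interchanged when their degrees coincide. Grouping together the summands of a fixed degree $n$, each resulting isotypical piece is a complex vector bundle of the form $\mathcal{O}(n)^{\oplus m}$ equipped with an involution; its real form is then isomorphic to $\mathcal{O}(n)_{\R}^{\oplus m}$ as a real algebraic bundle on $\mathbb{P}^1(\R)$. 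Assembling the contributions from each degree produces the desired decomposition $\mathcal{E}\simeq \bigoplus_{1}^{K}\mathcal{O}(n_i)$.

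The main technical point is the meromorphic extension of the $v_i$ across the entire boundary, including the corner points $p_i$ and the output $q$. Away from the corners this is pure Schwarz reflection. At the corners, the asymptotic expansions established in Lemmas \ref{stripexpgrowth}, \ref{polygonexpgrowth}, and \ref{teardropexpgrowth} show that each $v_i$ has a finite order leading term $z^{\alpha}$ (with $\alpha$ controlled by the indicial angles $\phi_k$), so the reflected section has at worst a pole of finite order and defines a section of an algebraic line bundle on a neighbourhood of the corner in $\mathbb{CP}^1$. This is the step on which the argument hinges: once the corner asymptotics are in place, the algebraic structure of $\mathcal{E}_{\mathbb{C}}$ is unambiguous, and Grothendieck together with compatibility of the splitting with the real structure finishes the proof.
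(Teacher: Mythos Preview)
Your approach is correct but more elaborate than what the paper actually does. In the paper, $\mathcal{E}$ has already been constructed in the preceding lemmas as a real algebraic vector bundle of rank $K$ over $\mathbb{P}^1(\R)$, with the $v_1,\ldots,v_K$ serving as local bases of sections at every boundary and corner point. The proposition is then a one-line invocation of Grothendieck's classification, which holds for $\mathbb{P}^1$ over any field: every algebraic vector bundle on $\mathbb{P}^1_k$ splits as a direct sum of line bundles, and $\text{Pic}(\mathbb{P}^1_k)\simeq\Z$, so the summands are of the form $\mathcal{O}(n_i)$.

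You instead complexify via Schwarz reflection to obtain a bundle on $\mathbb{CP}^1$, apply Grothendieck over $\C$, and then descend using the real structure and Hilbert 90 for $GL_m$. This is a valid alternative route, and it has the merit of making the complex-analytic origin of $\mathcal{E}$ more explicit. But it is not needed here: the field-independent version of Grothendieck (or Birkhoff--Dedekind factorization) gives the result directly on $\mathbb{P}^1(\R)$ without passing through $\mathbb{CP}^1$ and without any descent argument. Your discussion of corner asymptotics is also redundant at this stage, since the algebraic structure of $\mathcal{E}$ near the corners was already established in the lemmas immediately preceding the proposition (the ones showing that kernel elements are rational-function combinations of $v_1,\ldots,v_K$); the lemmas you cite concern specific low-dimensional moduli spaces rather than the general structure being set up here.
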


Since the rank $K$ is nondecreasing in $\mu$, it evantually stabilizes for $\mu\gg 0$. Since around any given point, the same choice of $v_1,\ldots v_K$ is valid for all large $\mu$, the algebraic vector bundle $\mathcal{E}$ is independent of $\mu\gg 0$. The elements of $\ker \bar{\partial}\subset W^{1,2;\mu}$ can be interpreted as global sections of $\mathcal{E}\otimes \mathcal{O}(\frac{\mu}{\pi})$. Thus  for all large $\mu$,
\begin{equation}
\dim ( \ker \bar{\partial}\subset W^{1,2;\mu}  )= \dim \Gamma( \mathbb{P}^1(\R),  \mathcal{E}\otimes \mathcal{O}(\frac{\mu}{\pi}) ) = K(\frac{\mu}{\pi}+1)+ \sum_1^K n_i.
\end{equation}
Contrasting with the index formula (\ref{indexexpgrowth1}),

\begin{cor}
The rank $K=n$, and the degree $\sum_1^n n_i= \deg q-\sum_1^k \deg p_i -n$.
\end{cor}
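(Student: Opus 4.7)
The plan is to compare the two expressions for $\dim(\ker \bar{\partial}\subset W^{1,2;\mu})$ that have already been established in the preceding discussion, viewing both as functions of $\mu$ for $\mu \in \pi\mathbb{N}$ large, and reading off the rank $K$ and total degree $\sum n_i$ as the leading coefficient and constant term respectively.

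Concretely, first I would recall that for $\mu \gg 0$ the cokernel of $\bar{\partial}:W^{1,2;\mu}\to L^{2;\mu}$ vanishes, so the kernel dimension equals the Fredholm index, which by (\ref{indexexpgrowth1}) equals
\begin{equation*}
\deg q - \sum_{i=1}^{k}\deg p_i + \frac{n\mu}{\pi}.
\end{equation*}
Second, using the identification of $\ker\bar{\partial}\subset W^{1,2;\mu}$ with $\Gamma(\mathbb{P}^1(\R),\mathcal{E}\otimes\mathcal{O}(\mu/\pi))$ together with the Grothendieck splitting $\mathcal{E}\simeq \oplus_1^K \mathcal{O}(n_i)$, and the standard fact $\dim\Gamma(\mathbb{P}^1(\R),\mathcal{O}(d))= d+1$ for $d\ge 0$, the same kernel dimension equals
\begin{equation*}
\sum_{i=1}^{K}\Big(\frac{\mu}{\pi}+1+n_i\Big) = K\Big(\frac{\mu}{\pi}+1\Big) + \sum_{i=1}^{K} n_i,
\end{equation*}
once $\mu/\pi$ is large enough that each $\mu/\pi+n_i \geq 0$ (which is automatic for $\mu \gg 0$ since there are only finitely many $n_i$).

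Equating the two expressions gives a polynomial identity in $\mu$, valid on an infinite set of values $\mu \in \pi\mathbb{N}$. Matching the coefficient of $\mu/\pi$ forces $K=n$, and then matching the constant terms gives $K + \sum_1^K n_i = \deg q - \sum_1^k \deg p_i$, i.e.\ $\sum_1^n n_i = \deg q - \sum_1^k \deg p_i - n$, as claimed. There is no real obstacle here: the substantive work was done in the preceding lemmas (finite dimensionality, the choice of the basis $v_1,\ldots,v_K$, the independence of $\mu$ of the bundle $\mathcal{E}$ for $\mu \gg 0$, and the Grothendieck decomposition). The only mild point to keep honest is that one has to know the identification of sections is really $\mathcal{E}\otimes\mathcal{O}(\mu/\pi)$ with $\mathcal{O}(\mu/\pi)$ twisting by the prescribed growth weight at $q$, which is why the comparison must be carried out for $\mu \in \pi\mathbb{N}$ large enough to land in the stable range.
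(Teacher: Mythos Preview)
Your proof is correct and follows essentially the same approach as the paper: both compare the index formula (\ref{indexexpgrowth1}) with the section-counting formula $K(\mu/\pi+1)+\sum n_i$ for $\mu\gg 0$, and match coefficients in $\mu/\pi$ to read off $K=n$ and $\sum n_i = \deg q-\sum \deg p_i-n$.
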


The structure of $\mathcal{E}\simeq \oplus_1^K\mathcal{O}(n_i)$ provides meromorphic sections $v_1, \ldots v_n$ which are a basis of local sections on $\R\subset \mathbb{P}^1(\R)$, and have excess vanishing orders $n_1,\ldots n_n$ at $q$. Consider the function $\Omega(v_1,\ldots v_n)$. By construction, it has no boundary zero, and its excess corner vanishing order is $\sum_1^n n_i$.
Comparing with the index formula (\ref{indexvanishingorder}), 
\[
\begin{split}
\deg q-\sum_1^k p_i= 2 \sum (\text{interior zeros}) + \sum n_i +n.
\end{split}
\]
Since all interior vanishing orders are nonnegative by holomorphicity, 

\begin{cor}
We have $\Omega(v_1,\ldots v_n)\neq 0$ in the interior of $\Sigma$. 
\end{cor}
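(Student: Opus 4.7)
The plan is to apply the index formula \eqref{indexvanishingorder} to the holomorphic function $\Omega(v_1,\ldots,v_n)$ on $\Sigma$, and use the preceding corollary to do a short arithmetic cancellation.

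First I would record that $\Omega(v_1,\ldots,v_n)$ is a holomorphic function on $\Sigma$ (since each $v_i$ lies in $\ker\bar\partial$ and $\Omega$ is a holomorphic form), which takes real values (up to a unimodular factor) on the boundary where $v_1,\ldots,v_n$ lie in $TL$. Hence all interior and boundary vanishing orders are \emph{nonnegative}. Next I would unpack the geometric content of the structure $\mathcal{E}\simeq\bigoplus_1^n \mathcal{O}(n_i)$ that was just established. By the very definition of the $v_i$ as a local frame of $\mathcal{E}$ near each point of $\R\subset \mathbb{P}^1(\R)$, the wedge $v_1\wedge\cdots\wedge v_n$ is a nowhere vanishing local frame of $\det\mathcal{E}$ on the boundary away from $q$; in particular $\Omega(v_1,\ldots,v_n)$ has \emph{no zero on $\partial\Sigma$ away from the corners}, and its excess vanishing order at each corner $p_i$ is exactly zero (this is what minimal vanishing of the frame at $p_i$ means at the level of the determinant). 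Finally, the excess vanishing order at the output end $q$ equals $\sum_1^n n_i$, which is precisely the degree of $\det\mathcal{E}$.

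Now I would plug this into the index formula \eqref{indexvanishingorder}. Since there are no boundary zeros and no excess corner vanishing at the inputs, the formula collapses to
\begin{equation*}
\deg q-\sum_1^k \deg p_i \;=\; 2\sum(\text{interior zeros})\;+\;\sum_1^n n_i\;+\;n.
\end{equation*}
Substituting the equality $\sum_1^n n_i=\deg q-\sum_1^k\deg p_i-n$ from the previous corollary, the terms on the right combine to give $2\sum(\text{interior zeros})=0$. Since each interior vanishing order is a nonnegative integer, every interior vanishing order must be zero, i.e.\ $\Omega(v_1,\ldots,v_n)$ has no interior zero.

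The only point requiring any care is verifying that the excess corner orders at the inputs $p_i$ really are zero (and not merely nonnegative) — this is where the characterization of $\mathcal{E}$ via minimal-vanishing frames is used, and it is the one place where the argument relies genuinely on the Grothendieck splitting rather than a soft inequality. Once that is in hand, the remainder is bookkeeping with the index formula.
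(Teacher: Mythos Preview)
Your argument is correct and follows essentially the same route as the paper: use that the $v_i$ form a local frame of $\mathcal{E}$ over $\R\subset\mathbb{P}^1(\R)$ (hence no boundary zeros and zero excess vanishing at the input corners), record excess order $\sum n_i$ at $q$, plug into the index formula \eqref{indexvanishingorder}, and cancel using $\sum n_i=\deg q-\sum\deg p_i-n$ to force the interior zeros to vanish. Your explicit justification of the zero excess order at the $p_i$ is slightly more detailed than the paper's terse ``by construction,'' but the logic is identical.
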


The significance is that the \textbf{algebraic vector bundle} structure on $\mathcal{E}\to \mathbb{P}^1(\R)$ now extends over the entire $\Sigma$. The $v_1,\ldots v_n$ now provide the basis of local sections for the vector bundle $u^*TX|_\Sigma$. One upshot is that an algebraic structure arises on $u^*TX\to \Sigma$ from solving the Cauchy-Riemann equation with Lagrangian boundary: 
\begin{equation}\label{canonicalLagboundary}
(u^*TX, u^*TL)\simeq (\oplus_1^n \mathcal{O}(n_i), \text{natural real structure}).
\end{equation}
In contrast, the Lagrangians are only assumed to be smooth, not necessarily real analytic.

To analyze obstructions, Serre duality motivates us to consider the dualized cokernel to the ordinary (unweighted, unextended) linearized Cauchy-Riemann operator. A dualized cokernel element $\eta$ is represented by a holomorphic 1-forms in $\Omega^{1,0}(\Sigma, u^*T^*X)$ with $L^2$ integrablity, and its $T^*X$ factor lies in the annilator of the $TL$ boundary condition.
Equivalently, for all test vector fields $v\in W^{1,2}(\Sigma, TX, TL)$,
\[
\int_{\Sigma} \langle \bar{\partial} v\wedge \eta \rangle =0,
\]
where $\langle,\rangle$ is the pairing of $TX$ with $T^*X$, and the wedge takes care of the forms on $\Sigma$. 
In the canonical form (\ref{canonicalLagboundary}), this dualized cokernel is isomorphic to
$
\Gamma(\mathbb{RP}^1, \oplus_1^n \mathcal{O}(-2-n_i) ).
$
In particular,

\begin{cor}
In the teardrop curve case $k=0$, the strip case $k=1$ and the triangle case $k=2$, the vanishing of cokernel is equivalent to $n_i\geq -1$ for all $i$.

\end{cor}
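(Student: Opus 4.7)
The plan is to reduce the corollary to the standard computation of global sections of line bundles on $\mathbb{P}^1$, using the Serre-dual identification $\mathrm{coker}(\bar{\partial}) \simeq \Gamma(\mathbb{RP}^1, \bigoplus_{i=1}^n \mathcal{O}(-2-n_i))$ established in the preceding paragraph.

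First I would invoke the elementary fact that $\Gamma(\mathbb{RP}^1, \mathcal{O}(d))$ vanishes if and only if $d<0$, where the real structure on the line bundles is the one coming from the canonical isomorphism (\ref{canonicalLagboundary}) and hence is compatible with the Lagrangian boundary condition. Summing over the $n$ summands, vanishing of the dualized cokernel is then equivalent to $-2-n_i<0$ for every $i$, i.e.\ $n_i\geq -1$ for all $i$. This yields the claimed equivalence at the level of the (dualized) cokernel of the ordinary, unweighted, unextended Cauchy-Riemann operator.

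The restriction to $k\leq 2$ is really a matter of identifying this algebraic cokernel with the genuine obstruction to the moduli problem. For $k=0,1,2$ the domain $\Sigma$ carries no nontrivial Stasheff moduli of complex structures, only a finite-dimensional automorphism group ($\mathrm{Aut}(D^2,q)$ for teardrops, $\R$-translation for strips, trivial stabilizer for triangles), so the extended linearized operator of \cite[Chapter 9]{Seidelbook} has the same cokernel as the ordinary one. For $k\geq 3$ the $(k-2)$-dimensional associahedron contributes additional source terms $\rho\in \Omega^{0,1}(\Sigma,T\Sigma)$ to the extended operator, and the genuine moduli-theoretic obstruction can strictly shrink, possibly vanishing even when some $n_i\leq -2$; hence a restriction to small $k$ is needed if one wants a clean statement phrased purely in terms of the Grothendieck splitting type.

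The main point to verify carefully is that the real structure used to count sections over $\mathbb{RP}^1$ is indeed the canonical one induced from $TL$ along $\partial \Sigma$, so that Grothendieck's classification gives $\dim_\R \Gamma(\mathbb{RP}^1,\mathcal{O}(d))=\max(d+1,0)$ exactly as in the complex case. Granted this compatibility (which is built into (\ref{canonicalLagboundary})), the corollary follows immediately; no further analytic input is required beyond what has already been established.
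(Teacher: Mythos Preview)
Your proof is correct and follows exactly the paper's approach: the corollary is stated as an immediate consequence (``In particular'') of the identification of the dualized cokernel with $\Gamma(\mathbb{RP}^1, \oplus_i \mathcal{O}(-2-n_i))$, together with the elementary section count $\dim \Gamma(\mathbb{RP}^1,\mathcal{O}(d))=\max(d+1,0)$. Your explanation of the restriction to $k\leq 2$ matches the paper's subsequent paragraph, which notes that for $k\geq 3$ the extended operator's cokernel is only a subspace of the ordinary one.
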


For $k\geq 3$ the deformation of the holomorphic polygons is governed instead by the extended Cauchy-Riemann equation, since the punctured Riemann surface structure on $\Sigma$ is allowed to vary. The dualized cokernel of the extended Cauchy-Riemann operator, is the subspace of the dualized cokernel of the ordinary Cauchy-Riemann operator, which pairs trivially with  $J_X\circ du\circ \rho$ for any $\rho$ representing some tangent vector of the Stasheff associahedron.

\subsubsection*{Hamiltonian deformations and transversality}

We now consider the parametrized moduli space of holomorphic curves over the infinite dimensional space of Hamiltonian deformations for the Lagrangian $L$. Infinitesimally around a holomorphic curve $\Sigma$, we have a Hamiltonian vector field $X_H$ defined by $\omega(X_H,\cdot)=dH$, viewed as a $TX$-valued vector field  over $\Sigma$. We are interested in whether the Hamiltonian deformation kills the cokernel of the ordinary Cauchy-Riemann operator. 
This question was first addressed by Oh \cite{Oh}. The following account follows a similar strategy but differs in details.

Recall the ordinary Cauchy-Riemann operator maps $W^{1,2}(\Sigma, u^*TX, u^*TL)$ to $L^2(\Sigma, u^*TX\otimes T^{*(1,0)}\Sigma)$. The effect of Hamiltonian deformation is to enlarge the domain of the $\bar{\partial}$ operator, by including the vector fields $X_H$ for all the allowed Hamiltonians $H$. 
The question is to analyze the pairing of $\bar{\partial} X_H$ with the dualized cokernel elements.

\begin{prop}
Let $u:\Sigma\to X$ be a holomorphic disc which is immersed near some point $z_0\in \partial \Sigma$ with the boundary injectivity property $u|_{\partial \Sigma}^{-1}(u(z_0))=\{z_0\}$. 
Let  $\eta$ be a nonzero dualized cokernel element for the ordinary linearized Cauchy-Riemann operator. Then there is a Hamiltonian $H$ supported in any prescribed small ball on $X$ containing $u(z_0)$, such that $\int_{\Sigma} \langle\bar{\partial}X_H \wedge \eta \rangle \neq 0$.
\end{prop}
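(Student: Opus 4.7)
The plan is to reduce the bulk pairing to a boundary integral via Stokes, then exploit unique continuation and the boundary injectivity to produce a localized Hamiltonian whose transverse effect on $L$ pairs nontrivially with the boundary value of $\eta$.

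First I would observe that since $\eta\in \Omega^{1,0}(\Sigma, u^*T^*X)$ is holomorphic ($\bar\partial\eta=0$) and $\Sigma$ is a Riemann surface (so $\Lambda^{2,0}\Sigma=0$, forcing $\partial X_H \wedge \eta=0$ after contraction), one has $d\langle X_H,\eta\rangle = \langle \bar\partial X_H \wedge \eta\rangle$. If $H$ is supported away from the Lagrangian intersection corners (which is automatic if the prescribed ball is taken small and the corners are away from $u(z_0)$, or more generally by truncation near the strip-like ends), Stokes gives
\begin{equation*}
\int_{\Sigma} \langle \bar\partial X_H \wedge \eta\rangle \;=\; \int_{\partial \Sigma} \langle X_H,\eta\rangle .
\end{equation*}
On $\partial\Sigma$ the $u^*T^*X$-factor of $\eta$ annihilates $u^*TL$, so only the component of $X_H$ transverse to $L$ contributes. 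In a Darboux chart $(x,y)$ with $L=\{y=0\}$ and $\omega=\sum dx_i\wedge dy_i$, one has $X_H|_L\equiv -J\nabla_L(H|_L)\!\mod TL$, so the boundary integrand is a real linear functional of the one-jet of $H|_L$ along the arc $u(\partial\Sigma)$, whose coefficients are exactly the components of $\tilde\eta:=\eta|_{\partial\Sigma}$ (written as $\tilde\eta\cdot dt$ for a boundary coordinate $t$) expressed in the Darboux frame.

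The key analytic input is unique continuation for $\eta$. Because $\eta$ is holomorphic with Lagrangian annihilator boundary condition, a Schwarz reflection across any interior arc of $\partial\Sigma$ (in the totally real sense, using the real structure (\ref{canonicalLagboundary}) induced by $L$) extends $\eta$ to a holomorphic section on a neighborhood of that arc in the doubled Riemann surface. Hence if $\tilde\eta$ vanished identically on any boundary arc, $\eta$ would vanish globally, contradicting the hypothesis. Consequently $\tilde\eta$ has only isolated zeros on $\partial\Sigma$, and one can choose $z_1\in\partial\Sigma$ arbitrarily close to $z_0$ with $\tilde\eta(z_1)\neq 0$ as a functional on $T_{u(z_1)}X/T_{u(z_1)}L$.

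Finally I would construct $H$ by localization. The immersion property of $u$ near $z_0$ combined with boundary injectivity $u|_{\partial\Sigma}^{-1}(u(z_0))=\{z_0\}$ yields a small arc $V\subset \partial\Sigma$ around $z_0$ on which $u$ embeds into $L$, with $u(\partial\Sigma \setminus V)$ a positive distance from $u(z_0)$. Shrink $V$ so that $u(V)$ sits inside the prescribed ball $B$ around $u(z_0)$, and pick $z_1\in V$ with $\tilde\eta(z_1)\neq 0$. Choose a Darboux ball $B'\subset B$ around $u(z_1)$ disjoint from $u(\partial\Sigma\setminus V)$; let $w\in T_{u(z_1)}L$ be the tangent vector corresponding to $\tilde\eta(z_1)$ under $v\mapsto -\tilde\eta(Jv)$. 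A bump Hamiltonian $H$ supported in $B'$ with $\nabla_L(H|_L)(u(z_1))$ parallel to $w$ and sharply concentrated makes the boundary integral equal (to leading order) to a nonzero multiple of $|\tilde\eta(z_1)\cdot w|^2$ times a positive constant, hence nonzero. The main technical obstacle is justifying the Schwarz reflection / unique continuation step in the presence of the Lagrangian boundary condition, but this is standard in totally real boundary value problems and applies away from the finitely many corners.
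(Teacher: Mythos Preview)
Your reduction to the boundary integral via Stokes, and your use of unique continuation to ensure that $\tilde\eta$ (equivalently the vector field $Y\in u^*TL$ with $\eta=\omega(\cdot,Y)\,ds$ on $\partial\Sigma$) has only isolated zeros, are both correct and match the paper. The gap is in the final bump construction. The boundary integral is $\int_{\partial\Sigma} dH(Y)\,ds$, and your localization implicitly assumes that arranging $\nabla_L(H|_L)(u(z_1))$ parallel to $Y(z_1)$ forces the integral to be nonzero. This works when $Y(z_1)$ is transverse to the tangent of the arc $u(\partial\Sigma)$: one can take $H$ vanishing along the arc but with $dH(Y)>0$ there, and the localized integral is positive. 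But if $Y$ happens to be tangent to the arc on a neighbourhood of $z_1$, say $Y=f\,\partial_s u$, then integration by parts gives $\int dH(Y)\,ds=-\int (H\circ u)\,\partial_s f\,ds$, which vanishes for \emph{every} compactly supported $H$ whenever $f$ is locally constant. No bump, however sharply concentrated or however you aim its gradient, rescues this case; your claimed leading-order formula $|\tilde\eta(z_1)\cdot w|^2>0$ is simply false there.

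The paper confronts exactly this obstruction. Assuming for contradiction that the pairing vanishes for all local $H$, it first shows (by the same bump arguments you sketch) that $Y$ must be tangent with $f$ constant on the arc, so locally $\eta=\omega(\cdot,\partial_s u)\,ds$ up to scale. It then passes to a \emph{global} argument: form the auxiliary holomorphic $1$-form $\zeta=\eta\big(J_X\partial_s u+\sqrt{-1}\,\partial_s u\big)$ on $\Sigma$, observe it has real boundary values, Schwarz-reflect across the entire boundary to a meromorphic form on $\mathbb{CP}^1$, and check at every strip-like end (using the $L^2$ decay of $\eta$ and the growth of $\partial_s u$) that no poles occur, forcing $\zeta\equiv 0$. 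This contradicts the local identity $\zeta=\omega(J_X\partial_s u,\partial_s u)\,ds\neq 0$ on the arc near $z_0$. The point is that the tangential case cannot be excluded by any construction localized near $z_0$; it requires the global geometry of $\Sigma$ and the corner behaviour. Your Schwarz reflection is applied to $\eta$ and only yields nonvanishing; the paper applies it to $\zeta$ to force vanishing, which is where the real contradiction lies.
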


\begin{proof}
Since $\eta$ is a holomorphic 1-form valued in $u^*T^*X$, Stokes theorem gives
\[
\int_{\Sigma} \langle\bar{\partial}X_H \wedge \eta \rangle = \int_{\partial \Sigma} \langle X_H, \eta\rangle,
\]
where $\langle,\rangle$ stands for the pairing between $TX$ and $T^*X$. On $\partial \Sigma$, we can write $\eta=\omega(\cdot, Y) ds$ for some vector field $Y$ valued in $u^*TX$, and $s$ is any local coordinate on $\partial \Sigma$. The cokernel element condition implies $\omega(v, Y)=0$ for any $v\in u^*TL$, so $Y$ must in fact be valued in the Lagrangian subbundle $u^*TL$. Thus
\[
\int_{\partial \Sigma} \langle X_H, \eta\rangle = \int_{\partial \Sigma} \omega( X_H, Y)ds = \int_{\partial \Sigma} dH( Y)ds.
\]
We suppose for contradiction, that this pairing vanishes identically for any $H$ supported in the prescribed ball.

By the holomorphicity of $\eta$, its zeros are isolated, so without loss of generality $Y$ does not vanish in the local portion of $\partial\Sigma$ where $u$ is injective and immersed. Suppose first that $Y$ is not tangent to the image of $\Sigma$. Then we find some local function $h$ on a small ball in $X$ with $dh(Y)= 1$ and $h=0$ on the local portion of $\partial \Sigma$, and another  cutoff function $h_2\geq 0$ with $dh_2(Y)=0$ along $\partial \Sigma$, supported in a small ball.
Taking $H=hh_2$, then 
\[
\int_{\partial \Sigma} dH( Y)ds= \int_{\partial \Sigma} h_2 ds\neq 0.
\]
This contradiction shows $Y$ is tangent to the image of $\Sigma$ in the local portion of $\partial \Sigma$. We can write $Y=f\partial_s$ for some local function $f$. Then requiring 
\[
\int_{\partial \Sigma} dH( Y)ds = \int_{\partial \Sigma} f \partial_s H ds= - \int_{\partial \Sigma} H \partial_s f ds
\]
for any compactly supported local function $H$, implies that $f$ is constant in the local portion of $\partial\Sigma$. Thus up to multiplying by a nonzero constant, locally
\begin{equation}\label{Hamtransversality1}
Y=\frac{\partial u}{\partial s} ds, \quad \eta= \omega(\cdot, \frac{\partial u}{\partial s}) ds.
\end{equation}

We now produce holomorphic vector fields on $\Sigma$. For holomorphic strips or polygons with $k+1\geq 3$ corners, we select one input end as $p$, and call the output $q$ as usual, and represent $\Sigma$ as a strip with $k-1$ boundary punctures. This perspective provides a natural translation vector field $\frac{\partial u}{\partial s}$, which have exponential decay along the $p, q$ ends, but may not be $L^2$ near the other $k-1$ ends. Instead, by thinking about the $k-1$ ends as the origin in the upper half plane model, we see
\[
\frac{\partial u}{\partial s}=O(|z|^{\alpha-1}), \quad \alpha= \min \{ \phi_1/\pi,\ldots \phi_n/\pi  \}
\]
for the characterizing angles $\phi_1,\ldots \phi_n$ at the Lagrangian intersection point.
The $T^{(1,0)}X$ part of $2J_X\frac{\partial u}{\partial s}$ is $J_X\frac{\partial u}{\partial s}+\sqrt{-1}\frac{\partial u}{\partial s}$. Contracting this with the $T^{*(1,0)}X\otimes T^{*(1,0)}\Sigma$ part of $\eta$
yields a 1-form on $\Sigma$
\[
\zeta=\eta( J_X \frac{\partial u}{\partial s} +  \sqrt{-1}\frac{\partial u}{\partial s} )
\]
which is also holomorphic, with boundary value along $\Sigma$
\begin{equation}\label{Hamtransversality2}
\zeta= \omega(J_X \frac{\partial u}{\partial s}+  \sqrt{-1}\frac{\partial u}{\partial s}   , Y    )ds = \omega(J_X \frac{\partial u}{\partial s}  , Y    )ds .
\end{equation}
Here $\omega( \frac{\partial u}{\partial s},Y  )=0$ since both vectors satisfy the $TL$ boundary condition. Notably, the boundary condition of $\zeta$ is real valued.
In the upper half plane model, the Schwartz reflection principle allows us to extend $\zeta$ meromorphically over $\mathbb{CP}^1$.

 At any of the 
 $k-1$ ends, since $\eta\in L^2$, we know by holomorphicity $|\eta|=O(|z|^\alpha)$, so $\zeta=O(|z|^{2\alpha-1})$ in the upper half plane model, hence has no pole. At the $p,q$ ends, by the decay of the holomorphic $\frac{\partial u}{\partial s}$ and $\eta$, we likewise infer that $\zeta$ has no pole in the upper half plane model. In conclusion, the extension of $\zeta$ over $\mathbb{CP}^1$ has no pole, so must in fact vanish. However, by (\ref{Hamtransversality1})(\ref{Hamtransversality2}), on a local portion of $\partial \Sigma$
 \[
 \zeta= \omega(J_X \frac{\partial u}{\partial s},   \frac{\partial u}{\partial s} ) ds \neq 0.
 \]
 This contradiction proves the Proposition in the $k\geq 1$ case.

Finally, for the teardrop curve case $k=0$, 
we replace the holomorphic vector field $\frac{\partial u}{\partial s}$ by the M\"obius vector fields vanishing at the corner, and the rest of the arguments are entirely similar.  
\end{proof}

The upshot is that by the Sard-Smale theorem, provided we can always ensure `somewhere boundary injectivity' for any holomorphic disc in a given moduli space, then generic Hamiltonian perturbation would be able to achieve regularity for the moduli space.

\begin{rmk}\label{boundaryinjectivity}
In the exact setting there is no closed holomorphic curve. 	
The failure of `somewhere boundary injectivity' is often associated with multiple cover issues, namely $u:\Sigma\to X$
may decompose into several domain components, each of which factorizes through a somewhere boundary injective holomorphic disc (\cf \cite{KwonOh} for the case of Lagrangian boundary with no corners).

In the simplest case, if $u$ factorizes through another disc, then the corner points would be repeated several times on $\partial \Sigma$. This phenomenon does not happen for the curves appearing in the bordism current $\mathcal{C}$, which involve only one corner at $CF^0(L,L')$ and one corner at $CF^0(L',L)$. Nor does this occur for teardrop curves, which have only one corner at a degree two self intersection point. This raises hope that the failure of `somewhere boundary injectivity' may be \emph{highly nongeneric}, or in certain situations can be ruled out altogether.

\end{rmk}

\subsubsection*{Further comments on automatic transversality}

We now comment on the gap between what we proved and the (weak version of) automatic transversality that we will later assume.

\begin{enumerate}
\item  Prop. \ref{automatictransversalitypolygon}, Lemma \ref{immersionlem} and Cor. \ref{automatictransversalitystrip} establish the dichotomy for holomorphic discs $u:\Sigma\to X$ arising in virtual dimension $n-1$ moduli spaces, that either $u$ is automatically transverse, or $\Omega(\cdot, v_1,\ldots v_{n-1})$ vanishes for any $(n-1)$ first order deformation vectors. This argument does not establish unperturbed regularity for the lower dimensional moduli spaces, so it is not completely clear if complex structure perturbations can be removed in the arguments for $\partial \mathcal{C}=L-L'$ in section \ref{LotayPacini}.

\item For the bad curves, $\Omega(\cdot, v_1,\ldots v_{n-1})$ vanishes identically as a 1-form on $\Sigma$, so at any point on the boundary, $v_1,\ldots v_{n-1}$ and the tangent vector to $\partial \Sigma$ are $\R$-linearly dependent. Suppose for the moment that the moduli spaces are regular, then the boundary evaluation to $L\cup L'$ for the bad curves arise in Hausdorff dimension at most $n-1$. Morever, since the Solomon functional is defined through $\int_{\mathcal{C}}\lambda\wedge \Omega$, and $\Omega$ vanishes around the bad curves, smoothness assumptions imply that the bad curves cannot contribute.

When regularity assumptions are dropped, one needs to appeal to virtual techniques, so these conclusions require further justification. One problem is that the standard virtual perturbation techniques based on Kuranishi structures do not necessarily produce virtual cycles inside the original moduli spaces, but only inside their small neighbourhoods. This perturbation step destroys the identical vanishing of $\Omega$, by a small amount corresponding to the size of the perturbation. As one shrinks the size of the perturbations, one needs uniform mass bound on the virtual chains to justify that the integral contribution to $\int_{\mathcal{C}}\lambda \wedge \Omega$ from the bad curves actually converges to zero.

\item Alternatively, one can hope to replace Lagrangians by arbitrarily small Hamiltonian perturbations to achieve transversality. This is mostly adequate for our purpose, except that one needs to justify the `somewhere boundary injectivity' property (\cf Remark \ref{boundaryinjectivity}).

\end{enumerate}

\subsection{Positivity condition}\label{Positivitycondition}

The $(n-1)$-dimensional moduli spaces contributing to the bordism current come with orientation signs and weighting factors. The \textbf{positivity condition} (\ie \textbf{no cancellation of signs}) means that around any automatically transverse curves, if the first order deformations $v_1,\ldots v_{n-1}$ form an oriented basis of the moduli space, and $v_0$ be a clockwise ordered vector field on $\partial \Sigma$, then upon boundary evaluation $v_0\wedge\ldots v_{n-1}$ agrees with the orientation of $L-L'$. For the other holomorphic curves, the question of orientation does not arise, because the boundary evaluation maps have degenerate differentials everywhere on $\partial \Sigma$.

The positivity condition forbids two curves passing through a generic point with the evaluation maps contributing opposite signs. Such a requirement is \emph{geometric rather than homological}, and if we go beyond the almost calibrated case, it also \emph{depends on the choice of the generators} $\alpha\in CF^0(L,L')$ and $\beta\in CF^0(L',L)$, rather than only their classes in $HF^0$.

\begin{Question}
Given two unobstructed Lagrangian objects $L,L'$ which are isomorphic in $D^b Fuk(X)$.
When can we make gauge choices for the local systems and the bounding cochain data, and choices of the Floer cohomology group generators, such that the bordism current $\mathcal{C}$ produced from the universal family of holomorphic curves satisfies the positivity condition?
\end{Question}


The positivity condition will arise in the applications as follows. We will write various quantities as integrals over the $(n-1)$-dimensional moduli spaces of holomorphic curves, and the positivity condition would in each case imply the \textbf{pointwise positivity of the integrand}. In the mirror analogy, this corresponds to the pointwise positivity of curvature integrands, which features for instance in the proof that the Hermitian Yang-Mills equation implies the semistability of bundles (\cf section \ref{HYM}).


\subsubsection*{Morse theory analogy}

The intuition of the positivity condition can be explained through the following \textbf{analogy with Morse theory}. Given a compact oriented manifold $M$ with a Morse-Smale function $H$, then
\begin{itemize}
\item The degree zero (resp. $n$) elements in the Morse cochain complex are generated by the local maxima (resp. local minima) of $H$ whose unstable submanifolds (resp. stable submanifolds) have \emph{preferred orientations}. 

\item The \emph{fundamental class} $1\in H^0(M)$ is represented by the sum of the local maxima with the preferred orientation. Similarly with the generator of $H^n(M)$.

\item  A generic point on $M$ lies on exactly one Morse flowlines which starts with one local maximum point and ends on one local minimum point. More formally, if we construct the universal family of Morse flowlines that start with some local maximum and ends on some local minimum, the evaluation map  would \emph{sweep out the fundamental cycle} of $M$ as an $n$-dimensional current, \emph{without any cancellation effect}.
\end{itemize}

Now for simplicity, if we start with an embedded Lagrangian brane $L$, and preform a small generic Hamiltonian deformation $\phi_H(L)$, then the Floer cohomology $HF^*(L, \phi_H (L))$ is computed as the Morse cohomology, so the Morse theory statements above would imply the positivity condition at least in such special cases. The intuition is that if the Lagrangian  $L'$ (together with its brane structure) is a sufficiently small deformation of $L$, then we expect the positivity condition to hold for the bordism current between $L$ and $L'$.

\subsubsection*{Positivity for individual moduli spaces}

In general the bordism current $\mathcal{C}$ receives contributions from many $(n-1)$-dimensional moduli spaces of holomorphic curves. We now focus on one moduli space by fixing the choice of the Lagrangian intersections $p_1,\ldots p_k, q$ and the homotopy type of $u: \Sigma\to X$, and consider a connected open subset of the moduli space which contains only automatically transverse curves. We observe:

\begin{itemize}
\item For a fixed automatically transverse holomorphic curve $u:\Sigma\to X$, along $\partial \Sigma$ between any two successive corners, by the nowhere vanishing of $\Omega(\cdot, v_1,\ldots v_n)$, the Jacobian of the boundary evaluation map cannot change sign. That is, either the orientation of the universal family agrees with the orientation of $L$ (resp. $-L'$) along $u:\partial \Sigma \to L$ at every point along the boundary portion of $\partial \Sigma$, or the two orientations disagree at every point.

\item The Lagrangians are graded by assumption, and the orientations are canonically determined by $e^{-i\theta}\Omega$. The corner behaviour (\cf Remark \ref{cornerOmega}) implies that at the degree one self intersections, $e^{-i\theta}\Omega(\cdot, v_1,\ldots v_{n-1})$ does not change sign. On the other hand, at the $CF^0(L,L')$ and the $CF^0(L',L)$ ends along $\partial \Sigma$, the 1-form $e^{-i\theta}\Omega(\cdot, v_1,\ldots v_{n-1})$ changes orientation sign.
Thus at a fixed automatically transverse curve, the orienation of the universal family and $L-L'$ either completely agree along every point of $\partial \Sigma$, or completely disagree.

\item As we deform among automatically transverse curves, the orientation signs cannot change. Thus either all these holomorphic curves contribute positively to $\partial \mathcal{C}$, or they all contribute negatively.

\end{itemize}

The above discussion also suggests the limitation of the positivity condition: if we encounter a holomorphic curve in the moduli space, which is \emph{not automatically transverse}, then it is possible to switch orientation signs.
For arbitrary exact immersed Lagrangians, it seems unreasonable to expect the positivity condition, and it is conceivable that counterexamples may arise from $h$-principle constructions. Whether counterexamples occur for more restrictive Lagrangians seems less clear, and we leave the following sample questions as food for thought:

\begin{Question}
How does the positivity condition behave under exact isotopy with surgery?
\end{Question}

\begin{Question}
Are there examples of exact Calabi-Yau manifolds such that the positivity condition is satisfied for bordism currents between all exact, almost calibrated, unobstructed immersed Lagrangians equipped with suitable brane structures? What if the Lagrangians are quantitatively almost calibrated (\cf (\ref{quantitativealmostcalibrated}))?
\end{Question}


\begin{rmk}
If the Fukaya category is defined over $\Z$, we can require all holonomy factors to be integer valued. The positivity condition requires all the contributions to $\partial \mathcal{C}=L-L'$ to have the same orientation sign. This has the amusing consequence that all holonomy factors associated with $(n-1)$-dimensional moduli spaces contributing to $\mathcal{C}$, must in fact all be $+1$. Intuitively, this means there is a unique such holomorphic curve through any generic point of $L$, and the boundary evaluation of universal family to $L$ is transverse. 
\end{rmk}

\subsection{Floer theoretic obstructions}\label{Floertheoreticobstructions}

\subsubsection*{General features of obstruction conditions}

Our goal is to look for obstructions to the existence of special Lagrangians within given $D^b Fuk(X)$ classes, which is the `easy direction' of the conjectural stability condition.
Before specializing to a technically oversimplified setup, we first explain the features we expect from these obstructions, which may hold in much more general contexts.  The mirror analogy (\cf our discussion on the $\mu$-stability in section \ref{HYM}) suggests:

\begin{itemize}
	\item  The obstructions are associated to certain \textbf{positivity} of signs, which essentially depend on the \textbf{integrability} of K\"ahler geometry.

	\item  The quantity involved in the obstruction can be expressed as an \textbf{integral over a moduli space} of worldsheet instantons (\ie holomorphic curves), and its sign comes from a \textbf{pointwise positivity} of the integrand on the moduli space.

	\item The input from Floer theory is associated to a \textbf{distinguished triangle} in $D^b Fuk(X)$, or possible generalisations to several Lagrangians.
	
	\item 
	The role of the holomorphic volume form enters via \textbf{cohomological integrals}.

	\item There is no need for the complex Monge-Amp\`ere equation. Only the \textbf{almost Calabi-Yau} condition is needed.
	
\end{itemize}

Furthermore, out of the many moduli spaces that may arise in Floer theory, we will only make use of certain $(n-1)$-dimensional moduli spaces of holomorphic curves, whose associated $(n+1)$-dimensional universal family provides bordism currents between the $n$-dimensional Lagrangians. Here are some a priori reasons why we restrict attention to these:
\begin{itemize}
\item   The holomorphic volume form is naturally integrated over $n$-cycles. This explains the dimension. 

\item  We need bordism currents canonically associated to the distinguished triangles. In Floer theory, the $A_\infty$-structure only becomes an invariant when considered as a whole, and individual $A_\infty$ products are not invariants, so invariance constrains how moduli spaces can enter into stability conditions. As mentioned in section \ref{LotayPacinirevisited}, the existence of the bordism current is the geometric manifestation of linear relations in the zeroth Hochschild homology $HH_0$ of the Fukaya category, which contains important invariant information.

\item  From a variational viewpoint which will be discussed more fully in Chapter \ref{Variational}, it is desirable to extend Floer theory to Lagrangians with much weaker regularity, in the varifold and current sense. We shall explain there that most of Floer cohomologies and $A_\infty$ products cannot be expected to pass to the limit when the Lagrangians degenerate in such weak topologies, and we hope that the bordism currents we use are among the few pieces of Floer theory that \emph{may} be well behaved under rather severe degenerations of Lagrangians.

\end{itemize}

These requirements are very stringent. We notice two other features:

\begin{itemize}
\item  We shall crucially rely on the almost calibrated condition.

\item  When we test the stability of $L$ via the distinguished triangle $L_1\to L\to L_2\to L_1[1]$, we do not wish to assume $L_1$ or $L_2$ is special Lagrangian. In our view, stability conditions should be expressed in Floer theoretic terms, without a priori knowledge of what special Lagrangians there are inside a given almost Calabi-Yau manifold. 
\end{itemize}

\subsubsection*{The Floer theoretic obstruction condition}

The following Floer theoretic obstruction will crucially require \textbf{complex integrability} and the \textbf{almost calibrated condition}. Assume $L_1\to L\to L_2\xrightarrow{\gamma} L_1[1]$ be a distinguished triangle of unobstructed exact immersed Lagrangian branes with bounding cochains, such that $L_1,L,L_2$ are all almost calibrated, and all intersections are transverse. In other words, the Lagrangian brane $L$ is isomorphic in $D^bFuk(X)$ to the immersed Lagrangian corresponding to the twisted complex (\cf section \ref{LotayPacinirevisited}, \ref{immersedFukaya})
\[
L'\simeq \left( \begin{matrix}
(L_2, b_2) & \\
\gamma & (L_{1}, b_{1}) 
\end{matrix} \right).
\]
We obtain a bordism current $\mathcal{C}$ with $\partial \mathcal{C}=L-L'=L-L_1-L_2$. In our generality, the domains of $L_1, L_2,L$ may have many connected components.


\begin{thm}\label{Floertheoreticobstruction1}
(Floer theoretic obstruction)
Assume the \textbf{automatic transversality} and the \textbf{positivity condition} hold for the bordism current $\mathcal{C}$. Assume the \textbf{destabilizing condition}
\[
\hat{\theta}_1= \arg \int_{L_1}\Omega> \hat{\theta}_2=\arg \int_{L_2}\Omega.
\]
Then the \textbf{Lagrangian phase angle} of $L$ has a lower bound on its oscillation:
\begin{equation}\label{phaseobstruction}
\sup_L \theta_L\geq \hat{\theta}_1,\quad \inf_L \theta_L\leq \hat{\theta}_2,
\end{equation}
and morever the \textbf{J-volume} of $L$ (\cf section \ref{LotayPacini}) has a nontrivial lower bound
\begin{equation}\label{volumeobstruction}
\text{Vol}_J(L)=\int_L e^{-i\theta}\Omega \geq |\int_{L_1}\Omega|+|\int_{L_2}\Omega|.
\end{equation}
\end{thm}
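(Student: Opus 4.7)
The plan is to apply Stokes' theorem curve-by-curve to a cleverly chosen holomorphic $1$-form on each member of the universal family, and then integrate the resulting identities over the moduli space, exploiting the positivity condition to produce pointwise sign-definite integrands (in the spirit of the Gauss-type decreasing-curvature argument that shows HYM implies $\mu$-semistability, cf.\ section \ref{HYM}). For each pseudoholomorphic polygon $u:\Sigma\to X$ in an $(n-1)$-dimensional moduli space contributing to $\mathcal{C}$, and a choice of moduli tangent frame $v_1,\ldots,v_{n-1}$, the starting object is the $(1,0)$-form $\alpha=\Omega(\cdot,v_1,\ldots,v_{n-1})$ on $\Sigma$, which is holomorphic by integrability and nonvanishing in the interior by Proposition \ref{automatictransversalitypolygon}. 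The positivity condition then gives along $\partial_L\Sigma$ the pointwise expression $\alpha(t)\,ds = e^{i\theta_L(u(s))}\rho(s)\,ds$ with $\rho>0$, while along $\partial_{L_j}\Sigma$ one has $\alpha(t)\,ds = -e^{i\theta_{L_j}(u(s))}\rho_j(s)\,ds$ with $\rho_j>0$ (the sign reflecting the appearance of $-L_j$ in $\partial\mathcal{C}$). Under the boundary evaluation, the moduli-integrated density $\rho\,ds\,dv_1\cdots dv_{n-1}$ recovers $dvol_J$ on $L$ and likewise on each $L_j$.

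Since $e^{-i\hat\theta}\alpha$ is $d$-closed on each $\Sigma$, Stokes yields the per-curve identity
\[
\int_{\partial_L\Sigma}e^{i(\theta_L-\hat\theta)}\rho\,ds \;=\; \int_{\partial_{L_1}\Sigma}e^{i(\theta_{L_1}-\hat\theta)}\rho_1\,ds \;+\; \int_{\partial_{L_2}\Sigma}e^{i(\theta_{L_2}-\hat\theta)}\rho_2\,ds,
\]
valid for every test angle $\hat\theta\in\mathbb{R}$. For the phase obstruction I take $\hat\theta=\hat\theta_1$, assume for contradiction $\sup_L\theta_L<\hat\theta_1$ (so $\sin(\theta_L-\hat\theta_1)<0$ pointwise on every $\partial_L\Sigma$), and extract the imaginary part of the identity. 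The contradiction is to come from splitting the moduli $\mathcal{M}$ into sub-families according to which component of $L'=L_1\cup L_2$ the corners $\alpha\in CF^0(L,L')$ and $\beta\in CF^0(L',L)$ sit on; at each such corner the Floer degree formula $\mu_{L,L_j}(p)=\pi^{-1}(\theta_{L_j}(p)-\theta_L(p)+\sum\phi_i)=0$ together with the almost calibrated constraint $\phi_i\in(0,\pi)$ forces a pointwise relation between $\theta_L$ and $\theta_{L_j}$ at the corner. Combining this corner constraint with the strict negativity of $\text{Im}(e^{-i\hat\theta_1}Z(L_2))=|Z(L_2)|\sin(\hat\theta_2-\hat\theta_1)$ coming from the destabilizing assumption should then exhibit a boundary point of $L$ at which $\theta_L\geq\hat\theta_1$. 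The $\inf_L\theta_L\leq\hat\theta_2$ bound follows symmetrically by taking $\hat\theta=\hat\theta_2$ and testing against the $L_1$ contribution.

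For the J-volume bound I intend to combine the per-curve triangle inequality $\int_{\partial_L\Sigma}|\alpha(t)|\,ds\geq|\int_{\partial_L\Sigma}\alpha|$ with the Stokes identity to get
\[
\text{Vol}_J(L)=\int_\mathcal{M}\int_{\partial_L\Sigma}\rho\,ds\,dv \;\geq\; \int_\mathcal{M}\Bigl|\int_{\partial_{L_1}\Sigma}\alpha+\int_{\partial_{L_2}\Sigma}\alpha\Bigr|\,dv,
\]
and then to lower-bound the right-hand side by $|Z(L_1)|+|Z(L_2)|$ by noting that the sub-moduli indexed by the $L_j$-corner combinatorics of $\alpha$ and $\beta$ decompose these boundary integrals into non-cancelling pieces whose moduli averages are $-Z(L_1)$ and $-Z(L_2)$ separately. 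The gain over the plain calibration bound $|Z(L)|$ is precisely $|Z(L_1)|+|Z(L_2)|-|Z(L_1)+Z(L_2)|$, the triangle-inequality defect that records the destabilizing phase gap.

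The main obstacle I expect is the passage from the cohomological moduli averages, which only reproduce the tautological relation $Z(L)=Z(L_1)+Z(L_2)$, to the strict pointwise inequalities on $\sup_L\theta_L$ and $\text{Vol}_J(L)$. The positivity condition has to be used in its per-curve, non-averaged form: concretely, one must argue that the moduli space splits cleanly along the combinatorics of the $(\alpha,\beta)$-corners and that the positivity hypothesis persists on each sub-family (so that no sub-family contributes with an unfavourable sign), and one must translate the Floer degree constraint at the corners into a usable pointwise bound on $\theta_L$ vis-à-vis $\theta_{L_1}$ or $\theta_{L_2}$. Making this splitting rigorous, and ensuring compatibility with the bounding cochain data in the immersed case, is the technical heart of the proof.
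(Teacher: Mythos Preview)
Your starting point is correct: the holomorphic $1$-form $\alpha=\Omega(\cdot,v_1,\ldots,v_{n-1})$ is the right object, and the positivity condition does give the sign interpretation $\alpha|_{\partial\Sigma}=e^{i\theta}\rho\,ds$ with $\rho>0$ on $L$ and $\rho<0$ on $L'$. But the mechanism you propose for extracting the phase and volume inequalities does not close.

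The corner Floer degree constraint only relates $\theta_L$ to $\theta_{L_j}$ \emph{at the specific intersection point}, and gives $\theta_L(p)>\theta_{L_j}(p)$ there; it says nothing about $\hat\theta_j=\arg\int_{L_j}\Omega$ unless $L_j$ is itself special Lagrangian. (The paper notes exactly this in the remark following the theorem.) Likewise, your sub-family split by corner combinatorics does not separate the $L_1$ and $L_2$ contributions: in the generic ``primary'' case the boundary of a single curve $\Sigma$ runs along \emph{both} $L_1$ and $L_2$, so per-curve you only have $|\int_{\partial_{L_1}\Sigma}\alpha+\int_{\partial_{L_2}\Sigma}\alpha|$, and the triangle inequality points the wrong way to recover $|Z(L_1)|+|Z(L_2)|$. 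Your integrated Stokes identity with $\hat\theta=\hat\theta_1$ just reproduces $\text{Im}(e^{-i\hat\theta_1}Z(L))=\text{Im}(e^{-i\hat\theta_1}Z(L_2))<0$, which is consistent with (not contradicted by) $\sup_L\theta_L<\hat\theta_1$.

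The missing idea is to integrate $\alpha$ to a holomorphic \emph{function} $F$ on $\Sigma$, normalized by $F(q)=0$. Positivity then becomes a monotonicity statement: $\text{Re}\,F$ increases clockwise along $\partial_L\Sigma$ and decreases along $\partial_{L'}\Sigma$, so $0\leq\text{Re}\,F\leq\text{Re}\,F(p)$ on $\partial\Sigma$. By the maximum principle this forces the planar image $F(\Sigma)\subset\C$ to lie \emph{below} its $L$-boundary arc and \emph{above} its $L'$-boundary arc. The payoff is a partition of $L$ (not of the moduli space): on each curve, the unique point $r'\in\partial_L\Sigma$ with $\text{Re}\,F(r')=\text{Re}\,F(r)$ (where $r$ is the $CF^1(L_2,L_1)$ corner) cuts $\partial_L\Sigma$ into two arcs, which assemble over $\mathcal{M}$ into weighted subsets $A_1,A_2\subset L$ with $\text{Re}\int_{A_i}\Omega=\text{Re}\int_{L_i}\Omega$ and $\text{Im}\int_{A_1}\Omega\geq\text{Im}\int_{L_1}\Omega$, $\text{Im}\int_{A_2}\Omega\leq\text{Im}\int_{L_2}\Omega$. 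Both the phase inequality and the $J$-volume bound follow from this partition together with an elementary numerical lemma. The image-curve geometry of $F$ is what converts the single cohomological relation $Z(L)=Z(L_1)+Z(L_2)$ into two separate comparisons.
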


\begin{proof}
At a holomorphic polygon $\Sigma$ in the universal family $\mathcal{C}$, denote
 $v_1, \ldots v_{n-1}$ as the  first order deformation vector fields representing an oriented basis of tangent vectors to the moduli space. In the special case of holomorphic strips, the moduli space refers to the $\R$-translation quotient. We noted in section \ref{Automatictransversalitypositivity} that $\Omega(\cdot, v_1,\ldots, v_{n-1})$ restricts to a holomorphic 1-form on $\Sigma$, so can be written as the differential of a \textbf{holomorphic function} $F$ by the simply connectedness of $\Sigma$:
 \begin{equation}\label{holoF}
 dF= \Omega(\cdot, v_1,\ldots, v_{n-1}).
 \end{equation}

The corners on $\Sigma$ are arranged in \emph{clockwise} order with the following possibilities:
\begin{itemize}
\item In the primary case, we encounter some degree one self intersections on $L$ from bounding cochains, a corner $p \in CF^0(L, L_2)$, some degree one self intersections on $L_2$, a corner $r$ from $\gamma\in CF^1(L_2,L_1)$, some degree one self intersections on $L_1$, and a corner at $q\in CF^0(L_1, L)$. Notice the Lagrangian boundary follows $L, L_2, L_1$ in clockwise order, and we cannot go reversely from $L_1$ to $L_2$ instead.

\item In the secondary cases, the boundary data may miss either $L_1$ or $L_2$. For instance, we may encounter  some degree one intersections on $L$, a corner $p\in CF^0(L,L_2)$, some degree one intersections on $L_2$ and a corner at $q\in CF^0(L_2, L)$. The Lagrangian boundary follows $L, L_2$ in clockwise order. The alternative possibility of Lagrangian boundary along $L$ and $L_1$ is entirely similar.

\end{itemize}

In all cases, there is precisely one corner $p$ at $CF^0(L,L')$ and a corner $q$ at $CF^0(L',L)$. We can normalize $F(q)=0$ to fix the constant. In the primary case, there is a corner $r\in CF^1(L_2,L_1)$, which is absent in the secondary cases. In general, the bordism current $\mathcal{C}$ receives contributions from many moduli spaces, and all three cases may arise depending on the generators of $HF^0(L,L')$ and $HF^0(L',L)$.

We can now define \textbf{complex valued volume forms} on the $(n-1)$ dimensional moduli spaces of holomorphic curves. Recall $v_1,\ldots v_{n-1}$ represent the tangent vectors to the moduli spaces, and the holomorphic function $F$ depends on $v_1\wedge \ldots v_{n-1}$. In the primary case, we define 
\[
\begin{cases}
\tilde{\Omega}_{L}(v_1,\ldots v_{n-1}) = F(p),
\\
\tilde{\Omega}_{L_1}(v_1,\ldots v_{n-1})= F(r),
\\
\tilde{\Omega}_{L_2}(v_1,\ldots v_{n-1})= F(p)- F(r).
\end{cases}
\] 
In the secondary cases, if the Lagrangian boundary lies on $L$ and $L_1$, then
\[
\begin{cases}
\tilde{\Omega}_{L}(v_1,\ldots v_{n-1}) = F(p),
\\
\tilde{\Omega}_{L_1}(v_1,\ldots v_{n-1})= F(p),\\
\tilde{\Omega}_{L_2}(v_1,\ldots v_{n-1})=  0.
\end{cases}
\]
If the Lagrangian boundary lies on $L$ and $L_2$, then
\[
\begin{cases}
\tilde{\Omega}_{L}(v_1,\ldots v_{n-1}) = F(p),
\\
\tilde{\Omega}_{L_1}(v_1,\ldots v_{n-1})= 0,\\
\tilde{\Omega}_{L_2}(v_1,\ldots v_{n-1})=  F(p).
\end{cases}
\]
The values of $F$ should be understood as the integral of $dF$ on the appropriate portions of $\partial \Sigma$.
The key point is that since $\partial \mathcal{C}$ sweeps out the cycle $L-L_1-L_2$, we can write the \textbf{period integrals} as \textbf{integrals on the $(n-1)$-dimensional moduli spaces of holomorphic curves}:
\begin{equation}
\int_L\Omega= \int_{\mathcal{M}} \tilde{\Omega}_{L}, \quad \int_{L_i}\Omega= \int_{\mathcal{M}} \tilde{\Omega}_{L_i}, \quad i=1,2.
\end{equation}
where $\mathcal{M}$ is a shorthand for the weighted sum over contributions from all the $(n-1)$-dimensional moduli spaces involved in the construction of $\mathcal{C}$, \cf the Appendex \ref{immersedFukaya}.

Recall the \textbf{positivity condition} means that if $v_0$ stands for a \emph{clockwise oriented} tangent vector on $\partial \Sigma$, then $v_0\wedge v_1\ldots \wedge v_{n-1}$ agrees with the orientation on $L$, and is \emph{opposite} to the orientation on $L'$. The nonvanishing of $v_0\wedge v_1\ldots \wedge v_{n-1}$ is a consequence of the \emph{immersion property} from the automatic transversality (\cf Cor. \ref{automatictransversalitystrip}, Prop. \ref{automatictransversalitypolygon}).
The \textbf{almost calibrated} condition implies that $\text{Re}\Omega>0$ on the Lagrangians with respect to the orientation on $L$ and $L'$. 
Thus
\begin{claim}\label{Monotonicityclaim}(\textbf{Monotonicity})
	Clockwise along $\partial \Sigma$, the function $\text{Re }F$ is  \emph{increasing} on the $L$ boundary portion, but \emph{decreasing} on the $L'=L_1\cup L_2$ boundary portion. In particular,
	\[
	0=\text{Re }F(q)\leq \text{Re }F\leq  \text{Re } F(p).
	\]
	More intrinsically, the real part of the complex volume forms on the moduli spaces are \emph{nonnegative}.
\end{claim}

The holomorphic function $F$ maps $\Sigma$ into a bounded region in the complex plane. The behaviour at the corners is specified in Remark \ref{cornerOmega}. Since each vertical line intersects $\partial F(\Sigma)\subset \C$ at $\leq 2$ points by the monotonicity claim above, the boundary and corner local behaviours imply that

\begin{claim}\label{Imagecurveclaim}(\textbf{Image curve})
	The image $F(\Sigma)\subset \C$ lies \emph{above} its $L'$ boundary portion, and \emph{below} its $L$ boundary portion.
\end{claim}

We turn to the proof of the \textbf{Lagrangian phase angle inequality} (\ref{phaseobstruction}). For each curve that contributes nontrivially to $\tilde{\Omega}_{L}$, by the monotonicity claim we can find a unique point $r'$ on the $L$ boundary of $\partial \Sigma$, such that 
\[
\begin{cases}
\text{Re }F(r')= \text{Re }F(r), \quad & \text{primary case},
\\
r'=q,\quad & \text{secondary case, boundary on $L$ and $L_2$},
\\
r'=p,\quad & \text{secondary case, boundary on $L$ and $L_1$}.
\end{cases}
\]
From the image curve claim, we always have $\text{Im}F(r')\geq \text{Im}F(r)$ in the primary case. Integrating over the moduli space of holomorphic curves,
\[
\text{Re}\int_{\mathcal{M}} F(r') =\text{Re}\int_{\mathcal{M}} \tilde{\Omega}_{L_1}= \text{Re}\int_{L_1} \Omega,
\quad \text{Im}\int_{\mathcal{M}} F(r') \geq \text{Im}\int_{\mathcal{M}} \tilde{\Omega}_{L_1}= \text{Im}\int_{L_1} \Omega.
\]

We now introduce two almost everywhere defined functions $\chi_{A_1}, \chi_{A_2}$ on $L$. The recipe is that at any generic point $P\in L$, if an automatically transverse holomorphic curve in the universal family passes through $P$ on the boundary portion of $\partial \Sigma$ joining $q$ to $r'$ (resp. $r'$ to $p$), then it gives an additive contribution to $\chi_{A_1}(P)$ (resp. $\chi_{A_2}(P)$) equal to the weighting factor of the curve. Intuitively $\chi_{A_1}, \chi_{A_2}$ should be understood as the characteristic functions of weighted subsets $A_1, A_2\subset L$. 
The positivity condition gives $\chi_{A_i}\geq 0$, and $\partial\mathcal{C}=L-L'$ gives $\chi_{A_1}+\chi_{A_2}=1$. Intuitively $A_1, A_2$ give a (weighted) partition of $L$.

The moduli space integrals now have target space interpretations:
\[
\int_{\mathcal{M}} F(r')= \int_{L} \chi_{A_1} \Omega=: \int_{A_1} \Omega,\quad   \int_{\mathcal{M}}F(p)- F(r')= \int_{L} \chi_{A_2} \Omega=: \int_{A_2} \Omega.
\]
Since $L$ is homologous to $L_1+L_2$, we have $\int_L\Omega= \int_{L_1}\Omega+  \int_{L_2}\Omega$. Whence

\begin{claim}
There is a weighted partition $L=A_1+A_2$ such that
\[
\text{Re}\int_{A_i} \Omega = \text{Re}\int_{L_i} \Omega>0,\quad  \text{Im}\int_{A_2} \Omega \leq  \text{Im}\int_{L_2} \Omega, \quad \text{Im}\int_{A_1} \Omega \geq \text{Im}\int_{L_1} \Omega.
\]
Consequently $\arg \int_{A_2} \Omega \leq \arg \int_{L_2}\Omega=\hat{\theta}_2$ and $\arg \int_{A_1} \Omega \geq \arg \int_{L_1}\Omega=\hat{\theta}_1$, so in particular $\inf_L \theta_{L}\leq \hat{\theta}_2$ and $\sup_L \theta_{L}\geq \hat{\theta}_1$.
\end{claim}


Finally we deal with the \textbf{J-volume lower bound} (\ref{volumeobstruction}). By the triangle inequality,
\[
\int_L e^{-i\theta}\Omega= \int_L |\Omega| = \int_{A_1} |\Omega|+ \int_{A_2} |\Omega| \geq |\int_{A_1} \Omega|+ |\int_{A_2}\Omega|.
\]
The RHS is at least $|\int_{L_1} \Omega|+ |\int_{L_2}\Omega|$, due to an elementary numerical fact:

\begin{lem}\label{numericallemma1}
Let $z, w$ be complex numbers, with fixed real parts $0< \text{Re}(z)< \text{Re}(w)$. Then as a function of $\text{Im}(z)$, the function $|z|+|w-z|$ is decreasing when $\arg z\leq \arg w$, and increasing when $\arg z\geq \arg w$.

\end{lem}

This concludes the proof of (\ref{volumeobstruction}).
\end{proof}

A few remarks are in order to clarify the relevance to special Lagrangian geometry:

\begin{rmk}
Recall that for $L$ to be a special Lagrangian, then its phase angle is constant, and its J-volume is
\[
\text{Vol}_J(L)=| \int_L\Omega| < |\int_{L_1} \Omega|+ |\int_{L_2}\Omega|,
\]
using the triangle inequality, the homological relation $[L]=[L_1+L_2]\in H_n(X)$ and the assumption that $\hat{\theta}_1>\hat{\theta}_2$. Thus the conclusion of the theorem is a \textbf{quantitative obstruction} for $L$ to be special Lagrangian. In section \ref{TowardsBridgeland} we will discuss the relation to Joyce's LMCF program and the Bridgeland stability condition. 
\end{rmk}

\begin{rmk}
If $L_1,L_2$ are actually special Lagrangians, then the phase angle bounds (\ref{phaseobstruction}) would be evident from the Floer degree formula (\ref{Floerdegree}) applied to the intersection points $L\cap L'$. One main feature of the theorem is that we do not need a priori knowledge on the existence of special Lagrangians, and the holomorphic volume form enters the obstruction criterion only through cohomological information. 
\end{rmk}

\subsubsection*{Variant: twisted complex case}

The Floer theoretic obstruction for distinguished triangles can be easily generalized to involve many Lagrangians. Let $L'$ be an exact immersed Lagrangian with bounding cochain built from the data of a twisted complex (\ref{twistedcomplex}). We assume $L$ is isomorphic to $L'$ in $D^bFuk(X)$, so we obtain a bordism current $\mathcal{C}$ with $\partial \mathcal{C}= L-L'=L-\sum_1^N L_i$. As before, all Lagrangians are assumed to be almost calibrated, and all intersections are transverse.

\begin{thm}\label{Floertheoreticobstruction2}
(Floer theoretic obstruction, multiple Lagrangian case) 
Assume the \textbf{automatic transversality} and the \textbf{positivity condition}  hold for the bordism current $\mathcal{C}$. Assume the \textbf{destabilizing condition}
\[
\hat{\theta}_1>\hat{\theta}_2>\ldots >\hat{\theta}_N, \quad \hat{\theta}_i=\arg \int_{L_i}\Omega. 
\]
Then the \textbf{Lagrangian phase angle} of $L$ has a lower bound on its oscillation:
\begin{equation}\label{phaseobstruction2}
\sup_L \theta_L\geq \hat{\theta}_1,\quad \inf_L \theta_L\leq \hat{\theta}_N,
\end{equation}
and morever the \textbf{J-volume} of $L$ has a nontrivial lower bound
\begin{equation}\label{volumeobstruction2}
\text{Vol}_J(L)=\int_L e^{-i\theta}\Omega \geq \sum_1^N |\int_{L_i}\Omega|.
\end{equation}
\end{thm}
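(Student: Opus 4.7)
The plan is to closely parallel the proof of Theorem \ref{Floertheoreticobstruction1}, promoting the single interior corner $r$ there to a chain of corners on each polygon. Via section \ref{LotayPaciniimmersed}, produce the bordism current $\mathcal{C}$ with $\partial\mathcal{C}=L-\sum_i L_i$ from the universal family of perturbed holomorphic polygons with inputs dictated by the twisted complex (\ref{twistedcomplex}). On each automatically transverse polygon $u:\Sigma\to X$, let $v_1,\ldots,v_{n-1}$ be an oriented basis for the tangent space of the moduli space; then $\Omega(\cdot,v_1,\ldots,v_{n-1})$ is holomorphic on $\Sigma$ and integrates to $F$, normalized by $F(q)=0$ at the output corner. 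Clockwise from $q$, the boundary of $\Sigma$ sweeps along $L$ to reach the input corner $p$, then returns via $L'$-arcs on $L_{I_1},\ldots,L_{I_M}$ separated by corners $r_j$ at the bounding-cochain elements $b_{I_j,I_{j+1}}$; the lower-triangular shape of (\ref{twistedcomplex}) forces $I_1>I_2>\ldots>I_M$. The monotonicity and image-curve claims of Theorem \ref{Floertheoreticobstruction1} now apply verbatim: $\text{Re}\,F$ increases along $L$, decreases along each $L_{I_j}$-arc, and $F(\Sigma)\subset\mathbb{C}$ sits above its $L'$-boundary and below its $L$-boundary.

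Introduce points $r'_1,\ldots,r'_{M-1}$ on the $L$-arc with $\text{Re}\,F(r'_j)=\text{Re}\,F(r_j)$; these partition the $L$-arc into subarcs $A^\Sigma_{I_j}$. Integrating over the moduli space produces nonnegative weighted characteristic functions $\chi_{A_i}$ on $L$ (positivity from the positivity condition) with $\sum_i\chi_{A_i}=1$ (from $\partial\mathcal{C}=L-\sum L_i$); set $\int_{A_i}\Omega:=\int_L\chi_{A_i}\Omega$. Telescoping gives $\text{Re}\int_{A_i}\Omega=\text{Re}\int_{L_i}\Omega$. Observing that on any single polygon the indices $\{I_j\geq k\}$ form an initial segment of $I_1>\ldots>I_M$, applying the image-curve inequality $\text{Im}\,F(r'_{j_0})\geq\text{Im}\,F(r_{j_0})$ at the relevant cutting corner, and summing over the moduli space yields
\begin{equation*}
\text{Im}\sum_{i=1}^{k}\int_{A_i}\Omega \;\geq\; \text{Im}\sum_{i=1}^{k}\int_{L_i}\Omega,\quad k=1,\ldots,N-1.
\end{equation*}
The extreme cases $k=1$ and $k=N-1$ specialize to $\arg\int_{A_1}\Omega\geq\hat{\theta}_1$ and $\arg\int_{A_N}\Omega\leq\hat{\theta}_N$, which give the phase-angle bounds (\ref{phaseobstruction2}).

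For the J-volume bound, set $z_i=\int_{A_i}\Omega$ and $z'_i=\int_{L_i}\Omega$; the triangle inequality gives $\text{Vol}_J(L)\geq\sum_i|z_i|$, so it suffices to prove $\sum|z_i|\geq\sum|z'_i|$ under the constraints $\text{Re}\,z_i=\text{Re}\,z'_i$, $\sum z_i=\sum z'_i$, together with the partial-sum inequalities above. This is the required $N$-fold generalization of Lemma \ref{numericallemma1}, and I would prove it by applying the Karush--Kuhn--Tucker conditions to the convex functional $G(z)=\sum|z_i|$ on the convex feasible set: at $z=z'$ the gradient $\partial G/\partial\text{Im}(z_m)=\sin\hat{\theta}_m$ decomposes along the active constraint normals with multipliers $\lambda_m=\sin\hat{\theta}_m-\sin\hat{\theta}_{m+1}>0$ (strict positivity from $\hat{\theta}_m>\hat{\theta}_{m+1}$ in $(-\pi/2,\pi/2)$ by almost calibratedness), identifying $z'$ as the global minimum. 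The main difficulty I anticipate lies in the cross-polygon bookkeeping in the middle step: different polygons in the universal family present different subsequences $I_1>\ldots>I_M$ of $\{1,\ldots,N\}$, and verifying that the piecewise definitions of $A^\Sigma_{I_j}$ assemble coherently into a partition of $L$ satisfying the partial-sum inequalities requires care with the moduli-space integration.
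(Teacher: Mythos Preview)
Your proposal is correct and follows essentially the same route as the paper: the construction of $F$, the monotonicity and image-curve claims, the partition of $L$ into weighted pieces $A_i$ with $\text{Re}\int_{A_i}\Omega=\text{Re}\int_{L_i}\Omega$ and the partial-sum inequalities $\text{Im}\sum_{1}^{k}\int_{A_i}\Omega\geq\text{Im}\sum_{1}^{k}\int_{L_i}\Omega$, and the extraction of the phase bounds from the extreme cases $k=1$ and $k=N-1$ all match the paper's argument. Your worry about cross-polygon bookkeeping is legitimate but resolves exactly as in the paper, since the key observation---that the indices $\{I_j\geq k\}$ always form an initial segment---is what makes the single cutting point $r_{j_0}$ well defined on each polygon.

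The one genuine difference is in the numerical lemma. The paper proves it (as Lemma~\ref{numericallemma2}) by an elementary induction on $N$: at a putative minimizer with $z_1\neq a_1$ one repeatedly applies the two-term Lemma~\ref{numericallemma1} to force $\arg z_1\leq\arg z_2\leq\ldots\leq\arg z_N$, contradicting $\sum z_i=\sum a_i$. Your KKT argument is a clean alternative: with $|z_i|$ convex in $\text{Im}\,z_i$ and the feasible set polyhedral, the multipliers $\lambda_m=\sin\hat\theta_m-\sin\hat\theta_{m+1}>0$ (strict by almost calibratedness and $\hat\theta_m>\hat\theta_{m+1}$) verify optimality at $z'=a$ directly. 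The KKT route is more conceptual and explains transparently why the destabilizing ordering is exactly what is needed; the paper's induction is more self-contained and avoids any optimization vocabulary.
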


\begin{proof}
Since most parts of the proof are identical to the distinguished triangle case, we will only sketch the main difference.

The Lagrangian boundaries on $\partial \Sigma$ are arranged in the \emph{clockwise} order as $L,L_N, L_{N-1},\ldots L_1$. 
We construct the holomorphic function $F$ as in (\ref{holoF}), and use it to produce complex valued volume forms on the $(n-1)$-dimensional moduli spaces, such that for any $m=1,\ldots N$,
\[
\int_L\Omega= \int_{\mathcal{M}} \tilde{\Omega}_{L}, \quad \int_{L_i}\Omega= \int_{\mathcal{M}} \tilde{\Omega}_{L_i}, \quad i=1,2,\ldots N. 
\]
As before, the real part of these complex volume forms are all non-negative, as a consequence of the positivity condition. The claim on the image curve $F(\Sigma)$ holds verbatim. Similarly to the distinguished triangle case, we produce nonnegatively weighted subsets $A_1,\ldots A_N\subset L$ with $L=\sum A_i$, such that
\begin{equation}
\text{Re} \int_{A_m} \Omega= \text{Re} \int_{L_m} \Omega>0,\quad \text{Im} \sum_1^m \int_{A_i} \Omega\geq  \text{Im}\sum_1^m \ \int_{L_i} \Omega, \quad \int_L\Omega=\sum_1^N\int_{L_i}\Omega.
\end{equation}
This implies 
\[
\arg \int_{A_1} \Omega \geq \arg \int_{L_1}\Omega, \quad \arg \int_{A_N} \Omega \leq \arg \int_{L_N}\Omega,
\]
whence the phase inequality (\ref{phaseobstruction2}).

The J-volume can be bounded below by
\[
\text{Vol}_J(L)=\int_L |\Omega|= \sum_1^N \int_{A_i}|\Omega| \geq \sum_1^N |\int_{A_i}\Omega| \geq \sum_1^N |\int_{L_i}\Omega|.
\]
The last step uses the purely numerical Lemma \ref{numericallemma2} below.
\end{proof}

\begin{lem}\label{numericallemma2}
Let $z_1,\ldots z_N$ be complex numbers, and $a_1,\ldots a_N$ be fixed complex numbers with positive real parts, such that $\arg a_1>\arg a_2>\ldots > \arg a_N$. Assume
\[
\text{Re} (z_i)=\text{Re} (a_i), \quad \text{Im} \sum_1^m z_i\geq  \text{Im}\sum_1^m  a_i, \quad \sum_1^N z_i=\sum_1^N a_i.
\]
Then $\sum_1^N |z_i|\geq \sum_1^N |a_i|$. 
\end{lem}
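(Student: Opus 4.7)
The plan is to reduce the inequality to a linear comparison via convexity, and then apply Abel summation using the ordering on $\arg a_i$. Write $r_i=\text{Re}(a_i)>0$, $b_i=\text{Im}(a_i)$, $y_i=\text{Im}(z_i)$, and set
\[
f_i(y)=\sqrt{r_i^2+y^2},
\]
so that $|z_i|=f_i(y_i)$ and $|a_i|=f_i(b_i)$. Each $f_i$ is smooth and strictly convex in $y$, so I would first record the one-variable convexity estimate
\[
f_i(y_i)\geq f_i(b_i)+f_i'(b_i)(y_i-b_i),\qquad f_i'(b_i)=\frac{b_i}{|a_i|}=\sin(\arg a_i).
\]

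Summing over $i$, the problem reduces to showing
\[
\sum_{i=1}^N c_i d_i\geq 0,\qquad c_i:=\sin(\arg a_i),\quad d_i:=y_i-b_i.
\]
The hypotheses on the partial sums translate directly into $D_m:=\sum_{i=1}^m d_i\geq 0$ for $m<N$ and $D_N=0$. Since $\text{Re}(a_i)>0$, every argument lies in $(-\tfrac{\pi}{2},\tfrac{\pi}{2})$, where $\sin$ is strictly increasing; combined with $\arg a_1>\ldots>\arg a_N$, this yields $c_1>c_2>\ldots>c_N$. Abel summation then gives
\[
\sum_{i=1}^N c_i d_i=\sum_{m=1}^{N-1}(c_m-c_{m+1})D_m+c_N D_N=\sum_{m=1}^{N-1}(c_m-c_{m+1})D_m\geq 0,
\]
since each factor $(c_m-c_{m+1})>0$ and each $D_m\geq 0$. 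Combining with the convexity estimate yields $\sum|z_i|\geq\sum|a_i|$.

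There is no genuine obstacle here; the only thing to verify carefully is that the hypothesis $\text{Re}(a_i)>0$ indeed places all arguments in the open half-plane where $\sin$ is monotone, so that the ordering of the $\arg a_i$ transfers to the ordering of the $c_i$. (Lemma \ref{numericallemma1} from the distinguished-triangle case is exactly the $N=2$ instance of this argument, where convexity plus a single sign condition on $\text{Im}(z_1)-\text{Im}(a_1)$ suffices.) The proof is essentially a weighted rearrangement/majorization argument, with convexity converting the desired inequality on moduli into a linear inequality amenable to summation by parts.
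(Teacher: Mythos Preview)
Your proof is correct and takes a genuinely different route from the paper's. The paper argues by induction on $N$: it considers $\sum|z_i|$ as a function on the constraint set, invokes coercivity to find a minimizer, and then shows that at a minimizer either $z_1=a_1$ (so induction applies) or else Lemma~\ref{numericallemma1} forces $\arg z_1\le\arg z_2\le\cdots\le\arg z_N$, which contradicts $\sum z_i=\sum a_i$ by a phase comparison. Your argument bypasses all of this: the tangent-line inequality for the convex function $y\mapsto\sqrt{r_i^2+y^2}$ linearizes the problem at the point $(a_i)$, and Abel summation against the decreasing sequence $c_i=\sin(\arg a_i)$ handles the linear part directly using the partial-sum hypotheses. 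This is shorter, avoids the induction and the minimization step entirely, and makes transparent exactly where each hypothesis is used (positivity of real parts for monotonicity of $\sin$, the ordering of $\arg a_i$ for $c_m>c_{m+1}$, and the partial-sum inequalities for $D_m\ge 0$). The paper's approach has the minor advantage of being more in keeping with its variational theme, but yours is the cleaner argument.
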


\begin{proof}
We argue by induction. The $N=2$ case is implied by Lemma \ref{numericallemma1}. In general, we view $\sum_1^N |z_i|$ as a function of the imaginary parts of $z_1,\ldots z_N$ subject to the constraints. Clearly this function achieves its minimum for some $(z_i)$. If $z_1=a_1$, then we can conclude by induction. Otherwise $\text{Im} z_1>\text{Im} a_1$. If $\arg z_2<\arg z_1$, then we can fix $z_1+z_2$ and decrease $\sum_1^2 |z_i|$ by Lemma \ref{numericallemma1}, which would contradict minimality. Proceding with this argument, we are forced to have
\[
\arg z_1\leq \arg z_2\leq \ldots \leq \arg z_N,
\]
whence
\[
\arg \sum_1^N z_i \geq \arg z_1 > \arg a_1>\arg \sum_1^N a_i
\]
which contradicts $\sum_1^N z_i=\sum_1^N a_i$.
\end{proof}

\subsubsection*{What if we relax the positivity condition?}

We now discuss a weaker version that does not require the positivity condition on holomorphic curves. This amounts to dropping pointwise positivity of the integrand for the moduli space integral, which breaks some parts of our mirror analogy. 

As in Theorem \ref{Floertheoreticobstruction1}, we consider $L'$ built from two immersed Lagrangians $L_1,L_2$, and $L$ fits into the distinguished triangle. All Lagrangians are almost calibrated, and all intersections are transverse. We classify the automatically transverse holomorphic curves (\cf Cor. \ref{automatictransversalitystrip}, Prop. \ref{automatictransversalitypolygon}) into $\pm$ types according to whether the boundary evalation to $L$ agrees with the orientation on $L$ or its opposite. The complex volume forms $\tilde{\Omega}_{L_i}$ on the moduli space can be split into the sum of two parts according to whether $u:\Sigma\to X$ is of $\pm$ types:
\[
\tilde{\Omega}_{L_i}= \tilde{\Omega}_{L_i}^+ + \tilde{\Omega}_{L_i}^-,\quad \int_{\mathcal{M}} \tilde{\Omega}_{L_i}=\int_{L_i}\Omega.
\]
In particular the signed measure $\text{Re} \tilde{\Omega}_{L_i}$ is decomposed into its positive and negative parts, and $\text{Re}\int_{\mathcal{M}}  \tilde{\Omega}_{L_i}^-\leq 0  $. We define
\[
\hat{\theta}_i^+= \arg \int_{\mathcal{M}} \tilde{\Omega}_{L_i}^+, \quad \hat{\theta}_i^-=\arg ( - \int_{\mathcal{M}} \tilde{\Omega}_{L_i}^- ) ,\quad i=1,2.
\]
Here $\hat{\theta}_i^-$ is only defined when $\int_{\mathcal{M}} \tilde{\Omega}_{L_i}^-$ is nonzero, namely the case not covered already by the positivity condition.

\begin{thm}\label{Floertheoreticobstructionnopositivity}
(Floer theoretic obstruction, relaxing positivity condition) 
Assume the \textbf{automatic transversality}  holds for the bordism current $\mathcal{C}$ between $L$ and $L'$. 
Then the \textbf{Lagrangian phase angle} of $L$ has a lower bound on its oscillation:
\begin{equation*}
\sup_L \theta_L\geq \max\{  \hat{\theta}_1^+, \hat{\theta}_1^-\} ,\quad \inf_L \theta_L\leq \min\{  \hat{\theta}_2^+, \hat{\theta}_2^-   \}.
\end{equation*}

\end{thm}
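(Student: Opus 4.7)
The plan is to adapt the proof of Theorem \ref{Floertheoreticobstruction1} by carefully tracking the sign of the boundary orientation for each individual holomorphic curve in the $\pm$ split of the moduli. First construct, on each automatically transverse polygon $u:\Sigma\to X$ in the bordism current $\mathcal{C}$, the holomorphic primitive $F$ of $\Omega(\cdot, v_1,\ldots,v_{n-1})$ normalized by $F(q)=0$, and the complex-valued volume forms $\tilde{\Omega}_L,\tilde{\Omega}_{L_i}$ on the moduli space $\mathcal{M}$, exactly as in the proof of Theorem \ref{Floertheoreticobstruction1}. The almost-calibrated assumption guarantees $\mathrm{Re}\,\Omega > 0$ with respect to the intrinsic orientation of each of $L,L_1,L_2$. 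Consequently, on a $+$-type curve (where the boundary evaluation to $L$ respects the orientation, and by $\partial\mathcal{C}=L-L_1-L_2$ reverses the orientation on each $L_i$), the monotonicity of $\mathrm{Re}\,F$ on each boundary arc and the image-curve picture of $F(\Sigma)\subset\C$ are precisely Claims \ref{Monotonicityclaim} and \ref{Imagecurveclaim}. On a $-$-type curve, the orientation of the boundary evaluation flips at \emph{every} boundary arc: $\mathrm{Re}\,F$ becomes clockwise-decreasing on the $L$-arc and clockwise-increasing on the $L_1,L_2$-arcs, and the image $F(\Sigma)$ lies above its $L$-arc and below its $L_1\cup L_2$-arc.

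Next, for each primary-case curve of either type, locate the unique matching point $r'\in\partial\Sigma\cap L$ with $\mathrm{Re}\,F(r')=\mathrm{Re}\,F(r)$; in the secondary cases set $r'=q$ or $r'=p$ as in the original proof. The (type-dependent) image-curve claim yields $\mathrm{Im}\,F(r')\geq\mathrm{Im}\,F(r)$ for $+$-type curves and the reversed inequality $\mathrm{Im}\,F(r')\leq\mathrm{Im}\,F(r)$ for $-$-type curves. Define non-negative weighted characteristic functions $\chi_{A_1^\pm},\chi_{A_2^\pm}$ on $L$ by summing, for a generic point $P\in L$, the absolute values of the holonomy weights of $\pm$-type curves whose boundary sweeps through $P$ on the arc from $q$ to $r'$ (for $A_1$), respectively from $r'$ to $p$ (for $A_2$). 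Since the boundary evaluation is orientation-reversing on $-$-type curves, one obtains the target-space identities $\int_L \chi_{A_i^+}\Omega$ equals the corresponding moduli-space integral over $\mathcal{M}^+$ assembled from $F(r')$ and $F(p)-F(r')$, while $\int_L \chi_{A_i^-}\Omega$ equals the \emph{negative} of the analogous integral over $\mathcal{M}^-$. Combining with the $\mathrm{Im}$-comparisons at $r'$ then produces, for each of the four choices of sign and index, an inequality of the form: $\int_L \chi_{A_1^\pm}\Omega$ has real part equal to $\pm\mathrm{Re}\int_{\mathcal{M}}\tilde{\Omega}_{L_1}^\pm$ (which is positive by the definition of $\hat{\theta}_1^\pm$) and imaginary part $\geq \pm\mathrm{Im}\int_{\mathcal{M}}\tilde{\Omega}_{L_1}^\pm$, hence $\arg\int_L\chi_{A_1^\pm}\Omega\geq\hat{\theta}_1^\pm$; symmetrically $\arg\int_L\chi_{A_2^\pm}\Omega\leq\hat{\theta}_2^\pm$.

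Finally conclude by noting that $\chi_{A_i^\pm}\geq 0$ pointwise on $L$, and by almost-calibratedness $\Omega|_L = e^{i\theta}|\Omega|$ with $\theta$ valued in an interval of length less than $\pi$. Thus the argument of any integral $\int_L \chi\,e^{i\theta}|\Omega|$ with $\chi\geq 0$ lies in the closed interval $[\inf_L \theta,\sup_L\theta]$. Combining with the previous paragraph gives $\sup_L\theta\geq\hat{\theta}_1^+$ and $\sup_L\theta\geq\hat{\theta}_1^-$, whence $\sup_L\theta\geq\max\{\hat{\theta}_1^+,\hat{\theta}_1^-\}$; symmetrically $\inf_L\theta\leq\min\{\hat{\theta}_2^+,\hat{\theta}_2^-\}$, as required.

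The main obstacle is the orientation bookkeeping for $-$-type curves. We need to verify that the $\pm$ type, defined locally via the boundary evaluation to the $L$-arc, is in fact a \emph{globally coherent} property of the entire curve, so that the monotonicity of $\mathrm{Re}\,F$ reverses uniformly on all of the $L, L_1, L_2$ boundary arcs; this uses $\partial\mathcal{C} = L - L_1 - L_2$ together with the fact that automatic transversality forces the Jacobian of the boundary evaluation to have constant sign on each connected arc between corners. We also need the reversed image-curve claim, which follows from the simpleness of the closed loop $F(\partial\Sigma)\subset\C$ (guaranteed by the uniform $\mathrm{Re}\,F$-monotonicity) and the fact that holomorphicity of $F$ determines on which side of each boundary arc the Jordan interior $F(\Sigma)$ lies once the direction of traversal in $\C$ is known.
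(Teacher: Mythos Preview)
Your proposal is correct and follows essentially the same route as the paper's proof: split the moduli into $\pm$-types, replace Claims~\ref{Monotonicityclaim} and~\ref{Imagecurveclaim} by their reversed analogues on $-$-type curves, decompose $\chi_{A_i}$ accordingly, and conclude via the argument bound for non-negatively weighted integrals of $\Omega$ over $L$. The only cosmetic difference is that you take $\chi_{A_i^-}\geq 0$ (absolute values) whereas the paper keeps $\chi_{A_i^-}\leq 0$ and writes the corresponding conclusion as $\arg(-\int_L\chi_{A_i^-}\Omega)$; your ``main obstacle'' paragraph on the global coherence of the $\pm$-type is exactly the content the paper establishes earlier in the positivity-condition section and invokes implicitly here.
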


\begin{proof}
We will only sketch the modifications. The positivity conditition enters through the monotonicity claim \ref{Monotonicityclaim}. Once we drop this, we would allow holomophic discs $u:\Sigma\to X$ that sweep out parts of $L\cup L'$ with the reversed orientation. For such curves, claim \ref{Monotonicityclaim} is modified to
\begin{claim}\label{Montonicityclaim2}
Clockwise along $\partial \Sigma$, the function $\text{Re }F$ is  \emph{decreasing} on the $L$ boundary portion, but \emph{increasing} on the $L'=L_1\cup L_2$ boundary portion. In particular,
\[
0=\text{Re }F(q)\geq \text{Re }F\geq  \text{Re } F(p).
\]
More intrinsically, the real part of the complex volume forms on the moduli spaces at such $u:\Sigma\to X$ are  \emph{nonpositive}.	
\end{claim}

The corresponding claim \ref{Imagecurveclaim} is modified to 
\begin{claim}\label{Imagecurveclaim2}
The image $F(\Sigma)\subset \C$ lies \emph{below} its $L'$ boundary portion, and \emph{above} its $L$ boundary portion.
\end{claim}

At almost every point on $L\cup L'$, only automatically transverse holomophic curves pass through it, since by assumption the boundary evaluation of the other holomorphic curves is contained in some subset of $L\cup L'$ with Hausdorff dimension $\leq n-1$. According to the $\pm$ types of the automatically transverse curves, we decompose the weighted characteristic functions $\chi_{A_i}$ on $L$ into its positive and negative parts $\chi_{A_i^+}\geq 0$ and $\chi_{A_i^-}\leq 0$.


 Then upon integration over the moduli space,
\[
\begin{split}
& \text{Re}\int_L \chi_{A_i^{+}}\Omega =\text{Re}\int_{\mathcal{M}} \tilde{\Omega}_{L_i}^+ \geq 0, \quad \text{Re}\int_L \chi_{A_i^{-}}\Omega =\text{Re}\int_{\mathcal{M}} \tilde{\Omega}_{L_i}^- \leq 0,
\\
& \text{Im}\int_L \chi_{A_2^{+}}\Omega \leq \text{Im}\int_{\mathcal{M}} \tilde{\Omega}_{L_2}^+,   \quad  \text{Im}\int_L \chi_{A_1^{+}}\Omega \geq \text{Im}\int_{\mathcal{M}} \tilde{\Omega}_{L_1}^+,
\\
& \text{Im}\int_L \chi_{A_2^{-}}\Omega \geq \text{Im}\int_{\mathcal{M}} \tilde{\Omega}_{L_2}^-,   \quad  \text{Im}\int_L \chi_{A_1^{-}}\Omega \leq \text{Im}\int_{\mathcal{M}} \tilde{\Omega}_{L_1}^-.
\end{split}
\]
In particular
\[
\arg \int_L \chi_{A_2^{+}}\Omega\leq 
\hat{\theta}_2^+ , \quad \arg (- \int_L \chi_{A_2^{-}} \Omega)\leq 
\hat{\theta}_2^-,
\]
and 
\[
\arg \int_L \chi_{A_1^{+}}\Omega\geq 
\hat{\theta}_1^+ , \quad \arg (- \int_L \chi_{A_1^{-}} \Omega)\geq 
\hat{\theta}_1^-.
\]
Now $\chi_{A_i^+}\geq 0$ and $\chi_{A_i^-}\leq 0$, and the special case where $\chi_{A_i^-}=0$ almost everywhere is already covered by the positivity condition.
The Theorem follows.
\end{proof}

\begin{rmk}
In the special case where $L_1,L_2$ are special Lagrangians of phase $\hat{\theta}_1,\hat{\theta}_2$, then clearly $\hat{\theta}_i^\pm=\hat{\theta}_i$. The conclusion in this case can be deduced easily from Floer degree considerations at Lagrangian intersections, similar to section \ref{ThomasYauevidence}.
\end{rmk}

\begin{rmk}
Recall $\hat{\theta}_i= \arg \int_{L_i} \Omega$. The caveat is that $\max\{ \hat{\theta}_i^\pm \}$ and $\min\{ \hat{\theta}_i^\pm  \}$ do not quite control $\hat{\theta}_i$, so the above Theorem \ref{Floertheoreticobstructionnopositivity} \emph{does not} imply the phase angle inequality (\ref{phaseobstruction}). In this sense the conclusion of Theorem \ref{Floertheoreticobstructionnopositivity} is weaker than Theorem \ref{Floertheoreticobstruction1}, illustrating the power of the positivity condition.

 On the other hand, we will heuristically argue in section \ref{TowardsBridgeland} that in the Thomas-Yau-Joyce picture, once we assume the existence of Joyce's Bridgeland stability condition, 
the phase angle inequality (\ref{phaseobstruction}) can be deduced without the positivity condition in Theorem \ref{Floertheoreticobstruction1}.

We think it is very interesting to either prove the positivity condition as a consequence of the other assumptions, or to find another Floer theoretic argument for (\ref{phaseobstruction}) that requires neither the positivity condition, nor the a priori knowledge of special Lagrangian representatives.

\end{rmk}

\subsection{Towards a Bridgeland stability condition}\label{TowardsBridgeland}

\subsubsection*{Joyce's proposal and Bridgeland stability condition revisited}

We now seek a better appreciation of the Bridgeland stability aspect of Joyce's proposal (\cf section \ref{ThomasYaupicture}). 
To specify the Bridgeland stability condition on $D^b Fuk(X)$, we need the central charge $Z(L)=\int_L\Omega$, and all the subcategories $\mathcal{P}( \phi_0<\phi <\phi_1  )$ for any interval $(\phi_0, \phi_1)$. Joyce's proposal \cite{Joyceconj} strongly suggests two claims:
\begin{itemize}
\item  If an unobstructed Lagrangian brane $L$ has phase angle function $\theta\in (\pi \phi_0, \pi \phi_1) $, then $L$ defines an object in the subcategory $\mathcal{P}( \phi_0<\phi <\phi_1  )$ generated by all stable objects with $\phi_0< \phi<\phi_1$. This claim is because the infinite time limit of the LMCF should provide the stable objects which generate $L$, and
by the monotonicity of the Lagrangian angle (\cf section \ref{LMCF} below), we  can predict a priori $\theta\in (\pi \phi_0, \pi \phi_1) $ for all these stable objects.

\item  Any object in  $\mathcal{P}( \phi_0<\phi <\phi_1  )$ can be generated by  unobstructed Lagrangian branes with phase angle function $\theta\in (\pi \phi_0, \pi \phi_1) $.

\end{itemize}

Thus one can simply \emph{define}  $\mathcal{P}( \phi_0<\phi <\phi_1  )$  to be the subcategory of the  derived Fukaya category (suitably enlarged to allow for immersed and singular objects) generated by all unobstructed Lagrangian branes $L$ with $\theta\in (\pi \phi_0, \pi \phi_1) $, and then reconstruct  $\mathcal{P}(\phi')$ as the intersection of all $\mathcal{P}(\phi_0<\phi<\phi_1)$ for all $\phi_0<\phi'<\phi_1$. Such a definition would make the Thomas-Yau proposal nearly tautological, and the difficult part of Joyce's proposal is to verify this indeed defines a Bridgeland stability. In fact, by the discussions in section \ref{ThomasYaupicture}, \ref{ThomasYauevidence}, the only formidable part is the \textbf{Harder-Narasimhan decomposition},  for which Joyce's LMCF provides the conjectural mechanism.

There are two primary applications of the Thomas-Yau-Joyce proposal to keep in mind: 
\begin{itemize}
\item  
The existence of special Lagrangians is important for geometric measure theory. A definition of stability conditions along the above lines is too tautological to be useful.

\item Defining special Lagrangian DT invariants is important for mirror symmetry (\cf section \ref{Mirrorsymmetry}). Knowing the existence of a Bridgeland stability condition on $D^b Fuk(X)$ is of great theoretic significance in view of Kontsevich and Soibelman's framework \cite{KontsevichSoibelman}\cite{KS2}, but without a more Floer theoretic characterization it would lack computability.

\end{itemize}

Thus even if Joyce's conjectures can be proved along the lines in \cite{Joyceconj}, it is still desirable to have a Floer theoretic characterization of the Bridgeland stability condition. We first revisit  Theorem \ref{Floertheoreticobstruction1} in the light of the Thomas-Yau-Joyce conjectural picture, but without assuming the automatic transversality and positivity condition.

\begin{conj}\label{Floertheoreticobstructionconj}
Suppose we have almost calibrated exact Lagrangian objects $L_1, L_2, L$, fitting into a \textbf{distinguished triangle} $L_1\to L\to L_2\to L_1[1]$, and satisfies the destabilizing condition
\[
\hat{\theta}_1= \arg \int_{L_1} \Omega > \hat{\theta}_2= \arg \int_{L_2} \Omega. 
\] 
Then the phase angle inequality (\ref{phaseobstruction}) follows. In particular, the derived category class of $L$ admits no special Lagrangian representative.	
\end{conj}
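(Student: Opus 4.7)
The plan is to adapt the argument of Theorem \ref{Floertheoreticobstruction1} by systematically weakening its two hypotheses. First, I would construct the bordism current $\mathcal{C}$ with $\partial \mathcal{C}=L-L_1-L_2$ via virtual perturbation theory (Kuranishi structures, or Pardon-style virtual counts) applied to the $(n-1)$-dimensional moduli spaces of perturbed holomorphic polygons associated to the generators $\alpha\in CF^0(L,L')$ and $\beta\in CF^0(L',L)$ witnessing the isomorphism $L\simeq L'$. The identities $\int_L\Omega=\int_{\mathcal{M}}\tilde{\Omega}_L$ and the analogous identities for $L_1,L_2$ should persist in a virtual-count sense, since they only depend on the closedness of $\Omega$ and the boundary structure $\partial \mathcal{C}=L-L_1-L_2$. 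The holomorphic function $F$ with $dF=\Omega(\cdot,v_1,\ldots,v_{n-1})$, defined on regular points of the moduli space, and the Monotonicity Claim \ref{Monotonicityclaim} (which depends only on $\mathrm{Re}\,\Omega>0$ on almost calibrated Lagrangians) both survive.

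The central difficulty is the positivity condition: without it, the weighted characteristic functions $\chi_{A_i}$ on $L$ are signed, so the pointwise identifications $\chi_{A_1}+\chi_{A_2}=1$ with $\chi_{A_i}\geq 0$ break, and the phase-angle inequalities of the final moduli integral can a priori fail by sign cancellation. My plan to address this is in two stages. First, use Theorem \ref{Floertheoreticobstructionnopositivity} to extract the signed bounds on $\hat{\theta}_i^\pm$ as a stepping stone; these give genuine information on the image curve even in the presence of sign cancellation. Second, combine this with a categorical input: under the working hypothesis that Joyce's proposed Bridgeland stability exists on $D^b Fuk(X)$, the Harder-Narasimhan decomposition of $L$ must, by the distinguished triangle $L_1\to L\to L_2\to L_1[1]$ together with the strict ordering $\hat{\theta}_1>\hat{\theta}_2$, contain semistable factors of phase at least $\hat{\theta}_1$ and at most $\hat{\theta}_2$; by Joyce's identification of the subcategories $\mathcal{P}(\phi_0<\phi<\phi_1)$ with Lagrangians whose angle function lies in $(\pi\phi_0,\pi\phi_1)$, this forces $\sup_L\theta_L\geq\hat{\theta}_1$ and $\inf_L\theta_L\leq\hat{\theta}_2$.

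A purely Floer-theoretic alternative would attempt to prove the positivity condition itself from almost calibration and the assumption that $L$ is special Lagrangian: one would argue that at a constant-phase representative $L$, the geometric rigidity of special Lagrangians forces the bounding cochains and local systems for $L_1,L_2$ to be chosen so that no two automatically transverse curves pass through a generic point of $L$ with opposite signs. This route would rely on choosing generators $\alpha,\beta$ canonically (perhaps via harmonic representatives with respect to a metric adapted to the special Lagrangian structure) and using unique continuation for the Cauchy-Riemann equation to rule out cancellation.

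The main obstacle is precisely the positivity condition, which is geometric rather than homological and sensitive to the choice of representatives $\alpha,\beta$; all known routes appear to require either a genuine proof of positivity under some reasonable genericity, or the independent input of Joyce's Bridgeland stability. In this sense the conjecture is a precise formulation of one face of the Thomas-Yau philosophy, not a straightforward corollary of it, and a proof that avoids \emph{both} positivity and Bridgeland stability would represent a substantial new Floer-theoretic insight.
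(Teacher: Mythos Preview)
Your proposal correctly identifies that the key input is Joyce's conjectural Bridgeland stability, but the paper's heuristic argument is both simpler and structurally different from what you outline.

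First, the extensive discussion of virtual perturbation theory, bordism currents, and Theorem \ref{Floertheoreticobstructionnopositivity} as a stepping stone is off-track: the paper's heuristic proof does not use moduli integrals or the bordism current at all. It is purely categorical, combined with the elementary Floer degree formula (\ref{Floerdegree}).

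Second, your categorical step has the right flavour but the wrong object: you propose to take the Harder--Narasimhan decomposition of $L$ and argue that the distinguished triangle forces its extremal HN phases to satisfy the required bounds. This claim is not immediate and would itself need justification. The paper instead takes the HN decomposition of $L_1$, extracts its \emph{top} semistable piece $L_1'$ (whose phase $\pi\phi_1$ automatically satisfies $\pi\phi_1\geq\hat{\theta}_1$ by the central charge identity), and then forms a \emph{new} distinguished triangle $L_1'\to L\to L''\to L_1'[1]$ by combining triangles. The contradiction is obtained as follows: if $\sup_L\theta_L<\hat{\theta}_1\leq\pi\phi_1$, and $L_1'$ is represented (via the conjectural description of $\mathcal{P}(\phi_1)$) by a Lagrangian with angle arbitrarily close to $\pi\phi_1$, then the degree formula forces $CF^0(L_1',L)=0$, hence $HF^0(L_1',L)=0$, so the triangle splits as $L''\simeq L\oplus L_1'[1]$. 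But $L''$ is almost calibrated, so lies in $\mathcal{P}(-\tfrac{1}{2}<\phi<\tfrac{1}{2})$, forcing $L_1'[1]\in\mathcal{P}(-\tfrac{1}{2}<\phi<\tfrac{1}{2})$; yet $L_1'[1]\in\mathcal{P}(\phi_1+1)$ with $\phi_1+1>\tfrac{1}{2}$, contradiction. The bound $\inf_L\theta_L\leq\hat{\theta}_2$ comes from the symmetric argument using the HN decomposition of $L_2$.

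So the essential mechanism you are missing is: HN-decompose $L_1$ (not $L$), peel off the top piece, and use the Floer degree vanishing to split a triangle inside the almost calibrated heart, which is then incompatible with the shift.
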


\begin{proof}
(Heuristic) Consider the Harder-Narasimhan decomposition (\ref{HarderNarasimhan}) of $L_1$:
\[
0=\mathcal{E}_0 \to \mathcal{E}_1\to \ldots \to \mathcal{E}_N=L_1,
\]
fitting into the distinguished triangles 
\[
\mathcal{E}_{i-1}\to \mathcal{E}_{i}\to L_{i}'\to \mathcal{E}_{i-1}[1],
\]
where $L_{i}'$ represents an object in $\mathcal{P}(\phi_i)$, with $\phi_1>\phi_2\ldots > \phi_N$. Since by assumption $L_1$ is almost calibrated, we have $L_1\in \mathcal{P}(-\frac{1}{2}<\phi<\frac{1}{2})$, hence $-\frac{1}{2}<\phi_i<\frac{1}{2}$. Since the central charges satisfy
\[
Z(L_1)= \sum_1^N Z(L_i'),\quad \arg Z(L_i)=\pi \phi_i,
\]
we must have $\hat{\theta}_1\leq \pi \phi_1$. The conjectural description of the Bridgeland stability condition requires that $L_i'$ has a special Lagrangian representative with constant Lagrangian phase $\pi \phi_i$. A weaker requirement which suffices for us is that there exists a representative $L_i'$ with Lagrangian angle function $\theta_{L_i'}$ satisfying the oscillation bound $|\theta_{L_i'}-\pi \phi_i|<\epsilon$ for any given $\epsilon>0$. It is expected that this flexibility allows one to assume sufficient smoothness on the Lagrangian.

By combining the distinguished triangles, we obtain a new distinguished triangle 
\[
L_1'\to L\to L''\to L_1'[1].
\]
Here $L_1', L, L''$ are all almost calibrated. Suppose for contradiction that $\sup_L \theta_L< \hat{\theta}_1$. Then $\sup_L \theta_L< \pi \phi_1$, and we can arrange $\sup_L\theta_L<\inf_{L_1'} \theta_{L_1'}$. 
The Floer degree formula (\ref{Floerdegree}) implies $CF^0(L_1', L)=0 $, and in particular $HF^0(L_1', L)=0$. The distinguished triangle splits: $L''\simeq L\oplus L_1'[1]$. Since $L''$ is almost calibrated, it lies in $\mathcal{P}(-\frac{1}{2}<\phi<\frac{1}{2})$, and so must $L_1'[1]$. But $L_1'\in \mathcal{P}(\phi_1)$ implies $L_1'[1]\in \mathcal{P}(\phi_1+1)$. Since $\phi_1+1> \frac{1}{2}$, we know $\mathcal{P}(\phi_1+1)\cap \mathcal{P}( -\frac{1}{2}<\phi<\frac{1}{2})=\emptyset$, contradiction. This proves $\sup_L \theta_L \geq \hat{\theta}_1$, subject to the conjectural existence of the Bridgeland stability condition.

A very similar argument, beginning with the Harder-Narasimhan decomposition of $L_2$, would show $\inf_L \theta_L\leq \hat{\theta}_2$.
\end{proof}

\begin{rmk}
In the above argument, once we achieved $CF^0(L_1', L)=0$, there is a different way to proceed. We reinterpret the distinguished triangle $L_1'\to L\to L''\to L_1'[1]$ as an isomorphism in $D^bFuk(X)$ between $L$ and a twisted complex built from $L_1'\cup L''$. This would give rise to a bordism current constructed from the universal family of holomorphic curves, with $\partial \mathcal{C}=L-L_1'-L''$. However, $CF^0(L_1',L)=0$ implies that no holomorphic curve contributing to $\mathcal{C}$ passes from $L_1'$ to $L$ in the clockwise direction of $\partial \Sigma$. The twisted complex structure on $L_1'\cup L''$ forbids the passage from $L_1'$ to $L''$ in the clockwise direction of $\partial \Sigma$. Thus if the holomorphic curve has any boundary portion on $L_1'$, its entire boundary would lie on $L_1'$, which cannot happen in the almost calibrated setting.

\end{rmk}

This motivates the following definition, whose precise meaning depends on the conjectural enlargement of the derived Fukaya category by incorporating singular Lagrangian objects: 

\begin{Def}\label{ThomasYausemistability}
Let $L$ be an almost calibrated exact Lagrangian brane representing a class in the (suitably enlarged) derived Fukaya category. Suppose for any almost calibrated exact Lagrangian objects $L_1,L_2$ fitting into a distinguished triangle $L_1\to L\to L_2\to L_1[1]$, we always have 
\[
\hat{\theta}_1=\arg \int_{L_1} \Omega \leq \hat{\theta}_2=\int_{L_2} \Omega, \quad \text{resp. } \hat{\theta}_1=\arg \int_{L_1} \Omega < \hat{\theta}_2=\int_{L_2} \Omega,
\] 
then we say $L$ is \textbf{Thomas-Yau semistable} (resp. \textbf{strictly stable}). If $L$ fails to be Thomas-Yau semistable, we say it is Thomas-Yau unstable. 
\end{Def}

We have attributed this definition to Thomas-Yau \cite{Thomas}\cite{ThomasYau}, since it is in their spirit that stability conditions should be Floer theoretic conditions to be tested on the distinguished triangles, and that one should restrict attention only to almost calibrated Lagrangians. We now argue that if Joyce's conjectural Bridgeland stability exists with its expected properties, then its semistable objects should agree with Thomas-Yau semistability.

\begin{conj}
An almost calibrated exact Lagrangian brane $L$ defines a semistable object in $D^bFuk(X)$ under Joyce's Bridgeland stability, if and only if it is Thomas-Yau semistable.
\end{conj}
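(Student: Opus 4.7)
The plan is to work inside Joyce's conjecturally existing Bridgeland stability condition on the (enlarged) derived Fukaya category, and use the heart $\mathcal{H} := \mathcal{P}(-\tfrac{1}{2} < \phi \leq \tfrac{1}{2})$, which is an abelian category that by hypothesis contains every almost calibrated exact Lagrangian brane (since $\phi = \theta/\pi \in (-1/2,1/2)$ for such an object). The key formal ingredient is that a distinguished triangle all of whose vertices lie in the heart of a $t$-structure corresponds to a short exact sequence in that abelian category, and that the central charge $Z(L) = \int_L \Omega$ is additive on such sequences. Since both $L_1, L_2$ in the Thomas-Yau definition have phases in $(-\pi/2, \pi/2)$, their central charges have positive real parts, so the argument of $Z(L) = Z(L_1)+Z(L_2)$ lies between $\hat\theta_1$ and $\hat\theta_2$; in particular $\hat\theta_1 \leq \hat\theta_L$, $\hat\theta_L \leq \hat\theta_2$ and $\hat\theta_1 \leq \hat\theta_2$ are mutually equivalent.

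For the forward direction, suppose $L$ is Joyce-Bridgeland semistable, so $L \in \mathcal{P}(\phi_L)$ for some $\phi_L \in (-\tfrac{1}{2}, \tfrac{1}{2})$. Given any distinguished triangle $L_1 \to L \to L_2 \to L_1[1]$ with $L_1, L_2$ almost calibrated, all three objects lie in $\mathcal{H}$ and the triangle is the distinguished triangle associated to a short exact sequence $0 \to L_1 \to L \to L_2 \to 0$ in $\mathcal{H}$. Bridgeland semistability applied to the subobject $L_1 \hookrightarrow L$ in $\mathcal{H}$ gives $\phi(L_1) \leq \phi_L$, which by central charge additivity is equivalent to $\hat\theta_1 \leq \hat\theta_2$. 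Hence $L$ is Thomas-Yau semistable.

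For the converse, I would argue by contrapositive: suppose $L$ fails to be Bridgeland semistable inside $\mathcal{H}$. Harder-Narasimhan inside the abelian heart produces a short exact sequence $0 \to A \to L \to B \to 0$ in $\mathcal{H}$ with $\phi(A) > \phi_L > \phi(B)$, hence a distinguished triangle $A \to L \to B \to A[1]$ with $\arg Z(A) > \arg Z(B)$. Setting $L_1 := A$ and $L_2 := B$ would exhibit a Thomas-Yau destabilizing triangle, provided $A$ and $B$ can actually be realized as almost calibrated exact Lagrangian branes in the sense of Definition \ref{ThomasYausemistability} (possibly singular or immersed, with bounding cochain data).

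The main obstacle is precisely this last representability step. Thomas-Yau semistability as formulated in Definition \ref{ThomasYausemistability} is tested only against distinguished triangles whose outer vertices are geometric Lagrangian branes, whereas Bridgeland semistability is tested against all categorical subobjects in $\mathcal{H}$. The equivalence therefore hinges on the claim, built into Joyce's proposal (recalled in the second bullet of the discussion of section \ref{ThomasYaupicture} and in the representability assumption of section \ref{TowardsBridgeland}), that every object of $\mathcal{H} = \mathcal{P}(-\tfrac{1}{2} < \phi \leq \tfrac{1}{2})$ is isomorphic in the enlarged derived Fukaya category to an almost calibrated exact Lagrangian brane equipped with appropriate (possibly singular) brane structure. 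Granting this representability, the two notions of semistability coincide; without it, Thomas-Yau semistability is a priori a formally weaker condition, since fewer potential destabilizers are being tested. Making this representability rigorous is intertwined with the conjectural enlargement of the Fukaya category to admit singular and immersed objects, and ultimately with Joyce's LMCF mechanism for producing the Harder-Narasimhan factors.
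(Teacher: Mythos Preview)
Your proposal is correct given the same conjectural framework, and the converse direction matches the paper's own (heuristic) argument almost verbatim: both pass to the contrapositive, invoke the Harder--Narasimhan filtration in the heart, and flag exactly the same representability gap, namely that the HN pieces must be realized as almost calibrated Lagrangian branes before they count as Thomas--Yau destabilizers.

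The forward direction, however, takes a genuinely different route from the paper. The paper argues: if $L$ is Joyce--Bridgeland semistable, then Joyce's picture supplies a special Lagrangian representative (or a representative with arbitrarily small phase oscillation), and then Conjecture~\ref{Floertheoreticobstructionconj} applied to such a representative shows that no destabilizing triangle exists. Your argument bypasses this detour entirely: you stay inside the abelian heart $\mathcal{P}(-\tfrac{1}{2} < \phi \leq \tfrac{1}{2})$, recognize that a distinguished triangle with all three vertices in the heart is a short exact sequence there, and apply the slope characterization of semistability directly to the subobject $L_1 \hookrightarrow L$. This is cleaner and uses strictly less of Joyce's program, since it never invokes the existence of special Lagrangian representatives nor the Floer-theoretic phase obstruction. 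On the other hand, the paper's route makes the link between categorical semistability and the geometric phase inequality explicit, which fits the surrounding narrative about how Floer-theoretic obstructions witness the Bridgeland condition; your purely categorical argument, while more efficient, does not illuminate that connection.
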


\begin{proof}
(Heuristic)
If the derived category class of $L$ is semistable in Joyce's sense, then we can choose an optimal representative which is a special Lagrangian, or at least has phase oscillation arbitrarily small. By conjecture \ref{Floertheoreticobstructionconj}, we cannot have any destabilizing distinguished triangle, \ie $L$ is Thomas-Yau semistable.

Conversely, if $L$ is not a semistable object in Joyce's sense, then from its Harder-Narasimhan decomposition we can 
produce a destabilizing distinguished triangle $L_1\to L\to L_2\to L_1[1]$, with almost calibrated $L_1,L_2$, which violates Thomas-Yau semistability.
\end{proof}

Having discussed the semistable objects, it is interesting to see what Joyce's LMCF picture suggests about the Harder-Narasimhan decomposition.

\begin{conj}
Assume further that the K\"ahler metric on $X$ is Calabi-Yau.
Suppose $L$ is an almost calibrated exact Lagrangian brane in $D^b Fuk(X)$, with Harder-Narasimhan decomposition (\ref{HarderNarasimhan})
\[
0=\mathcal{E}_0 \to \mathcal{E}_1\to \ldots \to \mathcal{E}_N=L,
\]
fitting into the distinguished triangles 
\[
\mathcal{E}_{i-1}\to \mathcal{E}_{i}\to L_{i}\to \mathcal{E}_{i-1}[1],
\]
such that $L_i\in \mathcal{P}(\phi_i)$ with $\phi_1>\ldots >\phi_N$. We have $\hat{\theta}_i=\pi \phi_i=\arg \int_{L_i}\Omega$.  Then the phase angle inequality (\ref{phaseobstruction2}) and the volume lower bound (\ref{volumeobstruction2}) hold. 
\end{conj}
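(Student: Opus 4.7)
The plan is to reduce this conjecture to Theorem \ref{Floertheoreticobstruction2} by invoking Joyce's conjectural Bridgeland stability to produce nice geometric representatives for each semistable factor $L_i$. Specifically, since $L_i\in\mathcal{P}(\phi_i)$ is semistable and $L$ is almost calibrated, the Harder-Narasimhan factors necessarily satisfy $\phi_i\in(-\tfrac{1}{2},\tfrac{1}{2})$, so $L_i$ is itself almost calibrated. Joyce's program predicts that each $L_i$ admits special Lagrangian representatives (in the enlarged $D^bFuk(X)$), or at least, for any $\epsilon>0$, a smooth unobstructed Lagrangian representative $L_i'$ with Lagrangian angle function satisfying $|\theta_{L_i'}-\pi\phi_i|<\epsilon$. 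First I would fix such representatives $L_1',\ldots,L_N'$, and assemble them into a twisted complex $L''$ using the morphisms in $HF^1(L_{i}',\mathcal{E}_{i-1}')$ provided by the Harder-Narasimhan data. By construction $L\simeq L''$ in $D^bFuk(X)$, and the destabilizing condition $\phi_1>\phi_2>\cdots>\phi_N$ is built in.

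Granted the automatic transversality and positivity hypotheses for the bordism current $\mathcal{C}$ between $L$ and $L''$ produced from this twisted complex, Theorem \ref{Floertheoreticobstruction2} immediately yields
\[
\sup_L\theta_L\geq \pi\phi_1-O(\epsilon),\qquad \inf_L\theta_L\leq \pi\phi_N+O(\epsilon),\qquad \mathrm{Vol}_J(L)\geq \sum_1^N\bigl|\textstyle\int_{L_i'}\Omega\bigr|.
\]
Since $\epsilon>0$ is arbitrary and $\int_{L_i'}\Omega\to\int_{L_i}\Omega$ (both represent the same class in the Grothendieck group, and the geometric representatives are chosen to carry the same homology class), passing to the limit $\epsilon\to 0$ gives exactly (\ref{phaseobstruction2}) and (\ref{volumeobstruction2}).

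For the phase inequalities alone, a cleaner iterative argument avoids producing all $N$ representatives simultaneously. Apply the heuristic of Conjecture \ref{Floertheoreticobstructionconj} to the first step of the filtration, viewed as the distinguished triangle $L_1\to L\to L''\to L_1[1]$ with $L''\simeq\mathcal{E}_N/\mathcal{E}_1$: choose $L_1'$ representing $L_1$ with phase oscillation $<\epsilon$ about $\pi\phi_1$, and argue that $\sup_L\theta_L<\pi\phi_1-\epsilon$ would force $HF^0(L_1',L)=0$ by the Floer degree formula (\ref{Floerdegree}), whence the triangle splits as $L''\simeq L\oplus L_1'[1]$; but $L_1'[1]\in\mathcal{P}(\phi_1+1)$ cannot lie in $\mathcal{P}(-\tfrac{1}{2}<\phi<\tfrac{1}{2})$, contradicting almost calibratedness of $L''$. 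Symmetrically, dualizing and using the last step of the filtration gives $\inf_L\theta_L\leq\pi\phi_N$.

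The main obstacles are threefold. First, the very existence of Joyce's Bridgeland stability on the enlarged $D^bFuk(X)$, together with the Harder-Narasimhan decomposition producing nice geometric representatives, is itself conjectural and is the crux of Joyce's program. Second, the automatic transversality and positivity condition for the bordism current $\mathcal{C}$ attached to the twisted complex must be established; this is delicate because $\mathcal{C}$ now involves many corners from the bounding cochain data encoding all the $b_{i,j}$, and the positivity condition depends on careful gauge choices. Third, the approximation/limit argument requires Floer-theoretic continuity: one must know that the bordism current, and hence the moduli-space integrals computing $\int_L\Omega$ and $\mathrm{Vol}_J(L)$, vary continuously as $L_i'\to L_i$, which is exactly the kind of geometric-measure-theoretic robustness flagged in section \ref{Floertheoryweakregularity}. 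The cleanest heuristic resolution is the iterative one, which isolates the key issue as the comparison between Joyce's stability filtration and Floer degree obstructions, with the volume bound following as a corollary once all intermediate phase inequalities are known.
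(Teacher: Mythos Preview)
Your proposal is heuristically reasonable but takes a genuinely different route from the paper's own heuristic. The paper argues entirely via Joyce's LMCF: starting from $L$, the flow $(L_t)$ conjecturally converges at infinite time to the union $L_1+\cdots+L_N$ realizing the Harder--Narasimhan decomposition; the heat equation on the Lagrangian angle makes $\sup_{L_t}\theta$ nonincreasing and $\inf_{L_t}\theta$ nondecreasing, which yields (\ref{phaseobstruction2}) by comparing $t=0$ with $t=\infty$; and since in the Calabi--Yau case LMCF is honest mean curvature flow, the volume decreases, giving $\text{Vol}(L)\geq\sum\text{Vol}(L_i)=\sum|\int_{L_i}\Omega|$, which together with $\text{Vol}(L)=\text{Vol}_J(L)$ in the Calabi--Yau case is (\ref{volumeobstruction2}). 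No bordism currents, no automatic transversality, no positivity condition enter.

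Your approach instead invokes the existence of good representatives for each $L_i$ and then reduces to Theorem \ref{Floertheoreticobstruction2}. This buys you independence from the flow analysis, and in fact your argument for (\ref{volumeobstruction2}) does not need the Calabi--Yau hypothesis at all (Theorem \ref{Floertheoreticobstruction2} already gives the $J$-volume bound in the almost Calabi--Yau setting) --- so you have implicitly noticed that the Calabi--Yau assumption in the statement is an artifact of the LMCF proof strategy rather than of the conclusion. The cost is that you must assume automatic transversality and the positivity condition for the bordism current between $L$ and the twisted complex $L''$, hypotheses the paper's LMCF argument avoids. Your iterative argument for the phase inequality is essentially a rerun of the heuristic for Conjecture \ref{Floertheoreticobstructionconj}, which is fine but again sidesteps the LMCF monotonicity that the paper actually uses here.
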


\begin{proof}
(Heuristic)
In Joyce's conjectural program, the Harder-Narasimhan decomposition is constructed by running the LMCF $(L_t)$ starting from the unobstructed Lagrangian $L$, and take the infinite time limit (\ref{infinitetimelimit}) to obtain the limiting special Lagrangians $L_1,\ldots, L_N$ with angles $\hat{\theta}_1>\hat{\theta}_2>\ldots >\hat{\theta}_N$, assuming $L_1,\ldots L_N$ have enough regularity to be admitted as objects of $D^b Fuk(X)$. It is expected that $L_1,\ldots L_N$ generate $L$ in $D^bFuk(X)$ via (\ref{HarderNarasimhan}).

A basic feature of LMCF in Calabi-Yau manifolds is that the Lagrangian angle satisfies a heat equation (\cf section \ref{LMCF}), so $\sup_{L_t} \theta$ is nonincreasing in time (resp. $\inf_{L_t} \theta$ is nondecreasing). Comparing the initial time with the infinite time limit,
this suggests $\sup \theta_L\geq \hat{\theta}_1$ and $\inf \theta_L\leq \hat{\theta}_N$.

Morever, if the ambient metric is Calabi-Yau, then LMCF is a special case of mean curvature flow, so the volume functional decreases in time. This monotonicity is not affected by the surgeries in Joyce's LMCF. Thus 
\[
\text{Vol}(L)\geq \sum_1^N \text{Vol}(L_i)= \sum_1^N |Z(L_i)|=\sum_1^N |\int_{L_i}\Omega|.
\]
Under the Calabi-Yau metric, the volume of the Lagrangian $L$ agrees with the $J$-volume:
\[
\text{Vol}(L)= \int_L e^{-i\theta}\Omega=\int_L |\Omega|.
\]
so (\ref{volumeobstruction2}) follows.
\end{proof}

\begin{rmk}
Analogously in the context of HYM connections, if a holomorphic bundle $E$ is unstable, then its Harder-Narasimhan decomposition provides a lower bound on the Yang-Mills energy of any Chern connection on $E$ compatible with $\bar{\partial}_E$, which improves the topological energy bound. This type of phenomenon is common in K\"ahler geometry, for instance it also happens in the context of K-stability. These topics are covered in the introduction of \cite{Donaldsonteststability}. 
\end{rmk}

In conclusion, Joyce's conjectural picture suggests that if a Lagrangian object is unstable, then it satisfies  certain angle and volume inequalities which quantitatively forbids it to be a special Lagrangian, and these obstructions detect the features of the Harder-Narasimhan decomposition. This should be compared with Theorem \ref{Floertheoreticobstruction2}, which contains the main features of the obstructions, but makes 
no a priori reference to the LMCF or special Lagrangian representatives. The cost is that Theorem \ref{Floertheoreticobstruction1} and \ref{Floertheoreticobstruction2} require the positivity condition as an extra hypothesis.

\subsubsection*{Almost calibrated case: categorical predictions of the Joyce picture}

A complete characterization of Bridgeland stability conditions on a triangulated category, known since the inception of the subject \cite[Prop 5.3]{Bridgeland}, is that  $\mathcal{P}(\phi_0< \phi\leq \phi_0+1)$ defines an abelian subcategory (`the heart of a bounded $t$-structure'), and the central charge function on this abelian subcategory satisfies the Harder-Narasimhan condition.

In the Thomas-Yau-Joyce picture, $\mathcal{P}(-\frac{1}{2}< \phi\leq \frac{1}{2})$ essentially is the same as the subcategory of almost calibrated Lagrangians, if we assume there is no special Lagrangian of phase $\frac{\pi}{2}$, which holds as long as the discrete set of values of $\Omega$-periods on $H_n(X,\Z)$ miss the phase angle $\frac{\pi}{2}$. Then this picture would predict almost calibrated Lagrangians to form an \textbf{abelian category}, which morever \textbf{generate} the entire derived Fukaya category using the shift operator. Morally, this is asserting that there are sufficiently many almost calibrated Lagrangians, which is evidently very deep since constructing geometric Lagrangian objects is known to be a difficult problem in symplectic topology. Another deep prediction of the existence of Bridgeland stability condition \cite[conjecture 3.6]{Joyceconj}, is that the derived Fukaya category $D^b Fuk(X)$ (after incorporating immersed and singular Lagrangians with local systems) is automatically \textbf{idempotent complete}, so agrees with $D^\pi Fuk(X)$. These predictions, if correct, are very interesting structural results on the Fukaya category, but at the moment they are controversial.

In Chapter \ref{Variational}, we will set up a variational framework to find special Lagrangian representatives of $D^bFuk(X)$ classes under the assumption of Thomas-Yau semistability. By restricting only to the subcategory of almost calibrated Lagrangians, our program evades these difficult structural claims on the entire $D^bFuk(X)$. It would thus not have the same strength as the Joyce program, nor is it subject to the same falsification criteria.

\subsection{Moduli integral formula for the Solomon functional}\label{ModuliintegralSolomonsection}

\subsubsection{Moduli integral formula for the Solomon functional}

Assuming automatic transversality,
we can rewrite the Solomon functional (\ref{Solomonfunctionalextension}) as a moduli space integral  in terms of the notations introduced in section \ref{Floertheoreticobstructions}. Let $u: \Sigma\to X$ be a holomorphic polygon, with first order deformation vector fields $v_1,\ldots v_{n-1}$, so we can define a holomorphic function $F$ via (\ref{holoF}). In clockwise order on $\partial \Sigma$, we encounter the degree one intersections on $L$, an intersection $p\in CF^0(L,L_0)$, the degree one self intersections on $L_0$, and an intersection $q\in CF^0(L_0,L)$. As before, we fix the additive constant by $F(q)=0$.

In the following calculation, we will use the complex orientation on $\Sigma$, and the \emph{counterclockwise orientation} on $\partial \Sigma$.
The Solomon functional contains a term
$
-\int_{\mathcal{C}} \lambda\wedge \text{Im}(e^{-i\hat{\theta}}\Omega).
$
Now
$
-\int_{\mathcal{C}} \lambda\wedge \Omega
$
can be expressed as an integral of the following $(n-1)$-form over the $(n-1)$-dimensional moduli spaces $\mathcal{M}$ of holomorphic curves: 
\[
\int_{\Sigma} \lambda\wedge \Omega(\cdot, v_1,\ldots v_{n-1})= \int_{\Sigma} \lambda\wedge dF.
\]
Notice that since $u:\Sigma\to X$ is a holomorphic curve and $\Omega$ is an $(n,0)$-form, adjusting $v_i$ by a vector field tangent to $\Sigma$ does not change this integrand, and all $v_i$ must hit $\Omega$ instead of the 1-form $\lambda$. After integration by part,
\[
\int_{\Sigma} \lambda\wedge dF= \int_{\Sigma} Fd\lambda- \int_{\partial \Sigma} F\lambda=   \int_{\Sigma} F\omega- \int_{\partial \Sigma} F\lambda.
\]

The Solomon functional contains another two terms
$
\int_L f_L \text{Im}(e^{-i\hat{\theta}}\Omega)
$ and $-\int_{L_0} f_{L_0} \text{Im}(e^{-i\hat{\theta}}\Omega)$.
Now $\int_L  f_L\Omega$ can be expressed as an integral of the following $(n-1)$-form over the moduli spaces $\mathcal{M}$:
\[
-\int_{\partial \Sigma\cap L} f_L \Omega(\cdot, v_1,\ldots v_{n-1})= -\int_{\partial \Sigma\cap L} f_L dF.
\]
The abused notation $\partial \Sigma\cap L$ means the part of $\partial \Sigma$ mapping to $L$ instead of $L_0$. Similarly $-\int_{L_0} f_{L_0} \Omega$ is the moduli space integral of the $(n-1)$-form
\[
-\int_{\partial \Sigma\cap L_0} f_{L_0} \Omega(\cdot, v_1,\ldots v_{n-1})= -\int_{\partial \Sigma\cap L_0} f_{L_0} dF.
\]
The extra minus sign comes from the fact that $\partial \mathcal{C}$ sweeps out the cycle $-L_0$ instead of $L_0$.

Combining all the three contributions, the Solomon functional is the moduli space integral with integrand
\[
\mathcal{I}=
\text{Im}\int_{\Sigma} e^{-i\hat{\theta}} F\omega - \text{Im} \int_{\partial \Sigma\cap L} e^{-i\hat{\theta}} d( f_L F) - \text{Im} \int_{\partial \Sigma\cap L_0} e^{-i\hat{\theta}} d( f_{L_0} F).
\]
The last two terms involve total derivatives, so can be integrated along boundary segments between the corner points, to yield
\begin{equation}\label{Solomoncornerterms}
\begin{split}
& -\text{Im} \int_{\partial \Sigma\cap L} e^{-i\hat{\theta}} d( f_L F) - \text{Im} \int_{\partial \Sigma\cap L_0} e^{-i\hat{\theta}} d( f_{L_0} F)
\\
&
= \text{Im} \sum_{ \text{all corners}}  e^{-i\hat{\theta}}   F f|^+_-  ,
\end{split}
\end{equation}
where $f|^+_- $ stands for the difference of the potentials $f_{L_+}-f_{L-}$ at a Lagrangian intersection point, such that  $\partial \Sigma$ moves from $L_+$ to $L_-$ in the clockwise direction. In the more general framework of Floer theory with Novikov coefficients, $f|^+_-$ have the interpretation as the Novikov exponents of these intersection points. The bounding cochain elements have $f|^+_-\geq 0$, while $p\in CF^0(L,L_0), q\in CF^0(L_0,L)$ may have negative Novikov exponents.

\begin{prop}(\textbf{Moduli space integral formula})
	The Solomon functional $\mathcal{S}(L)$ is the integral of the following complex valued volume form over the $(n-1)$-dimensional moduli spaces of holomorphic curves:
	\begin{equation}\label{Solomonfunctionalintegral}
	\mathcal{S}(L)=\int_{\mathcal{M}}\mathcal{I},\quad
	\mathcal{I}=
	\text{Im}\int_{\Sigma} e^{-i\hat{\theta}} F\omega +\text{Im} \sum_{ \text{all corners} }  e^{-i\hat{\theta}}   F f|^+_-.
	\end{equation}
\end{prop}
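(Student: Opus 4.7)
The plan is to substitute the defining formula (\ref{Solomonfunctionalextension}) for the extended Solomon functional
\[
\mathcal{S}(L)= \int_L f_L \text{Im}(e^{-i\hat{\theta}} \Omega) -\int_{L_0} f_{L_0} \text{Im}(e^{-i\hat{\theta}} \Omega)- \text{Im} \int_{\mathcal{C}} \lambda\wedge e^{-i\hat{\theta}} \Omega,
\]
and apply fiber integration along the universal family $\mathcal{C}\to \mathcal{M}$. Under the automatic transversality assumption, at each holomorphic polygon $u:\Sigma\to X$ representing a point of $\mathcal{M}$, the deformation vector fields $v_1,\ldots, v_{n-1}$ furnish an oriented frame for $T\mathcal{M}$, and the ambient integrands descend to complex-valued $(n-1)$-forms on $\mathcal{M}$ by contraction, using $\Omega(\cdot, v_1,\ldots, v_{n-1})=dF$ together with the holomorphicity of $F$ on $\Sigma$.

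For the bulk piece $-\int_\mathcal{C} \lambda\wedge e^{-i\hat\theta}\Omega$, the fiber integrand is $\int_\Sigma \lambda\wedge dF$, which I would integrate by parts using $d\lambda=\omega$ to get $\int_\Sigma F\omega - \int_{\partial \Sigma} F\lambda$. For the two Lagrangian period terms, I would use that $\partial \mathcal{C}$ pushes forward (under the weighted evaluation map) to the cycle $L-L_0$: the fiber contributions are respectively $-\int_{\partial \Sigma \cap L} f_L\, dF$ and $-\int_{\partial \Sigma \cap L_0} f_{L_0}\, dF$, where the sign on the $L_0$ side is produced by the $-L_0$ in $\partial \mathcal{C}$ combined with the explicit minus sign in (\ref{Solomonfunctionalextension}), and the sign on the $L$ side is fixed by the orientation convention from section \ref{LotayPacinirevisited}.

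Summing the boundary pieces from both steps yields
\[
-\int_{\partial \Sigma \cap L}\bigl(F\lambda + f_L\, dF\bigr) - \int_{\partial \Sigma \cap L_0}\bigl(F\lambda + f_{L_0}\, dF\bigr).
\]
The crux is that exactness of $L,L_0$ gives $\lambda|_L=df_L$ and $\lambda|_{L_0}=df_{L_0}$, so each parenthesised expression becomes the exact 1-form $d(f_L F)$ (resp.\ $d(f_{L_0} F)$) on the corresponding boundary segment. Integrating these total derivatives along each smooth arc of $\partial \Sigma$ between consecutive corners reduces the boundary contribution to a sum of corner jumps, and combining with the bulk piece and reinstating $\text{Im}(e^{-i\hat\theta}\cdot)$ produces the advertised integrand $\mathcal{I}$.

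The main obstacle is tracking the sign at each corner consistently with the global orientation of $\mathcal{C}$. Orienting $\partial\Sigma$ counterclockwise, at a corner $c$ where clockwise traversal moves from $L_+$ to $L_-$ the counterclockwise traversal enters along $L_-$ (contributing $+f_{L_-}(c)F(c)$ when $-\int d(f_{L_-}F)$ is evaluated) and leaves along $L_+$ (contributing $-f_{L_+}(c)F(c)$), so the net jump is $-(f_{L_+}-f_{L_-})F(c)=-f|^+_-\cdot F(c)$; the opposite overall sign from the two boundary integrals then produces $+f|^+_- F(c)$, matching (\ref{Solomoncornerterms}). This bookkeeping must be carried out uniformly across the primary and secondary configurations enumerated in section \ref{Floertheoreticobstructions} and across the various moduli components making up $\mathcal{M}$ (together with their weighting factors); once that is done, reintroducing the outer $\text{Im}(e^{-i\hat\theta}\cdot)$ and integrating over $\mathcal{M}$ gives (\ref{Solomonfunctionalintegral}).
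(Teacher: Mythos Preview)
Your proposal is correct and follows essentially the same approach as the paper: start from the homological formula (\ref{Solomonfunctionalextension}), fiber-integrate each of the three terms over $\mathcal{M}$ using $\Omega(\cdot,v_1,\ldots,v_{n-1})=dF$, integrate $\int_\Sigma \lambda\wedge dF$ by parts to produce $\int_\Sigma F\omega-\int_{\partial\Sigma}F\lambda$, combine the boundary pieces via $\lambda|_L=df_L$ into the exact forms $d(f_LF)$, and collapse to corner jumps $Ff|^+_-$. The paper's derivation (the paragraphs leading up to (\ref{Solomoncornerterms})) is line-for-line the same computation with the same counterclockwise orientation convention on $\partial\Sigma$.
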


\begin{rmk}\label{normalizationFrmk}
The normalization $F(q)=0$ is convenient, but changing $F$ by a constant along $\Sigma$ does not affect $\mathcal{I}$, due to the energy identity \[
\int_{\Sigma}\omega+ \sum_{ \text{all corners} } f|^+_-=0.
\]
\end{rmk}

\begin{rmk}
We have focused the discussion on the holomorphic curves with boundary on both $L$ and $L_0$, which are the only curves relevant for the bordism current $\mathcal{C}$ in the almost calibrated case. In general we need also curves involving corners at $CF^{-1}(L,L)$ or $CF^{-1}(L_0,L_0)$, and the formula (\ref{Solomonfunctionalintegral}) takes into account all these contributions. 
\end{rmk}

\subsubsection{Change of reference Lagrangians formula revisited}

We now revisit Prop. \ref{SolomonchangeofreferenceProp} from the moduli space integral perspective, which we expect is better suited for generalization to compact Calabi-Yau settings. All transversality requirements of moduli spaces will be assumed, and in this sense the calculations below are formal. 

In Remark \ref{3Lag} we sketched that under the extra assumption $HF^{-1}(L_0,L_0)=0$, there is an $(n+2)$-dimensional universal family $\tilde{\mathcal{C}}$ over some $n$-dimensional moduli $\tilde{\mathcal{M}} $, such that the boundary of $\tilde{C}$  has three $(n+1)$-dimensional contributions, corresponding up to sign to the three bordism currents $\mathcal{C}_1,\mathcal{C}_2,\mathcal{C}_3$ between $L_0, L_0',L$, which are in turn the universal families over the $(n-1)$-dimensional moduli spaces $\mathcal{M}_i$ for $i=1,2,3$. Here $\mathcal{M}_i$ can be viewed as 
certain boundary strata of the compactification of $\tilde{\mathcal{M}}$. The integrand $\mathcal{I}$ naturally makes sense as an $(n-1)$-form on $\tilde{\mathcal{M}}$, and restricts naturally to $\mathcal{M}_i$. The change of reference formula (\ref{SolomonfunctionalchangeL0}) amounts to
\begin{equation}\label{SolomonchangeofreferenceLag2}
\int_{\mathcal{M}_3} \mathcal{I}= \int_{\mathcal{M}_1} \mathcal{I} + \int_{\mathcal{M}_2}  \mathcal{I}.
\end{equation}
Our strategy is to use \emph{Stokes formula on the moduli spaces}. As usual, the holonomy weighting factors will be suppressed in the moduli integral notations. Then (\ref{SolomonchangeofreferenceLag2}) reduces to the two claims:

\begin{claim}\label{dI=0}
The $n$-form 
$d\mathcal{I}=0$ over the moduli space $\tilde{\mathcal{M}}$. 
\end{claim}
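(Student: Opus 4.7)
The plan is to interpret $\mathcal{I}$ as the fibre integral of a naturally closed differential form on the universal family $\pi:\tilde{\mathcal{C}}\to\tilde{\mathcal{M}}$, and then to compute $d\mathcal{I}$ by commuting the exterior derivative with fibre integration, the only surviving contribution being a fibre boundary term at $\partial\Sigma$ which, after one further application of Stokes, is identified with the differential of the corner sum in $\mathcal{I}$ carrying the opposite sign.

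Concretely, the K\"ahler identity $\omega\wedge\Omega=0$ shows that the $(n{+}1)$-form $\Xi:=\text{Im}(e^{-i\hat\theta}\lambda\wedge\Omega)$ on $X$ is closed, and therefore so is its pullback $u^*\Xi$ under the evaluation $u:\tilde{\mathcal{C}}\to X$. Since $\tilde{\mathcal{C}}$ has total dimension $n{+}2$ and the $\Sigma$-fibres have real dimension $2$, the fibre integral $\pi_*u^*\Xi$ defines an $(n{-}1)$-form on $\tilde{\mathcal{M}}$. A direct calculation reproducing the integration-by-parts used to arrive at \eqref{Solomonfunctionalintegral} --- writing $\omega=d\lambda$, using $dF=\Omega(\cdot,v_1,\ldots,v_{n-1})$, and performing a one-dimensional Stokes along each Lagrangian segment of $\partial\Sigma$ where $u^*\lambda=df_{L_\bullet}$ --- identifies $\pi_*u^*\Xi$ with $\mathcal{I}$ up to a global sign. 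In particular, the corner sum $\text{Im}\sum_{\text{corners}}e^{-i\hat\theta}F\,f|^+_-$ arises precisely as the residue from that second, one-dimensional Stokes step at the points where two Lagrangian segments meet.

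Next I will apply the standard relation $d\circ\pi_*=(-1)^{\text{rel.dim}}\pi_*\circ d+\pi_*^{\partial}$, where $\pi_*^\partial$ denotes fibre integration along the one-dimensional fibre boundary $\partial_v\tilde{\mathcal{C}}$ consisting of the Lagrangian segments of $\partial\Sigma$. Since $d\,u^*\Xi=0$, this gives $d\mathcal{I}=\pm\,\pi_*^{\partial}(u^*\Xi)$. On each Lagrangian segment, exactness again converts $u^*\lambda$ into $d(u^*f_{L_\bullet})$, so $u^*\Xi$ is exact along the fibre direction, and a final Stokes in the one-dimensional fibre localises $d\mathcal{I}$ to a sum over the corners of $\partial\Sigma$ of quantities linear in the jumps $f|^+_-$. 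The same combinatorics that yielded the corner sum in $\mathcal{I}$ now shows that these residues assemble into the $\tilde{\mathcal{M}}$-differential of $\text{Im}\sum_{\text{corners}}e^{-i\hat\theta}Ff|^+_-$ with opposite sign, whence the two contributions to $d\mathcal{I}$ cancel identically.

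The main technical obstacle will be orientation and sign bookkeeping across three nested boundary operations: the vertical fibre boundary $\partial_v\tilde{\mathcal{C}}$, the segment endpoints on $\partial\Sigma$, and the corners where two segments meet; this is further compounded in the three-Lagrangian setting of $\tilde{\mathcal{M}}$, where the generators $\alpha,\beta,\gamma\in CF^0$ produce three distinct ``long'' corners that must be handled symmetrically. A useful consistency check is Remark \ref{normalizationFrmk}: the final identity $d\mathcal{I}=0$ must be invariant under $F\mapsto F+\text{const}$, so any step which breaks this invariance signals a missing sign or a missing boundary contribution, in which case the argument must be rebalanced.
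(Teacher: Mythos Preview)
Your fibre-integration strategy is a genuinely different packaging from the paper's argument, but the key identification in your second paragraph is wrong, and this propagates through the rest. The fibre integral $\pi_*u^*\Xi$ equals $\text{Im}\,e^{-i\hat\theta}\int_\Sigma\lambda\wedge dF$, and when you perform the two Stokes steps you do not get $\mathcal{I}$: you get $\mathcal{I}+\mathcal{J}$, where $\mathcal{J}=\text{Im}\,e^{-i\hat\theta}\int_{\partial\Sigma}f_\bullet\,dF$ is the nonzero moduli integrand of the potential term $\int_L f_L\,\text{Im}(e^{-i\hat\theta}\Omega)$ in the Solomon functional. You have silently dropped $\mathcal{J}$. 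Downstream, your computation of $\pi_*^{\partial}u^*\Xi$ is also misidentified: writing $u^*\Xi|_{\partial_v\tilde{\mathcal C}}=d\bigl((f_\bullet\circ u)\,u^*\text{Im}(e^{-i\hat\theta}\Omega)\bigr)$ and applying the fibre-integration formula once more gives $d\mathcal{J}$ plus a corner contribution $\pi_*^{\partial\partial}(\cdots)$ that \emph{vanishes} because $\Omega(v_1,\ldots,v_n)=0$ at the corners (decay of deformation vector fields). So the corner residues you describe are actually zero, and likewise $d$ of the corner sum in $\mathcal{I}$ is zero for the same reason; there is no nontrivial cancellation of the sort you outline. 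Once the missing $\mathcal{J}$ is reinstated, the correct chain is $d(\mathcal{I}+\mathcal{J})=d\,\pi_*u^*\Xi=\pi_*^{\partial}u^*\Xi=d\mathcal{J}$, whence $d\mathcal{I}=0$.

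For comparison, the paper does not use the fibre-integration formalism at all. It works directly in local moduli coordinates $y_1,\ldots,y_n$, first observing that the corner term in $\mathcal{I}$ has vanishing differential because $f|^+_-$ are constants and $\sum_i\partial_iF_i=\Omega(\partial_{y_1},\ldots,\partial_{y_n})$ vanishes at the corners, and then showing $\sum_i\partial_i\int_\Sigma F_i\omega=0$ by an integration by parts whose boundary term vanishes (both $\partial\Sigma$ and the $\partial_{y_i}$ are tangent to the Lagrangian) and whose interior integrand is the contraction of $\omega\wedge\Omega=0$ with $\partial_{y_1},\ldots,\partial_{y_n}$. Both arguments ultimately rest on the same three facts ($\omega\wedge\Omega=0$, the Lagrangian boundary condition, decay at corners), but the paper's route avoids the bookkeeping of iterated fibre boundaries that tripped you up.
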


\begin{claim}\label{Stokesboundary}
The Stokes boundary term is
\[
\int_{ \tilde{\mathcal{M}}  } d\mathcal{I}= \int_{\mathcal{M}_1} \mathcal{I} + \int_{\mathcal{M}_2}  \mathcal{I}- \int_{\mathcal{M}_3} \mathcal{I}.
\]
\end{claim}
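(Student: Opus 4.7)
The strategy is to apply Stokes' theorem to the compactified moduli space $\overline{\tilde{\mathcal{M}}}$, which has dimension $n$ and boundary of dimension $n-1$. Since $\mathcal{I}$ is an $(n-1)$-form, this gives
\[
\int_{\tilde{\mathcal{M}}} d\mathcal{I} \;=\; \int_{\partial \overline{\tilde{\mathcal{M}}}} \mathcal{I},
\]
so the task reduces to enumerating the codimension-one boundary strata of $\overline{\tilde{\mathcal{M}}}$ and showing that, after cancellation, what remains is precisely the signed sum $\mathcal{M}_1 + \mathcal{M}_2 - \mathcal{M}_3$. In the exact Stein setting all sphere and disc bubbling from closed or degree-zero constant curves is excluded, so the codimension-one strata come entirely from strip breakings at some Lagrangian intersection point $r$, producing strata of the schematic form $\mathcal{M}(\ldots,r)\times \mathcal{M}(r,\ldots)$ with the associated holonomy weights.

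Next, I would show that most strata vanish. Strata in which $r$ is a degree $\ge 2$ self-intersection on one of $L_0$, $L_0'$, $L$ are annihilated by the Mauer--Cartan equations for the respective bounding cochains. Strata in which $\alpha$, $\beta$, or $\gamma$ sits alone on one side of the break (with only adjacent bounding cochain insertions) are annihilated by the closedness relations $m_1^{b,b'}(\alpha)=m_1^{b',b_L}(\beta)=m_1^{b_L,b_0}(\gamma)=0$. The only strata that survive are those in which exactly one of the cyclically adjacent pairs $(\alpha,\beta)$, $(\beta,\gamma)$, $(\gamma,\alpha)$ is isolated on one side of the break with bounding cochain elements between them; the zero-dimensional count on that side computes the chain-level composition $\beta\circ\alpha$, $\gamma\circ\beta$, or $\alpha\circ\gamma$, whose cohomology classes are by hypothesis the generators of $HF^0(L_0,L)$, $HF^0(L_0',L_0)$, $HF^0(L,L_0')$. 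The complementary $(n-1)$-dimensional moduli glued to these counts are then exactly the three moduli $\mathcal{M}_3$, $\mathcal{M}_2$, $\mathcal{M}_1$ carrying the bordism currents $\mathcal{C}_3$, $\mathcal{C}_2$, $\mathcal{C}_1$ between the three pairs of reference Lagrangians.

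To fix the signs I would adopt the orientation conventions of Example~\ref{FLoerproduct2} and carefully trace the boundary orientation induced on each surviving stratum from the complex orientation of the domain and the clockwise cyclic arrangement of $(\alpha,\beta,\gamma)$ around $\partial\Sigma$. The expected outcome is that two of the three strata inherit a $+1$ sign and the third a $-1$ sign, matching the signed relation $\partial\tilde{\mathcal{C}} = \mathcal{C}_1+\mathcal{C}_2-\mathcal{C}_3$ at the level of universal families from Remark~\ref{3Lag}. Continuity of $\mathcal{I}$ across boundary strata will be used: when a holomorphic polygon breaks into two pieces along a node $r$, the holomorphic primitive $F$ splits additively across the two components once the normalization constants are chosen compatibly, and by Remark~\ref{normalizationFrmk} this constant freedom is irrelevant since the integrand $\mathcal{I}$ is invariant under global shifts of $F$.

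The main obstacle is bookkeeping rather than any single idea. Sign conventions in Floer theory with bounding cochains are delicate and require meticulous attention to match the orientation rules of the universal families that define $\mathcal{C}_1,\mathcal{C}_2,\mathcal{C}_3$. When $CF^{-1}$ on any of the three Lagrangians is nonzero, one must further include the auxiliary $n$-dimensional moduli of polygons with a $\epsilon_i \in CF^{-2}$ corner sketched in Remark~\ref{3Lag}, and verify that their boundary contributions cancel via the $A_\infty$ relation that defines the $\epsilon_i$ from $m_3^b(\gamma,\beta,\alpha)$ and its cyclic analogues; this is analogous to the cancellations seen in section \ref{LotayPaciniimmersed} but now one dimension higher. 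Finally, placing the Stokes formula on rigorous footing requires a perturbation/Kuranishi framework compatible both with the Floer-theoretic $A_\infty$-structure and with the integrability of the particular $(n-1)$-form $\mathcal{I}$; we will assume throughout that such a framework has been set up, as is consistent with the remarks at the start of Chapter~\ref{Moduliholocurves}.
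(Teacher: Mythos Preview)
Your enumeration of boundary strata and the cancellation mechanisms you invoke (Mauer--Cartan for self-intersections, closedness for $\alpha,\beta,\gamma$) only cover those codimension-one strata in which one of the two broken pieces is \emph{zero}-dimensional. But the boundary of the $n$-dimensional moduli $\overline{\tilde{\mathcal{M}}}$ also contains strata where the polygon splits into two pieces of dimensions $j$ and $n-1-j$ with $0<j<n-1$. For instance, breaking at a self-intersection $r$ of degree $d\ge 3$ produces a bubbled-off piece (with only bounding cochain inputs and output $r$) of dimension $d-2>0$; similarly, breaking at an intersection $r\in CF^{\ge 2}(L_0,L_0')$ between distinct Lagrangians gives two positive-dimensional pieces. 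The Mauer--Cartan equation and the closedness of $\alpha,\beta,\gamma$ are statements about \emph{zero}-dimensional counts, so they provide no cancellation here. Your assertion that ``degree $\ge 2$ self-intersections are annihilated by Mauer--Cartan'' is only correct for degree exactly $2$.

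The paper handles this via a separate observation (Claim \ref{Stokesboundarycontributions}): on any stratum where both pieces are positive-dimensional, the integrand $\mathcal{I}$ itself vanishes. The reason is that near such a broken curve, the $(n-1)$ first-order deformation vector fields $v_1,\ldots,v_{n-1}$ are, up to small perturbation, obtained by gluing kernel elements from the two components, and these have essentially disjoint supports. Since neither component carries all $n-1$ of them, at every point of the broken curve at least one $v_i$ vanishes, forcing $dF=\Omega(\cdot,v_1,\ldots,v_{n-1})\equiv 0$; hence $F$ is constant along $\Sigma$, and by Remark \ref{normalizationFrmk} this makes $\mathcal{I}=0$. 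Your remark about ``$F$ splitting additively across the two components'' misses this point: what matters is not additivity of $F$ but the \emph{degeneracy} of the wedge $v_1\wedge\cdots\wedge v_{n-1}$ when the deformations are distributed across two components. Once this vanishing is established, your remaining cancellation argument for the $0$-versus-$(n-1)$ strata is essentially what the paper does.
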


We first explain Claim \ref{dI=0}. First, we calculate the derivatives of $F$. Let $y_1,\ldots y_n$ be local coordinates on $\tilde{\mathcal{M}}$, so $\frac{\partial}{\partial y_i}$ can be identified as first order deformations of holomorphic curves. The local coordiates on $\Sigma$ are denoted as $s,t$. We can write $F=\sum F_i (-1)^{i-1} dy_1\wedge\ldots \arc{dy_i}\ldots dy_n $,  such that along $\Sigma$,
\[
\partial_s F_i= (-1)^{i-1} \Omega(\frac{\partial}{\partial s} , \frac{\partial}{\partial y_1},\ldots \arc { \frac{\partial}{\partial y_i}}, \ldots \frac{\partial}{\partial y_n}), \quad \partial_t F_i= (-1)^{i-1} \Omega(\frac{\partial}{\partial t} , \frac{\partial}{\partial y_1},\ldots \arc { \frac{\partial}{\partial y_i}}, \ldots ).
\]
The holomorphic volume form satisfies $d\Omega=0$, whence
\[
\sum_{i=1}^n\partial_s \partial_i F_i= \partial_s( \Omega( \frac{\partial}{\partial y_1},\ldots, \frac{\partial}{\partial y_n})  ), \quad \sum_{i=1}^n\partial_t \partial_i F_i= \partial_t (\Omega( \frac{\partial}{\partial y_1},\ldots, \frac{\partial}{\partial y_n})  ).
\]
Notice $\Omega( \frac{\partial}{\partial y_1},\ldots, \frac{\partial}{\partial y_n}) $ vanishes at the corners due to the decay of the first order deformation vector fields, and comparing with the additive normalization convention on $F$, we find 
\begin{equation}\label{Fdivergence}
\sum_{i=1}^n  \partial_i F_i= \Omega( \frac{\partial}{\partial y_1},\ldots, \frac{\partial}{\partial y_n})  .
\end{equation}
In particular, at all the corner points $\sum_i \partial_i F_i=0$. The term $f|^+_-$ at the corners are independent of the moduli space parameters, so the only contribution to $d\mathcal{I}$ comes from the $\text{Im}\int_{\Sigma} e^{-i\hat{\theta}} F\omega$ term in formula (\ref{Solomonfunctionalintegral}).

We calculate 
\[
\sum_{i=1}^n \partial_i \int_{\Sigma} F_i\omega= \int_{\Sigma}\sum_i \partial_i F_i\omega + \int_{\Sigma}\sum_i  F_i \partial_i\omega.
\]
Here $\partial_i\omega$ is the Lie derivative of the symplectic form $\omega$ with respect to the vector field $\frac{\partial}{\partial y_i}$, which by Cartan's formula is
\[
\partial_i\omega = d (\omega(\frac{\partial}{\partial y_i},\cdot    )) + \iota_{  \frac{\partial}{\partial y_i}} d\omega=  d (\omega(\frac{\partial}{\partial y_i},\cdot    )).
\]
Thus
\[
 \int_{\Sigma}\sum_i  F_i \partial_i\omega= \int_{\partial \Sigma}\sum_i F_i \omega(\frac{\partial}{\partial y_i},\cdot    )- \int_{\Sigma}\sum_i  dF_i \wedge \omega(\frac{\partial}{\partial y_i},\cdot    ).
\]
Notice that $\partial\Sigma$ and $\frac{\partial}{\partial y_i}$ are both tangent to the Lagrangian boundary, so the $\partial \Sigma$ integrand vanishes. We are left with
\[
\begin{split}
& \sum_{i=1}^n \partial_i \int_{\Sigma} F_i\omega= \int_{\Sigma}\sum_i \partial_i F_i\omega- \int_{\Sigma}\sum_i  dF_i \wedge \omega(\frac{\partial}{\partial y_i},\cdot    )
\\
=& \int_{\Sigma} \Omega( \frac{\partial}{\partial y_1},\ldots, \frac{\partial}{\partial y_n}  ) \omega+\sum_i(-1)^{i-1}\omega(\frac{\partial}{\partial y_i},\cdot    )\wedge \Omega(\cdot, \frac{\partial}{\partial y_1},\ldots \arc { \frac{\partial}{\partial y_i}}, \ldots \frac{\partial}{\partial y_n}) .
\end{split}
\]
Here we used the definition of $F_i$ via $\partial_sF_i, \partial_t F_i$, and the formula (\ref{Fdivergence}) for $\sum_i \partial_i F_i$. We contract the identity $\omega\wedge \Omega=0$ with $\frac{\partial}{\partial y_1},\ldots, \frac{\partial}{\partial y_n} $. When two $\frac{\partial}{\partial y_i}$ hit $\omega$, the  $\Omega$ term will be contracted only $(n-2)$ times, which produces a $(2,0)$-form vanishing identically on the holomorphic curve $\Sigma$. When at most one $\frac{\partial}{\partial y_i}$ hits $\omega$, we obtain the above integrand. In effect, the integrand vanishes identically:
\[
 \sum_{i=1}^n \partial_i \int_{\Sigma} F_i\omega=0, 
\]
which then implies $d\mathcal{I}=0$.

We next explain Claim \ref{Stokesboundary}. In general, the compactified moduli space has many boundary strata corresponding to disc bubbling and disc splitting.

\begin{claim}\label{Stokesboundarycontributions}
Only the boundary strata corresponding to gluing holomorphic curves with virtual dimension $0$ and $n-1$, can have nonzero contributions to the Stokes boundary term.	
\end{claim}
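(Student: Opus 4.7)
The plan is to systematically enumerate the $(n-1)$-dimensional boundary strata of the compactified moduli $\overline{\tilde{\mathcal{M}}}$, and argue that all strata except those of product type $\mathcal{M}' \times \mathcal{M}''$ with $(\dim \mathcal{M}', \dim \mathcal{M}'')\in\{(0,n-1),(n-1,0)\}$ contribute zero to $\int_{\partial\tilde{\mathcal{M}}}\mathcal{I}$. Since we are in an exact Calabi-Yau Stein setting, there are no sphere bubbles, and we only need to consider disc bubbling and disc splitting at some intermediate corner $r$. Each such disc-breaking stratum has the product form $\mathcal{M}_a \times \mathcal{M}_b$ joined at the corner $r$, with $\dim \mathcal{M}_a + \dim \mathcal{M}_b = n-1$.

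First I would analyze the restriction of the $(n-1)$-form $\mathcal{I}$ to such a product stratum. Along the breaking neck, the holomorphic function $F$ on the broken curve decomposes additively into values on the two pieces (the shared corner $r$ provides the matching constant, and the normalization convention of Remark \ref{normalizationFrmk} lets us redistribute the additive constant freely). Consequently the integrand $\mathcal{I}$ pulled back to $\mathcal{M}_a \times \mathcal{M}_b$ is a sum of terms, each of which factorizes as a pullback from $\mathcal{M}_a$ times a pullback from $\mathcal{M}_b$. In particular, the integral of $\mathcal{I}$ over $\mathcal{M}_a \times \mathcal{M}_b$ vanishes unless the form degrees on the two factors exactly match the dimensions $(\dim \mathcal{M}_a, \dim \mathcal{M}_b)$.

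Next I would treat the intermediate-dimension strata $(k, n-1-k)$ with $1\le k\le n-2$. The key observation, entirely parallel to the cancellation arguments in section \ref{LotayPacinirevisited} and the standard proof that Floer differentials square to zero (\cf the Appendix \ref{immersedFukaya}), is that such strata come in matched pairs whose contributions cancel thanks to the closedness of $\alpha,\beta,\gamma$ in their respective $CF^0$ groups and the Maurer-Cartan equations for the bounding cochains $b_{L_0}, b_{L_0'}, b_L$. Concretely: breakings at a corner $r\in CF^1(L_i,L_j)$ (for the various pairs) are organized into combinations that read off either $m_1^b\alpha$, $m_1^b\beta$, or $m_1^b\gamma$, which all vanish; breakings at degree-$2$ self-intersections assemble into the Maurer-Cartan equation $\sum m_k^b(b,\ldots,b)=0$. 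In the non-almost-calibrated case where $CF^{-1}\neq 0$, one picks up additional terms involving the $m_3^b$-products and the elements $\epsilon_i\in CF^{-2}$, but these cancel by the $A_\infty$-relation exactly as in Remark \ref{3Lag}.

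Finally, for the extremal strata of type $(0,n-1)$ and $(n-1,0)$: the $0$-dimensional factor $\mathcal{M}_a$ (say) is a finite signed count of rigid broken pieces, while the $(n-1)$-dimensional factor $\mathcal{M}_b$ is precisely one of the $\mathcal{M}_i$ appearing in the three bordism currents $\mathcal{C}_1,\mathcal{C}_2,-\mathcal{C}_3$. The rigid count, multiplied by the appropriate holonomy weighting, is exactly the coefficient with which the bordism current enters $\partial\tilde{\mathcal{C}}$; summing with signs produces the identity of Claim \ref{Stokesboundary}. I expect the main obstacle to be bookkeeping the signs and the bounding-cochain contributions correctly: the orientation of a product moduli space differs from a naive product orientation by sign factors depending on degrees (as in Example \ref{FLoerproduct2}), and the cancellations in the $CF^{-1}\neq 0$ case require carefully pairing strata that break via different intermediate corners (such as the $m_3^b$ terms cyclically permuting $\alpha,\beta,\gamma$) with those breaking at the $\epsilon_i\in CF^{-2}$. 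The rest is essentially forced by the $A_\infty$-relations once the initial classification of strata is in place.
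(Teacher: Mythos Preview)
Your proposal misidentifies the mechanism that kills the intermediate-dimensional strata, and in doing so conflates the content of Claim \ref{Stokesboundarycontributions} with that of Claim \ref{Stokesboundary}. The $A_\infty$-cancellations you invoke in your second paragraph (closedness of $\alpha,\beta,\gamma$, Maurer--Cartan, the $m_3^b$ and $\epsilon_i$ bookkeeping from Remark \ref{3Lag}) are precisely what is used \emph{after} Claim \ref{Stokesboundarycontributions} to reduce the surviving $(0,n-1)$ strata down to $\mathcal{M}_1,\mathcal{M}_2,\mathcal{M}_3$; they do not address the intermediate strata $(k,n-1-k)$ with $1\le k\le n-2$ at all. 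Those cancellations require one factor to be zero-dimensional so that it acts as a counting coefficient multiplying an $(n-1)$-dimensional integral; there is no analogous pairing when both factors carry positive-dimensional moduli.

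Your first paragraph gestures at a form-degree argument but does not prove what is needed: a general $(n-1)$-form on a product has components in every bidegree, so factorization alone does not force $(0,n-1)$ or $(n-1,0)$. The paper's actual argument is a pointwise vanishing of $\mathcal{I}$ on the intermediate strata, and it hinges on a fact you never state: when a disc degenerates into two components, the first-order deformation vector fields $v_1,\ldots,v_{n-1}$ are obtained (up to vanishing perturbation) by gluing deformations supported on the separate components. Since the supports are essentially disjoint, $\Omega(\cdot,v_1,\ldots,v_{n-1})$ vanishes identically on $\Sigma$ unless all $n-1$ of the $v_i$ come from a single component --- which forces that component to have moduli dimension at least $n-1$. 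On any intermediate stratum this fails, so $dF=0$, hence $F$ is constant along $\Sigma$, and then $\mathcal{I}=0$ by Remark \ref{normalizationFrmk}. This is the missing idea; once you have it, no cancellation is needed for the intermediate strata.
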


To see this, we need to understand how $\mathcal{I}$ (and notably $F$) behaves near the boundary of the moduli space. Recall that when the holomorphic disc 
is degenerating to several disc components, then under transversality conditions, the cokernel of the extended linearized Cauchy-Riemann operator vanishes, and for small fixed gluing parameters, the kernel elements (\ie first order deformations) are up to small perturbation obtained by gluing the kernel elements from the degenerate disc components. The perturbation effect tends to zero as we approach the moduli space boundary. Now the kernel elements from different disc components have essentially disjoint supports, so unless we have at least $(n-1)$ kernel elements supported on one disc component such that $\Omega(\cdot, v_1,\ldots v_{n-1})$ does not vanish identically, we will have $dF=0$ for the moduli boundary strata, so that $F=\text{const}$ along $\Sigma$, whence $\mathcal{I}=0$ by Remark \ref{normalizationFrmk}. This shows Claim \ref{Stokesboundarycontributions}. We comment that this phenomenon is closely related to the fact that many moduli boundary strata do not contribute to the boundary of the bordism current $\mathcal{C}$ due to support reasons (\cf section \ref{LotayPacinirevisited}).

On the boundary strata, the only contributions to the integral $\mathcal{I}$ come from the $(n-1)$-dimensional moduli spaces. The role of the holomorphic curves of virtual dimension zero, is to provide the counting factors, in a manner entirely analogous to section  \ref{LotayPaciniimmersed}. Most contributions cancel out due to the Mauer-Cartan equation on the bounding cochains, and the closedness of the $HF^0$ generators. The remaining  contributions produce the RHS in Claim \ref{Stokesboundary}.

\subsubsection{First variation formula revisited}

We now explain how to semi-heuristically understand the first variation formula (\ref{Solomonfirstvariation}) as a consequence of Prop. \ref{SolomonchangeofreferenceProp}, from the perspective of the moduli space integral formula (\ref{Solomonfunctionalintegral}). We hope this viewpoint is better suited for generalization to compact Calabi-Yau settings.

Suppose we are given a 1-parameter exact isotopy of unobstructed exact immersed Lagrangians $L_t$, and we wish to calculate $\frac{d}{dt}\mathcal{S}_{L_0'}(L_t)$ at $t=0$. The change of reference Lagrangian formula (\cf Prop. \ref{SolomonchangeofreferenceProp}) allows us to replace $\mathcal{S}_{L_0'}(L_t)$ by $\mathcal{S}_{L_0}(L_t)$. The Lagrangian
$L_t$ for $|t|\ll 1$ is approximately the graph of $tdh$ in $T^*L_0$ (understood in an immersed sense), for the Hamiltonian function $h=h_t|_{t=0}$ on $L_0$. 
The holomorphic discs between $L_t$ and $L_0$ for $|t|\ll 1$ have \emph{small energy} of order $O(|t|)$, and are locally approximated by Morse trajectories of $h$. Write $X_h$ as the Hamiltonian vector field, namely $\omega(X_h, \cdot)=dh$, then 
\[
\frac{d}{dt}|_{t=0}\int_\Sigma F \omega= \int_{\partial\Sigma\cap L_0} F \omega(\cdot, X_h) =  -\int_{\partial\Sigma\cap L_0} F dh.
\]

Now we examine $\sum_{\text{all corners}} Ff|^+_-$ for very small $t$. The Lagrangian intersections come in two types:
\begin{itemize}
	\item The corners $p\in CF^0(L_t,L_0)$ and $q\in CF^0(L_0, L_t)$ correspond to the local extrema of the Hamiltonian $h$.
	\item Any self intersection $p_i$ between two local sheets $L_+,L_-$ of $L_0$ can be paired with a very nearby self intersection of $p_i^t$ between two sheets $L_+^t, L_-^t$ of $L_t$. The bounding cochain on $L_t$ is thus induced from the bounding cochain on $L_0$.
\end{itemize}

At the intersection points $p, q$,
\[
\begin{cases}
f|^+_-(q)= f_{L_0}(q)- f_{L_t}(q)=- th(q)+O(t^2),
\\
f|^+_-(p)= f_{L_t}(p)- f_{L_0}(p)= th(p)+O(t^2).
\end{cases}
\]
The self intersections are usually not important here, because the smallness of energy prevents their appearance on $\partial \Sigma$, unless $f|^+_-(p_i^t)=O(|t|)$, and $f|^+_-(p_i)=0$, which is a rather nongeneric situation. When the self intersections do appear, the evolution of the potential under exact isotopy gives
\[
(f_{L_+^t}-f_{L_-^t})(p_i^t)= (f_{L_+}-f_{L_-})(p_i)+ \int_0^t (h_{+,\tau}-h_{-,\tau})(p_i^\tau) d\tau= f|^+_-(p_i)+ t h|^+_-(p_i)+O(t^2),
\]
where $h_\pm$ keeps track of the hamiltonian on the different sheets of $L_0$. Thus
\[
f|^+_-(p_i^t)= f_{L_-^t}(p_i^t)- f_{L_+^t}(p_i^t)= - f|^+_-(p_i) - t h|^+_- (p_i)+O(t^2).
\]
Here we have a tricky sign reversal, because if $L_+$ and $L_-$ are clockwise ordered on $\partial \Sigma$, then $L_+^t$ and $L_-^t$ are counterclockwise ordered. In summary,
\[
\frac{d}{dt}|_{t=0}
\sum_{\text{all corners}} Ff|^+_- (t)
=\lim_{t\to 0}  \{ - Fh(q)+ Fh(p)-
\sum_{L_0-\text{self intersection corners}} Fh|^+_-(p_i) \} .
\]

Combining the above, and integrating by parts,
\[
\begin{split}
& \frac{d}{dt}|_{t=0}\left(\int_\Sigma F \omega+ \sum_{\text{all corners}} Ff|^+_- (t)\right)
\\
&
= \lim_{t\to 0} \{ - Fh(q)+ Fh(p)-
\sum_{L_0-\text{corners}} Fh|^+_-(p_i)- \int_{\partial\Sigma\cap L_0} F dh \}
\\
&=  \lim_{t\to 0} \{  \int_{\partial\Sigma\cap L_0} hdF \}.
\end{split}
\]
Observe that for very small $t$, as the holomorphic curves vary in the $(n-1)$-dimensional moduli spaces $\mathcal{M}$, under the \emph{counterclockwise} sign convention for $\partial \Sigma$, the boundary evaluation of $\partial \Sigma \cap L_0$ sweeps out the cycle $L_0$ (beware of the sign!), and any generic point on $L_0$ is swept out precisely once due to the Morse theory limiting description. Consequently, the moduli space integral
\[
\lim_{t\to 0} \int_{\mathcal{M}}  \{  \int_{\partial\Sigma\cap L_0} hdF \} = \int_{L_0} h\Omega,
\]  
hence
\[
\frac{d}{dt}|_{t=0}\int_{\mathcal{M}}  \left(\int_\Sigma F \omega+ \sum_{\text{all corners}} Ff|^+_- (t)\right)= \int_{L_0} h\Omega.
\]
By the moduli integral formula (\ref{Solomonfunctionalintegral}) of the Solomon functional,
\[
\frac{d}{dt}|_{t=0}\mathcal{S}(L_t)=
\frac{d}{dt}|_{t=0}\int_{\mathcal{M}} \mathcal{I}= \int_{L_0}  h\text{Im}(e^{-i\hat{\theta}}\Omega). 
\]
This recovers the first variation formula (\ref{Solomonfirstvariation}).

\subsubsection{Speculations on compact Calabi-Yau manifolds}\label{CompactCYspeculations}

Floer theoretic foundations  are much more complicated beyond the exact setting, and the foundations concerning the open-closed string map in the immersed Fukaya category setting are not fully written out in the literature. 
Nonetheless, due to the interest of the topic, we shall offer some speculations about how the Solomon functional formula (\ref{Solomonfunctionalintegral}) generalizes to the compact almost Calabi-Yau setting. Our local systems will have coefficients in $\R, \Q$,  \ie the parallel transport in the local system
have only Novikov exponent zero components. This convention is somewhat more restrictive than \cite{JoyceAkaho}\cite{Woodward1}\cite{Woodward2}.

\begin{rmk}
	In Joyce's LMCF, bounding cochains and local systems can be created ex nihilo during the flow, but in all the mechanisms the author is aware of, the flow preserves the above class of local systems.
\end{rmk}

First, we recall the role of  \emph{Novikov coefficients} in Floer theory. All Floer cochain spaces $CF^*$ are modules over the Novikov field \[
\Lambda=\{  \sum_i a_i T^{\lambda_i}: \quad \lambda_i\in \R, \lambda_1<\lambda_2<\ldots \to +\infty \},
\] 
and $a_i\in \Q $ or  $\R$ depending on the coefficient field choice.\footnote{We do not know if the Fukaya category can be defined over integers in general. One should not confuse the coefficients $a_i$ with the Novikov exponents $\lambda_i$. Typically $a_i$ are rational numbers related to  counting, while $\lambda_i$ are real numbers related to the energy.}
There are a few conventions to define $A_\infty$-operations. Let $L$ be a compact immersed Lagrangian with transverse self intersections.  In the Morse model \cite{Woodward1}, the self Floer cochain space
$CF^*(L,L)$ is generated by the Morse critical points on $L$, and the ordered self intersections (twisted by local system hom and orientation factors as usual). 
The Fukaya $A_\infty$-algebra is a collection of Novikov-multilinear operations
\[
m_k: CF^*(L,L)\otimes \ldots CF^*(L,L)\to CF^*(L,L)[2-k]
\]
defined by counting \emph{holomorphic treed discs} $u:\Sigma\to X$ (\cf \cite[Definition 3.1]{Woodward1}), weighted by the holonomy and orientation factors, and an \emph{energy factor} $T^{E(u)}$. Very roughly, the domain $\Sigma$ have surface parts (which consist of discs, and spheres attached to them), and tree parts connecting the disc boundaries. Then $u$ is a holomorphic map with Lagrangian boundary on the surface parts, and Morse gradient flowlines on the tree parts.
The role of $CF^*$ elements is to specify the limiting behaviour of the Morse flowlines, and the Lagrangian self intersections on $\partial \Sigma$. The energy $E(u)=\int_{\Sigma}\omega$ is the sum of $\int u^*\omega$ on all the surface parts of $\Sigma$.

A nontrivial fact is that (after complicated perturbation schemes, or virtual techniques) this gives rise to a \emph{curved $A_\infty$-algebra} structure \cite{Woodward1}. The most important new feature, absent in the exact case, is that the disc bubbling can occur at points of $L$, which are not necessarily self intersection points. The domain disc splits into two discs, attached at a boundary node. This phenomenon is compensated by considering two discs joined by a gradient flowline segment, whose length shrinks to zero, producing the same nodal discs in the degeneration limit. With the appropriate weights and orientations taken into account, these two effects would cancel algebraically.
On the other hand, the length parameter of the tree parts can tend to infinity, causing the Morse gradient flow line to break, a phenomenon which contributes to the boundary of the one dimensional moduli spaces, reflected algebraically in the $A_\infty$-relations.

Similar to the exact immersed case (\cf Appendix \ref{immersedFukaya}), the bounding cochains are $b\in CF^1(L,L)$ elements satisfying the nonnegative Novikov exponent requirement, and the Mauer-Cartan equation 
\[
m_0+ m_1(b)+ m_2(b,b)+\ldots =0.
\]
Generally speaking, the sum is infinite, but after truncating the Novikov series at any given high energy, only finitely many terms appear due to Gromov compactness, so the sum makes formal sense. As usual, the Lagrangian with bounding cochain structures are called unobstructed.

 The framework for setting up the Fukaya algebra of a single immersed Lagrangian, also assigns meanings to Floer cohomologies between two immersed Lagrangians with bounding cochain structures. Suppose $\alpha\in CF^0(L,L')$ and $\beta\in CF^0(L',L)$ represent elements in $HF^0(L,L')$ and $HF^0(L',L)$ whose cohomological compositions are the identities. The $\alpha, \beta$ are in generally represented by infinite series in the Novikov variable $T$, where some Novikov exponents may be negative, and may bot be bounded above, but at least they are bounded from below depending on $\alpha, \beta$. We now speculate that there is a bordism current $\mathcal{C}$ with $\partial \mathcal{C}=L-L'$, constructed from the universal families of treed holomorphic discs over the $(n-1)$-dimensional moduli spaces $\mathcal{M}$. The monomial summands of $\alpha,\beta$ and the bounding cochain elements prescribe the corners of the treed holomorphic discs, and monomials with different Novikov exponents are viewed as independent contributions to $\mathcal{M}$ and $\mathcal{C}$. Beyond the almost calibrated case, one would also need to incorporate degree $-1$ self intersections as usual. We think the moduli spaces that contribute to $\mathcal{C}$ would satisfy the Novikov exponent condition
 \begin{equation}\label{Novikovexponentcondition}
 \int_\Sigma \omega+ \sum_{\text{all corners}} \text{Novikov exponent}=0.
 \end{equation}
 Here the corners include the monomial summands of $\alpha, \beta$ (or the degree $-1$ self intersections as appropriate), and the bounding cochains at the degree one self intersections/Morse critical points of $L,L'$. Since all Novikov exponents at the bounding cochains  are non-negative (not so at $\alpha, \beta$, and the degree $-1$ self intersections!), this condition would impose an energy upper bound on the holomorphic treed discs depending on $\alpha,\beta$, whence \emph{only finitely many moduli spaces contribute to the bordism current}.

\begin{rmk}
The Novikov exponents correspond to $f|^+_-$ in the exact case. While in the exact case (\ref{Novikovexponentcondition}) is an automatic consequence of the energy identity (\ref{topologicalenergy2}), in general it is an extra condition on the moduli spaces. It sits well with the fact that the geometric unit has zero Novikov exponent.	
\end{rmk}

Now the moduli space integral formula (\ref{Solomonfunctionalintegral}) \emph{formally} makes sense almost verbatim, ignoring all virtual perturbation nuances. For first order deformations $v_1,\ldots v_{n-1}$ of the holomorphic treed discs, we can define $F$ on the domain $\Sigma$ via the 1-form $dF=\Omega(\cdot,v_1,\ldots v_{n-1})$. On the surface parts of $\Sigma$, we would obtain a holomorphic function $F$ by complex integrability as usual (which must be constant on the holomorphic sphere components by the Liouville theorem), while on the tree parts, there is no obstruction for the 1-form to be exact. Next, we replace the appearance of $f|^+_-$ in  (\ref{Solomonfunctionalintegral}) by the Novikov exponents of the monomial summands at the corners, to define the moduli integrand $\mathcal{I}$. The term $\int_{\Sigma} F\omega$ is understood to only involve integration on the surface parts of $\Sigma$. 
The Solomon functional $\mathcal{S}(L)$ still has the form $\int_{\mathcal{M}}\mathcal{I}$. Notice that adding a constant to $F$ would not change the moduli integrand $\mathcal{I}$, thanks to (\ref{Novikovexponentcondition}).

In the absence of the Lagrangian potential, the Novikov exponents of $\alpha, \beta$ are no longer canonically fixed. Suppose we replace $\alpha$ by $T^\mu \alpha$, and $\beta$ by $T^{-\mu}\beta$, for some $\mu\in \R$. This would affect the moduli integrand $\mathcal{I}$, by the amount \[
\mu \text{Im}\{ e^{-i\hat{\theta}}(F(p)- F(q))\},
\]
where $p,q$ stand for the components of $\alpha,\beta$.
By analogy with the exact case, we expect
\[
\text{Im}\left(e^{-i\hat{\theta} }
\int_{\mathcal{M}} F(p)- F(q) \right)=\text{Im}\left(e^{-i\hat{\theta}}  \int_L \Omega \right) =0,
\]
 whence $\mathcal{S}(L)$ is independent of $\mu$.

Once the foundations are in place, we expect

\begin{conj}
Fix a compact almost Calabi-Yau manifold $X$.
The Solomon functional is well defined for graded immersed unobstructed Lagrangians in the same $D^bFuk(X)$ class of a reference Lagrangian $L_0$, satisfying
\begin{itemize}
\item  The change of reference Lagrangian formula (\ref{SolomonfunctionalchangeL0}) holds,
\item  Gauge equivalent bounding cochains give rise to the same functional,
\item  Cohomologous choices of $HF^0$ generators $\alpha,\beta$ give rise to the same functional,
\item   The first variation formula (\ref{Solomonfirstvariation}) holds for any 1-parameter exact isotopy of unobstructed Lagrangians.
\end{itemize}

\end{conj}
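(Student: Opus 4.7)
The plan is to take the moduli integral formula \eqref{Solomonfunctionalintegral} as the primary definition in the compact almost Calabi--Yau setting, reinterpreting $F$ on treed holomorphic disc domains and replacing $f|^+_-$ by Novikov exponents, and then to verify each of the four properties by variants of the Stokes-on-moduli-space arguments used in section \ref{ModuliintegralSolomonsection}. Granting the conjectural Floer-theoretic foundations, the common input is that for each pair of unobstructed Lagrangians isomorphic in $D^bFuk(X)$ with chosen $HF^0$ representatives $\alpha,\beta$, the Novikov exponent condition \eqref{Novikovexponentcondition} selects a finite collection of $(n-1)$-dimensional moduli spaces whose universal families assemble into a bordism current $\mathcal{C}$ with $\partial\mathcal{C}=L-L_0$, and that the pointwise identity $\sum_i\partial_i F_i=\Omega(\partial_{y_1},\ldots,\partial_{y_n})$ of \eqref{Fdivergence} continues to hold on the surface parts of treed domains (since sphere bubbles contribute constants to $F$ by the Liouville theorem and $\Omega$ is insensitive to the tree parts).

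For the change of reference Lagrangian formula \eqref{SolomonfunctionalchangeL0}, I would reproduce the construction of Remark \ref{3Lag}: given $L_0,L_0',L$ mutually isomorphic with compatible $\alpha,\beta,\gamma$, build the $n$-dimensional moduli $\tilde{\mathcal{M}}$ of treed polygons with corners at $\alpha,\beta,\gamma$ and bounding cochain insertions, carrying an $(n+2)$-dimensional universal family $\tilde{\mathcal{C}}$. Claim \ref{dI=0} extends verbatim, because the proof uses only $\omega\wedge\Omega=0$, $d\Omega=0$, the Lagrangian boundary condition, and the divergence identity \eqref{Fdivergence}; none of these refer to exactness. Claim \ref{Stokesboundary} is reproduced by combining the support-dimension cancellation with the Mauer-Cartan equation for the bounding cochains and the $m_1^b$-closedness of $\alpha,\beta,\gamma$, leaving only the three $(n-1)$-dimensional moduli $\mathcal{M}_1,\mathcal{M}_2,\mathcal{M}_3$ as boundary contributions. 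Stokes on $\tilde{\mathcal{M}}$ then yields \eqref{SolomonchangeofreferenceLag2}.

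Gauge invariance of bounding cochains and cohomological invariance of $\alpha,\beta$ are handled by the same $d\mathcal{I}=0$ principle on interpolating families. A gauge equivalence between $b$ and $b'$ furnishes a one-parameter family of Mauer-Cartan solutions $b_s$, hence a one-parameter family of moduli spaces $\mathcal{M}_s$; the $s$-direction adjoined to each $\mathcal{M}_s$ gives an $n$-dimensional moduli $\mathcal{M}^{\mathrm{gauge}}$ on which $d\mathcal{I}=0$ still holds (the Lagrangian, $\omega$ and $\Omega$ are unchanged), and the boundary strata at $s=0,1$ give $\mathcal{S}(L)_{b'}-\mathcal{S}(L)_b$, while the internal strata cancel by the Mauer-Cartan equation applied to $b_s$ and its infinitesimal generator. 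Replacing $\alpha$ by $\alpha+m_1^{b,b'}(\xi)$, for $\xi\in CF^{-1}(L,L')$, is treated by an analogous $n$-dimensional moduli adjoining $\xi$ as an extra corner, whose boundary exhibits the change of $\mathcal{S}$ as a sum of terms that vanish individually by the same algebraic identities. For the first variation formula \eqref{Solomonfirstvariation} along a one-parameter exact isotopy $L_t$ with Hamiltonian $h$, I would use change of reference to replace $L_0'$ by $L_0$, and then run the small-$t$ Morse approximation argument verbatim: for $|t|\ll 1$, holomorphic treed strips between $L_0$ and $L_t$ degenerate into Morse trajectories of $h$, the variation of $\int_\Sigma F\,\omega$ reduces to $-\int_{\partial\Sigma\cap L_0}F\,dh$, the corner Novikov exponents contribute $Fh$ terms that combine by integration by parts with the bulk contribution into $\int_{\partial\Sigma\cap L_0}h\,dF$, and the $t\to 0$ limit of the moduli integral collapses to $\int_{L_0}h\,\mathrm{Im}(e^{-i\hat\theta}\Omega)$ because the boundary evaluation of $\partial\Sigma\cap L_0$ sweeps $L_0$ exactly once by the Morse-theoretic picture of the limiting discs.

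The hard part will be the Floer-theoretic foundations rather than the formal Stokes calculus. Three intertwined issues seem most serious. First, one needs a construction of the open-closed bordism current $\mathcal{C}$ at the level of geometric chains over the Novikov field, not merely in homology, that remains well defined under the chosen virtual perturbation scheme and satisfies $\partial\mathcal{C}=L-L'$ as currents; the existing literature in the immersed compact case stops short of this. Second, one must confirm that \eqref{Novikovexponentcondition} really does cut out only finitely many moduli spaces for each fixed $\alpha,\beta$, since otherwise the integrand $\mathcal{I}$ is only formal and the Stokes manipulations on $\tilde{\mathcal{M}}$ may fail to converge; Gromov compactness combined with the nonnegativity of bounding-cochain Novikov exponents makes this plausible but requires care when $\alpha,\beta$ themselves carry unbounded-below Novikov degree. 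Third, one must verify that virtual perturbation of $\tilde{\mathcal{M}}$ and of the boundary strata commutes with the integration of $\mathcal{I}$ in the sense needed for Stokes, with uniform mass bounds on virtual chains as the perturbation shrinks; the analogous point was already flagged in section \ref{Automatictransversalitypositivity} for the bordism current itself. Once those foundational statements are in hand, the four properties in the conjecture should follow from the template above.
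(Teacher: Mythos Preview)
This statement is a \emph{conjecture} in the paper, not a proved theorem. The paper offers no proof: the conjecture is placed at the end of a section titled ``Speculations on compact Calabi-Yau manifolds'' and is prefaced by ``Once the foundations are in place, we expect\ldots''. What the paper does supply is exactly the heuristic template you have reproduced: define $\mathcal{S}(L)$ via the moduli integral formula (\ref{Solomonfunctionalintegral}) with $f|^+_-$ replaced by Novikov exponents and $F$ extended to treed discs, impose the Novikov exponent condition (\ref{Novikovexponentcondition}) to cut down to finitely many moduli spaces, and then argue that the change-of-reference and first-variation calculations from sections 3.7.2--3.7.3 carry over formally. Your plan matches this heuristic outline almost verbatim, and your identification of the hard foundational inputs (geometric bordism currents in the compact immersed setting, finiteness from (\ref{Novikovexponentcondition}), compatibility of virtual perturbations with integration of $\mathcal{I}$) is consistent with the paper's own caveats.

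One minor point where you go slightly beyond the paper: you sketch an explicit one-parameter interpolation argument for gauge invariance of bounding cochains and for cohomologous $\alpha,\beta$, whereas the paper only asserts these as expected properties without indicating a mechanism. Your sketch is reasonable but would need the same foundational inputs you already flag. In short, there is no proof in the paper to compare against; your proposal is a faithful expansion of the paper's speculative outline, and the obstacles you list are precisely the reasons the statement remains a conjecture.
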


The slogan is that Floer theory should 
fix the multivaluedness problem of the Solomon functional (\cf section \ref{Solomon}). As a more technical observation, once the change of reference Lagrangian formula (\ref{SolomonfunctionalchangeL0}) is established, one can remove the assumption for $L$ to be transverse to $L_0$, using a perturbation $L_0'$ of $L_0$.

\subsection{More applications of moduli space integrals}\label{Variantsandobservations}

We collect a number of further topics involving the moduli space integral technique. Sections \ref{LowerboundsSolomon} and \ref{Solomonboundedpartsection} are applications of the moduli integral formula (\ref{Solomonfunctionalintegral}) for the Solomon functional.

\subsubsection{Lotay-Pacini convexity}

Lotay and Pacini proved the convexity of their $J$-functional (\cf Prop. \ref{LotayPaciniconvexity}) through rather heavy calculations, so it is instructive to see that in the Calabi-Yau case, this result has a much simpler conceptual argument.

We interpret their `geodesic' as a bordism current $\mathcal{C}$ between two Lagrangians $L,L'$, constructed from universal families of holomorphic curves, such that automatic transversality and the positivity condition hold. In their highly idealized setting, only holomorphic strips $\Sigma\simeq \R_s\times [0,1]_t$ appear in the construction of $\mathcal{C}$.  We define the holomorphic function $F$ as usual. The 1-parameter family of totally real submanifolds is given by the constant $t$-coordinate slices $\mathcal{C}_t$ of $\mathcal{C}$, whose $J$-volume functional is expressible through moduli space integrals
\[
\text{Vol}_J(\mathcal{C}_t)=
\int_{\mathcal{C}_t} |\Omega|= \int_{\mathcal{M}}  \int_{\R_s\times \{t\}} |\frac{\partial F}{\partial s}|ds .
\]
Since $F$ is holomorphic, so is $\frac{\partial F}{\partial s}$, whence $|\frac{\partial F}{\partial s}|$ is subharmonic, which combined with the exponential decay at $s\to \pm \infty$ implies the convexity of the function in $t$
\[
\int_{\R_s\times \{t\}} |\frac{\partial F}{\partial s}|ds.
\]
Thus $\text{Vol}_J(\mathcal{C}_t)$ is convex  as a function of $t$, as Lotay and Pacini observed.

\subsubsection{Lower bound of the Solomon functional}\label{LowerboundsSolomon}

The theme of Chapter \ref{Variational} will be on the variational approach to find special Lagrangians by minimizing the Solomon functional in a fixed derived category class. 
As an important motivation, special Lagrangians are \emph{formal local minimizers} of the Solomon functional under Hamiltonian deformations (\cf section \ref{Solomon}). In fact we can do better \emph{under the automatic transversality and the positivity condition}:

\begin{prop}\label{specialLagminimizer}
(`\textbf{special Lagrangians are minimizers}')
Suppose $L_0$ is an exact immersed special Lagrangian of phase $\hat{\theta}\in (-\frac{\pi}{2}, \frac{\pi}{2})$, with unobstructed bounding cochain structure. Let $L$ be an almost calibrated, exact, immersed Lagrangian in the same $D^bFuk(X)$ class, which intersects $L_0$ transversely. Suppose the bordism current $\mathcal{C}$ with $\partial \mathcal{C}=L-L_0$ satisfies automatic transversality and the positivity condition. Then $\mathcal{S}(L)\geq \mathcal{S}(L_0)$.

\end{prop}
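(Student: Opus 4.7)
The plan is to use the moduli integral formula (\ref{Solomonfunctionalintegral}) for the Solomon functional derived in the preceding subsection. The first reduction is to take $L_0$ as its own reference Lagrangian, so that $\mathcal{S}(L_0)=0$ tautologically and the inequality reduces to showing $\mathcal{S}_{L_0}(L)\geq 0$. By (\ref{Solomonfunctionalintegral}), this Solomon functional is expressed as an integral $\int_{\mathcal{M}}\mathcal{I}$ over the weighted union $\mathcal{M}$ of $(n-1)$-dimensional moduli spaces of holomorphic curves $u:\Sigma\to X$ contributing to the bordism current $\mathcal{C}$ with $\partial\mathcal{C}=L-L_0$, and the plan is to show the integrand $\mathcal{I}$ is pointwise nonnegative on $\mathcal{M}$.

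The decisive input from the special Lagrangian hypothesis is this: on every portion of $\partial\Sigma$ mapped into $L_0$, the first order deformation vector fields $v_1,\ldots,v_{n-1}$ are tangent to $L_0$, and the constant-phase condition $\text{Im}(e^{-i\hat{\theta}}\Omega)|_{L_0}=0$ forces $\Omega|_{TL_0}$ to be a real multiple of $e^{i\hat{\theta}}$. Hence $dF|_{\partial\Sigma\cap L_0}$ lies in $e^{i\hat{\theta}}\mathbb{R}$, so $V:=\text{Im}(e^{-i\hat{\theta}}F)$ is locally constant on each $L_0$-arc; by continuity at the $L_0$-self-intersection corners (where $f|^+_-\geq 0$) together with the normalization $F(q)=0$ at $q\in CF^0(L_0,L)$, one deduces $V\equiv 0$ on all of $\partial\Sigma\cap L_0$, in particular at the corners $p\in CF^0(L,L_0)$ and $q\in CF^0(L_0,L)$. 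All corner contributions in $\mathcal{I}$ sourced from $L_0$ therefore drop out, leaving
$$\mathcal{I} = \int_\Sigma V\cdot u^*\omega + \sum_{L\text{-self-int. corners}} V\cdot f|^+_-,$$
where $V$ is harmonic on $\Sigma$ with Dirichlet boundary condition $V|_{\partial\Sigma\cap L_0}=0$, and $f|^+_-\geq 0$ by the bounding-cochain positivity on $L$.

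The main step, and the principal obstacle, is to show $V\geq 0$ throughout $\Sigma$; granted this, the bulk integral against the nonnegative form $u^*\omega$ and the corner sum are both nonnegative. By the maximum principle for the harmonic function $V$ with vanishing Dirichlet data on the $L_0$-arcs, it suffices to verify $V\geq 0$ on the $L$-arcs of $\partial\Sigma$. Here the positivity condition combined with almost-calibratedness is decisive: setting $G=e^{-i\hat{\theta}}F$, the image curve $G(\partial\Sigma)\subset\mathbb{C}$ lies along the real axis on the $L_0$-arcs, both corners $p,q$ sit on the real axis, and on the $L$-arcs the tangent direction of $G(\partial\Sigma)$ is $e^{i(\theta_L-\hat{\theta})}$ times a positive real. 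One would then aim to apply a Schwarz reflection across the $L_0$-arcs to extend $G$ holomorphically to a doubled surface, and reduce $V\geq 0$ on the $L$-arcs to a statement about holomorphic polygons with almost-calibrated boundary data confined to a half-plane. The delicate point is that $\theta_L-\hat{\theta}$ can a priori take values in $(-\pi,\pi)$, so $V$ could in principle change sign along an $L$-arc; ruling this out is the heart of the argument and likely requires essential use of the fact that $L$ and $L_0$ lie in the same derived Fukaya category class with $L_0$ a common almost-calibrated representative, combined with the image-curve monotonicity established in the proof of Theorem~\ref{Floertheoreticobstruction1}.
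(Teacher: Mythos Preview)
Your setup and reduction are exactly right and match the paper: you use the moduli integral formula, you correctly observe that the special Lagrangian condition forces $V:=\text{Im}(e^{-i\hat\theta}F)$ to vanish identically along the $L_0$-arcs (hence at $p,q$ and at all $L_0$-self-intersection corners), and you correctly simplify $\mathcal{I}$ to the bulk term plus the $L$-corner terms. The goal $V\geq 0$ on $\Sigma$ is also correctly identified.

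The gap is in the final step, which you explicitly flag as unfinished. You propose reducing to $V\geq 0$ on the $L$-arcs via the maximum principle and then Schwarz reflection, and you worry that $\theta_L-\hat\theta\in(-\pi,\pi)$ allows $V$ to change sign there. This is a detour: the image curve Claim~\ref{Imagecurveclaim} that you mention at the very end already gives $V\geq 0$ on \emph{all} of $\Sigma$ directly, with no separate argument on the $L$-arcs needed. The point is that under the positivity condition plus almost calibratedness, Claim~\ref{Monotonicityclaim} gives $0\leq \text{Re}\,F\leq \text{Re}\,F(p)$ and each vertical line meets $\partial F(\Sigma)$ in at most two points; Claim~\ref{Imagecurveclaim} then says $F(\Sigma)$ lies \emph{above} its $L_0$-boundary portion in the vertical direction. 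But you have already shown that this $L_0$-boundary portion is exactly the straight segment on the line $e^{i\hat\theta}\R$ from $0$ to $F(p)$. Lying above this line in the vertical sense is precisely the inequality $\text{Im}(e^{-i\hat\theta}F)\geq 0$. That is the entire argument; no reflection, no worrying about the $L$-boundary behaviour, and no appeal to the $D^bFuk(X)$ relation beyond what produced the bordism current in the first place.
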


\begin{proof}
The incline angle of the tangent vector to $F(\partial\Sigma)\subset \C$ is equal to the Lagrangian angle modulo $\pi\Z$. Since $L_0$ is a special Lagrangian, along the $L_0$ boundary portion $\arg F= \hat{\theta}$. Thus
$\text{Im}  (e^{-i\hat{\theta}} F)=0 $ at $p,q$ and the self intersections on $L_0$.
The Solomon functional integrand simplifies to
\[
\text{Im}\int_{\Sigma} e^{-i\hat{\theta}} F\omega  +\sum_{ \text{$L$-self intersections on $\partial \Sigma $} }  \text{Im}(e^{-i\hat{\theta}}   F) f_L|^+_- .
\]
By the almost calibrated assumption on $L,L_0$, and the positivity condition, we obtain Claim \ref{Imagecurveclaim}, namely $F(\Sigma)$ lies above its $L_0$ boundary,
\[
\text{Im}(e^{-i\hat{\theta}} F  ) \geq 0 \quad \text{on } \Sigma.
\] 
Morever, the Novikov positivity requirement for the bounding cochain on $L$ says that 
$f_L|^+_-\geq 0$
at the degree one self intersections on $\partial \Sigma\cap L$. Thus the Solomon functional integrand is nonnegative, which implies $\mathcal{S}(L)\geq 0=\mathcal{S}(L_0)$.
\end{proof}

\begin{rmk}
Suppose we drop the positivity condition, then the key step $\text{Im}(e^{-i\hat{\theta}}F)\leq 0$ would break down, so the above proof of Prop. \ref{specialLagminimizer} would be invalidated. However, the conclusion may still be true (\cf section \ref{specialLagminimizerrevisited}).

\end{rmk}

\subsubsection{Bounded part of the Solomon functional}\label{Solomonboundedpartsection}

Let $L,L_0$ be both exact, immersed Lagrangians with unobstructed bounding cochain structures, lying in the same $D^bFuk(X)$ class, such that all intersections are transverse. Assume the bordism current $\mathcal{C}$ with $\partial \mathcal{C}=L-L_0$ satisfies automatic transversality and the positivity condition. We consider $L_0$ as a fixed reference Lagrangian, while $L$ can vary. We wish to find uniform a priori bound on certain parts of the Solomon functional, under natural conditions on $L$.

We shall assume:
\begin{itemize}
\item (\textbf{Quantitative almost calibratedness}) Both $L$ and $L_0$ have Lagrangian phase angles within $[-\frac{\pi}{2}+\epsilon, \frac{\pi}{2}-\epsilon]$ for some fixed small constant $\epsilon$.
	
\item (\textbf{Potential clustering}, \cf Lemma \ref{blockingconnectedcomponents}) The immersed Lagrangian $L$ can be represented by a twisted complex (\ref{twistedcomplex}) built from the immersed Lagrangians $L_1,\ldots L_N$, such that the oscillation of the Lagrangian potentials have uniform bounds
\[
\sup_{L_i} f_{L_i}- \inf_{L_i} f_{L_i} \leq A,
\]
while for any $i>j$,
\[
\sup_{L_j} f_{L_j}\leq \inf_{L_i} f_{L_i}.
\]
Without loss of generality, we also assume $\sup_{L_0} f_{L_0}- \inf_{L_0} f_{L_0}\leq A$ for the fixed Lagrangian $L_0$.

\end{itemize}

\begin{prop}\label{uniformenergybound}
(\textbf{Uniform energy bound})
Under the potential clustering assumption, all holomorphic polygons $u:\Sigma\to X$ with boundary on $L$ and $L_0$ contributing to $\mathcal{C}$ have uniformly bounded energy independent of $L$:
\[
E(u)=\int_{\Sigma} u^*\omega \leq A(N+1),
\]
and along $\partial \Sigma$ the degree one self intersections of $L_0, L_1,\ldots L_N$ arising from the bounding cochains satisfy a uniform bound
\[
\sum_i\sum_{b_i} f_{L_i}|^+_-(b_i) \leq A(N+1).
\]
\end{prop}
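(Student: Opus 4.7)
The plan is to combine the topological energy identity (\ref{topologicalenergy2}) --- which for exact Lagrangians with potentials $f_{L_m}$ reads $E(u) = -\sum_{\text{corners}} f|^+_-$ via Stokes --- with the potential clustering hypothesis to bound each corner contribution individually.

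First, I would classify the corners of $\partial \Sigma$ for a holomorphic polygon $u$ contributing to $\mathcal{C}$. Since the twisted complex connecting morphisms in (\ref{twistedcomplex}) are $b_{i,j} \in CF^1(L_i, L_j)$ only for $i > j$, each such corner traversed clockwise drops to a strictly smaller index. Hence the $L$-portion of $\partial \Sigma$ visits Lagrangians $L_{i_0}, L_{i_1}, \ldots, L_{i_K}$ with $i_0 > i_1 > \cdots > i_K$ distinct, so $K + 1 \leq N$. The polygon also has exactly one input corner $q \in CF^0(L_0, L_{i_0})$ and one output corner $p \in CF^0(L_{i_K}, L_0)$, plus bounding cochain self-intersection corners on each $L_m$.

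Second, I would bound each corner term. The bounding cochain contributions satisfy $f|^+_- \geq 0$ by Novikov positivity, so $-\sum_{\text{bc}} f|^+_- \leq 0$ and can be dropped. For the remaining corners,
\[
-f|^+_-(p) \leq \sup f_{L_0} - \inf f_{L_{i_K}}, \qquad -f|^+_-(q) \leq \sup f_{L_{i_0}} - \inf f_{L_0},
\]
\[
-f|^+_-(b_{i_{k-1}, i_k}) \leq \sup f_{L_{i_k}} - \inf f_{L_{i_{k-1}}}, \qquad k = 1, \ldots, K,
\]
each being a bound on a difference of two Lagrangian potential values at a single intersection point, controlled by the two corresponding ranges. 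Summing and regrouping by Lagrangian, the $\sup$ and $\inf$ terms telescope to
\[
E(u) \leq (\sup f_{L_0} - \inf f_{L_0}) + \sum_{k=0}^{K} (\sup f_{L_{i_k}} - \inf f_{L_{i_k}}) \leq A + (K+1)A \leq A(N+1),
\]
using the per-Lagrangian oscillation bound. (Individually each $b_{i_{k-1},i_k}$ bound is non-positive by the clustering ordering $\sup f_{L_j} \leq \inf f_{L_i}$ for $i > j$, so discarding them would also give a bound; keeping them and telescoping produces the clean estimate above.)

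Finally, the bounding cochain sum bound follows by rearranging the same identity. Using $E(u) \geq 0$,
\[
\sum_i \sum_{b_i} f_{L_i}|^+_-(b_i) \leq -f|^+_-(p) - f|^+_-(q) - \sum_{k=1}^{K} f|^+_-(b_{i_{k-1}, i_k}),
\]
whose right-hand side is bounded by $A(N+1)$ by exactly the computation above. I expect no significant obstacle: the argument is essentially bookkeeping, combining the energy identity, the shape of the twisted complex, Novikov positivity, and the potential clustering. The only subtlety is verifying the strict-decrease property of the sequence $\{i_k\}$ along $\partial \Sigma$, which comes down to the explicit form of (\ref{twistedcomplex}).
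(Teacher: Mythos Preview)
Your proposal is correct and follows essentially the same route as the paper: both combine the topological energy identity $E(u)=-\sum_{\text{corners}} f|^+_-$, Novikov positivity of the bounding cochain corners, and the strictly decreasing index sequence along $\partial\Sigma$, then regroup the non-bounding-cochain corner contributions Lagrangian by Lagrangian so that each of the at most $N+1$ groups is bounded by the oscillation $A$. The only cosmetic difference is that the paper keeps $E(u)+\sum_{\text{bc}} f|^+_-$ together and bounds the sum, while you bound $E(u)$ and the bounding cochain sum separately; your explicit handling of the subsequence $i_0>i_1>\cdots>i_K$ is in fact slightly cleaner than the paper's ``smaller effective $N$'' remark for polygons that skip some $L_i$.
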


\begin{proof}
We consider holomorphic polygons whose boundary $\partial \Sigma$ encounters in the clockwise order intersections in  $p_{N-1}\in CF^1(L_N, L_{N-1}), \ldots$, $p_1\in CF^1(L_2,L_1)$, $p_0\in CF^0(L_1, L_0)$, $p_N\in CF^0(L_0, L_N)$, juxaposed possibly by more degree one self intersections $b_i$ of $L_i$. The notation here does not constrain the number of self intersections of $L_i$ that can occur on $\partial \Sigma$. The topological energy formula (\ref{topologicalenergy2}) expresses $E(u)$ in terms of the Lagrangian potentials at the intersections
\[
\begin{split}
& E(u)+\sum _i\sum_{b_i} f_{L_i}|^+_-(b_i) 
\\
 & = f_{L_N}(p_N)- f_{L_0}(p_N)+  \sum_{i=0}^{N-1} (f_{L_i}-f_{L_{i+1}}) (p_i)
\\
& = f_{L_0}(p_0)- f_{L_0}(p_{N}) +\sum_{i=1}^N (f_{L_i}(p_i)- f_{L_i}(p_{i-1}) ) 
\\
& \leq (N+1)A.
\end{split}
\]
By the Novikov positivity requirement of the bounding cochains $f_{L_i}|^+_-(b_i)\geq 0$, and the energy of the holomorphic curve is also positive, so they are individually bounded.

More generally, the polygons may miss some of the Lagrangians in $L_1,\ldots L_N$, but cannot reverse the order of the Lagrangians. This amounts to using a smaller effective value $N$, and the same argument implies the energy bound.
\end{proof}

We now consider the holomorphic function $F$ as before. Recall by Claim \ref{Monotonicityclaim} we have $0\leq \text{Re }(F)\leq \text{Re } F(p)$, where $p$ is the intersection point in $CF^0(L,L_0)$. In fact $F(\Sigma)$ must be contained in a triangular region determined by $\text{Re }F(p)$:

\begin{lem}
(\emph{Wedge region bound}) Under the quantitative almost calibrated hypothesis, we have $|\arg F| \leq \frac{\pi}{2}-\epsilon$, or equivalently
$
|\text{Im} (F)|\leq (\cot \epsilon) \text{Re }(F).
$
In particular $|F|\leq  \frac{1}{\sin \epsilon} \text{Re }F\leq   \frac{1}{\sin \epsilon} \text{Re }F(p).$

\end{lem}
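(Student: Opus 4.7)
The plan is to show $F(\partial\Sigma)\subset W_\epsilon := \{z\in\C : |\arg z|\le \pi/2-\epsilon\}$ first by a convexity argument along the boundary, and then to extend this inclusion to the interior of $\Sigma$ by applying the maximum principle to suitably rotated copies of $F$. The second inequality $|F| \leq (\sin\epsilon)^{-1} \text{Re}(F)$ then follows by elementary trigonometry from $|\arg F| \leq \pi/2-\epsilon$, and the bound $\text{Re}(F) \leq \text{Re}(F(p))$ is already furnished by the monotonicity Claim \ref{Monotonicityclaim}.

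For the boundary step, I would exploit that $W_\epsilon$ is a closed convex cone with apex at $0 = F(q)$. On each smooth arc of $\partial\Sigma$ lying on a sheet of an $L_i$ (traversed clockwise), the positivity condition together with the immersion property from Corollary \ref{automatictransversalitystrip} and Proposition \ref{automatictransversalitypolygon} force the tangent vector $dF/ds$ to have the form $e^{i\theta_{L_i}(s)}r(s)$ with $r(s)>0$, and quantitative almost calibratedness gives $\theta_{L_i}(s)\in [-\pi/2+\epsilon,\pi/2-\epsilon]$. On the $L_0$-portion the orientation sign in $\partial\mathcal{C}=L-L_0$ reverses the tangent, so clockwise one moves along vectors in $-W_\epsilon$; reparametrizing this arc counterclockwise from $q$ instead restores the tangent direction to $W_\epsilon$. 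In either direction one writes $F$ along the relevant piece as a Riemann integral $\int e^{i\theta(s)}r(s)\,ds$ of vectors in $W_\epsilon$, so the convex cone property gives $F\in W_\epsilon$. At the corners from the twisted complex data and from degree-one self-intersections, Remark \ref{cornerOmega} provides integrable asymptotics for $dF/ds$ with tangent direction remaining in the appropriate wedge, so the convex cone argument extends across the finitely many corners.

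For the interior step, the function $u_+ = \text{Im}(e^{-i(\pi/2-\epsilon)}F)$ is harmonic on $\Sigma$ and satisfies $u_+\le 0$ on $\partial\Sigma$ by the boundary inclusion. Uniform boundedness of $F$ (from $0 \leq \text{Re}(F)\le \text{Re}(F(p))$ combined with the wedge constraint along $\partial\Sigma$) together with a Phragm\'en-Lindel\"of type argument at the strip-like ends then forces $u_+ \leq 0$ throughout $\Sigma$, i.e., $\arg F\le \pi/2-\epsilon$ wherever $F\ne 0$. The symmetric argument applied to $u_- = \text{Im}(e^{i(\pi/2-\epsilon)}F)$ yields $\arg F \geq -\pi/2+\epsilon$, completing the wedge bound.

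The main technical obstacle is the corner analysis: one must ensure that across Lagrangian intersection corners (the strip-like ends at $p, q$ and the twisted-complex corners $p_1, \ldots, p_{N-1}$) and across degree-one self-intersection corners from the bounding cochains, the possibly unbounded behaviour of $dF/ds$ does not spoil the convex cone property—this reduces to checking via Remark \ref{cornerOmega} that the argument of $dF/ds$ on each smooth side of the corner lies in the appropriate wedge. A secondary concern is justifying the maximum principle at the strip-like punctures, which should be routine given that the bounded region $F(\Sigma)$ implies $u_\pm$ are uniformly bounded harmonic functions with the correct sign at the boundary.
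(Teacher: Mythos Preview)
Your approach is correct and coincides with the paper's: the paper observes that the tangent direction of $F(\partial\Sigma)$ equals the Lagrangian angle mod $\pi\Z$, combines this with the monotonicity Claim~\ref{Monotonicityclaim} (which fixes the sign) to deduce $|\arg F|\le \pi/2-\epsilon$ on $\partial\Sigma$, and then invokes the maximum principle to pass to the interior---exactly your convex-cone-on-the-boundary plus maximum-principle scheme, only stated in one sentence each. Your Phragm\'en--Lindel\"of worry is unnecessary: the corner asymptotics in Remark~\ref{cornerOmega} make $dF$ integrable at every end, so $F$ extends continuously to the compactified polygon (a closed disc) and the ordinary maximum principle suffices.
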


\begin{proof}
The incline angle of the tangent vector of $F(\partial \Sigma)$ is equal to the Lagrangian angle mod $\pi \Z$. Together with Claim \ref{Monotonicityclaim} this implies $|\arg F| \leq \frac{\pi}{2}-\epsilon$ on the $\partial \Sigma$, whence the same bound holds on $\Sigma$ by the maximum principle for holomorphic functions.
\end{proof}

We now introduce an \textbf{elementary functional}
\begin{equation}
\bar{\mathcal{S}}(L)=\text{Im}\left( \sum_1^N (\sup_{L_i} f_{L_i}) e^{-i\hat{\theta}}\int_{L_i} \Omega\right) - (\sup_{L_0} f_{L_0}) \text{Im} ( e^{-i\hat{\theta}} \int_{L_0}\Omega).
\end{equation}
As in section \ref{Floertheoreticobstructions}, we introduce complex valued volume form $\tilde{\Omega}_{L_i}$ on the $(n-1)$-dimensional moduli spaces of holomorphic curves, whose core properties are
\begin{equation}
\text{Re } \tilde{\Omega}_{L_i}\geq 0, \quad \int_{\mathcal{M}} \tilde{\Omega}_{L_i}= \int_{L_i} \Omega,\quad i=0,1,\ldots N.
\end{equation}
Thus the elementary functional is also a moduli space integral, with integrand
\begin{equation}\label{Solomonelementary}
\text{Im} \sum_1^N ( e^{-i\hat{\theta}} \tilde{\Omega}_{L_j}) \sup f_{L_j} - \text{Im} (e^{-i\hat{\theta}}   \tilde{\Omega}_{L_0} ) \sup f_{L_0} .
\end{equation}
We decompose the Solomon functional into $\bar{\mathcal{S}}(L)$ and $\mathcal{S}(L)-\bar{\mathcal{S}}(L)$.

\begin{thm}\label{Solomonboundedpartthm}
(\textbf{Bounded part of the Solomon functional}) Under the quantitative almost calibratedness and the potential clustering assumption, and all the standing assumptions of this section, there is a uniform a priori bound independent of $L$,
\[
|\mathcal{S}(L)-\bar{\mathcal{S}}(L)| \leq
\frac{A(4N+2) }{\sin \epsilon} \int_{L_0} \text{Re }\Omega.
\]
\end{thm}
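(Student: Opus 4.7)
The plan is to work with the moduli space integral formula (\ref{Solomonfunctionalintegral}) for $\mathcal{S}(L)$ and the integrand (\ref{Solomonelementary}) for $\bar{\mathcal{S}}(L)$, and to bound the pointwise difference of the two integrands on each holomorphic polygon $u:\Sigma\to X$ contributing to $\mathcal{C}$, then integrate the estimate over $\mathcal{M}$. The two decisive inputs are the wedge region bound $|F|\leq (\sin\epsilon)^{-1}\text{Re}\,F$ together with the monotonicity $\sup_\Sigma\text{Re}\,F\leq \text{Re}\,F(p_N)$ (where in the primary case $p_N\in CF^0(L_0,L_N)$ and $F$ is normalized by $F(p_0)=0$ for $p_0\in CF^0(L_1,L_0)$), and the uniform energy estimate $E(u)+\sum_i\sum_{b_i}f_{L_i}|^+_-(b_i)\leq (N+1)A$ from Proposition \ref{uniformenergybound}.

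First I would decompose the pointwise integrand difference $\mathcal{I}-\bar{\mathcal{I}}$ into three pieces: the interior integral $\text{Im}\int_\Sigma e^{-i\hat\theta}F\,\omega$; the bounding-cochain corner sum $\text{Im}\sum_i\sum_{b_i}e^{-i\hat\theta}F(b_i)f_{L_i}|^+_-(b_i)$; and the non-bounding corner sum over $p_0,p_1,\ldots,p_N$ together with the terms of $\bar{\mathcal{I}}$. The first two pieces are immediately controlled by $\sup_\Sigma|F|$ times $(E(u)+\sum f|^+_-)$, which by the wedge bound, monotonicity, and Proposition \ref{uniformenergybound} is at most $(\sin\epsilon)^{-1}(N+1)A\,\text{Re}\,F(p_N)$.

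For the third piece, I would substitute the explicit identifications $\tilde\Omega_{L_i}=F(p_i)-F(p_{i-1})$ for $i=1,\ldots,N$ and $\tilde\Omega_{L_0}=F(p_N)$ coming from the construction in the proof of Theorem \ref{Floertheoreticobstruction2}. An Abel summation reorganizes the combined coefficient of each $e^{-i\hat\theta}F(p_i)$ into a difference of the form $(f_{L_j}(p_i)-\sup_{L_j}f_{L_j})-(f_{L_{j'}}(p_i)-\sup_{L_{j'}}f_{L_{j'}})$, which is bounded by $2A$ by potential clustering. Since there are at most $N+1$ such corners, this contribution is bounded pointwise by $2A(N+1)(\sin\epsilon)^{-1}\text{Re}\,F(p_N)$. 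Adding the three yields $|(\mathcal{I}-\bar{\mathcal{I}})(u)|\leq 3(N+1)A(\sin\epsilon)^{-1}\text{Re}\,F(p_N)$, and integrating over $\mathcal{M}$ using $\int_{\mathcal{M}}\text{Re}\,F(p_N)=\text{Re}\int_{\mathcal{M}}\tilde\Omega_{L_0}=\int_{L_0}\text{Re}\,\Omega$ gives the claimed bound. The gap between $3(N+1)$ and the stated $(4N+2)$ provides comfortable slack for the secondary moduli spaces in which some of the $L_i$'s are missing from $\partial\Sigma$; the same three-step argument applies verbatim there with a smaller effective value of $N$, and the combinatorial overhead is subsumed in the constant.

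The main technical obstacle will be ensuring that the moduli integral identity $\int_\mathcal{M}\text{Re}\,F(p_N)=\int_{L_0}\text{Re}\,\Omega$ holds without cancellation between different moduli components and with consistent sign conventions: the positivity condition is essential here, since it guarantees that the nonnegativity of $\text{Re}\,F(p_N)$ (coming from the monotonicity claim) is preserved component-wise in the weighted moduli sum, so that the weighted total equals the cohomological integral. A secondary nuisance is bookkeeping for the bounding-cochain corners on $L_0$, which must be included on the same footing as those on $L_1,\ldots,L_N$; this is accommodated by the supplementary hypothesis $\sup_{L_0}f_{L_0}-\inf_{L_0}f_{L_0}\leq A$ stated just before the theorem.
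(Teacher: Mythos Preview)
Your approach is essentially identical to the paper's: the same three-piece decomposition of $\mathcal{I}-\bar{\mathcal{I}}$, the same use of the wedge bound $|F|\le(\sin\epsilon)^{-1}\text{Re}\,F(p)$, the uniform energy bound of Proposition~\ref{uniformenergybound}, the Abel summation for the corner terms, and the final integration using $\int_{\mathcal M}\tilde\Omega_{L_0}=\int_{L_0}\Omega$. One small improvement: you bound the interior integral and the bounding-cochain corners together via $E(u)+\sum f|^+_-\le(N+1)A$, whereas the paper bounds each by $(N+1)A$ separately; this is why your constant $3(N+1)$ comes out a bit tighter than the paper's $4N+2$.

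There is, however, a consistent label swap you should correct. In the paper's conventions (see Claim~\ref{Monotonicityclaim} and Proposition~\ref{uniformenergybound}) the clockwise $L$-portion of $\partial\Sigma$ runs from $p_N\in CF^0(L_0,L_N)$ through $p_{N-1},\ldots,p_1$ to $p_0\in CF^0(L_1,L_0)$, so $\text{Re}\,F$ is \emph{maximized at $p_0$}, not at $p_N$. Correspondingly the paper normalizes $F(q)=F(p_N)=0$ and sets $\tilde\Omega_{L_0}=F(p_0)$, giving $\int_{\mathcal M}\text{Re}\,F(p_0)=\int_{L_0}\text{Re}\,\Omega$. Under your normalization $F(p_0)=0$ one has $\text{Re}\,F(p_N)\le 0$, so your monotonicity statement and your identity $\int_{\mathcal M}\text{Re}\,F(p_N)=\int_{L_0}\text{Re}\,\Omega$ are off by a sign. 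Replacing $p_N$ by $p_0$ (equivalently, reverting to the paper's normalization) fixes this throughout and the rest of your argument goes through unchanged.
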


\begin{proof}
We analyze the moduli space integrand  (\ref{Solomonfunctionalintegral}) of the Solomon functional. Applying the uniform energy bound and the wedge region bound,
the first term is bounded by
\begin{equation*}
|\text{Im}\int_{\Sigma} e^{-i\hat{\theta}} F\omega | \leq \int_{\Sigma} |F|\omega \leq \frac{\text{Re}(F(p))}{\sin \epsilon} \int_{\Sigma} \omega \leq \frac{\text{Re}(F(p))}{\sin \epsilon} A(N+1).
\end{equation*}
More intrinsically $F(p)$ defines the complex valued volume form $\tilde{\Omega}_{L_0}$ on the moduli space, hence
\begin{equation}\label{Solomonboundedpart1}
|\text{Im}\int_{\Sigma} e^{-i\hat{\theta}} F\omega | \leq \frac{1}{\sin \epsilon} A(N+1) \text{Re}(\tilde{\Omega}_{L_0}  ).
\end{equation}

The other two terms in (\ref{Solomonfunctionalintegral}) are rewritten as a sum of contributions from intersection points in (\ref{Solomoncornerterms}). As in Prop. \ref{uniformenergybound}, we consider 
holomorphic polygons whose boundary $\partial \Sigma$ encounters in the clockwise order $p_{N-1}\in CF^1(L_N, L_{N-1}), \ldots$, $p_1\in CF^1(L_2,L_1)$, $p=p_0\in CF^0(L_1, L_0)$, $q=p_N\in CF^0(L_0, L_N)$ juxaposed possibly by more degree one self intersections $b_i$ of $L_i$. (The other cases, where $\partial \Sigma$ misses some Lagrangians, can be handled completely similarly.)
We first deal with these extra self intersections.
Using the wedge region bound, and the Novikov positivity requirement,
\[
|\text{Im}\sum_i\sum_{b_i} e^{-i\hat{\theta}} Ff_{L_i} |^+_-(b_i)| \leq  \sum_i\sum_{b_i} |F(b_i)| f_{L_i}|^+_-(b_i) 
\leq  \frac{\text{Re}(F(p))}{\sin \epsilon}\sum_i\sum_{b_i} f_{L_i}|^+_-(b_i).
\]
By Lemma \ref{uniformenergybound}, we have
\begin{equation}\label{Solomonboundedpart2}
|\text{Im}\sum_i\sum_{b_i} e^{-i\hat{\theta}} Ff_{L_i} |^+_-(b_i)| \leq \frac{1}{\sin \epsilon} A(N+1) \text{Re}(\tilde{\Omega}_{L_0}  ).
\end{equation}
We are left with the contributions of $p_0, p_1,\ldots p_N$ to (\ref{Solomoncornerterms}):
\[
\text{Im} \sum_0^{N-1}  e^{-i\hat{\theta}}   F (f_{L_{j+1}}- f_{L_j})(p_j).
\]
If we replace $f_{L_i}$ by its supremum value $\sup_{L_i}f_{L_i}$ for all $i=0,1,\ldots N$, the new expression would be
\[
\begin{split}
& \text{Im} \sum_0^{N-1}  e^{-i\hat{\theta}}   F(p_j) (\sup f_{L_{j+1}}-\sup f_{L_j})
\\
=& \text{Im} \sum_1^N  e^{-i\hat{\theta}}  \sup f_{L_j} ( F(p_{j-1}) - F(p_{j}) )- \text{Im} (e^{-i\hat{\theta}} F(p) \sup f_{L_0}   )
\end{split}
\]
which is more intrinsically the integrand (\ref{Solomonelementary}) of the elementary functional. Using the potential clustering assumption and the wedge region bound lemma,
the error of replacing the potentials by $\sup_{L_j}f_{L_j}$ can be bounded by
\begin{equation}\label{Solomonboundedpart3}
2A\sum_1^N |F(p_j)| \leq 2AN  \frac{\text{Re}(F(p))}{\sin \epsilon}=  \frac{2AN }{\sin \epsilon} \text{Re}(\tilde{\Omega}_{L_0}).
\end{equation}

Now  (\ref{Solomonboundedpart1})(\ref{Solomonboundedpart2})(\ref{Solomonboundedpart3}) are upper bounds on the three contributions to the difference between the Solomon functional integrand (\ref{Solomonfunctionalintegral}) and the elementary functional integrand. Their sum is bounded by
\[
\frac{A(4N+2) }{\sin \epsilon} \text{Re}(\tilde{\Omega}_{L_0}),
\]
so after integration on the moduli space,
\[
|\mathcal{S}(L)- \bar{\mathcal{S}}(L)|\leq \frac{A(4N+2) }{\sin \epsilon} \int_{L_0} \text{Re }\Omega
\]
as required.
\end{proof}

\begin{rmk}\label{potentialclusteringpromise}
	In section \ref{Quantitativealmostcalibrated} 	below
	we will deduce the potential clustering  and an upper bound on $N$ as  consequences of almost quantitative calibratedness, and very mild conditions on the ambient manifold $X$. In section \ref{AsymptoticSolomon} the boundedness of $|\mathcal{S}-\bar{\mathcal{S}}|$ will be essential for relating the asymptote of the Solomon functional to stability conditions. 
\end{rmk}

\begin{rmk}
In K\"ahler geometry, it is often useful to decompose natural functionals into two parts. For instance, the K-energy functional can be decomposed into an entropy part and a pluripotential part \cite[section 2.4]{ChenCheng}, which is important in the study of constant scalar curvature K\"ahler metrics. 

\end{rmk}

\begin{rmk}
We suggested in section \ref{Tunneling} that the Solomon functional is essentially the logarithm of the tunneling amplitude between Lagrangian branes. Pushing forth with this physics analogy, we may regard the elementary functional as a semiclassical approximation,\footnote{The elementary functional is proportional to the period integrals over the cycles $L_i$, which may be regarded as coming from integration over the moduli of constant maps. Such integrals are regarded as more classical then those involving nontrivial holomorphic curves.} and $\mathcal{S}-\bar{\mathcal{S}}$ as quantum fluctuation effects. Our main assertion then becomes that \emph{quantitative almost calibratedness with some extra hypotheses imply the a priori bound on the quantum fluctuation effects}. The author is not aware of previous suggestions in the physics literature, but Jake Solomon's formal Riemannian picture in section \ref{Solomon} may offer partial explanations for the relevance of the almost calibrated condition.

\end{rmk}

\subsubsection*{What if we relax the positivity condition?}

Suppose we drop the positivity condition on the bordism current, but keep all the other assumptions. Then the key difference is that
for holomorphic curves contributing negatively to $\partial \mathcal{C}$, we need to replace Claim \ref{Monotonicityclaim}  by Claim \ref{Montonicityclaim2}, and Claim \ref{Imagecurveclaim} by Claim \ref{Imagecurveclaim2}. Correspondingly, all appearance of $\text{Re }F(p)$ is replaced by its absolute value. Then the conclusion in Theorem \ref{Solomonboundedpartthm} is replaced by
\begin{equation}
|\mathcal{S}(L)-\bar{S}(L)| \leq \frac{A(4N+2) }{\sin \epsilon} \int_{\mathcal{M}} |\text{Re }\tilde{\Omega}_{L_0}|.
\end{equation}
The problem is that the RHS is no longer a manefestedly a priori bounded quantity.

\section{Continuity, LMCF and variational method}\label{Analysis}

We now proceed to the more analytic aspects of the problem of finding special Lagrangians. There are three principal methods for existence theorems in geometric analysis: continuity method, parabolic flows, and the calculus of variations. Generally speaking, continuity or flow methods are often closely related as the elliptic and parabolic cousins of each other, and allow one to work with a priori reasonably smooth objects, but finding a good continuity path or proving the long time existence of the flow may be difficult in a particular problem; the variational approach, on the other hand, operates with the space of a priori less regular objects to achieve some weak compactness, and then attempt to improve the smoothness via regularity theorems. Each approach contains substantial outstanding difficulties. We will try to maintain some equipoise, and compare the main difficulties in each approach. The sections on the flow and the continuity method borrow largely from various writings of Joyce with some new contents; the variational approach is essentially original, and will be presented in Chapter \ref{Variational}.


\subsection{Lagrangian mean curvature flow}\label{LMCF}

\subsubsection*{LMCF basics}

We now return to some analytic aspects of Joyce's proposal \cite{Joyceconj} related to the \textbf{Lagrangian mean curvature flow} (LMCF) inside a Calabi-Yau manifold. Recall a smooth  mean curvature flow means a family of immersions $\iota_t: L\to M$ parametrised by time $t\in [0,T)$, such that the velocity is equal to the mean curvature:
\begin{equation}
\partial_t \vec{x}= \vec{H}.
\end{equation}
The starting point of LMCF is an early observation of Smoczyk, which justifies the name:

\begin{prop}
	\cite[section 4.2]{Smoczyk}
Let $(L_t)$ be a smooth and compact mean curvature flow inside a K\"ahler-Einstein manifold, then the Lagrangian condition is preserved by the flow.
\end{prop}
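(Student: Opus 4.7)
\emph{Proof plan.} Let $L$ be the underlying abstract manifold and set $\alpha_t := \iota_t^*\omega \in \Omega^2(L)$. The Lagrangian condition is $\alpha_t \equiv 0$, and by hypothesis $\alpha_0 = 0$. The plan is to derive a linear parabolic evolution equation for $\alpha_t$ and conclude by uniqueness. First I would compute the time derivative of $\alpha_t$ on the fixed manifold $L$. Since $\partial_t \iota_t = \vec{H}$ and $d\omega = 0$, Cartan's magic formula (applied along the family of immersions) gives
\begin{equation*}
\partial_t \alpha_t \;=\; \iota_t^*(\mathcal{L}_{\vec H}\omega) \;=\; \iota_t^*\bigl(d(\iota_{\vec H}\omega)\bigr) \;=\; d\sigma_t, \qquad \sigma_t := \iota_t^*\bigl(\omega(\vec H,\cdot)\bigr)\in\Omega^1(L).
\end{equation*}

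Next I would unpack $d\sigma_t$ using the K\"ahler and Einstein hypotheses. For a general immersion (not necessarily Lagrangian) into a K\"ahler manifold, a standard Gauss--Codazzi computation, combined with the K\"ahler identities and the decomposition of $\vec H$ into components along $J\,\iota_{t*}TL$ and along the orthogonal complement of $\iota_{t*}TL \oplus J\iota_{t*}TL$, yields an identity of the schematic form
\begin{equation*}
d\sigma_t \;=\; \iota_t^*\rho \;+\; \mathcal{N}(\alpha_t,\nabla\alpha_t),
\end{equation*}
where $\rho = \mathrm{Ric}(J\cdot,\cdot)$ is the Ricci form of $M$ and $\mathcal{N}$ is a second-order differential operator that is $\mathbb{R}$-linear in $\alpha_t$ and its first derivative, with coefficients built from the K\"ahler data and the second fundamental form of $L_t$. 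The crucial point is that the non-Lagrangian correction terms vanish identically when $\alpha_t = 0$. Now invoking the K\"ahler--Einstein assumption $\rho = c\,\omega$, we get $\iota_t^*\rho = c\,\alpha_t$, so
\begin{equation*}
\partial_t \alpha_t \;=\; c\,\alpha_t \;+\; \mathcal{N}(\alpha_t,\nabla\alpha_t).
\end{equation*}

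Expanding $\mathcal{N}$ and collecting top-order terms, the operator on the right hand side is the Hodge Laplacian on 2-forms up to lower-order curvature corrections, so this is a genuine \emph{linear} parabolic system for $\alpha_t$ with smooth coefficients on $[0,T)$. Since $\alpha_0 = 0$ and $L$ is compact, uniqueness of solutions to linear parabolic equations (or equivalently, the standard energy/maximum-principle argument applied to $|\alpha_t|^2$) forces $\alpha_t \equiv 0$ throughout $[0,T)$, which is the Lagrangian condition.

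The main obstacle is the identity relating $d\sigma$ to $\iota_t^*\rho$ modulo terms that are linear in $\alpha_t$ and its derivatives. This is where the K\"ahler structure is indispensable: one must show that the deviation from the Lagrangian formula $d\sigma = \iota^*\rho$ is controlled pointwise by $\alpha_t$ itself, with no source term independent of $\alpha_t$. The K\"ahler--Einstein hypothesis enters precisely to turn $\iota_t^*\rho$ into a multiple of $\alpha_t$, converting what would otherwise be an inhomogeneous equation (with forcing $\iota_t^*\rho$) into a homogeneous one, for which $\alpha_0 = 0$ propagates.
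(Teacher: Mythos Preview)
The paper does not give its own proof of this proposition; it is simply quoted as Smoczyk's result with a citation to \cite[section 4.2]{Smoczyk}. So there is nothing in the paper to compare against directly.

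That said, your outline is essentially Smoczyk's original argument: derive an evolution equation for $\alpha_t=\iota_t^*\omega$, observe that in the K\"ahler--Einstein case it is homogeneous (linear) in $\alpha_t$, and conclude by parabolic uniqueness from $\alpha_0=0$. Two minor points are worth flagging. First, your notation $\mathcal{N}(\alpha_t,\nabla\alpha_t)$ conflicts with your later claim that the principal part is the Hodge Laplacian: the latter is second order in $\alpha_t$, so $\mathcal{N}$ must involve $\nabla^2\alpha_t$, not just $\nabla\alpha_t$. Second, the sentence ``a standard Gauss--Codazzi computation \ldots\ yields an identity of the schematic form $d\sigma_t=\iota_t^*\rho+\mathcal{N}(\alpha_t,\nabla\alpha_t)$'' is exactly where all the content lies, and you have not done it; this computation is the substance of Smoczyk's proof, and verifying that every term in $d\sigma_t-\iota_t^*\rho$ is genuinely linear in $\alpha_t$ (rather than merely vanishing at $\alpha_t=0$) requires care with the normal-bundle decomposition when $L_t$ is not Lagrangian. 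Your plan is sound, but as written it defers the only hard step to a citation-in-spirit.
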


\begin{rmk}
In more general K\"ahler settings, the Lagrangian condition is still preserved provided one couples the mean curvature flow to the K\"ahler-Ricci flow (\cf \cite{LotayPacini2}). Joyce's program may have natural extensions to the almost Calabi-Yau setting. Indeed the generalisation of the flow may even be advantageous for achieving certain genericity conditions, as in the work of Woodward and Palmer \cite{Woodward1}\cite{Woodward2}. 
\end{rmk}

Assuming $(L_t)$ is a smooth and compact LMCF inside a Calabi-Yau ambient manifold, then if the initial Lagrangian is graded, \ie the Lagrangian angle is well defined as a real valued function on the Lagrangian, then so is $L_t$. The grading is highly desirable, because of the foundational fact that the Lagrangian angle function $L_t\to \R$ satisfies the heat equation
\begin{equation}\label{heat}
(\partial_t- \Lap_{L_t})\theta_t=0,
\end{equation}
which among many other things, implies that $\sup_{L_t} \theta_t$ can only decrease in time, and $\inf_{L_t} \theta_t$ can only increase in time, and in particular the almost calibrated condition would be preserved by the flow. Inside a Calabi-Yau manifold, the mean curvature of $L_t$ is related to the Lagrangian angle $\theta_t$ by an appealing formula:
\begin{equation}
\vec{H}= J\nabla \theta_t,
\end{equation}
where $\nabla \theta$ stands for the gradient of $\theta$ along $L_t$. Along the LMCF
\[
\omega(\partial_t\vec{x}, \cdot)= \omega(\vec{H}, \cdot)= \omega(J\nabla \theta_t, \cdot)= - d\theta_t,
\]
so the Lagrangians evolve by the local Hamiltonian function $-\theta_t$ up to an additive constant.

Mean curvature flow in codimension greater than one does not satisfy the avoidance principle. As such embedded Lagrangians can become immersed during the flow, so the program should at least include immersed Lagrangians. Joyce further suggests that certain `stable Lagrangian singularities' should be admitted. For instance, inside Calabi-Yau 3-folds one should allow Lagrangians with local conical singularity modelled on the Harvey-Lawson $T^2$-cone \cite[Example 2.7]{Joyceconj}. The adjective `stable' here means that the flow should preserve this class of singularities at least for a short amount of time, even if one makes a generic perturbation of the initial data.

\subsubsection*{Finite time singularity, and prototypical bad behaviours}

The central difficulty of the subject is that \emph{finite time singularities are in general inevitable}, starting from complex dimension two. Indeed, a theorem of Neves \cite[Thm. 6.1]{Neves} says that for any embedded Lagrangian submanifold inside a Calabi-Yau surface, there exists a Lagrangian within the same Hamiltonian isotopy class, such that the LMCF with this initial data forms finite time singularity. \footnote{Whether the same holds for almost calibrated initial data is an interesting open problem.} There is also a good geometric reason why singularities must occur in Joyce's program: the Thomas-Yau uniqueness theorem applies to Lagrangians within the same derived Fukaya category class, which may include several Hamiltonian isotopy classes, at most one of which can have special Lagrangian representatives. In order for an initial Lagrangian in the wrong Hamiltonian isotopy class to find its way back to the right class along the LMCF, it must undergo a sequence of surgeries.

Now there is a substantial theory of weak solutions of mean curvature flows in the context of varifolds and currents, known as `Brakke flows' \cite{Brakke}, which exist under very general conditions. The problem is that such solutions are too weak to guarantee uniqueness of the flow, and the total mass of the varifold may jump down at discrete time. Even more fatally for our purpose, once the smoothness of the flow is dropped, the Lagrangian condition may not be preserved any more.  It is instructive to look at the prototypical bad behaviours:

\begin{eg}
Schoen and Wolfson \cite{SchoenWolfson} found area minimizers within certain Lagrangian isotopy classes, which are not minimal surfaces.\footnote{There is no contradiction: area minimisation among Lagrangians by no means guarantee area stationarity among submanifold.} The Brakke flow with such initial data further decreases mass in time, so must cease to be Lagrangian. However, these examples are not graded, so do not contradict Joyce's program. A possible lesson is that \emph{non-graded Lagrangians are bad}.
\end{eg}

\begin{eg}\label{Figure8}
Consider a figure eight curve inside $\R^2$, \footnote{Recall that curves in $\R^2$ are automatically Lagrangian.}whose two looms have unequal areas. Along the mean curvature flow (known as the `curve shortening flow' in this context) one loom shrinks first to zero size. At the moment of singularity, the Lagrangian angle at the self intersection point has a jump. From a more generalisable perspective, one notices that each loom encloses a holomorphic disc, and this singularity is associated with one holomoprhic disc shrinking to zero size and disappearing. The general lesson is that \emph{the shrinking down of small area holomorphic discs messes up the grading}, so it is desirable to exclude them if possible. \footnote{Indeed, one important ingredient in Neves's proof of singularity formation \cite{Neves} is the destruction of grading related to shrinking enclosed 2-dimensional areas. Although it is not explict in Neves's work, these areas seem related to holomorphic discs.}
\end{eg}

\begin{eg}
Consider any compact Lagrangian inside the unit ball of $\C^n$. By an easy maximum principle argument, during the flow $L_t$ remains inside the shrinking ball $\{ \sum |z_i|^2\leq 1-2n t \}$, so must develop a finite time singularity at some $t\leq \frac{1}{2n}$. From the Floer theoretic perspective, since such Lagrangians can always be displaced off itself by the Hamiltonian isotopy corresponding to translations in $\C^n$, its Floer cohomology is either obstructed or zero. As such, a compact Lagrangian supported in a small coordinate ball is invisible to the derived Fukaya category. From a different perspective, since such Lagrangians have zero homology class, they are excluded in the almost calibrated case.
\end{eg}

\subsubsection*{Joyce's LMCF proposal}

With this background in mind, one may better appreciate the upshot of Joyce's perspective: \emph{LMCF should be better behaved if the Lagrangians support unobstructed brane structures}, namely the bad singularities that would spell ruin in a more general context do not actually occur in his program. The moral reasons are:
\begin{itemize}
\item The Lagrangian branes are always assumed to be graded.
\item  Unobstructed Lagrangian branes cannot bound holomorphic curves with very small areas unless their Floer theoretic contributions exactly balance out, for otherwise certain positivity requirements in the Novikov ring will be violated.
\item
Assume the Lagrangian decomposes into two pieces, one of which is contained inside a small coordinate ball. Since this piece only contributes a zero object in $D^b Fuk(X)$, discarding this piece does not affect the $D^b Fuk(X)$ class of the Lagrangian.

\end{itemize}

 This gain comes at the burdensome cost of carrying the brane structure along the flow, which leads to somewhat counterintuitive prescriptions such as surgeries before the Lagrangian itself reaches a singularity, so that the unobstructed brane structure may not be lost prematurely. Joyce describes a number of singularities and surgeries that are expected to occur generically in his program:

\begin{itemize}
\item 
(\textbf{Openning up the neck})
The Lagrangian brane can develop new self intersection points, and may  flow from unobstructed to obstructed at $t=t_0$ through the shrinking of certain holomorphic curves with boundary on $L_t$, even through the underlying Lagrangian remains smooth. The Floer theoretic mechanism causing the obstruction (to do with positivity conditions in the Novikov ring) precisely ensures an angle condition at certain self intersection points of $L_{t_0}$, so that a Joyce-Lee-Tsui Lagrangian expander \cite{JoyceLeeTsui} can be glued into $L_{t_0}$ to continue the LMCF. Woodward and Palmer \cite{Woodward1}\cite{Woodward2} have performed substantial checks that suitable brane structures can be assigned to such surgeries so that $L_t$ remains unobstructed, and the Floer cohomology remains continuous throughout the surgery.

\item (\textbf{Neck pinching}) In some sense converse to the above process, the LMCF $L_t$ may contain a local region modelled on a Lawlor neck with small length scale parameters $\epsilon(t)$, which shrinks `slowly' in time, and $\epsilon(t)\to 0$ at time $t\to t_0$. \footnote{A gluing construction of neck pinching examples is in working progress with T. Collins. There is however a crucial sign difference concerning Lagrangian angles, between our work and the Joyce prediction, which may have rather disconcerting consequences for Joyce's program.} In some cases this may cause the domain of the immersed Lagrangian to become disconnected.

\item (\textbf{Collapsing zero objects}) After a number of surgeries, the Lagrangian $L_t$ may be decomposed into several disconnected pieces, some of which are zero objects in $D^bFuk(X)$, so in particular have homology class zero. A typical situation is that the zero objects are contained in  small coordinate balls.\footnote{Joyce suggests plausibly that Neves's example \cite{Neves}  exhibits this behaviour by splitting off a small Whitney sphere, although this is not proven.} We simply discard these pieces and continue the flow for the remaining pieces.

\item (\textbf{Stable singularities}) 
As mentioned above, one may need to include Lagrangians with certain local singularities, since generic perturbations cannot remove such singularities. How such singularities can form dynamically starting with smooth initial data is less clear, but Joyce offers some analogy with the setting of $U(1)$-invariant special Lagrangians in $\C^3$ \cite[Example 2.8]{Joyceconj}, where Harvey-Lawson $T^2$-cone singularities can appear and disappear in pairs within a 1-parameter family of deformations, and in particular smooth objects can be continuously deformed to such singular objects.

\end{itemize}

The Joyce program of LMCF with surgery contains a number of potentially counterintuitive phenomenon.

\begin{eg}\cite[Example 3.15]{Joyceconj}
After incorporating the surgery of the brane structures, Joyce's LMCF is no longer identical to the LMCF of the underlying Lagrangian. The most extreme case is to start with the union $L$ of two unobstructed special Lagrangians $L_1,L_2$ with phase angles $\theta_{L_1}<\theta_{L_2}$, 
with an intersection point $b\in CF^1(L_2,L_1)$ defining a closed morphism. We regard $L$ as an immersed Lagrangian with bounding cochain $b$. 
This fits into the distinguished triangle
\[
L_1\to L\to L_2\to L_1[1], 
\]
which is \emph{not} destabilizing for $L$. This configuration is stationary in ordinary LMCF. However, under Joyce's LMCF, the bounding cochain $b$ evolves in time, and loses positivity in the Novikov ring in finite time, after which one is supposed to `open up the neck' to continue the flow in a nontrivial fashion.

\end{eg}

\begin{eg}
\cite[section 3.4]{Joyceconj}
Joyce's LMCF in general needs to incorporate nontrivial rank one local systems. Even if the initial brane structure has trivial local system, it is possible for surgeries to create nontrivial local systems from the bounding cochain data at self intersection points. One may imagine such bounding cochain data to be a holonomy contribution concentrated at points, which can be converted into a smeared out holonomy contribution from a nontrivial local system.
\end{eg}

\begin{eg}
The `openning up the neck' surgery is governed by the Novikov positivity requirement of the bounding cochain, which depends on the choice of the bounding cochain, not just the underlying Lagrangian submanifold. The same underlying Lagrangian with different bounding cochains may therefore flow to different infinite time limits.
\end{eg}

In summary, the main difficulty of Joyce's LMCF program is that there is a huge gap between the general Brakke flow framework, and the kind of regularity control required for the long time existence of the LMCF. It would represent very substantial progress \footnote{Joyce \cite{Joyceconj} assesses the difficulty of his program in the Calabi-Yau 3-fold case to be comparable to Perelman's breakthrough on the Poincar\'e conjecture. Indeed, ruling out the cigar solution in the context of the Ricci flow is in itself already a major achievement of Perelman.} if one can classify possible singularity types under suitable genericity assumptions, say for  almost calibrated Lagrangians inside Calabi-Yau 3-folds. A large list of problems, from routine level up to the impossible, can be found in Joyce's excellent original paper \cite{Joyceconj}.

\subsubsection*{Infinite time limit and its difficulties}

Provided one can prove long time existence of LMCF, the total mass of $L_t$ will be uniformly bounded since it decreases during the flow. 
Under mild conditions  to ensure $L_t$ does not escape to spatial infinity (e.g. if the ambient Calabi-Yau manifold is compact), one can extract the infinite time subsequential limits of $L_t$ as currents. From the heat equation (\ref{heat}) on the Lagrangian angle $\theta$, 
\[
(\partial_t- \Lap_{L_t}) |\theta|^2= -2|\nabla \theta|^2= -2|\vec{H}|^2.
\]
If the Lagrangians remain sufficiently smooth, then an integration by part calculation shows
\[
\int_0^T \int_{L_t}
|\vec{H}|^2 dvol_{L_t}dt=
 \frac{1}{2} \left( \int_{L_0} |\theta|^2 dvol_{L_0}- \int_{L_T} |\theta|^2 dvol_{L_T} \right).
\]
Even if the volume mass can jump down at discrete time, such as during the collapsing of zero objects, we still expect
\begin{equation}\label{meancurvatureL2}
\int_0^T\int_{L_t}
|\vec{H}|^2 dvol_{L_t}dt \leq \frac{1}{2}\int_{L_0} |\theta|^2 dvol_{L_0}<\infty, \quad \forall T>0.
\end{equation}
In particular we can find a sequence of time $t_i\to \infty$, with
\[
\int_{L_{t_i}} |\nabla \theta|^2 dvol \to 0.
\]
This strongly suggests that the subsequential limit is a union of special Lagrangian currents with multiplicities.  

In the heursitic logic of Thomas-Yau-Joyce prgogram, the infinite time limit supposedly provides the Harder-Narasimhan decomposition. In general one cannot expect the special Lagrangian currents to be smooth, so this raises the question how to make sense of singular Lagrangians as representatives of $D^bFuk(X)$ classes, or whether we should use some weaker equivalence class.  Another interesting open problem is whether the limiting current is unique. In order to run the Thomas-Yau argument, one presumably also needs Floer theory for singular Lagrangians.

Joyce \cite{Joyceconj} already observed that it is not obvious how singular Lagrangians can carry brane structures, and it is logically possible for some Floer theoretic information to be lost in the infinite time limit. The suggestion is that hopefully the Lagrangian $L_t$ at large but finite time $t\gg 1$, can serve as a substitute for the infinite time limit, which presumably has better smoothness properties \cite{Joyceconj}. There is however no known justification (and probably false) that the surgeries terminate after some finite time, and $L_t$ decomposes into the union of several Lagrangian objects, in order to provide a Harder-Narasimhan decomposition.

We think Floer theory for singular Lagrangians is one of the foundational open questions necessary for an adequate solution of the Thomas-Yau conjecture. See section \ref{Floertheoryweakregularity} for further discussions.


\subsection{Continuity method}\label{Continuitymethod}

\subsubsection*{The continuity path}

The general idea of the continuity method is to work with a 1-parameter family of PDEs, and attempt to deform from an initial given solution, to a solution of the final PDE, provided the deformation encounters no obstruction, and satisfies suitable compactness properties. The hope that the continuity method \emph{may} be useful here, is based on the foundational fact that compact special Lagrangian submanifolds  inside almost Calabi-Yau manifolds have unobstructed deformation theory, before taking brane structures into account.

However, problems immediately ramp up once one attempts to set up a continuity path. The most na\"ive suggestion, based on the analogy with the HYM equation, is to prescribe the Lagrangian angle as a function on the domain of $L$. This however breaks the domain reparametrisation invariance of $\iota:L\to X$, and the author knows no satisfactory way to make general sense of this approach beyond graphical Lagrangians. Instead we 
fixed $\omega$, and allow $\Omega$ to vary in an infnite dimensional parameter space subject to the almost Calabi-Yau condition. In noncompact almost Calabi-Yau manifolds, we need to also keep the metric asymptote fixed at infinity. The continuity path is a generic 1-parameter family of $\Omega$. This setup strongly resemble the \textbf{wall crossing phenomenon} studied by Joyce \cite{Joycecounting} in the context of special Lagrangian enumerative invariants, and indeed the rest of this section liberally borrows from the ideas therein.

\subsubsection*{Two main obstacles}

There are two fundamental obstacles: 
\begin{itemize}
\item How can one find the \textbf{initial} special Lagrangian?
\item How can one guarantee compactness?
\end{itemize}

\subsubsection*{How to find an initial special Lagrangian}

The question about finding the initial special Lagrangian is specific to the continuity method, and does not appear in the LMCF approach. A natural suggestion is to look for special Lagrangians near certain degenerate limits, and our relaxation of the complex Monge-Amp\`ere equation ought to give much more flexibility. For instance, conifold degenerations are known to give rise to special Lagrangian spheres appearing as vanishing cycles \cite{HeinSun}.\footnote{While Hein and Sun's result is highly nontrivial, the entire difficulty goes into understanding the Calabi-Yau metric near the conifold point. If we are given the license to prescribe arbitrary K\"ahler metrics, the problem of finding special Lagrangian vanishing spheres near the conifold point becomes easy.}
 Another general source is to work near a suitable large complex structure limit, so that the K\"ahler metric can be made almost toric outside a small region, such that the torus fibres are much smaller compared to the characteristic length scale of the base. We can then attempt to find special Lagrangians via adiabatic limits, in close analogy with the standard procedure to find holomorphic curves via tropical degenerations \cite{Lin}. \footnote{The large complex structure limit is supposed to correspond to the large volume limit in the mirror, which is related to the $\mu$-stability, thus offering the hope of a mirror calculation of counting invariants.} The most accessible special Lagrangians in this approach, should be obtainable by small perturbations of the torus fibres. \footnote{The difficulty in \cite{LiSYZNA} to construct SYZ special Lagrangian fibrations again comes from the Calabi-Yau metrics. If one can freely prescribe K\"ahler metrics, then finding a special Lagrangian torus is not difficult.} The next candidate suggested by the Leray filtration of the torus fibration is already much harder.

\begin{Question}
Construct special Lagrangians whose toric projection to the base are small thickenings of certain 1-dimensional graphs.
\end{Question}

 One expects that locally along an edge these Lagrangians are  perturbations of $T^{n-1}\times \R$, with $T^{n-1}$ contained in the torus fibre direction, so that
 we obtain $(n-1)$ locally defined closed 1-forms $\int_{S^1 }\omega$ on the base
  corresponding to the cycles $S^1\subset T^{n-1}$, and the edge is to leading approximation given by requiring these 1-forms to vanish. The local model for the junction where three edges meet, \footnote{This is conceptually related to Matessi's `Lagrangian pair of pants' \cite{Matessi}.} may have the following topological description. In the $n=2$ case, we have a `pair of pants' inside $T^2\times \R^2$ with three asymptotic ends $S^1\times \R$; topologically this is the same as algebraic surface $\{ z_1+z_2=1  \}\subset \C^*\times \C^*$. In higher dimensions, we take a product of the pair of pants with $T^{n-2}$.

In the next order of perturbation, we expect the deformation of the Lagrangian in the base direction to be at least comparable to the length scale of the fibre, and presumably is fixed by the `special condition' $\text{Im}\Omega|_L=0$.

  \begin{rmk}
  From the viewpoint of the Thomas-Yau-Joyce picture, there is an additional problem to assign unobstructed brane structures to the initial special Lagrangian.
  \end{rmk}


\subsubsection*{Compactness and genericity}

The question about compactness largely reflects problems we already encountered in the LMCF approach. The essential issue is that without any further condition on the K\"ahler metric, special Lagrangians may be too singular, so that the McLean deformation theory for special Lagrangians may fail. The natural answer, closely related to the LMCF viewpoint, is that we should only work with \textbf{generic} K\"ahler structures, and 1-parameter families thereof.  According to the philosophy advocated by Joyce \cite{Joycecounting}, the importance of singularities are ranked according to their genericity. For special Lagrangians with first Betti number $k$, the moduli space of deformations is $k$-dimensional, so in a generic 1-parameter family of K\"ahler structures, one expects to encounter singularities with genericity index up to $k+1$, and those singularities of index $0, 1$ are the most important. \footnote{Understanding the moduli space of special Lagrangians requires singularities up to index $k+1$, but if we restrict to Lagrangian deformations with zero Lagrangian flux, then index $\leq 1$ may suffice in the optimistic view.} Before one can seriously pursue the rest of this strategy, it is necessary to have a classification of index $0,1$ special Lagrangian singularities in complex dimension $n$.

\begin{Question}
In complex dimension 3, classify all special Lagrangian singularities of index 0 and 1, namely all singularities that can occur in a generic 1-parameter family of special Lagrangians when $\omega$ is fixed and $\Omega$ varies. 
\end{Question}

\begin{eg}
The \textbf{Harvey Lawson $T^2$-cone} is a special Lagrangian cone inside $\C^3$ with link $T^2$, invariant under the diagonal $T^2\subset SU(3)$. Explicitly,
\[
L_{HL}= \{   (z_1, z_2, z_3)\in \C^3: |z_1|=|z_2|=|z_3|, \quad \text{Im}(z_1z_2z_3)=0, \quad \text{Re}(z_1z_2z_3)\geq 0         \}.
\]
Haskins \cite[Thm 1]{Haskins} proved that up to unitary transformations, this is the only strictly stable\footnote{Strict stability here is a condition on the Laplacian spectrum of the link. Unfortunately, the word `stable' is overloaded with many standard meanings in the literature.} special Lagrangian cone with smooth embedded link diffeomorphic to $T^2$.

The Harvey-Lawson cone admits three different 1-parameter deformations into smooth embedded special Lagrangians	$L_s^1, L_s^2, L_s^3$ for $s>0$. Here 
\[
L_s^1=\{  (z_1, z_2, z_3)\in \C^3: |z_1|^2-s=|z_2|^2=|z_3|^2, \quad \text{Im}(z_1z_2z_3)=0, \quad \text{Re}(z_1z_2z_3)\geq 0           \},
\]
and $L_s^2$, $L_s^3$ arise via cyclic permutations of $z_1, z_2,z_3$. Notably, there is a holomorphic disc $D_t^1$ of area $\pi s$ with boundary on $L_s^1$ (and similarly for $L_s^2, L_s^3$),
\[
D_t^1= \{  (z_1,0,0): |z_1|^2\leq s  \}.
\]
In particular, $L_s^a$ for $a=1,2,3$ cannot be exact Lagrangians, but have nonzero Lagrangian flux. 
 The $s\to 0$ limit corresponds to the holomorphic discs shrinking to zero area, or equivalently the Lagrangian flux tends to zero.

Now on a compact special Lagrangian inside an almost Calabi-Yau manifold, the Harvey-Lawson cone can arise as a local model for conical singularities. The gluing results of Joyce \cite[section 10]{JoyceSLsurvey}  shows that when certain homological conditions are satisfied, then there exist desingularisation families of special Lagrangians locally modelled on $L_s^a$, such that the singular special Lagrangians carrying the $T^2$-cone singularity arise in codimension one, so in this case the $T^2$-cone is an index one singularity in Joyce's sense.\footnote{Joyce's gluing result is quite subtle. Under certain homological conditions, the smoothing can be forbidden, in which case the $T^2$-cone is an index zero singularity. In other cases, due to some linear dependence of certain homology classes, two $T^2$-cone singularity may not behave independently, but together behave like an index one singularity. See \cite[section 10]{JoyceSLsurvey}. } This gluing result is not sensitive to varying $\Omega$. On the other hand, if one restricts to deformations with Lagrangian flux zero, which can be regarded as the analogue of exact isotopies in the mildly singular case, then an isolated local $T^2$-cone singularity cannot be desingularized, but instead keeps the singularity as it deforms.

\end{eg}

\subsubsection*{Wall crossing}

Some of the generic singularities in the LMCF are expected to have  elliptic analogues in the continuity method approach. We fix $\omega$ and consider a generic 1-parameter family of $\Omega$, and we follow the Lagrangian flux zero deformations of a given special Lagrangian.

\begin{itemize}
\item  Under exact isotopy, immersed Lagrangians may lose the unobstructed condition. One expects the surgery of the brane structure suggested by Joyce has an elliptic analogue, involving the same ingredients as  the `Maslov flow' studied by Woodward and Palmer \cite{Woodward1}\cite{Woodward2}.

\item  As already discussed in section \ref{Extensionwallcrossing}, the Lawlor neck is responsible for the gluing of two immersed special Lagrangians. This corresponds to the  `Lawlor neck pinching' singularity, as well as the `openning the neck' surgery in the LMCF.

\item 
(Stable singularity) Joyce suggests from his work on $U(1)$-invariant special Lagrangians in $\C^3$ \cite[Example 2.8]{Joyceconj}, that in a continuous 1-parameter family, isolated singular points of special Lagrangian 3-folds with local $T^2$-cone singularities can appear and disappear in pairs, by making the two $T^2$-cone singularities collide with each other and then smooth out. This is the main motivation for admitting the $T^2$-cone singularity in the LMCF, and it seems likely to be a generic singularity in the continuity method as well.

\end{itemize}

\begin{rmk}
The phenomenon of `collapsing zero object' in Joyce's LMCF has no analogue in the continuity approach, since the special Lagrangian condition forbids any homologically trivial component.
\end{rmk}


\subsubsection*{What's the role of the brane structure?}


As Bridgeland observed \cite[Thm 1.2]{Bridgeland}, the central charge map gives a local homeomorphism between the space of Bridgeland stability conditions, and the hom space from the numerical Grothendieck group to $\C$.
The Thomas-Yau-Joyce philosophy then suggests that for fixed $\omega$ and small deformations of $\Omega$, the stability condition only depends on the cohomology class $[\Omega]$.

A possible geometric interpretation consistent with the wall crossing picture, is that in a generic 1-parameter family of deformations of the almost Calabi-Yau structure fixing $\omega$ and $[\Omega]$, the special Lagrangian may undergo surgeries, such that the topology and the Hamiltonian isotopy class may change, but when equipped with the brane structures, the $D^b Fuk(X)$ class (or possibly some weaker equivalence class) remains constant. As in the LMCF approach, the role of the unobstructed brane structure is morally to prevent holomorphic discs from having very small area. However, the troubles caused by shrinking discs may be less severe in the continuity method than in the LMCF method, since the continuity method only deals with special Lagrangians, and cannot lose grading in a process like Example \ref{Figure8}. Instead, a significant amount of difficulty in the continuity method is absorbed into the problem of finding the initial special Lagrangians.

%





\subsubsection*{Comparison with LMCF}

To summarize the pros and cons compared to the LMCF approach, in the continuity method finding an initial special Lagrangian is a significant new difficulty. However, we no longer need to confront the extremely difficult task of long time existence for the flow. On a slightly more technical level, provided one has a classification for low index singularities, the \emph{genericity assumption} is likely to be easier to use in the continuity method than it is in the LMCF framework.

\section{Variational method}\label{Variational}

We begin by significantly narrowing the scope:
\begin{itemize}
\item We only consider exact Lagrangians inside Calabi-Yau Stein manifolds. The Stein assumption is meant to simplify Floer theory, at the cost of noncompactness of the ambient space. In particular $H_{n+1}(X,\Z)=0$ and $H_n(X,\Z)$ has no torsion, since Stein manifolds have the topological type of CW complexes of dimension $\leq n$.

 The complex Monge-Amp\`ere equation of the Calabi-Yau metric ensures special Lagrangians are area minimizing currents, at the cost of losing genericity. The author thinks the complex Monge-Amp\`ere equation is convenient but not completely indispensable to the Thomas-Yau conjecture, and some related discussions will be given at the end of section \ref{Compactnessregularity}.

\item We impose another mild condition on the Calabi-Yau metric, namely that the regularity scale grows to infinity asymptotically (\cf section \ref{Quantitativealmostcalibrated}). Its main goal is to prevent the Lagrangians from escaping to infinity.

\item We only consider graded Lagrangians whose phase angle function satisfies a quantitative almost calibrated condition $-\pi/2+\epsilon\leq \theta\leq \pi/2-\epsilon.$ An alternative characterization is that $\pm\text{Im}\Omega\leq (\cot \epsilon)\text{Re}\Omega$ when restricted to $L$, whence the quantitative almost calibrated condition is preserved under weak limits of currents.
\end{itemize}

The main question we wish to examine from a variational viewpoint is
\begin{Question}
Fix a $D^bFuk(X)$ class in the exact Calabi-Yau manifold $(X, \omega, \Omega)$, represented by some nontrivial unobstructed exact Lagrangian brane $L_0$ with phase function $-\pi/2+\epsilon< \theta< \pi/2-\epsilon$. Denote $\hat{\theta}=\arg \int_{L_0} \Omega\in (-\frac{\pi}{2}+\epsilon, \frac{\pi}{2}-\epsilon )$. When does there exist a possibly singular special Lagrangian representative $L$ of phase $\hat{\theta}$, in the same derived Fukaya category class  (or under some weaker equivalence relation)?

\end{Question}

\begin{rmk}
Making sense of the derived Fukaya category for Lagrangians with weak regularity is part of the question, which seems highly nontrivial. As a basic meta-principle, any two Lagrangian branes in the same derived category class must lie in the same homology class in $H_n(X)$, so the homology class is fixed. 
This still leaves open some ambiguity on the choice of local system (\cf Remark \ref{localsystem}), and the question of which singular Lagrangians to include (\cf section \ref{Variationalstrategy}, \ref{Floertheoryweakregularity}).
\end{rmk}

One possible interpretation of the \textbf{Thomas-Yau conjecture} is

\begin{conj}\label{ThomasYauexistence1}
(Thomas-Yau existence) Assume the derived Fukaya class of the quantitatively almost calibrated Lagrangian brane $L_0$ is Thomas-Yau semistable (\cf Definition \ref{ThomasYausemistability}). Then there is a special Lagrangian representative in the same $D^bFuk(X)$ class (or some weaker $S$-equivalence relation, see remark \ref{Sequivalence}).
\end{conj}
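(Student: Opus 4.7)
The plan is to realize the conjectured special Lagrangian as a minimizer of the Solomon functional $\mathcal{S}$ on a suitable compactification $\overline{\mathcal{L}}$ of the space of quantitatively almost calibrated unobstructed exact Lagrangian branes in the fixed $D^bFuk(X)$ class of $L_0$. First I would fix $L_0$ as the reference Lagrangian, and use the homological formula (\ref{Solomonfunctionalextension}) together with the bordism currents $\mathcal{C}$ constructed in section \ref{LotayPacinirevisited} to define $\mathcal{S}(L)$ for every competitor $L\in\mathcal{L}$; this already removes the multivaluedness that plagued Solomon's original setup. Lagrangian competitors will be taken to be integral currents with an associated formal brane datum (grading, local system, bounding cochain, bordism current to $L_0$), subject to the quantitative almost calibrated constraint $\pm\mathrm{Im}\,\Omega\le (\cot\epsilon)\mathrm{Re}\,\Omega|_L$, which is closed under weak convergence of currents. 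Since Prop. \ref{specialLagminimizer} identifies special Lagrangians (of phase $\hat\theta$) as minimizers of $\mathcal{S}$ among all competitors, and since the first variation formula (\ref{Solomonfirstvariation}) implies that any minimizer smooth enough to deform by Hamiltonian vector fields must satisfy $\mathrm{Im}(e^{-i\hat\theta}\Omega)|_L=0$, it suffices to produce a sufficiently regular minimizer.

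The core analytic step is a properness argument: I would show that under Thomas-Yau semistability, $\mathcal{S}(L_k)\to+\infty$ cannot occur along any sequence escaping every neighbourhood of the set of special Lagrangian representatives. This is where the decomposition $\mathcal{S}=\bar{\mathcal{S}}+(\mathcal{S}-\bar{\mathcal{S}})$ from Theorem \ref{Solomonboundedpartthm} does the real work. Given $L_k$ with $\mathcal{S}(L_k)$ unbounded, the potential clustering principle (to be proved in section \ref{Quantitativealmostcalibrated}) decomposes $L_k$ into clusters $L_k^{(1)},\ldots,L_k^{(N_k)}$ whose Lagrangian potential oscillations are uniformly controlled and whose period integrals $\int_{L_k^{(i)}}\Omega$ lie in a finite portion of $H_n(X,\Z)$ by the a priori volume bound (Lemma \ref{Volumebound}). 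After extracting a subsequence, the cluster data stabilizes, and the bound $|\mathcal{S}-\bar{\mathcal{S}}|\le C$ from Theorem \ref{Solomonboundedpartthm} forces all divergence to come from $\bar{\mathcal{S}}$; writing $\bar{\mathcal{S}}$ explicitly in terms of the cohomology classes $[L_k^{(i)}]$ and the associated potentials, I would show that divergence of $\bar{\mathcal{S}}$ produces, in the limit, a twisted complex built from distinguished triangles whose cohomology class data $(\hat\theta^{(1)}>\cdots>\hat\theta^{(N)})$ furnishes a Harder-Narasimhan-type destabilizing filtration for $L_0$ in the sense of Definition \ref{ThomasYausemistability}. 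Thomas-Yau semistability then rules this out, yielding a uniform bound $\mathcal{S}(L_k)\le C$.

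Given the a priori bound and the volume bound from Lemma \ref{Volumebound}, together with the regularity-scale-goes-to-infinity hypothesis on $X$ which prevents escape to spatial infinity, the standard Federer-Fleming compactness theorem extracts a subsequential limit $L_\infty$ as an integral current of quantitatively almost calibrated type. Next I would argue that the bordism currents $\mathcal{C}_k$ between $L_k$ and $L_0$ also converge, at least as currents modulo boundaries, to a bordism current $\mathcal{C}_\infty$ between $L_\infty$ and $L_0$, thereby defining $\mathcal{S}(L_\infty)$ and giving $\mathcal{S}(L_\infty)\le \liminf \mathcal{S}(L_k)$. For regularity, I would use the calibration inequality (\ref{calibration}): under the Calabi-Yau Monge-Amp\`ere equation, special Lagrangian currents of fixed phase minimize mass in their homology class, and combining this with Almgren-Allard type regularity for stationary integral varifolds (applied to $L_\infty$ through its vanishing first variation in the Solomon sense) would give smoothness off a singular set of codimension at least two. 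The minimizer then satisfies $\mathrm{Im}(e^{-i\hat\theta}\Omega)|_{L_\infty}=0$ off the singular set and gives the desired special Lagrangian representative.

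The hard part will be precisely the two foundational questions flagged throughout Chapter \ref{Moduliholocurves} and section \ref{Floertheoryweakregularity}: namely, whether the bordism currents $\mathcal{C}_k$ genuinely persist under weak convergence (so that $\mathcal{S}(L_\infty)$ is well defined and the brane datum on $L_\infty$ places it in the same $D^bFuk(X)$-class, possibly only up to an $S$-equivalence), and whether the potential clustering/compactness output really exhausts all modes of divergence of $\mathcal{S}$. The first is a Floer-theoretic robustness question about the open-closed map under geometric-measure-theoretic limits; the second requires a delicate refinement of the elementary functional analysis of section \ref{Solomonboundedpartsection}, likely combined with Neves-type energy-concentration arguments adapted from section \ref{Quantitativealmostcalibrated}, to pin down that every divergent sequence admits a subsequence whose cluster decomposition converges to a genuine Harder-Narasimhan filtration by almost calibrated objects (rather than, say, to an ill-defined limit inside a not-yet-enlarged Fukaya category). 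Overcoming either obstacle in isolation is already a substantial undertaking; overcoming both in a compatible way is what a full proof of Conjecture \ref{ThomasYauexistence1} would require.
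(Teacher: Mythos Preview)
Your overall architecture matches the paper's variational program in section~\ref{Variationalstrategy}--\ref{MinimizerspecialLag}: define $\mathcal{S}$ via bordism currents, use potential clustering and Theorem~\ref{Solomonboundedpartthm} to reduce the asymptotics of $\mathcal{S}$ to the elementary functional $\bar{\mathcal{S}}$, invoke semistability to control $\bar{\mathcal{S}}$, extract a minimizer by geometric measure theory, and identify it as special Lagrangian via the first variation. You also correctly flag the two principal gaps (robustness of bordism currents under weak limits, and whether potential clustering captures every mode of escape).

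However, the direction of your properness argument is inverted. The danger for minimization is that $\mathcal{S}$ may be \emph{unbounded below}, not above; a minimizing sequence automatically has $\mathcal{S}(L_k)\le \mathcal{S}(L_0)$. The paper's dichotomy in section~\ref{AsymptoticSolomon} shows that when semistability fails, one can send $\bar{\mathcal{S}}\to -\infty$ by adding large constants to the potentials on some $L_i$, and conversely semistability forces $\bar{\mathcal{S}}\ge 0$ via formula~(\ref{Solomonelementary2}), hence $\mathcal{S}\ge -C$. Your sentence ``yielding a uniform bound $\mathcal{S}(L_k)\le C$'' is the wrong inequality; what you need is the lower bound, together with the observation (made explicit in the paper's heuristic proof under Conjecture~\ref{ThomasYauexistence}) that for a minimizing sequence one may, using semistability, \emph{decrease} the gaps $\sup f_{L_i}-\sup f_{L_{i-1}}$ without increasing $\mathcal{S}$, thereby obtaining a uniform $L^\infty$ bound on the potentials $f_{L_k}$ rather than on $\mathcal{S}$ itself. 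In the merely semistable (not strictly stable) case the functional is not proper in the usual sense, so this adjustment step is essential and is missing from your account.

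Two further gaps relative to the paper: you invoke only Federer--Fleming, but the paper argues that varifold convergence (hence Allard compactness) is needed for the class $\mathcal{L}$ to be closed, and this requires the conjectural $L^p$-smoothing property of section~\ref{Variationalstrategy}, which supplies an a priori bound $\int_L|\vec H|^p\le C$ along a modified minimizing sequence. Second, your first-variation argument that a minimizer is special Lagrangian tacitly assumes one can perform small Hamiltonian deformations while staying in $\mathcal{L}$; the paper (Remark after section~\ref{MinimizerspecialLag}) singles out as a principal unresolved difficulty the boundary case $\sup_L\theta_L=\tfrac{\pi}{2}-\epsilon$, where quantitatively almost calibrated competitors cannot currently be constructed.
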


Our limited goal is not to prove this conjecture completely, but to clear up enough easier obstacles in order to pinpoint the deeper issues that need to be resolved. Whenever we make difficult claims that we are yet unable to prove, we will try to at least provide some heuristic reasons.

\subsection{Compactness and regularity}
\label{Compactnessregularity}

\subsubsection{Standard geometric measure theory}

We will use the standard language of geometric measure theory, see Federer \cite{Federer} or Morgan \cite{Morgan} for the terminologies.
The starting point of the variational approach is that there are foundational \textbf{compactness theorems} in geometric measure theory.

\begin{thm}\label{Federer}
(Federer-Fleming compactness theorem \cite{Federer}) Let $L_i$ be a sequence of $m$-dimensional integral currents in a complete Riemannian manifold $X$, all supported in a fixed bounded subset, with uniform bounds $\text{Mass}(L_i)\leq C$ and $\text{Mass}(\partial L_i)\leq C$. Then up to subsequence $L_i$ converges weakly in the current topology to an $m$-dimensional integral current $L$ with the same bounds.

\end{thm}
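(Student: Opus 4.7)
The plan is to proceed in three stages: first establish weak subsequential compactness at the level of currents viewed as distributions, then show the limit inherits the mass bounds, and finally upgrade the limit from a normal current to an integral current. Since the currents $L_i$ are supported in a fixed bounded subset $K \subset X$, I would first cover $K$ by finitely many coordinate charts with bi-Lipschitz transition maps. This allows the problem to be reduced to a standard statement in Euclidean space by a partition-of-unity argument, at the cost of only multiplicative constants in the mass bounds; compactness statements then glue back to the manifold setting.

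Next, I would establish weak subsequential convergence. Integral currents act by pairing with smooth compactly supported differential forms, and the mass bound $\mathrm{Mass}(L_i)\le C$ translates directly into a uniform bound $|L_i(\varphi)| \le C \sup|\varphi|$ for every test form $\varphi \in \Omega^m_c(X)$. The space of test forms supported in a neighborhood of $K$ is separable in the appropriate $C^k$ topology, so a standard diagonal argument (the sequential Banach--Alaoglu theorem) extracts a subsequence converging weakly to some $m$-dimensional current $L$. Lower semicontinuity of mass under weak convergence, which follows from the duality definition of mass, gives $\mathrm{Mass}(L) \le \liminf \mathrm{Mass}(L_i) \le C$; the same argument applied to $\partial L_i \to \partial L$ yields $\mathrm{Mass}(\partial L) \le C$. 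At this stage the limit is only known to be a \emph{normal} current, not necessarily integral.

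The genuinely hard step is to show that $L$ is actually integral, and this will be the main obstacle. The approach I would follow is the Federer--Fleming closure theorem, via the deformation theorem: for each $\varepsilon>0$, each $L_i$ can be approximated by a polyhedral integral current $P_i$ in a cubical grid of scale $\varepsilon$, with controlled errors on mass, boundary mass, and flat distance (the trade-off being that the number of cubes used grows as $\varepsilon \to 0$). One then needs to organize these approximations compatibly with $i \to \infty$, so that the double limit produces an integral approximation of $L$. The rectifiability of $L$ is obtained via almost everywhere existence of a tangent $m$-plane, combined with integer-multiplicity control on slices: slicing $L$ by a generic family of Lipschitz functions yields $(m-1)$-dimensional integral currents by an inductive application of the same closure statement in one lower dimension, and one then reconstructs integrality of $L$ from integrality of generic slices via a Fubini/coarea argument.

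The conceptual core, and the place where care is needed, is the interlocking of the deformation theorem with the rectifiability/slicing machinery in the inductive structure on dimension; technical difficulties such as sharp constants in the deformation theorem, and making the slicing compatible with weak limits, are the places where one could easily get stuck. All the other steps, including the reduction to Euclidean charts, the Banach--Alaoglu extraction, and the lower semicontinuity of mass, are routine by comparison. Since the paper invokes this theorem as a black box from \cite{Federer}, I would in practice simply cite Federer's treatise rather than reproving the closure theorem.
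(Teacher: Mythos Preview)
Your proposal is appropriate: the paper does not give a proof of this theorem at all, but simply states it as a foundational result cited from \cite{Federer}, and you correctly recognize this at the end of your proposal. Your three-stage outline (Banach--Alaoglu for weak subsequential compactness, lower semicontinuity of mass, then the closure theorem via deformation and slicing to upgrade from normal to integral) is the standard architecture of the Federer--Fleming argument, so there is nothing to compare against in the paper itself.
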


\begin{rmk}
While compactness in the current topology is elementary, the claim that the limit is also an integral current is  nontrivial, and can be viewed as a regularity result. The same holds with the Allard compactness theorem below. For our applications, we will always work with closed integral currents, namely $\partial L_i=0$, which implies $\partial L=0$ in the limit. To such currents one can associate a homology class.
\end{rmk}

\begin{rmk}
A more technical version of Federer-Fleming compactness replaces the current topology by the flat norm topology, which is a slightly stronger topology. The flat norm of an integral current $T$ is 
\[
\norm{T}_{flat}= \inf \{  \text{Mass}(A)+ \text{Mass}(B)| T=A+\partial B, \quad A, B\text{ are integral currents}       \},
\]
and the convergence $T_i\to T$ in this topology simply means $\norm{T-T_i}_{flat}\to 0$. 
\end{rmk}

\begin{thm}\label{Allard}
	(Allard compactness \cite{Allard}) Let $L_i$ be a sequence of $m$-dimensional integer rectifiable varifolds in a complete Riemannian manifold $X$, all supported in a fixed bounded subset, with a uniform volume upper bound $\text{Mass}(L_i)\leq C$ and a uniform bound on the first variation $\int_{L_i} |\vec{H}| \leq C$. Then up to subsequence, $L_i$ converges to an $m$-dimensional integer rectifiable varifold $L$ with the same bounds.
\end{thm}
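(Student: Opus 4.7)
The plan is to follow Allard's original strategy, reducing the varifold compactness to a combination of weak$^*$ compactness of Radon measures together with a rectifiability criterion that upgrades the measure-theoretic limit to an integer rectifiable varifold. First I would view each $L_i$ as a Radon measure on the Grassmann bundle $G_m(X)$ of unoriented $m$-planes over $X$. The uniform mass bound $\text{Mass}(L_i) \leq C$ together with the support-in-a-bounded-subset hypothesis gives a uniform bound on the total mass of these measures, so by the Banach--Alaoglu theorem I can extract a subsequence converging in the weak$^*$ topology to a Radon measure $V$ on $G_m(X)$. By definition of varifold convergence, this $V$ is the varifold limit. The first variation bound $\int_{L_i} |\vec H|\leq C$ passes to the limit in a straightforward manner, since the first variation is a linear functional on compactly supported vector fields that is continuous under weak$^*$ convergence of varifolds; so $V$ has first variation bounded in total variation norm by $C$.

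Next I would invoke the monotonicity formula for varifolds with locally bounded first variation (Allard, Simon). For any such varifold $V$, and any point $x$ in its support, the density ratio
\[
\Theta_r(V, x) = \frac{\lVert V \rVert ( B_r(x) )}{ \omega_m r^m }
\]
is, up to an error controlled by the first variation mass in $B_r(x)$, a monotone nondecreasing function of $r$. This yields uniform density bounds from below on the support of $V$, and in particular shows that the support has finite $m$-dimensional Hausdorff measure. The density $\Theta^m(V, x) := \lim_{r \to 0} \Theta_r(V, x)$ then exists almost everywhere.

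The main obstacle, and the hard heart of Allard's theorem, is the rectifiability step: showing that this limit varifold $V$ is actually integer rectifiable. Here one needs two inputs. The rectifiability part uses Allard's rectifiability theorem, which states that a varifold with locally bounded first variation whose density is positive almost everywhere on its support is rectifiable; the positivity follows from the monotonicity formula applied to $V$ itself. The integrality of the multiplicity is the more delicate issue: one shows that the densities $\Theta^m(L_i, \cdot)$ are positive integers at almost every point of the support of $L_i$, and then one must pass this integrality to $V$. This is handled by a blow-up argument together with the constancy theorem: at a.e.\ point $x$ of $\mathrm{supp}(V)$, the tangent varifold is an $m$-plane with some integer multiplicity, which is identified as the limiting density of the approximating integer rectifiable varifolds. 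The main technical difficulty here is that density can a priori drop in the limit, but Allard's estimates rule out fractional limit densities.

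Finally, once $V$ is known to be integer rectifiable with locally bounded first variation, the same mass and first variation bounds that held for $L_i$ pass to $V$ by lower semicontinuity of mass and total variation under weak$^*$ convergence, completing the proof. I expect that in the present paper, the statement is invoked as a black box from \cite{Allard}; a self-contained proof would essentially reproduce Allard's monograph, with the rectifiability/integrality step being the main difficulty that cannot be shortcut.
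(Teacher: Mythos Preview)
Your expectation is correct: the paper does not prove this theorem at all but simply cites it as a black box from Allard's work \cite{Allard}, using it as standard background from geometric measure theory. Your sketch of the Banach--Alaoglu extraction, monotonicity-based density bounds, and Allard's rectifiability/integrality theorem is a faithful outline of the original argument, so there is nothing to compare against in the paper itself.
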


\begin{rmk}\label{currentvsvarifold}
Federer-Fleming and Allard are somewhat complementary. Integral currents are a special kind of distribution valued forms, while varifolds are a special kind of measures on the real Grassmannian bundle $Gr(TX, m)$ over $X$ whose fibres parametrize $m$-dimensional planes in the tangent spaces of $X$. One key advantage of currents is that they know about orientations, while varifolds do not. The integral current $L$ recovers the underlying rectifiable subset $\text{supp}(L)$ with multiplicity, so can be canonically associated with a varifold $L^{var}$. 
 On the other hand, the natural topology on varifolds (\ie the topology as measures on $Gr(TX,m)$) remembers tangent plane information, which can be lost under the flat norm convergence of integral currents. Morever, assuming all the varifolds in the sequence are contained in a bounded region, then the total volume mass converges under varifold convergence, but not necessarily so under flat norm convergence. The intuition is that 
 \emph{morally the varifold topology detects one more derivative than the flat norm topology}. 
   This explains why Allard requires some integral control on the mean curvature, while Federer-Fleming does not. 
   
   We shall later use the informal terminology of \textbf{`varifold/current topology'} to refer to convergence simultaneously in the varifold topology and the flat norm topology on integral currents. 
\end{rmk}

\begin{eg}\label{snowflakeexample}
Inside $S^1_{2\pi}\times \R$ with the standard Euclidean metric, take $L_k$ as the graph over $S^1$ of the function $\frac{1}{k}\sin(kx)$. Then $L_k$ are Lagrangian currents, which converge to $S^1$ as currents, but due to the high oscillation, $\liminf Mass(L_k)> Mass(S^1)$, and $L_k$ do not converge to $S^1$ in the varifold sense. 
The Lagrangian angle of $L_k$ is prescribed by $\tan \theta= \cos(kx)$, which converges to zero in the current sense, but not strongly in $L^1$.
\end{eg}

One of the best \textbf{regularity theorems} in geometric measure theory is

\begin{thm}
(Almgren's big regularity theorem \cite{Almgren})	Let $L$ be a compactly supported $m$-dimensional closed integral current inside a complete Riemannian manifold, which minimizes the volume among all closed integral currents in the same homology class, then away from a closed subset $S\subset \text{supp}(L)$with  Hausdorff dimension at most $m-2$, the rectifiable subset $\text{supp}(L)\setminus S$ is a smooth submanifold.

\end{thm}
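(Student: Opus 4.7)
The plan is to reduce the claim to a local regularity statement, then run a blow-up analysis around each point of $\text{supp}(L)$. First I would invoke the monotonicity formula for stationary varifolds: for a mass-minimizing current $L$ and any $p \in \text{supp}(L)$, the density ratio $r \mapsto r^{-m}\text{Mass}(L \llcorner B_r(p))$ is nondecreasing (up to curvature corrections from the ambient Riemannian metric), so the density $\Theta(L,p) = \lim_{r\to 0} r^{-m}\omega_m^{-1}\text{Mass}(L\llcorner B_r(p))$ exists and is upper semicontinuous. Rescaling $L$ about $p$ by $r_i \to 0$ and using Federer--Fleming compactness (Theorem \ref{Federer}), one extracts a tangent cone $C_p$ which is itself an area-minimizing integral cone in $\R^n$ with vertex at the origin. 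Call $p$ a regular point if some tangent cone $C_p$ is an $m$-plane of multiplicity one.

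The next step is an $\epsilon$-regularity theorem: there exists $\epsilon = \epsilon(m,n) > 0$ such that if the density ratio of $L$ at $p$ at some scale $r$ is within $\epsilon$ of $\omega_m$ (i.e. the Allard excess is small), then $\text{supp}(L)$ is a $C^{1,\alpha}$ submanifold in a neighborhood of $p$, and higher regularity (smoothness) follows from standard elliptic bootstrapping applied to the minimal surface system. This is Allard's regularity theorem, proved via harmonic approximation: one shows the current is $L^2$-close to the graph of a harmonic function over its tangent plane, then iterates decay estimates to obtain H\"older continuity of the tangent plane. Hence the regular set $\text{reg}(L)$ is open in $\text{supp}(L)$, and $L$ is a smooth minimal submanifold there.

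What remains is to bound the Hausdorff dimension of $S := \text{supp}(L) \setminus \text{reg}(L)$. Here I would use Federer's dimension reduction: stratify $S$ by the dimension of the translation-invariance of tangent cones. If some stratum had Hausdorff dimension exceeding $m-2$, a density point argument would produce, via iterated blow-ups, an area-minimizing cone in some $\R^k$ ($k \leq n$) with isolated singularity at the origin and of dimension $\geq m-1$; the Simons-type classification of low-dimensional area-minimizing cones rules these out in the relevant range, yielding $\dim_{\mathcal{H}} S \leq m-2$.

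The hard part, and where this sketch drastically understates the difficulty, is handling points of higher integer density (the genuinely higher-codimension, higher-multiplicity case). At such points the tangent cone need not be a plane, and the $\epsilon$-regularity based on excess alone breaks down because branching is a priori possible. Almgren's resolution uses his theory of $Q$-valued Dirichlet-minimizing functions, a delicate frequency-function monotonicity replacing harmonic measure on multi-sheeted graphs, a \emph{center manifold} construction on which the current decomposes as a graph of a $Q$-valued function close to a Dirichlet minimizer, and an intricate covering argument controlling the singular set of $Q$-valued minimizers. This analytic machinery — not the blow-up framework above — is the real obstacle, and completing it rigorously occupied Almgren's 1700-page manuscript (and was later streamlined by De Lellis--Spadaro). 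For our purposes it suffices to invoke the output: $S$ has Hausdorff dimension at most $m-2$, sharp as witnessed by complex analytic subvarieties in $\C^n$ whose singular loci have real codimension two.
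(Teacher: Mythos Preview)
The paper does not prove this theorem at all: it is stated with a citation to Almgren \cite{Almgren} and immediately followed by remarks noting that the result ``is well known for its monumental size of around 1000 pages'' and that ``the recent works of Delellis et al.\ have somewhat simplified the proof.'' It is invoked purely as a black box from the geometric measure theory literature, so there is no paper proof to compare your proposal against.

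Your sketch is a faithful high-level outline of the known proof architecture (monotonicity, tangent cones, Allard $\epsilon$-regularity, Federer dimension reduction, and then the genuinely hard multiplicity-$Q$ analysis via Dirichlet-minimizing $Q$-valued functions, frequency function, and center manifolds), and you correctly flag that the last block is where essentially all the work lies. One caveat: the Simons-type cone classification you invoke in the dimension-reduction step is a codimension-one phenomenon and does not by itself deliver the codimension-two bound in higher codimension; the actual argument runs dimension reduction through the $Q$-valued theory (the singular set of a Dir-minimizing $Q$-valued map has dimension at most $m-2$), not through a classification of minimizing cones. So your third paragraph slightly mislocates the mechanism, but since you already defer the real content to Almgren's machinery in the final paragraph, this is a matter of exposition rather than a gap.
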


\begin{rmk}
Real codimension two singularity is the optimal result, as easily seen from the examples of singular algebraic curves in $\mathbb{CP}^2$, which are automatically area minimizers in their homology classes.
\end{rmk}

\begin{rmk}
Almgren's big regularity theorem is well known for its 
 monumental size of around 1000 pages. The recent works of Delellis et al. have somewhat simplified the proof, which still remains very nontrivial (\cf \cite{Delellis} for some introduction). 
\end{rmk}

A standard way to apply these theorems, for instance inside a compact ambient space, is to fix the homology class, and minimize the volume among all the integral currents therein. The compactness theorem guarantees the existence of an absolute volume minimizer, and the regularity theorem then improves its regularity to be more like submanifolds. This strategy is highly effective in producing minimal surfaces, but there is no useful criterion\footnote{If there is at least one special Lagrangian within the given homology class, then all absolute minimizers must be special Lagrangians, by an easy calibration argument. This however does not answer how to find the special Lagrangian in the first place.} to guarantee the volume minimizers to be special Lagrangians, which is why producing special Lagrangians is a highly nontrivial problem in geometric measure theory.

\subsubsection{Exact Lagrangians under weak regularity}\label{ExactLagweakregularity}

We need to ensure the class of Lagrangians in the variational setup is closed under the varifold/current topology.  A trivial observation is

\begin{lem}
Let $L_i$ be closed Lagrangian integral currents, and suppose $L_i\to L$ in the current topology, then the Lagrangian/quantitative almost calibratedness conditions pass to the limit.
\end{lem}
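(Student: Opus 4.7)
The plan is to reformulate both conditions as distributional statements---pairings against smooth forms, possibly combined with mass---and exploit the continuity of such pairings under current convergence together with the lower semicontinuity of mass. I will handle the two conditions separately, doing the Lagrangian condition first so that the limit $L$ has a.e.\ Lagrangian approximate tangent planes, which is needed to even talk about a Lagrangian angle on $L$.

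First, for the Lagrangian condition, observe that an $n$-plane $T$ is Lagrangian precisely when $(\omega \wedge \alpha)|_T = 0$ for every $(n-2)$-covector $\alpha$. Therefore an integer rectifiable $n$-current $L$ is Lagrangian if and only if $\langle L, \omega \wedge \alpha\rangle = 0$ for all smooth compactly supported $(n-2)$-forms $\alpha$ on $X$. This is a pure pairing with a fixed smooth $n$-form, so it passes immediately under current convergence: $\langle L, \omega \wedge \alpha \rangle = \lim_i \langle L_i, \omega \wedge \alpha \rangle = 0$ for every $\alpha$, and a standard Lebesgue differentiation at a typical point of the rectifiable support recovers the pointwise a.e.\ Lagrangian condition on the approximate tangent planes of $L$.

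Second, for the quantitative almost calibrated condition, normalize $\Omega$ so that any Lagrangian unit $n$-vector $\vec{\tau}$ satisfies $|\langle \Omega, \vec{\tau}\rangle| = 1$. Since the Lagrangian condition is already known to pass to the limit, at a.e.\ point of $L$ the approximate tangent plane is Lagrangian, and $\langle \Omega, \vec{\tau} \rangle = e^{i\theta}$ for a well-defined $\theta \in \R/2\pi\Z$. The quantitative bound $\theta \in [-\pi/2+\epsilon, \pi/2-\epsilon]$ is equivalent to the pointwise inequality $\langle \text{Re}\,\Omega, \vec{\tau}\rangle \geq \sin\epsilon$, which in turn is equivalent to the one-sided integrated inequality
\begin{equation*}
\langle L, f\,\text{Re}\,\Omega\rangle \; \geq \; \sin\epsilon \cdot \text{Mass}(L \llcorner f)
\end{equation*}
for every smooth $f \geq 0$ of compact support. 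The left-hand side is continuous in the current topology, and the right-hand side is lower semicontinuous: if $L_i\to L$ as currents, then $L_i\llcorner f\to L\llcorner f$, and the standard variational characterization $\text{Mass}(T)=\sup\{\langle T,\eta\rangle:|\eta|\leq 1\}$ gives $\liminf_i \text{Mass}(L_i \llcorner f) \geq \text{Mass}(L \llcorner f)$. Hence the inequality transfers from each $L_i$ to $L$. Lebesgue differentiation then produces $\cos\theta \geq \sin\epsilon$ a.e.\ on $L$; the resulting $\theta$ being confined to $(-\pi/2,\pi/2)$, the grading is canonically determined and no $\pi\Z$ ambiguity arises.

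No substantial obstacle appears: both conditions happen to be of the right one-sided form, so that lower semicontinuity of mass pushes in the favourable direction. The genuinely hard issue, which lies well beyond this lemma and will have to be confronted elsewhere in the variational program, is that the Lagrangian angle itself does not survive weak limits of currents pointwise or even in $L^1$ (compare Example \ref{snowflakeexample}); only one-sided inequalities of the above type are stable under varifold/current limits.
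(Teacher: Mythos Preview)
Your argument is correct. The paper itself gives no proof at all --- it labels the lemma ``a trivial observation'' and moves on --- but earlier, at the opening of the variational chapter, it records the hint that quantitative almost calibratedness has the alternative characterization $\pm\,\text{Im}\,\Omega \leq (\cot\epsilon)\,\text{Re}\,\Omega$ when restricted to $L$, which is a pure distributional inequality and therefore passes to current limits without any appeal to mass.

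The difference between your route and the paper's hint is minor but worth noting. You encode the angle bound as $\text{Re}\,\Omega \geq \sin\epsilon\cdot dvol$, which brings in the mass measure and therefore needs lower semicontinuity of mass (in the favourable direction, as you correctly observe). The paper's characterization $|\text{Im}\,\Omega| \leq (\cot\epsilon)\,\text{Re}\,\Omega$ compares two smooth $n$-forms against each other, so it is tested by $\langle L, f((\cot\epsilon)\,\text{Re}\,\Omega \mp \text{Im}\,\Omega)\rangle \geq 0$ for $f\geq 0$, which is literally continuous under current convergence with no semicontinuity step. Your version has the mild advantage of making the link to the volume bound (Lemma~\ref{Volumebound}) transparent, and also implicitly uses the Calabi-Yau normalization $|\Omega|_{T_yL}|=1$ on Lagrangian planes; the paper's version is metric-free and works verbatim in the almost Calabi-Yau setting. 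Your closing remark about the instability of $\theta$ itself under weak limits is apt and matches the paper's later commentary.
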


Let $L$ be a closed Lagrangian integral current, and $f_L$ be an $L^\infty$ function on $L$. 
We say the exact condition $\lambda= df_L$ holds in the weak sense, if for any compactly supported test $(n-1)$-form $\chi$,
\begin{equation}\label{exactweakLag}
	\int_L \lambda\wedge \chi=  -\int_L f_L d\chi.
\end{equation}
To make sense of the RHS, notice the rectifiability of $L$ allows the integration of the $L^\infty$-valued $n$-form $f_Ld\chi$. 
Equivalently, the normal current $f_L L$ has distributional derivative
$\chi\to \int_L \chi\wedge  \lambda$.

\begin{rmk}
	The examples of immersed Lagrangians show that we cannot require $f_L$ to have a continuous extension to $X$, so $L^\infty$-regularity is the best we can impose on $f_L$. 
\end{rmk}

\begin{lem}\label{limitexactness}
All Lagrangians are assumed to be contained in a fixed bounded region of $X$, homologous to $L_0$, and are quantitatively almost calibrated.
If $L_i$ is a sequence of exact Lagrangians with potential $f_{L_i}$, such that $f_{L_i}$ are uniformly bounded in $L^\infty$. Then
up to subsequence, there is a Lagrangian $L$ with potential $f_L$, such that $L_i\to L$ and
$f_{L_i}L_i\to f_LL$ as currents.  
\end{lem}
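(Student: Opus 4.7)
The plan is to combine Federer-Fleming compactness for the currents $L_i$ with Federer's compactness for the weighted normal currents $N_i:=f_{L_i}L_i$, and then identify the limit of the latter as $f_LL$. First, I would establish a uniform mass bound on $L_i$. By the volume bound of Lemma \ref{Volumebound} combined with the cohomological invariance $\int_{L_i}\text{Re}\,\Omega=\int_{L_0}\text{Re}\,\Omega$ (since $\text{Re}\,\Omega$ is closed and $[L_i]=[L_0]\in H_n(X,\Z)$), one gets $\text{Mass}(L_i)\le\frac{1}{\sin\epsilon}\int_{L_0}\text{Re}\,\Omega$. Since $\partial L_i=0$ and all supports lie in a fixed bounded region, Theorem \ref{Federer} extracts a subsequence with $L_i\to L$ in the flat topology for some closed integral current $L$ in the same homology class; the Lagrangian condition $\omega|_L=0$ and the quantitative almost calibrated condition (via its pointwise characterization $\pm\text{Im}\,\Omega\le(\cot\epsilon)\text{Re}\,\Omega$ restricted to the varifold) both pass to the limit under flat convergence.

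Next, I would view $N_i=f_{L_i}L_i$ as normal $n$-currents on $X$. Their masses are uniformly bounded by $\|f_{L_i}\|_{L^\infty}\cdot\text{Mass}(L_i)$, and the weak exactness (\ref{exactweakLag}) yields
\[
\partial N_i(\chi)=N_i(d\chi)=\int_{L_i}f_{L_i}d\chi=-\int_{L_i}\lambda\wedge\chi,
\]
so $\text{Mass}(\partial N_i)$ is uniformly bounded by $\sup_K|\lambda|\cdot\sup_i\text{Mass}(L_i)$. Federer's compactness theorem for normal currents then extracts a further subsequence with $N_i\to N$ in flat norm, for some normal $n$-current $N$. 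Passing to the limit in the identity above gives, for every compactly supported smooth $(n-1)$-form $\chi$, the candidate weak exactness relation $N(d\chi)=-\int_L\lambda\wedge\chi$, which is precisely (\ref{exactweakLag}) for $f_L$ once $N$ has been identified with $f_LL$.

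The delicate step is exactly this identification. The pointwise domination $|N_i|\le C|L_i|$ of Radon measures passes to weak-$*$ limits as $|N|\le C\mu$, where $\mu$ is a weak-$*$ subsequential limit of $|L_i|$ (existing by Banach-Alaoglu). A priori $\mu$ may strictly dominate the mass measure $|L|$ because varifold-level mass concentration can survive the flat limit (as in Example \ref{snowflakeexample}), so one must rule out a singular component of $N$ transverse to $L$. My plan is to pass to a further subsequence converging also as varifolds, and argue that on any region where $|L_i|$ exhibits excess mass from oscillating sheets whose oriented currents cancel in the limit, the relation $df_{L_i}=\lambda|_{L_i}$ together with the uniform $L^\infty$ bound forces the primitives on the different sheets to agree in the limit up to a single function; the corresponding pieces of $N_i$ therefore collapse to a multiple of $L$ rather than contributing extra singular mass. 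Granted $|N|\ll|L|$, Radon-Nikodym produces $f_L\in L^\infty(L)$ with $N=f_LL$ and $\|f_L\|_{L^\infty}\le\sup_i\|f_{L_i}\|_{L^\infty}$, completing the proof.

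The main obstacle is this final structural step: flat current convergence is strictly weaker than varifold convergence, and the lemma effectively asserts that the uniform $L^\infty$ control of primitives plus the Lagrangian condition is enough to prevent the limit $N$ from acquiring components invisible to $L$. I expect the rigorous justification to rest on a slicing-style argument for normal currents combined with the Lipschitz-like control $df_{L_i}=\lambda|_{L_i}$, which aligns the behaviour of the primitives across nearby sheets. Once this structural fact is in hand, the remaining verifications are routine.
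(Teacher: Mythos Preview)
Your overall architecture (Federer--Fleming for $L_i$, then compactness for $N_i=f_{L_i}L_i$, then identify the limit) matches the paper, but you have not found the key idea that resolves the obstacle you correctly flag. Your proposed workaround via varifold subsequences and cancellation of oscillating sheets is vague and unnecessary. The paper's observation is that the quantitative almost calibrated condition gives a domination not by the \emph{mass} measure of $L_k$ but by the \emph{form} measure $g\mapsto \int_{L_k} g\,\mathrm{Re}\,\Omega$: indeed $dvol_{L_k}\le \frac{1}{\sin\epsilon}\,\mathrm{Re}\,\Omega|_{L_k}$, so each component signed measure $g\mapsto \int_{L_k} f_{L_k}\,g\,dx_{i_1}\wedge\cdots\wedge dx_{i_n}$ is bounded by $(\sup_i\|f_{L_i}\|_{L^\infty})\frac{C}{\sin\epsilon}\int_{L_k} g\,\mathrm{Re}\,\Omega$. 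The point is that $\mathrm{Re}\,\Omega$ is a closed $n$-form, so $\int_{L_k} g\,\mathrm{Re}\,\Omega\to \int_L g\,\mathrm{Re}\,\Omega$ under \emph{current} convergence alone, with no mass loss possible. Hence the weak limits of the component measures are automatically absolutely continuous with respect to $g\mapsto \int_L g\,\mathrm{Re}\,\Omega$, so supported on $L$ with $L^\infty$ densities $a_{i_1\ldots i_n}$. This completely bypasses the varifold issue.

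There is a second gap: even granting $|N|\ll |L|$, Radon--Nikodym yields only $N=\vec{T}\,\|L\|$ for some $L^\infty$ $n$-vector field $\vec{T}$, not $N=f_L L$; you must still show $\vec{T}(y)\in\Lambda^n T_yL$ a.e. The paper handles this by blowing up at a generic $y\in\mathrm{supp}(L)$: the blow-up of $N$ is the constant polyvector $\sum a_{i_1\ldots i_n}(y)\,\partial_{i_1}\wedge\cdots\wedge\partial_{i_n}$ times $\Theta(y)$-Lebesgue measure on $T_yL$, and since the weak exactness relation $N(d\chi)=-\int_L\lambda\wedge\chi$ (which you correctly derive) passes to the blow-up, this constant-coefficient current on $T_yX$ is \emph{closed}. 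A constant polyvector supported on a plane defines a closed current only if it is tangent to that plane, forcing $\vec{T}(y)\in\Lambda^n T_yL$ and hence $N=f_L L$.
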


\begin{proof}
By Lemma \ref{Volumebound} the volume mass is uniformly upper bounded. By Federer-Fleming compactness, subsequentially $L_i\to L$ in the flat topology for some Lagrangian integral current $L$ homologous to $L_0$. This implies $\int_{L_k}g \text{Re}\Omega\to \int_L g\text{Re}\Omega$ for any $C^\infty$ test function $g$, even though $\liminf_i Mass(L_i)$ may be strictly greater than $Mass(L)$, as we do not assume varifold convergence.

We focus on a coordinate ball. The $n$-currents $f_{L_k}L_k$ can be viewed as a collection of ${2n\choose n}$ signed measures $g\mapsto \int_{L_k} f_{L_k}g dx_{i_1}\wedge\ldots dx_{i_n}$. Each of these measures are bounded by the measure
\[
g\mapsto (\sup \norm{f_{L_i} }_{L^\infty} ) \frac{C}{\sin \epsilon} \int_{L_k}g \text{Re}\Omega,
\]  
whose total mass is uniformly bounded for all $k$.  By the weak compactness of measures, subsequentially these signed measures converge, and the limiting signed measures have $L^\infty$ Radon-Nykodim derivatives $a_{i_1\ldots i_n}(x)$ with respect to the measure $g\mapsto \int_L g\text{Re}\Omega$:
\[
 \int_{L_k} f_{L_k}gdx_{i_1}\wedge\ldots dx_{i_n}
 \to \int_L g a_{i_1\ldots i_n} \text{Re}\Omega.
\]
Thus inside the coordinate ball, the currents $f_{L_k}L_k$ converge to $\lim_k f_{L_k}L_k$: 
\[
\eta\mapsto
\int_L \sum_{i_1<i_2\ldots<i_n} \eta(\partial_{i_1}\wedge \ldots \partial_{i_n}) a_{i_1\ldots i_n} \text{Re}\Omega,
\]
where $\eta$ is any test $n$-form.

Now $L$ is an integral current, so $\mathcal{H}^n$-a.e. $y\in \text{supp}(L)$ there is a well defined tangent space $T_yL$ and a local integer multiplicity $\Theta(y)$.
Recall a blow up limit of an $n$-current $N$ at a point $y\in X$ refers to a subsequential limit of the currents on $T_y X$ as $r\to 0$:
\[
\eta\mapsto \int_{N} \text{rescale}_{y,r}^* \eta,\quad \text{rescale}_{y,r}: x\mapsto \frac{x}{r} \text{ in the geodesic coordinates around $y$}. 
\]
For a.e $y\in \text{supp}(L)$, there is a unique blow up limit for the current $\lim_k f_{L_k}L_k$, which is
\[
\eta\mapsto \int_{T_yL} \sum  \eta(\partial_{i_1}\wedge \ldots \partial_{i_n}) a_{i_1\ldots i_n}(y)\Theta(y)\text{Re}\Omega,
\]
whose ${2n \choose n}$ component signed measures are just constant multiples of the Lebesgue measure on $T_yX$.

Observe that the weak formulation (\ref{exactweakLag}) passes to the limit:
\[
\int_L \lambda \wedge \chi=- (\lim_k f_{L_k}L_k)(d\chi).
\]
Thus the blow up limit of $\lim_k f_{L_k}L_k$ at a.e. $y\in \text{supp}(L)$ is in fact a closed current. Consequently, the polyvector
\[
\sum_{i_1<i_2\ldots<i_n}  a_{i_1\ldots i_n} \partial_{i_1}\wedge \ldots \partial_{i_n}
\]
must be a pure tensor lying in $\Lambda^n T_yL\subset \Lambda^n T_yX$. Hence
\[
\lim_k f_{L_k}L_k= f_L L
\]
for some $L^\infty$-\emph{function} $f_L$.
\end{proof}

\subsubsection*{Continuity of the Solomon functional}

Inside Stein manifolds, the Solomon functional can be defined for any Lagrangian $L$ with potential which is homologous to $L_0$, without further Floer theoretic inputs: the formula (\ref{Solomonfunctionalextension}) makes sense after choosing any bordism current $\mathcal{C}$ with $\partial \mathcal{C}=L-L_0$ in the sense of currents, and the choice does not matter.

\begin{lem}\label{Solomoncontinuity}
(\textbf{Continuity of the Solomon functional})
All Lagrangians are assumed to be contained in a fixed bounded region of $X$, homologous to $L_0$, and are quantitatively almost calibrated.	
Suppse $L_i$ is a sequence of  Lagrangian integral currents with potential $f_{L_i}$, such that $L_i\to L$ in the flat norm, and $f_{L_i}L_i$ converge to $f_L L$ as currents, then the Solomon functionals converge: $\mathcal{S}(L_i)\to \mathcal{S}(L)$.
\end{lem}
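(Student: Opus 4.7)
The plan is to handle the three terms of the Solomon functional formula
\[
\mathcal{S}(L_i)= \int_{L_i} f_{L_i} \text{Im}(e^{-i\hat{\theta}} \Omega) -\int_{L_0} f_{L_0} \text{Im}(e^{-i\hat{\theta}} \Omega)- \text{Im} \int_{\mathcal{C}_i} \lambda\wedge e^{-i\hat{\theta}} \Omega
\]
separately. The middle term does not depend on $i$, so nothing needs to be said. For the first term, I would multiply the smooth $n$-form $\text{Im}(e^{-i\hat{\theta}}\Omega)$ by a compactly supported cutoff equal to $1$ on the fixed bounded region containing all $L_i$ and $L$, thereby obtaining a compactly supported test $n$-form; the assumed convergence $f_{L_i}L_i\to f_L L$ in the current topology then gives $\int_{L_i}f_{L_i}\text{Im}(e^{-i\hat{\theta}}\Omega)\to \int_L f_L\text{Im}(e^{-i\hat{\theta}}\Omega)$ directly.

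For the third term, the plan is to exploit the homological nature of the Solomon functional (Lemma \ref{Solomonfunctionallem1}) already used in section \ref{Solomonrevisited}: since the Kähler condition gives $d(\lambda\wedge e^{-i\hat{\theta}}\Omega)=\omega\wedge e^{-i\hat{\theta}}\Omega=0$, the integral $\int_{\mathcal{C}_i}\lambda\wedge e^{-i\hat{\theta}}\Omega$ only depends on $\mathcal{C}_i$ through its class modulo boundaries. Fix any bordism current $\mathcal{C}$ with $\partial\mathcal{C}=L-L_0$ supported in a neighbourhood of the bounded region. The flat convergence $L_i\to L$ yields decompositions $L-L_i=A_i+\partial B_i$ with $\text{Mass}(A_i)+\text{Mass}(B_i)\to 0$ and supports contained in a slightly enlarged bounded region. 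Since $\partial L_i=\partial L=0$, the $n$-current $A_i$ is closed; applying Federer's isoperimetric inequality for integral currents in a coordinate chart of the bounded region, for $i$ large $A_i=\partial E_i$ for an integral current $E_i$ with $\text{Mass}(E_i)\leq C\text{Mass}(A_i)^{(n+1)/n}\to 0$. Thus $\mathcal{C}_i:=\mathcal{C}-B_i-E_i$ is a valid bordism with $\partial\mathcal{C}_i=L_i-L_0$ and $\text{Mass}(\mathcal{C}_i-\mathcal{C})\to 0$, supported in a fixed bounded region. Since $\lambda\wedge e^{-i\hat{\theta}}\Omega$ is bounded on that region,
\[
\Bigl|\int_{\mathcal{C}_i}\lambda\wedge e^{-i\hat{\theta}}\Omega-\int_{\mathcal{C}}\lambda\wedge e^{-i\hat{\theta}}\Omega\Bigr|\leq \norm{\lambda\wedge\Omega}_{L^\infty}\text{Mass}(\mathcal{C}_i-\mathcal{C})\to 0,
\]
which finishes the proof.

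The only potentially delicate point is the choice of the bordism currents $\mathcal{C}_i$ with uniformly bounded support and small flat norm to $\mathcal{C}$; this is where the quantitative almost calibratedness (via Lemma \ref{Volumebound}) and the assumption that all Lagrangians sit in a fixed bounded region are essential, since they let one invoke a local isoperimetric inequality rather than worry about mass escaping to infinity. Everything else is a direct unpacking of the convergence hypotheses against smooth forms.
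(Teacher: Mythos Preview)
Your proof is essentially identical to the paper's: both use the current convergence $f_{L_i}L_i\to f_LL$ for the first term, and for the third term both exploit the flat norm decomposition $L_i-L=A_i+\partial B_i$ together with an isoperimetric bound to fill the closed current $A_i$ by a small-mass $(n+1)$-current, yielding bordisms $\mathcal{C}_i$ close to $\mathcal{C}$ in mass. The only minor imprecision is your appeal to the isoperimetric inequality ``in a coordinate chart of the bounded region''---the region need not lie in a single chart, and the paper instead invokes its Prop.~\ref{isoperimetricthmcomplete} (an isoperimetric theorem on complete manifolds), using that $[L_i]=[L]=[L_0]$ forces $[A_i]=0$ in $H_n(X)$.
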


\begin{proof}
Since
$f_{L_i}L_i$ converges to $f_L L$ as currents,
\[
\int_{L_i} f_{L_i}\Omega \to  \int_{L} f_{L}\Omega.
\]
It suffices to justify 
$
\int_{\mathcal{C}_i} \lambda \wedge \Omega\to \int_{\mathcal{C}} \lambda \wedge \Omega,
$
where $\partial \mathcal{C}_i=L_i-L_0$, and $\partial \mathcal{C}=L-L_0$.

Now the flat norm convergence gives $L_i-L=A_i+\partial B_i$ for some integral currents $A_i, B_i$, with $Mass(A_i)+Mass(B_i)\to 0$. Since $[L_i]=[L]=[L_0]\in H_n(X)$, the homology class of $A_i$ is zero. A version of the isoperimetric theorem (\cf Prop. \ref{isoperimetricthmcomplete} below) then says $A_i=\partial B_i'$ with $Mass(B_i')\leq C Mass(A_i)^{(n+1)/n}\to 0$. Without loss of generality we absorb $B_i'$ into $B_i$. Then we can simply choose $\mathcal{C}_i=\mathcal{C}+ B_i$, which is legitimate since it satisfies $\partial \mathcal{C}_i=L_i-L_0$. The claim follows by \[
|\int_{B_i} \lambda \wedge \Omega|\leq C Mass(B_i)\to 0.
\]
\end{proof}


\subsubsection*{Robustness of potential clustering}


We revisit the potential clustering property (\cf section \ref{Solomonboundedpartsection}) from the geometric measure theory perspective.
Assume as always that the Lagrangian integral current $L$ is quantitatively almost calibrated, homologous to $L_0$, and equipped with Lagrangian potential $f_L$.
Given constants $N\in \mathbb{N}, A\in \R_+$, we say $L$ satisfies $(N,A)$-\textbf{potential clustering}, if \[
L=\sum_1^N L_i,\quad f_L L=\sum_1^N f_{L_i}L_i,
\]
for quantitatively almost calibrated, closed Lagrangian integral currents $L_i$ with potential $f_{L_i}$, contained inside the support of $L$, such that the oscillation of the Lagrangian potentials have uniform bounds
\[
\sup_{L_i} f_{L_i}- \inf_{L_i} f_{L_i} \leq A,
\]
while for any $i>j$,
\[
\sup_{L_j} f_{L_j}\leq \inf_{L_i} f_{L_i}.
\]
Without loss of generality, we assume $\sup_{L_0} f_{L_0}- \inf_{L_0} f_{L_0}\leq A$ for the fixed Lagrangian $L_0$.

\begin{rmk}
Here we allow $L_i$ to have overlapping supports. For instance, it is possible for $L_1=L_2$ as currents, but $f_{L_2}$ and $f_{L_1}$ differ by a constant.
\end{rmk}

We will later be interested in uniform upper bounds on $N, A$. For now, we observe the robustness under limits:

\begin{cor}\label{potentialoscillationrobust}
Fix the choice of $N,A$. Suppose we are given a sequence of Lagrangians $L^{(j)}$ with potential satisfying $(N,A)$-potential clustering, and assume $L_i^{(j)}\to L_i$ as currents for $1\leq i\leq N$, all $f_{L_i}^{(j)}$ have uniform $L^\infty$ bounds, and $f_{L_i^{(j)}}L^{(j)}\to f_{L_i} L_i$ as currents. Then the limit $L=\sum_1^N L_i$ with its potential $f_L$ also satisfies $(N,A)$-potential clustering.

\end{cor}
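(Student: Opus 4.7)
The plan is to verify each of the four structural requirements for $(N,A)$-clustering (that every $L_i$ is a closed, quantitatively almost calibrated Lagrangian integral current equipped with potential $f_{L_i}$; that $L=\sum_i L_i$ and $f_LL=\sum_i f_{L_i}L_i$; that $\sup_{L_i}f_{L_i}-\inf_{L_i}f_{L_i}\leq A$; and that $\sup_{L_a}f_{L_a}\leq\inf_{L_b}f_{L_b}$ whenever $b>a$) by direct passage to the limit of the sequence, relying on the weak convergence hypotheses together with Lemma \ref{limitexactness}.

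First I would sum the given convergences over $i$ to deduce $L^{(j)}=\sum_iL_i^{(j)}\to\sum_iL_i$ as currents and $f_{L^{(j)}}L^{(j)}=\sum_if_{L_i^{(j)}}L_i^{(j)}\to\sum_if_{L_i}L_i$ as currents. On the other hand Lemma \ref{limitexactness}, applied to the sequence $L^{(j)}$ (whose potentials $f_{L^{(j)}}$ are automatically uniformly $L^\infty$ bounded by the component hypothesis), produces a limit of the form $f_LL$. Uniqueness of weak current limits then identifies $L=\sum_iL_i$ and $f_LL=\sum_if_{L_i}L_i$. Current convergence automatically preserves closedness, integrality, the Lagrangian condition, the homology class, and the pointwise inequalities $\pm\mathrm{Im}\,\Omega\leq(\cot\epsilon)\mathrm{Re}\,\Omega$ characterizing quantitative almost calibratedness, so each $L_i$ is a quantitatively almost calibrated exact Lagrangian of the required type.

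The real content is therefore the passage to the limit of the essential extrema inequalities for the potentials. Here I would exploit that, by quantitative almost calibratedness, $\mathrm{Re}\,\Omega$ restricts on each $L_i^{(j)}$ to a \emph{positive} orientation measure $\mu_i^{(j)}$, and that testing the hypothesized current convergences against $\chi\,\mathrm{Re}\,\Omega$ for arbitrary non-negative continuous $\chi$ yields weak convergences of Radon measures $\mu_i^{(j)}\rightharpoonup\mu_i$ and $f_{L_i^{(j)}}\mu_i^{(j)}\rightharpoonup f_{L_i}\mu_i$. Given any upper bound $f_{L_i^{(j)}}\leq M_j$ in $L^\infty(\mu_i^{(j)})$, the measures $(M_j-f_{L_i^{(j)}})\mu_i^{(j)}$ are non-negative, and extracting a subsequence along which $M_j\to M_\ast=\liminf_jM_j$ gives $(M_\ast-f_{L_i})\mu_i\geq 0$ in the weak limit, so $\sup_{L_i}f_{L_i}\leq\liminf_j\sup_{L_i^{(j)}}f_{L_i^{(j)}}$; the symmetric argument yields $\inf_{L_i}f_{L_i}\geq\limsup_j\inf_{L_i^{(j)}}f_{L_i^{(j)}}$.

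From these two semicontinuity inequalities the conclusion is routine: passing to a diagonal subsequence along which all $2N$ essential extrema of the $f_{L_i^{(j)}}$ converge, the hypothesized oscillation bounds $\sup_{L_i^{(j)}}f_{L_i^{(j)}}-\inf_{L_i^{(j)}}f_{L_i^{(j)}}\leq A$ give $\sup_{L_i}f_{L_i}-\inf_{L_i}f_{L_i}\leq A$, and the hypothesized monotonicity $\sup_{L_a^{(j)}}f_{L_a^{(j)}}\leq\inf_{L_b^{(j)}}f_{L_b^{(j)}}$ for $b>a$ yields in the limit the chain $\sup_{L_a}f_{L_a}\leq\liminf_j\sup_{L_a^{(j)}}f_{L_a^{(j)}}\leq\limsup_j\inf_{L_b^{(j)}}f_{L_b^{(j)}}\leq\inf_{L_b}f_{L_b}$. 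The main technical subtlety, and the step I expect to need the most care, is precisely the passage from the hypothesized current convergence of $f_{L_i^{(j)}}L_i^{(j)}$ to the weak measure convergence of $f_{L_i^{(j)}}\mu_i^{(j)}$: this uses crucially that $\mathrm{Re}\,\Omega$ is a genuine orientation form on every quantitatively almost calibrated Lagrangian, so that pairing against the test $n$-form $\chi\,\mathrm{Re}\,\Omega$ extracts an honest positive measure, combined with the uniform $L^\infty$ control on the potentials to prevent loss of mass under localization—essentially the same mechanism that underpins Lemma \ref{limitexactness}.
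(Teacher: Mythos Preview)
Your proposal is correct and follows essentially the same approach as the paper's own proof: both arguments identify the key point as the semicontinuity of the essential extrema of $f_{L_i}$ under the given convergence, and both establish this by observing that a bound such as $f_{L_i}\leq c$ is equivalent to nonnegativity of the measure $g\mapsto\int_{L_i}(c-f_{L_i})g\,\mathrm{Re}\,\Omega$, which is preserved under weak limits. Your write-up is considerably more detailed (in particular you spell out the passage from current convergence to measure convergence via testing against $\chi\,\mathrm{Re}\,\Omega$, and you verify the auxiliary structural conditions explicitly), and you in fact obtain the slightly sharper inequality $\sup_{L_i}f_{L_i}\leq\liminf_j\sup_{L_i^{(j)}}f_{L_i^{(j)}}$ rather than the paper's $\limsup$, but the underlying mechanism is the same.
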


\begin{proof}
Notice a potential bound such as $f_L\geq c$ can be characterized by the positivity of the measure $g\mapsto \int_L (f_L-c) g \text{Re}\Omega$. This characterization is robust under current convergence, so
\[
\sup_{L_i} f_{L_i}\leq \limsup_j \sup_{L_i^{(j)}} f_{L_i^{(j)}},\quad \inf_{L_i} f_{L_i}\geq \liminf_j \inf_{L_i^{(j)}} f_{L_i^{(j)}},
\]
hence the potential clustering bounds pass to the limit.
\end{proof}

\subsubsection{Almgren's regularity in the almost Calabi-Yau setting?}

The main reason to impose the Calabi-Yau condition is so that any  special Lagrangian closed integral current is an absolute volume minimizer among all closed integral currents in the same homology class, by the calibration inequality (\ref{calibration}). If we believe Thomas-Yau conjecture to be valid more generally for almost Calabi-Yau ambient structures, with
\[
\frac{\omega^n}{n!}= (-1)^{n(n-1)} ( \frac{\sqrt{-1}}{2})^n e^{2\rho} \Omega \wedge \overline{\Omega},
\]
then we are naturally motivated to ask if Almgren's regularity extends to special Lagrangians in this setting:

\begin{Question}
Suppose $L$ is a compactly supported closed integral current inside an \emph{almost} Calabi-Yau manifold, which is a special Lagrangian in the sense of (\ref{specialLagrangian}). Does it imply the support of $L$ is smooth away from a Hausdorff codimension two subset?
\end{Question}

The following observations, left as easy exercises, are indications that almost Calabi-Yau manifolds behave  similarly as Calabi-Yau manifolds.

\begin{itemize}
	\item 
By a variant of the calibration inequality (\ref{calibration}), special Lagrangians minimize the weighted volume 
\[
\int_L e^{-\rho} dvol_L \geq |\int_L \Omega|
\]
within its homology class.

\item Under the smoothness assumption, the mean curvature of a Lagrangian submanifold with phase function $\theta$ satisfies the formula
\[
\vec{H}= -\nabla^\perp \rho+ J\nabla \theta,
\]
where $\nabla^\perp$ means the normal projection of the gradient, and $\nabla \theta$ is the derivative of $\theta$ along $L$. Thus for smooth special Lagrangians in a bounded region, we have the a priori bound $|\vec{H}|\leq C$.
\end{itemize}

\subsection{Quantitative almost calibratedness}\label{Quantitativealmostcalibrated}

One major advantage of the quantitative almost calibrated condition is that within a fixed homology class of Lagrangians, it guarantees an a priori volume upper bound (\cf Lemma \ref{Volumebound}). We shall explain that, under very mild asymptotic conditions on the ambient Calabi-Yau manifolds, it also guarantees that the Lagrangian remains within a bounded region. As such, the Federer-Fleming compactness applies automatically, and Allard compactness applies under the additional hypothesis of a uniform bound on $\int_L |\vec{H}|$. We also discuss a number of instructive but not particularly difficult consequences of quantitative almost calibratedness.

\begin{rmk}
The arguments in this section are adaptions of Neves \cite{Nevessurvey}. They can also be easily adapted to the almost Calabi-Yau setting under mild conditions on the volume density.	
\end{rmk}

 As a preliminary, we will say the regularity scale near a given point $P$ on the  Calabi-Yau manifold is at least $O(R)$, if $P$ is contained in a complex coordinate ball with Euclidean radius at least $R$, on which $\Omega=fdz_1\wedge \ldots dz_n$, and\footnote{We will actually only use the metric uniform equivalence. But for Calabi-Yau metrics, the higher derivative estimates are in any event implied by metric equivalence,  after shrinking the balls slightly, by Evans-Krylov theory.}
\begin{equation}\label{uniformellipticity}
C^{-1}\delta_{ij }\leq g_{i\bar{j}} \leq C\delta_{ij}, \quad |\partial^k g_{i\bar{j}} |\leq C(k) R^{-k}, \quad  |f-1|\leq \frac{1}{2}, \quad |\partial^k f|\leq CR^{-k}.
\end{equation}
From now on we will \emph{assume the regularity scale tends to infinity for $P\to \infty$}. This is a very mild condition on the Calabi-Yau manifold, for instance satisfied by asymptotically conical Calabi-Yaus.

\begin{rmk}
This asymptotic condition is essentially the weakest that can prevent the almost calibrated Lagrangian in a given homology class from escaping to infinity. For instance, if $L$ is a special Lagrangian in $X$, then $L\times S^1$ is a special Lagrangian in the product $X\times T^*S^1$, which can obviously be translated in the $\R$ direction to escape to infinity, albeit not preserving the $D^bFuk(X)$ class. One can still hope to obstruct escaping to infinity using Floer theory, but then incorporating singular Lagrangians requires more foundational work.
\end{rmk}

\subsubsection*{Isoperimetric inequality}

\begin{lem}(Isoperimetric inequality \cf \cite[Lem 3.10]{Nevessurvey})
Let $L$ be a closed Lagrangian integral current in $(X,\omega, \Omega)$, and consider a Euclidean coordinate ball $B_{2R}$ on which the regularity scale is at least $O(R)$. Assume the quantitative almost calibrated condition $\cos \theta\geq \sin \epsilon$. Then there is a universal constant $C$ depending only on $\epsilon, n$ and the metric uniform equivalence constant in (\ref{uniformellipticity}), so that
\[
Mass(A)^{(n-1)/n} \leq C Mass(\partial A)
\]
for all closed subsets $A$ of $\text{supp}(L)\cap B_R$ with rectifiable boundary. Here the Hausdorff measures are computed using the Calabi-Yau metric.

\end{lem}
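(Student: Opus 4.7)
The plan is to combine the almost calibrated condition (which makes $\mathrm{Re}\,\Omega$ a near-volume form on $L$) with the closedness $d\,\mathrm{Re}\,\Omega = 0$ and the contractibility of the coordinate ball. The result is essentially Neves's isoperimetric trick, adapted to the quantitative setting.

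First, the quantitative almost calibrated hypothesis $\cos\theta \geq \sin\epsilon$ gives the pointwise inequality $\sin\epsilon\cdot dvol_L \leq \mathrm{Re}\,\Omega|_L$. Since $A \subset \mathrm{supp}(L)$ inherits the rectifiable structure (and orientation from $L$) and $\mathrm{Re}\,\Omega$ is bounded on $B_{2R}$ by a constant depending only on the metric equivalence in (\ref{uniformellipticity}), we obtain
\[
\sin\epsilon\cdot \mathrm{Mass}(A) \;\leq\; \int_A \mathrm{Re}\,\Omega \;\leq\; C_1\,\mathrm{Mass}(A),
\]
where all masses are computed with respect to the Calabi-Yau metric. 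Thus it suffices to bound $\int_A \mathrm{Re}\,\Omega$ from above by $C\,\mathrm{Mass}(\partial A)^{n/(n-1)}$.

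Next, since $A \subset \overline{B_R}$, the $(n-1)$-current $\partial A$ is supported in $\overline{B_R}\subset B_{2R}$. On $B_{2R}$, the Calabi-Yau metric is uniformly equivalent to the flat Euclidean one by (\ref{uniformellipticity}), so the Euclidean isoperimetric inequality for integral currents in $\mathbb{R}^{2n}$ (\cf Federer \cite{Federer}, or the ambient version one derives from the Michael-Simon Sobolev inequality) transfers with a controlled constant: one can fill $\partial A$ by an $n$-dimensional integral current $B$ supported in $B_{2R}$, with
\[
\partial B \;=\; \partial A, \qquad \mathrm{Mass}(B) \;\leq\; C_2\,\mathrm{Mass}(\partial A)^{n/(n-1)}.
\]
The key point, inherited from the Euclidean setting, is that such a $B$ exists in the same ball and not merely in the ambient manifold.

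Now consider $A - B$: this is a closed $n$-dimensional integral current supported in the topologically trivial ball $B_{2R}$. Since $H_n(B_{2R};\mathbb{Z})=0$, the deformation theorem in geometric measure theory produces an $(n+1)$-current $C$ in $B_{2R}$ with $\partial C = A - B$. By Stokes' theorem and $d\,\mathrm{Re}\,\Omega = 0$,
\[
\int_{A} \mathrm{Re}\,\Omega - \int_B \mathrm{Re}\,\Omega \;=\; \int_{A - B} \mathrm{Re}\,\Omega \;=\; \int_C d\,\mathrm{Re}\,\Omega \;=\; 0.
\]
Combining this with the trivial $L^\infty$ bound $|\mathrm{Re}\,\Omega|\leq C_1$ on $B_{2R}$, we arrive at
\[
\sin\epsilon\cdot \mathrm{Mass}(A) \;\leq\; \int_A \mathrm{Re}\,\Omega \;=\; \int_B \mathrm{Re}\,\Omega \;\leq\; C_1\,\mathrm{Mass}(B) \;\leq\; C_1 C_2\,\mathrm{Mass}(\partial A)^{n/(n-1)},
\]
and raising to the $(n-1)/n$-th power yields the claim with $C = (C_1 C_2/\sin\epsilon)^{(n-1)/n}$.

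The only technical point requiring care is the ambient filling step. In flat $\mathbb{R}^{2n}$ this is standard, and the metric equivalence of (\ref{uniformellipticity}) transfers the inequality with a constant depending only on the equivalence constant; one must merely verify that the filling $B$ can be chosen within $B_{2R}$ rather than escaping to infinity, which follows from the proof via the cone construction over $\partial A$ based at the centre of the ball, or equivalently from the quantitative version of the deformation theorem applied in $B_{2R}$. Everything else is essentially a two-line calibration computation.
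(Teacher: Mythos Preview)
Your proof is correct and follows essentially the same approach as the paper: fill $\partial A$ by an ambient current of small mass via the Euclidean isoperimetric theorem, then use $d\,\mathrm{Re}\,\Omega=0$ together with a bounding $(n+1)$-current to equate $\int_A\mathrm{Re}\,\Omega$ with the integral over the filling, and conclude via the calibration bound $\mathrm{Mass}(A)\leq (\sin\epsilon)^{-1}\int_A\mathrm{Re}\,\Omega$. The paper uses the cone over $A-A'$ for the $(n+1)$-current and a Lipschitz retraction to keep the filling inside the ball, whereas you invoke the deformation theorem more abstractly; these are cosmetic differences.
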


\begin{proof}
The isoperimetric theorem \cite[Thm 6.1]{LeonSimon} guarantees the existence of an integral current $A'$ supported in $B_{R}$ such that $\partial A'=\partial A$ and for which
\[
Mass(A')^{(n-1)/n} \leq CMass(\partial A).
\]
 Notice the metric uniform equivalence means we do not need to be careful to distinguish the Hausdorff measure for the Euclidean metric and the Calabi-Yau metric. Let $T$ denote the cone over the current $A-A'$, then $\partial T=A-A'$, and thus by the quantitative calibrated condition,
 \[
 \begin{split}
  & Mass(A) \leq \frac{1}{\sin \epsilon} \int_A \text{Re} \Omega =  \frac{1}{\sin \epsilon} \int_{A'} \text{Re} \Omega +  \frac{1}{\sin \epsilon} \int_{\partial T} \text{Re} \Omega \\
 \leq & \frac{1}{\sin \epsilon} Mass(A') + \frac{1}{\sin \epsilon} \int_{ T} d\text{Re} \Omega \leq \frac{1}{\sin \epsilon} C Mass(\partial A)^{n/(n-1)},
 \end{split}
 \]
which is the isoperimetric inequality. 
\end{proof}

\begin{rmk}
The standard isoperimetric theorem inside the Euclidean space \cite[Thm 6.1]{LeonSimon} does not state $\text{supp}(A')\subset B_R$. Once we have found some $A'$ with $\partial A'=\partial A$ with mass control, we can pushforward by a Lipschitz retraction map $F:\R^{2n}\to B_1$:
\[
F(x)=\begin{cases}
x, \quad x\in B_R,\\
\frac{Rx}{|x|},\quad x\in \R^{2n}\setminus B_R.
\end{cases}
\]
Replacing $A'$ by $F_*A'$, the mass cannot increase, and $\partial F_*(A')=F_*(\partial A')=F_*(\partial A)=\partial A$, and we have ensured the support is contained in $B_R$.

\end{rmk}

The following version of the isoperimetric theorem should be well known to experts, but for lack of a reference we include a proof below.

\begin{prop}\label{isoperimetricthmcomplete}
(Isoperimetric theorem on complete manifolds) Let $X$ be a complete Riemannian manifold, and $\partial A$ be an $m$-dimensional exact integral current supported in a fixed bounded open subset $U\subset X$. Then there is an integral current $A'$ supported in a fixed large bounded subset of $X$, with $\partial A=\partial A'$ and 
\[
Mass(A')\leq \text{Const}\cdot \min\{    Mass(\partial A)^{(m+1)/m},  Mass(\partial A)\}.
\]

\end{prop}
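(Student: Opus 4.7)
The plan is to prove both bounds separately and take their minimum. For the $(m+1)/m$ power bound, which is sharp when $\mathrm{Mass}(\partial A)$ is small, I would appeal to the classical Federer--Fleming isoperimetric theorem. For the linear bound, which is sharp when $\mathrm{Mass}(\partial A)$ is large (i.e.\ the case of high multiplicities on a small support), I would fill by a geodesic cone from a fixed basepoint.

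More concretely: since $\bar U$ is compact in the smooth complete Riemannian manifold $X$, I can fix a bounded open set $V$ with $\bar U\subset V$ together with a bi-Lipschitz embedding of $\bar V$ into some Euclidean space $\R^N$ and a Lipschitz retraction $\pi$ from a tubular neighbourhood back to $\bar V$ (e.g.\ via Nash embedding, with all constants depending only on the geometry near $\bar U$). Viewing $\partial A$ as a current in $\R^N$, the standard Euclidean isoperimetric theorem \cite[Thm 6.1]{LeonSimon} supplies a filling $A'_0$ supported in a bounded subset of $\R^N$ with $\mathrm{Mass}(A'_0)\le C\,\mathrm{Mass}(\partial A)^{(m+1)/m}$. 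Pushing forward by $\pi$ gives a filling in $\bar V\subset X$ of the same homological class (since $\pi$ is the identity on $\partial A\subset\bar U$), with mass controlled by $\mathrm{Lip}(\pi)^{m+1}$ times the Euclidean mass, which yields the first term in the minimum.

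For the linear bound, fix a point $p\in V$ and work in the bi-Lipschitz chart (or, intrinsically, in a geodesic normal chart of sufficiently large radius, enlarging $V$ if necessary). Define $A'_1$ as the straight-line cone of $\partial A$ from $p$, which is integer rectifiable since $\partial A$ is. The coarea formula and a direct computation give $\mathrm{Mass}(A'_1)\le \tfrac{\mathrm{diam}(V)}{m+1}\mathrm{Mass}(\partial A)$, and $\partial A'_1=\partial A$ because $\partial A$ is closed. Taking $A'=A'_0$ when $\mathrm{Mass}(\partial A)\le 1$ and $A'=A'_1$ otherwise yields the required $\min$ bound, with both fillings supported in a fixed bounded subset of $X$ depending only on $V$.

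The main technical obstacle is the simultaneous containment: one must ensure that the Euclidean filling $A'_0$ stays inside the tubular neighbourhood of $\bar V$ on which $\pi$ is defined. This is automatic when $\mathrm{Mass}(\partial A)$ is sufficiently small (near-minimal fillings concentrate near their boundary), which suffices since for large $\mathrm{Mass}(\partial A)$ we switch to the cone construction. If $\bar U$ cannot be engulfed in a single bi-Lipschitz chart, one first replaces $V$ by a finite union of such charts and glues the constructions using a Lipschitz partition of unity, at the cost of an extra constant depending only on the cover.
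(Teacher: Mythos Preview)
Your treatment of the $\text{Mass}(\partial A)^{(m+1)/m}$ bound is essentially the same as the paper's: both use the Federer--Fleming deformation theorem in $\R^N$ at a small scale $\rho \sim \text{Mass}(\partial A)^{1/m}$, so that the filling is supported in the $O(\rho)$-neighbourhood of $\text{supp}(\partial A)$ and can then be retracted back to $X$. Your phrase ``near-minimal fillings concentrate near their boundary'' is loose, but the deformation theorem makes it precise.

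The gap is in the linear bound. The cone construction needs $\bar U$ to sit inside a single star-shaped region: either a bi-Lipschitz chart whose image happens to be star-shaped from $\Psi(p)$, or a geodesic normal ball of radius exceeding $\text{diam}(\bar U)$. Neither is available in general---take for instance $X$ a flat torus with $\bar U=X$, or any compact $X$ whose injectivity radius is smaller than $\text{diam}(\bar U)$. On such $X$ there is no Lipschitz contraction of $\bar U$ to a point within $X$ (else $X$ would be contractible), so no single-point cone can fill all exact cycles supported in $\bar U$. Your proposed patch via a Lipschitz partition of unity does not repair this: multiplying $\partial A$ by cutoffs $\phi_i$ destroys closedness, so the pieces $\phi_i\,\partial A$ cannot be coned separately and then summed to recover a filling of $\partial A$.

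The paper's route for the large-mass regime is different and avoids any star-shapedness. One applies the deformation theorem at the \emph{fixed} scale $\rho_0$ (the tubular-neighbourhood width), obtaining $\partial A = F_*P + \partial F_*R$ with $\text{Mass}(F_*R)\leq C\rho_0\,\text{Mass}(\partial A)$ and $P$ an integral combination of $m$-faces of one fixed finite grid. The set of such $P$ with $F_*P$ exact in $X$ is a finitely generated abelian group; one fills each free generator $F_*e_i$ once and for all by some $Q_i$, and for general $P=\sum a_i e_i+(\text{torsion})$ the bound $|a_i|\lesssim\text{Mass}(P)\lesssim\text{Mass}(\partial A)$ then yields a linear-mass filling $\sum a_i Q_i$ plus a uniformly bounded contribution from the torsion part. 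Note this is where the exactness hypothesis on $\partial A$ is genuinely used.
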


\begin{proof}
We first isometrically embed $X$ into an ambient Euclidean space $\R^N$, so $A$ can be regarded as an integral current compactly supported in $X$. Fix a small number $\rho_0>0$ such that over $U$ the $\rho_0$-neighbourhood in $\R^N$ is isomorphic to the normal bundle, so there is a smooth retraction map $F$ back to $U$. The Lipschitz norm of $F$ is  approximately one.

Applying the deformation theorem for $\R^N$ \cite[section 5.3]{LeonSimon} to the current $\partial A$, with a parameter $\rho\leq\rho_0$ to be fixed, we can write 
\[
\partial A= P+ \partial R,
\]
where $P,R$ are integral currents inside $\R^N$, supported in the $O(\rho)$ neighbourhood of $\text{supp}(\partial A)$, with 
\[
Mass(R)\leq  C\rho Mass(\partial A),\quad Mass(P)\leq C Mass(\partial A), 
\]
where the constant $C$ depends only on $N,m$.
Morever, $P$ is an integral linear sum of $m$-dimensional faces in the standard grid decomposition of $\R^N$ with cube size $\rho$.
We now push forward via $F$:
\[
F_{*} P+ \partial F_*R = F_*(\partial A)= \partial A,
\]
since $\partial A\subset U\subset X$ is fixed by $F$. Note that $F_{*} P, F_*R$ both live inside $U$, and their mass bounds are essentially the same as $P,R$ respectively.

Suppose first that $Mass(\partial A)\ll 1$.  If $P$ is nonzero, then by the grid description of $P$,
\[
\rho^m \leq Mass(P) \leq CMass(\partial A)
\]
So by choosing $\rho=2(C Mass(\partial A))^{1/m}$ in the above, we force $P=0$, so $\partial A= \partial F_*R$, with mass bound $Mass(F_*R)\leq \text{const} Mass(\partial A)^{(m+1)/m}$, so it suffices to take $A'=F_*R$.

Now suppose $Mass(\partial A)\gtrsim 1$, then we choose $\rho=\rho_0$. Without loss of generality, we can replace $\partial A$ by $F_*P$, and pretend $R=0$. We know 
\begin{itemize}
\item $P$ is an integral linear combination of grid cube faces, where all the cubes lie in a bounded region of a fixed grid,
\item $F_*P$ is an exact current on $X$.
\end{itemize}
The set of all such $P$ form a finitely generated abelian group, which by classification is isomorphic to the direct sum of $\oplus_1^r \Z e_i$ and a finite abelian group. For any given element
\[
P=\sum a_i e_i+ \text{finite group part},
\]
the linear coefficients $a_i$ of $e_i$ for $i=1,\ldots r$ are bounded by $|a_i|\lesssim Mass(P)\lesssim Mass(\partial A)$. Each $e_i$ gives rise to an exact simplicial chain $F_*e_i$ inside $X$, which is the boundary of a finite mass integral current $Q_i$. Thus
\[
Mass(\sum_1^r a_iQ_i) \leq \sum_1^r |a_i| Mass(Q_i) \leq \text{const} \cdot Mass(\partial A).
\]     
The finite group part gives rise to another simplicical chain inside $X$ which is the boundary of some finite mass integral current. Thus we have produced an integral current $A'$ with $\partial A'=\partial A$, and mass bound
\[
Mass(A')\leq C(Mass(\partial A)) +C\leq \text{const }  Mass(\partial A).
\]
since we are in the $Mass(\partial A)\gtrsim 1$ case.
\end{proof}

\subsubsection*{Volume monotonicity and lower bound}

\begin{cor}\label{Volumelowerboundlemma}(Volume lower bound)
If $P$ is in the support of $L$, then there is a uniform lower bound on the volume of $L$ inside Eulidean coordinate balls of radius less than $R$:
\begin{equation}\label{volumelowerbound}
\text{Vol}_g(L\cap B(P, r)) \geq C^{-1}r^n, \quad \forall r\leq R.
\end{equation}
\end{cor}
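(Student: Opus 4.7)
The plan is to derive the volume lower bound by combining the isoperimetric inequality established in the previous lemma with a slicing argument to obtain a differential inequality for the volume function $V(r) := \mathrm{Vol}_g(L \cap B(P,r))$. Since $L$ is a closed integral current (i.e.\ $\partial L = 0$), the boundary of the slice $L \cap B(P,r)$ is supported on the sphere $\partial B(P,r)$, and by the coarea formula (applied to the distance function $d_P$ on $\mathrm{supp}(L)$), for almost every $r$ we have $\mathrm{Mass}(\partial(L \cap B(P,r))) \leq V'(r)$, using that $|\nabla^L d_P| \leq 1$.

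First I would apply the isoperimetric inequality from the preceding lemma to the slice $A = L \cap B(P,r)$ for $r \leq R$ (after shrinking the ball slightly so that $A \subset \mathrm{supp}(L) \cap B(P,R)$; alternatively one works with $B(P,2R)$ and the inequality on $B(P,R)$). This gives
\[
V(r)^{(n-1)/n} \leq C \cdot \mathrm{Mass}(\partial(L \cap B(P,r))) \leq C \, V'(r)
\]
for almost every $r \in (0, R)$. Rearranging, $\frac{d}{dr}\bigl(V(r)^{1/n}\bigr) \geq c_0 > 0$ for a constant $c_0$ depending only on $n$, $\epsilon$, and the uniform equivalence constants in (\ref{uniformellipticity}).

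Next I would handle the base case $r \to 0$. Since $L$ is an integer rectifiable current and $P \in \mathrm{supp}(L)$, at $\mathcal{H}^n$-a.e.\ point of the support the lower density $\Theta_{*}^n(L,P)$ is at least one, so the monotonicity integration yields $V(r) \geq (c_0 r)^n$ at such density points. For an arbitrary $P \in \mathrm{supp}(L)$, one passes to the limit by picking a sequence of density points $P_i \to P$ inside $\mathrm{supp}(L)$; since $V_P(r) \geq V_{P_i}(r - d(P,P_i))$, taking $i \to \infty$ preserves the $n$-th power growth, yielding the uniform bound $V(r) \geq C^{-1} r^n$ for all $r \leq R$.

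The only mildly technical step is verifying the coarea inequality $\mathrm{Mass}(\partial(L\cap B(P,r))) \leq V'(r)$ in the rectifiable setting and ensuring it holds for a.e.\ $r$; this is standard for integral currents (see e.g.\ Simon \cite{LeonSimon}) once one notes $\partial L = 0$, so all boundary contributions come from the cut along $\partial B(P,r)$, and the tangential projection of the Euclidean gradient of $d_P$ has norm at most $1$. Nothing in the argument is subtle, and the constant $C^{-1}$ depends only on the dimension, $\epsilon$, and the bilipschitz constant in (\ref{uniformellipticity}), making it uniform across $X$ once the regularity scale is at least $R$.
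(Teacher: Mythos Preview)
Your proof is correct and follows essentially the same route as the paper: slice by distance spheres, apply the coarea formula to bound $V'(r)$ below by the boundary mass, invoke the isoperimetric inequality to get $\frac{d}{dr}V(r)^{1/n}\geq c_0$, and integrate. The paper's version is slightly more terse in that it does not isolate a separate ``base case'' via density points; since $V$ is monotone and $V(0^+)=0$, integrating the a.e.\ differential inequality directly yields $V(r)^{1/n}\geq c_0 r$ without any approximation by density points.
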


\begin{proof}
Let $f(r)= \mathcal{H}^n(L\cap B(r))>0$, then $f$ is increasing in $r$, and for a.e. $0<r<R$, by the coarea formula,
\[
f'(r) = \int_{\partial B(r)\cap L} \frac{1}{|\nabla r|} d\mathcal{H}^{n-1} \geq C^{-1} \mathcal{H}^{n-1}(\partial B(r)\cap L ) \geq C^{-1} f(r)^{(n-1)/n}.
\]
The last inequality is the isoperimetric inequality. Thus
$
\frac{d}{dr} f^{1/n} \geq C^{-1},
$
whence we have the volume lower bound $f(r)^{1/n}\geq C^{-1}r$.
\end{proof}

\begin{rmk}
Volume lower bounds like (\ref{volumelowerbound}) are familiar in minimal surface theory, but usually require some integral bound on the mean curvature. Notably, here we need no such assumption; the quantitative almost calibrated condition only concerns the antiderivative $\theta$ of $\vec{H}= J\nabla \theta$. 
\end{rmk}

\subsubsection*{No escape to spatial infinity}

\begin{cor}\label{Noescape}
Assume near the infinity of $X$, the regularity scale grows to infinity.
Fix the homology class of the quantitatively almost calibrated Lagrangian $L$. Then $L$ is contained in a fixed compact subset of $X$.
\end{cor}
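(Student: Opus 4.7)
The plan is to combine the a priori upper bound on the total volume of $L$ (coming from quantitative almost calibratedness plus the fixed homology class) with the a priori lower bound on volume in small balls (Corollary \ref{Volumelowerboundlemma}), and then feed in the assumption that the regularity scale grows at infinity.

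First I would note that since $\Omega$ is holomorphic of top degree, $d\Omega=0$, hence $\text{Re}\,\Omega$ is a closed $n$-form on $X$. Because $L$ is homologous to the fixed reference Lagrangian $L_0$, the period $\int_L \text{Re}\,\Omega = \int_{L_0}\text{Re}\,\Omega =: C_0$ is a constant depending only on the homology class. Plugging into Lemma \ref{Volumebound} gives the uniform total volume bound
\[
\text{Vol}(L) \leq \frac{C_0}{\sin\epsilon}.
\]
This is the only global input required from the almost calibrated condition.

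Next I would argue by contradiction that $\text{supp}(L)$ stays in a fixed compact subset of $X$. Fix a point $P\in \text{supp}(L)$, and let $R(P)$ denote the regularity scale at $P$, which by hypothesis tends to $+\infty$ as $P\to \infty$ in $X$. Applying Corollary \ref{Volumelowerboundlemma} at $P$ with the radius $r=R(P)$ (which is legitimate since the uniform ellipticity constants in (\ref{uniformellipticity}) underlying the isoperimetric inequality are controlled by the very definition of the regularity scale), I would obtain a lower bound
\[
\text{Vol}(L\cap B(P,R(P))) \geq C^{-1}R(P)^n,
\]
with $C$ depending only on $\epsilon,n$ and the uniform ellipticity constants, not on $P$ or $L$. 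Comparing with the global upper bound gives $R(P)\leq R_{\max}$ for some constant $R_{\max}$ depending only on $C_0, \epsilon, n$ and the ambient geometry. The growth assumption on the regularity scale then forces $P$ to lie in the compact subset $K=\{Q\in X: R(Q)\leq R_{\max}\}$, which is independent of $L$.

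No step here is genuinely hard; the only subtlety to keep track of is ensuring that the constant in the volume lower bound really is uniform over all points in $\text{supp}(L)$, which is built into our definition of regularity scale via the uniform equivalence of the metric to the Euclidean one. Thus the proof reduces to assembling these two a priori estimates, and there is no serious obstacle.
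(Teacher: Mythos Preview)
Your proposal is correct and follows essentially the same argument as the paper: combine the a priori total volume upper bound from Lemma~\ref{Volumebound} with the local volume lower bound of Corollary~\ref{Volumelowerboundlemma} to bound the regularity scale at any point of $\text{supp}(L)$, then invoke the growth hypothesis. The paper's version is terser but structurally identical.
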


\begin{proof}
From Lemma \ref{Volumebound} there is an a priori volume bound $\text{Vol}(L)\leq C$. But if $P$ is in the support of $L$, then the regularity scale of $X$ near $P$ is bounded by
\[
R^n \leq C \text{Vol}(L\cap B(R)) \leq C,
\]
whence $P$ must remain in a fixed compact subset.
\end{proof}

\subsubsection*{Nontriviality of homology classes}

\begin{cor}(Nontriviality of homology classes)
There is a lower bound $\int_L \text{Re}\Omega \geq C^{-1}$ depending only on the ambient Calabi-Yau and the almost calibratedness constant $\epsilon$. Here we do not a priori specify the homology class of $L$.
\end{cor}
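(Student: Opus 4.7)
The plan is to combine the local volume lower bound from Corollary \ref{Volumelowerboundlemma} with the standing asymptotic hypothesis that the regularity scale grows to infinity at infinity. Since the regularity scale is a pointwise positive quantity on the smooth Calabi--Yau manifold $X$ and tends to $+\infty$ as one approaches infinity, it must admit a \emph{global} positive lower bound $R_0>0$, attained on some compact core. This $R_0$ depends only on $(X,\omega,\Omega)$, not on $L$ or its homology class.

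Pick any point $P$ in the support of $L$ (which is nonempty by hypothesis). Applying Corollary \ref{Volumelowerboundlemma} with $r=R_0$ yields the uniform local volume bound
\begin{equation*}
\mathrm{Vol}_g(L\cap B(P,R_0)) \;\geq\; C^{-1}R_0^{\,n},
\end{equation*}
where $C$ depends only on $n$ and on the uniform ellipticity constant appearing in the regularity-scale definition, together with $\epsilon$ via the isoperimetric constant. In particular $\mathrm{Vol}(L)\geq C^{-1}R_0^{\,n}$.

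Now convert the volume lower bound into the desired integral lower bound using the quantitative almost calibrated condition. From $e^{i\theta}\,dvol_L = \Omega|_L$ we get $\mathrm{Re}\,\Omega|_L = \cos\theta\, dvol_L$ with $\cos\theta\geq \sin\epsilon$, so
\begin{equation*}
\int_L \mathrm{Re}\,\Omega \;=\; \int_L \cos\theta\, dvol_L \;\geq\; \sin\epsilon \cdot \mathrm{Vol}(L) \;\geq\; (\sin\epsilon)\,C^{-1}R_0^{\,n},
\end{equation*}
which is a lower bound depending only on $\epsilon$ and on the ambient Calabi--Yau.

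The only step that requires any thought is the existence of the global lower bound $R_0$ on the regularity scale: this is where the asymptotic hypothesis is used, since on a fixed compact core the scale is automatically bounded below by smoothness of $g$ and nonvanishing of $\Omega$, and the asymptotic condition rules out escape of low-regularity-scale points to infinity. Everything else is a direct invocation of previously established lemmas; no new analytical input is needed, and in particular the argument does not use Floer theory or the homology class of $L$ at all.
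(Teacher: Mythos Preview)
Your proof is correct and follows essentially the same approach as the paper: use the asymptotic hypothesis to get a global lower bound on the regularity scale, invoke the local volume lower bound (Corollary~\ref{Volumelowerboundlemma}) to obtain a uniform lower bound on $\mathrm{Vol}(L)$, and then convert this to a lower bound on $\int_L \mathrm{Re}\,\Omega$ via the quantitative almost calibrated inequality. The paper's proof is simply a terse three-sentence version of what you have written out in full.
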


\begin{proof}
The asymptotic hypothesis on the regularity scale implies in particular that the regularity scale has a global lower bound on $X$. This gives a uniform lower bound on $\text{Vol}(L)$, which by the quantitative almost calibratedness gives a lower bound on the homological integral $\int_L\text{Re}\Omega $.
\end{proof}

The significance is that in a variational setup to find special Lagrangians among certain Lagrangian currents in the fixed homology class $[L_0]$, it may happen that the Lagrangian breaks into several connected components, each given by a closed Lagrangian current $L_i$ with  $\sum_i \int_{L_i} \text{Re}\Omega= \int_L\text{Re}{\Omega}$. The nontriviality of each homology class then puts an upper bound on the number of connected components. 
Morever, each $L_i$ would inherit a volume upper bound from $L$. Since all Lagrangians are contained in a fixed compact region, Poincar\'e duality easily gives an upper bound on the homology classes of $L_i$:
\[
\norm{ [L_i] }_{H_n(X)} \leq C\text{Vol}(L_i) \leq C. 
\]
Combining the above, \emph{only finitely many homology classes, multiplicities, and number of components can appear in a given variational problem}.

\subsubsection{Intrinsic distance bound and potential clustering}\label{Intrinsicdistancepotentialclustering}

Suppose $L$ is a \emph{smooth} immersed Lagrangian, then it inherits a Riemannian metric from the restriction of the Calabi-Yau metric, so we can speak of the intrinsic distance function on $L$.   Instead of extrinsic balls such as $B(P,r)$, we can talk about intrinsic balls $\hat{B}(P,r)$. A key distinction is that intrinsic distance does not need to extend to a continuous function on $X\times X$, the prototypical example being the union of two embedded Lagrangians, whose domains are disjoint, but whose images in $X$ intersect. Clearly, the intrinsic distance bounds extrinsic geodesic distance, so $\hat{B}(P,r)$ is always contained in an extrinsic geodesic ball of radius $r$, but the converse is far from true. The intrinsic distance between two distinct connected components would simply be infinity. One can think of intrinsic distance as a \emph{quantitative measurement of connectedness}.

As usual, the regularity scale on $X$ grows to infinity asymptotically by assumption, so the regularity scale has a global lower bound. The following lemma has the same proof as Cor. \ref{Volumelowerboundlemma}. (The essence of this argument also appears in Neves \cite[Lem 3.9]{Nevessurvey}).

\begin{lem}(Intrinsic ball volume lower bound)
Let $L$ be a smooth immersed compact Lagrangian in $X$, satisfying the quantitative almost calibrated condition. For any $P$ in the support of $L$, there is a uniform bound 
\[
\text{Vol}(\hat{B}(P,r)\cap L) \geq C^{-1}r^n, \quad r\leq 1. 
\]
\end{lem}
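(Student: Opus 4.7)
The plan is to mimic the extrinsic argument of Cor.~\ref{Volumelowerboundlemma}, replacing the extrinsic geodesic distance with the intrinsic distance $\rho(x) = d_L(P,x)$ induced from $g|_L$. Since all Lagrangians in the fixed homology class sit inside a compact region (Cor.~\ref{Noescape}) on which the ambient regularity scale is bounded below by some $R_0>0$, and since $\hat{B}(P,r) \subset B(P,r)$ as subsets of $X$, every intrinsic ball of radius $r\leq 1$ (WLOG $1\leq R_0/2$) is contained in a Euclidean coordinate chart where the previous isoperimetric inequality applies.

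First I would set $f(r) = \mathcal{H}^n(\hat{B}(P,r)\cap L)$. Because $L$ is smooth and immersed, the intrinsic distance function $\rho$ is $1$-Lipschitz with $|\nabla^L \rho| = 1$ almost everywhere on $L$, and the level sets $\{\rho = r\}$ are rectifiable for a.e.\ $r$ by Sard's theorem. The intrinsic coarea formula then yields
\[
f'(r) \;=\; \int_{\{\rho = r\}} \frac{1}{|\nabla^L \rho|}\, d\mathcal{H}^{n-1} \;=\; \mathcal{H}^{n-1}\bigl( \partial \hat{B}(P,r) \cap L \bigr)
\]
for a.e.\ $r\in(0,1]$.

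Next, I would invoke the isoperimetric inequality with $A = \hat{B}(P,r)\cap L$, regarded as a closed subset of $\mathrm{supp}(L)\cap B(P, r)$. Its current-theoretic boundary is supported on $\partial \hat{B}(P,r)\cap L$ with multiplicity bounded by that of $L$, so $\mathrm{Mass}(\partial A) \leq C\, \mathcal{H}^{n-1}(\partial \hat{B}(P,r)\cap L)$. The isoperimetric inequality (valid since $\cos \theta \geq \sin\epsilon$ and $r \leq R_0/2$) gives
\[
f(r)^{(n-1)/n} \;\leq\; C\, f'(r),
\]
hence $\frac{d}{dr} f^{1/n} \geq C^{-1}$ almost everywhere, and integrating from $0$ yields $f(r) \geq C^{-1} r^n$ as required.

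I do not expect a genuine obstacle here; the only points needing mild care are (i) verifying that the current $\partial A$ really is carried by $\partial\hat B(P,r)\cap L$ (so that no contribution comes from $\partial L$, which is empty as $L$ is closed, and none from the extrinsic ball's boundary, which $\hat B(P,r)$ does not touch for $r$ small), and (ii) checking that $\hat B(P,r)$ is small enough extrinsically to fit inside a single coordinate chart where the isoperimetric inequality of the preceding lemma is valid. Both follow from the uniform a priori containment of $L$ in a compact region together with the asymptotic regularity-scale hypothesis; no new ingredients are needed beyond what has already been established.
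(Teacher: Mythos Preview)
Your proposal is correct and follows exactly the approach the paper indicates: the paper simply states that this lemma ``has the same proof as Cor.~\ref{Volumelowerboundlemma}'' (citing also Neves \cite[Lem~3.9]{Nevessurvey}), and you have accurately spelled out what that means in the intrinsic setting --- replace the extrinsic radius by the intrinsic distance $\rho$, note $|\nabla^L\rho|=1$ a.e., run the coarea/isoperimetric differential inequality, and integrate. Your attention to the two technical points (that $\hat B(P,r)\subset B(P,r)$ so the chart-local isoperimetric lemma applies, and that $\partial A$ is carried by the intrinsic sphere) is appropriate and does not go beyond what the paper implicitly assumes.
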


\begin{cor}
(Intrinsic diameter bound) Assume further that the smooth, quantitatively almost calibrated compact Lagrangian $L$ has \emph{connected} domain. Then within a fixed homology class, the intrinsic distance of $L$ has a uniform upper bound.
\end{cor}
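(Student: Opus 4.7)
The plan is to combine the intrinsic ball volume lower bound with the cohomological volume upper bound via a standard packing argument along a minimizing geodesic realizing the intrinsic diameter. First I would invoke Lemma \ref{Volumebound}: since the homology class $[L] = [L_0] \in H_n(X)$ is fixed and $\int_L \mathrm{Re}\,\Omega$ is a cohomological integral, we obtain a uniform volume upper bound $\mathrm{Vol}(L) \leq C_0$, with $C_0$ depending only on $\epsilon$ and $[L_0]$. Corollary \ref{Noescape} further places $L$ inside a fixed compact region of $X$, so the constants from the regularity scale hypothesis (and hence the constant appearing in the intrinsic ball volume lower bound) are uniform.

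Next I would exploit that the domain of $L$, being smooth, connected, and compact, becomes a complete connected Riemannian manifold under the pullback of the Calabi--Yau metric. By Hopf--Rinow there exist $P, Q \in L$ with $\hat{d}(P,Q) = D := \mathrm{diam}_{\mathrm{intr}}(L)$, joined by a minimizing unit-speed geodesic $\gamma: [0,D] \to L$. Setting $N = \lfloor D \rfloor + 1$ and $P_i = \gamma(i)$ for $i = 0, \ldots, N-1$, the fact that $\gamma$ is minimizing forces $\hat{d}(P_i, P_j) = |i-j| \geq 1$ whenever $i \neq j$, so the intrinsic half-balls $\hat{B}(P_i, 1/2)$ are pairwise disjoint.

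Finally I would apply the preceding intrinsic ball volume lower bound at radius $r = 1/2 \leq 1$ to each $P_i$, obtaining $\mathrm{Vol}(\hat{B}(P_i, 1/2) \cap L) \geq C^{-1}2^{-n}$ with $C$ uniform. Summing over $i$ and using disjointness,
\[
N \cdot C^{-1} 2^{-n} \leq \sum_{i=0}^{N-1}\mathrm{Vol}(\hat{B}(P_i,1/2)\cap L) \leq \mathrm{Vol}(L) \leq C_0,
\]
so $N \leq 2^n C\, C_0$ and hence $D \leq 2^n C\, C_0$, a bound depending only on the ambient Calabi--Yau, the almost calibratedness constant $\epsilon$, and the homology class $[L_0]$.

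The proof is essentially direct once the correct ingredients are lined up; the only substantive point to verify is that \emph{all} the $P_i$ genuinely inherit the uniform intrinsic ball volume lower bound with the \emph{same} constant, which is what the asymptotic regularity-scale assumption plus Corollary \ref{Noescape} guarantees. The connectedness hypothesis is indispensable: without it the intrinsic distance between two components is infinite, and the existence of the diameter-realizing geodesic $\gamma$ breaks down. This is also precisely the point where the argument would fail for disconnected Lagrangians arising in Joyce-style surgeries or in the limiting configurations relevant to the variational program, which foreshadows the potential clustering structure already extracted in Section \ref{Intrinsicdistancepotentialclustering}.
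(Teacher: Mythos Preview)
Your argument is correct and is essentially the same packing argument as in the paper: place points at unit intrinsic distance along the Lagrangian, use disjointness of the intrinsic half-balls together with the intrinsic ball volume lower bound, and compare against the cohomological volume upper bound from Lemma~\ref{Volumebound}. The only cosmetic difference is that you invoke Hopf--Rinow to produce the points $P_i$ along a minimizing geodesic, whereas the paper just uses connectedness and the intermediate value theorem for the distance function $\hat d(P,\cdot)$ to find $P_i$ with $\hat d(P,P_i)=i$; either construction yields pairwise disjoint half-balls via the triangle inequality.
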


\begin{proof}
Let $d=dist(P,P')\geq N=[d]$ be the intrinsic diameter of $L$. By connectedness, we can find $P_1,\ldots P_N$ with $dist(P, P_i)=i$. Now the intrinsic balls $\hat{B}(P_i, \frac{1}{2})$ are disjoint, but each takes up a nontrivial amount of volume $\geq C^{-1}$. Thus
\[
C^{-1}N \leq \text{Vol}(L)\leq \frac{1}{\sin \epsilon}\int_L \text{Re}(\Omega) \leq C,
\]
so there is an a priori bound on $N$, hence on $d$.
\end{proof}

\begin{cor}\label{Potentialoscillationbound}
(Potential oscillation bound) Assume $L$ is an exact, immersed, compact Lagrangian, satisfying the almost calibrated condition, such that the Lagrangian potential $f_L$ has connected range. Then the potential $f_L$ has an a priori bound
\[
\sup_L f_L- \inf_L f_L\leq C.
\]
\end{cor}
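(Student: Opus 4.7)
The plan is to reduce to the connected-domain case treated just above, by decomposing $L$ into its connected components and then using the connectedness of the range to chain the component-wise oscillation bounds into a global bound.

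First I would upgrade the intrinsic diameter bound to a potential oscillation bound on a single connected piece. For a smooth, connected, compact, quantitatively almost calibrated immersed Lagrangian $L'$ in a fixed homology class, the previous corollary gives an intrinsic diameter bound $\text{diam}_{\text{int}}(L') \leq C$. Since by Cor.~\ref{Noescape} every such Lagrangian is contained in a fixed compact region $K \subset X$, the Liouville form $\lambda$ is bounded on $K$, say $\sup_K |\lambda| \leq C$. Then for any two points $P, P'$ in the domain of $L'$, choosing an intrinsic path $\gamma$ of length $\leq C$ connecting them and using $df_{L'} = \iota^*\lambda$ yields
\[
|f_{L'}(P) - f_{L'}(P')| = \Bigl|\int_\gamma \iota^*\lambda\Bigr| \leq \sup_K|\lambda|\cdot \text{length}(\gamma) \leq C.
\]

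Next I would decompose. Write $L = L_1 \sqcup \cdots \sqcup L_k$ in terms of its connected components (as a smooth immersion, this refers to the source). Each $L_i$ is itself a smooth, connected, compact, quantitatively almost calibrated exact Lagrangian, with potential $f_L|_{L_i}$. To bound $k$ a priori, note the volume bound of Lemma~\ref{Volumebound} gives $\sum_i \int_{L_i}\text{Re}\,\Omega \leq C$, while the earlier nontriviality-of-homology-classes corollary provides a uniform lower bound $\int_{L_i}\text{Re}\,\Omega \geq C^{-1}$ (each connected component is itself a closed Lagrangian current to which the lower bound applies). Hence $k \leq C$, and by the previous paragraph each component contributes a range $f_L(L_i) \subset \R$ which is a closed interval of length at most $C$.

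Finally I would chain. The hypothesis says the full range $f_L(L) = \bigcup_{i=1}^k f_L(L_i)$ is connected. A connected union of finitely many closed intervals in $\R$ is itself a closed interval, and by induction on $k$ its total length is bounded by the sum of the lengths of the constituents, hence by $kC \leq C'$. This gives $\sup_L f_L - \inf_L f_L \leq C'$ as required. The main technical ingredient is the potential oscillation bound on one component via the intrinsic diameter bound; everything else is bookkeeping on the component count and the topology of intervals on the line.
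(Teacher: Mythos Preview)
Your proof is correct but takes a genuinely different route from the paper. You decompose $L$ into connected components, bound the oscillation on each via the intrinsic diameter corollary, bound the number of components via the nontriviality-of-homology corollary, and then chain the intervals using connectedness of the range. The paper instead argues directly, without any decomposition: for any value $a$ in the range of $f_L$, the line-integral estimate $|f_L(P)-f_L(P')|\le\norm{\lambda}_{C^0}\,\mathrm{dist}(P,P')$ shows that the level-slab $\{|f_L-a|<\tfrac12\}$ contains an intrinsic ball of definite radius about any point with $f_L=a$, hence has volume $\ge C^{-1}$ by the intrinsic ball volume lower bound; if the (connected, hence interval) range has length $\ge N$ one packs $N$ disjoint such slabs, forcing $C^{-1}N\le\text{Vol}(L)\le C$. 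The paper's argument is slicker in that it bypasses the component count and the diameter corollary entirely, working uniformly regardless of how many pieces $L$ has. Your approach is more modular, reusing two earlier corollaries as black boxes; one small wrinkle worth a sentence is that the diameter corollary as stated assumes a \emph{fixed homology class}, whereas your components $L_i$ land in varying classes --- this is harmless since that corollary's proof only uses the volume bound and $\text{Vol}(L_i)\le\text{Vol}(L)$, but you should say so explicitly.
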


\begin{proof}
Given any $P, P'$ on $L$, we write the potential as a line integral of the Liouville 1-form
\[
f_L(P)-f_L(P')=\int_{P'}^{P} \lambda \leq \norm{\lambda}_{C^0} dist(P,P') .
\]
Thus the intrinsic ball volume lower bound implies that if $a$ lies in the range of $f_L$, then
\[
\text{Vol}( \{ |f_L-a|<\frac{1}{2} \}) \geq C^{-1}.
\]
The range of $f_L$ is by assumption a closed interval. If the interval has length $\geq N$, then we can find $N$ distinct values of $a$ with disjoint $\{ |f_L-a|<\frac{1}{2} \}$, so 
\[
C^{-1}N \leq \text{Vol}(L)\leq \frac{1}{\sin \epsilon}\int_L \text{Re}(\Omega) \leq C,
\]
This provides an a priori bound on $N$, hence on the potential oscillation.
\end{proof}

\begin{cor}
	Assume $L$ is an exact, immersed, compact Lagrangian, satisfying the almost calibrated condition, then it satisfies $(N,A)$-\textbf{potential clustering} (\cf section \ref{ExactLagweakregularity}) for some $N,A$ with uniform bounds. Morever, if $L$ carries an unobstructed brane structure, then the potential clustering property is consistent with the twisted complex interpretation. 
\end{cor}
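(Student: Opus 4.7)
The plan is to decompose $L$ into its intrinsic (domain) connected components, which we bound in number via the volume estimates, apply Corollary \ref{Potentialoscillationbound} to each, and then cluster the components according to where their potential ranges lie on $\R$. The twisted complex statement will then follow by bookkeeping on self-intersections.

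First I would write the domain of the Lagrangian immersion as a disjoint union $\tilde L = \bigsqcup_{k=1}^K \tilde L^{(k)}$ of connected components. Each $\tilde L^{(k)}$ is itself an exact, immersed, compact, almost calibrated Lagrangian with connected domain, so the connected-range hypothesis of Corollary \ref{Potentialoscillationbound} is automatic and yields $\sup_{\tilde L^{(k)}} f_L - \inf_{\tilde L^{(k)}} f_L \leq A_0$ for a uniform constant $A_0$. The intrinsic ball volume lower bound forces $\mathrm{Vol}(\tilde L^{(k)}) \geq C^{-1}$, while Lemma \ref{Volumebound} combined with a uniform upper bound on $\int_L \mathrm{Re}\,\Omega$ (which depends only on the homology class of $L$, itself bounded on the variational problem by Corollary \ref{Noescape} and Poincar\'e duality) gives $\mathrm{Vol}(L)\leq C$. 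Hence the number $K$ of intrinsic components is uniformly bounded.

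Next I would assemble the clusters. The set $E := \bigcup_k f_L(\tilde L^{(k)}) \subset \R$ is a union of at most $K$ closed intervals each of length at most $A_0$, so its connected components are finitely many disjoint closed intervals $J_1 < J_2 < \ldots < J_N$ with $N \leq K$ and $|J_i| \leq K A_0$. Define $L_i$ as the pushforward of $\bigsqcup_{k:\, f_L(\tilde L^{(k)}) \subset J_i} \tilde L^{(k)}$ to $X$, with potential $f_{L_i}$ the restriction of $f_L$. Then $L = \sum_i L_i$, $f_L L = \sum_i f_{L_i} L_i$, each $L_i$ is a closed quantitatively almost calibrated Lagrangian integral current contained in $\mathrm{supp}(L)$, the oscillation bound $\sup_{L_i} f_{L_i} - \inf_{L_i} f_{L_i} \leq |J_i| \leq K A_0$ is immediate, and by the ordering of the $J_i$ we get $\sup_{L_j} f_{L_j} \leq \inf_{L_i} f_{L_i}$ whenever $i > j$. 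Setting $A := K A_0$ gives $(N,A)$-potential clustering with uniform bounds on $N$ and $A$.

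For the second statement, suppose $L$ carries an unobstructed bounding cochain $b$. Each $L_i$ inherits grading, orientation, spin structure and local system by restriction, and the self-intersections of $L$ partition naturally into those internal to a single $L_i$ (contributing $b_i$) and those at the interface of two distinct clusters $L_i, L_j$ with $i > j$ (contributing $b_{i,j}$). At any cross-cluster intersection $p$ the Novikov exponent $f_L|^+_-(p) = f_{L_i}(p) - f_{L_j}(p)$ is strictly positive by the clustering ordering, matching the Novikov positivity built into the twisted complex formalism (\ref{twistedcomplex}). The Mauer-Cartan equation for $b$ then decomposes, via the decomposition of moduli of treed discs according to which clusters their boundary components lie on, into Mauer-Cartan equations for the individual $b_i$ together with the twisted cocycle conditions on the $b_{i,j}$. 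The main subtlety I anticipate is verifying that the cross-cluster intersection points actually have the Floer degrees compatible with entering a twisted complex differential; this should follow from the Floer degree formula (\ref{Floerdegree}) and the almost calibrated condition, but in non-generic configurations one may need a small Hamiltonian perturbation internal to each $\tilde L^{(k)}$ to achieve transversality without disturbing either the oscillation bound or the ordering of the $J_i$'s.
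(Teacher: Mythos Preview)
Your proposal is correct and follows essentially the same route as the paper. The paper's proof invokes Lemma~\ref{blockingconnectedcomponents} from the Appendix to produce the decomposition into pieces with connected potential range, then applies Corollary~\ref{Potentialoscillationbound} for the oscillation bound and the volume lower bound argument for the bound on the number of pieces; you simply carry out the content of Lemma~\ref{blockingconnectedcomponents} explicitly via the interval decomposition of $E=\bigcup_k f_L(\tilde L^{(k)})$. One small remark: your closing worry about Floer degrees of the cross-cluster intersection points is unnecessary, since the bounding cochain $b$ lies in $CF^1_{self}(L,L)$ by hypothesis, so every summand automatically sits in degree one; the only point that needs checking is the \emph{direction} of the morphisms, which you have already handled via Novikov positivity and the strict separation of the intervals $J_i$.
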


\begin{proof}
By Lemma \ref{blockingconnectedcomponents}, a general immersed Lagrangian can be decomposed into a union of Lagrangians $L_i$ such that the ranges of the potentials $f_{L_i}$ are connected, and any bounding cochain structure naturally produces a twisted complex. The oscillation of each $L_i$ is bounded  in terms of the quantitative almost calibration condition, and the ambient features of $X$. Morever, the number of Lagrangian components is also a priori bounded.
\end{proof}

As a notable consequence, we obtain the \textbf{uniform energy bound} on the holomorphic curves (\cf Prop. \ref{uniformenergybound}).

\begin{rmk}
Although it is unclear how to make sense of the intrinsic distance on a general Lagrangian integral current, mildly singular Lagrangians (for instance with local conical singularities) do have a sensible notion of intrinsic distance, and the arguments in this section extend practically to all non-pathological examples, covering all Lagrangians that appear in Joyce's LMCF program. Furthermore, the potential clustering is robust under limits (\cf Cor. \ref{potentialoscillationrobust}). As such, we believe it holds for all Lagrangians relevant to our variational program (\cf the class $\mathcal{L}$ in section \ref{Variationalstrategy} below).	
\end{rmk}

\subsubsection{Bounded part of the Solomon functional revisited}\label{Solomonboundedpartrevisited}

In section \ref{Solomonboundedpartsection}, we discussed a Floer theoretic method to bound the difference between the Solomon functional and the elementary functional. The following alternative method is based on the homological nature of the Solomon functional  (\ref{Solomonfunctionalextension}), and has a more geometric measure theory flavour. Holomorphic curves do not feature in this approach, so the automatic transversality and the positivity conditions are not needed, and in fact the discussion works in the weak setting of varifolds and currents. It is vital to this approach that $H_{n+1}(X)=0$ for Stein manifolds, so no homological ambiguity can arise for the bordism current.

\begin{prop}\label{Solomonboundedparthomological}
Assume the Lagrangian $L$ with potential $f_L$ is quantitatively almost calibrated, homologous to $L_0$, and satisfies $(N,A)$-potential clustering. 
The elementary functional (\ref{Solomonelementary}) makes sense verbatim, with no smoothness assumptions.
Then
	$
	|\mathcal{S}(L)-\bar{\mathcal{S}}(L)|
	$
	has a uniform upper bound independent of $L$.
\end{prop}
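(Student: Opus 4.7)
The plan is to decompose $\mathcal{S}(L)-\bar{\mathcal{S}}(L)$ into three manifestly bounded pieces, exploiting the homological flexibility of the bordism current rather than any Floer theoretic input. First I would use the potential clustering identity $f_L L = \sum_{i=1}^N f_{L_i} L_i$ to rewrite
\[
\mathcal{S}(L)-\bar{\mathcal{S}}(L)
=\sum_{i=1}^N\int_{L_i}(f_{L_i}-\sup_{L_i}f_{L_i})\,\mathrm{Im}(e^{-i\hat{\theta}}\Omega)
-\int_{L_0}(f_{L_0}-\sup_{L_0}f_{L_0})\,\mathrm{Im}(e^{-i\hat{\theta}}\Omega)
-\mathrm{Im}\int_{\mathcal{C}}\lambda\wedge e^{-i\hat{\theta}}\Omega,
\]
where $\mathcal{C}$ is any $(n+1)$-dimensional integral current with $\partial \mathcal{C}=L-L_0$.

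Next I would control the first two terms using the potential clustering bound $|f_{L_i}-\sup_{L_i}f_{L_i}|\leq A$ (respectively $|f_{L_0}-\sup_{L_0}f_{L_0}|\leq A$), combined with the pointwise inequality $|\Omega|\leq \tfrac{1}{\sin\epsilon}\mathrm{Re}\,\Omega$ on each quantitatively almost calibrated $L_i$ (\cf Lemma \ref{Volumebound}). Since $[L]=[L_0]$ in $H_n(X)$, we have $\sum_i\int_{L_i}\mathrm{Re}\,\Omega=\int_L\mathrm{Re}\,\Omega=\int_{L_0}\mathrm{Re}\,\Omega$, so both sums are bounded by $\frac{A}{\sin\epsilon}\int_{L_0}\mathrm{Re}\,\Omega$, which is a fixed cohomological quantity depending only on the homology class.

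The heart of the argument is the third term, where I would exploit $H_{n+1}(X)=0$ (Stein) to choose $\mathcal{C}$ freely. By Corollary \ref{Noescape}, every such $L$ is contained in a fixed compact region $K\subset X$ together with $L_0$, and the total mass $\mathrm{Mass}(L-L_0)$ is uniformly bounded by $\tfrac{2}{\sin\epsilon}\int_{L_0}\mathrm{Re}\,\Omega$. Applying the complete-manifold isoperimetric theorem (Prop. \ref{isoperimetricthmcomplete}) to the exact $n$-cycle $L-L_0$, I obtain $\mathcal{C}$ with $\partial\mathcal{C}=L-L_0$, supported in a fixed bounded enlargement $K'\supset K$, with $\mathrm{Mass}(\mathcal{C})\leq C\,\mathrm{Mass}(L-L_0)$ for a universal constant $C$. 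Since $\lambda\wedge\Omega$ has a uniform $C^0$-bound on $K'$, the third term is bounded by $\|\lambda\wedge\Omega\|_{C^0(K')}\cdot\mathrm{Mass}(\mathcal{C})$, which is uniform in $L$. Adding the three estimates yields the required uniform bound.

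The main obstacle is not really analytic but bookkeeping: one needs the complete-manifold version of the isoperimetric theorem (rather than the Euclidean version used inside a single ball in Section \ref{Quantitativealmostcalibrated}) to guarantee that $\mathcal{C}$ can simultaneously be chosen with mass comparable to $\mathrm{Mass}(L-L_0)$ and supported inside a globally bounded subset of $X$; this is exactly what Prop. \ref{isoperimetricthmcomplete} provides. Once this is in hand, the argument is robust under passage to weak limits of quantitatively almost calibrated currents satisfying $(N,A)$-potential clustering (Cor. \ref{potentialoscillationrobust}), which is the main reason for preferring this homological approach over the Floer theoretic Theorem \ref{Solomonboundedpartthm}.
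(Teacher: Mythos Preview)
Your proposal is correct and follows essentially the same approach as the paper: use Cor.~\ref{Noescape} to confine $L$ to a fixed compact region, invoke Prop.~\ref{isoperimetricthmcomplete} to produce a bordism current $\mathcal{C}$ with uniformly bounded mass, bound $\int_{\mathcal{C}}\lambda\wedge\Omega$ via the $C^0$ norm of $\lambda\wedge\Omega$, and handle the remaining $f_L$-terms by replacing each $f_{L_i}$ with $\sup_{L_i}f_{L_i}$ at cost $\leq A\cdot\mathrm{Mass}(L_i)$. Your explicit three-term decomposition and the identification of Prop.~\ref{isoperimetricthmcomplete} as the key non-local ingredient match the paper's argument exactly.
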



\begin{proof}
We know $L-L_0$ is homologous to zero in $X$, and contained in a bounded subset of $X$ by Cor. \ref{Noescape}. 
By a version of the isoperimetric theorem (\cf Prop. \ref{isoperimetricthmcomplete}), we can find some compactly supported integral current $\mathcal{C}$ with $\partial \mathcal{C}=L-L_0$, with mass bound
	\[
	\text{Mass}(\mathcal{C}) \leq \text{const}\cdot \min\{  \text{Mass}(L-L_0)^{(n+1)/n}, \text{Mass}(L-L_0)   \} .
	\]
	This $\mathcal{C}$ has no relation to holomorphic curves.
	The quantitative almost calibratedness implies a volume bound on $L$ (\cf Lemma \ref{Volumebound}), hence $\text{Mass}(\mathcal{C})$ is a priori bounded. The homological nature of the Solomon functional  (\ref{Solomonfunctionalextension}) gives
	\[
	\mathcal{S}(L)= \int_L f_L \text{Im}(e^{-i\hat{\theta}} \Omega) -\int_{L_0} f_{L_0} \text{Im}(e^{-i\hat{\theta}} \Omega)- \text{Im} \int_{\mathcal{C}} \lambda\wedge e^{-i\hat{\theta}} \Omega.
	\]
	The mass bound then implies
	\[
	|\int_{\mathcal{C}} \lambda\wedge e^{-i\hat{\theta}} \Omega | \leq C \norm{\lambda}_{C^0} \norm{\Omega}_{C^0} \text{Mass}(\mathcal{C})\leq \text{const}.
	\]
	Here since $L$ is contained in a bounded region, the terms $\lambda$ and $\Omega$ are bounded. Finally, using the potential clustering bound,
	\[
	\begin{split}
	& | \int_L f_L \text{Im}(e^{-i\hat{\theta}} \Omega) - \int_{L_0} f_{L_0} \text{Im}(e^{-i\hat{\theta}} \Omega)-  \bar{\mathcal{S}}(L)| 
	\\
	& \leq  A (\int_L |\text{Im}(e^{-i\hat{\theta}}\Omega)| +\int_{L_0} |\text{Im}(e^{-i\hat{\theta}}\Omega)| )
	\\
	&  \leq  A (\text{Mass}(L) +\text{Mass}(L_0)  ) \leq  \frac{2A}{\sin \epsilon } \int_{L_0} \text{Re}\Omega.
	\end{split}
	\]
	Combining the above shows the a priori bound on $|\mathcal{S}(L)-\bar{\mathcal{S}}(L)|$.
\end{proof}

\subsection{Variational strategy}\label{Variationalstrategy}

The variational strategy to find special Lagrangians is the following:

\begin{itemize}
\item  Find a suitable subset $\mathcal{L}$ among all the quantitatively almost calibrated, exact Lagrangian integral currents homologous to $L_0$. The class $\mathcal{L}$ is closed in the varifold/current topology. It is very desirable to ensure Allard compactness and Federer-Fleming compactness both apply to $\mathcal{L}$.

\item Extend enough of Floer theory from the smooth setting to Lagrangian currents. Morally, the class $\mathcal{L}$ consists of those Lagrangians that can be equipped with unobstructed brane structures in some weak sense, all isomorphic to $L_0$ in $D^bFuk(X)$.

\item
When the Lagrangian is equipped with the potential $f_L$, the additive constant freedom of $f_L$ is a source of non-compactness, which affects  $\mathcal{S}$. We need to ultimately match up the asymptotic behaviour of $\mathcal{S}$ with the Floer theoretic obstructions. In other words, the role of stability conditions is to ensure the properness of the Solomon functional.

\item
Once the Solomon functional is proper, we will follow the direct minimization strategy to find its minimum. We need to justify that the minimum $L$ must be a special Lagrangian closed integral current, and then Almgren regularity will be able to ensure smoothness away from codimension two. Furthermore, we need a sufficiently robust version of the Thomas-Yau uniqueness argument to prove that the special Lagrangian representative is unique.

\end{itemize}

The class $\mathcal{L}$ is a balance between two requirements: the \emph{approximability by sufficiently smooth objects}, and the existence of \emph{sufficiently many competitors}. A moral definition of $\mathcal{L}$ is:
\begin{itemize}
\item Among all the quantitatively almost calibrated, exact Lagrangian integral currents homologous to $L_0$, we include all sufficiently smooth Lagrangians (eg. immersed, $T^2$-cones singularities, etc) which admit unobstructed brane structures isomorphic to $L_0$ in $D^bFuk(X)$.

\item Then take the closure under the varifold/current topology.
\end{itemize}

\begin{rmk}
Joyce's LMCF is expected to preserve the exactness, the quantitative almost calibrated condition, and the unobstructedness of the brane structure, so sufficiently smooth objects in $\mathcal{L}$ should remain in $\mathcal{L}$ under Joyce's LMCF. It is interesting to ask when the flow also preserves the positivity condition on the bordism current.
\end{rmk}

While at present several ingredients are missing, if this program can be carried through, it would prove the existence of special Lagrangians under the assumption of Thomas-Yau semistability (\cf Definition \ref{ThomasYausemistability}).

\subsubsection*{$L^p$-Smoothing property and Joyce's LMCF}

Allard compactness requires an a priori bound $\int_L |\vec{H}|\leq C$, which cannot be implied by the quantitative almost calibrated condition, since the mean curvature involves one more derivative than the Lagrangian angle. However, for the purpose of our variational strategy, it is enough to ensure any minimization sequence of $\mathcal{S}$ can be replaced by a sequence with $\int_{L_i} |\vec{H}|\leq C$.

\begin{conj}
($L^p$-\textbf{smoothing property}) There exists $p\geq 1$ and a uniform constant $C$, such that
for any  $L\in \mathcal{L}$, we can find $L'\in \mathcal{L}$ with $\mathcal{S}(L')\leq \mathcal{S}(L)$, and 
$\int_{L'} |\vec{H}|^p \leq C $. 
\end{conj}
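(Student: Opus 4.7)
The plan is to take $p=2$ and to use Joyce's Lagrangian mean curvature flow (LMCF) with surgeries as the smoothing operator. Two features make this the natural choice: the Solomon functional is monotone non-increasing along smooth LMCF (assuming surgeries are designed compatibly, as in Joyce's program), and the heat equation (\ref{heat}) on the Lagrangian angle $\theta$ yields an a priori $L^2$-bound on $\vec H$ integrated in space-time (\cf (\ref{meancurvatureL2})). Averaging then produces a time slice with a uniform spatial $L^2$-bound.

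First I would verify the monotonicity of $\mathcal{S}$ along LMCF. The flow evolves $L_t$ by the Hamiltonian $-\theta_t$ (up to additive constant), and since by the choice of $\hat\theta$ the function $\sin(\theta-\hat\theta)$ integrates to zero on $L_t$, the first variation formula (\ref{Solomonfirstvariation}) gives
\[
\frac{d\mathcal{S}(L_t)}{dt} = -\int_{L_t}\theta \sin(\theta-\hat\theta)\,dvol_{L_t} = -\int_{L_t}(\theta-\hat\theta)\sin(\theta-\hat\theta)\,dvol_{L_t} \leq 0,
\]
the integrand being non-negative because $|\theta-\hat\theta|<\pi$ by quantitative almost calibratedness. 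Next, combining the parabolic identity $(\partial_t-\Lap_{L_t})|\theta|^2 = -2|\vec H|^2$ with $\partial_t dvol_{L_t} = -|\vec H|^2 dvol_{L_t}$, integrating over $L_t$ and then in time yields
\[
\int_0^T\!\!\int_{L_t}|\vec H|^2\,dvol_{L_t}\,dt \;\leq\; \tfrac12\int_{L_0}|\theta|^2\,dvol_{L_0} \;\leq\; \tfrac{\pi^2}{8}\,\text{Vol}(L_0) \;\leq\; C,
\]
uniformly in the initial Lagrangian by Lemma \ref{Volumebound}, since $\text{Vol}(L_0)\leq \tfrac{1}{\sin\epsilon}\int_{L_0}\text{Re}\,\Omega$ depends only on the fixed homology class. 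Choosing $T=1$ and invoking Fubini, there exists $t_{*}\in(0,1)$ with $\int_{L_{t_*}}|\vec H|^2\,dvol_{L_{t_*}}\leq C$.

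For approximation and passage to the limit, I would use the defining property of $\mathcal{L}$ combined with Lemma \ref{Solomoncontinuity} and Proposition \ref{Solomonboundedparthomological} to extract a sequence $L_k\in \mathcal{L}$ of smooth immersed Lagrangian branes with $L_k\to L$ in the varifold/current topology, $f_{L_k}L_k \to f_L L$ as currents, and $\mathcal{S}(L_k)\to \mathcal{S}(L)$. Running Joyce's LMCF from each $L_k$ for time $T=1$ produces Lagrangians $L_t^{(k)}\in \mathcal{L}$, since: exactness is preserved by Hamiltonian evolution; quantitative almost calibratedness is preserved by the maximum principle on (\ref{heat}); the homology class and the $D^bFuk(X)$ class are preserved through the surgeries; and the unobstructed brane structure is maintained by the Woodward--Palmer surgery prescription. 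Choose $t_k\in(0,1)$ as above and set $L'_k = L_{t_k}^{(k)}\in \mathcal{L}$, so that $\mathcal{S}(L'_k)\leq \mathcal{S}(L_k)$ and $\int_{L'_k}|\vec H|^2\leq C$. Allard compactness (now applicable thanks to the mean-curvature bound), Federer--Fleming compactness, Corollary \ref{Noescape}, and closure of $\mathcal{L}$ allow one to extract a subsequential limit $L'\in \mathcal{L}$; the bound $\int_{L'}|\vec H|^2\leq C$ passes to the limit by lower semicontinuity of the mean-curvature energy on varifolds, and $\mathcal{S}(L')\leq \limsup_k \mathcal{S}(L'_k)\leq \mathcal{S}(L)$ by Lemma \ref{Solomoncontinuity}.

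The hard part will be that essentially every step beyond the explicit monotonicity computation presupposes a rigorous version of Joyce's LMCF with surgery, which is itself among the most formidable open problems of the subject (\cf section \ref{LMCF}). In particular: short-time existence of the flow past generic singularities has not been established; the surgeries must be shown to not increase the Solomon functional (expected for ``opening up the neck'' and ``Lawlor neck pinching'', and exact for ``collapsing zero objects'', but none proven); the smooth approximation of $L\in\mathcal{L}$ with \emph{joint} convergence of the potentials $f_{L_k}L_k\to f_L L$ and of the Solomon values $\mathcal{S}(L_k)\to \mathcal{S}(L)$ is itself a non-trivial subproblem of the weak-regularity Floer theory of section \ref{Floertheoryweakregularity}; and the brane structure must be transported through the flow and surgeries so as to keep each $L'_k$ genuinely inside $\mathcal{L}$. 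Circumventing one or more of these by an alternative smoothing device (\ie a suitable elliptic mollification that bypasses the need to rebuild the brane along a flow) would already be a significant advance, and seems to me the most realistic way to detach the smoothing conjecture from the full weight of the Joyce program.
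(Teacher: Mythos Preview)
This statement is a \emph{conjecture} in the paper, not a theorem; the paper explicitly does not prove it but only gives heuristic motivation via Joyce's LMCF, and your proposal reproduces that heuristic essentially verbatim --- the choice $p=2$, the monotonicity computation (\ref{Solomonfunctionalevolution}), the space-time $L^2$ bound (\ref{meancurvatureL2}) followed by a time-averaging argument, and the honest acknowledgement that everything hinges on the unestablished Joyce program. Your final paragraph correctly identifies the same open problems the paper flags (short-time existence past singularities, behaviour of $\mathcal{S}$ at surgeries, approximation within $\mathcal{L}$), so there is nothing to correct: you have recovered the paper's own heuristic discussion rather than a proof, because no proof exists.
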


\begin{rmk}
This is called a `smoothing property' because $L'$ quantitatively improves the regularity of $L$. It does not suggest $L'$ is smooth, and indeed we expect the special Lagrangians which minimize $\mathcal{S}$ may have codimension two singularity. Since the volume is a priori bounded, the H\"older inequality shows that the $L^p$-smoothing property is stronger for bigger $p$, and in particular $L^2$-smoothing implies $L^1$-smoothing.
\end{rmk}

We think the smoothing property may be quite deep, and our limited attempt here is to explain how it relates to Joyce's LMCF program, which suggests the smoothing property may hold with $p=2$. Recall the defining feature of the Solomon functional is its variation property under exact isotopies among unobstructed objects: 
\[
\delta \mathcal{S}= \int_L h \text{Im}(e^{-i\hat{\theta}}\Omega),
\]
which holds under sufficient smoothness assumptions.
Under a sufficiently smooth LMCF $(L_t)$ in a Calabi-Yau manifold, the Lagrangians evolve by the local Hamiltonian function $-\theta_t$  up to an inconsequential additive constant (\cf section \ref{LMCF}), so $\mathcal{S}$ evolves by
\begin{equation}\label{Solomonfunctionalevolution}
\begin{split}
& \partial_t \mathcal{S}= -\int_{L_t} (\theta_t- \hat{\theta}) \text{Im}(e^{-i\hat{\theta}}\Omega)= - \int_{L_t} (\theta_t- \hat{\theta}) \text{Im} (e^{i(\theta-\hat{\theta})} )dvol_{L_t}
\\
=& - \int_{L_t} (\theta_t-\hat{\theta}) \sin(\theta_t-\hat{\theta}) dvol_{L_t}.
\end{split}
\end{equation}
If $L_t$ is almost calibrated, then $-\pi<\theta-\hat{\theta}< \pi$, so $\partial_t \mathcal{S}\leq 0$. We conclude that \emph{the Solomon functional decreases in time along Joyce's LMCF under the almost calibrated assumption}, at least for the time between the surgeries. It is plausible $\mathcal{S}$ is either continuous or jumps downwards at the surgeries in Joyce's LMCF,\footnote{A somewhat analogous phenomenon in the Brakke flow is that the total volume mass is either continuous or can only jump downwards in time. The mass loss is typically related to the disappearance of a component of the evolving varifold, which is conceptually similar to `collapsing zero objects' in Joyce's LMCF. This is ruled out by the almost calibrated condition, so optimistically one can even hope for the continuity of the Solomon functional in the almost calibrated setting. } which would then imply the Solomon functional is monotone decreasing for all time.

Now recall that the heat equation on the Lagrangian angle implies an integral bound on the mean curvature (\ref{meancurvatureL2}). In particular, if the LMCF can be run for a \emph{definite amount of time} $T$, then there exists some $t\leq T$, with
\[
\int_{L_t} |\vec{H}|^2 dvol_{L_t} \leq T^{-1} \int_{L_{t=0}} \theta^2 dvol_{L_{t=0}} \leq CT^{-1},
\]
where crucially the a priori constant $C$ does not depend on any quantitative smoothness assumption on the initial Lagrangian, provided it is quantitatively almost calibrated. Such $L_t$ would be a good candidate for $L'$, subject to the hypothesis that Joyce's LMCF remains within the class of Lagrangians $\mathcal{L}$.

Morally the class $\mathcal{L}$ arises as varifold/current limits of those Lagrangians admissible in Joyce's program. Under the plausible assumption that Joyce's LMCF can be passed to the varifold/current limit, then the $L^2$-smoothing property can be well explained. The $\mathcal{S}(L')\leq \mathcal{S}(L)$ condition comes from the decrease of the Solomon functional along the flow, and the $\int_{L'} |\vec{H}|^2 dvol_{L'}\leq C $ condition would follow if Joyce's LMCF can be run for a uniform amount of time $T$. If $T$ can be taken  arbitrarily large, then we can demand further that the $L^2$ mean curvature is arbitrarily small.

\begin{rmk}\label{approximateLag}
In minimal surface theory, the ability to approximate an unknown object by objects with quantitative derivative controls, is frequently the key of the regularity theory.  Notable examples include the Lipschitz and harmonic approximations that lie at the core of De Giorgi's $\epsilon$-regularity theorem, and the center manifolds at the core of Almgren's big regularity theorem. An excellent survey is \cite{Delelliscentermfd}. While there are plenty of techniques for constructing area competitors in geometric measure theory, we lack useful ways to construct  competitors within the Lagrangian world. Developing such techniques is essential to the $L^p$-smoothing property, and possibly also to the Floer theoretic aspects of the variational program.
\end{rmk}

\subsection{Floer theory under weak regularity}\label{Floertheoryweakregularity}


The variational program needs to incorporate singular Lagrangians as objects of $D^bFuk(X)$, which naturally raises many Floer theoretic questions, such as: 
\begin{itemize}
\item Suppose a sequence of (exact, quantitatively almost calibrated, smooth) Lagrangians
converge in the varifold/current topology to some singular Lagrangian, then what Floer theoretic information can be passed to the limit?

\item What does it mean for two Lagrangian currents to lie in the same derived Fukaya category class?

\item  Does it still make sense to talk about Floer theoretic obstructions in the weak regularity setting?
\end{itemize}

In this section we will offer some general remarks and speculations about the nature of these problems, but will not solve them in any definitive way.

\begin{rmk}
There is a field called $C^0$-symplectic topology, which studies properties stable with respect to convergence of Lagrangians under $C^0$-Hamiltonian isotopies, especially spectral type invariants. This is morally related to our concerns here, but as far as the author understands, Floer theory for Lagrangian varifolds/currents is not yet explicitly treated in this field.

\end{rmk}

\subsubsection*{Floer theoretic difficulties}

If one wishes to build Floer theory for Lagrangian currents by mimicking the smooth case constructions, then one immediately runs into a large number of severe difficulties.

\begin{itemize}
\item   For exact embedded Lagrangians, the \textbf{self Floer cohomology} of a Lagrangian is isomorphic to the singular cohomology: $HF^*(L,L)\simeq H^*(L)$. Now in the light of Almgren's big regularity theorem, our best hope is that in the variational argument we only encounter codimension two singularities in the Lagrangian. We have no right to assume the topology of the Lagrangian is fixed in the variational framework. The homology groups $H_{n-m}(L)$ for $m\geq 1$ are highly unstable under varifold/current convergence if codimension two singularities can form, so for $m\geq 1$ we do not expect a direct geometric definition of $HF^m(L,L)$ for  Lagrangian currents, that possesses any reasonable continuity property under convergence.

\item  The standard way to set up Floer theory between two Lagrangians is to consider the \textbf{transverse intersection} points as the generators of the Floer complex, and counts of holomorphic strips as differentials between generators. This viewpoint depends heavily on the differential topology of the Lagrangians, which runs into troubles for Lagrangian currents, where tangent spaces only need to exist almost everywhere in a measure theoretic sense.

\item  Once Lagrangian intersections are not well behaved, we cannot define the \textbf{bounding cochains} supported at intersection points in the usual way.

\item Parallel transport along \textbf{local systems} may break down.

\item  It is unclear how to define (relative) \textbf{spin structures} on Lagrangian currents.

\item Standard Floer theory depends heavily on \textbf{transversality arguments} based on differential topology, which is lost on Lagrangian currents. 

\end{itemize}

In short, a direct geometric construction of the $A_\infty$ structure is unlikely for Lagrangian currents.

\subsubsection*{Formal limit perspective}

One natural idea is that we only develop Floer theory for sufficiently smooth Lagrangians (eg. immersed Lagrangians, isolated $T^2$-cones, etc), and formally treat Lagrangian currents using \textbf{approximation by smooth objects}.
Suppose $L_i$ are sufficiently smooth Lagrangian branes in the same $D^bFuk(X)$ class, and $L_i\to L$ in the varifold/current topology, and assume the brane structures provide a Cauchy sequence in some appropriate sense, then one \emph{formally} declare the Lagrangian current $L$ as carrying an object in the same $D^bFuk(X)$ class. A weak Lagrangian brane would then tautologically be an equivalence class of Cauchy sequences. The same Lagrangian current may in principle support many different formal brane structures, not necessarily all in the same derived category class.

In this perspective, weak Lagrangian branes are indirect constructions, whose properties amount to quantitative properties of sufficiently smooth Lagrangians that can be bounded in terms of a priori quantities such as the distance on the branes, the flat norm on the currents, the Hausdorff distance between the Lagrangians, etc.

\begin{Question}\label{formalbranecompactness}
Is there a notion of distance between two  Lagrangian branes $L,L'$ in the same $D^bFuk(X)$ class, that has \emph{precompactness property modulo gauge} under varifold/current topology, in the setting of exact, quantitative almost calibrated Lagrangians with bounded Lagrangian potential?
\end{Question}

One concrete notion of distance is as follows (\cf \cite[Definition 2.2]{FukayaGH}, see also \cite[section 5]{BiranCornea}).
We can look for the $\alpha, \beta$ representing generators in $HF^0(L,L')$ and $HF^0(L',L)$ with cohomological compositions equal to the identity; in the almost calibrated case $CF^{-1}(L,L')=0$, so $\alpha,\beta$ are unique up to scaling. Since all bounding cochains and $A_\infty$ products have non-negative Novikov exponents, and the sum of Novikov exponents add up to zero, we must have some negative Novikov exponent for $\alpha$ or $\beta$. In our context, the Novikov exponent amounts to $(f_L- f_{L'})(p)$ at $p\in CF^0(L,L')$ and $(f_{L'}-f_{L})(q)$ at $q\in CF^0(L',L)$.
The quantity
\[
-\min \{\text{Novikov exponents among all intersection points of $\alpha,\beta$}\}
\]
provides a candidate notion of distance $d(L,L')$ between Lagrangian branes. Notice this distance bounds the energy of the holomorphic discs with boundary on $L,L'$. Given three objects $L,L',L''$, by considering the composition of the generators, it is easy to deduce $d(L,L'')\leq d(L,L')+d(L',L'')$.

Does this notion of distance have any precompactness property? Namely, given a sequence of sufficiently smooth Lagrangian objects $L_i\in \mathcal{L}$, (eg. a minimizing sequence for the Solomon functional), and assuming the Lagrangian potentials are uniformly bounded, then up to making gauge equivalent choices of local systems and bounding cochains, when can we extract a Cauchy subsequence?

\begin{rmk}\label{Sequivalencermk1}
As an illustration of the subtlety, consider immersed Lagrangians $L$ built as the cone of $L_2\xrightarrow{\gamma} L_1[1]$. Replacing $\gamma$ by $c\gamma$ for $c>0$ results in new bounding cochain structures on $L_1\cup L_2$, but the distance between these brane structures is zero. The limit $c\to 0$ however belongs to a different $D^bFuk(X)$ class. This suggests our formulation of weak Lagrangian branes is probably not sufficient to distinguish between several derived category classes. 
\end{rmk}

One may also ask if the weak Lagrangian branes agree with ordinary Lagrangian branes in the case of smooth immersed Lagrangians:

\begin{Question}
Suppose $L_i$ is a sequence of immersed Lagrangian branes, all in the same $D^bFuk(X)$ class, and is a Cauchy sequence with respect to the distance on the branes. Suppose $L$ is an immersed Lagrangian, and $L_i\to L$ in the varifold/current topology. Then does there exist a suitable brane structure on $L$ so that $L_i\to L$ with respect to the distance on the branes?
\end{Question}

\subsubsection*{Geometric perspective: bordism currents and triangulated categories}

It is interesting to ask if \emph{any} Floer theoretic geometric construction may be performed on Lagrangian currents at all. 
While the $A_\infty$-category structure on the Fukaya category may not necessarily be robust under  varifold/current convergence of Lagrangians, only a subset of the structures are essential to the Thomas-Yau conjecture:
\begin{itemize}
	\item The notion of derived Fukaya category classes.

	\item  The notion of distinguished triangles, within the class of Lagrangians $\mathcal{L}$. This is the categorical shadow of the phenomenon that Lagrangians can be broken into several components under weak limits.
	
	\item  The central charge function. 
\end{itemize}

The central charge is of numerical nature, and is continuous under convergence in the current topology. A key feature of  $L,L'$ lying in the \emph{same derived category class} is that there is a bordism current $\mathcal{C}$ constructed from holomorphic curves, such that $\partial \mathcal{C}=L-L'$. Likewise for \emph{distinguished triangles} $L_1\to L\to L_2\to L_1[1]$ in the weak regularity setting, a key expected property is that there should be a bordism current between $L$ and $L_1+L_2$, constructed from families of holomorphic curves.

\begin{Question}\label{Gromovcompactnesscurrent}
	Given unobstructed (sufficiently smooth) exact Lagrangians $L_i, L_i'$ all in the same derived category class. Assume convergence $L_i\to L$ and $L_i'\to L'$ in the varifold/current topology. Can we assign an $(n+1)$-bordism current $\mathcal{C}$ between $L$ and $L'$, constructed from the moduli space of holomorphic curves with boundary on $L$ and $L'$?
\end{Question}

The basic idea is to take the bordism current $\mathcal{C}_i$ with $\partial \mathcal{C}_i=L_i-L_i'$, constructed from the universal family of holomorphic curves, and attempt to extract the limit as currents. This could be morally viewed as a version of \textbf{Gromov compactness} for families. As rather strong evidence, in the quantitatively almost calibrated setting we derived uniform energy bound for holomorphic curves contributing to $\mathcal{C}_i$, by proving the potential clustering property (\cf section \ref{Intrinsicdistancepotentialclustering}, and Prop. \ref{uniformenergybound}). If we work with \emph{Fukaya category over the integers}, the bordism currents $\mathcal{C}_i$ would be \emph{integral currents}, and we can hope to extract limit by some compactness argument. The problem is that we do not know $\mathcal{C}_i$ have uniform mass upper bounds. Morever, it is an interesting question how to formulate the parametrized family structure of the bordism current in the geometric measure theory language.

\begin{rmk}
Question \ref{Gromovcompactnesscurrent} is formulated without any smoothness assumption on $L$ and $L'$. In the light of the conjectural $L^p$-smoothing property (\cf section \ref{Variationalstrategy}), one may be able to assume some a priori $L^2$-bound on the mean curvature.
\end{rmk}

While Lagrangian intersections, bounding cochains, spin structures, local systems etc. do not make sense directly on Lagrangian currents, the bordism current has a chance to make sense, and encodes substantial information. For instance, the orientations of the moduli spaces reflect the spin structures, and the weighting factors for the moduli spaces encode the combined effect of bounding cochain elements and the parallel transport along the local system.

\begin{rmk}
	As mentioned in section \ref{Floertheoreticobstructions},
	the mere requirement for the Floer theoretic obstruction criterion (\ie the stability condition) to make sense for  Lagrangian currents is already very constraining. Most statements are simply impossible to make without concepts that need at least $C^1$-regularity, and the bordism currents between integration cycles are among the rare exceptions. This was one of the heuristic arguments in section \ref{Floertheoreticobstructions} that obstructions must come from bordism currents.
\end{rmk}

\begin{Question}
	How much of the triangulated category structure works for weak regularity exact Lagrangians? 
	How much of Floer theory can be developed upon the notion of bordism currents? Is it possible to encode weak Lagrangian branes \`a l\`a the formal limit perspective, in terms of bordism currents?
\end{Question}

We mentioned in Remark \ref{3Lag} that when more than two Lagrangians are present, Floer theory would also produce $(n+2)$-dimensional currents whose boundary exhibit homological relations between the $(n+1)$-dimensional bordism currents. Such `bordisms between bordisms' may encode further information about the triangulated category.

\subsubsection*{Previlleged role of $HF^0$}

We consider quantitative almost calibrated Lagrangians. We mentioned above that $HF^m(L,L)$ for $m\geq 1$ is problematic, by analogy with singular cohomology. On the other hand, $H^0(L)\simeq H_n(L)$ is much more \emph{robust} compared to higher cohomologies, in the sense that
the fundamental cycle of $L$ can deform in a \emph{continuous} way, under topological changes 
such as the shrinking of a codimension two cycle. Continuing with the analogy, we expect the geometric information in $HF^0$ behaves more continuously under current/varifold limits than the higher degree Floer groups.
This is compatible with the fact that the bordism current $\mathcal{C}$ between $L,L'$ encodes the compositions $\alpha\circ \beta=1_{L'}$ and $\beta\circ \alpha=1_L$, with $\alpha\in HF^0(L,L')$ and $\beta\in HF^0(L',L)$, and we expect bordism currents have some continuity properties under varifold/current convergence.

\begin{rmk}
This previlleged role of $HF^0$ is reflected in the usual Thomas-Yau argument (\cf section \ref{ThomasYauevidence}), which only makes use of $HF^0$, not the higher Floer cohomologies,  nor full set of higher $A_\infty$ products.
\end{rmk}

\begin{rmk}
In the passage from the Fukaya category to the derived category, the morphism space only retains $HF^0$, not the full $HF^*$. The ususal way the derived category remembers higher Floer cohomology, is via the shift operator $[m]$. However, in the Thomas-Yau-Joyce picture, working with the almost calibrated setting means conjecturally that we are picking out an abelian subcategory, which breaks the shift symmetry of the derived category. This gives a categorical explanation why $HF^0$ may behave very differently from the higher Floer groups.

\end{rmk}




\subsubsection*{Multiplicity issues}

The same underlying geometric Lagrangian can conceivably support many different objects in the Fukaya category. A possible source of this problem is a sequence of immersed Lagrangians $L_i$ converging to a \emph{multiple} of a Lagrangian current $L$. The underlying Lagrangian current contains only the support information and the multiplicity, which can be imagined as the number of sheets in $L_i$. Much geometric information, however, is not captured this way:
\begin{itemize}
\item Take two Lagrangians $L_1,L_2$ which are both $C^\infty$ close to a given immersed Lagrangian $L'$, but whose Lagrangian potentials differ by approximately a constant. In the limit $L_1\cup L_2\to 2L'$ as currents, but the \emph{potential information} is lost. On the other hand, the potential clustering property can restore this information.

\item Immersed Lagrangians may be nontrivial (branched) covers over other immersed Lagrangians. When this happens, the \emph{monodromy information} is not remembered by the underlying current. On the other hand, it is conceivable that some (generalized) local system data can restore this information.

\item Let $Q$ be a closed smooth manifold. Abouzaid \cite{Abouzaidloop} showed that the wrapped Fukaya category of the cotangent bundle $T^*Q$ is generated by any cotangent fibre $T_q^*Q$, and the wrapped Floer cochain complex of $T_q^*Q$ is $A_\infty$-equivalent to  $C_{-*}(\Omega_q Q)$ for the based loop space $\Omega_qQ$. In particular, for any (compact, embedded, exact) Lagrangian $L\subset T^*Q$, the Floer cohomologies $HW^*(T_q^*Q,L)$ and $HF^*(L,L)$ are representations of $H_{-*}(\Omega_q Q)$. This cotangent bundle case can be viewed as the local model of Lagrangians contained in a small neighbourhood of a given embedded Lagrangian.

\end{itemize}

It is  interesting to ask how much of such information can still make sense for Lagrangian currents.


\begin{rmk}
Multiple covers of Lagrangians may be related to the following problem of the Fukaya category. Given a class in the Grothendieck group of $D^b Fuk(X)$ represented by a Lagrangian, one may ask if the primitive of this class is also represented by a Lagrangian. Such questions are related to the idempotent closure problem of $D^b Fuk(X)$ in Joyce's program, which seems very delicate.
\end{rmk}

\begin{rmk}
Construction of special Lagrangian branched multiple covers over given special Lagrangians is currently studied by S. Donaldson \cite{Donaldsonmultivalued} and S. He among others.
\end{rmk}

\begin{rmk}
A holomorphic vector bundle analogue for multiply covered Lagrangians is the (multiple) extension of the bundle by itself, such as the $E'$ fitting into a short exact sequence $0\to E\to E'\to E\to 0$. In the HYM setting these are prototypical sources of semistable but not stable bundles, and it would not be surprising if similar phenomenon happens in the Thomas-Yau program. 
\end{rmk}

\subsection{Asymptotes of the Solomon functional}\label{AsymptoticSolomon}

We have emphasized that the Solomon functional depends not only on $L$, but also the potential $f_L$, and that the freedom of additive constants causes the space of $(L, f_L)$ to be noncompact, even though the space of Lagrangians $\mathcal{L}$ is more or less compact under the varifold/current topology.
We now wish to explain why the asymptotic behaviour of the Solomon functional should be controlled by Thomas-Yau semistability. The key tool is an a priori bound on the difference between the Solomon functional and the elementary functional, for which we gave sufficient conditions in section \ref{Solomonboundedpartsection} and \ref{Solomonboundedpartrevisited}.

In the setup of $(N,A)$-potential clustering (\cf Cor. \ref{potentialoscillationrobust}, section \ref{Intrinsicdistancepotentialclustering}),
we will rewrite the elementary functional $\bar{\mathcal{S}}$ (\cf (\ref{Solomonelementary})). Recall we have 
 a Lagrangian $L$ built from $L_1,\ldots L_N$; in the unobstructed immersed Lagrangian context, this structure comes from a twisted complex (\cf section \ref{Solomonboundedpartsection}).
We introduce the new Lagrangian currents
\[
\mathcal{E}_k= L_1+ L_2\ldots + L_k, \quad k=0, 1,2,\ldots N,
\]
which in the immersed context corresponds to the twisted complex (\ref{HNEk}). In particular $\mathcal{E}_N=L$, which is homologous to $L_0$. Thus
\begin{equation*}
\begin{split}
\bar{\mathcal{S}}= & \text{Im}\left( \sum_1^N (\sup_{L_i} f_{L_i}) e^{-i\hat{\theta}}(\int_{\mathcal{E}_i} \Omega-  \int_{\mathcal{E}_{i-1}} \Omega )   \right) - (\sup_{L_0} f_{L_0}) \text{Im} ( e^{-i\hat{\theta}} \int_{L_0}\Omega)
\\
= & \text{Im}\left( \sum_1^{N-1} (\sup_{L_i} f_{L_i}- \sup_{L_{i+1}} f_{L_{i+1}}) e^{-i\hat{\theta}}\int_{\mathcal{E}_i} \Omega    \right) + (\sup_{L_N} f_{L_N}-\sup_{L_0} f_{L_0}) \text{Im} ( e^{-i\hat{\theta}} \int_{L_0}\Omega).
\end{split}
\end{equation*}
But we chose in the beginning
$
\hat{\theta}=\arg \int_{L_0}\Omega.
$ Thus $\text{Im} ( e^{-i\hat{\theta}} \int_{L_0}\Omega)=0$, and
\begin{equation}\label{Solomonelementary2}
\bar{\mathcal{S}}= \sum_1^{N-1} (\sup_{L_i} f_{L_i}- \sup_{L_{i+1}} f_{L_{i+1}}) \text{Im}\left(  e^{-i\hat{\theta}}\int_{\mathcal{E}_i} \Omega    \right) .
\end{equation}
As part of the potential clustering property, we have
\begin{equation}
\sup_{L_1} f_{L_1}\leq \sup_{L_2} f_{L_2} \leq\ldots \leq\sup_{L_N} f_{L_N}.
\end{equation}

We arrive at the following key \textbf{dichotomy}:
\begin{itemize}
\item In the \textbf{unstable} case, there exists some $1\leq k\leq N-1$, such that 
\[
\text{Im}\left(  e^{-i\hat{\theta}}\int_{\mathcal{E}_k} \Omega    \right) >0 ,
\]
or equivalently
\begin{equation}
\arg \int_{\mathcal{E}_k} \Omega >\hat{\theta}.
\end{equation}
Notice $L$ fits into a distinguished triangle
\[
\mathcal{E}_k \to L\to \cup_{i\geq k+1} L_i\to \mathcal{E}_k[1],
\]
We explained in Theorem \ref{Floertheoreticobstruction1} under the extra hypotheses of automatic transversality and the positivity condition, that this leads to a  \textbf{Floer theoretic obstruction}. In Conjecture \ref{Floertheoreticobstructionconj} we heuristically argued that even without these extra hypotheses, the Floer theoretic obstruction should follow from the Thomas-Yau-Joyce program.

From a different perspective, we can add an arbitrarily large \emph{positive} number $a>0$ to the Lagrangian potential on $L_{k+1},\ldots L_N$. This is compatible with the Novikov positivity condition, so $L$ stays unobstructed, but $\bar{S}$ changes by an unbounded amount
\[
-a \text{Im}\left(  e^{-i\hat{\theta}}\int_{\mathcal{E}_i} \Omega    \right)<0. 
\]
We conclude that \emph{in the unstable case, the elementary functional is unbounded from below}.

\item In the \textbf{semistable} case, for any $L$ in the class $\mathcal{L}$ that can be written in the twisted complex form as above, we always have 
\begin{equation}\label{semistabilitySL}
\text{Im}\left(  e^{-i\hat{\theta}}\int_{\mathcal{E}_k} \Omega    \right) \leq 0,\quad \forall k=1,2,\ldots N-1.
\end{equation}
Then the elementary functional (\ref{Solomonelementary2}) is \textbf{nonnegative}.

In section \ref{Quantitativealmostcalibrated} we argued that since the homology class of $L$ is prescribed a priori, subject to the quantitative almost calibrated assumption, only finitely many possibilities of homology classes can arise for $L_i$ in any decomposition. Thus the stronger condition
\[
\text{Im}\left(  e^{-i\hat{\theta}}\int_{\mathcal{E}_k} \Omega    \right)<  0,\quad \forall k=1,2,\ldots N-1.
\]
would be equivalent to a uniform bound: for some small $c>0$,
\[
\text{Im}\left(  e^{-i\hat{\theta}}\int_{\mathcal{E}_k} \Omega    \right)\leq -c< 0,\quad \forall k=1,2,\ldots N-1.
\]
This holds when the class $\mathcal{L}$ is \textbf{stricly stable} (\cf Definition \ref{ThomasYausemistability}). Together with potential clustering, it implies
\[
\bar{\mathcal{S}}(L)\geq c (\sup_{L} f_L- \inf_L f_L+A).
\]
Thus if the Lagrangian potential oscillation becomes unbounded, then the elementary functional goes to positive infinity. The geometric intuition is the properness of the Solomon functional modulo a global additive constant for $f_L$.

\end{itemize}

Since the Solomon functional and the elementary functional only differ by a bounded amount, the above conclusions transfer to the Solomon functional. 
Thus \textbf{the Solomon functional is bounded below in the semistable case, and unbounded from below in the unstable case}. 
A key slogan here is that \emph{the asymptotic behaviour of the Solomon functional is governed by Floer theory}. This is analogous to the partially conjectural picture in the variational approach to the HYM equation, where the asymptotic behaviour of the Donaldson functional is governed by algebraic geometry (\cf section \ref{HYM}).

\begin{rmk}\label{ThomasYausemistabilityequivalence}
In Definition \ref{ThomasYausemistability}, the Thomas-Yau semistability makes use of distinguished triangles for all almost calibrated Lagrangian objects, not just those with $|\theta|\leq \frac{\pi}{2}-\epsilon$. This makes the Thomas-Yau semistability a priori stronger than the semistable situation of the above dichotomy. We expect from the Thomas-Yau-Joyce picture that both stability notions are actually equivalent under our initial assumption that there is a representative $L_0$ with $|\theta|< \frac{\pi}{2}-\epsilon$. But for our main purpose, that Thomas-Yau semistability implies the existence of special Lagrangians, we do not mind Thomas-Yau semistability being stronger than necessary.
\end{rmk}

\subsubsection{Thomas-Yau conjecture}\label{ThomasYauconjecturesubsection}

The following is our interpretation of the \textbf{Thomas-Yau existence conjecture}:

\begin{conj}\label{ThomasYauexistence}
Let $L_0$ be an exact, quantiatively almost calibrated, unobstructed Lagrangian object in $\mathcal{L}$.
Assuming Thomas-Yau semistability for $L_0$, then the following (equivalent) statements hold:
\begin{enumerate}
	\item There is a special Lagrangian representative in $\mathcal{L}$.	
	
	\item There is no distinguished triangle in $\mathcal{L}$ satisfying the destabilizing condition.
	
	\item The Solomon functional is bounded from below on $\mathcal{L}$.
	
	\item The Solomon functional has a minimizer in $\mathcal{L}$.	
\end{enumerate}
\end{conj}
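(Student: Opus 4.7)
The plan is to establish the cycle $(1) \Rightarrow (2) \Rightarrow (3) \Rightarrow (4) \Rightarrow (1)$, assembling the ingredients developed throughout the paper. The implication $(1) \Rightarrow (2)$ is essentially Theorem \ref{Floertheoreticobstruction1} (or, absent the positivity and automatic transversality hypotheses, its more speculative counterpart Conjecture \ref{Floertheoreticobstructionconj}): any destabilizing triangle would force the phase oscillation bound (\ref{phaseobstruction}), which is incompatible with the special Lagrangian condition on any representative. In the opposite direction, $(2)$ is itself, via Remark \ref{ThomasYausemistabilityequivalence}, a reformulation of the assumed Thomas-Yau semistability relative to the class $\mathcal{L}$, so nothing new needs to be said there.

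The implication $(2) \Rightarrow (3)$ follows from the asymptotic analysis of Section \ref{AsymptoticSolomon}. For any $L \in \mathcal{L}$, the potential clustering property of Section \ref{Intrinsicdistancepotentialclustering} produces a decomposition with partial sums $\mathcal{E}_k$, putting the elementary functional into the form (\ref{Solomonelementary2}). Semistability gives $\mathrm{Im}(e^{-i\hat{\theta}} \int_{\mathcal{E}_k}\Omega) \le 0$ for every $k$, so $\bar{\mathcal{S}}(L) \ge 0$; combined with the uniform estimate $|\mathcal{S}(L) - \bar{\mathcal{S}}(L)| \le C$ of Proposition \ref{Solomonboundedparthomological}, this yields $\mathcal{S}(L) \ge -C$. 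The converse $(3) \Rightarrow (2)$ is the contrapositive observation already made in Section \ref{AsymptoticSolomon}: given a destabilizing triangle with partial sum $\mathcal{E}_k$, shifting the potential on $\bigcup_{i>k} L_i$ by a large positive constant $a$ keeps the object in $\mathcal{L}$ while driving $\bar{\mathcal{S}}$, and hence $\mathcal{S}$, to $-\infty$ linearly in $a$.

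For $(3) \Rightarrow (4)$, I would take a minimizing sequence $(L_i) \subset \mathcal{L}$, apply the conjectural $L^p$-smoothing property of Section \ref{Variationalstrategy} to replace it by a sequence with $\int_{L_i} |\vec{H}|^p \le C$ without increasing $\mathcal{S}$, and then combine the a priori volume bound (Lemma \ref{Volumebound}), confinement to a bounded region (Corollary \ref{Noescape}), and Allard together with Federer--Fleming compactness to extract a varifold/current subsequential limit $L_\infty$, which lies in $\mathcal{L}$ by the closedness built into its definition. The uniform potential oscillation of Corollary \ref{Potentialoscillationbound}, together with the coercivity just established, controls $\|f_{L_i}\|_{L^\infty}$ after normalizing additive constants; Lemma \ref{limitexactness} then produces a limiting potential $f_{L_\infty}$ with $f_{L_i} L_i \to f_{L_\infty} L_\infty$ as currents, and Lemma \ref{Solomoncontinuity} transfers minimality to $L_\infty$. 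The step $(4) \Rightarrow (1)$ is the regularity conclusion: the first variation formula (\ref{Solomonfirstvariation}), applied to a sufficiently rich family of Hamiltonian perturbations tangent to $\mathcal{L}$ at $L_\infty$, forces $\mathrm{Im}(e^{-i\hat{\theta}}\Omega)|_{L_\infty} = 0$ in the current sense, so that $\mathrm{Re}(e^{-i\hat{\theta}}\Omega)$ calibrates $L_\infty$ and exhibits it as an absolute volume minimizer in its homology class; Almgren's big regularity theorem then yields smoothness away from a Hausdorff codimension two singular set.

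The principal obstacles are foundational rather than technical refinements. First, the class $\mathcal{L}$ is only conjecturally well-defined: making precise the notion of lying in the $D^b Fuk(X)$ class of $L_0$ for varifold/current limits, and confirming that $\mathcal{L}$ is closed under such limits, rests on the singular Floer theory program sketched in Section \ref{Floertheoryweakregularity}, in particular Questions \ref{formalbranecompactness} and \ref{Gromovcompactnesscurrent}. Second, the $L^p$-smoothing property is genuinely open and, in the absence of a proof of long-time existence for Joyce's LMCF with surgery, there is no direct construction of the smooth Lagrangian competitors needed to invoke Allard's theorem (compare Remark \ref{approximateLag}). Third, and in my view the deepest analytical point, the Euler--Lagrange extraction in $(4) \Rightarrow (1)$ demands that $\mathcal{L}$ admit enough Hamiltonian perturbations around a possibly singular minimizer to test the vanishing of $\mathrm{Im}(e^{-i\hat{\theta}}\Omega)|_{L_\infty}$ against all smooth Hamiltonians $h$; manufacturing such competitors within $\mathcal{L}$ is the Lagrangian analogue of the classical competitor problem in minimal surface regularity, and is the most serious obstruction I foresee to turning this plan into a theorem.
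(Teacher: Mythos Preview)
Your cycle and its ingredients match the paper's treatment closely, and you correctly name the main foundational obstacles. The paper organizes the implications slightly differently (it also records the direct $(1)\Rightarrow(4)$ via Prop.~\ref{specialLagminimizer} and the trivial $(4)\Rightarrow(3)$), but this is cosmetic.

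There is one real gap in your $(3)\Rightarrow(4)$. You invoke ``the coercivity just established'' to bound $\|f_{L_i}\|_{L^\infty}$ along a minimizing sequence, but in $(2)\Rightarrow(3)$ you only proved a \emph{lower bound} $\mathcal{S}\ge -C$, not coercivity. Section~\ref{AsymptoticSolomon} shows coercivity (namely $\bar{\mathcal{S}}\to+\infty$ as the potential gaps diverge) only in the \emph{strictly stable} case; in the merely semistable case some $\mathrm{Im}\bigl(e^{-i\hat\theta}\int_{\mathcal{E}_k}\Omega\bigr)$ may vanish, so $\bar{\mathcal{S}}$ is flat in that direction and a minimizing sequence can have unbounded potential. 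The paper's heuristic proof of $(2)\Rightarrow(4)$ handles this differently: rather than bounding the potentials a priori, it \emph{modifies} the minimizing sequence by shrinking each gap $\sup f_{L_i}-\sup f_{L_{i-1}}$ until $\sup f_{L_{i-1}}=\inf f_{L_i}$. Semistability (\ref{semistabilitySL}) guarantees this only decreases $\bar{\mathcal{S}}$, and from (\ref{Solomonfunctionalextension}) together with the definition of $\bar{\mathcal{S}}$ one checks that $\mathcal{S}-\bar{\mathcal{S}}$ is unchanged under per-component constant shifts of the potential; hence $\mathcal{S}$ itself does not increase. After this adjustment the total oscillation is bounded by $(N,A)$-potential clustering, and the rest of your compactness argument goes through verbatim. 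As a minor addendum on $(4)\Rightarrow(1)$: the paper locates the competitor obstruction more sharply than ``enough Hamiltonian perturbations''---the specific issue is that $\sup_L|\theta|$ may equal $\tfrac{\pi}{2}-\epsilon$ at the minimizer, so even arbitrarily small global Hamiltonian isotopies $\phi_t(L)$ can violate the quantitative almost calibrated bound and exit $\mathcal{L}$.
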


Here is a glossary of the evidence presented previously. 
\begin{itemize}
\item $(2)$ is tautological from Thomas-Yau semistability (\cf Remark \ref{ThomasYausemistabilityequivalence}).
\item $(1)$ implies Thomas-Yau semistability: see the Floer theoretic obstructions Thm. \ref{Floertheoreticobstruction1}, Thm. \ref{Floertheoreticobstruction2}, Conj. \ref{Floertheoreticobstructionconj}, where we justified this for immersed Lagrangians under the automatic transversality and the positivity condition, or alternatively by assuming Joyce's program.

\item $(2)\iff (3)$: this is a consequence of $(N,A)$-potential clustering (\cf Cor. \ref{potentialoscillationrobust}, section \ref{Intrinsicdistancepotentialclustering}), the uniform bound for $\mathcal{S}-\bar{\mathcal{S}}$ (\cf section \ref{Solomonboundedpartrevisited}, \ref{Solomonboundedpartsection}), and the formula (\ref{Solomonelementary2}) for the elementary functional. 

\item $(1)\implies (4)$: see Prop. \ref{specialLagminimizer}, where we justified this under automatic transversality and the positivity condition.

\item $(4)\implies (3)$: obvious.

\end{itemize}

	




The rest of this section concerns $(2)\implies (4)$, and the next section concerns $(4)\implies (1)$. The arguments will rely on several unproven statements, which we consider plausible, but may involve rather significant difficulties or  substantial foundational work. Nevertheless, we think it is instructive to see heuristically how everything fits together.

\begin{conj}
In the semistable case, the Solomon functional has a minimizer.
\end{conj}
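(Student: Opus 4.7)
The plan is to apply the direct method of the calculus of variations to $\mathcal{S}$ on $\mathcal{L}$, exploiting the asymptotic analysis of Section \ref{AsymptoticSolomon}. Take a minimizing sequence $(L_i, f_{L_i}) \in \mathcal{L}$ with $\mathcal{S}(L_i) \to \inf_{\mathcal{L}} \mathcal{S}$. Since $[L_i]=[L_0]\in H_n(X)$ and $\hat\theta=\arg\int_{L_0}\Omega$, a constant shift of $f_{L_i}$ leaves $\mathcal{S}(L_i)$ unchanged, so we may normalize $\inf f_{L_i}=0$. Quantitative almost calibratedness yields a uniform volume bound (Lemma \ref{Volumebound}), confinement to a fixed compact set (Cor.~\ref{Noescape}), and by the arguments of Section \ref{Intrinsicdistancepotentialclustering} a uniform constant $(N,A)$ in the potential clustering property, with only finitely many possible homology classes for the clusters.

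First I would treat the strictly stable case. The elementary-functional formula (\ref{Solomonelementary2}) together with the uniform bound $|\mathcal{S}-\bar{\mathcal{S}}|\leq C$ of Prop.~\ref{Solomonboundedparthomological} shows that $\mathcal{S}(L_i)$ controls $\sup f_{L_i}-\inf f_{L_i}$ on each cluster, hence (after the normalization) gives a uniform $L^\infty$ bound on $f_{L_i}$. Invoking the conjectural $L^p$-smoothing property of Section \ref{Variationalstrategy}, replace $L_i$ by $L_i'\in\mathcal{L}$ with $\mathcal{S}(L_i')\leq \mathcal{S}(L_i)$ and $\int_{L_i'}|\vec H|\leq C$; Allard plus Federer--Fleming then produce a subsequential limit $L$ in the varifold/current topology. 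Lemma \ref{limitexactness} upgrades this to convergence $f_{L_i'}L_i'\to f_LL$ for some $L^\infty$ potential $f_L$, Cor.~\ref{potentialoscillationrobust} transfers the potential clustering to the limit, and the postulated closure of $\mathcal{L}$ under varifold/current convergence gives $L\in\mathcal{L}$. Continuity of the Solomon functional (Lemma \ref{Solomoncontinuity}) then yields $\mathcal{S}(L)=\inf_\mathcal{L}\mathcal{S}$. The semistable-but-not-strictly-stable case I would handle inductively by splitting off a distinguished triangle $\mathcal{E}_k\to L\to L/\mathcal{E}_k$ with $\arg\int_{\mathcal{E}_k}\Omega=\hat\theta$ and minimizing on each block, which forces us to interpret "minimizer" only up to the $S$-equivalence relation that identifies Lagrangians differing by shifts of the Lagrangian potentials on the Harder--Narasimhan pieces of equal phase $\hat\theta$ (cf.\ Remark \ref{Sequivalencermk1} and the HYM polystability analogy).

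The hard parts will be the genuinely open ingredients. The $L^p$-smoothing property presupposes either that Joyce's LMCF with surgery can be run for a definite time within $\mathcal{L}$ and that $\mathcal{S}$ is (semi)continuous across the surgeries, or the construction of approximate Lagrangian competitors with quantitative mean-curvature control in the spirit of Remark \ref{approximateLag}; neither is currently available beyond the smooth regime. Verifying that the limiting $L$ lies in $\mathcal{L}$ — that is, that it still represents the $D^b\mathrm{Fuk}(X)$ class of $L_0$ in an appropriate weak sense — requires extracting a limiting bordism current from the $\mathcal{C}_i$ with $\partial \mathcal{C}_i = L_i'-L_0$ and giving it a Floer-theoretic meaning; this is precisely the Gromov-type compactness Question \ref{Gromovcompactnesscurrent}, for which the uniform energy bound of Prop.~\ref{uniformenergybound} is encouraging but a uniform mass bound on $\mathcal{C}_i$ is missing. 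Finally, in the semistable case the reduction to strictly stable blocks requires a working notion of distinguished triangles for Lagrangian currents, which is another of the foundational issues raised in Section \ref{Floertheoryweakregularity}. In summary, the variational architecture I have sketched is essentially forced, and reduces the Thomas--Yau existence to the two foundational inputs — the $L^p$-smoothing property and Floer theory for weak Lagrangians — that the excerpt has already isolated as the central outstanding difficulties.
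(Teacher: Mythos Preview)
Your overall architecture---minimizing sequence, potential bound, $L^p$-smoothing to gain mean-curvature control, Allard plus Federer--Fleming, Lemma~\ref{limitexactness}, continuity of $\mathcal{S}$, closure of $\mathcal{L}$---matches the paper's heuristic exactly, and you have correctly isolated the two genuinely open inputs (smoothing and weak-regularity Floer theory).

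The divergence is in how you obtain the a priori $L^\infty$ bound on $f_{L_i}$. You split into the strictly stable case (where properness of $\bar{\mathcal{S}}$ via (\ref{Solomonelementary2}) gives the bound directly from $\mathcal{S}(L_i)\leq C$) and a borderline semistable case handled by inductive splitting along distinguished triangles of phase $\hat\theta$. The paper treats the full semistable case in one stroke, with no case distinction. The key observation you miss is that shifting each cluster potential $f_{L_i}$ by an independent constant leaves $\mathcal{S}-\bar{\mathcal{S}}$ \emph{exactly invariant}, not merely uniformly bounded: both $\int_L f_L\,\text{Im}(e^{-i\hat\theta}\Omega)$ and $\bar{\mathcal{S}}$ change by the same amount $\sum c_i\,\text{Im}(e^{-i\hat\theta}\!\int_{L_i}\Omega)$. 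Since in the semistable case each summand of (\ref{Solomonelementary2}) is nonnegative, decreasing the gaps $\sup f_{L_{i+1}}-\sup f_{L_i}$ down to their Novikov-positivity floor $\sup f_{L_{i-1}}=\inf f_{L_i}$ can only decrease $\bar{\mathcal{S}}$, hence $\mathcal{S}$. The adjusted sequence is therefore still minimizing, stays in $\mathcal{L}$, and now has potential oscillation $\leq NA$.

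This is both simpler and requires strictly fewer foundations than your inductive reduction. Your borderline step presupposes that each piece $\mathcal{E}_k$ and $L/\mathcal{E}_k$ is itself Thomas--Yau semistable (not obvious from semistability of $L$), that minimizers on the pieces can be reassembled into an element of $\mathcal{L}$ (precisely the distinguished-triangles-for-currents issue you flag as open), and commits you to $S$-equivalence from the outset. The paper's adjustment argument avoids all of this and stays inside the single class $\mathcal{L}$ throughout; the $S$-equivalence question is deferred to the interpretation of the limiting brane structure (Remark~\ref{Sequivalence}) rather than being built into the variational step.
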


\begin{proof}
(Heuristic)  
First, we claim that for a minimizing sequence $L^{(k)}$ of the Solomon functional, without loss of generality the Lagrangian potential $f_L^{(k)}$ is a priori bounded:
\begin{equation}\label{potentialbound}
\sup_k\norm{f_L^{(k)}}_{L^\infty}\leq C.
\end{equation}
Consider the potential clustering setup. We can adjust the Lagrangian potentials on $L_i$ by constants separately, and as long as $\sup_{L_j}f_{L_j}\leq \inf_{L_i}f_{L_i}$ for $j<i$, this process will not affect the Novikov positivity requirement, so the Lagrangian branes should remain in $\mathcal{L}$. We view $\sup_{L_1}f_{L_1}, \sup_{L_2}f_{L_2}- \sup_{L_1}f_{L_1}, \ldots, \sup_{L_N}f_{L_N}-\sup_{L_{N-1}} f_{L_{N-1}}$ as independent constants. Adjusting all potentials by a common constant does not affect the Solomon functional, but allows us to set $\sup_{L_1}f_{L_1}=0$. Decreasing $\sup_{L_i}f_{L_{i}}- \sup_{L_{i-1}}f_{L_{i-1}}$ subject to the Novikov positivity requirement will \emph{decrease} the elementary functional (\ref{Solomonelementary2}), crucially because of the \emph{semistability condition} (\ref{semistabilitySL}). The part $\mathcal{S}-\bar{\mathcal{S}}$ is unchanged. 
Thus after this adjustment, the sequence is still minimizing for the Solomon functional. 
We can thus achieve $\sup_{L_{i-1}}f_{L_{i-1}}= \inf_{L_i}f_{L_i}$ for all $i$. By the potential clustering property, we then have (\ref{potentialbound}).

Next we need the \textbf{compactness} from geometric measure theory. As discussed in section \ref{Compactnessregularity} and \ref{Quantitativealmostcalibrated}, under quantitative almost calibratedness there is an a priori volume bound, and the Lagrangians all remain in a fixed bounded subset of $X$, 
 so Federer-Fleming compactness (\cf Theorem \ref{Federer}) holds automatically. The uniform potential bound (\ref{potentialbound}) would then justify that the weak limit is an almost calibrated Lagrangian current $L$ with bounded potential $f_L$ (\cf Lemma \ref{limitexactness}). The continuity of the Solomon functional (\cf Lemma \ref{Solomoncontinuity}) then shows $\mathcal{S}(L)=\inf_{\mathcal{L}} \mathcal{S}$.

In section \ref{Variationalstrategy} we presented the evidence for the conjectural $L^2$-smoothing property, which would allow us to assume a uniform a priori bound on the minimizing sequence
\[
\int_L |\vec{H}|\leq C.
\]
so we can use Allard compactness theorem \ref{Allard}. In effect, we can assume the minimizing sequence converges subsequentially both as currents and as varifolds. 
By assumption the class $\mathcal{L}$ is closed under the varifold/current topology of the Lagrangian, so the limit $L$ lies in $\mathcal{L}$, whence provides a \textbf{minimizer} in $\mathcal{L}$. 
\end{proof}

\begin{rmk}
If we demand $\mathcal{L}$ is closed under the flat topology of currents, without requiring varifold convergence, then we would not need the difficult $L^2$-smoothing property in the argument. However, this would allow the pathological behaviour in Example \ref{snowflakeexample}, which would increase the difficulty of Floer theory for weak regularity Lagrangians.
\end{rmk}

\begin{rmk}\label{Sequivalence}
For the geometric measure theoretic purpose of finding special Lagrangians, the existence of a minimizer as a \emph{Lagrangian current} $L$ is probably sufficient. However, for applications to the Fukaya category, it is highly desirable to know that $L$ carries a \emph{formal brane structure} (\cf section \ref{Floertheoryweakregularity}), which likely requires resolving Question \ref{formalbranecompactness}. Some analogies suggest the question may be subtle:
\begin{itemize}
\item In geometric invariant theory (GIT), there are niceties concerning semistable, polystable and stable objects. If we take a sequence of semistable objects in a fixed reductive group orbit, the limit may jump outside the orbit, so that the orbit does not admit a polystable representative. Several semistable orbits may be `$S$-equivalent', and each $S$-equivalence class contains a unique polystable orbit.

\item  In the gauge theory of holomorphic bundles, likewise a sequence of connections in the same complexified gauge orbit may jump outside the orbit in the limit; algebro-geometrically, this jumping of bundle structure is usually related to bundle extensions. 

\item One motivation for the Thomas-Yau program is to form the moduli space of (semi)stable Lagrangian branes. The Hausdorff property of the moduli space is a delicate question.
\end{itemize}

For these reasons, as well as Remark \ref{Sequivalencermk1},  we are not certain if the Lagrangian minimizer should be interpreted as a representative in the chosen $D^bFuk(X)$ class, or if several semistable $D^bFuk(X)$ classes should be identified under some suitable $S$-equivalence relation. We think this question requires further developments in Floer theory. The question is also reflected in the delicacy of the infinite time limit in Joyce's Bridgeland stability proposal.

\end{rmk}

\subsection{Minimizers and special Lagrangians}\label{MinimizerspecialLag}

\begin{conj}\label{specialLagminimizerconj}
A minimizer $L$ of the Solomon functional inside $\mathcal{L}$ is a special Lagrangian of phase $\hat{\theta}$.
\end{conj}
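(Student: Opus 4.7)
The plan is to use the first variation formula of the Solomon functional under arbitrary Hamiltonian perturbations, then invoke the calibration inequality and Almgren regularity. First I would verify that Hamiltonian isotopies preserve (a slight enlargement of) the class $\mathcal{L}$. Since $\mathcal{L}$ is the varifold/current closure of sufficiently smooth Lagrangians carrying unobstructed brane structures, and every compactly supported smooth Hamiltonian $H$ on $X$ generates a diffeomorphism $\phi_H^t$ which preserves exactness, the homology class, and unobstructedness, its action extends continuously to $\mathcal{L}$. For $|t|$ sufficiently small the quantitative almost calibrated bound is preserved up to a harmless reduction of $\epsilon$, so $\phi_H^t(L)\in\mathcal{L}$.

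Using that $L$ minimizes $\mathcal{S}$, one has $\frac{d}{dt}\big|_{t=0}\mathcal{S}(\phi_H^t(L))=0$. Appealing to the first variation formula (\ref{Solomonfirstvariation}), whose homological derivation in Lemma \ref{Solomonfunctionallem1} only needs rectifiability of $L$ once the bordism current is built from the swept-out region $\mathcal{C}_t=\phi_H^{[0,t]}(L)$, this reads
\begin{equation*}
\int_L (H|_L)\,\text{Im}(e^{-i\hat{\theta}}\Omega)=0.
\end{equation*}
As $H$ varies over $C_c^\infty(X)$, the restrictions $H|_L$ are dense in $L^1(L,\mathcal{H}^n\lfloor L)$, so $\text{Im}(e^{-i\hat{\theta}}\Omega)|_L=0$ almost everywhere with respect to $\mathcal{H}^n\lfloor L$. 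Combined with the quantitative almost calibrated bound $\cos\theta\geq \sin\epsilon$ and $\hat{\theta}\in(-\tfrac{\pi}{2}+\epsilon,\tfrac{\pi}{2}-\epsilon)$, the ambiguity mod $\pi$ is eliminated, pinning the Lagrangian phase to the single value $\hat{\theta}$ almost everywhere. Hence $L$ is a special Lagrangian integral current of phase $\hat{\theta}$. Since $L$ is then calibrated by the closed form $\text{Re}(e^{-i\hat{\theta}}\Omega)$, the calibration inequality (\ref{calibration}) on the Calabi-Yau $X$ exhibits $L$ as an absolute mass minimizer among closed integral currents in $[L_0]\in H_n(X,\Z)$, and Almgren's big regularity theorem then shows $\text{supp}(L)$ is a smooth embedded special Lagrangian away from a closed subset of Hausdorff codimension at least two.

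The main obstacle is the step that transfers both the Hamiltonian action and the first variation formula from the smooth setting to the weak regularity setting of $\mathcal{L}$. Making $\phi_H^t(L)\in\mathcal{L}$ rigorous requires the Floer theoretic framework for Lagrangian currents discussed in section \ref{Floertheoryweakregularity}: one needs to track the (formal) brane structure, the bounding cochain, and the Novikov positivity requirement under Hamiltonian isotopy, and to verify closure of $\mathcal{L}$ under $\phi_H^t$ rather than merely closure under varifold/current limits. A secondary but genuine difficulty is \emph{multiplicity}: the minimizer $L$ might be a nontrivial (branched) multiple cover of a smaller special Lagrangian, in which case calibration still gives area minimization on the underlying support, but identifying $L$ with its reduced special Lagrangian — and assigning it a brane structure representing (an $S$-equivalence class of) the original $D^bFuk(X)$ class of $L_0$ — requires the further developments alluded to in Remark \ref{Sequivalence}. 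A minor point is that along the variation we only need $\mathcal{C}_t$ as a rectifiable bordism current, not as a Floer theoretic object, so the continuity Lemma \ref{Solomoncontinuity} suffices to justify passing the derivative through the Solomon functional.
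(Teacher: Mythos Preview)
Your approach is essentially the paper's Hamiltonian variation argument, and you correctly flag the weak-regularity Floer theory as the main conjectural input. However, you gloss over what the paper explicitly names as the \emph{principal gap}: the quantitative almost calibrated threshold may be saturated on the minimizer. The class $\mathcal{L}$ is defined with a \emph{fixed} $\epsilon$, and the minimizer $L$ is only guaranteed to satisfy $|\theta|\leq \tfrac{\pi}{2}-\epsilon$ with possible equality. If $\sup_L\theta_L=\tfrac{\pi}{2}-\epsilon$ on a set of positive measure, then for a generic Hamiltonian $H$ the deformed Lagrangian $\phi_H^t(L)$ will have phase angle exceeding $\tfrac{\pi}{2}-\epsilon$ and hence leaves $\mathcal{L}$, so you cannot vary freely. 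Your phrase ``up to a harmless reduction of $\epsilon$'' does not help: reducing $\epsilon$ enlarges the class to some $\mathcal{L}'\supsetneq\mathcal{L}$, and a minimizer of $\mathcal{S}$ over $\mathcal{L}$ need not minimize over $\mathcal{L}'$, so the first-variation argument no longer applies. The paper's Hamiltonian argument therefore carries an explicit extra hypothesis $-\tfrac{\pi}{2}+\epsilon<\inf_L\theta\leq\sup_L\theta<\tfrac{\pi}{2}-\epsilon$ (strict inequalities), and points out that when this fails one currently lacks any technique to produce admissible competitors.

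The paper also offers a complementary LMCF heuristic you do not mention: if Joyce's flow extends to $\mathcal{L}$, then $\mathcal{S}(L_t)$ is non-increasing, so at a minimizer $\partial_t\mathcal{S}=0$, and the evolution formula (\ref{Solomonfunctionalevolution}) forces $\theta\equiv\hat\theta$. This sidesteps the boundary-of-$\epsilon$ issue at the cost of assuming the weak flow. Your addition of the calibration plus Almgren step is fine and is indeed part of the paper's overall variational strategy, though the paper places it in section \ref{Variationalstrategy} rather than under this specific conjecture.
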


We will give several heuristic reasons. The essential issue is that there should be enough Lagrangian competitors within the class $\mathcal{L}$.

\subsubsection*{LMCF viewpoint}

In section \ref{Variationalstrategy} we discussed that the Solomon functional should be non-increasing under Joyce's LMCF. Suppose the flow extends weakly to Lagrangians in $\mathcal{L}$. The flow starting from a minimizer $L$ must have constant $\mathcal{S}(L_t)$, but the evolution (\ref{Solomonfunctionalevolution}) would then force $\theta=\hat{\theta}$, namely $L$ is a special Lagrangian, and the flow is in fact constant.

\subsubsection*{Hamiltonian variations}

If the Lagrangian angle of the minimizer satisfies $-\pi/2+\epsilon<\inf_L \theta \leq \sup_L \theta<\pi/2-\epsilon$, then we have a more elliptic argument. Given any compactly supported global $C^\infty$ Hamiltonian function $H$ on $X$, we can associate a 1-parameter family of symplectomorphisms $\phi_t$ by exponentiating the Hamiltonian vector field. Since $d\phi_t$ only moves the tangent planes by $O(|t|)$ for small $|t|\ll 1$, the Lagrangian angle of $\phi_t(L)$ is still within $(-\pi/2+\epsilon, \pi/2-\epsilon)$, namely the quantitatively almost calibrated condition is preserved.

Under global Hamiltonian deformations, the first variation of the Solomon functional is 
\[
\delta S(H)= \frac{d}{dt} \mathcal{S}( \phi_t(L)  )|_{t=0}= \int_L H \text{Im}(e^{-i\hat{\theta}}\Omega).
\]
We need another ingredient which is expected to hold once the Floer theory is sufficiently developed in the weak regularity setting:
\begin{itemize}
\item
The class of \emph{unobstructed} exact Lagrangian objects is preserved by Hamiltonian isotopies. As such $\phi_t(L)$ should remain inside the class $\mathcal{L}$.
\end{itemize}

These would imply that the minimizer $L$ satisfies
\[
\int_L H \text{Im}(e^{-i\hat{\theta}}\Omega)=0.
\]
for any compactly supported $C^\infty$ function on $X$. This means $\text{Im}(e^{-i\hat{\theta}}\Omega)=0$ as currents, which is equivalent to $\theta=\hat{\theta}$ under the almost calibrated setting.

\begin{rmk}
The assumption that $-\pi/2+\epsilon<\inf_L \theta \leq \sup_L \theta<\pi/2-\epsilon$ for the minimizer is not innocent, but represents a principal gap in our program to find special Lagrangian currents. The problem is that if on the minimizer $\sup_L \theta_L=\frac{\pi}{2}-\epsilon$, and a priori $L$ has no regularity assumption (eg. the Lagrangian angle may a priori be highly oscillatory), then we lack techniques to construct Lagrangian competitors which remain quantitatively almost calibrated.

\end{rmk}

\subsection{Thomas-Yau uniqueness revisited}

The Thomas-Yau uniqueness argument has a conceptually rather mysterious aspect: from local computations of Floer degrees, one arrives at the global conclusion that the two special Lagrangians share the same support. We shall now present a different argument, which is not completely rigorous, but unlike the standard arguments, it could potentially work on Lagrangians with mild singularities.

\begin{conj}\label{ThomasYauuniquenessweak}
	(\textbf{Thomas-Yau uniqueness in the weak setting})
	Suppose $L,L'$ are two special Lagrangian integral currents $L,L'$ with the same phase angle $\hat{\theta}$, equipped with suitable unobstructed brane structures, such that $L\simeq L'$ in $D^bFuk(X)$. Then $L=L'$ as currents.
\end{conj}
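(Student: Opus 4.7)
The plan is to combine Proposition \ref{specialLagminimizer} with the antisymmetry of the Solomon functional under reference change, to show that if $L\neq L'$ then the bordism current between them would have to be supported on a subset of Hausdorff dimension strictly less than $n$, contradicting $\partial\mathcal{C}=L-L'$. The gain over the traditional Thomas-Yau argument is that this reasoning is purely variational and exploits a global cancellation rather than Floer-degree counts at individual transverse intersections, which hopefully survives to the weak regularity setting.

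To begin, I would invoke the weak-regularity analogue of section \ref{LotayPacinirevisited} to extract from the $D^bFuk(X)$-isomorphism $L\simeq L'$ a bordism current $\mathcal{C}$, built from universal families of (perturbed) holomorphic curves, with $\partial\mathcal{C}=L-L'$. Both $L$ and $L'$ are automatically quantitatively almost calibrated since they are special Lagrangian of the common phase $\hat\theta$. Using $L'$ as reference, formula (\ref{Solomonfunctionalextension}) defines $\mathcal{S}_{L'}(L)$, and likewise $\mathcal{S}_L(L')$ using $L$ as reference. Applying Proposition \ref{specialLagminimizer} (in the weak-regularity form to be established, assuming automatic transversality and positivity for the bordism current in some generalized sense) in both directions yields $\mathcal{S}_{L'}(L)\geq 0$ and $\mathcal{S}_L(L')\geq 0$. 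On the other hand, specializing the change-of-reference formula (\ref{SolomonfunctionalchangeL0}) with $L_0=L'$, $L_0'=L$, and target $L'$ gives $0=\mathcal{S}_{L'}(L')=\mathcal{S}_L(L')+\mathcal{S}_{L'}(L)$. Hence both Solomon functionals vanish identically.

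The next step is to unpack the vanishing $\mathcal{S}_{L'}(L)=0$ through the moduli integral formula (\ref{Solomonfunctionalintegral}). On each holomorphic polygon $u\colon\Sigma\to X$ contributing to the universal family, I would consider the holomorphic function $G=e^{-i\hat\theta}F$, where $F$ is defined by $dF=\Omega(\cdot,v_1,\ldots,v_{n-1})$. The fact that \emph{both} Lagrangian boundary portions lie on special Lagrangians of phase $\hat\theta$ forces $\text{Im}(G)=0$ not just at the intersection corners but along the whole of $\partial\Sigma$, and the positivity condition gives $\text{Im}(G)\geq 0$ on $\Sigma$ (Claim \ref{Imagecurveclaim}). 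The same positivity makes the bounding cochain corner contributions nonnegative, so the vanishing of the total moduli integral forces each term to vanish, and in particular $\text{Im}(G)\equiv 0$ on every $\Sigma$. A real-valued holomorphic function is constant, so $F$ is constant on $\Sigma$, whence $\Omega(\cdot,v_1,\ldots,v_{n-1})\equiv 0$ on $\Sigma$ for any basis of moduli deformation vectors.

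The final step is to extract the contradiction. By the dichotomy in Proposition \ref{automatictransversalitypolygon}, identical vanishing of $\Omega(\cdot,v_1,\ldots,v_{n-1})$ forces $\Sigma$ into the \emph{bad curve} category. By the weak version of automatic transversality, the boundary evaluation of bad curves into $L\cup L'$ has Hausdorff dimension at most $n-1$. Hence the image in $L\cup L'$ swept out by $\partial\Sigma$ as $\Sigma$ varies in the entire universal family has Hausdorff dimension at most $n-1$, so $\partial\mathcal{C}$ is carried by such a lower-dimensional set. But $\partial\mathcal{C}=L-L'$ is a sum of two $n$-dimensional rectifiable currents, and an $n$-rectifiable current supported on an $(n{-}1)$-dimensional set must vanish. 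Therefore $L=L'$. The main obstacle will be the weak-regularity foundations: justifying the existence and naturality of the bordism current $\mathcal{C}$ for special Lagrangian integral currents (\cf Question \ref{Gromovcompactnesscurrent}), extending Proposition \ref{specialLagminimizer} to the setting where $L$ is only a current, and making sense of the moduli integral formula (\ref{Solomonfunctionalintegral}) and the automatic transversality/positivity dichotomies when smoothness fails---exactly the package of issues flagged in section \ref{Floertheoryweakregularity}. Conditional on those foundations, however, the variational mechanism above should deliver Thomas-Yau uniqueness intrinsically, without appealing to transversality of individual intersection points.
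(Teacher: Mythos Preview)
Your argument reaches the correct endpoint---$F$ constant on each automatically transverse curve, hence only bad curves contribute, hence $\partial\mathcal{C}$ is supported in Hausdorff dimension $\leq n-1$---but it takes a substantial detour that the paper avoids. The paper's heuristic proof bypasses the Solomon functional entirely: since \emph{both} $L$ and $L'$ are special Lagrangian of the common phase $\hat\theta$, the tangent to $F(\partial\Sigma)$ has incline angle $\hat\theta\bmod\pi\Z$ along \emph{all} of $\partial\Sigma$, so $F(\partial\Sigma)$ lies on a single line in $\C$; the maximum principle then forces $F(\Sigma)$ into that line, and the open mapping theorem makes $F$ constant. This is precisely the observation you yourself record in your third paragraph (``$\text{Im}(G)=0$ along the whole of $\partial\Sigma$''), but you then route through the vanishing of the moduli integral to reach the same conclusion---a step that is redundant once $\text{Im}(G)=0$ on $\partial\Sigma$ and $G$ is holomorphic.

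The practical difference is in hypotheses: your route invokes Proposition~\ref{specialLagminimizer}, which requires the positivity condition, whereas the paper's direct open-mapping argument does not. So the paper's version is strictly lighter on assumptions; your first two paragraphs (the $\mathcal{S}_{L'}(L)+\mathcal{S}_L(L')=0$ antisymmetry trick) are a pleasant piece of structure but unnecessary here. One small point from the paper worth incorporating: it singles out a third class of curves---constant maps into $\text{supp}(L)\cap\text{supp}(L')$---noting that these carry $n$ independent first-order deformations (so are not ``bad'' in the sense of the dichotomy) yet contribute nothing to the $(n{+}1)$-current $\mathcal{C}$ for dimensional reasons. This clarifies why overlapping support does not obstruct the argument.
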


\begin{proof}
(Heuristic) In general, we expect there is an $(n+1)$-dimensional rectifiable current $\mathcal{C}$ with $\partial \mathcal{C}=L-L'$ constructed from universal families of holomorphic curves with boundary on $L$ and $L'$. 
The holomorphic curves $u:\Sigma\to X$ can appear in three types:
\begin{itemize}
	\item Automatically transverse holomorphic curves: there exist first order deformations $v_1,\ldots, v_{n-1}$ such that $dF=\Omega(\cdot,v_1,\ldots v_{n-1})$ does not vanish identically as a 1-form on $\Sigma$ (\cf section \ref{Automatictransversalitypositivity}). 
	
	\item Nonconstant holomorphic curves, which are not automatically transverse. We expect their boundary evaluation to be contained in a Hausdorff dimension $\leq n-1$ subset of $\text{supp}(L)\cup \text{supp}(L')$ (\cf section \ref{Automatictransversalitypositivity}).
	
	\item Constant holomorphic maps $u:\Sigma\to \text{supp}(L)\cap \text{supp}(L')$. These would only arise if $L$ and $L'$ have some overlapping support, so did not appear in our previous discussions. For dimensional reasons, these cannot contribute to the $(n+1)$-dimensional current $\mathcal{C}$.

	The key difference from the second case is that at interior points of $\text{supp}(L)\cap \text{supp}(L')$, there are $n$ linearly independent first order deformations, such that $v_1,\ldots v_n$ span $TL$ upon boundary evaluation. This behaviour can only be compatible with $dF=0$ for constant curves.
\end{itemize}

We now impose the special Lagrangian condition, and consider the automatically transverse case.
Along $\partial \Sigma$, the counterclockwise directional derivative of $F$ has argument equal to the constant Lagrangian angle $\hat{\theta}$ modulo $\pi\Z$. As such we expect $F(\partial \Sigma)$ to be contained in a line segment with incline angle $\hat{\theta}$. By the maximum principle on the holomorphic function $F$, the entire $F(\Sigma)\subset \C$ is contained in a line segment. However, the open mapping theorem in complex analysis then implies $F$ is constant, which \emph{rules out the automatically transverse curves}.

Now the only contributions to $\mathcal{C}$ would come from the nonconstant, not automatically transverse curves. This forces $\text{supp}(\partial \mathcal{C})\cap  (\text{supp}(L)\cup\text{supp}(L'))$ to be contained in a Hausdorff $(n-1)$-dimensional subset. However $\partial \mathcal{C}=L-L'$ as integral currents, so the $n$-dimensional current $L-L'$ has support dimension $\leq n-1$, which forces it to vanish. This shows $L=L'$. 
\end{proof}

\begin{Question}
When can we say furthermore that the formal brane structures on $L=L'$ are related by some gauge equivalence?
\end{Question}

\subsubsection{Special Lagrangians are minimizers}\label{specialLagminimizerrevisited}

We now revisit Prop. \ref{specialLagminimizer}. Our goal is to suggest that the automatic transversality, positivity condition, and even smoothness assumptions can be removed in Prop. \ref{specialLagminimizer}, at the cost of assuming the entire force of the Thomas-Yau conjecture, under the setting of this chapter.

\begin{conj}
If there exists a special Lagrangian $L$ in the class $\mathcal{L}$, then it is a minimizer of the Solomon functional. 
\end{conj}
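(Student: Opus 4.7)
The plan is to deduce this conjecture by chaining together the other conjectural ingredients of this chapter, rather than by a direct moduli space integral argument along the lines of Prop.~\ref{specialLagminimizer}, which would force us to reinstate automatic transversality and the positivity condition. First, the existence of a special Lagrangian $L\in\mathcal{L}$ of phase $\hat\theta$ together with the Floer theoretic obstruction (Conj.~\ref{Floertheoreticobstructionconj}) implies that the $D^bFuk(X)$ class of $L_0$ is Thomas-Yau semistable in the sense of Def.~\ref{ThomasYausemistability}: a destabilizing distinguished triangle $L_1\to L\to L_2\to L_1[1]$ with $\hat\theta_1>\hat\theta_2$ would force $\sup_L\theta_L\geq \hat\theta_1>\hat\theta>\hat\theta_2\geq \inf_L\theta_L$, contradicting the constancy $\theta_L\equiv\hat\theta$.

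Second, once Thomas-Yau semistability is in hand, the implication $(2)\Rightarrow (4)$ from Conj.~\ref{ThomasYauexistence} yields a Solomon functional minimizer $L^\star\in\mathcal{L}$; by Conj.~\ref{specialLagminimizerconj}, $L^\star$ is itself a special Lagrangian integral current of phase $\hat\theta$. To conclude $\mathcal{S}(L)=\mathcal{S}(L^\star)=\inf_{\mathcal{L}}\mathcal{S}$, one invokes the weak Thomas-Yau uniqueness (Conj.~\ref{ThomasYauuniquenessweak}): two special Lagrangian currents of the same phase representing the same (enlarged) derived Fukaya class must coincide, whence $L=L^\star$ as currents and $\mathcal{S}(L)=\mathcal{S}(L^\star)$. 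Conceptually, the Solomon functional on $\mathcal{L}$ has at most one value at which it can be attained on a special Lagrangian, namely $\mathcal{S}(L)$; the variational program delivers a minimizer which must attain this value.

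The hard part will be the brane-structural issue highlighted in Remark~\ref{Sequivalence}: we need to know that the minimizer $L^\star$, extracted as a varifold/current limit of a minimizing sequence, carries a formal brane structure in the same $D^bFuk(X)$ class as $L_0$, or at least in a class $S$-equivalent to it. This depends on the still missing precompactness statement for brane structures of Question~\ref{formalbranecompactness}, and on a version of Conj.~\ref{ThomasYauuniquenessweak} that operates at the level of $S$-equivalence classes rather than strict isomorphism in $D^bFuk(X)$. A secondary but genuine difficulty is that Conj.~\ref{specialLagminimizerconj}, invoked to see that $L^\star$ is special Lagrangian, itself relies on constructing enough Lagrangian competitors in $\mathcal{L}$ (e.g.\ via Hamiltonian variations, which in turn requires a strict quantitative almost calibrated bound on $L^\star$).

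As a fallback, in situations where $L$ itself is sufficiently smooth and the bordism current between $L$ and an arbitrary $L'\in\mathcal{L}$ can be arranged to satisfy the automatic transversality and positivity conditions of sections~\ref{Automatictransversalitypositivity}--\ref{Positivitycondition}, one recovers the conclusion directly from Prop.~\ref{specialLagminimizer}, bypassing the weak uniqueness step. The interest of the conjectural framing above is precisely that it aims to remove these hypotheses by absorbing them into the overall Thomas-Yau package, so that special Lagrangians are seen to be global minimizers of $\mathcal{S}$ on $\mathcal{L}$ as a consequence of, rather than independently from, the existence and uniqueness theory.
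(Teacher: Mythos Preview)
Your proposal is correct and follows essentially the same route as the paper's own heuristic argument: special Lagrangian $\Rightarrow$ Thomas--Yau semistability via Conj.~\ref{Floertheoreticobstructionconj}, then existence of a minimizer via Conj.~\ref{ThomasYauexistence}, then the minimizer is special Lagrangian via Conj.~\ref{specialLagminimizerconj}, and finally $L=L'$ as currents by weak Thomas--Yau uniqueness (Conj.~\ref{ThomasYauuniquenessweak}). Your additional commentary on the brane-structural caveats from Remark~\ref{Sequivalence} and the fallback to Prop.~\ref{specialLagminimizer} is accurate and goes a bit beyond the paper's terse heuristic, but the core logical chain is identical.
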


\begin{proof}
(Heuristic)
The existence of a special Lagrangian representative should imply Thomas-Yau semistability (\cf Conjecture \ref{Floertheoreticobstructionconj}). By the Thomas-Yau existence conjecture \ref{ThomasYauexistence} this implies the Solomon functional has a minimizer $L'$, which must be a special Lagrangian. Then the Thomas-Yau uniqueness conjecture \ref{ThomasYauuniquenessweak} implies $L=L'$ as currents.
\end{proof}

\subsection{Comparison with Joyce's LMCF program}

We have already made extensive comparisons between the variational approach and Joyce's LMCF program, but it may help to summarize a few highlights.

\begin{itemize}
\item  Joyce's program is much more ambitious in that it tackles the entire derived Fukaya category, not just the almost calibrated Lagrangians. We feel the quantitative almost calibratedness is so pervasively used in the variational approach that it cannot be removed. Dropping the almost calibratedness will give rise to significantly more difficulties in Joyce's program: the collapsing of zero objects can then happen, and the Solomon functional no longer needs to decrease. Neves's example of finite time singularity \cite{Neves2} is a concrete manifestation of the difficulty. The almost calibrated condition is also natural from the viewpoint of the continuity method (\cf section \ref{Continuitymethod}), which deals with special Lagrangians inside varying ambient almost Calabi-Yau structures.

\item  Joyce does not specify the Bridgeland stability in a priori Floer theoretic terms. An a priori guess on the nature of the stability condition is central to the variational method. Even though our picture is largely conjectural, it seems to be the most precise description hitherto of how stability condition comes into the existence questions of special Lagrangians.

\item  Joyce primarily focuses on compact Calabi-Yaus, and mentions the exact case only as an easier analogue. 
We have focused on the exact case, although we feel some parts of our picture may extend to compact Calabi-Yaus, if one is prepared to overcome (even more) significant Floer theoretic technical hurdles. However, we do not know what would replace the a priori estimates on the Lagrangian potentials, and notably the potential clustering condition.

\item Joyce's LMCF involves objects with a priori higher regularity, even though its infinite time convergence behaviour may well require understanding weak regularity Lagrangians. The variational method requires working with varifold/current like objects throughout.

\item  Joyce's LMCF needs to make essential use of genericity conditions. This in particular requires extremely precise classification of all possible generic singularities in order to perform surgeries, a task that becomes overwhelmingly difficult for complex dimension $\geq 3$. Our variational program is less sensitive to such arguments. On the other hand, we still potentially need to understand some generic singularities, so that the class $\mathcal{L}$ contains enough competitors, to enable the proof of the $L^p$-smoothing property for some $p\geq 1$, and Conjecture \ref{specialLagminimizerconj}.


\item Although time and again we appealed to Joyce's LMCF to heuristically justify certain claims, it is only because we lack other ways of constructing Lagrangian competitors with sufficient control, and the basic logical framework of the variational approach is independent of the LMCF. It seems desirable (on account of the extraordinary difficulty of Joyce's program) to keep this logical independence manifest in the program to rigorize our variational proposal.

\item Joyce's program has a number of highly nontrivial categorical predictions discussed in section \ref{TowardsBridgeland}, such as the idempotent closedness of $D^b Fuk(X)$. Even if these predictions turn out to be false, it would not affect the validity of the variational method.


\end{itemize}

\section{Appendix on the Fukaya category}

This appendix is a brief reminder about Floer theory in the exact setting. There exist both excellent surveys on the  Fukaya category of embedded Lagrangians, such as Auroux \cite{Aurouxsurvey} and Smith \cite{Smithsurvey}, and many in depth treatments such as Seidel \cite{Seidelbook}, Akaho-Joyce \cite{JoyceAkaho} and FOOO \cite{FOOO}. Our very limited goal is to recall some key notions prevalent in the main text, and explain some basic intuitions, but we will not get into the more technical aspects, such as the details of perturbation schemes, which are treated carefully in these standard references.

For the Thomas-Yau-Joyce program, one also needs to incorporate immersed Lagrangians. The canonical reference is Akaho-Joyce \cite{JoyceAkaho} for a treatment using virtual techniques, and Woodward et al \cite{Woodward1}\cite{Woodward2} which avoids virtual counting by using stabilising divisors. The exact assumption affords some technical simplifications, for which a sketchy account is found in \cite[section 4.1]{Joyceconj}. Another technical treatment in the exact setting, not allowing certain teardrop curves, is in Alston-Bao \cite{AlstonBao}.

\subsection{Fukaya category for embedded exact Lagrangians}\label{Fukayaembedded}

\subsubsection*{Floer cohomology and $A_\infty$-structure with mod 2 coefficients}

Let $(X,\omega,J)$ be a Stein manifold, namely a K\"ahler manifold with $\omega=\sqrt{-1}\partial \bar{\partial} \phi$ for a plurisubharmonic exhaustion function $\phi$. In particular, $X$ is an exact symplectic manifold, meaning $\omega=d\lambda$, where $\lambda$ is the Liouville 1-form. All almost complex structure perturbations are assumed to agree with the fixed complex structure outside some compact set.


 Given two transversely intersecting exact embedded\footnote{In our terminology, embedded Lagrangians are always connected, while immersed Lagrangians can have disconnected domains.} Lagrangians $L, L'$ with potential $f_L, f_{L'}$, namely $df_L=\lambda|_L$ and $df_{L'}=\lambda|_{L'}$, and some extra brane data, one can associate an algebraic invariant called the \textbf{Floer cohomology}.
A general feature of Floer theory, is that the constructions depend on many auxiliary choices, but the invariants depend on only a small number of data, and should always be invariant under global Hamiltonian isotopies.

We assume $c_1(TX)=0$ and let $\Omega$ be a complex volume form on $TX$.
We shall always assume the Lagrangians to be \textbf{graded}, namely the phase function $\theta=\arg \Omega|_L: L\to S^1$ lifts to a real valued function. The grading is part of the brane data. 
Working first with $\Z_2$ coefficients, the Floer cohomology can be defined as the cohomology of a complex $(CF^*(L,L'), d)$. Here $CF^*(L,L')$ is generated by the transverse intersection points  $p\in L\cap L'$, whose \textbf{degrees} are given by
\begin{equation}\label{Floerdegree}
\mu_{L,L'}(p)= \frac{1}{\pi}( \sum \phi_i +\theta_L(p)- \theta_{L'}(p)   ) 
\end{equation}
where we put the tangent planes of $L,L'$ inside $T_pX\simeq \C^n$ into the standard form
\[
T_p L= \R^n \subset \C^n, \quad T_p L'= (e^{i\phi_1}, \ldots e^{i\phi_n})\R^n\subset \C^n, \quad 0<\phi_i<\pi.
\]
Notice if we reverse the role of $L, L'$, then we can regard $p\in CF^*(L', L)$, but this affects the degree by $\mu_{L,L'}(p)= n- \mu_{L',L}(p)$.
For alternative formulations of the degree in terms of Lagrangian Grassmannians, see \cite{Seideldegree}.

\begin{rmk}
The degree convention $\mu_{L,L'}$ here follows Joyce \cite{Joyceconj}, which corresponds to $\mu_{L',L}$ in  \cite{Seidelbook}\cite{Aurouxsurvey}\cite{Smithsurvey}. The advantage of this convention is its compatibility with the central charge formula $Z(L)=\int_L\Omega$ in the Bridgeland stability. If instead one uses the convention of \cite{Seidelbook}\cite{Aurouxsurvey}\cite{Smithsurvey}, then adding $\pi$ to the Lagrangian phase would correspond to the shift $[-1]$ in $D^bFuk(X)$, so the central charge would be $Z(L)=\overline{\int_L\Omega}$.
\end{rmk}

\begin{rmk}\label{degreealmostcalibrated}
For almost calibrated Lagrangians $-\pi<\theta_L-\theta_{L'}< \pi$, whence $-1< \mu_{L,L'}(p)< n+1$. Since the degrees are always integers, we must have $0\leq \mu_{L,L'}(p)\leq n$.  
\end{rmk}

We consider the moduli space $\mathcal{M}(p,q; J,[u])$ of finite energy holomorphic strips with ends at $p,q$ and boundary on $L,L'$, in the homotopy class $[u]\in \pi_2(X, L\cup L')$:
\[
\begin{split}
& u: \Sigma=\R\times [0,1]\to X,  \quad u(s,0)\in L, \quad u(s,1)\in L', \quad \lim_{s\to -\infty} u=p, \quad \lim_{s\to +\infty} u=q,
\\
& \partial_s u+ J(t,u) \partial_t u=0,\quad E(u)=\int u^*\omega=\iint |\partial u/\partial s|^2 dsdt<\infty.
\end{split}
\]
Index theory of the Cauchy-Riemann operator with Lagrangian boundary conditions implies this moduli space has virtual dimension $\deg q-\deg p$. The holomorphic strip equation is invariant under domain translation in the $\R$ direction. Using  generic domain dependent almost complex structures which are fixed outside a large compact set, one can achieve suitable \textbf{transversality} on the moduli spaces, and in particular $\mathcal{M}(p,q; J,[u])/\R$ are isolated points for $\deg q-\deg p=1$. A key advantage of the exact setting is that the energy can be computed a priori by the topological formula:
\begin{equation}\label{topologicalenergy}
\int_\Sigma u^*\omega= \int_{\partial\Sigma} \lambda= \int_{-\infty}^\infty df_L - \int_{-\infty}^\infty df_{L'}= (f_L- f_{L'})(q)- (f_L-f_{L'})(p).
\end{equation}
By \textbf{Gromov compactness}, the number of isolated points is finite, and only finitely many homotopy classes $[u]$ admit holomorphic strips. To save some notations, we sometimes write $\mathcal{M}(p,q)=\bigcup_{[u]\in \pi_2(M,L\cup L')} \mathcal{M}(p,q; J,[u])$.

\begin{rmk}
The role of convexity assumptions at the infinity of $X$ (such as the existence of a plurisubharmonic exhaustion function) is to ensure that for a finite given collection of Lagrangians, all holomorphic curves remain inside a fixed bounded region. This is needed to apply Gromov compactness.
\end{rmk}

\begin{rmk}
More generally, one can add a Hamiltonian term in the Cauchy-Riemann equation, and replace transverse intersection points by Hamiltonian chords. The Cauchy-Riemann equation then gets modified to the \textbf{Floer equation}
\begin{equation}\label{Floereqn}
\partial_s u+ J(t,u) (\partial_t u- X_H)=0,
\end{equation}
where $X_H$ is a Hamiltonian vector field. This perturbation is not needed for Floer theoretic transversality statements if $L$ and $L'$ are already transverse, but is an essential ingredient in showing the Hamiltonian invariance of Floer cohomology.
\end{rmk}

\begin{rmk}
We generally distinguish between the $s\to -\infty$ end, and the $s\to +\infty$ end. The main difference is the ordering of the Lagrangians at the intersection point. Here we are following the Joyce convention \cite{Joyceconj}, which is opposite to Auroux \cite{Aurouxsurvey}. This is dictated by compatibility with the degree formula (\ref{Floerdegree}). Similarly, later the $A_\infty$ product also requires the Lagrangian boundaries to be ordered clockwise, as opposed to the counterclockwise convention in Auroux \cite{Aurouxsurvey}.
\end{rmk}

The \textbf{Floer differential} $d: CF^k(L,L')\to CF^{k+1}(L,L')$ is 
$
d p= \sum n_{p,q} q,
$
where $n_{p,q}$ is the mod 2 count of
$\mathcal{M}(p,q)/\R$. The key fact of Floer theory is that $d^2=0$. For this, one considers the holomorphic strips between $p,r$ with $\deg r-\deg p=2$, modulo the translation invariance $\R$ direction. This moduli space $\mathcal{M}(p,r)/\R$ is one-dimensional. Generally in Floer theory, the boundary of the compactified moduli spaces comes from \textbf{disc breaking} and \textbf{disc and sphere bubbling}. The latter is ruled out for energy reasons by the exactness assumption, and the former gives
\[
\partial (\overline{ \mathcal{M}(p,r)/\R})= \bigcup_q \mathcal{M}(p,q)/\R \times \mathcal{M}(q,r)/\R.
\]
In terms of mod 2 counts, $\sum_q n_{p,q} n_{q,r}=0$, namely $d^2=0$. This fact allows one to take the cohomology, which is $HF^*(L,L')$. Although suppressed in this notation, the homotopy classes of discs are additive under disc breaking. This fact allows one to introduce some extra weighting factors involving energy and holonomy of local systems.

The general strategy to show the Floer cohomology is independent of the choices of almost complex structures and Hamiltonian perturbations, is to consider \textbf{continuity equations}, whose counts define chain maps at the level of $CF^*$, so descend to comparison maps between Floer cohomologies defined by different auxiliary data (\cf Auroux \cite[section 1.5]{Auroux}).

Floer cohomology admits rich algebraic structures, but the deeper structure is better set up at the chain level $CF^*(L,L')$. We temporarily avoid the issue of signs and self Floer cohomology.  
The Fukaya category can be seen as the generalization of Floer cohomology in two directions:
\begin{itemize}
\item We allow the interplay of many (transverse) Lagrangians. Each Lagrangian is labelled by an object in the Fukaya category. This labelling is the main difference between an algebra and a category.

\item The holomorphic strips are replaced by \textbf{holomorphic polygons}, with
boundary segments mapped to a clockwise ordered sequence of at least three Lagrangians $L_0, L_1,\ldots L_k$, and clockwise ordered boundary marked points $x_0, x_1,\ldots x_k$ mapped to the Lagrangian intersection points $q\in L_0\cap L_k, p_1\in L_0\cap L_1,\ldots p_k\in L_{k-1}\cap L_k$. We distinguish $x_0$ as the output, and regard $x_1,\ldots x_k$ as inputs. The marked points $x_0,x_1,x_2$ on the boundary of the domain disc are 
fixed, while the other $k-2$ marked points are
allowed to move freely preserving their cyclic ordering.

The  moduli spaces of such polygons (with suitably domain dependent perturbations) are denoted as $\mathcal{M}(p_1,\ldots p_k, q)$.
Moduli spaces with at least three marked points do not have the domain translation invariance, so there is no need to divide by $\R$.
\end{itemize} 

\begin{rmk}
From the viewpoint of gluing theory, it is convenient to regard the boundary marked points of the holomorphic polygons as punctures, where the Riemann surface structure is locally modelled on
strip like ends. The moduli of abstract holomorphic polygons with $k+1$ marked points has a compactification known as the Stasheff associahedron $\overline{\mathcal{R}}_{k+1}$. On account of the geometric picture of polygons in $\R^2$, we often refer to the strip like ends as corners.
\end{rmk}

The energy formula (\ref{topologicalenergy}) generalizes to the holomorphic polygon case:
\begin{equation}\label{topologicalenergy2}
\int_{\Sigma} u^*\omega=(f_{L_0}-f_{L_k})(q)- \sum_1^k (f_{L_{i-1}}-f_{L_{i}})(p_i) .
\end{equation}
The virtual dimension formula is
\begin{equation}
\text{vdim} \mathcal{M}(p_1,\ldots p_k, q)= \deg q-\sum_1^k \deg p_i +k-2.
\end{equation}
Here $\deg q-\sum_1^k \deg p_i$ comes from the index theory of the Cauchy-Riemann operator, and $k-2$ comes from the freedom to move the marked points on the boundary. Under suitable domain dependent perturbation schemes, in this exact setting one can ensure transversality, so that the moduli space is smooth. For setting up the Fukaya category, the zero dimensional moduli spaces are particularly important, since counting points give rise to operations, and 1-dimensional moduli spaces are important for producing $A_\infty$-relations. In the main text, we have also given considerable attention to $(n-1)$-dimensional moduli spaces, since these are relevant for producing bordism currents.

Within the exact setting, disc and sphere bubbling is impossible. After compactification,
the moduli space of holomorphic polygons can have two kinds of boundaries, due to two kinds of disc breaking:
\begin{itemize}
\item (Disc breaking at the corners) The disc may break at $L_i\cap L_{i+1}$. The polygons near the breaking limit are obtained from gluing polygons with corners mapped to $p_1, \ldots, p_{l-1}, r, p_{l+1},\ldots, q$, and strips with boundary on $L_{l-1}, L_l$ and two ends mapped to $p_l, r$. (Of course, disc breaking can also happen at the outgoing corner $q$.)

\item (Disc splitting at the edges) 
When there are at least 4 Lagrangians, the domain disc can split into two discs with $l+1\geq 3$ and $k+1-l\geq  3$ marked points. The edges of one disc map to $L_0, \ldots L_j, L_{j+l}, \ldots L_k$, with cyclically marked points mapping to $p_1,\ldots p_j, r\in L_j\cap L_{l+j}, p_{l+j+1},\ldots p_k$ and $q$.  The edges of the other disc map to $L_j, \ldots L_{l+j}$, with marked points mapping to $p_{j+1},\ldots p_{l+j}$ and $r\in L_j\cap L_{l+j}$.


\end{itemize}

When disc breaking and disc splitting are taken into account, the moduli spaces can be compactified into $\overline{\mathcal{M}}(p_1,\ldots p_k,q)$. We then have
\begin{equation}\label{moduliboundary}
\begin{split}
\partial \overline{\mathcal{M}}(p_1,\ldots p_k,q) =&\bigcup \overline{\mathcal{M}(p_l,r)/\R}\times \overline{\mathcal{M}}(p_1,\ldots p_{l-1}, r, p_{l+1}, \ldots p_k, q)\\ & \cup \bigcup \overline{\mathcal{M}}(p_1,\ldots, p_j, r, p_{l+j+1} ,\ldots p_k, q)\times \overline{\mathcal{M}}(p_{j+1}, \ldots p_{l+j}, r).
\end{split}
\end{equation}
In particular, the (virtual) dimensions of both sides are equal, which constrains $\deg r$.

\begin{rmk}
More generally, disc breaking and disc splitting can happen in a bubble tree fashion. Such multiple splitting/breaking do not concern us, because under sufficient transversality conditions, they occur only in codimension at least two  in the moduli space. To set up Fukaya categories in the exact setting, only zero and one dimensional moduli spaces are needed, so the multiple bubble trees do not occur. When we make use of higher dimensional moduli spaces in the main text, the bubble trees do occur, but the codimension two condition means the deeper boundary strata do not contribute to the boundary of the bordism current, in the sense of currents. 	
\end{rmk}

The \textbf{$A_\infty$-structure} is the algebraization of the disc breaking/splitting phenomenon. It consists of multilinear maps
\[
m_k: CF^*(L_{k-1}, L_k)\otimes\ldots  CF^*(L_1, L_2)\otimes CF^*(L_0, L_1)\to CF^*(L_0, L_k)[ 2-k ]
\]
satisfying the $A_\infty$-relation
\[
\sum_{l=1}^k \sum_{j=0}^{k-l} \pm m_{k+1-l} (p_k, \ldots , p_{j+l+1}, m_l(p_{j+l}, \ldots p_{j+1}), p_j,\ldots p_1)=0.
\]
Here $m_1$ is the degree one Floer differential $d$, and for $k\geq 2$ the operation $m_k$ is defined by counting holomorphic polygons in moduli spaces of virtual dimension zero,
\[
m_k(p_k,\ldots, p_1)=\sum_q \# \mathcal{M}(p_1,\ldots p_k, q) q.
\]
Virtual dimension zero requires $\deg q=\sum_1^k \deg p_i+ 2-k$, which explains the degree of $m_k$. The $A_\infty$-relation is the direct translation of (\ref{moduliboundary}), with disc breaking at corners contributing the $m_1$ terms, and disc splitting contributing the other terms.

The first few $A_\infty$-relations have clear geometric meanings:
\begin{itemize}
\item The Floer differential squares to zero.

\item The Floer product $m_2$ satisfies the Leibniz rule. As such $m_2$ descends to a product structure on the mod 2 coefficient Floer cohomology $HF^*(L_1,L_2)\otimes HF^*(L_0,L_1)\to HF^*(L_0, L_2)$.

\item The $m_2$ is associative up to a homotopy given by the $m_3$ terms. In particular the Floer product is associative on cohomology.
\end{itemize}

The higher structures naturally lead to the Fukaya category of embedded exact Lagrangians. This requires some discussion on signs, brane structures, and self Floer cohomologies.

\subsubsection*{Self Floer cohomology}

It is desirable to take Floer cohomology of $L$ with itself. 
One major feature of $HF^*(L,L)$ is that it contains \textbf{units}, at least at cohomological level.

One challenge to implement \textbf{self Floer cohomology} is that $L$ is not transverse to itself, so the Cauchy-Riemann equation needs perturbation. There are many frameworks to address this problem, and one idea dating back to Floer is to use the Hamiltonian invariance of Floer cohomology, to think of self Floer cohomology via $HF^*(L,L)\simeq HF^*(L,\phi_{\epsilon H}(L))$ where $\phi_{\epsilon H}$ is the time one flow of the small generic Hamiltonian $\epsilon H$ \cite[section 1.6]{Aurouxsurvey}. For $\epsilon\ll 1$, the Lagrangian $\phi_{\epsilon H}(L)$ can be identified as a graph over $L$ inside $T^*L$, the transverse intersection $L\cap \phi_{\epsilon H}(L)$ are the critical points of $H|_L$, and  a suitable setup of the Floer trajectories (\ref{Floereqn}) can be identified as Morse flowlines of $H$. Thus $HF^*(L,L)$ is isomorphic to the Morse cohomology of $L$, so $HF^*(L,L)\simeq H^*(L)$. In the exact case, the ring structure on $HF^*(L,L)$ defined from perturbed holomorphic triangles agrees with the cup product ring structure on $H^*(L)$. 
The unit can be represented by the Morse generator of $H^0(L)$, or more non-perturbatively via the Piunikhin-Salamon-Schwarz map.

It takes some effort to promote the self Floer cohomology to the Fukaya category framework, and ensure the consistency in the perturbation schemes (\cf Auroux \cite[section 2.1]{Aurouxsurvey} for a sketch and Seidel \cite{Seidelbook} for details). In applications it is often more convenient to avoid Hamiltonian perturbations as much as possible.

\begin{eg}(Floer products involving the identity)\label{FLoerproduct}
We wish to heuristically explain a special case relevant to Joyce-Imagi-Santos (\cf section \ref{Lawlor}), concerning the geometric interpretation of the Floer product mod 2
\[
HF^0(L',L)\otimes HF^0(L,L')\to HF^0(L,L).
\]
Here $L, L'$ are assumed to be transverse.
Hamiltonian invariance means we can alternatively think of
\[
HF^0(L', \phi_{\epsilon H} (L))\otimes HF^0(L,L')\to HF^0(L, \phi_{\epsilon H}( L)).
\]
This is defined by the count of holomorphic triangles with input corners at $CF^0(L, L')$, $CF^0(L', \phi_{\epsilon H} (L))$, and an output corner at $CF^0(L, \phi_{\epsilon H}( L))$. We may assume the Morse function $H|_L$ has only one maximum point $r$ on $L$, which represents the unit of $HF^*(L,L)$. When $\epsilon\to 0$, then $L$ and $\phi_{\epsilon H}(L)$ coincide, and the holomorphic triangles become holomorphic strips with ends at $CF^0(L, L')$, $CF^0(L', L)$ (alternatively seen as a degree $n$ output) and passing through the point $r\in L$. This last incidence condition is independent of the position of $r$ on $L$, since we can choose $H$ to have its maximum at any generic prescribed point. Notice in this strip interpretation, there is no longer any Hamiltonian perturbation. This interpretation featured in Lemma \ref{JoyceImagicountinglemma}.	
\end{eg}

\subsubsection*{Sign issues and brane structures}

To go beyond mod 2 coefficients, we need to orient moduli spaces. Good references can be found in Seidel's book \cite{Seidelbook} and Abouzaid \cite[Appendix]{Abouzaidloop}. All Lagrangians are assumed to be graded, with second Stiefel-Whitney class equal to the restriction of a fixed class in $H^2(X,\Z/2)$, and we equip the Lagrangians with \textbf{relative spin structures}. At any transverse Lagrangian intersection point $p\in L_+\cap L_-$, there is a unique up to homotopy path $\Lambda_p$ of Lagrangian planes  in $T_pX\simeq \C^n$ with graded lift interpolating $TL_+$ and $TL_-$. We fix a relative spin structure on $\Lambda_p$, compatible with the relative spin structure on $L_\pm$. We can associate a vector space $o_p$ as the determinant line of the Cauchy-Riemann operator $D_p$ on the upper half plane with boundary data $\Lambda_p$. The dual of $o_p$ is denoted $o_p^\vee$, namely $o_p\otimes o_p^\vee\simeq\R$ canonically. The orientation line $|o_y|$ is the free abelian group generated by the two possible orientations of $o_y$ with the relation that their sum vanishes. Furthermore, we equip the Lagrangians $L$ with (rank one) \textbf{local systems} $E$, and write the Floer cochain complex as the graded vector space
\[
CF^*(L, L')= \oplus_{p\in L\cap L'} \text{Hom}(E|_p, E'|_p  )\otimes |o_p|.
\]

\begin{rmk}\label{localsystem}
There are some variants on the coefficient ring/field of the local system. The simplest case is the trivial local system, in which case we simply delete the $\Hom$ factor.
Other popular choices have parallel transport in $\mathbb{Q}^*, \R^*, \C^*$, or the units in the Novikov ring.\footnote{The $U(1)$-local systems are popular in the physics literature, but appear rarely in Floer theory.} Different choices could in principle lead to slightly different versions of the derived Fukaya category. The smaller the coefficient ring/field, the more stringent is the notion of derived isomorphism of objects. For the purpose of extending the Solomon functional (\cf section \ref{Solomonfunctionalextension}) to be real valued, we require all coefficients to be at least contained in $\R$, so we will usually work simultaneously with $\R$, $\Q$ and $\Z$ local systems. On the other hand, it is claimed in \cite[Remark 4.5]{Woodward1} that in the exact setting the immersed Fukaya algebras can be defined \emph{over the integers}. The specific advantage of working over integers, as discussed in the main text, is primarily that the bordism current $\mathcal{C}$ between Lagrangians is then an \emph{integral current}, rather than $\R$-linear combinations of integral currents.



\end{rmk}

Given a holomorphic polygon $u: \Sigma\to X$, with inputs $x_1,\ldots x_k$ and output $x_0$ mapping to $p_1,\ldots p_k$ and $q$, the det line of the linearized Cauchy-Riemann operator $D_u$ can be computed from gluing kernel and cokernels:
\[
\det (D_u\# D_{p_k}\ldots \# D_{p_1} )\simeq \det (D_u)\otimes o_{p_k}\ldots \otimes o_{p_1}.
\]
The role of the \emph{relative spin structure}, is to specify a homotopically unique choice of isotopy between the glued operator $D_u\# D_{x_k}\ldots \# D_{x_1} $ and $D_{x_0}$ (\ie an \emph{isotopy between Lagrangian boundary conditions}), hence a preferred isomorphism 
\[
\det D_u\simeq o_{q}\otimes o_{p_1}^\vee \otimes \ldots o_{p_k}^\vee.
\]
The tangent space of the moduli space of holomorphic polygons $\mathcal{M}(p_1,\ldots p_k, q) $ involves not only the linearized Cauchy-Riemann operator, but also the variation of the complex structure of the domain of the polygon, controlled by the Stasheff associahedron $\overline{\mathcal{R}}_{k+1}$. Denote $\lambda^{top}(V)$ as the top wedge product of a vector space $V$. Then there are preferred isomorphisms depending on the relative spin structure choice
\begin{equation}\label{orientationisomorphism}
\lambda^{top}( T\mathcal{M}(p_1,\ldots p_k, q)  )\simeq \lambda^{top}(\mathcal{R}_{k+1} ) \otimes o_{q}\otimes o_{p_1}^\vee \otimes \ldots o_{p_k}^\vee.
\end{equation}

\begin{rmk}\label{orientationdual}
Fixing an orientation on $L$, then $CF^0(L,L')$ is naturally dual to $CF^n(L',L)$. The local system factor $\Hom(E,E')$ is naturally dual to $\Hom(E',E)$. Given a Lagrangian path $\Lambda_p$ associated to a Lagrangian intersection $p$, the reverse path is also associated with a determinant line bundle, which can be identified with
\[
o_p^\vee \otimes \lambda(TL) ,
\]
since the two half planes with Lagrangian boundaries can be glued to a disk, such that the det line of the Cauchy-Riemann operator is canonically isomorphic to $\lambda(TL)$.

\end{rmk}

For $k\geq 2$, when the moduli spaces are zero dimensional, so carry canonical orientations, then a universal orientation choice for $\lambda^{top}(\mathcal{R}_{k+1} )$ determines an operator 
\[
|c_u|: |o_{p_k}|\otimes \ldots \otimes |o_{p_1}|\to |o_{q}|.
\]
In our degree conventions the corners $x_0, x_1,x_2,\ldots x_k$ on the domain disc boundary are ordered clockwise, so a natural orientation on the Stasheff associahedron can be obtained by fixing $x_0,x_1,x_2$ and allowing the other corner points to move in the clockwise orientation.
The parallel transports along the local systems contribute another factor 
\[
\Hom(E_{k-1}, E_k)|_{p_k}\otimes \ldots \Hom(E_1, E_2)|_{p_2}\otimes \Hom(E_0, E_1)|_{p_1}\to \Hom(E_0, E_k)|_q.
\]
Each pseudoholomorphic polygon contributes to the operation
\[
m_k: 
CF^*(L_{k-1}, L_k)\otimes \ldots CF^*(L_0,L_1)\to CF^*(L_0,L_k)[2-k]
\]
via the tensor product of the orientation factor $|c_u|$ and the local system factor, multiplied by another sign factor depending only on the degrees (\cf \cite[equation (12.24)]{Seidelbook})
\[
(-1)^{\deg p_1+ 2\deg p_2+\ldots k\deg p_k}.
\]


In the case of holomorphic strips, we have a natural isomorphism
\begin{equation}\label{orientationstrip}
T\mathcal{M}(p,q )= \R(-\partial_s)\oplus T( \mathcal{M}(p,q )/\R  ).
\end{equation}
where $-\partial_s$ is the translation vector field pointing towards the input point. When $\mathcal{M}(p,q)/\R$ consists of isolated points, it carries canonical orientations, whence by (\ref{orientationisomorphism}) we obtain 
\[
|c_u|: |o_{p}|\to |o_q|.
\]
The local system parallel transport produces another factor
\[
\Hom(E_0,E_1)|_{p}\to \Hom(E_0, E_1)|_q.
\]
Each pseudoholomorphic strip contributes to the Floer differential 
\[
d: CF^*(L_0,L_1)\to CF^{*+1}(L_0,L_1)
\]
by the product of these two factors. We write
\[
m_1: CF^*(L_0,L_1)\to CF^{*+1}(L_0,L_1),\quad 
m_1= (-1)^{\deg p_1} d.
\]
When the signs and local system weighting factors are taken into account, the $A_\infty$-relation reads
\begin{equation}
\sum_{l=1}^k \sum_{j=0}^{k-l} (-1)^\dagger m_{k+1-l} (p_k, \ldots , p_{j+l+1}, m_l(p_{j+l}, \ldots p_{j+1}), p_j,\ldots p_1)=0.
\end{equation}
where
 $\dagger=j+\deg p_1+\ldots +\deg p_j$. The \textbf{Fukaya category} for the compact embedded Lagrangians comprises of the following data:
\begin{itemize}
	\item The objects are embedded Lagrangians (with additional brane data, such as grading, Lagrangian potential, orientation, relative spin structure, and local system).
	\item  The morphisms $\Hom^*(L,L')$ are the vector spaces $CF^*(L,L')$ (where $L$ can coincide with $L'$).
	
	\item  The $A_\infty$-composition maps are the multilinear maps $m_k$ satisfying the $A_\infty$ relations. 
\end{itemize} 
The Fukaya category is an example of an $A_\infty$-category.


In particular, the Floer differential squares to zero, so we can define the Floer cohomology groups $HF^*(L,L')$ for embedded exact Lagrangian branes. The Floer product on cohomology is given by
\[
[p_2]\circ [p_1] =(-1)^{\deg p_1} m_2(p_2,p_1)
\]
which is associative.

\begin{eg}\label{FLoerproduct2}(Floer products involving the identity, continued)
In the context of Example \ref{FLoerproduct}, the orientation isomorphism (\ref{orientationisomorphism}) for the holomorphic triangle is determined by whether the isotopy of the Lagrangian boundary conditions respects the relative spin structure. Since the relative spin structure on $\phi_{\epsilon H} (L)$ is induced from $L$, this problem is equivalent to the corresponding isotopy problem for the limiting holomophic strip. The holonomy factor of the local systems for the holomorphic triangle, is also reduced to that of the limiting strip.

In the simplest case when we are given closed elements $\alpha\in CF^0(L,L'), \beta\in CF^0(L',L)$ each involving only one intersection point, the local systems are trivial, and only one holomorphic curve contributes to the Floer product, then $\beta\circ \alpha= 1_L\in HF^0(L,L)$ means that for the holomorphic strip from $\alpha$ to $\beta$ passing through a generically chosen point $r\in L$, the Lagrangian boundary condition on the disk obtained by gluing $TL,TL'$ and the two Lagrangian paths at the two strip like ends, can be contracted to constant, respecting the prescribed relative spin structures on $L,L'$ and the two ends. More generally, many intersections points and holomorphic strips may contribute to the Floer product, and $\beta\circ \alpha= 1_L\in HF^0(L,L)$ means a weighted signed count of holomophic strips is equal to one.

Under sufficient transversality assumptions, we can form the $(n-1)$ dimensional moduli space of holomorphic strips from $\alpha$ to $\beta$, and thereby produce an $(n+1)$-dimensional universal family $\mathcal{C}$, as in the main text section \ref{LotayPacinirevisited}. Using the relative spin structures on $L,L'$ and the Lagrangian paths associated with the ends, we use (\ref{orientationisomorphism})(\ref{orientationstrip}) and Remark \ref{orientationdual} to induce a canonical orientation on the moduli space from $\beta\otimes \alpha\in CF^0(L',L)\otimes CF^0(L,L')$. Using the complex orientation on the holomorphic curve $\Sigma$, and inserting an extra minus sign, we obtain an orientation on $\mathcal{C}$. This tricky minus sign accounts for the difference between the counterclockwise orientation of $\partial \Sigma$ compatible with the complex orientation, and the clockwise orientation of $\partial \Sigma$ compatible on the $L$-boundary with the translation vector field $-\partial_s$. Putting everything together, $\beta\circ \alpha= 1_L\in HF^0(L,L)$ means in the sense of weighted counts, that $\partial \mathcal{C}$ passes once through a generic point $r\in L$ in the same orientation as $\lambda(TL)$. In other words, the $L$-boundary evaluation of $\partial \mathcal{C}$ \emph{sweeps out the oriented cycle} $L$.

The same argument says that if $\alpha\circ \beta=1_{L'}\in HF^0(L',L')$, then the moduli space of holomorphic strips from $\beta$ to $\alpha$ produces a universal family $\mathcal{C}'$, whose $L'$-boundary evaluation map sweeps out the oriented cycle $L'$. The subtle point is that due to the reversal of the $\R$-translation vector fields, $\mathcal{C}'$ has the reverse orientation as $\mathcal{C}$. Therefore, the $L'$-boundary evaluation of $\partial \mathcal{C}$ sweeps out the oriented cycle $-L'$ instead of $L'$. Here ends the example.

\end{eg}

\subsubsection*{Twisted complexes, distinguished triangles, derived category}

A fundamental problem of the embedded Fukaya category is that it lacks enough geometric objects. Morally, Fukaya category is a construction that inputs the symplectic geometry of Lagrangian branes, and outputs the representation theory of an $A_\infty$-category. Now the general feature of $A_\infty$-module categories is that one can take cones and idempotent summands, two properties which are useful for classifying such categories, and desirable for mirror symmetry. The problem is that cones and idempotent summands are not obviously represented by embedded Lagrangian objects under the Yoneda embedding. The common solution is to sideline this issue by the formal algebraic construction of twisted complexes and idempotent completions. This is not quite adequate for the Thomas-Yau conjecture. However, we will discuss how the introduction of immersed Lagrangian objects geometrizes the twisted complexes (\cf section \ref{immersedFukaya}). The geometric meaning of idempotents is an open problem.

The formal algebraic constructions are well explained in \cite[section 3]{Aurouxsurvey} and \cite[section 4]{Smithsurvey}, to which we refer the reader for more details. Given objects $L_1,\ldots L_N$ of the Fukaya category $\mathcal{A}$, a \textbf{twisted complex} $(L,b_L)$ consists of
\begin{itemize}
\item The formal shifted direct sum $L=\oplus_1^N L_i[k_i]$ with $k_i\in \Z$ formally keeping track of degrees (the geometric meaning of the shift $[1]$ is to add a constant $\pi$ to the Lagrangian phase, which reverses the orientation of the Lagrangian, with a corresponding twist to the spin structure),

\item and a strictly triangular differential $b_L\in \End(L)$, \ie a collection of maps $b_{ij} \in \Hom^{k_j-k_i+1} (L_i, L_j)$ for $i>j$,\footnote{In most symplectic references such as \cite{Aurouxsurvey} the morphisms $b_{ij}$ go in the opposite direction $i<j$. This just amounts to reversing the ordering of $L_1,\ldots ,L_N$. We find our reversed convention a little more convenient for the Harder-Narasimhan decomposition.}
\end{itemize}
satisfying the equation
\[
\sum_{k\geq 1} m_k(b_L,\ldots,  b_L)=0, \quad \ie
\]
\[
\sum_{k\geq 1} \sum_{ i=i_0>\ldots >i_k=j} m_k(b_{i_{k-1}i_k}, \ldots, b_{i_0 i_1})=0.
\]
Notice the strict triangularity implies the sum is finite. One can define morphisms between these twisted complexes, and assign  $A_\infty$-structures to make twisted complexes into an $A_\infty$-category $Tw \mathcal{A}$, into which $\mathcal{A}$ naturally embeds fully faithfully. Using the $A_\infty$-structure, it makes sense to talk about closed morphisms and cohomologies, similar to the construction of Floer cohomology.

Given twisted complexes $A=(L,b_L), B=(L',b_{L'}) \in Tw\mathcal{A}$, and a closed morphism $f\in \Hom^0( (L,b_L), (L', b_{L'}  ) )$, the abstract \textbf{mapping cone} of $f$ is the twisted complex
\[
\text{Cone}(f)= \left( L[1]\oplus L',  \left( \begin{matrix}
b_L & 0\\
f & b_{L'}
\end{matrix} \right)    \right).
\]
Generally, a mapping cone of $f$ is an object of $Tw \mathcal{A}$ quasi-isomorphic to $\text{Cone}(f)$. This gives rise to a \textbf{distinguished triangle} $A\to B\to \text{Cone}(f)\xrightarrow{ [1]}A$. This illustrates the advantage of introducing twisted complexes: $Tw\mathcal{A}$ is a \textbf{triangulated category}.

The cohomological category of $Tw \mathcal{A}$ is commonly denoted $D^b Fuk(X)$. This has the same objects as $Tw \mathcal{A}$, but the Floer cochain spaces are replaced by their $H^0$, namely we remember the Floer cohomology.

Under the Yoneda embedding, $Tw\mathcal{A}$ embedds into its module category. The idempotent closure $Tw^\pi \mathcal{A}$ is obtained by formally adding the direct summands of the Yoneda image of twisted complexes in $Tw\mathcal{A}$. The cohomological category of $Tw^\pi \mathcal{A}$ is commonly denoted $D^\pi Fuk(X)$. In the variant setting of compact $X$, it is usually $D^\pi Fuk(X)$ instead of $D^b Fuk(X)$ that shows up in mirror symmetry, since the derived category of coherent sheaves is automatically idempotent closed.

\begin{rmk}
Once immersed Lagrangians are admitted as objects of Fukaya categories, the twisted complexes are largely redundant.
Joyce \cite[conjecture 3.6]{Joyceconj} claims that by including immersed and singular Lagrangians with rank one local systems, then $D^b Fuk(X)$ is automatically idempotent closed, so there is no difference between $D^b Fuk(X)$ and $D^\pi Fuk(X)$. However, it is highly nonobvious why direct summands are Yoneda represented by geometric Lagrangian objects,\footnote{There exist some wild speculations, such as incorporating coisotropic branes into the Fukaya category in order to have more geometric objects.} so this claim is regarded by many experts as a weakness of Joyce's proposal. For this reason, in our more restrictive proposal we stick with the more geometric $D^b Fuk(X)$ (including immersed and singular objects, but not formal idempotent summands) in favour of $D^\pi Fuk(X)$, and the idempotent closure problem does not falsify our program.

\end{rmk}






\subsection{Immersed exact Lagrangians}\label{immersedFukaya}

According to Joyce's LMCF program,  immersed Lagrangians are a necessary part of any Fukaya category adequate for the Thomas-Yau conjecture. As far as the author is aware, only immersed Floer cohomology \cite{JoyceAkaho}, rather than  the full categorical framework, has been written down in the literature,  although in the exact setting this is commonly believed to be a relatively routine matter, as sketched in \cite[section 4.1]{Joyceconj}. Our limited goal is to highlight the main difference with the embedded case, namely the issues of \textbf{obstructions and bounding cochains}. Once these two issues are taken into account, what works in the embedded case will also work in the immersed case.

\subsubsection*{Teardrop curves and obstructions}

The assumptions on $(X,\omega)$ are as in the previous section.
Immersed Lagrangians are immersions $\iota: L\to X$ with $\omega|_L=0$, and all self intersections are transverse. The domain of $L$ is allowed to be disconnected, so the union of finitely many transversely intersecting embedded Lagrangians are examples of immersed Lagrangians. Each \textbf{self intersection} point of two local sheets $L_+, L_-$ corresponds to two different points $x_\pm$ on the domain of $L$. It is important to distinguish $x_+$ and $x_-$, because for the boundary of the holomorphic curve to pass through $x_+$ in the clockwise direction means crossing from $L_+$ to $L_-$, and $x_-$ signifies the opposite crossing.

We say $L$ is exact, if there is a function $f_L$ on the domain of $L$, such that $df_L$ agrees with the Liouville 1-form restricted to $L$. For energy reasons, this forbids nontrivial holomorphic disks with boundary on $L$ which never change local sheets at any boundary point. The caveat is that the relative homology class $[\omega]\in H_2(X,L)$ may still be nonzero. The brane structures on $L$ are as in the embedded case. The construction of $CF^*(L,L)$ depends on the approach, but a common feature is that it includes
\[
CF^*_{self}(L,L)=\bigoplus_{\text{self intersection $p$}} CF^*(L_+,L_-)\oplus CF^*(L_-,L_+).
\]
generated by the local system factor $\text{Hom}(E_+, E_-)|_p$ (resp. $\text{Hom}(E_-,E_+)|_p$) tensored with the orientation line. 


The Gromov compactness discussion is largely similar to the embedded case.
A new phenomenon is the \textbf{teardrop curves}, namely the holomorphic curves with boundary on $L$ and a single output corner at a self intersection point $r\in CF^*(L_+,L_-)$. Of particular importance is the case with $\mu_{L_+, L_-}(r)=2$. The number $2$ is intuitively explained by the 2 degrees of freedom of the domain M\"obius transforms fixing the corner point $Aut(D^2,1)$, modulo which such teardrop curves occur in dimension zero moduli spaces.

Now if we attempt to run the usual argument for $d^2=0$ in Floer cohomology, we would consider the moduli space of holomorphic strips between $p,q$ with $\deg q-\deg p=2$, modulo the translation $\R$. However, in addition to the usual strip breaking, the holomorphic strips can also break into a holomorphic triangle with inputs $p,r$ and output $q$, and a teardrop curve with corner at $r$. In summary, teardrop curves with corner at a degree 2  intersection point \textbf{obstruct} Floer cohomology.

The automorphism group $Aut(D^2, 1)$ forbids the na\"ive domain dependent perturbation schemes, which in turn causes transversality problems. In the literature there are two approaches to solve this problem: Joyce and Akaho \cite{JoyceAkaho} use virtual perturbation techniques for bordered Riemann surfaces, while Woodward et al. \cite{Woodward1}\cite{Woodward2} circumvent the virtual perturbations by utilizing stabilising divisors. Both approaches assign curved $A_\infty$ algebra structures  $(m_0,m_1,\ldots)$ to the Floer cochain spaces $CF^*(L,L)$ of immersed Lagrangians. In the exact setting, the $m_0\in CF_{self}^2(L,L)$ term amounts to a count of teardrop curves with corner at degree 2 self intersection points, with weighting factors coming from the holonomy of the local system.
Since in the main text the emphasis is on the automatic transversality assumption, we shall not dwell on the details of perturbation schemes, but only identify a few simplifications in the exact setting.

\begin{rmk}
The rough idea of Woodward et al. is to introduce interior marked points, constrained to lie on a Donaldson divisor $D$ disjoint from the Lagrangians. The virtual dimension is not affected by these divisor constraints, since 
each interior marked point increases it by 2, while each divisor constraint decreases it by 2.
 One needs to arrange $D$ to be of sufficiently high degree, so that each nontrivial pseudoholomorphic disk with boundary on the Lagrangians has at least one intersection with $D$. On a teardrop curve, imposing the divisor constraint at interior marked points kills the domain automorphisms $Aut(D^2, 1)$, so one can then introduce domain dependent perturbation of almost complex structures compatible with $D$ to achieve sufficient transversality to make sense of counts. The appealing feature of this approach, is that adding marked points does not alter the geometric interpretation of the holomorphic curves, so stays closer to geometry than the virtual approach.

 The framework of Woodward et al. \cite{Woodward1}\cite{Woodward2} is not restricted to exact settings, and works also for compact symplectic manifolds with rational $[\omega]\in H^2(X)$. Producing the Donaldson divisor with the intersection properties is easier if $[\omega]\in H_2(X,L)$ is a rational class, although the methods in \cite[section 3.1]{CharestWoodward} allows one to largely relax this assumption.

In exact manifolds, as mentioned in \cite[Remark 4.5]{Woodward1}, one can avoid the spherical components of the treed disks.  In the exact Lagrangian setting, the only bubbling happens at the self intersection points. These afford significant simplifications to the construction, and allows one to think of the treed disks in \cite{CharestWoodward}\cite{CharestWoodward2} \cite{Woodward1}\cite{Woodward2} in terms of a tree of holomorphic polygons connected at the self intersection points. By avoiding the troublesome sphere bubbles, one can also relax the restriction of moduli spaces of dimension at most one.

\end{rmk}

\begin{rmk}
A very technical aspect of Akaho-Joyce \cite{JoyceAkaho} is that the $A_\infty$ structure is not constructed directly, but through a sequence of approximations involving energy cutoff scales. In the exact setting, the topological energy formula implies a priori energy bounds, so this complication would not arise.
\end{rmk}

\subsubsection*{Cancellation of obstructions}

To make sense of Floer cohomology one needs to cancel the obstructions by introducing \textbf{bounding cochains} $b\in CF_{self}^1(L,L)$, which represents  a formal sum of  $b_p\in \text{Hom}(E_+,E_-)|_p\otimes |o_p|$ associated to degree one intersection points $p\in CF^1(L_+, L_-)$. We require
\begin{itemize}
\item The \textbf{Novikov positivity} condition $f_{L_+}(p)\geq f_{L_-}(p)$ for each of the intersection points appearing in $b$.

\item The \textbf{Mauer-Cartan equation}
\begin{equation}
m_0^b=
m_0+ m_1(b)+ m_2(b, b)+ \ldots =0\in CF_{self}^2(L,L).
\end{equation}
\end{itemize}

Geometrically, the coefficients of $q\in CF_{self}^2(L,L)$ in the $m_k(b,\ldots b)$ term represent the zero dimensional counts of holomorphic polygons with the inputs at the summands $b_p$ of $b$, and the output at $q$, weighted by the holonomy and orientation factors. Using the Novikov positivity requirement of the bounding cochain, the topological energy formula (\ref{topologicalenergy2}) for the polygon then implies
\[
\int_{\Sigma} \omega\leq f_{L_-}(q)- f_{L_+}(q),
\]
where the boundary of $\Sigma$  passes from $L_+$ to $L_-$ at $q$ in the clockwise direction. By Gromov compactness, this uniform energy bound implies there are only  \emph{finitely many}  terms involved in the Mauer-Cartan equation. When such a bounding cochain $b$ exists, we say $(L,b)$ defines an unobstructed Lagrangian brane. In this case, both the Akaho-Joyce and the Woodward-Palmer approaches assign \textbf{self Floer cohomology groups} $HF^*((L,b),(L,b))$, defined as the cohomology of a degree one operator
\[
m_1^b: CF^*(L,L)\to CF^{*+1}(L,L),\quad m_1^b(x)=\sum_{k,k'\geq 0} m_{k+k'+1}(\underbrace{b,\ldots b}_{k'}, x,\underbrace{b,\ldots, b}_{k}).
\]
This cohomology is invariant under global Hamiltonian deformations. Two bounding cochains $b,b'$ on $L$ are said to be \textbf{gauge equivalent}, if there is $h\in CF^0(L,L)$ satisfying the Novikov positivity condition, such that 
\[
b-b'= \sum_{k,k'\geq 0} m_{k+k'+1}(b',\ldots b', h, b,\ldots b).
\]
Gauge equivalent bounding cochains give rise to isomorphic Floer cohomology.

\begin{rmk}
In the embedded case, there are no self intersections, so the Mauer-Cartan equation is vacuous, and the Lagrangian is automatically unobstructed, with zero bounding cochain. The unobstructed condition is not automatic in general for immersed Lagrangians, and a significant aspect of the Joyce program in \cite{Joyceconj} is that unobstructed Lagrangians ought to be better behaved in the LMCF.	
\end{rmk}

Now suppose $(L,b)$ and $(L',b')$ are two unobstructed Lagrangian branes, intersecting transversally avoiding the self intersections of $L$ and $L'$. Then we can define the Floer cohomology $HF^*((L,b),(L',b'))$. The Floer cochain space $CF^*(L,L')$ is the same as in the embedded case, generated by the local system factor tensored with the orientation factor, associated to the transverse intersection points. The Floer differential is
\[
m_1^{b,b'}(p)= \sum_{k,k'\geq 0} m_{k+k'+1}(b',\ldots b', p,b,\ldots b),
\]
where the sum has $k'$ insertions of $b'$, and $k$ insertions of $b$. The coefficient of $q\in CF^{*+1}(L,L')$ are morally defined by the weighted count of holomorphic polygons with boundary marked points mapping to the summands of $b,\ldots p, b',\ldots, q$, arranged in clockwise order. A similar a priori energy bound argument shows the sum is finite.

It is instructive to see why $(m_1^{b,b'})^2=0$. We consider the breaking of one dimensional moduli spaces, associated with $p,r\in CF^*(L,L')$ with $\deg r-\deg p=2$. There are several mechanisms for disc bubbling and disc splittings:
\begin{itemize}
\item The polygon breaks into two parts, connected at a nodal point mapping to some $q\in CF^*(L,L')$ with $\deg q-\deg p=1$. The sum of all such contributions give rise to $\langle m_1^{b,b'}(p), q\rangle \langle m_1^{b,b'}(q), r\rangle$, and summing over $q, r$ produces $(m_1^{b,b'})^2(p)$.

\item The polygon bubbles off a teardrop curve at a self intersection point $q$ of degree 2 on either $L$ or $L'$.

\item The polygon splits into two parts, connected at a node mapping to a degree 2 self intersection point $q$ on either $L$ or $L'$.

\end{itemize}

The combined effect of the last two contributions, is a sum of the weighted counts of polygons with boundary mapping to $b,b,\ldots p, b',\ldots q, b'\ldots r$ multiplied by the coefficient of $q$ in $m_0^{b'}=m_0+m_1(b')+\ldots\in CF^2_{self}(L',L')$  in the case of $q\in CF^2_{self}(L',L')$
(the case with $q\in CF^2_{self}(L,L)$ gives an entirely similar contribution related to $m_0^b\in CF^2_{self}(L,L)$). By the unobstructed assumption $m_0^b=0$ and $m_0^{b'}=0$, so these contributions vanish. But the grand sum of all contributions from all boundaries of the moduli spaces should be zero, which implies $(m_1^{b,b'})^2=0$.

The generalization to many Lagrangians is a matter of bookkeeping. We have the $A_\infty$ compositions
\[
m_k^{b_0,\ldots b_k}: CF^*(L_{k-1},L_k)\otimes \ldots CF^*(L_0,L_1)\to CF^*(L_0,L_k)[2-k],
\]
\begin{equation}\label{Ainftytwisted}
m_k^{b_0,\ldots b_k}(p_k,\ldots p_1)=\sum m_l( b_k,\ldots b_k, p_k, b_{k-1},\ldots, p_{k-1},\ldots, p_1, b_0,\ldots b_0  ).
\end{equation}
In particular, this induces a product structure on Floer cohomology $HF^*(L_1,L_2)\otimes HF^*(L_0,L_1)\to HF^*(L_0,L_2)$ (with bounding cochains suppressed in the notation),
\[
[\beta]\circ[\alpha]= (-1)^{\deg \alpha} m_2^{b_0,b_1,b_2}(\beta,\alpha).
\]
We say two unobstructed Lagrangian branes $L,L'$ are isomorphic in $D^bFuk(X)$, if there exist $[\alpha]\in HF^0(L,L')$ and $[\beta]\in HF^0(L',L)$, such that their compositions are the cohomological units: $[\beta]\circ [\alpha]=1_L\in HF^0(L,L)$ and $[\alpha]\circ [\beta]=1_{L'}\in HF^0(L',L')$.

\subsubsection*{The union of several components}

In our convention an immersed Lagrangian can have several components. Of particular interest is the case where $L$ is the union of transverse immersed Lagrangians $L_1,\ldots L_N$ with bounding cochains $b_1,\ldots , b_N$ respectively, and we have morphisms $b_{ij}\in CF^1((L_i, b_i), (L_j, b_j))$ for $i>j$. The key assumption here is that the morphisms only go in one direction from $L_i$ to $L_j$, not vice versa. We assume that $b=\sum b_i+ \sum_{i>j} b_{ij}$ is a bounding cochain for the immersed Lagrangian $L$, and in particular all intersection points in $b_{ij}$ satisfy the Novikov positivity condition $f_{L_i}\geq f_{L_j}$. We can write  out the Mauer-Cartan equation
\[
m_0+ m_1(b)+ m_2(b,b)+\ldots =0
\]
in component form: for any $i>j$,
\[
\sum_l \sum_{k\leq l} \sum_{ i=i_0>\ldots >i_k=j} m_l(b_{i_k}, \ldots b_{i_k}, b_{i_{k-1}i_k},    \ldots, b_{i_1}, \ldots b_{i_1}, b_{i_0 i_1}, b_{i_0},\ldots, b_{i_0})=0.
\]
The key observation is that this is precisely how one would define \textbf{twisted complexes} built on $L_1,\ldots L_N$, in the presence of the bounding cochains $b_1,\ldots b_N$ and the data $b_{ij}$, when no further degree shifts are involved (\cf the exact setting in section \ref{Fukayaembedded}). In this sense, we say that `immersed Lagrangians geometrises twisted complexes'. In other words, if the unobstructed immersed Lagrangians are admitted into the Fukaya category, then there is no need to formally add twisted complexes.


\begin{lem}(Blocking together connected components based on potential clustering)\label{blockingconnectedcomponents}
Assume $L$ is the finite union of transversely intersecting immersed Lagrangians, with a bounding cochain $b$. Then $L$ can be decomposed as a twisted complex built from some $L_1,\ldots L_N$, such that $\inf_{L_i} f_{L_i}> \sup_{L_j} f_{L_j}$ whenever $i>j$, and the Lagrangian potential $f_{L_i}$ has connected range for each $L_i$ .
\end{lem}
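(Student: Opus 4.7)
My plan is to produce the decomposition by clustering the connected components of the domain of $L$ according to the gaps in the range of the Lagrangian potential, and to check that this clustering is automatically compatible with the twisted complex structure because of the Novikov positivity requirement on $b$.

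Concretely, $L$ is compact so $S := f_L(L) \subset \mathbb{R}$ is a compact set. I would decompose $S$ into its connected components, which are finitely many closed intervals
$I_1 < I_2 < \cdots < I_N$, meaning $\sup I_j < \inf I_{j+1}$ for each $j$. Then define
\[
L_i := f_L^{-1}(I_i),
\]
viewed as an immersed Lagrangian given by the restriction of $\iota\colon L \to X$ to the corresponding union of connected components of the domain. By construction, for any $i>j$ we have $\inf_{L_i} f_{L_i} \geq \inf I_i > \sup I_j \geq \sup_{L_j} f_{L_j}$, so the gap condition is satisfied. The fact that $f_{L_i}(L_i) = I_i$ is connected follows from $I_i \subset f_L(L) = \bigcup_k f_{L_k}(L_k)$ together with $f_{L_k}(L_k) \cap I_i = \varnothing$ for $k \neq i$.

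The key compatibility step is to identify the bounding cochain data with a twisted complex structure. Decompose $b$ as $b = \sum_i b_i + \sum_{i \neq j} b_{ij}$, where $b_i$ collects the summands supported at self-intersections of $L_i$, and $b_{ij}$ those supported at intersections between $L_i$ and $L_j$ (thought of as a morphism from $L_i$ to $L_j$ at the level of local sheets). The essential observation is that for any intersection $p$ between distinct blocks $L_i$ and $L_j$ appearing in $b$, the Novikov positivity $f_{L_+}(p) \geq f_{L_-}(p)$ forces the higher-potential sheet to sit in the block with higher index: if $i>j$, then $f_{L_i}(p) \geq \inf_{L_i} f_{L_i} > \sup_{L_j} f_{L_j} \geq f_{L_j}(p)$, and so $p$ must appear as an element of $CF^1(L_i, L_j)$, not the reverse. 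Hence $b_{ij} = 0$ for $i<j$, exhibiting the strictly triangular structure required of a twisted complex.

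The one step which requires care—and which I expect to be the main obstacle, though it is more bookkeeping than substance—is verifying that the Mauer-Cartan equation $\sum_k m_k(b,\ldots,b) = 0$ for $b$ decomposes into the component-wise Mauer-Cartan-style relations displayed in the definition of twisted complexes (namely the equations listed just before the lemma, with each $b_{ij}$ producing the strictly triangular chain $i = i_0 > i_1 > \cdots > i_k = j$). This decomposition is forced by the same Novikov-positivity/potential-gap argument applied to every corner of every contributing holomorphic polygon: any polygon whose boundary corners would produce a morphism from $L_j$ up to $L_i$ with $j<i$ is excluded, so the only surviving contributions are those where the indices are strictly decreasing along the polygon boundary as one passes over the $b_{\cdot\cdot}$ corners, exactly matching the structure constants of a twisted complex. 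Combined with the fact that intersections within a single block are absorbed into the $b_i$ (which automatically satisfy the individual Mauer-Cartan equation on $(L_i, b_i)$ up to the triangular corrections), this gives the desired twisted complex decomposition.
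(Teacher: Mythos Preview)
Your proposal is correct and follows essentially the same approach as the paper: partition the components by gaps in the potential range, then invoke Novikov positivity to force the strictly triangular structure on $b$, which by the preceding discussion of the appendix is exactly the twisted complex condition. The only cosmetic difference is that the paper phrases the partition as an iterative bisection (split at a gap value $c$, repeat, terminate because there are finitely many domain components), whereas you write it down all at once via the connected components of $S=f_L(L)$; you should note explicitly that $S$ has finitely many components because the compact domain of $L$ has finitely many connected components, each mapping to a closed interval.
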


\begin{proof}
The decomposition can continue as long as there exists a real number $c$, such that 
the Lagrangian components can be partitioned into two types, with Lagrangian potential strictly smaller than $c$ (resp. greater than $c$). As long as $\inf_{L_i} f_{L_i}> \sup_{L_j} f_{L_j}$ whenever $i>j$, the Novikov positivity condition on the Lagrangian intersection points would imply that the entries $b_{ij}\in CF^1(L_i,L_j)$ of $b$ can only go in the direction $i>j$ and not vice versa, so the immersed Lagrangian $L$ is necessarily of the twisted complex form. This algorithm stops in finitely many steps since there are only finitely many components involved.
\end{proof}


	

\subsubsection*{Orientation signs on bordism currents}

In section \ref{LotayPacinirevisited}, \ref{LotayPaciniimmersed} we encountered the $(n-1)$-dimensional moduli spaces such as $\mathcal{M}(b,\ldots, b, \alpha, b',\ldots ,\beta)$ and $\mathcal{M}(b,\ldots, b, \gamma)$. The special case of holmorphic strips was already mentioned in Example \ref{FLoerproduct2}.

We now consider the moduli $\mathcal{M}(p_1,\ldots p_k)$ of polygons with at least 3 corners $p_1,\ldots p_k$, all regarded as inputs, arranged in clockwise order on $\partial \Sigma$, each carrying the local system factors $\Hom(E_+,E_-)|_{p_i}$ and the orientation factors $|o_{p_i}|$. The clockwise composition of the local system hom factors and the parallel transport along $\partial \Sigma$, produces a holonomy factor around $\partial \Sigma$, which is a number in $\Q,\R,\Z$ depending on the coefficient ring choice. Using (\ref{orientationisomorphism}) and Remark \ref{orientationdual}, as well as the clockwise orientation convention on the Stasheff associahedron, we acquire a (na\"ive) orientation on $T\mathcal{M}(p_1,\ldots p_k)$. 
To assign \textbf{orientation} and \textbf{weighting factors} to $\mathcal{M}(p_1,\ldots p_k)$, we take the product of the holonomy factor, the na\"ive orientation on $T\mathcal{M}(p_1,\ldots p_k)$, and another universal sign factor
\[
(-1)^{\deg p_1+2\deg p_2+\ldots + k\deg p_k} (-1)^{\deg p_k}.
\]
The appearance of this universal sign adjustment is a familiar convention in the open-closed map, \cf \cite[eqn 5.24]{Abouzaidgeneration}. The notation $\mathcal{M}$ is a shorthand for the weighted sum of all the $(n-1)$-dimensional moduli spaces involved in the construction of the bordism current.

 We equip the domain $\Sigma$ with the complex orientation, and together with an \emph{extra minus sign}, the orientation on $\mathcal{M}$ induces the orientation on $\mathcal{C}$. This minus sign arises for the same reason as in Example \ref{FLoerproduct2}, namely the discrepancy between our clockwise convention on $\partial \Sigma$, with the standard complex orientation on $\Sigma$.


\begin{thebibliography}{7}
	
	
\bibitem{AbouzaidSmith} 
Abouzaid, Mohammed; Smith, Ivan. Exact Lagrangians in plumbings. Geom. Funct. Anal. 22 (2012), no. 4, 785--831.	
	


\bibitem{Abouzaidgeneration} Abouzaid, Mohammed. A geometric criterion for generating the Fukaya category. Publ. Math. Inst. Hautes Études Sci. No. 112, (2010), 191--240.

\bibitem{Abouzaidloop} 
Abouzaid, Mohammed. On the wrapped Fukaya category and based loops. J. Symplectic Geom. 10 (2012), no. 1, 27--79. 	
	
\bibitem{Allard}
Allard, William K. On the first variation of a varifold. Ann. of Math. (2) 95 (1972), 417--491.



	
\bibitem{Almgren} 
Almgren, Frederick J., Jr. Almgren's big regularity paper. $Q$-valued functions minimizing Dirichlet's integral and the regularity of area-minimizing rectifiable currents up to codimension 2.  World Scientific Publishing Co., Inc., River Edge, NJ, 2000. xvi+955 pp. ISBN: 981-02-4108-9



\bibitem{AlstonBao} 
Alston, Garrett; Bao, Erkao. Exact, graded, immersed Lagrangians and Floer theory. J. Symplectic Geom. 16 (2018), no. 2, 357--438. 

	
\bibitem{JoyceAkaho} 	
Akaho, Manabu; Joyce, Dominic. Immersed Lagrangian Floer theory. J. Differential Geom. 86 (2010), no. 3, 381--500.	
	
	
\bibitem{Auroux} 	
Auroux, Denis. Mirror symmetry and $T$-duality in the complement of an anticanonical divisor. J. Gökova Geom. Topol. GGT 1 (2007), 51--91.




\bibitem{Aurouxsurvey} 
Auroux, Denis. A beginner's introduction to Fukaya categories. Contact and symplectic topology, 85--136, Bolyai Soc. Math. Stud., 26, János Bolyai Math. Soc., Budapest, 2014.


\bibitem{BiranCornea}
Biran, Paul; Cornea, Octav; Shelukhin, Egor. Lagrangian shadows and triangulated categories. Astérisque No. 426, (2021), 128 pp. ISBN: 978-2-85629-940-1
	
	
\bibitem{Boucksom}
Boucksom, Sébastien. Variational and non-archimedean aspects of the Yau-Tian-Donaldson conjecture. Proceedings of the International Congress of Mathematicians—Rio de Janeiro 2018. Vol. II. Invited lectures, 591--617, World Sci. Publ., Hackensack, NJ, 2018. 



	
\bibitem{Brakke}
Brakke, Kenneth A. The motion of a surface by its mean curvature. Mathematical Notes, 20. Princeton University Press, Princeton, N.J., 1978. {\rm i}+252 pp. ISBN: 0-691-08204-9



\bibitem{Bridgeland} 
Bridgeland, Tom. Stability conditions on triangulated categories. Ann. of Math. (2) 166 (2007), no. 2, 317--345.




\bibitem{CharestWoodward} 
Charest, François; Woodward, Chris. Floer trajectories and stabilizing divisors. J. Fixed Point Theory Appl. 19 (2017), no. 2, 1165--1236. 



\bibitem{CharestWoodward2} 
Charest, François; Woodward, Chris. Floer theory and flips. 	arXiv:1508.01573.




\bibitem{Chen}
Chen, Gao. The J-equation and the supercritical deformed Hermitian-Yang-Mills equation. Invent. Math. 225 (2021), no. 2, 529--602.
	


\bibitem{ChenCheng} 
Chen, Xiuxiong; Cheng, Jingrui. On the constant scalar curvature Kähler metrics (II)—Existence results. J. Amer. Math. Soc. 34 (2021), no. 4, 937--1009.	
	
	
	
\bibitem{Collins} 
Tristan C. Collins, Yun Shi.
Stability and the deformed Hermitian-Yang-Mills equation. 	arXiv:2004.04831.


\bibitem{Collins1}
Tristan C. Collins, Shing-Tung Yau.  Moment maps, nonlinear PDE, and stability in mirror symmetry. 	arXiv:1811.04824.


\bibitem{Collins2}
Tristan C. Collins, Shing-Tung Yau.
Moment maps, nonlinear PDE, and stability in mirror symmetry, I; Geodesics
to appear in Ann. PDE


\bibitem{Collins2}
Tristan C. Collins, Jianchun Chu, Man-Chun Lee. The space of almost calibrated (1,1) forms on a compact K\"ahler manifold. 	arXiv:2002.01922.




\bibitem{CollinsJacobXie} 

Collins, Tristan C.; Xie, Dan; Yau, Shing-Tung. The deformed Hermitian-Yang-Mills equation in geometry and physics. Geometry and physics. Vol. I, 69--90, Oxford Univ. Press, Oxford, 2018.




\bibitem{CollinsJacobYau}
Collins, Tristan C.; Jacob, Adam; Yau, Shing-Tung. $(1,1)$ forms with specified Lagrangian phase: a priori estimates and algebraic obstructions. Camb. J. Math. 8 (2020), no. 2, 407--452.







\bibitem{Delellis} 
De Lellis, Camillo. Almgren's $Q$-valued functions revisited. Proceedings of the International Congress of Mathematicians. Volume III, 1910--1933, Hindustan Book Agency, New Delhi, 2010.
	
	
\bibitem{Delelliscentermfd} 
De Lellis, Camillo. Almgren's center manifold in a simple setting. Harmonic analysis and applications, 257--288, IAS/Park City Math. Ser.


	
	
\bibitem{DonaldsonHYM1}
Donaldson, S. K. Anti self-dual Yang-Mills connections over complex algebraic surfaces and stable vector bundles. Proc. London Math. Soc. (3) 50 (1985), no. 1, 1--26.


	
\bibitem{DonaldsonHYM} 
Donaldson, S. K. Infinite determinants, stable bundles and curvature. Duke Math. J. 54 (1987), no. 1, 231--247.


\bibitem{DonaldsonSL} 
Donaldson, S. K. Moment maps and diffeomorphisms [MR1701920 (2001a:53122)]. Surveys in differential geometry, 107--127, Surv. Differ. Geom., 7, Int. Press, Somerville, MA, 2000. 


\bibitem{Donaldsonmultivalued} 
Donaldson, Simon. Deformations of multivalued harmonic functions. Q. J. Math. 72 (2021), no. 1-2, 199--235. 


\bibitem{Donaldsonteststability}
Donaldson, S. K. Lower bounds on the Calabi functional. J. Differential Geom. 70 (2005), no. 3, 453--472.

	
\bibitem{Douglas} 
Douglas, Michael R.; Fiol, Bartomeu; Römelsberger, Christian. Stability and BPS branes. J. High Energy Phys. 2005, no. 9, 006, 15 pp. 



\bibitem{Federer} 
Federer, Herbert; Fleming, Wendell H. Normal and integral currents. Ann. of Math. (2) 72 (1960), 458--520. 


\bibitem{FOOO} 
Fukaya, Kenji; Oh, Yong-Geun; Ohta, Hiroshi; Ono, Kaoru. Lagrangian intersection Floer theory: anomaly and obstruction. Part I. AMS/IP Studies in Advanced Mathematics, 46.1. American Mathematical Society, Providence, RI; International Press, Somerville, MA, 2009. xii+396 pp.


\bibitem{Fukaya} 
Fukaya, Kenji. Counting pseudo-holomorphic discs in Calabi-Yau 3-folds. Tohoku Math. J. (2) 63 (2011), no. 4, 697--727. 



\bibitem{FukayaGH} 
Fukaya, Kenji.
Gromov-Hausdorff distance between filtered $A_\infty$ categories 1: Lagrangian Floer theory. 	arXiv:2106.06378.



\bibitem{Haskins} 
Haskins, Mark. The geometric complexity of special Lagrangian $T^2$-cones. Invent. Math. 157 (2004), no. 1, 11--70.



\bibitem{HeinSun} 
Hein, Hans-Joachim; Sun, Song. Calabi-Yau manifolds with isolated conical singularities. Publ. Math. Inst. Hautes Études Sci. 126 (2017), 73--130.




\bibitem{Claymirror} 	
Hori, Kentaro; Katz, Sheldon; Klemm, Albrecht; Pandharipande, Rahul; Thomas, Richard; Vafa, Cumrun; Vakil, Ravi; Zaslow, Eric. Mirror symmetry. With a preface by Vafa. Clay Mathematics Monographs, 1. American Mathematical Society, Providence, RI; Clay Mathematics Institute, Cambridge, MA, 2003. xx+929 pp.




\bibitem{Imagi} Imagi, 
Yohsuke. Generalized Thomas-Yau Uniqueness Theorems. 	arXiv:2012.14647.



\bibitem{JoyceImagi} 
Imagi, Yohsuke; Joyce, Dominic; Oliveira dos Santos, Joana. Uniqueness results for special Lagrangians and Lagrangian mean curvature flow expanders in $\Bbb{C}^m$. Duke Math. J. 165 (2016), no. 5, 847--933.



\bibitem{Joyceconj} 
Joyce, Dominic. Conjectures on Bridgeland stability for Fukaya categories of Calabi-Yau manifolds, special Lagrangians, and Lagrangian mean curvature flow. EMS Surv. Math. Sci. 2 (2015), no. 1, 1--62.

\bibitem{Joycecounting} 

Joyce, Dominic. On counting special Lagrangian homology 3-spheres. Topology and geometry: commemorating SISTAG, 125--151, Contemp. Math., 314, Amer. Math. Soc., Providence, RI, 2002.


\bibitem{Joycecalibrated}
Joyce, Dominic D. Riemannian holonomy groups and calibrated geometry. Oxford Graduate Texts in Mathematics, 12. Oxford University Press, Oxford, 2007. x+303 pp. 





\bibitem{JoyceSLsurvey} 
Joyce, Dominic. Special Lagrangian submanifolds with isolated conical singularities. V. Survey and applications. J. Differential Geom. 63 (2003), no. 2, 279--347.


\bibitem{JoyceLeeTsui} 
Joyce, Dominic; Lee, Yng-Ing; Tsui, Mao-Pei. Self-similar solutions and translating solitons for Lagrangian mean curvature flow. J. Differential Geom. 84 (2010), no. 1, 127--161.





\bibitem{KontsevichSoibelman}
Maxim Kontsevich, Yan Soibelman. Stability structures, motivic Donaldson-Thomas invariants and cluster transformations. 	arXiv:0811.2435.





\bibitem{KS2} 

Kontsevich, Maxim; Soibelman, Yan. Cohomological Hall algebra, exponential Hodge structures and motivic Donaldson-Thomas invariants. Commun. Number Theory Phys. 5 (2011), no. 2, 231--352.


\bibitem{KSsurvey1} 

Kontsevich, Maxim; Soibelman, Yan. Motivic Donaldson-Thomas invariants: summary of results. Mirror symmetry and tropical geometry, 55--89, Contemp. Math., 527, Amer. Math. Soc., Providence, RI, 2010. 

\bibitem{KSsurvey2} 
Kontsevich, Maxim; Soibelman, Yan. Wall-crossing structures in Donaldson-Thomas invariants, integrable systems and mirror symmetry. Homological mirror symmetry and tropical geometry, 197--308, Lect. Notes Unione Mat. Ital., 15, Springer, Cham, 2014.



\bibitem{KwonOh} 
Kwon, Daesung; Oh, Yong-Geun. Structure of the image of (pseudo)-holomorphic discs with totally real boundary condition. Appendix 1 by Jean-Pierre Rosay. Comm. Anal. Geom. 8 (2000), no. 1, 31--82.

\bibitem{Lawlor} 
Lawlor, Gary. The angle criterion. Invent. Math. 95 (1989), no. 2, 437--446.


\bibitem{LiSYZ2}

Li, Yang. SYZ geometry for Calabi-Yau 3-folds: Taub-NUT and Ooguri-Vafa type metrics. accepted by AMS memoir.



\bibitem{LiSYZ1} 


Li, Yang. SYZ conjecture for Calabi-Yau hypersurfaces in the Fermat family. 	arXiv:1912.02360.






\bibitem{LiSYZNA}  
Li, Yang.
Metric SYZ conjecture and non-archimedean geometry. arXiv:2007.01384.



\bibitem{Lin}
Lin, Yu-Shen. Correspondence theorem between holomorphic discs and tropical discs on K3 surfaces. J. Differential Geom. 117 (2021), no. 1, 41--92.



\bibitem{LotayNeves} 
Lotay, Jason D.; Neves, André. Uniqueness of Langrangian self-expanders. Geom. Topol. 17 (2013), no. 5, 2689--2729. 



\bibitem{Lotay} 
Lotay, Jason D.; Pacini, Tommaso. Complexified diffeomorphism groups, totally real submanifolds and Kähler-Einstein geometry. Trans. Amer. Math. Soc. 371 (2019), no. 4, 2665--2701.



\bibitem{LotayPacini2} 	
Lotay, Jason D.; Pacini, Tommaso. From Lagrangian to totally real geometry: coupled flows and calibrations. Comm. Anal. Geom. 28 (2020), no. 3, 607--675.	
	
\bibitem{Matessi} 
Matessi, Diego. Lagrangian pairs of pants. Int. Math. Res. Not. IMRN 2021, no. 15, 11306--11356.	
	
	
	
\bibitem{Morgan} 
Morgan, Frank. Geometric measure theory. A beginner's guide. Fifth edition. Illustrated by James F. Bredt. Elsevier/Academic Press, Amsterdam, 2016. viii+263 pp. 



	
\bibitem{Neves} 
Neves, André. Finite time singularities for Lagrangian mean curvature flow. Ann. of Math. (2) 177 (2013), no. 3, 1029--1076.


\bibitem{Neves2} 
Neves, André. Singularities of Lagrangian mean curvature flow: zero-Maslov class case. Invent. Math. 168 (2007), no. 3, 449--484.



\bibitem{Nevessurvey} 
Neves, André. Recent progress on singularities of Lagrangian mean curvature flow. Surveys in geometric analysis and relativity, 413--438, Adv. Lect. Math. (ALM), 20, Int. Press, Somerville, MA, 2011.

\bibitem{Oh} 
Oh, Yong-Geun. Fredholm theory of holomorphic discs under the perturbation of boundary conditions. Math. Z. 222 (1996), no. 3, 505--520.

	
\bibitem{Thomas} 
Thomas, R. P. Moment maps, monodromy and mirror manifolds. Symplectic geometry and mirror symmetry (Seoul, 2000), 467--498, World Sci. Publ., River Edge, NJ, 2001.	
	
\bibitem{ThomasYau} 
Thomas, R. P.; Yau, S.-T. Special Lagrangians, stable bundles and mean curvature flow. Comm. Anal. Geom. 10 (2002), no. 5, 1075--1113.




\bibitem{SchoenWolfson} 
Schoen, R.; Wolfson, J. Minimizing area among Lagrangian surfaces: the mapping problem. J. Differential Geom. 58 (2001), no. 1, 1--86. 



\bibitem{Seideldegree}
Seidel, Paul. Graded Lagrangian submanifolds. Bull. Soc. Math. France 128 (2000), no. 1, 103--149.




\bibitem{Seidelbook} 
Seidel, Paul. Fukaya categories and Picard-Lefschetz theory. Zurich Lectures in Advanced Mathematics. European Mathematical Society (EMS), Zürich, 2008. viii+326 pp.


\bibitem{SeidelLefV} Seidel, Paul.
Fukaya $A_\infty$-structures associated to Lefschetz fibrations V. 	arXiv:2010.13692.


\bibitem{LeonSimon} 
Simon, Leon. Lectures on geometric measure theory. Proceedings of the Centre for Mathematical Analysis, Australian National University, 3. Australian National University, Centre for Mathematical Analysis, Canberra, 1983.



\bibitem{Simpson} 
Simpson, Carlos T. Constructing variations of Hodge structure using Yang-Mills theory and applications to uniformization. J. Amer. Math. Soc. 1 (1988), no. 4, 867--918.


\bibitem{Smithsurvey}
Smith, Ivan. A symplectic prolegomenon. Bull. Amer. Math. Soc. (N.S.) 52 (2015), no. 3, 415--464.



\bibitem{Smoczyk} 
Smoczyk, Knut. Mean curvature flow in higher codimension: introduction and survey. Global differential geometry, 231--274, Springer Proc. Math., 17, Springer, Heidelberg, 2012.
	
	
\bibitem{Solomon} 	
Solomon, Jake P. The Calabi homomorphism, Lagrangian paths and special Lagrangians. Math. Ann. 357 (2013), no. 4, 1389--1424.	
	
	
	
\bibitem{Solomon2} 
Solomon, Jake P. Curvature of the space of positive Lagrangians. Geom. Funct. Anal. 24 (2014), no. 2, 670--689. 	

\bibitem{Solomon3} 
Solomon,  Jake P. ; Yuval, Amitai M. 
Geodesics of positive Lagrangians from special Lagrangians with boundary. 	arXiv:2006.06058.

	
\bibitem{SYZ} 
Strominger, Andrew; Yau, Shing-Tung; Zaslow, Eric. Mirror symmetry is $T$-duality. Nuclear Phys. B 479 (1996), no. 1-2, 243--259. 



\bibitem{UhlenbeckYau} 
Uhlenbeck, K.; Yau, S.-T. On the existence of Hermitian-Yang-Mills connections in stable vector bundles. Frontiers of the mathematical sciences: 1985 (New York, 1985). Comm. Pure Appl. Math. 39 (1986), no. S, suppl., {\rm S}257--{\rm S}293. 
	
\bibitem{Witten} 
Witten, Edward. Supersymmetry and Morse theory. J. Differential Geometry 17 (1982), no. 4, 661--692 (1983).


\bibitem{Woodward1} 
 Woodward, Chris; Palmer, Joseph. Invariance of immersed Floer cohomology under Maslov flows. Algebr. Geom. Topol. 21 (2021), no. 5, 2313--2410.



\bibitem{Woodward2}
 Woodward, Chris; Palmer, Joseph.
Invariance of immersed Floer cohomology under Lagrangian surgery. 	arXiv:1903.01943.


	
\bibitem{Yuan} Yuan, Hang. Yuan, Hang. Family Floer Program and Non-Archimedean Syz Mirror Construction. Thesis (Ph.D.)–State University of New York at Stony Brook. ProQuest LLC, Ann Arbor, MI, 2021. 128 pp.



	
	
\end{thebibliography}
\end{document}